\numberwithin{equation}{section}
\theoremstyle{definition}
\newtheorem{thm}{Theorem}[section]
\newtheorem{lem}[thm]{Lemma}
\newtheorem{defn}[thm]{Definition}
\newtheorem{rem}[thm]{Remark}
\newtheorem{prop}[thm]{Proposition}
\newtheorem{coro}[thm]{Corollary}
\newtheorem{ex}[thm]{Example}
\newtheorem{conv}[thm]{Convention}
\def \C {\mathbb{C}}
\def \E {\mathbb{E}}
\def \F {\mathbb{F}}
\def \N {\mathbb{N}}
\def \O {\mathcal{O}}
\def \P {\bold{P}}
\def \Q {\mathbb{Q}}
\def \R {\mathbb{R}}
\def \T {\mathbb{T}}
\def \V {\mathbb{F}_{p}^{d}}
\def \Z {\mathbb{Z}}
\def \+ {\hat{+}}
\def \- {\hat{-}}
\def \d {\delta}
\def \e {\epsilon}
\def \h {\bold{h}}
\def \w {\bold{w}}
\def \x {\bold{x}}
\def \y {\bold{y}}
\def \cor {\text{cor}}
\def \CP {\{\text{pure, nice, independent, consistent}\}}
\def \DR {\text{DR}}
\def \Gow {\Box}
\def \Hor {\text{Horiz}}
\def \ind {\text{ind}}
\def \lin {\text{lin}}
\def \Lip {\text{Lip}}
\def \Nil {\text{Nil}}
\def \ped {\text{ped}}
\def \poly {\text{poly}}
\def \pp {\perp_{M}}
\def \rank {\text{rank}}
\def \SGI {\text{SGI}}
\def \st {\text{HP}}
\def \sp {\text{span}}
\def \Taylor {\text{Taylor}}
\def \Vk {\mathbb{Z}_{K}^{d}}
\def \zp {\mathbb{Z}/p^{\mathbb{N}}}
\title[Spherical higher order Fourier analysis over finite fields III]{Spherical higher order Fourier analysis over finite fields III: a spherical Gowers inverse theorem}
\author{Wenbo Sun}
\address[Wenbo Sun]{Department of Mathematics, Virginia Tech, 225 Stanger Street, Blacksburg, VA, 24061, USA}
\email{swenbo@vt.edu}
\thanks{The author was partially supported by the NSF Grant DMS-2247331}
\subjclass[2020]{11T99, 37A99}
\begin{document}

\maketitle

\begin{abstract}
  This paper is the third part of the series \emph{Spherical higher order Fourier analysis over finite fields}, aiming to develop the higher order Fourier analysis method along spheres over finite fields, and to solve the geometric Ramsey conjecture in the finite field setting.
  
  In this paper, we prove an inverse theorem over finite field for spherical Gowers norms, i.e. a local Gowers norm supported on a sphere. We show that if the $(s+1)$-th spherical Gowers norm of a 1-bounded function $f\colon\mathbb{F}_{p}^{d}\to \mathbb{C}$ is at least $\epsilon$ and if $d$ is sufficiently large depending only on $s$, then $f$ correlates on the sphere with a $p$-periodic $s$-step nilsequence, where the bounds for the complexity and correlation depend only on $d$ and $\epsilon$. This result will be used in later parts of the series to prove the geometric Ramsey conjecture in the finite field setting. 
\end{abstract}

\tableofcontents

\section{Introduction}

\subsection{The spherical Gowers inverse theorem}
This paper is the third part of the series \emph{Spherical higher order Fourier analysis over finite fields} \cite{SunA,SunB,SunD}. The purpose of this paper is to use the tools developed in  \cite{SunA,SunB} to prove a Gowers inverse theorem for spherical sets.
We start by recalling the definition of local Gowers norms. For any finite set $A$ and function $f\colon A\to\C$, denote 
$\E_{n\in A}f(n):=\frac{1}{\vert A\vert}\sum_{n\in A}f(n)$.

\begin{defn}[Local Gowers norms]
	Let $\Omega$ be a subset of $\V$, $s\in\N_{+}$,
	and $f\colon\V\to \C$ be a function. The \emph{$s$-th $\Omega$-Gowers norm} of $f$ is defined by the quantity
	$$\Vert f\Vert_{U^{s}(\Omega)}:=\Bigl\vert\E_{(n,h_{1},\dots,h_{s})\in \Gow_{s}(\Omega)}\prod_{\e=(\e_{1},\dots,\e_{s})\in\{0,1\}^{s}} \mathcal{C}^{\vert\e\vert}f(n+\e_{1}h_{1}+\dots+\e_{s}h_{s})\Bigr\vert^{\frac{1}{2^{s}}},$$
	where 
	$$\Gow_{s}(\Omega):=\{(n,h_{1},\dots,h_{s})\in(\V)^{s+1}\colon n+\e_{1}h_{1}+\dots+\e_{s}h_{s}\in\Omega \text{ for all } (\e_{1},\dots,\e_{s})\in\{0,1\}^{s}\},$$
	$\vert\e\vert:=\e_{1}+\dots,\e_{s}$ and $\mathcal{C}^{2n+1}f=\overline{f}$, $\mathcal{C}^{2n}f=f$ for all $n\in\Z$.\footnote{One can show that $\Vert\cdot\Vert_{U^{s}(\Omega)}$ is indeed a norm when $s\geq 2$.}
\end{defn}

The inverse theorems for Gowers norms is the central part of higher order Fourier analysis, which has been studied extensively in the last two decades, and it has many applications in Semer\'edi-type problems. See \cite{CGS21,CS14,FH17,Gow01,GT08b,GT08,GT10b,GT10,GTZ11,GTZ12,HK05,JST23,Sze12,TZ12b,TZ12} for a (far from being complete) list of related works. For example,
it was shown by Green, Tao and Ziegler \cite{GTZ12} (see also \cite{Man14}) that for $\Omega=\F_{p}$, for any function $f\colon \Omega\to \C$ bounded by 1 with $\Vert f\Vert_{U^{s+1}(\Omega)}\gg 1$, there exists an $s$-step nilsequence $\phi\colon \Omega\to \C$ of low complexity such that $\vert\E_{n\in \Omega}f(n)\phi(n)\vert\gg 1$. This result can be generalized to the case $\Omega=\V$ without much difficulty.

In this paper, we are particularly interested in the case when $$\Omega=V(M):=\{n\in\V\colon M(n)=0\}$$ is the set of zeros of some quadratic form $M\colon\V\to\F_{p}$ (see Section \ref{3:s:defn} for definitions), since such a norm is strongly connected to our study of the geometric Ramsey conjecture in \cite{SunD}. A typical example of $M$ is $M(n)=n\cdot n-r$ for some $r\in\F_{p}$, in which case $V(M)$ is the sphere of square radius $r$ centered at $\bold{0}$.  
For convenience we call $\Vert\cdot\Vert_{U^{s}(V(M))}$ the \emph{$s$-th spherical Gowers norm}. 
Let $\Nil^{s;C,1}_{p}(\V)$ denote the set of all scalar valued $p$-periodic nilsequences of degree at most $s$ and complexity at most $C$ (see Section \ref{3:s:nn} for the precise definition).
Throughout this paper, for $s\in\Z$, denote
\begin{equation}\label{3:thisisns}
N(s):=(2s+16)(15s+453).
\end{equation}

We proof the following spherical Gowers inverse theorem of step $s$ (abbreviated as $\SGI(s)$):

\begin{thm}\label{3:inv}[%Inverse theorem for spherical Gowers norms
	$\SGI(s)$]
		For every $d\in\N_{+},s\in\N$ with $d\geq N(s-1)=(2s+14)(15s+438)$ 	and $\e>0$, there exist $\delta:=\d(d,\e), C:=C(d,\e)>0$, and $p_{0}:=p_{0}(d,\e)\in\N$
		 such that for every prime $p\geq p_{0}$, every
		 non-degenerate quadratic form $M\colon\V\to\F_{p}$,  and every function $f\colon\V\to\C$ bounded in magnitude by 1, if $\Vert f\Vert_{U^{s+1}(V(M))}>\e$, then there exists
		   $\phi\in\Nil^{s;C,1}_{p}(\V)$ 
		such that
		$$\Bigl\vert\E_{n\in V(M)}f(n)\cdot \phi(n)\Bigr\vert>\delta.$$
\end{thm}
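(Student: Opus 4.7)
I would proceed by induction on $s$. The base case $s=0$ is trivial: $\Vert f\Vert_{U^{1}(V(M))}>\e$ says $\vert\E_{n\in V(M)} f(n)\vert>\e$, so $\phi\equiv 1$ works.

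For the inductive step, assume $\SGI(s-1)$ and $\Vert f\Vert_{U^{s+1}(V(M))}>\e$. A standard ``differentiation'' expansion of the Gowers norm (Cauchy-Schwarz in the outermost direction) yields a positive density set of directions $h\in\V$ for which the multiplicative derivative $\Delta_h f(n):=f(n)\overline{f(n+h)}$ satisfies $\Vert \Delta_h f\Vert_{U^s(V(M)\cap(V(M)-h))}\gg\e^{O(1)}$. The intersection $V(M)\cap(V(M)-h)$ is the zero set of the pair of quadratic forms $M(\cdot)$ and $M(\cdot+h)$, so $\SGI(s-1)$ in its stated form (a single non-degenerate quadric) does not apply directly. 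I would first upgrade it to a version for varieties cut out by several quadrics in a $\CP$ configuration, via the polynomial-factor / regularity machinery of \cite{SunA,SunB}. After this upgrade, for each admissible $h$ one obtains a $p$-periodic $(s-1)$-step nilsequence $\phi_h$ of bounded complexity correlating with $\Delta_h f$ on the intersection.

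The main body of the argument is then the symmetrization step. After pigeonholing on the discrete complexity data parametrizing $\phi_h$, one extracts a dense subfamily depending uniformly on $h$. Exploiting the commutation $\Delta_h\Delta_k f=\Delta_k\Delta_h f$ together with the Cauchy-Schwarz orthogonality argument of Green-Tao-Ziegler, the top-order Taylor coefficients of the $\phi_h$ should satisfy an approximate cocycle relation in $h$. Integrating this cocycle, using the equidistribution and regularity results for polynomial phases on spheres developed in earlier parts of the series, produces a $p$-periodic degree-$s$ nilsequence $\phi\in\Nil^{s;C,1}_{p}(\V)$ that correlates with $f$ on $V(M)$, completing the induction.

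The main obstacle is that the entire symmetrization-and-integration procedure has to be carried out \emph{along the curved set} $V(M)$, not on all of $\V$. Each differentiation step introduces a new auxiliary quadric that must be tracked, and every Cauchy-Schwarz manipulation must respect the sphere constraint; this is what forces the passage to the multi-quadric $\CP$ framework at each inductive stage. Quantitatively, each step ``spends'' a fixed amount of dimension so that the auxiliary quadrics remain non-degenerate and Paper II's equidistribution theory continues to apply, which is precisely the role of the hypothesis $d\geq N(s-1)=(2s+14)(15s+438)$ and explains its explicit linear-in-$s$ form.
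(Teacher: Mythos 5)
Your high-level roadmap matches the paper's strategy: induction on $s$, a Cauchy--Schwarz differentiation step producing $\Delta_h f$ that correlates with an $(s-1)$-step nilcharacter on $V(M)^h$, an upgrade of $\SGI(s-1)$ to the intersected variety (this is exactly the paper's $\SGI(s-1)+$, Proposition \ref{3:inv+}, which treats $V(M)\cap(V+c)$), and a final symmetrization step. You also correctly identify the driving constraint, namely that every Cauchy--Schwarz must respect the quadric, which forces passage to the $M$-set/$M$-ideal framework of \cite{SunA,SunB}.

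However, two points are off, and one is a genuine gap. First, you attribute the extraction of the cocycle relation on the frequencies of $\phi_h$ to the commutation $\Delta_h\Delta_k f=\Delta_k\Delta_h f$. That is the wrong identity at that stage. In both GTZ and this paper, the relation $\phi_{h_1}\otimes\phi_{h_2}\sim\phi_{h_3}\otimes\phi_{h_4}$ on additive quadruples $h_1+h_2=h_3+h_4$ comes from the \emph{cocycle} identity $\Delta_{h+k}f(n)=\Delta_h f(n+k)\Delta_k f(n)$ (the Furstenberg--Weiss step, Section \ref{3:s:b2}); the commutation $\Delta_h\Delta_{h'}=\Delta_{h'}\Delta_h$ is deployed only much later, in the ``symmetric argument'' (Sections \ref{3:s:b7}--\ref{3:s:b72}), and for a different purpose: to show that the already-built nilcharacter $\chi(h,n)$ factors as $\Theta(n+h)\overline{\Theta'(n)}$. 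Second, and more seriously, the phrase ``integrating this cocycle \dots\ produces a $p$-periodic degree-$s$ nilsequence'' glosses over what is actually the bulk of the paper. The passage from a family $\{\phi_h\}$ with an approximate Freiman-homomorphism property on frequencies modulo $J^M_{h_1,h_2,h_3}$ to a single multi-degree-$(1,s)$ nilcharacter $\chi(h,n)$ requires: the sunflower lemma over $M$-ideals (Lemmas \ref{3:sf1}--\ref{3:sf2}, resting on the linearization theorem \ref{3:aadd} from Part II), a Ratner-type equidistribution theorem on the sphere (Theorem \ref{3:rat}) to kill the ``petal'' directions in the vertical character (Theorem \ref{3:vns}), and an explicit Lie-theoretic semidirect-product construction (Section \ref{3:s:b5}). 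None of this is automatic, and there is no shortcut that avoids it: the loss of information modulo $J^M_{h_1,h_2,h_3}$ in the Furstenberg--Weiss step is what makes the sphere setting harder than $\F_p^d$, and your proposal does not engage with it. Finally, a small correction: $N(s-1)=(2s+14)(15s+438)$ is quadratic in $s$, not linear.
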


\begin{rem}
By considering the cases  $p\geq p_{0}$ and $p<p_{0}$ separately, one can show that the restriction $p\geq p_{0}$ in Theorem \ref{3:inv} is not necessary. We leave the details to the interested readers. 

It is also natural to ask whether one can remove the dependence of $\d$ and $C$ on the dimension $d$ in Theorem \ref{3:inv}. We do not pursuit this imporvement in this paper since Theorem \ref{3:inv} is good enough for us for the study in \cite{SunD}.
\end{rem}

We remark that the converse of $\SGI(s)$ also holds. See Proposition \ref{3:cinv}.

%{\color{} Improve the lower bound of $p$}

%\begin{proof}
%	Suppose we have show Theorem \ref{3:inv} for $p\geq p_{0}(d,\e,s)$.
%	Now let $p<p_{0}(d,\e,s)$ and $f\colon\V\to\C$ be bounded in magnitude by 1 with $\Vert f\Vert_{U^{s+1}(V(M))}>\e$.
%	By Lemma \ref{3:countingh}, there exists $p'(s)>0$ such that $\Gow_{s}(V(M))$ is non-empty as long as $p\geq p'(s)$.
%	By the Pigeonhole Principle, there exists some $n_{0}\in V(M)$ such that $f(n_{0})\geq \e^{2^{s+1}}$.
%	
%	Let $G/\Gamma=\T^{d}$ and $F\colon\T^{d}\to\C$ be a smooth function with $0\leq F\leq 1$ such that $F(x)=0$ for all $d_{\T^{d}}(x,\frac{\tau(n_{0})}{p}\Z^{d})\geq 1/4p$ and that $F(x)=1$ for all $d_{\T^{d}}(x,\frac{\tau(n_{0})}{p}\Z^{d})\leq 1/8p$. It is not hard to see that $\Vert F\Vert_{\Lip(\T^{d})}\leq 10p\leq 10 p_{0}(d,\e,s)$.
%	
%	Let $g\colon\V\to \R^{d}$ be the map given by $g(n)=\frac{\tau(n)}{p}$ and set $\phi(n)=F(g(n)\Z^{d})$. Then $\phi$ belongs to  $\Nil^{s;10 p_{0}(d,\e,s),1}_{p}(V(M))$.
%	Moreover, $\phi(n)$ equals to 1 if $n=n_{0}$ and equals to 0 otherwise.
%	 So by Lemma \ref{3:counting},  
%	$$\Bigl\vert\E_{n\in V(M)}f(n)\cdot \phi(n)\Bigr\vert=\frac{1}{\vert V(M)\vert}\vert f(n_{0})\vert\geq \frac{\e^{2^{s+1}}}{\vert V(M)\vert}=\e^{2^{s+1}}\cdot p^{-(d-1)}(1+O(p^{-1/2}))\gg_{d,\e,s} 1.$$
%\end{proof}	

\subsection{Outline of the proof}\label{3:s:otl}

The outline of our proof of $\SGI(s)$ is similar to the proof Theorem 1.3 of the work of Green, Tao and Ziegler \cite{GTZ12}. However, there are also many significant differences between the two results.
 For convenience we informally say that two sequences $f,g\colon\Omega\to\C$ \emph{correlate} (on $\Omega$) if the average of $fg$ over $\Omega$ is bounded away from zero. We say that $f$ and $g$ \emph{$s$-correlate} (on $\Omega$) if $fg$ correlates (on $\Omega$) with a nilsequence of step $s$ with low complexity.

\textbf{Step 1: some preliminary reductions.} We  first  show that $\SGI(2)$ holds. The case $s=2$ can be proved using pure Fourier analysis method. Our proof is similar to the method used by Lyall, Magyar and Parshall \cite{LMP19}.
Now suppose that $\SGI(s)$ holds and we prove that $\SGI(s+1)$ holds. If $\Vert f\Vert_{U^{s+2}(V(M))}\gg 1$, then for many $h\in \V$, we have that $\Vert \Delta_{h}f\Vert_{U^{s+1}(V(M)^{h})}\gg 1$, where $V(M)^{h}$ is the set of $n\in\V$ with $M(n)=M(n+h)=0$. So it follows from $\SGI(s)+$ (and variation of $\SGI(s)$ which follows immediately from $\SGI(s)$, see Proposition \ref{3:inv+}) that $\Delta_{h}f$ correlates with an $s$-step nilsequence $\chi_{h}$. Using the approximation properties developed in Appendix \ref{3:s:AppB}, we may assume without loss of generality that $\chi_{h}$ are nilcharacters. 
Step 1 is conducted in Section \ref{3:s:b0}.

\textbf{Step 2: a Furstenburg-Weiss type argument.} 
  By using the cocycle identity
 $\Delta_{h+k}f(n)=\Delta_{n}f(n+k)\Delta_{k}f(n),$ 
 we can subtract more information for $\chi_{h}$. 
 Roughly speaking, we can show that for many $(h_{1},h_{2},h_{3},h_{4})\in(\F_{p}^{d})^{4}$ with $h_{1}+h_{2}=h_{3}+h_{4}$, the function
 \begin{equation}\label{3:1234}
 n\mapsto\chi_{h_{1}}(n)\cdot\chi_{h_{2}}(n+h_{1}-h_{4})\cdot\overline{\chi}_{h_{3}}(n)\cdot\overline{\chi}_{h_{4}}(n+h_{1}-h_{4})	
 \end{equation}	
 is correlated with an $(s-1)$-step nilsequence on $V(M)^{h_{1},h_{3},h_{3}-h_{2}}$, the  set of $n\in\V$ with $M(n)=M(n+h_{1})=M(n+h_{3})=M(n+h_{3}-h_{2})=0$. This is done in Section \ref{3:s:b2}. Our method is similar to the one used in Section 8 of \cite{GTZ12}.  However, a new Fubini-type theorem is needed (Theorem \ref{3:ct}) in order to complete this step.
 
 To better understand the idea behind the proof, it is helpful to temporarily think of $\chi_{h}(n)$ as a polynomial function $\exp(P_{h}(n)+P'_{h}(n))$ for some homogeneous polynomial $P_{h}\colon\V\to\F_{p}$ of degree $s-1$ and some polynomial $P'_{h}\colon\V\to\F_{p}$ of degree at most $s-2$. In the setting of \cite{GTZ12}, roughly speaking, the Furstenburg-Weiss type argument implies that for many $h_{1},\dots,h_{4}$ with $h_{1}+h_{2}=h_{3}+h_{4}$, we have that
 $P_{h_{1}}+P_{h_{2}}=P_{h_{3}}+P_{h_{4}}$. 
 One can then use the tools from additive combinatorics to find an explicit expression for $P_{h}$.
 
 However, in our setting,  there is a lost of information  when applying the Furstenburg-Weiss type argument. As a result, we only obtain the weaker conclusion that $P_{h_{1}}+P_{h_{2}}\equiv P_{h_{3}}+P_{h_{4}} \mod J^{M}_{h_{1},h_{2},h_{3}}$ (see Appendix \ref{3:s:AppA8} for the definition) for many $h_{1},\dots,h_{4}$ with $h_{1}+h_{2}=h_{3}+h_{4}$. 
 To overcome this difficulty,    we need to make essential use of the additive combinatorial tools for $M$-ideals developed in the second part of the series \cite{SunB}.
 Using the linearization result for $M$-ideals (Theorem \ref{3:aadd}), we are able to express  $P_{h}  \mod J^{M}_{h}$ as an almost linear Freiman homomorphism.

 \textbf{Step 3: building a nilobject.} Using the information obtained in Step 2 on $\chi_{h}(n)$, we construct a nilobject and express $\chi_{h}(n)$ as $\chi(h,n)$ for some nilcharacter $\chi$ of multi-degree $(1,s)$. This is done in Sections \ref{3:s:b3}, \ref{3:s:b4} and \ref{3:s:b5}. The outline is similar to the one used in Sections 9-12 of \cite{GTZ12}. But our method also have many differences from \cite{GTZ12}. For example, in Section \ref{3:s:b3}, we need to use a more sophisticated sun-flower lemma (Lemma \ref{3:sf2}, based on the work in the second part of the series \cite{SunB}) compared with Lemma 10.10 of \cite{GTZ12}. In Section \ref{3:s:b4}, we need to use a new Ratner-type thoerem (Theorem \ref{3:rat}) based on the factorization theorem developed in  the first part of the series \cite{SunA}. In  Section \ref{3:s:b5}, in the construction of $\chi$, compared with the approach used in Section 12 of \cite{GTZ12}, we need to pay extra attention to ensure that $\chi$ is  $p$-periodic.

 \textbf{Step 4: a symmetric argument}.   
 Using the symmetric identity $\Delta_{h}\Delta_{h'}f(n)=\Delta_{h'}\Delta_{h}f(n)$, we extract further information from $\chi_{h}(n)=\chi(n,h)$ and show that $\chi_{h}(n)=\Theta(n+h)\Theta'(n)$ for some $(s+1)$-step nilsequences $\Theta$ and $\Theta'$. By some standard arguments, this implies that $f$ correlates to $\Theta$  and thus completes the proof of $\SGI(s+1)$.
This step is done in Sections  \ref{3:s:b7} (for the case $s=2$) and  \ref{3:s:b72} (for the case $s\geq 3$), which is in analogous to  Section 13 of \cite{GTZ12}. 

However, unlike in \cite{GTZ12}, the symmetric argument in our setting causes a loss of information. Therefore, in Sections  \ref{3:s:b7} and  \ref{3:s:b72}, we need to have an estimate finer than \cite{GTZ12} in order to keep track of the information for nilcharacters and to minimize the loss of information (see Remark \ref{3:sbr1}). Also as a result of the loss of information, we need to use new methods to improve the results of a significant part of Section 13 of \cite{GTZ12} in order to carry out the symmetric argument. See the discussion at the beginning of Section \ref{3:s:ss02} for more details.

\textbf{Organization of the paper.}
We provide the background material for nilsequences in Section \ref{3:s:bmn}.  Sections \ref{3:s:b0}--\ref{3:s:b72} are devoted to the proof the spherical Gowers inverse theorem, whose roles are explained in the above mentioned outline of the proof. 

In order to prevent the readers from being distracted by technical details, we put the proofs of some results in the appendices. In Appendix \ref{3:s:AppA}, we collect results from the previous two parts of the series \cite{SunA,SunB} which are used in this paper. In Appendix \ref{3:s:AppB}, we prove some approximation properties for nilsequences. In Appendix \ref{3:s:AppC}, we prove some properties on an equivalence property for nilsequences  defined in Definition \ref{3:deneq}.  In Appendix \ref{3:s:AppD}, we prove the converse of the spherical Gowers inverse theorem.

\textbf{Acknowledgements.} We thank James Leng for bringing the manuscript \cite{GTZ24} to our attention, and for pointing out a mistake in an earlier version of this paper on the approximation property for polynomial sequences with degree-rank filtrations.

\subsection{Definitions and notations}\label{3:s:defn}

  \begin{conv}
  Throughout this paper, we use
   $\tau\colon\F_{p}\to \{0,\dots,p-1\}$ to denote the natural bijective embedding, and use $\iota$ to denote the map from $\Z$ (or $\Z_{K}$ for any $K$ divisible by $p$) to $\F_{p}$ given by $\iota(n):=\tau^{-1}(n \mod p\Z)$.
	We also use 
	$\tau$ to denote the map from $\F_{p}^{k}$ to $\Z^{k}$ (or $\Z_{K}^{k}$) given by $\tau(x_{1},\dots,x_{k}):=(\tau(x_{1}),\dots,\tau(x_{k}))$,
	and
	$\iota$ to denote the map from $\Z^{k}$ (or $\Z_{K}^{k}$) to $\F_{p}^{k}$ given by $\iota(x_{1},\dots,x_{k}):=(\iota(x_{1}),\dots,$ $\iota(x_{k}))$. 
	When there is no confusion, we will not state the domain and range of $\tau$ and $\iota$ explicitly.
	
	We may also extend the domain of $\iota$ to all the rational numbers of the form $x/y$ with $(x,y)=1, x\in\Z, y\in\Z\backslash p\Z$ by setting $\iota(x/y):=\iota(xy^{\ast})$, where $y^{\ast}$ is any integer with $yy^{\ast}\equiv 1 \mod p\Z$.
  \end{conv}

Below are the notations we use in this paper:

\begin{itemize}
	\item Let $\N,\N_{+},\Z,\Q,\R,\R+,\C$ denote the set of non-negative integers, positive integers, integers, rational numbers, real numbers, positive real numbers, and complex numbers, respectively. Denote $\T:=\R/\Z$. Let $\F_{p}$ denote the finite field with $p$ elements. Let $\Z_{K}$ denote the cyclic group with $K$ elements.
		\item Throughout this paper, $d$ is a fixed positive integer and $p$ is a prime number.
		\item Throughout this paper, unless otherwise stated, all vectors are assumed to be horizontal vectors.
		\item Let $\mathcal{C}$ be a collection of parameters and $A,B,c\in\R$. We write $A\gg_{\mathcal{C}} B$ if $\vert  A\vert\geq K\vert B\vert$ and $A=O_{\mathcal{C}}(B)$ if $\vert A\vert\leq K\vert B\vert$ for some $K>0$ depending only on the parameters in $\mathcal{C}$.
%		We write $A=o_{c\to\infty;\mathcal{C}}(B)$ if for any $\e>0$, there exists $K>0$ depending only on $\e$ and  the parameters in $\mathcal{C}$ such that $\vert A\vert\leq \e\vert B\vert$ for all $c\geq K$. 
In the above definitions, we allow the set $\mathcal{C}$ to be empty. In this case $K$ will be a universal constant.
\item Let $[N]$ denote the set $\{0,\dots,N-1\}$.
	\item
	For $i=(i_{1},\dots,i_{k})\in\Z^{k}$, denote   $\vert i\vert:=\vert i_{1}\vert+\dots+\vert i_{k}\vert$.
	 For $n=(n_{1},\dots,n_{k})\in\Z^{k}$ and $i=(i_{1},\dots,i_{k})\in\N^{k}$, denote $n^{i}:=n_{1}^{i_{1}}\dots n_{k}^{i_{k}}$ %$\vert i\vert:=i_{1}+\dots+i_{k}$, 
 and $i!:=i_{1}!\dots i_{k}!$. 
For $n=(n_{1},\dots,n_{k})\in\N^{k}$ and $i=(i_{1},\dots,i_{k})\in\N^{k}$, denote $\binom{n}{i}:=\binom{n_{1}}{i_{1}}\dots \binom{n_{k}}{i_{k}}$.	
	\item We say that $\mathcal{F}\colon \R_{+}\to\R_{+}$ is a \emph{growth function} if $\mathcal{F}$ is strictly increasing and $\mathcal{F}(n)\geq n$ for all $n\in \R_{+}$.
%	\item For any set $F$, let $F[x_{1},\dots,x_{k}]$ denote the set of all polynomials in the variables $x_{1},\dots,x_{k}$ whose coefficients are from $F$. 
\item  For any subset $R$ of $\R$, let $\st_{R,d}(s)$ denote the set of all the homogeneous polynomial in $\R[x_{1},\dots,x_{d}]$ of degree $s$ with coefficients in $R$.
	\item For $D\in\N_{+}$, let $\mathbb{S}^{D}$ denote the set of $(z_{1},\dots,z_{D})\in\C^{D}$ with $\vert z_{1}\vert^{2}+\dots+\vert z_{D}\vert^{2}=1$.
	 \item For $D,D'\in\N_{+}$, $v=(v_{1},\dots,v_{D})\in\C^{D}$ and $w=(w_{1},\dots,w_{D'})\in\C^{D'}$. Let $v\otimes w:=(v_{1}w_{1},\dots,v_{D}w_{D'})\in\C^{DD'}$. We remark that if $v\in\mathbb{S}^{D}$ and $w\in\mathbb{S}^{D'}$,  then $v\otimes w\in\mathbb{S}^{DD'}$. Similarly, if $X$ is a set and $f\colon X\to\mathbb{C}^{D}$, $g\colon X\to\mathbb{C}^{D'}$ are functions, then we use $f\otimes g\colon X\to\mathbb{C}^{DD'}$ to denote the function $(f\otimes g)(x):=f(x)\otimes g(x)$.
%	 \item For $N\in\N$, let $[N]:=\{0,\dots,N-1\}$.
    \item For $x\in\R$, let $\lfloor x\rfloor$ denote the largest integer which is not larger than $x$, and $\lceil x\rceil$ denote the smallest integer which is not smaller than $x$. Let $\{x\}:=x-\lfloor x\rfloor$.
		\item For $D\in\N_{+}$, a set $X$ and a function $f\colon X\to\mathbb{C}^{D}$, let $\overline{f}\colon X\to\mathbb{C}^{D}$ denote the function obtained by taking the conjugate of each coordinates of $f$.
	\item Let $D\in\N_{+}$, $X$ be a finite set and $f\colon X\to\C^{D}$ be a function. Denote $\E_{x\in X}f(x):=\frac{1}{\vert X\vert}\sum_{x\in X}f(x)$, the average of $f$ on $X$.
	\item We say that a set $\Omega\subseteq \Z^{k}$ is \emph{$Q$-periodic} if $\Omega=\Omega+Q\Z^{k}$. 
	\item If $\Omega\subseteq \Z^{k}$ is  a $Q$-periodic set and $f\colon \Z^{k}\to\C$ is such that $f(n)=f(n+Qm)$ for all $m,n\in\Z^{k}$ with $n,n+Qm\in\Omega$. Then we denote $\E_{x\in\Omega}f(x):=\E_{x\in\Omega\cap [Q]^{k}}f(x)$.
	\item For $F=\Z^{k}$ or $\F_{p}^{k}$, and $x=(x_{1},\dots,x_{k}), y=(y_{1},\dots,y_{k})\in F$, let $x\cdot y\in \Z$ or $\F_{p}$ denote the dot product given by
	$x\cdot y:=x_{1}y_{1}+\dots+x_{k}y_{k}.$
%	\item Let  $\tau\colon\F_{p}\to \{0,\dots,p-1\}$ denote the natural bijective embedding. Let $\iota\colon \Z\to\F_{p}$ denote the map given by $\iota(n):=\tau^{-1}(n \mod p\Z)$.
%	We also use 
%	$\tau$ to denote the map from $\F_{p}^{k}$ to $\Z^{k}$ given by $\tau(x_{1},\dots,x_{k}):=(\tau(x_{1}),\dots,\tau(x_{k}))$,
%	and
%	$\iota$ to denote the map from $\Z^{k}$ to $\F_{p}^{k}$ given by $\iota(x_{1},\dots,x_{k}):=(\iota(x_{1}),\dots,\iota(x_{k}))$. 
		\item Let $\exp\colon \R\to\C$ denote the function $\exp(x):=e^{2\pi i x}$. 
	\item If $G$ is a connected, simply connected Lie group, then we use  $\log G$ to denote its Lie algebra. Let $\exp\colon \log G\to G$ be the exponential map, and $\log\colon G\to \log G$ be the logarithm map. For $t\in\R$ and $g\in G$, denote $g^{t}:=\exp(t\log g)$.
	\item If $f\colon H\to G$ is a function from an abelian group $H=(H,+)$ to some group $(G,\cdot)$, denote $\Delta_{h} f(n):=f(n+h)\cdot f(n)^{-1}$ for all $n,h\in H$.
	\item If $f\colon H\to \C^{D}$ is a function on an abelian group $H=(H,+)$, denote $\Delta_{h} f(n):=f(n+h)\otimes\overline{f}(n)$ for all $n,h\in H$.
%	\item If $f\colon \Z^{k}\to \C$ or $f\colon \F_{p}^{k}\to\F_{p}$ is a polynomial, then for all $1\leq i\leq k$, let $\partial_{i}f$ denote the formal partial derivative of $f$ in the $i$-th variable.
	\item We write affine subspaces of $\V$ as $V+c$, where $V$ is a subspace of $\V$ passing through $\bold{0}$, and $c\in\V$.
	\item There is a natural correspondence between polynomials taking values in $\F_{p}$ and polynomials  taking values in $\Z/p$.   Let $F\in\poly(\V\to\F_{p}^{d'})$ and $f\in \poly(\Z^{d}\to (\Z/p)^{d'})$ be polynomials of degree at most $s$ for some $s<p$. If  $F=\iota\circ pf\circ\tau$, then we say that $F$ is \emph{induced} by $f$ and $f$ is a \emph{lifting} of $F$.\footnote{As is explained in \cite{SunA},   $\iota\circ pf\circ\tau$ is well defined.}
     We say that $f$ is a  \emph{regular lifting} of $F$ if in addition $f$ has the same degree as $F$ and $f$ has $\{0,\frac{1}{p},\dots,\frac{p-1}{p}\}$-coefficients. 
     \item Let $(X,d_{X})$ be a metric space.
The \emph{Lipschitz norm} of a function $F\colon X\to\C^{D}$ is defined as 
$$\Vert F\Vert_{\Lip(X)}:=\sup_{x\in X}\vert F(x)\vert+\sup_{x,y\in X, x\neq y}\frac{\vert F(x)-F(y)\vert}{d_{X}(x,y)}.$$
For a vector valued function $F=(F_{1},\dots,F_{D})\colon X\to\C^{D}$, we denote $\Vert F\Vert_{\Lip(X)}:=\max_{1\leq i\leq D}\Vert F_{i}\Vert_{\Lip(X)}$.
When there is no confusion, we write $\Vert F\Vert_{\Lip}=\Vert F\Vert_{\Lip(X)}$ for short. 
For a subset $A$  of $\C^{D}$, 
we say that $F\colon X\to A$ is a \emph{Lipschitz function} if $\Vert F\Vert_{\Lip(X)}<\infty$.
Let $\Lip(X\to A)$ denote the set of all Lipschitz functions $F\colon X\to A$.
 \end{itemize}	

 Let $D,D'\in\N_{+}$ and $C>0$. 
Here are some basic notions of complexities:

	\begin{itemize}
		\item \textbf{Real and complex numbers:} a  number $r\in\R$ is of \emph{complexity} at most $C$ if $r=a/b$ for some $a,b\in\Z$ with $-C\leq a,b\leq C$. If $r\notin \Q$, then we say that the complexity of $r$ is infinity. A complex number is of \emph{complexity} at most $C$ if both its real and imaginary parts are of complexity at most $C$.
	   \item \textbf{Vectors and matrices:} a vector or matrix is of \emph{complexity} at most $C$ if all of its entries are of  complexity at most $C$.
	   \item \textbf{Subspaces:} a subspace of $\R^{D}$ is of \emph{complexity} at most $C$ if it is the null space of a matrix of complexity at most $C$.
	  \item \textbf{Linear transformations:} let $L\colon\C^{D}\to\C^{D'}$ be a linear transformation. Then $L$ is associated with an $D\times D'$ matrix $A$ in $\C$. We say that $L$ is of \emph{complexity} at most $C$ if  $A$ is of complexity at most $C$.
	  \item \textbf{Lipschitz function:} the \emph{complexity} of a Lipschitz function is defined to be its Lipschitz norm.
	\end{itemize}

We also need to recall the notations regarding quadratic forms defined in \cite{SunA}.	
	
\begin{defn}	
 We say that a function $M\colon\V\to\F_{p}$ is a \emph{quadratic form} if 
	$$M(n)=(nA)\cdot n+n\cdot u+v$$
	for some $d\times d$ symmetric matrix $A$ in $\F_{p}$, some $u\in \F_{p}^{d}$ and $v\in \F_{p}$.
We say that $A$ is the matrix \emph{associated to} $M$.
We say that $M$ is \emph{pure} if $u=\bold{0}$.
We say that $M$ is \emph{homogeneous} if $u=\bold{0}$ and $v=0$. We say that $M$ is \emph{non-degenerate} if $M$ is of rank $d$, or equivalently, $\det(A)\neq 0$.
\end{defn}

We use $\rank(M):=\rank(A)$ to denote the \emph{rank} of $M$.
Let $V+c$ be an affine subspace of $\V$ of dimension $r$. There exists a (not necessarily unique) bijective linear transformation $\phi\colon \F_{p}^{r}\to V$.
 We define the \emph{rank} $\rank(M\vert_{V+c})$ of $M$  restricted to $V+c$ as the rank of the quadratic form $M(\phi(\cdot)+c)$. It was proved in \cite{SunA} that   $\rank(M\vert_{V+c})$ is independent of the choice of $\phi$.

We also need to use the following notions.

\begin{itemize}
\item For a polynomial $P\in\poly(\F_{p}^{k}\to\F_{p})$, let $V(P)$ denote the set of $n\in\F_{p}^{k}$ such that $P(n)=0$.
\item Let $r\in\N_{+}$, $h_{1},\dots,h_{r}\in \V$ and $M\colon\V\to\F_{p}$ be a quadratic form. Denote $$V(M)^{h_{1},\dots,h_{r}}:=\cap_{i=1}^{r}(V(M(\cdot+h_{i}))\cap V(M).$$
\item Let $\Omega$ be a subset of $\V$ and $s\in\N$.  Let 
$\Gow_{s}(\Omega)$ denote the set of $(n,h_{1},\dots,h_{s})\in(\V)^{s+1}$ such that $n+\e_{1}h_{1}+\dots+\e_{s}h_{s}\in\Omega$ for all $(\e_{1},\dots,\e_{s})\in\{0,1\}^{s}$.
Here we allow $s$ to be 0, in which case $\Gow_{0}(\Omega)=\Omega$.
We say that $\Gow_{s}(\Omega)$ is the \emph{$s$-th Gowers set} of $\Omega$.
\end{itemize}

Quadratic forms can also be defined in the $\Z/p$-setting.

\begin{defn}	
   We say that a function $M\colon\Z^{d}\to\Z/p$ is a \emph{quadratic form} if 
	$$M(n)=\frac{1}{p}((nA)\cdot n+n\cdot u+v)$$
	for some $d\times d$ symmetric matrix $A$ in $\Z$, some $u\in \Z^{d}$ and $v\in \Z$.
We say that $A$ is the matrix \emph{associated to} $M$. 
 \end{defn}

By Lemma %\ref{1:lifting}
A.1 of \cite{SunA},
any quadratic form $\tilde{M}\colon\Z^{d}\to\Z/p$ associated with the matrix $\tilde{A}$ induces a quadratic form $M:=\iota\circ p\tilde{M}\circ\tau\colon\F_{p}^{d}\to\F_{p}$ associated with the matrix $\iota(\tilde{A})$. Conversely, any quadratic form $M\colon\F_{p}^{d}\to\F_{p}$ associated with the matrix $A$ admits a regular lifting $\tilde{M}\colon\Z^{d}\to\Z/p$, which is a quadratic form associated with the matrix $\tau(A)$.

For a quadratic form $\tilde{M}\colon\Z^{d}\to\Z/p$, we say that $\tilde{M}$ is \emph{pure/homogeneous/$p$-non-degenerate} if the  quadratic form $M:=\iota\circ p\tilde{M}\circ\tau$ induced by $\tilde{M}$ is pure/homogeneous/non-degenerate. 
The \emph{$p$-rank} of $\tilde{M}$, denoted by $\rank_{p}(\tilde{M})$, is defined to be the rank of $M$.

%We say that $h_{1},\dots,h_{k}\in\Z^{d}$ are \emph{$p$-linearly independent} if for all $c_{1},\dots,c_{k}\in\Z/p$, $c_{1}h_{1}+\dots+c_{k}h_{k}\in\Z$
% implies that $c_{1},\dots,c_{k}\in\Z$, or equivalently, if $\iota(h_{1}),\dots,\iota(h_{k})$ are linearly independent.

 We also need to use the following notions.

\begin{itemize}
\item For a polynomial $P\in\poly(\Z^{k}\to\R)$, let $V_{p}(P)$ denote the set of $n\in \Z^{k}$ such that $P(n+pm)\in \Z$ for all $m\in\Z^{k}$.
\item For $\Omega\subseteq\Z^{d}$ and $s\in\N$, let $\Gow_{p,s}(\Omega)$ denote the set of $(n,h_{1},\dots,h_{s})\in(\Z^{d})^{s+1}$ such that $n+\e_{1}h_{1}+\dots+\e_{s}h_{s}\in\Omega+p\Z^{d}$ for all $\e_{1},\dots,\e_{s}\in\{0,1\}$. 
We say that $\Gow_{p,s}(\Omega)$ is the \emph{$s$-th $p$-Gowers set} of $\Omega$.
\end{itemize}

For all other definitions and notations which are used but not mentioned in this paper, we refer the readers to Appendix \ref{3:s:AppA}  for details.

\section{Background material for nilsequence}\label{3:s:bmn}

We start with some basic definitions on nilmanifolds.
Many notations we use are similar to the ones used in Section 6 of \cite{GTZ12}. Unlike in the first part of the series \cite{SunA}, in this paper, instead of just working with $\N$-filtered nilmanifold, we also need to work with multi-degree and degree-rank filtrations. As a result, there are overlappings between Sections \ref{3:s:nfn}-\ref{3:s:nnh1} of this paper and Section %\ref{1:s:pp3}
3 of \cite{SunA} since we need to  extend definitions and results in \cite{SunA} to more general filtrations.
On the other hand, the materials in Sections \ref{3:s:nn2}-\ref{3:s:nn} are new and did not appear in \cite{SunA}.

\subsection{Nilmanifolds, filtrations and polynomial sequences}\label{3:s:nfn}

\begin{defn}[Ordering]
	An \emph{ordering} $I=(I,\prec,+,0)$ is a set $I$ equipped with a partial ordering $\prec$, a binary operation $+\colon I\times I\to I$, and a distinguished element $0\in I$ with the following properties:
	\begin{enumerate}[(i)]
		\item the operation $+$ is commutative and associative, and has 0 as the identity element;
		\item the partial ordering $\prec$ has 0 as the minimal element;
		\item if $i,j\prec I$ are such that $i\prec j$, then $i+k\prec j+k$ for all $k\in I$;
		\item for every $j\in I$, the initial segment $\{i\in I\colon i\prec d\}$ is finite.
	\end{enumerate}	
	A \emph{finite downset} in $I$ is a finite subset $J$ of $I$ with the property that $j\in J$ whenever $j\in I$ and $j\prec i$ for some $i\in J$.
\end{defn}	

In this paper, we will use the following special orderings:
	\begin{enumerate}[(i)]
		\item the \emph{degree ordering}, where $I=\N$ with the usual ordering, addition and zero element;
		\item the \emph{multi-degree ordering}, where $I=\N^{k}$ with the usual addition and zero element, and with the product ordering, i.e. $(i'_{1},\dots,i'_{k})\preceq (i_{1},\dots,i_{k})$ if $i_{j}'\leq i_{j}$ for all $1\leq j\leq k$;
		\item the \emph{degree-rank ordering}, where $I$ is the sector $\DR:=\{[s,r]\in\N^{2}\colon 0\leq r\leq s\}$ with the usual addition and zero element, and the lexicographical ordering, i.e. $[s',r']\prec [s,r]$ if $s'<s$ or if $s'=s$ and $r'<r$.\footnote{We write vectors in $\DR$ as $[s,r]$ instead of $(s,r)$ in order to distinguish them from the $\N^{2}$ multi-degree ordering.} 
	\end{enumerate}	
It is not hard to verify that all these three concepts are indeed orderings. We remark that the degree and the degree-rank orderings are \emph{total orderings}, meaning that for $i,j\in I$, either $i\preceq j$ or $j\preceq i$. However, the multi-degree ordering is not a total ordering.

Let $I$ be the degree, multi-degree or degree-rank ordering. We say that the \emph{dimension} of $I$ (denoted as $\dim(I)$) is $1$ if $I=\DR$, and is $k$ if $I=\N^{k}$.

Let $G$ be a group and $g,h\in G$. Denote $[g,h]:=g^{-1}h^{-1}gh$. For subgroups $H,H'$ of $G$, let $[H,H']$ denote the group generated by $[h,h']$ for all $h\in H$ and $h'\in H'$.

\begin{defn}[Filtered group]
	Let $I$ be an ordering and $G$ be a group. An \emph{$I$-pre-filtration} on $G$ is a collection $G_{I}=(G_{i})_{i\in I}$ of subgroups of $G$ indexed by $I$   such that  the following holds:
	\begin{enumerate}[(i)]
		\item for all $i,j\in I$ with $i\prec j$, we have that $G_{i}\supseteq G_{j}$;
		\item  for all $i,j\in I$, we have $[G_{i},G_{j}]\subseteq G_{i+j}$.
	\end{enumerate}	
	
	In addition, we say that  $G_{I}=(G_{i})_{i\in I}$ is an \emph{$I$-filtration} on $G$ if one of the followings hold:
	\begin{enumerate}[(i)]
		\item $I=\N$ is the degree ordering and $G=G_{0}=G_{1}$;
		\item  $I=\N^{k}$ is the multi-degree filtration and $G=G_{\bold{0}}$ is equal to the group generated by $G_{e_{1}},\dots,G_{e_{k}}$;
		\item $I=\N^{2}$ is the degree-rank filtration and $G=G_{[0,0]}$, $G_{[i,0]}=G_{[i,1]}$ for all $i\geq 1$.
	\end{enumerate}
	
	For $s\in I$, we say that $G$ is an \emph{($I$-filtered) (pre-)nilpotent group} of \emph{degree} at most $s$ (or of \emph{degree} $\preceq s$) with respect to some $I$-(pre-)filtration  $(G_{i})_{i\in I}$ if $G_{i}$ is trivial whenever $i\not\preceq s$. For a downset $J$ of $I$, we say that $G$   is an \emph{($I$-filtered) (pre-)nilpotent group} of  \emph{degree} $\subseteq J$ with respect to some $I$-(pre-)filtration  $(G_{i})_{i\in I}$ if $G_{i}$ is trivial whenever $i\notin J$. 
	\end{defn}

In this paper, we will use the following pre-filtrations.
\begin{enumerate}[(i)]
	\item Let $(d_{1},\dots,d_{k})\in\N^{k}$. A \emph{(pre-)nilpotent Lie group of multi-degree $\leq (d_{1},\dots,d_{k})$} (or a \emph{nilpotent Lie group of degree $\leq d_{1}$}, if $k=1$) is a (pre-)nilpotent $I$-filtered Lie group of degree  $\leq (d_{1},\dots,d_{k})$, where $I=\N^{k}$ is the multi-degree ordering. If $J$ is a downset,  we define (pre-)nilpotent Lie group of multi-degree $\subseteq J$ in a similar way. %In order to make it into a filtration, we further require that $G_{0}=G$.
		\item Let $[s,r]\in\DR$, A \emph{(pre-)nilpotent Lie group of degree-rank $\leq [s,r]$} is a (pre-)nilpotent $\DR$-filtered Lie group of degree $\leq [s,r]$. %with the additional assumption that $G_{[0,0]}=G$ and $G_{[i,0]}=G_{[i,1]}$ for all $i\geq 1$.
\end{enumerate}	

    \begin{rem}
    In most part of the paper, we work with genial filtrations instead of pre-filtrations. However, similar to \cite{GTZ12,GTZ24}, there are occasions where the use of pre-filtrations is unavoidable and one needs to pay careful attentions to the distinctions between  filtrations and pre-filtrations.
\end{rem}

\begin{defn}[Nilmanifold]
Let $I$ be an ordering.
	Let $\Gamma$ be a discrete and cocompact subgroup of a connected, simply-connected nilpotent Lie group $G$ with (pre-)filtration $G_{I}=(G_{i})_{i\in I}$ such that $\Gamma_{i}:=\Gamma\cap G_{i}$ is a cocompact subgroup of $G_{i}$\footnote{In some papers, such $\Gamma_{i}$ is called a \emph{rational} subgroup of $G$.} for all $i\in I$.
	Then we say that $G/\Gamma$ is an \emph{($I$-filtered) (pre-)nilmanifold}, and we use $(G/\Gamma)_{I}$ to denote the collection $(G_{i}/\Gamma_{i})_{i\in I}$ (which is called the \emph{$I$-(pre-)filtration} of $G/\Gamma$). We say that $G/\Gamma$ has degree $\preceq s$ or $\subseteq J$ with respect to $(G/\Gamma)_{I}$ if $G$ has degree $\preceq s$ or $\subseteq J$ with respect to $G_{I}$.
\end{defn}

By modifying the top level of a pre-nilmanifold accordingly, one can   convert a pre-nilmanifold to a genial nilmanifold.

  \begin{defn}[Essential components of pre-nilmanifolds]
   Let $I$ be the degree, multi-degree, or degree-rank ordering with $\dim(I)\vert k$. Let $(G/\Gamma)_{I}$ be an $I$-filtered pre-nilmanifold. When $I$ is the degree-rank filtration, we further assume that $G_{[i,0]}=G_{[i,1]}$ for all $i\geq 1$.  We define a subgroup $G'$ of $G$ as follows:
   \begin{itemize}
       \item if $I$ is the degree ordering, then let $G':=G_{1}$;
       \item if $I$ is the multi-degree ordering, then let $G'$ denote the group generated by $G_{(1,0,\dots,0)},$ $\dots,G_{(0,\dots,0,1)}$;
       \item if $I$ is the degree ordering, then let $G':=G_{[1,0]}$.
   \end{itemize} 
   Define $G'_{\bold{0}}:=G'$ and $G'_{i}:=G_{i}$ for all $i\in I\backslash\{0\}$. It is not hard to see that $(G')_{I}$ is an $I$-filtered nilmanifold. Let $\Gamma':=G'\cap\Gamma$. Then $(G'/\Gamma')_{I}$ is an $I$-filtered nilmanifold having the same degree as $(G/\Gamma)_{I}$.  For convenience we call $(G'/\Gamma')_{I}$ the \emph{essential component} of $(G/\Gamma)_{I}$. 
   \end{defn}
   
Next we introduce some special types of nilmanifolds.

\begin{defn}[Sub-nilmanifold]\label{3:id1}
	Let $G/\Gamma$ be an $I$-filtered nilmanifold of degree $\subseteq J$ with filtration $G_{I}$ and $H$ be a rational subgroup of $G$. Then $H/(H\cap \Gamma)$ is also  an $I$-filtered nilmanifold of degree $\subseteq J$ with the filtration $H_{I}$ given by $H_{i}:=G_{i}\cap H$ for all $i\in I$ (see Example 6.14 of \cite{GTZ12}). We say that  $H/(H\cap \Gamma)$ is a \emph{sub-nilmanifold} of $G/\Gamma$, $H_{I}$ (or $(H/(H\cap \Gamma))_{I}$) is the filtration \emph{induced by} $G_{I}$ (or $(G/\Gamma)_{I}$).
\end{defn}	

\begin{defn}[Quotient nilmanifold]\label{3:id2}
	Let $G/\Gamma$ be an $I$-filtered nilmanifold of degree $\subseteq J$ with filtration $G_{I}$ and $H$ be a normal subgroup of $G$. Then $(G/H)/(\Gamma/(\Gamma\cap H))$ is also  an $I$-filtered nilmanifold of most $\subseteq J$ with the filtration $(G/H)_{I}$ given by $(G/H)_{i}:=G_{i}/(H\cap G_{i})$ for all $i\in I$. We say that  $(G/H)/(\Gamma/(\Gamma\cap H))$ is the \emph{quotient nilmanifold} of $G/\Gamma$ by $H$ and that $(G/H)_{I}$ is the filtration \emph{induced by} $G_{I}$.
\end{defn}	

\begin{defn}[Product nilmanifold]\label{3:id4}
	Let $G/\Gamma$ and  $G'/\Gamma'$ be  $I$-filtered nilmanifolds of degree $\subseteq J$ with filtration $G_{I}$ and $G'_{I}$. Then $G\times G'/\Gamma\times\Gamma'$ is also  an $I$-filtered nilmanifold of most $\subseteq J$ with the filtration $(G\times G'/\Gamma\times\Gamma')_{I}$ given by $(G\times G'/\Gamma\times\Gamma')_{i}:=G_{i}\times G'_{i}/\Gamma_{i}\times \Gamma'_{i}$ for all $i\in I$. We say that  $G\times G'/\Gamma\times\Gamma'$ is the \emph{product nilmanifold} of $G/\Gamma$ and $G'/\Gamma'$ and that $(G\times G'/\Gamma\times\Gamma')_{I}$ is the filtration \emph{induced by} $G_{I}$ and $G'_{I}$.
\end{defn}

The following definition is based on from Example 6.11 of \cite{GTZ12}, which explains the connections between different filtrations.

\begin{defn}[Inductions of filtrations]\label{3:id3}
      Let $s\in\N_{+}$ and $G$ be an $\N$-filtered nilmanifold of degree $\leq s$ with filtration $G_{\N}$. Then for any $k\in\N_{+}$, $G_{\N}$ \emph{induces} a  multi-degree filtration $G'_{\N^{k}}$ of degree $\subseteq \{m\in\N^{k}\colon \vert m\vert\leq s\}$ given by $G'_{m}:=G_{\vert m\vert}$ for all $m\in\N^{k}$. Conversely, let $k\in\N_{+}, J\subseteq \N^{k}$, and $G$ be an $\N^{k}$-filtered nilmanifold of degree $\leq J$ with filtration $G_{\N^{k}}$. Then $G_{\N^{k}}$ \emph{induces} a  degree filtration $G'_{\N}$ of degree $\max\{\vert m\vert\colon m\in\N^{k}\}$ given by $G'_{i}:=\bigvee_{m\in\N^{k},\vert m\vert=i}G_{m}$ for all $i\in\N$, where $\vee_{a\in A}G_{a}$ is the group generated by $\cup_{a\in A}G_{a}$.
    
     Let $s\in\N_{+}$ and $G$ be an $\N$-filtered nilmanifold of degree $\leq s$ with filtration $G_{\N}$.
    Since $G=G_{0}$, $G_{\N}$ \emph{induces} a  degree-rank filtration $G'_{\DR}$ of degree $\leq [s,s]$ given by setting $G'_{[t,r]}$ to be the group generated by all the iterated commutators of $g_{i_{1}},\dots,g_{i_{m}}$ with $g_{i_{j}}\in G_{i_{j}}$ for $1\leq j\leq m$ such that either $i_{1}+\dots+i_{m}>t$ or $i_{1}+\dots+i_{m}=t$ and $m\geq \max\{r,1\}$.
     Conversely, let $s,r\in\N_{+}, r\leq s$ and $G$ be an $\DR$-filtered nilmanifold of degree $\leq [s,r]$ with filtration $G_{\DR}$. Then $G_{\DR}$ \emph{induces} a  degree  filtration $G'_{\N}$ of degree $\leq s$ given by setting $G'_{i}:=G_{[i,0]}$ for all $i\in\N$.
     
     A  multi-degree filtration $G_{\N^{k}}$ \emph{induces} a degree-rank filtration $G'_{\DR}$ by first induces a degree filtration $G''_{\N}$ and then induces from $G''_{\N}$ a degree-rank filtration $G'_{\DR}$ in the above mentioned senses. Similarly, a degree-rank filtration $G_{\DR}$   \emph{induces} a multi-degree filtration $G'_{\N^{k}}$ by first induces a degree filtration $G''_{\N}$ and then induces from $G''_{\N}$ a multi-degree filtration $G'_{\N^{k}}$ in the above mentioned senses. 
     
     Let $I,I'$ be the degree, multi-degree or degree-rank filtrations. We say that the $I$-filtration of a nilmanifold $(G/\Gamma)_{I}$ \emph{induces} the $I'$-filtration of a nilmanifold $(G'/\Gamma')_{I'}$ if $G_{I}$ induces $G'_{I'}$.
\end{defn}

We remark that although we used the same terminology ``induce" in Definitions \ref{3:id1}, \ref{3:id2}, \ref{3:id4} 
and \ref{3:id3}, this will not cause any confusion in the paper as the meaning of ``induce" will be clear from the context.

\begin{defn}[Filtered homomorphism]	
	An \emph{($I$-filtered) homomorphism} $\phi\colon G/\Gamma\to G'/\Gamma'$ between two $I$-filtered nilmanifolds is a group homomorphism $\phi\colon G\to G'$  which maps $\Gamma$ to $\Gamma'$ and maps $G_{i}$ to $G'_{i}$ for all $i\in I$.
\end{defn}

Every nilmanifold has an explicit algebraic description by using the Mal'cev basis:

\begin{defn} [Mal'cev basis]\label{3:Mal}
	Let $s\in\N_{+}$, $G/\Gamma$ be a  nilmanifold  of step at most $s$ with the $\N$-filtration  $(G_{i})_{i\in\N}$. Let $\dim(G)=m$ and $\dim(G_{i})=m_{i}$ for all $0\leq i\leq s$. A basis $\mathcal{X}:=\{X_{1},\dots,X_{m}\}$ for the Lie algebra $\log G$ of $G$ (over $\mathbb{R}$) is a \emph{Mal'cev basis} for $G/\Gamma$ adapted to the filtration $G_{\N}$ if
	\begin{itemize}
		\item for all $0\leq j\leq m-1$, $\log H_{j}:=\text{Span}_{\mathbb{R}}\{\xi_{j+1},\dots,\xi_{m}\}$ is a Lie algebra ideal of $\log G$ and so $H_{j}:=\exp(\log H_{j})$  is a normal Lie subgroup of $G;$
		\item $G_{i}=H_{m-m_{i}}$ for all $0\leq i\leq s$;
		\item the map $\psi^{-1}\colon \mathbb{R}^{m}\to G$ given by
		\begin{equation}\nonumber
		\psi^{-1}(t_{1},\dots,t_{m})=\exp(t_{1}X_{1})\dots\exp(t_{m}X_{m})
		\end{equation}	
		is a bijection;
		\item $\Gamma=\psi^{-1}(\Z^{m})$.
	\end{itemize}	
	We call $\psi$ the  \emph{Mal'cev coordinate map} with respect to the Mal'cev basis $\mathcal{X}$.
	If $g=\psi^{-1}(t_{1},\dots,t_{m})$, we say that $(t_{1},\dots,t_{m})$ are the \emph{Mal'cev coordinates} of $g$ with respect to $\mathcal{X}$. 
	
	We say that the Mal'cev basis $\mathcal{X}$  is \emph{$C$-rational} (or of \emph{complexity} at most $C$)  if all the structure constants $c_{i,j,k}$ in the relations
	   	$$[X_{i},X_{j}]=\sum_{k}c_{i,j,k}X_{k}$$
	   	are rational with complexity at most $C$.
\end{defn}

For any $h\in G$, there is a unique way to write $h$ as
  $h=\{h\}[h]$
such that $\psi(\{h\})\in [0,1)^{m}$ and $[h]\in\Gamma$. We adopt this notation throughout this paper.

It is known that for every $\N$-filtration $G_{\N}$ which is rational for $\Gamma$, there exists a Mal'cev basis adapted to it. See for example the discussion on pages 11--12 of \cite{GT12b}.

If $G/\Gamma$ is a nilmanifold with a multi-degree filtration $(G_{i})_{i\in\N^{k}}$. Then there is a  Mal'cev basis $\mathcal{X}$ of $G/\Gamma$ adapted to the  degree filtration $(G'_{i})_{i\in\N}$, where $G'_{i}$ is the group generated by $G_{(i_{1},\dots,i_{k})}, i_{1}+\dots+i_{k}=i$ for all $i\in\N$. We say that $\mathcal{X}$ is the  \emph{Mal'cev basis} of $G/\Gamma$ adapted to $(G_{i})_{i\in\N^{k}}$.

If $G/\Gamma$ is a nilmanifold with a degree-rank filtration $(G_{[s,r]})_{[s,r]\in\DR}$. Then there is a  Mal'cev basis $\mathcal{X}$ of $G/\Gamma$ adapted to the  degree filtration $(G'_{i})_{i\in\N}$, where $G'_{i}=G_{[i,0]}$ for all $i\in\N$. We say that $\mathcal{X}$ is the  \emph{Mal'cev basis} of $G/\Gamma$ adapted to $(G_{[s,r]})_{[s,r]\in\DR}$.

We use the following quantities to describe the complexities of the objected defined above.

 \begin{defn}[Notions of complexities for nilmanifolds]
Let $G/\Gamma$ be a nilmanifold with filtration $G_{I}$ and a Mal'cev basis $\mathcal{X}=\{X_{1},\dots,X_{D}\}$ adapted to it. 
We say that  $G/\Gamma$ is of \emph{complexity} at most $C$ if the  Mal'cev basis $\mathcal{X}$ is $C$-rational and $\dim(G)\leq C$. 

 An element $g\in G$ is of \emph{complexity} at most $C$ (with respect to the Mal'cev coordinate map $\psi\colon G/\Gamma\to\R^{m}$) if $\psi(g)\in [-C,C]^{m}$.

Let $G'/\Gamma'$ be a nilmanifold  endowed with the Mal'cev basis  $\mathcal{X}'=\{X'_{1},\dots,X'_{D'}\}$ respectively. Let $\phi\colon G/\Gamma\to G'/\Gamma'$ be a filtered homomorphism, we say that $\phi$ is of  \emph{complexity} at most $C$ if the map $X_{i}\to\sum_{j}a_{i,j}X'_{j}$ induced by $\phi$ is such that all $a_{i,j}$ are of complexity at most $C$.

 Let $G'\subseteq G$  be a closed connected subgroup. We say that  $G'$ is \emph{$C$-rational} (or of \emph{complexity} at most $C$) relative to $\mathcal{X}$ if the Lie algebra $\log G$ has a basis consisting of linear combinations $\sum_{i}a_{i}X_{i}$ such that $a_{i}$ are rational numbers of complexity at most $C$.
\end{defn}

\begin{conv}
 In the rest of the paper, all nilmanifolds are assumed to have a fixed filtration, Mal'cev basis and a smooth Riemannian metric induced by the Mal'cev basis. Therefore, we will simply say that a nilmanifold, Lipschitz function, sub-nilmanifold etc. is of complexity $C$ without mentioning the reference filtration and Mal'cev basis.
\end{conv}

\begin{defn}[Polynomial sequences]
	Let $d, k\in\N_{+}$ and $G$ be a connected simply-connected (pre-)nilpotent Lie group. 
	\begin{enumerate}[(i)]
		\item Let  $H=\Z^{d}$ or $\R^{d}$\footnote{In this paper, we mainly consider polynomials sequences from $\Z^{d}$ to $G$. The only place we use polynomials sequences from $\R^{d}$ to $G$ is Theorem \ref{3:papprox}.} and $(G_{i})_{i\in\N}$ be a degree (pre-)filtration of $G$. A map $g\colon H\to G$ is a \emph{($\N$-filtered) $d$-integral polynomial sequence} if
		$$\Delta_{h_{m}}\dots \Delta_{h_{1}} g(n)\in G_{m}$$
		 for all $m\in\N$ and $n,h_{1},\dots,h_{m}\in H$.\footnote{Recall that $\Delta_{h}g(n):=g(n+h)g(n)^{-1}$
 for all $n, h\in H$.}
		\item Let $H=(\Z^{d})^{k}$ or $(\R^{d})^{k}$ and $(G_{i})_{i\in\N^{k}}$ be a multi-degree (pre-)filtration of $G$.
		Let 	$e_{i,j}, 1\leq i\leq k, 1 \leq j\leq d$ be the standard unit vectors in $(\Z^{d})^{k}$.
 		 A map $g\colon H\to G$ is a \emph{($\N^{k}$-filtered) $d$-integral polynomial sequence} if
		$$\Delta_{e_{i_{m},j_{m}}}\dots \Delta_{e_{i_{1},j_{1}}} g(n)\in G_{t}$$
		for all $m\in\N$, $1\leq i_{1},\dots,i_{m}\leq k, 1\leq j_{1},\dots,j_{m}\leq d$, and $n\in H$, where $t=(t_{1},\dots,t_{k})\in\N^{k}$ is such that $t_{\ell}$ equals to the number of $i_{1},\dots,i_{m}$ which equals to $\ell$ for all $1\leq \ell\leq k$.
		\item Let $H=\Z^{d}$ or $\R^{d}$ and $(G_{[s,r]})_{[s,r]\in\DR}$ be a degree-rank (pre-)filtration of $G$. A map $g\colon H\to G$ is a \emph{($\DR$-filtered) $d$-integral polynomial sequence} if
		$$\Delta_{h_{m}}\dots \Delta_{h_{1}} g(n)\in G_{[m,0]}$$
		for all $m\in\N$ and $n,h_{1},\dots,h_{m}\in H$.
	\end{enumerate}	
	
    The sets of all $\N$-, $\N^{k}$-, and $\DR$-filtered $d$-integral polynomial sequences are denoted by $\poly(H\to G_{\N})$, $\poly(H^{k}\to G_{\N^{k}})$ and $\poly(H\to G_{\DR})$, respectively.\footnote{Polynomial sequences can be defined for more general groups $H$ and for more general filtrations (see for example \cite{GTZ12}). But we do not need them in this paper.}
  \end{defn}

\begin{rem}
	Our definition of polynomial sequences coincides with the one in \cite{GTZ12} when $d=1$. When $d>1$, the set $\poly(\Z^{d}\to G_{\N})$
	defined in this paper is the set $\poly(\Z^{d}\to G'_{\N^{d}})$ defined in \cite{GTZ12}, where $G'_{(i_{1},\dots,i_{d})}=G_{i_{1}+\dots+i_{d}}$ for all $i_{1},\dots,i_{d}\in\N$, and the set $\poly((\Z^{d})^{k}\to G_{\N})$
	defined in this paper is the set $\poly((\Z^{d})^{k}\to G'_{\N^{dk}})$ defined in \cite{GTZ12}, where $G'_{(i_{1,1},\dots,i_{k,d})}=G_{(i_{1,1}+\dots+i_{1,d},\dots,i_{k,1}+\dots+i_{k,d})}$ for all $i_{1,1},\dots,i_{k,d}\in\N$. The reason we adopt a different notation from  \cite{GTZ12} is that the new notation is more convenient to use since we frequently view vectors in $\Z^{dk}$ as $k$-tuples of elements in $\Z^{d}$ in this paper.	
\end{rem}

 \begin{conv}\label{3:ckk}
 	In the rest of the paper, we will omit the phrase ``$d$-integral" when mentioning a polynomial sequence, since the quantity $d$ will always be clear from the context. For example,
 	let $I$ be the degree, multi-degree or degree-rank ordering, $k\in\N_{+}$ with $\dim(I)\vert k$, and $G/\Gamma$ be an $I$-filtered nilmanifold. Denote $d=k/\dim(I)$. When  writing $\poly(\Omega\to G_{I})$, we regard $\Z^{k}$ as $(\Z^{d})^{\dim(I)}$, and $\poly(\Z^{k}\to G_{I})$ is understood as $\poly((\Z^{d})^{\dim(I)}\to G_{I})$, the set of $d$-integral 
 	polynomial sequences (where $d$ is uniquely determined by $I$ and $k$).
\end{conv}	
 
By Corollary B.4 of \cite{GTZ12}, all of $\poly(\Z^{k}\to G_{\N})$, $\poly((\Z^{k})^{k'}\to G_{\N^{k'}})$, $\poly(\Z^{k}\to G_{\DR})$,  and $\poly((\Z^{k})^{k'}\to G_{\N})$ are groups with respect to the pointwise multiplicative operation, provided that the corresponding $G_{I}$ are genial filtrations.
We refer the readers to Appendix B of \cite{GTZ12} for more properties for   polynomial sequences.

On the other hand, we caution the readers that these sets are not closed under the pointwise multiplicative operation when  $G_{I}$ is not a filtration, and thus Corollary B.4 of \cite{GTZ12} does not apply to pre-filtrations. However, we have the following simple lemma which is good enough for our purposes:
    
  \begin{lem}\label{3:pre3g}
      Let $k\in\N_{+}$, $I$ be the degree, multi-degree or degree-rank ordering with $\dim(I)\vert k$, and $(G/\Gamma)_{I}$ be an $I$-filtered pre-nilmanifold. Let $(G'/\Gamma')_{I}$ be the essential component of $(G/\Gamma)_{I}$.
     Let $g\in \poly(\Z^{k}\to (G'/\Gamma')_{I})$ and $h\in G$. Then $hg$ belongs to  $ \poly(\Z^{k}\to (G/\Gamma)_{I})$.
  \end{lem}
  \begin{proof}
  The conclusion follows from the indentity 
      $$\Delta_{h_{m}}\dots\Delta_{h_{1}}(hg)(n)=\Delta_{h_{m}}\dots\Delta_{h_{1}}h(n)$$
for all $m\geq 1$ and $h_{1},\dots,h_{m}\in \Z^{k}$, and from the fact that $G'_{i}=G_{i}$ for all $i\neq \bold{0}$.
  \end{proof}

We conclude this section with a lemma regarding the expression of a polynomial sequence using Mal'cev basis.

\begin{lem}\label{3:malfil}
	Let $s\in\N_{+}$ and $G$ be an $s$-step connected, simply-connected nilpotent Lie group with the degree filtration $G_{\N}$ and Mal'cev basis $\mathcal{X}=\{X_{1},\dots,X_{m}\}$, where $m=\dim(G)$. Denote $m_{i}:=\dim(G_{i})$.
	Suppose that $G_{i}$ is generated by $X_{m-m_{i}+1},\dots,X_{m}$. Let $g\colon\Z^{d}\to G$ be a map given by
	$$g(n)=\prod_{j=1}^{m}\exp(P_{j}(n)X_{j})$$
	for some polynomials $P_{j}\colon\Z^{d}\to\R$. Then $g\in\poly(\Z^{d}\to G_{\N})$ if and only if $\deg(P_{j})\leq i$ for all $m-m_{i}+1\leq j\leq m-m_{i+1}$.
\end{lem}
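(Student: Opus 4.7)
The plan is to analyze the one-parameter factors $g_j(n) := \exp(P_j(n) X_j)$ individually and then combine them, using that $\poly(\Z^{d}\to G_{\N})$ is a group under pointwise multiplication (Corollary B.4 of \cite{GTZ12}, cited in the paper).

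For the ``if'' direction, fix $j$ and let $i_j$ be the unique index with $m-m_{i_j}+1\leq j\leq m-m_{i_j+1}$, so that $X_j\in G_{i_j}$ by the hypothesis on the Mal'cev basis. Since $\{\exp(tX_j):t\in\R\}$ is an abelian one-parameter subgroup, iterated differences of $g_j$ decouple cleanly:
$$\Delta_{h_{k}}\cdots\Delta_{h_{1}}g_j(n)=\exp\bigl((\Delta_{h_{k}}\cdots\Delta_{h_{1}}P_j)(n)\cdot X_j\bigr),$$
where the inner $\Delta$'s are the usual additive discrete differences. For $k\leq i_j$ this already lies in $G_{i_j}\subseteq G_k$; for $k\geq i_j+1$ the degree hypothesis $\deg(P_j)\leq i_j$ makes the iterated finite difference of the scalar polynomial $P_j$ vanish identically, so the value is the identity and lies in $G_k$. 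Hence each $g_j\in\poly(\Z^{d}\to G_{\N})$, and the product $g=g_1\cdots g_m$ is a polynomial sequence.

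For the ``only if'' direction I would induct on the step $s$. The case $s=1$ (where $G$ is abelian) is immediate from computing $\Delta_{h_{2}}\Delta_{h_{1}}g(n)$ directly. For the inductive step, consider the quotient nilmanifold attached to the abelian central subgroup $G_s$ (abelian since $[G_s,G_s]\subseteq G_{2s}=\{e\}$, and central in $G_{1}$ since $[G_s,G_{1}]\subseteq G_{s+1}=\{e\}$). Projecting by $\pi\colon G\to G/G_s$ sends $g$ to a polynomial sequence on an $(s-1)$-step group with Mal'cev basis $\{\pi(X_1),\dots,\pi(X_{m-m_s})\}$; writing $m':=m-m_s$ and $m'_i:=m_i-m_s$ for $i\leq s-1$, the structural hypothesis transfers because $m'-m'_i+1=m-m_i+1$. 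The induction hypothesis therefore gives $\deg(P_j)\leq i$ for $m-m_i+1\leq j\leq m-m_{i+1}$ and $i\leq s-1$. Applying the already-proven ``if'' direction, $g_{\leq}(n):=\prod_{j=1}^{m-m_s}\exp(P_j(n)X_j)$ is a polynomial sequence, hence so is $g_{\mathrm{top}}:=g_{\leq}^{-1}g$, which by construction takes values in the abelian group $G_s$. Using that $G_s$ is abelian we may write $g_{\mathrm{top}}(n)=\exp\bigl(\sum_{j>m-m_s}P_j(n)X_j\bigr)$, and the polynomial sequence condition at level $k=s+1$ forces the $(s+1)$-fold additive discrete difference of $\sum_{j>m-m_s}P_j(n)X_j$ to vanish. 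Linear independence of the $X_j$ in $\log G_s$ then yields $\deg(P_j)\leq s$ for each $j>m-m_s$, completing the induction.

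The main obstacle is carrying the polynomial-sequence condition past the non-commutativity of $G$: a direct computation of iterated differences of the full product $\prod_j \exp(P_j(n)X_j)$ produces a tangled cascade of commutators. The fix is to peel off the top layer $G_s$ using the quotient/induction scheme above; once localized to $G_s$ the problem becomes abelian and reduces to the classical fact that iterated finite differences of a polynomial of degree $\leq s$ vanish at order $s+1$.
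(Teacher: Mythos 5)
Your proof is correct and uses essentially the same mechanism as the paper's: for the converse, isolate the already-controlled factors via the group structure of $\poly(\Z^{d}\to G_{\N})$ together with the established ``if'' direction, and reduce the remaining top piece to an abelian calculation. The only cosmetic difference is the induction variable — you induct on the step $s$ by quotienting out $G_s$ and invoking the inductive hypothesis on $G/G_s$, whereas the paper runs a single loop ``Property-$0$, Property-$1$, $\dots$, Property-$s$'' peeling off one level $i$ at a time and modding out by $G_{i+1}$; both unroll to the same argument.
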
	
\begin{proof}
	If $\deg(P_{j})\leq i$ for all $m-m_{i}+1\leq j\leq m-m_{i+1}$, then since $\exp((a+b)X_{j})=\exp(aX_{j})\exp(bX_{j})$ for all $a,b\in\mathbb{R}$, it is not hard to see that $n\mapsto \exp(P_{j}(n)X_{j})$ is a map in $\poly(\Z^{d}\to G_{\N})$ for all $1\leq j\leq m$. By Corollary B.4 of \cite{GTZ12}, we have that $g\in \poly(\Z^{d}\to G_{\N})$.
	
	Conversely, suppose that $g\in \poly(\Z^{d}\to G_{\N})$. We say that \emph{Property-$i$} holds if $\deg(P_{j})\leq i'$ for all $m-m_{i'}+1\leq j\leq m-m_{i'+1}$ with $i'\leq i$. Clearly Property-0 holds. Suppose that Property-$(i-1)$ holds for some $1\leq i\leq s$. Then by the converse direction and Corollary B.4 of \cite{GTZ12},
	$$\Bigl(\prod_{j=1}^{m-m_{i}}\exp(P_{j}(n)X_{j})\Bigr)^{-1}g(n)=\prod_{j=m-m_{i}+1}^{m}\exp(P_{j}(n)X_{j})$$
	belongs to $g\in \poly(\Z^{d}\to G_{\N})$. So $\prod_{j=m-m_{i}+1}^{m-m_{i+1}}\exp(P_{j}(n)X_{j}) \mod G_{i+1}$ belongs to $\poly(\Z^{d}\to (G/G_{i+1})_{\N})$. Since $G_{i}/G_{i+1}$ is abelian, by the definition of $\poly(\Z^{d}\to (G/G_{i+1})_{\N})$,
	$$\Delta_{h_{i+1}}\dots\Delta_{h_{1}}\prod_{j=m-m_{i}+1}^{m-m_{i+1}}\exp(P_{j}(n)X_{j}) \equiv\prod_{j=m-m_{i}+1}^{m-m_{i+1}}\exp(\Delta_{h_{i+1}}\dots\Delta_{h_{1}}P_{j}(n)X_{j}) \equiv 0\mod G_{i+1}$$
	for all $n,h_{1},\dots,h_{i+1}\in\Z^{d}$. This means that $\deg(P_{j})\leq i$ for all $m-m_{i}+1\leq j\leq m-m_{i+1}$. So Property-$i$ holds.
	
	Inductively we have that Property-$s$ holds and we are done.
\end{proof}

\subsection{Different notions of polynomial sequences}
 
We may extend the concept of partially periodic polynomial sequences defined in \cite{SunA}  naturally for more general filtrations.

\begin{defn}[null, rational and periodic polynomial sequences]\label{1:defr}
       Let $I$ be the degree, multi-degree or degree-rank ordering, $k\in\N_{+}$ with $\dim(I)\vert k$, $G/\Gamma$ be an $I$-filtered (pre-)nilmanifold,
       and   $g\in\poly(\Z^{k}\to G_{I})$. We say that $g$ is 
       \begin{itemize}
          \item \emph{null} if $g(n)\in\Gamma$ for all $n\in\Z^{k}$;
          \item \emph{rational} if each coordinate of $g(Qn)\in\Gamma$ %is a  polynomial with coefficients in $\Z/Q$ for some $Q\in\N_{+}$, 
          for all $n\in\Z^{k}$ for some $Q\in\N_{+}$, in which case we also say that $g$ is \emph{$Q$-rational};
          \item \emph{periodic} if $g(n+Qm)^{-1}g(n)\in\Gamma$ for all $m,n\in\Z^{k}$ for some $Q\in\N_{+}$, in which case we also say that $g$ is \emph{$Q$-periodic}.
       \end{itemize}
      \end{defn}

\begin{defn}[Partially null, rational and periodic polynomial sequences]
Let $I$ be the degree, multi-degree or degree-rank ordering, $k,Q\in\N_{+}$ with $\dim(I)\vert k$, $G/\Gamma$ be an $I$-filtered (pre-)nilmanifold,  $\Omega$ be a subset of $\Z^{k}$, and 
  $n_{\ast}\in\Omega$. %if $I=\N$ or $\DR$, and let $\Omega$ be a subset of $(\Z^{k})^{k'}$ if $I=\N^{k'}$.
	 \begin{itemize}
          \item We use $\poly(\Omega\to G_{I}\vert\Gamma)$ to denote the set of all $g\in \poly(\Z^{k}\to G_{I})$ which is \emph{partially null on $\Omega$,} meaning that  $g(n)\in \Gamma$ for all $n\in\Omega$.
           \item We use $\poly_{\approx Q,n_{\ast}}(\Omega\to G_{I}\vert\Gamma)$ to denote the set of all $g\in \poly(\Z^{k}\to G_{I})$ which is \emph{partially $Q$-rational on $\Omega$ with base point $n_{\ast}$} meaning that $g(n+Qm)\in \Gamma$ for all $m\in\Z^{k}$ with $n_{\ast}+Qm\in\Omega$. When $n_{\ast}=\bold{0}$, we write $\poly_{\approx Q}(\Omega\to G_{I}\vert\Gamma):=\poly_{\approx Q,n_{\ast}}(\Omega\to G_{I}\vert\Gamma)$ for short.
          \item We use $\poly_{Q}(\Omega\to G_{I}\vert\Gamma)$ to denote the set of all $g\in \poly(\Z^{k}\to G_{I})$ which is \emph{partially $Q$-periodic on $\Omega$,} meaning that $g(n+Qm)^{-1}g(n)\in \Gamma$ for all $m,n\in\Z^{k}$ with $n,n+Qm\in\Omega$.
       \end{itemize}
 \end{defn}

%\begin{defn}[Partially periodic polynomial sequences]
%	Let $I$ be the degree, multi-degree or degree-rank ordering, $k\in\N_{+}$ with $\dim(I)\vert k$, $G/\Gamma$ be an $I$-filtered (pre-)nilmanifold, $p$ be a prime, and $\Omega$ be a subset of $\Z^{k}$. 
%	Let $\poly(\Omega\to G_{I}\vert\Gamma)$ denote the set of all $g\in \poly(\Z^{k}\to G_{I})$ such that $g(n)\in \Gamma$ for all $n\in\Omega$. For $Q\in\N_{+}$, let $\poly_{Q}(\Omega\to G_{I}\vert\Gamma)$ denote the set of all $g\in \poly(\Z^{k}\to G_{I})$ such that $g(n+Qm)^{-1}g(n)\in \Gamma$ for all $m,n\in\Z^{k}$ with $n,n+Qm\in\Omega$.

%Let
 %$\poly(\F_{p}^{k}\to G_{I})$ denote the set of all functions of the form $f\circ\tau$ for some $f\in \poly(\Z^{k}\to G_{I})$.
 %For a subset $\Omega$ of $\F_{p}^{k}$, let
 %$\poly_{p}(\Omega\to G_{I}\vert\Gamma)$ denote the set of all functions of the form $f\circ\tau$ for some $f\in \poly_{p}(\iota^{-1}(\Omega)\to G_{I}\vert\Gamma)$. We define $\poly(\Omega\to G_{I}\vert\Gamma)$  similarly.

 %For $\Omega\subseteq\Z^{k}$
 %or $\F_{p}^{k}$, we call functions in $\poly_{Q}(\Omega\to G_{I}\vert\Gamma)$ \emph{partially $Q$-periodic polynomial sequences on $\Omega$.}
 %\end{defn}	

 As is explain in Example %\ref{1:notagroup2} 
 3.16 of \cite{SunA},	
  	$\poly_{p}(\Omega\to G_{I}\vert \Gamma)$ is not closed under multiplication. However, we have the following:

\begin{lem}\label{3:grg}
	Let $I$ be the degree, multi-degree or degree-rank ordering, $k,Q\in\N_{+}$ with $\dim(I)\vert k$, $G/\Gamma$ be an $I$-filtered nilmanifold, $p$ be a prime, and  $\Omega$ be a subset of $\Z^{k}$. For all $f\in\poly_{Q}(\Omega\to G_{I}\vert \Gamma)$ and $g\in\poly(\Omega\to G_{I}\vert \Gamma)$, we have that $fg\in \poly_{Q}(\Omega\to G_{I}\vert \Gamma)$.
\end{lem}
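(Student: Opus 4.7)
The proof should be short and essentially computational. The plan is to verify the two defining conditions of $\poly_{p}(\Omega\to G_{I}\vert \Gamma)$ for the product $fg$, namely (i) that $fg$ is a polynomial sequence in $\poly(\Z^{k}\to G_{I})$, and (ii) that $(fg)(n+pm)^{-1}(fg)(n)\in\Gamma$ for all $n\in\Omega+p\Z^{k}$ and $m\in\Z^{k}$.

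For (i), I would invoke Corollary B.4 of \cite{GTZ12}, which ensures that $\poly(\Z^{k}\to G_{I})$ is a group under pointwise multiplication for each of the three filtrations under consideration (degree, multi-degree, and degree-rank, via Convention \ref{3:ckk}). Since both $f$ and $g$ lie in $\poly(\Z^{k}\to G_{I})$, so does $fg$.

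For (ii), I would simply expand the expression:
\begin{equation*}
(fg)(n+pm)^{-1}(fg)(n) = g(n+pm)^{-1}\bigl(f(n+pm)^{-1}f(n)\bigr)g(n).
\end{equation*}
Fix $n\in\Omega+p\Z^{k}$ and $m\in\Z^{k}$; then $n+pm\in\Omega+p\Z^{k}$ as well. Since $g\in\poly(\Omega\to G_{I}\vert \Gamma)$, both $g(n)$ and $g(n+pm)$ lie in $\Gamma$. Since $f\in\poly_{p}(\Omega\to G_{I}\vert \Gamma)$, the middle factor $f(n+pm)^{-1}f(n)$ also lies in $\Gamma$. The product of these three elements of the group $\Gamma$ is in $\Gamma$, which gives (ii).

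There is no substantial obstacle: the lemma is essentially a bookkeeping statement verifying that the factor $g$ (which is genuinely $\Gamma$-valued on $\Omega+p\Z^{k}$, not merely periodic modulo $\Gamma$) can be absorbed on either side of $f$ without disturbing the partial periodicity. The only subtlety is to note that one cannot write a similar identity for a product of two elements of $\poly_{p}(\Omega\to G_{I}\vert \Gamma)$, because then the conjugation $g(n+pm)^{-1}(\cdots)g(n)$ would no longer reduce to a product inside $\Gamma$; this is precisely the reason $\poly_{p}(\Omega\to G_{I}\vert \Gamma)$ fails to be closed under multiplication (as noted in Example 3.16 of \cite{SunA}), while still forming a module over $\poly(\Omega\to G_{I}\vert \Gamma)$ on the right, as asserted here.
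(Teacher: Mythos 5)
Your proof is correct and is exactly the direct verification one expects; the paper itself omits the argument, referring to Lemma 3.17 of \cite{SunA}, and the computation you give — closure of $\poly(\Z^{k}\to G_{I})$ under multiplication via Corollary B.4 of \cite{GTZ12}, then the expansion $(fg)(n+pm)^{-1}(fg)(n)=g(n+pm)^{-1}\bigl(f(n+pm)^{-1}f(n)\bigr)g(n)$ with all three factors in $\Gamma$ — is the same bookkeeping. One tiny quibble: calling $\poly_{p}(\Omega\to G_{I}\vert\Gamma)$ a ``module over'' $\poly(\Omega\to G_{I}\vert\Gamma)$ is imprecise (these are groups, not rings); ``stable under right multiplication by'' is the accurate phrasing, but this does not affect the proof.
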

%\begin{proof}
%By Corollary B.4 of \cite{GTZ12}, $\poly(\Omega\to G_{I})$ is a group. So $fg\in \poly(\Omega\to G_{I})$. So it suffices to show that for all $n\in\Omega+p\Z^{k}$ and $m\in\Z^{k}$, we have that 
%$$(f(n+pm)g(n+pm))^{-1}(f(n)g(n))=g(n+pm)^{-1}(f(n+pm)^{-1}f(n))g(n)$$
%belongs to $\Gamma$. Since $f\in\poly_{p}(\Omega\to G_{I}\vert \Gamma)$, we have that $f(n+pm)^{-1}f(n)\in\Gamma$. Since $g\in\poly(\Omega\to G_{I}\vert \Gamma)$, we have that $g(n+pm)^{-1},g(n)\in\Gamma$. We are done.
%\end{proof}	

The proof of Lemma \ref{3:grg} is almost identical to that of Lemma %\ref{1:grg} 
3.17 in \cite{SunA}. So we omit the details.
	
	We summarize some properties for rational and periodic polynomial sequences as follows.
	
	\begin{lem}\label{3:r2p}
 Let $d,Q\in\N_{+}$, $C>0$, $I$ be the degree, multi-degree or degree-rank ordering, $s\in I$,   $(G/\Gamma)_{I}$ be an $I$-filtered nilmanifold of complexity at most $C$, and $g,g'\in\poly(\Z^{d}\to G_{I})$. 
 There exists $Q':=Q'(C,d,s)\in\N_{+}$ such that
   \begin{enumerate}[(i)]
   	   \item if $g,g'$ are $Q$-rational, then $gg'$ is $Q'Q^{Q'}$-rational;  
           \item if $g$ is $Q$-rational, then $g$ is $Q'Q^{Q'}$-periodic;
           \item if $g$ is $Q$-periodic, then  for any  $n_{\ast}\in\Z^{d}$, the map $g\in\poly(\Z^{d}\to G_{\N})$ given by 
    $$g'(n):=\{g(n_{\ast})\}^{-1}g(n)[g(n_{\ast})]^{-1}$$
     is   $(s!)^{d}Q^{s}$-rational.
      \end{enumerate}
\end{lem}
    \begin{proof}
        Part (i) can be deduced easily from the Baker-Campbell-Hausdorff formula. Part (ii) follows from an argument similar to Lemma A.12 of  \cite{GT12b}. %(see also Proposition 3.19 of \cite{SunA} for a stronger result). 
        Part (iii) can be deduced from  Lemma 3.18 of \cite{SunA}.
    \end{proof}

\iffalse

    \begin{lem}\label{3:r2p}
       Let $d,Q\in\N_{+}$, $C>0$. %$I$ be the degree, multi-degree or degree-rank ordering, $s\in I$,   %$(G/\Gamma)_{I}$ be an $I$-filtered nilmanifold of complexity at most $C$, and $g\in\poly(\Z^{d}\to G_{I})$. 
       There exists $Q':=Q'(C,d,s)\in\N_{+}$ such that every $Q$-rational nilsequence/nilcharacter of complexity at most $C$ defined on a subset of $\Z^{d}$ or $\F_{p}^{d}$ is a    $Q'Q^{Q'}$-periodic nilsequence/nilcharacter  of the same degree and complexity, and that every $Q$-periodic nilsequence/nilcharacter of complexity at most $C$ defined on a subset of $\Z^{d}$ or $\F_{p}^{d}$ is a   $Q'Q^{Q'}$-rational nilsequence/nilcharacter of the same degree and of  complexity $O(C)$.
        %
      %\begin{enumerate}[(i)]
         %  \item if $g$ is $Q$-rational, then $g$ is $Q'Q^{Q'}$-periodic;
           %\item if $g$ is $Q$-periodic, then $g$ is $Q'Q^{Q'}$-rational.
       %\end{enumerate}
    \end{lem}
    \begin{proof}
        The first part of the statement follows from Lemma A.12 of  \cite{GT12b}, and the second part of the statement can be deduced from  Lemma 3.18 of \cite{SunA}.
    \end{proof}

\fi

We may also extend the domains of polynomial sequences to $\F_{p}^{k}$ in a natural way.
 	
		\begin{defn}[Polynomial sequences in $\F_{p}^{k}$]
	Let $I$ be the degree, multi-degree or degree-rank ordering, $k\in\N_{+}$ with $\dim(I)\vert k$, $G/\Gamma$ be an $I$-filtered (pre-)nilmanifold, and $p$ be a prime. Let
 $\poly(\F_{p}^{k}\to G_{I})$ denote the set of all functions of the form $f\circ\tau$ for some $f\in \poly(\Z^{k}\to G_{I})$.
 For a subset $\Omega$ of $\F_{p}^{k}$, let
 $\poly_{p}(\Omega\to G_{I}\vert\Gamma)$ denote the set of all functions of the form $f\circ\tau$ for some $f\in \poly_{p}(\iota^{-1}(\Omega)\to G_{I}\vert\Gamma).$ We define $\poly(\Omega\to G_{I}\vert\Gamma)$  similarly. 
  We call functions in $\poly_{p}(\Omega\to G_{I}\vert\Gamma)$ \emph{partially $p$-periodic polynomial sequences on $\Omega$.}
 \end{defn}

  As is explained in Example %\ref{1:notagroup} 
  3.1823 of \cite{SunA}, partially $p$-periodic polynomial sequences are not closed under translations and transformations. However, the following proposition asserts that partially $p$-periodic  are  closed under translations and transformations modulo $\Gamma$.

       \begin{prop}\label{3:BB}
       	Let $I$ be the degree, multi-degree or degree-rank ordering, $k\in\N_{+}$ with $\dim(I)\vert k$, $G/\Gamma$ be an $I$-filtered nilmanifold,  $p$ be a prime, $\Omega$ be a subset of $\F_{p}^{k}$, and $g\in\poly_{p}(\Omega\to G_{I}\vert \Gamma)$. 
       	\begin{enumerate}[(i)]
       		\item For any $h\in\F_{p}^{k}$, there exists $g'\in\poly_{p}((\Omega-h)\to G_{I}\vert \Gamma)$   such that $g(n+h)\Gamma=g'(n)\Gamma$ for all $n\in\Omega-h$.
       		\item If $I$ is the degree filtration, then for any $k'\in\N_{+}$ and  linear transformation $L\colon \F_{p}^{k'}\to \F_{p}^{k}$, there exists  $g''\in\poly_{p}(L^{-1}(\Omega)\to G_{I}\vert \Gamma)$ such that   $g(L(n))\Gamma=g''(n)\Gamma$ for all $n\in L^{-1}(\Omega)$.	
       	\end{enumerate}	
\end{prop}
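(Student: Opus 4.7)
My plan is to construct $g'$ and $g''$ by working in the integer model of the polynomial sequence and exploiting the fact that the class $\poly(\Z^{k}\to G_{I})$ is preserved under translations (for all three orderings) and, for the degree ordering, under composition with integer linear maps. Throughout, the key mechanism is to use the defining $p$-periodicity modulo $\Gamma$ to interchange freely between any two lifts of a given residue class in $\F_{p}^{k}$ hitting $\Omega$.

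For part (i), let $\tilde{g}\in\poly_{p}(\tau(\Omega)\to G_{I}\vert\Gamma)$ be the integer lift of $g$, so $g=\tilde{g}\circ\tau$. Set $\tilde{g}'(n):=\tilde{g}(n+\tau(h))$ for $n\in\Z^{k}$. Since translations preserve the iterated discrete derivatives that define $\poly(\Z^{k}\to G_{I})$ (Corollary B.4 of \cite{GTZ12}), $\tilde{g}'\in\poly(\Z^{k}\to G_{I})$. To check the partial $p$-periodicity on $\tau(\Omega-h)+p\Z^{k}$, observe that for $n$ in this set, $\iota(n+\tau(h))=\iota(n)+h\in\Omega$, hence $n+\tau(h)\in\tau(\Omega)+p\Z^{k}$; the $p$-periodicity of $\tilde{g}$ on $\tau(\Omega)+p\Z^{k}$ then transfers verbatim to $\tilde{g}'$ on $\tau(\Omega-h)+p\Z^{k}$. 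Finally, for $n\in\Omega-h$ both $\tau(n)+\tau(h)$ and $\tau(n+h)$ lie in $\tau(\Omega)+p\Z^{k}$ and agree mod $p$, so $p$-periodicity gives $g'(n)\Gamma=g(n+h)\Gamma$, where $g':=\tilde{g}'\circ\tau$.

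For part (ii), let $\tilde{L}\colon\Z^{k'}\to\Z^{k}$ be the integer linear map whose matrix is obtained by applying $\tau$ entry-wise to the matrix of $L$, so that $\iota\circ\tilde{L}=L\circ\iota$ and $\tilde{L}(n+pm)=\tilde{L}(n)+p\tilde{L}(m)$. Because $I$ is the \emph{degree} filtration, composing a sequence in $\poly(\Z^{k}\to G_{I})$ of degree $\leq s$ with an integer linear map yields a sequence in $\poly(\Z^{k'}\to G_{I})$ of degree $\leq s$ (this is where we must avoid multi-degree or degree-rank: a linear $L$ can mix coordinate blocks and so need not respect those more refined gradings). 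Thus $\tilde{g}''(n):=\tilde{g}(\tilde{L}(n))$ lies in $\poly(\Z^{k'}\to G_{I})$. For $n\in\tau(L^{-1}(\Omega))+p\Z^{k'}$ we have $\iota(\tilde{L}(n))=L(\iota(n))\in\Omega$, so $\tilde{L}(n)\in\tau(\Omega)+p\Z^{k}$; combining with the identity $\tilde{L}(n+pm)=\tilde{L}(n)+p\tilde{L}(m)$ and the $p$-periodicity of $\tilde{g}$, we conclude $\tilde{g}''(n+pm)^{-1}\tilde{g}''(n)\in\Gamma$. The coset identity $g''(n)\Gamma=g(L(n))\Gamma$ on $L^{-1}(\Omega)$ follows because $\tilde{L}(\tau(n))\equiv\tau(L(n))\pmod{p}$ and both representatives land in $\tau(\Omega)+p\Z^{k}$.

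The whole argument is essentially bookkeeping; the only conceptual point is that linear substitution preserves the filtration only in the degree case, which is exactly why part (ii) is stated with that restriction. The main thing to be careful about is the systematic use of the non-homomorphic sections $\tau$ and $\iota$: one must repeatedly verify that the relevant points lie in $\tau(\Omega)+p\Z^{k}$ before invoking the defining $p$-periodicity of $\tilde{g}$ to pass from one $\mathrm{mod}\,p$ representative to another modulo $\Gamma$.
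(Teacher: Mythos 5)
Your proof is correct and uses the natural lift--translate/compose--check-periodicity argument; the paper itself omits the proof, deferring to the near-identical Lemma 3.19 in the first paper of the series, and what you have written is essentially that argument carried out in full. One minor parenthetical slip: precomposition with an integer linear map actually does preserve $\poly(\Z^{d}\to G_{\DR})$ as well, since the defining condition involves only $G_{[m,0]}$ and is formally the same as the degree condition, so the closure genuinely fails only for the multi-degree ordering (which has coordinate blocks); but since the proposition restricts part (ii) to the degree filtration regardless, this imprecision has no bearing on the correctness of your proof.
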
	

The proof of Proposition \ref{3:BB} is almost identical to that of Proposition %\ref{1:BB} 
3.24 in \cite{SunA}. So we omit the details.

\subsection{The Baker-Campbell-Hausdorff formula}	

The material of this section comes from Appendix C of \cite{GT10b}. We write it down for completeness.

Let $G$ be a group, $t\in\N_{+}$ and $g_{1},\dots,g_{t}\in G$. The \emph{iterated commutator} of $g_{1}$ is defined to be $g_{1}$ itself. Iteratively, we define an \emph{iterated commutator} of $g_{1},\dots,g_{t}$ to be an element of the form $[w,w']$, where $w$ is an iterated commutator of $g_{i_{1}},\dots,g_{i_{r}}$, $w'$ is an iterated commutator of $g_{i'_{1}},\dots,g_{i'_{r'}}$ for some $1\leq r,r'\leq t-1$ with $r+r'=t$ and $\{i_{1},\dots,i_{r}\}\cup \{i'_{1},\dots,i'_{r'}\}=\{1,\dots,t\}$.

Similarly, let $X_{1},\dots,X_{t}$ be elements of a Lie algebra. The \emph{iterated Lie bracket} of $X_{1}$ is defined to be $X_{1}$ itself. Iteratively, we define an \emph{iterated Lie bracket} of $X_{1},\dots,X_{t}$ to be an element of the form $[w,w']$, where $w$ is an iterated Lie bracket of $X_{i_{1}},\dots,X_{i_{r}}$, $w'$ is an iterated Lie bracket of $X_{i'_{1}},\dots,X_{i'_{r'}}$ for some $1\leq r,r'\leq t-1$ with $r+r'=t$ and $\{i_{1},\dots,i_{r}\}\cup \{i'_{1},\dots,i'_{r'}\}=\{1,\dots,t\}$.

Let $G$ be a connected and simply connected nilpotent Lie group. The \emph{Baker-Campbell-Hausdorff formula} asserts that for all $X_{1},X_{2}\in\log G$, we have
$$\exp(X_{1})\exp(X_{2})=\exp\Bigl(X_{1}+X_{2}+\frac{1}{2}[X_{1},X_{2}]+\prod_{\alpha}c_{\alpha}X_{\alpha}\Bigr),$$	
where $\alpha$ is a finite set of labels, $c_{\alpha}$ are real constants, and $X_{\alpha}$ are iterated Lie brackets with $k_{1,\alpha}$ copies of $X_{1}$  and $k_{2,\alpha}$ copies of $X_{2}$ for some $k_{1,\alpha},k_{2,\alpha}\geq 1$ and $k_{1,\alpha}+k_{2,\alpha}\geq 3$. One may use this formula to show that for all $g_{1},g_{2}\in G$ and $x\in \R$, we have that
\begin{equation}\nonumber\label{1:C1}
(g_{1}g_{2})^{x}=g_{1}^{x}g_{2}^{x}\prod_{\alpha}g_{\alpha}^{Q_{\alpha}(x)},
\end{equation}	
where $\alpha$ is a finite set of labels,   $g_{\alpha}$ are iterated commutators with $k_{1,\alpha}$ copies of $g_{1}$  and $k_{2,\alpha}$ copies of $X_{2}$ for some $k_{1,\alpha},k_{2,\alpha}\geq 1$, and $Q_{\alpha}\colon\R\to\R$ are polynomials of degrees at most $k_{1,\alpha}+k_{2,\alpha}$ without constant terms.

Similarly, one can show that for any $g_{1},g_{2}\in G$ and $x_{1},x_{2}\in \R$, we have that	
\begin{equation}\nonumber%\label{1:C2}
[g_{1}^{x_{1}},g_{2}^{x_{2}}]=[g_{1},g_{2}]^{x_{1}x_{2}}\prod_{\alpha}g_{\alpha}^{P_{\alpha}(x_{1},x_{2})},
\end{equation}	
where $\alpha$ is a finite set of labels,   $g_{\alpha}$ are iterated commutators with $k_{1,\alpha}$ copies of $g_{1}$  and $g_{2,\alpha}$ copies of $X_{2}$ for some $k_{1,\alpha},k_{2,\alpha}\geq 1$, $k_{1,\alpha}+k_{2,\alpha}\geq 3$, and $P_{\alpha}\colon\R^{2}\to\R$ are polynomials of degrees at most $k_{1,\alpha}$ in $x_{1}$ and at most $k_{2,\alpha}$ in $x_{2}$ which vanishes when $x_{1}x_{2}=0$.

\subsection{Type-I horizontal  torus and character}\label{3:s:nnh1}

 In this section, we recall the definitions of type-I horizontal  torus and character defined in \cite{SunA}.

\begin{defn}[Type-I horizontal torus and character]
Let $G/\Gamma$ be a nilmanifold endowed with a Mal'cev basis $\mathcal{X}$.  The \emph{type-I horizontal torus} of $G/\Gamma$ is $G/[G,G]\Gamma$.
	A \emph{type-I horizontal character} is a continuous homomorphism $\eta\colon G\to \R$ such that $\eta(\Gamma)\subseteq \Z$.
	When written in the coordinates relative to $\mathcal{X}$, we may write $\eta(g)=k\cdot \psi(g)$ for some unique $k=(k_{1},\dots,k_{m})\in\Z^{m}$, where $\psi\colon G\to \R^{m}$ is the coordinate map with respect to the Mal'cev basis  $\mathcal{X}$. We call the quantity $\Vert\eta\Vert:=\vert k\vert=\vert k_{1}\vert+\dots+\vert k_{m}\vert$ the \emph{complexity} of $\eta$ (with respect to $\mathcal{X}$). 
\end{defn}

It is not hard to see that any type-I horizontal character mod $\Z$ vanishes on $[G,G]\Gamma$ and thus descent to a continuous homomorphism between the type-I horizontal torus $G/[G,G]\Gamma$ and $\R/\Z$. Moreover, $\eta \mod \Z$ is a well defined map from $G/\Gamma$ to $\R/\Z$. 

Type-I horizontal torus and character are used to characterize whether a nisequence is equidistributed on a nilmanifold \cite{GT12b,Lei05}.  The following lemma is a generalization of Lemma %\ref{1:B.9}
 3.21 of \cite{SunA} to multi-degree filtrations.
 Its proof is similar to that of Lemma B.9 of  \cite{GTZ12} and Lemma 2.8 of \cite{CS14}. We omit the details. 
\begin{lem}\label{3:B.9}
	Let $d,k\in\N_{+}$, $H=(\Z^{d})^{k}$ or $(\R^{d})^{k}$,
	 and $G$ be an $\N^{k}$-pre-filtered group of degree $\leq J$ for some finite downset $J$. We complete the partial order on $J$ to a total ordering in an arbitrary fashion. A function $g\colon H\to G$ belongs to $\poly(H\to G_{\N^{k}})$ if and only if 
	 for all $m=(m_{1},\dots,m_{k})\in(\N^{d})^{k}$ with $(\vert m_{1}\vert,\dots,\vert m_{k}\vert)\in J$,
	 there exists  $X_{m}\in \log(G_{(\vert m_{1}\vert,\dots,\vert m_{k}\vert)})$  such that 
	$$g(n)=\prod_{m=(m_{1},\dots,m_{k})\in(\N^{d})^{k}, (\vert m_{1}\vert,\dots,\vert m_{k}\vert)\in J}\exp\Bigl(\binom{n}{m}X_{m}\Bigr).$$
	Moreover, if $g\in\poly(H\to G_{\N^{k}})$, then the choice of $X_{m}$ are unique.
	
	Furthermore, if $G'$ is a subgroup of $G$ and $g$ takes values in $G'$, then $X_{m}\in \log(G')$ for all $m$. 
\end{lem}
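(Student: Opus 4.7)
The plan is to prove the lemma by induction on a total ordering refining $\prec$ on $J$, adapting the Taylor-expansion strategy of Lemma B.9 of \cite{GTZ12} and Lemma 2.8 of \cite{CS14}.

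For the sufficient direction, by Corollary B.4 of \cite{GTZ12} (closure of $\poly$ under products) it is enough to verify that each single factor $n \mapsto \exp(\binom{n}{m} X_{m})$ with $X_m \in \log G_{(|m_1|,\ldots,|m_k|)}$ lies in $\poly(H \to G_{\N^k})$. This follows from the discrete Pascal identity
$$\Delta_{e_{i,j}} \tbinom{n}{m} = \tbinom{n}{m - e_{i,j}}$$
(interpreted as $0$ when $m_{i,j} = 0$): taking $|m_\ell|$ discrete derivatives in the $\ell$-th coordinate block reduces the $\ell$-th part of the multi-index from $m_\ell$ to $0$, and any further derivative along that block then vanishes. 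Consequently each factor has multi-degree $\preceq (|m_1|, \ldots, |m_k|) \in J$, as required.

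For the necessary direction I would fix a total ordering $<$ on $J$ extending $\prec$ with $0$ minimal, and define the $X_m$ recursively. Start by setting $X_0 := \log g(0) \in \log G$. At the inductive step, assuming $X_{m'} \in \log G_{(|m'_1|,\ldots,|m'_k|)}$ has been constructed for all $m' < m$, introduce the corrected sequence
$$h(n) := \left( \prod_{m' < m} \exp\!\bigl(\tbinom{n}{m'} X_{m'}\bigr) \right)^{\!-1} g(n),$$
which lies in $\poly(H \to G_{\N^k})$ by Corollary B.4 of \cite{GTZ12}. Evaluate at $n = m$: since $\binom{m}{m'} \neq 0$ forces $m' \leq m$ coordinate-wise in $(\N^d)^k$, which (as all components are non-negative integers) forces $(|m'_1|, \ldots, |m'_k|) \prec (|m_1|, \ldots, |m_k|)$ strictly when $m' \neq m$, an application of the Baker-Campbell-Hausdorff formula shows $h(m) \in G_{(|m_1|,\ldots,|m_k|)}$, and we set $X_m := \log h(m)$.

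The main technical point will be tracking the Baker-Campbell-Hausdorff cross-commutator terms produced by expanding and reordering the partial product. The filtration law $[G_t, G_{t'}] \subseteq G_{t + t'}$ guarantees that every such commutator lies in $G_{t+t'}$ with $t + t' \succeq (|m_1|, \ldots, |m_k|)$ whenever the constituent factors contribute jointly to this multi-degree, so these corrections do not obstruct the extraction of $X_m$. Uniqueness is then immediate from the inductive construction, since any alternative choice of $X_m$ would shift the value of the partial product at $n = m$. Finally, if $g$ takes values in a subgroup $G' \leq G$, each $h$ constructed inductively is a product of elements of $G'$, so $h(m) \in G'$ and $X_m = \log h(m) \in \log G'$, yielding the last clause.
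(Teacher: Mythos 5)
The sufficient direction is fine: Pascal's identity shows each factor $n\mapsto \exp\bigl(\binom{n}{m}X_m\bigr)$ lies in $\poly(H\to G_{\N^k})$, and Corollary B.4 of \cite{GTZ12} closes the argument.

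There is, however, a genuine gap in the necessary direction at the pivotal step where you claim $h(m)\in G_{(|m_1|,\dots,|m_k|)}$. You attribute this to ``an application of the Baker--Campbell--Hausdorff formula'' and to the inclusion $[G_t,G_{t'}]\subseteq G_{t+t'}$, but this cannot produce the conclusion. The surviving factors $\exp\bigl(\binom{m}{m'}X_{m'}\bigr)$ have $\binom{m}{m'}\neq 0$, hence $X_{m'}\in\log G_{(|m'_1|,\dots,|m'_k|)}$ with $(|m'_1|,\dots,|m'_k|)\prec(|m_1|,\dots,|m_k|)$; since the filtration is decreasing, those subgroups are \emph{larger} than $G_{(|m_1|,\dots,|m_k|)}$, and reorganizing a product of such elements via BCH only produces further elements of those larger subgroups, not of the smaller target group. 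Nothing in such a computation uses the polynomiality of $g$, which is precisely the hypothesis that must force $h(m)$ downward in the filtration. What is needed is the iterated-derivative evaluation: fix a total order on the multi-indices $m\in(\N^d)^k$ themselves (an extension of coordinatewise dominance, breaking ties arbitrarily among indices sharing a multi-degree vector -- your order is only declared on $J\subseteq\N^k$ and does not separate such indices), verify inductively that $h(m')=\text{id}_G$ for every $m'\leq m$ coordinatewise with $m'\neq m$, so that
\[
\Delta_{e_{1,1}}^{m_{1,1}}\cdots\Delta_{e_{k,d}}^{m_{k,d}} h(0) = h(m),
\]
and then invoke $h\in\poly(H\to G_{\N^k})$ to place the left-hand side in $G_{(|m_1|,\dots,|m_k|)}$. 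This is the engine in the proofs of Lemma B.9 of \cite{GTZ12} and Lemma 2.8 of \cite{CS14}; without it the induction does not close. A secondary omission: after extracting $X_m$ for all $m$ with $(|m_1|,\dots,|m_k|)\in J$, you still owe an argument that the final remainder is identically $\text{id}_G$ rather than merely noting that the extraction halts; this uses the degree bound $\subseteq J$ to show all remaining Taylor data is trivial. The uniqueness and subgroup clauses you give are adequate once the core step is repaired.
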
	

An equivalent way of saying that a function $g\colon H\to G$ belongs to $\poly(\Z^{k}\to G_{\N^{k}})$ is that 
\begin{equation}\label{3:tl}
g(n)=\prod_{m=(m_{1},\dots,m_{k})\in(\N^{d})^{k}, (\vert m_{1}\vert,\dots,\vert m_{k}\vert)\in J}g_{m}^{\binom{n}{m}}
\end{equation}
for some $g_{m}\in G_{(\vert m_{1}\vert,\dots,\vert m_{k}\vert)}$ (with any fixed order in $\N^{k}$).
We call $g_{m}$ the \emph{($m$-th) type-I Taylor  coefficient} of $g$, and (\ref{3:tl}) the \emph{type-I Taylor expansion} of $g$.

\subsection{Type-II horizontal  torus and character}\label{3:s:nn2}

In this paper, in addition the type-I horizontal  torus and character, we also need to use the type-II horizontal  torus and character, which are defined in Definition 9.6 of \cite{GTZ12} and can be used to characterize Ratner-type theorem for nilmanifolds with degree-rank filtrations.

\begin{defn}[Type-II  Taylor coefficients]
	Let $G$ be a degree-rank-filtered connected, simply-connected nilpotent Lie group with filtration $(G_{[s,r]})_{[s,r]\in\DR}$. For every $i\in\N_{+}$, define the \emph{$i$-th type-II horizontal space} $\Hor_{i}(G)$ to be the abelian group
	$$\Hor_{i}(G):=G_{[i,1]}/G_{[i,2]},$$
	with the convention that $G_{[s,r]}:=G_{[s+1,0]}$ if $r>s$ (in particular, $G_{[1,2]}=G_{[2,0]}$).
	If $\Gamma$ is a subgroup of $G$, then define 	$$\Hor_{i}(G/\Gamma):=\Hor_{i}(G)/\Hor_{i}(\Gamma).$$
		It is easy to see that the type-II horizontal spaces $\Hor_{i}(G)$ are abelian Lip groups and that $\Hor_{i}(\Gamma)$ is a sublattice of $\Hor_{i}(G)$. So $\Hor_{i}(G/\Gamma)$ is a torus, which we call the \emph{$i$-th type-II horizontal torus} of $G/\Gamma$.

	For any $d\in \N_{+}$, $g\in\poly(\Z^{d}\to G_{\DR})$ and $m=(m_{1},\dots,m_{d})\in\N^{d}$ with $\vert m\vert=i$, we define the \emph{$m$-th type-II  Taylor coefficient} $\Taylor_{m}(g)$ to be the quantity
	$$\Taylor_{m}(g):=\Delta_{e_{1}}\dots\Delta_{e_{1}}\dots\Delta_{e_{d}}\dots\Delta_{e_{d}} g(n) \mod G_{[i,2]}$$
	for any $n\in\Z^{d}$, where $\Delta_{e_{j}}$ occurs $m_{j}$ times for all $1\leq j\leq d$ and $e_{j}\in\Z^{d}$ is the $j$-th standard unit vector. Note that this map is well defined since $\Taylor_{m}(g)$ takes values in $G_{[i,1]}$ and has first derivatives in $G_{[i+1,1]}\subseteq G_{[i,2]}$ (and thus is independent of the choice of $n$). 
	Denote
	$$\Taylor_{i}(g)(h_{1},\dots,h_{i}):=\Delta_{h_{i}}\dots\Delta_{h_{1}} g(n) \mod G_{[i,2]}$$
	for $h_{1},\dots,h_{i}\in\Z^{d}$ (again this quantity is independent of the choice of $n$). 	By Corollary B.7 of \cite{GTZ12}, it is not hard to see that $\Taylor_{i}(g)(h_{1},\dots,h_{i})$ is a homogeneous polynomial of multi-degree $(1,\dots,1)$ in the variables $h_{1},\dots,h_{i}$, and is symmetric in the variables $h_{1},\dots,h_{i}$.

	If $G$ is a connected, simply-connected nilpotent Lie group with a degree filtration $G_{\N}$, then we may define the type-II  Taylor coefficient and type-II horizontal toruses similarly with $(G_{[s,r]})_{[s,r]\in\DR}$ being the natural degree-rank filtration induced by $G_{\N}$.  
\end{defn}

By Corollary B.7 of \cite{GTZ12} (see also the discussion on page 1282 of \cite{GTZ12}), we have:

\begin{lem}\label{3:magictaylor}
	Let $d\in\N_{+}$ and $G/\Gamma$ be a nilmanifold of degree at most $s$ with a filtration $G_{\N}$.
	For all $g,g'\in\poly(\Z^{d}\to G_{\N})$ and $m\in\N^{d}$, we have that 
	$$\Taylor_{m}(gg')(h_{1},\dots,h_{i})=\Taylor_{m}(g)(h_{1},\dots,h_{i})\cdot\Taylor_{m}(g')(h_{1},\dots,h_{i}) \mod G_{[i,2]}.$$
	In other words,  the map $g\mapsto \Taylor_{m}(g)$ is a homomorphism. 
\end{lem}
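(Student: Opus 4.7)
The plan is to deduce Lemma~\ref{3:magictaylor} essentially from Corollary~B.7 of \cite{GTZ12}, after translating between the degree filtration $G_\N$ and the induced degree-rank filtration $G_\DR$ provided by Definition~\ref{3:id3}; I will also sketch a direct induction to explain what is going on. The starting point is the basic commutator identity
\[
\Delta_h(gg')(n) \;=\; \Delta_h g(n) \cdot \bigl(g(n)\Delta_h g'(n) g(n)^{-1}\bigr) \;=\; \Delta_h g(n)\cdot \Delta_h g'(n)\cdot [\Delta_h g'(n),\,g(n)^{-1}],
\]
obtained by writing $(gg')(n+h) = \Delta_h g(n) \cdot g(n)\cdot \Delta_h g'(n)\cdot g'(n)$ and applying the conjugation rule $aba^{-1} = b[b,a^{-1}]$ with $a=g(n)^{-1}$, $b=\Delta_h g'(n)$. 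Since $g,g'$ take values in $G = G_{[0,0]}$ and first derivatives of a polynomial sequence in $\poly(\Z^d \to G_\N)$ lie in $G_1 = G_{[1,1]}$, the correction $[\Delta_h g'(n),g(n)^{-1}]$ is a length-$2$ iterated commutator whose factor weights sum to $\geq 1$, and therefore lies in $G_{[1,2]}$ by the definition of the induced degree-rank filtration.

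I would then induct on $i = |m|$. The base $i=0$ is trivial. For the inductive step, I would apply $\Delta_{h_i}\cdots\Delta_{h_2}$ to both sides of the identity above. Using the group property of $\poly(\Z^d\to G_\N)$ (Corollary~B.4 of \cite{GTZ12}) together with the Leibniz-type expansions of $\Delta_{h_j}$ on products and on commutators, each further differentiation of the error term $[\Delta_{h_1}g'(n),g(n)^{-1}]$ produces only (products of) iterated commutators of length $\geq 2$, whose constituent weights sum to at least the total number of derivatives applied. Consequently, after all $i$ differentiations, every correction term is an iterated commutator of length $\geq 2$ and total weight $\geq i$, hence lies in $G_{[i,2]}$. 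This gives
\[
\Delta_{h_i}\cdots\Delta_{h_1}(gg')(n) \;\equiv\; \Delta_{h_i}\cdots\Delta_{h_1}g(n)\cdot \Delta_{h_i}\cdots\Delta_{h_1}g'(n) \pmod{G_{[i,2]}},
\]
which by the definition of $\Taylor_m$ is precisely the claimed identity $\Taylor_m(gg') = \Taylor_m(g)\cdot \Taylor_m(g') \bmod G_{[i,2]}$; equivalently, it shows that $\Taylor_m$ factors through the abelian quotient $\Hor_i(G)=G_{[i,1]}/G_{[i,2]}$, so the product rule on $G$ becomes multiplication in this abelian group.

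The main obstacle is the combinatorial bookkeeping: one must verify that every further application of $\Delta_{h_j}$ to a length-$\geq 2$ commutator correction produces only length-$\geq 2$ commutator corrections whose aggregate weight strictly increases, so that they stay in $G_{[i,2]}$ at step $i$. This is exactly the content that is packaged in Corollary~B.7 of \cite{GTZ12}, and so the cleanest write-up is to invoke that corollary directly once the induced degree-rank filtration is set up, rather than re-derive the commutator calculus from scratch.
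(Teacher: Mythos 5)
Your proposal is correct and matches the paper's approach: the paper also proves this by directly citing Corollary B.7 of \cite{GTZ12} (and the discussion on page 1282 there), and your supplementary inductive sketch is the standard argument underlying that corollary. One small slip: in the conjugation step, to land on $b[b,a^{-1}]=[\Delta_h g'(n), g(n)^{-1}]$ you should take $a=g(n)$, not $a=g(n)^{-1}$; the resulting identity and all subsequent reasoning are nonetheless correct.
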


\begin{defn}[$i$-th type-II horizontal character]
	For $i\in\N_{+}$ and an $\N$-filtered or $\DR$-filtered nilmanifold $G/\Gamma$. An \emph{$i$-th type-II horizontal character} is a  continuous homomorphisms  $\xi_{i}\colon\Hor_{i}(G)\to\R$ with $\xi_{i}(\Hor_{i}(\Gamma))\subseteq \Z$. We use $\mathfrak{N}_{i}(G/\Gamma)$ to denote the group of all $i$-th type-II horizontal characters. 
	
		The Mal'cev basis $\mathcal{X}$ induces a natural isomorphism $\psi\colon G_{[i,1]}/G_{[i,2]}\to\R^{k}$, and thus we may write $\eta_{i}(g_{i}):=(m_{1},\dots,m_{k})\cdot \psi(g_{i})$ for some $m_{1},\dots,m_{k}\in\Z$, where $k=\dim(G_{[i,1]})-\dim(G_{[i,2]})$. 
		We call the quantity $\Vert\eta_{i}\Vert:=\vert m_{1}\vert+\dots+\vert m_{k}\vert$ the \emph{complexity} of $\eta_{i}$ (with respect to $\mathcal{X}$). 
\end{defn}

We provide  a formula for further uses.

\begin{lem}\label{3:sbeq}
	Let $d,i\in\N_{+}$, $G/\Gamma$ be an $\N$-filtered nilmanifold, $g(n)\in \poly(\Z^{d}\to G_{\N})$, and $\xi_{i}\in\mathfrak{N}_{i}(G/\Gamma)$. Then 
	\begin{equation}\label{3:2is3}
	\begin{split}
	\xi_{i}(\Taylor_{i}(g)(h_{1},\dots,h_{i}))=\Delta_{h_{i}}\dots\Delta_{h_{1}}\Bigl(\sum_{m\in\N^{d},\vert m\vert=i}\xi_{i}(\Taylor_{m}(g))\binom{n}{m}\Bigr).
	\end{split}
	\end{equation}
\end{lem}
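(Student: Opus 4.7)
My approach is to show that both sides of \eqref{3:2is3} are symmetric $i$-multilinear forms in $h_1,\dots,h_i$ with values in $\R$, and then to verify the identity by evaluating on standard basis vectors in $\Z^d$.

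For the left-hand side, the paragraph immediately following the definition of $\Taylor_i(g)$ already asserts (via Corollary B.7 of \cite{GTZ12}) that $(h_1,\dots,h_i) \mapsto \Taylor_i(g)(h_1,\dots,h_i) \in \Hor_i(G)$ is a homogeneous polynomial of multi-degree $(1,\dots,1)$ which is symmetric in $h_1,\dots,h_i$; composing with the continuous homomorphism $\xi_i\colon \Hor_i(G)\to \R$ preserves these properties. For the right-hand side, since every monomial of the polynomial $P(n) := \sum_{m\in\N^d,\,|m|=i}\xi_i(\Taylor_m(g))\binom{n}{m}$ has total degree exactly $i$ in $n$, its $i$-fold discrete derivative $\Delta_{h_i}\cdots\Delta_{h_1}P(n)$ is independent of $n$ and is a symmetric $i$-multilinear form in $h_1,\dots,h_i$ (a classical property of discrete derivatives of polynomials of top degree).

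Granted these reductions, it suffices to evaluate both sides at $h_k = e_{j_k}$ for standard basis vectors $e_{j_k}\in\Z^d$, $1\leq k\leq i$. Define the multi-index $m := (m_1,\dots,m_d)$ with $m_\ell := |\{k\colon j_k = \ell\}|$, so that $|m| = i$. Since discrete difference operators commute, the left-hand side becomes
\[
\xi_i\bigl(\Delta_{e_{j_i}}\cdots\Delta_{e_{j_1}} g(n) \mod G_{[i,2]}\bigr) = \xi_i\bigl(\Delta_{e_1}^{m_1}\cdots\Delta_{e_d}^{m_d} g(n) \mod G_{[i,2]}\bigr) = \xi_i(\Taylor_m(g))
\]
by the very definition of $\Taylor_m(g)$. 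For the right-hand side, a separation-of-variables computation using the one-variable identity $\Delta^a\binom{x}{b} = \binom{x}{b-a}$ (with the convention $\binom{x}{k}=0$ for $k<0$) gives
\[
\Delta_{e_1}^{m_1}\cdots\Delta_{e_d}^{m_d}\binom{n}{m'} = \prod_{\ell=1}^{d}\binom{n_\ell}{m'_\ell - m_\ell} = \delta_{m,m'}
\]
whenever $|m| = |m'| = i$, since $m \neq m'$ forces some $m'_\ell < m_\ell$ and hence a vanishing factor. Therefore the right-hand side also collapses to $\xi_i(\Taylor_m(g))$, matching the left-hand side.

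The main technical point is the multilinearity and symmetry of $\Taylor_i(g)$ modulo $G_{[i,2]}$; this is already invoked in the paragraph following its definition (and follows from the commutator estimate in Corollary B.7 of \cite{GTZ12}, the observation that $[G_{[i,1]}, G_{[1,1]}] \subseteq G_{[i+1,2]} \subseteq G_{[i,2]}$, and the fact that $\Taylor_i$ is the top-degree derivative of a polynomial sequence). Once that is granted, the rest is a routine reduction to basis vectors and a combinatorial identity.
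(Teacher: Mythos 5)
Your proof is correct and follows essentially the same route as the paper: both invoke Corollary B.7 of [GTZ12] to get symmetry and multilinearity of the two sides, and then evaluate on standard basis vectors and conclude by multilinearity. You spell out the binomial-delta computation $\Delta_{e_1}^{m_1}\cdots\Delta_{e_d}^{m_d}\binom{n}{m'}=\delta_{m,m'}$ in more detail than the paper, but this is the same underlying argument.
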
	
\begin{proof}
	By Corollary B.7 of \cite{GTZ12}, it is not hard to see that both sides of (\ref{3:2is3}) are symmetric and multi-linear in $h_{1},\dots,h_{i}$, and are independent of $n$ (see also the remark on page 1281 of \cite{GTZ12}). On the other hand, for any $m=(m_{1},\dots,m_{d})\in\N^{d}$ with $\vert m\vert=i$, if we take $m_{t}$ many $h_{1},\dots,h_{i}$ to be equal to $e_{t}$ for all $1\leq t\leq d$, then both sides of (\ref{3:2is3}) equals to $\xi_{i}(\Taylor_{m}(g))$. So by multi-linearity, we have that (\ref{3:2is3}) holds.
\end{proof}

For $p$-periodic polynomial sequences, we have some $p$-periodicity properties for their  type-II  Taylor coefficients.

\begin{lem}\label{3:ratispp}
	Let $d,i,r\in\N_{+}$, $p\gg_{d,i} 1$ be a prime, $G/\Gamma$ be an $\N$-filtered nilmanifold, $g\in\poly_{p^{r}}(\Z^{d}\to G_{\N}\vert\Gamma)$, and $\xi_{i}$ be an $i$-th type-II horizontal character of $G/\Gamma$. 
	Then the map $(h_{1},\dots,h_{i})\mapsto\xi_{i}(\Taylor_{i}(g)(h_{1},\dots,h_{i}))$
	is a homogeneous $p^{r}$-periodic symmetric polynomial of multi-degree $(1,\dots,1)$. Moreover, $\Taylor_{m}(g)^{p^{r}}\in G_{[i,2]}\Gamma$ and $\xi_{i}(\Taylor_{m}(g))\in\Z/p^{r}$ for all $m\in\N^{d}$ with $\vert m\vert=i$.
\end{lem}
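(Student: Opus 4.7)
My plan is to first observe that the three conclusions—the $p$-periodicity of $P(h_1,\ldots,h_i):=\xi_i(\Taylor_i(g)(h_1,\ldots,h_i))$ modulo $\Z$, the containment $\Taylor_m(g)^p\in G_{[i,2]}\Gamma$, and $\xi_i(\Taylor_m(g))\in\Z/p$—are equivalent, so I can concentrate on a single one of them. The symmetry, multi-linearity, and homogeneity of $P$ are inherited from the corresponding properties of $\Taylor_i(g)$ in the abelian quotient $\Hor_i(G)=G_{[i,1]}/G_{[i,2]}$ recorded just before the lemma (via Corollary B.7 of \cite{GTZ12}). The equivalences themselves follow because $\xi_i$ by definition takes integer values on $\Hor_i(\Gamma)=(\Gamma\cap G_{[i,1]})G_{[i,2]}/G_{[i,2]}$, so $\xi_i(x)\in\Z$ iff $x\in G_{[i,2]}\Gamma$ for $x\in G_{[i,1]}$; and Lemma~\ref{3:sbeq} expresses $P$ as $\Delta_{h_i}\cdots\Delta_{h_1}Q(n)$ with $Q(n)=\sum_{|m|=i}\xi_i(\Taylor_m(g))\binom{n}{m}$, which by expansion of the discrete derivatives of $\binom{n}{m}$ is a symmetric multi-linear polynomial in $h_1,\ldots,h_i$ whose coefficients are $\Z$-combinations of the $\xi_i(\Taylor_m(g))$.

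My strategy for proving $\Taylor_m(g)^p\in G_{[i,2]}\Gamma$ is to identify $\Taylor_m(g)$ modulo $G_{[i,2]}$ with the top-degree type-I Taylor coefficient $g_m$ from the expansion $g(n)=\prod_{m'}g_{m'}^{\binom{n}{m'}}$ (for any fixed ordering of the product). The identification hinges on the fact that the image of $G_{[i,1]}/G_{[i,2]}$ is central in $\bar G:=G/G_{[i,2]}$ (since $[\bar G,\bar G_{[i,1]}]\subseteq\bar G_{[i+1,2]}=\{e\}$), and that any commutator $[g_{m'},g_{m''}]$ lies in $G_{|m'|+|m''|}\subseteq G_{[i,2]}$ whenever $|m'|+|m''|\geq i+1$, while for $|m'|+|m''|=i$ it lands inside the abelian subgroup $G_{[i,1]}$ where it is invisible to further commutators. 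Carefully bookkeeping these corrections when computing $\Taylor_m(g)=\Delta_{e_1}^{m_1}\cdots\Delta_{e_d}^{m_d}g(0)\pmod{G_{[i,2]}}$ from the type-I expansion shows that $\Taylor_m(g)\equiv g_m\pmod{G_{[i,2]}}$, so my goal becomes $g_m^p\in\Gamma G_{[i,2]}$.

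To prove $g_m^p\in\Gamma G_{[i,2]}$ I would exploit the $p$-periodicity $g(n+pe_j)=g(n)\gamma_j(n)$ with $\gamma_j\in\poly(\Z^d\to\Gamma_\N)$ together with the type-I expansion of $g$, imitating and specializing the argument for the type-I case developed in the proof of Lemma~3.21 of \cite{SunA}. The key combinatorial input is that, for $|m|<p$,
$$\binom{n+pe_j}{m}-\binom{n}{m}=\sum_{1\leq a\leq m_j}\binom{p}{a}\binom{n}{m-ae_j},$$
so this difference is divisible by $p$ (since $p\mid\binom{p}{a}$ for $1\leq a<p$ and the $a=p$ term is absent when $|m|<p$); combined with Baker--Campbell--Hausdorff corrections tracked modulo $G_{[|m|,2]}$, this forces the top-degree Taylor coefficient $g_m$ to satisfy $g_m^p\in\Gamma G_{|m|+1}$. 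Since $G_{|m|+1}=G_{[|m|+1,0]}\subseteq G_{[|m|,2]}$, one concludes $g_m^p\in\Gamma G_{[i,2]}$ as required.

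I expect the main obstacle to be the identification $\Taylor_m(g)\equiv g_m\pmod{G_{[i,2]}}$ and the adaptation of the type-I periodicity analysis from \cite{SunA} to establish $g_m^p\in\Gamma G_{|m|+1}$; both steps require careful BCH-type commutator bookkeeping to reconcile the non-commutativity of $G$ with the $p$-divisibility of the binomial coefficients and with the degree-rank filtration structure. Once that bookkeeping is in place the chain of equivalences in the first paragraph converts $g_m^p\in\Gamma G_{[i,2]}$ into the three asserted conclusions.
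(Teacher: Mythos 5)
Your proposal takes a genuinely different route from the paper's. The paper never touches the type-I Taylor expansion: it works directly with the iterated difference $\Delta_{h_i}\cdots\Delta_{h_1}g(n)$, observing that when $h_1\in p\Z^d$ the cocycle $g_{h_1}(n):=g(n)^{-1}g(n+h_1)$ is $\Gamma$-valued by $p$-periodicity of $g$, so a short Baker--Campbell--Hausdorff rewrite places $\Delta_{h_i}\cdots\Delta_{h_1}g(n)$ in $G_{[i,2]}\Gamma$, hence $\xi_i(\Taylor_i(g)(h_1,\dots,h_i))\in\Z$ whenever $h_1\in p\Z^d$; multilinearity and symmetry then upgrade this to full $p$-periodicity, and the Taylor-coefficient statements follow from Lemma~\ref{3:sbeq} together with duality over all $\xi_i$. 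You instead identify $\Taylor_m(g)\equiv g_m\pmod{G_{[i,2]}}$ with the $m$-th type-I Taylor coefficient and try to show $g_m^p\in\Gamma G_{[i,2]}$ directly via Vandermonde divisibility and commutator bookkeeping, reversing the logical order (coefficients first, periodicity second). The paper's approach buys conceptual economy: the cocycle $g_{h_1}$ absorbs the $p$-periodicity in one stroke and avoids manipulating the explicit product $\prod_{m'} g_{m'}^{\binom{n}{m'}}$ at all; your route is more hands-on and, if carried out, yields extra information about the type-I coefficients themselves, but it is substantially more work. Two cautions: your intermediate assertion $g_m^p\in\Gamma G_{|m|+1}$ is stronger than what the lemma states and than what you need ($\Gamma G_{[|m|,2]}$ suffices), so either prove the stronger form carefully or keep the reductions at the level of $G_{[|m|,2]}$ as you announce earlier; and the phrasing ``$\xi_i(x)\in\Z$ iff $x\in G_{[i,2]}\Gamma$'' is only correct when quantified over all $\xi_i\in\mathfrak{N}_i(G/\Gamma)$---this quantifier is exactly what the duality step uses, so it should be made explicit.
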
	
\begin{proof}
	It follows from  Corollary B.7 of \cite{GTZ12} that $\xi_{i}(\Taylor_{i}(g)(h_{1},\dots,h_{i}))$  a homogeneous symmetric polynomial of multi-degree $(1,\dots,1)$ (see also the remark on page 1281 of \cite{GTZ12}). We now show that it is $p^{r}$-periodic.
	Note that if $h_{1}\in p^{r}\Z^{d}$, then by the Baker-Campbell-Hausdorff formula, 
	$$\Delta_{h_{i}}\dots\Delta_{h_{1}}g(n)\equiv\Delta_{h_{i}}\dots\Delta_{h_{2}}(g_{h_{1}}(n)) \mod G_{[2,2]},$$
	where $g_{h_{1}}(n):=g(n)^{-1}g(n+h_{1})$ takes values in $\Gamma$. 	
So $\Delta_{h_{i}}\dots\Delta_{h_{1}}g(n)\in G_{[2,2]}\Gamma$
and thus
	\begin{equation}\label{3:cmnn}
	\begin{split}
	\xi_{i}(\Taylor_{i}(g)(h_{1},\dots,h_{i}))\equiv\xi_{i}(\Delta_{h_{i}}\dots\Delta_{h_{1}}g(n) \mod G_{[i,2]})\in\Z \text{ if } h_{1}\in p^{r}\Z^{d}.
	\end{split}
	\end{equation}   
	Since $\xi_{i}(\Taylor_{i}(g)(h_{1},\dots,h_{i}))$ is multi-linear and symmetric, it is not hard to deduce from (\ref{3:cmnn}) that $\xi_{i}(\Taylor_{i}(g)(h_{1},\dots,h_{i}))$ is $p^{r}$-periodic.
	
	 So by Lemma \ref{3:sbeq}, $\xi_{i}(\Taylor_{m}(g))\in\Z/p^{r}$ for all $m\in\N^{d}$ with $\vert m\vert=i$. Since $\xi_{i}$ is an arbitrary $i$-th type-II horizontal character of $G/\Gamma$, we have that $\Taylor_{m}(g)^{p^{r}}$ vanishes at every $i$-th type-II horizontal character of $G/\Gamma$. So $\Taylor_{m}(g)^{p^{r}}\in G_{[i,2]}\Gamma$.
\end{proof}

\subsection{Ratner-type theorem}

Following the idea of \cite{GTZ12},
we use the factorization theorem to obtain a Ratner-type theorem. 
	For an $\N$-filtered nilmanifold $G/\Gamma$ and $i\in\N_{+}$, recall that $\mathfrak{N}_{i}(G/\Gamma)$ is the group of all $i$-th type-II horizontal characters.

\begin{defn} 
Let $g(n)\in \poly(\Z^{d}\to G_{\N})$, $Q\in\N_{+}$
and $\Omega$ be a subset of $\Z^{d}$.
Let $\Xi_{\Omega,i,Q}(g)$ denote the group of all    $\xi_{i}\in \mathfrak{N}_{i}(G/\Gamma)$  such that 
$$\xi_{i}(\Taylor_{i}(g)(h_{1},\dots,h_{i}))=\xi_{i}(\Delta_{h_{i}}\dots\Delta_{h_{1}}g(n) \mod G_{[i,2]})\in\Z/Q$$
for all $(n,h_{1},\dots,h_{i})\in \Gow_{p,i}(\Omega)$.\footnote{Note that $\xi_{i}(\Delta_{h_{i}}\dots\Delta_{h_{1}}g(n) \mod G_{[i,2]})$ is well defined since $\Delta_{h_{i}}\dots\Delta_{h_{1}}g(n)\in G_{i}$.}
 Let
 $$\Xi_{\Omega,i,Q}^{\perp}(g):=\{x\in\Hor_{i}(G)\colon \xi_{i}(x)\in\Z \text{ for all } \xi_{i}\in \Xi_{\Omega,i}(g)\}.$$
 %
 %If $\Omega$ is a subset of $\V$ and $g=g'\circ\tau\in \poly_{p}(\V\to G_{\N}\vert\Gamma)$ for some $g' \in \poly_{p}(\Z^{d}\to G_{\N}\vert\Gamma)$, then denote $\Xi_{\Omega,i}(g):=\Xi_{\tau(\Omega),i}(g')$ and $\Xi_{\Omega,i}^{\perp}(g):=\Xi_{\tau(\Omega),i}^{\perp}(g')$.  
  \end{defn}

%\begin{lem}
% Suppose that $g=g'\circ\tau=g''\circ\tau$ for some $g',g''\in\poly_{p}(\iota^{-1}(\Omega)\to G_{\N}\vert\Gamma)$. Then $\Xi_{\tau(\Omega),i}(g')=\Xi_{\tau(\Omega),i}(g'')$ and $\Xi^{\perp}_{\tau(\Omega),i}(g')=\Xi_{\tau(\Omega),i}^{\perp}(g'')$. In particular, $\Xi_{\Omega,i}(g)$ and $\Xi_{\Omega,i}^{\perp}(g)$ are well defined and are independent of the choice of $g'$.
 %\end{lem}
%\begin{proof}
%It suffices to show that for all $\xi_{i}\in \mathfrak{N}_{i}(G/\Gamma)$ and $(n,h_{1},\dots,h_{i})\in \Gow_{p,i}(\tau(\Omega))$, we have that
%\begin{equation}\label{3:hid2}
%\xi_{i}(\Taylor_{i}(g')(h_{1},\dots,h_{i}))=\xi_{i}(\Taylor_{i}(g'')(h_{1},\dots,h_{i})).
%\end{equation}
%Note that $g'(n)=g''(n)$ for all $n\in \tau(\Omega)$. So for all  $n\in \tau(\Omega)$ and $m,m'\in (\Z^{d})^{k}$, 
%\begin{equation}\label{3:hid3}
%g'(n+pm)^{-1}g''(n+pm')=(g'(n+pm)^{-1}g'(n))(g''(n)^{-1}g''(n+pm'))\in\Gamma.
%\end{equation}
%For all $(n,h_{1},\dots,h_{i})\in \Gow_{p,i}(\tau(\Omega))$, since $n+\e_{1}h_{1}+\dots+\e_{i}h_{i}\in \tau(\Omega)+p(\Z^{d})^{k}$ for all $\e_{1},\dots,\e_{i}\in\{0,1\}$, it follows from (\ref{3:hid3}) that
 %$\Delta_{h_{i}}\dots\Delta_{h_{1}}({g'}^{-1}g'')(n)\in G_{[i,2]}\Gamma$.
%Therefore,  we have that
%$$\xi_{i}(\Taylor_{i}({g'}^{-1}g'')(h_{1},\dots,h_{i}))=\xi_{i}(\Delta_{h_{i}}\dots\Delta_{h_{1}}({g'}^{-1}g'')(n) \mod G_{[i,2]})\in \Z$$
%for all $(n,h_{1},\dots,h_{i})\in \Gow_{p,i}(\tau(\Omega))$. By Lemma \ref{3:magictaylor}, we get (\ref{3:hid2}).
%\end{proof}

We are now ready to state the Ratner-type theorem, in the proof of which we make essential use of the factorization theorem proved in the first part of the series \cite{SunA}.
 For $i\in\N$, let $\pi_{\Hor_{i}(G)}\colon G_{i}\to \Hor_{i}(G)$ be the projection map.

 \begin{thm}[Ratner-type theorem]\label{3:rat}
		Let $0<\d<1/2, C>0$, $d,D\in\N_{+}$, $s\in\N$  and $p\gg_{C,\d,d,D} 1$ be a prime. Let 
		$M\colon\F_{p}^{d}\to\F_{p}$ be a  non-degenerate quadratic form and $V+c$ be an affine subspace of $\V$. %of co-dimension $r$ with $\rank(M\vert_{V+c})=d-r$.
		Denote $\Omega:=V(M)\cap (V+c)$. 
		Let $G/\Gamma$ be an $s$-step $\N$-filtered nilmanifold  of complexity at most $C$, equipped with an $C$-rational Mal'cev basis $\mathcal{X}$, and  $g\in \poly(\Z^{d}\to G_{\N})$ be rational. Let
		$F\in\Lip(G/\Gamma\to\C^{D})$
		be a function with $\Vert F\Vert_{\Lip(G/\Gamma\to\C^{D})}\leq C$ such that 
		$$\vert\E_{n\in \iota^{-1}(\Omega)}F(g(n)\Gamma)\vert\geq \d.\footnote{Note that $g$ is periodic by Lemma \ref{3:r2p}.}$$
		If $\rank(M\vert_{V+c})\geq s+13$ and $p\gg_{C,\d,d,D} 1$, then there exist $Q\in\N_{+}$ with $Q\leq O_{C,\d,d,D}(1)$, $\e\in G$  of complexity $O_{C,\d,d,D}(1)$ and  a rational subgroup $G_{P}$ of $G$ which is $O_{C,\d,d,D}(1)$-rational relative to $\mathcal{X}$ such that for all $i\in\N_{+}$,
		$\pi_{\Hor_{i}(G)}(G_{P}\cap G_{i})\geq \Xi^{\perp}_{\iota^{-1}(\Omega),i,Q}(g)$		
		 and
		$$\Bigl\vert\int_{G_{P}/\Gamma_{P}}F(\e x)\,dm_{G_{P}/\Gamma_{P}}(x)\Bigr\vert\geq \d/2,$$
		where  $m_{G_{P}/\Gamma_{P}}$ is the Haar measure of $G_{P}/\Gamma_{P}$.
\end{thm}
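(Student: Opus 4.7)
The plan is to deduce this Ratner-type theorem from a quantitative factorization theorem for partially $p$-periodic polynomial sequences on the variety $V(M)\cap(V+c)$, analogous to the use of factorization in Sections 1.17--1.18 of \cite{GTZ12}. The key point is that the tools from Part I \cite{SunA} supply a factorization theorem valid on zero sets of non-degenerate quadratic forms intersected with affine subspaces, and the dimension hypothesis $d\geq s+r+13$ is precisely what makes $\rank(M\vert_{V+c})=d-r$ large enough (relative to the step $s$) for that theorem to apply.

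First I would invoke the factorization theorem from \cite{SunA} on the sequence $g\in\poly_{p}(V(M)\cap(V+c)\to G_{\N}\vert\Gamma)$, with equidistribution parameter $\eta=\eta(C,\d,D)$ to be tuned later. This produces, after enlarging $p\gg_{C,\d,d,D}1$, a decomposition
\begin{equation*}
g(n)=\e(n)\cdot g_{P}(n)\cdot\gamma(n)\quad\text{on }V(M)\cap(V+c),
\end{equation*}
where $\e$ is $(\eta,\text{poly})$-smooth with $\e(n)=\e\cdot\e'(n)$ for some $\e\in G$ of complexity $O_{C,\d,D,d}(1)$; $\gamma$ is rational (so $\gamma(n)\Gamma$ is periodic of period $O(1)$); and $g_{P}\in\poly_{p}(V(M)\cap(V+c)\to (G_{P})_{\N}\vert\Gamma_{P})$ is sufficiently equidistributed on $G_{P}/\Gamma_{P}$, where $G_{P}\leq G$ is $O_{C,\d,d,D}(1)$-rational relative to $\mathcal{X}$.

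Next, by Lipschitz continuity of $F$ and the smoothness of $\e$, the average $\E_{n\in V(M)\cap(V+c)}F(g(n)\Gamma)$ is approximated to within $O(\eta\Vert F\Vert_{\Lip})$ by $\E_{n\in V(M)\cap(V+c)}F(\e\cdot g_{P}(n)\gamma(n)\Gamma)$. Breaking into residue classes on which $\gamma(n)\Gamma$ is constant and applying the equidistribution of $g_{P}$ on each class (again using the factorization theorem from \cite{SunA}), one transfers the correlation hypothesis to
\begin{equation*}
\Bigl\vert\int_{G_{P}/\Gamma_{P}}F(\e x)\,dm_{G_{P}/\Gamma_{P}}(x)\Bigr\vert\geq \d/2,
\end{equation*}
provided $\eta$ was chosen small enough in terms of $\d,C,D$.

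The last step is to verify the horizontal containment $\pi_{\Hor_{i}(G)}(G_{P}\cap G_{i})\supseteq\Xi_{V(M)\cap(V+c),i}^{\perp}(g)$. By duality it suffices to show that any $\xi_{i}\in\mathfrak{N}_{i}(G/\Gamma)$ which vanishes on $\pi_{\Hor_{i}(G)}(G_{P}\cap G_{i})$ already lies in $\Xi_{V(M)\cap(V+c),i}(g)$. Using Lemma \ref{3:magictaylor} to split the Taylor coefficients of the product $g=\e g_{P}\gamma$, the $g_{P}$-contribution is killed by $\xi_{i}$ (since $\Taylor_{m}(g_{P})\in G_{P}\cap G_{i}$), the $\gamma$-contribution is integer-valued under $\xi_{i}$ by rationality together with Lemma \ref{3:ratispp} applied to $\gamma$, and the $\e$-contribution vanishes modulo $\Z$ on the whole $p$-Gowers set $\Gow_{p,i}(V(M)\cap(V+c))$ because $\e$ is constant up to a smooth error of total size $o(1)$ across the (discrete) Gowers parallelepiped. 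These together give $\xi_{i}(\Taylor_{i}(g)(h_{1},\dots,h_{i}))\in\Z$ on $\Gow_{p,i}(V(M)\cap(V+c))$, as required.

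The main obstacle is the third step: one must organize the verification simultaneously at every horizontal level $i\in\{1,\dots,s\}$, yet the factorization from \cite{SunA} is produced by an iterative refinement which at each stage only controls one level. To handle this, I would run the factorization through an outer induction on the step of the filtration (passing to the quotient $G/G_{[i,2]}$ at the $i$-th stage), choosing $\eta$ and the equidistribution parameter small enough at each step so that all previously established containments are preserved. Keeping track of the rationality bounds of $G_{P}$ under this iteration, and converting between the $\N$-filtration of $G$ and the degree-rank filtration used to define the type-II horizontal characters, is the delicate bookkeeping at the heart of the proof.
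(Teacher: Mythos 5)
Your high-level strategy — factor $g$ via the equidistribution theorem from Part I, transfer the correlation to the subnilmanifold $G_P/\Gamma_P$, then verify the horizontal containment by decomposing the Taylor data of $g$ along the factorization — is exactly what the paper does, and the first two steps would go through. But several details are off, and the final ``obstacle'' you describe does not exist.

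First, in the factorization of Theorem \ref{3:facf3} the element $\e$ is a \emph{constant} of $G$ of bounded complexity, not a smooth polynomial sequence $\e(n)=\e\cdot\e'(n)$. This matters in two places: (a) the transfer step is not a $O(\eta\Vert F\Vert_{\Lip})$ approximation using smoothness of $\e$ but rather an exact identity $F(\e g'(n)\gamma(n)\Gamma)=F(\e g'(n)\Gamma)$, valid because $\gamma\in\poly(V(M)\cap(V+c)\to G_{\N}\vert G_P\cap\Gamma)$ means $\gamma(n)\in\Gamma$ on the relevant periodicized set, so $\gamma(n)\Gamma=\Gamma$ — there is no need to break into residue classes; the only approximation needed is the equidistribution bound for $g'$, controlled by $\mathcal{F}(C')^{-1}$; and (b) since $\e$ is constant, $\Delta_{h_i}\dots\Delta_{h_1}\e=id$ for every $i\geq 1$, so the $\e$-contribution to $\xi_i(\Taylor_i(g))$ vanishes exactly, not ``up to a smooth $o(1)$ error across the parallelepiped''.

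Second, you invoke Lemma \ref{3:ratispp} for $\gamma$, but $\gamma$ is not $p$-periodic; the integrality of $\xi_i$ on the $i$-fold derivatives of $\tilde{\gamma}$ at Gowers tuples follows simply from the fact that $\tilde{\gamma}$ is $\Gamma$-valued on $\tau(V(M)\cap(V+c))+p\Z^d$, so for $(n,h_1,\dots,h_i)\in\Gow_{p,i}$ the derivative $\Delta_{h_i}\dots\Delta_{h_1}\tilde{\gamma}(n)\mod G_{[i,2]}$ lies in the image of $\Gamma$, which $\xi_i$ sends to $\Z$.

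Third, and most importantly, the ``main obstacle'' you identify — needing an outer induction on the filtration step because the factorization ``only controls one level'' — is not real. The theorem from \cite{SunA} produces a single rational subgroup $G_P$ such that $g'$ takes values in $G_P$; consequently $\Taylor_m(\tilde{g})\in G_P\cap G_i$ for \emph{every} $i$ simultaneously, so any $\xi_i$ annihilating $\pi_{\Hor_i(G)}(G_P\cap G_i)$ kills the $\tilde{g}$-contribution at level $i$ for all $i$ at once. Together with the trivial vanishing of the $\e$-contribution and the integrality of the $\tilde{\gamma}$-contribution, the containment $\pi_{\Hor_i(G)}(G_P\cap G_i)\geq\Xi^{\perp}_{V(M)\cap(V+c),i}(g)$ follows for all $i$ with no iteration or degree-rank bookkeeping. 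The paper's proof is in fact a one-shot application of the factorization.
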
	
\begin{proof}  
By taking components we may assume that $F$ is scalar-valued.
	Let $\mathcal{F}\colon\R_{+}\to\R_{+}$ be a growth function to be chosen later depending only on $C,\d,d,D$.
	Fix any $n_{\ast}\in\iota^{-1}(\Omega)$.
	 By Theorem \ref{3:facf3r},  (see Appendix \ref{3:s:AppA7} for definitions) if $p\gg_{C,\d,d,\mathcal{F}} 1$, then we may write $$g=\e g'\gamma,$$ where $\e\in G$ is of complexity $O_{C'}(1)$, $g'\in\poly(\Z^{d}\to (G_{P})_{\N})$ is a rational polynomial sequence with  $(g'(n)\Gamma)_{n\in \iota^{-1}(\Omega)}$ being $\mathcal{F}(C')^{-1}$-totally equidistributed    on some  sub-nilmanifold  $G_{P}/(G_{P}\cap \Gamma)$ of $G/\Gamma$ for some subgroup $G_{P}$ of $G$ which is $C'$-rational relative to $\mathcal{X}$ for some $C\leq C'\leq O_{C,d,\mathcal{F}}(1)$, and	 
	  $\gamma\in\poly_{\approx r,n_{\ast}}(\iota^{-1}(\Omega)\to G_{\N}\vert\Gamma)\cap\poly_{r}(\iota^{-1}(\Omega)\to G_{N}\vert\Gamma)$  for some $r\in\N_{+}$ with $r\leq O_{C,C',d,s}(1)<\mathcal{F}(C')$ (provided that $\mathcal{F}$ grows sufficiently fast).

	 For all $m\in [r]^{d}$,  there exists $\tilde{m}\in\Z^{d}$ with $\tilde{m}-m\in r\Z^{d}$ and $\tilde{m}-n_{\ast}\in p\Z^{d}$. Then $\tilde{m}\in\iota^{-1}(\Omega)$. % and we may write $\tilde{m}=m+rz_{m}$ for some $z_{m}\in\Z^{d}$. 
	 Denote
	 $$g'_{m}(n):=\{\gamma(\tilde{m})\}^{-1}g'(n)\{\gamma(\tilde{m})\}.$$
	  By Lemma \ref{3:r2p}, we may assume that 
	   $g$  and $g'_{m}$ are $prK$-periodic for some $K\in\N_{+}$ for all $m\in [r]^{d}$. 
	Then  
	\begin{equation}\nonumber%\label{3:D61}
	\begin{split}
	&\quad\d\leq \vert\E_{n\in \iota^{-1}(\Omega)}F(g(n)\Gamma)\vert
	=\vert\E_{m\in[r]^{d}}\E_{n\in \iota^{-1}(r^{-1}(\Omega-m))\cap[pK]^{d}}F(\e g'(rn+m)\gamma(rn+m)\Gamma)\vert
	\\&=\vert\E_{m\in[r]^{d}}\E_{n\in \iota^{-1}(r^{-1}(\Omega-m))\cap[pK]^{d}}F(\e g'(rn+m)\gamma(\tilde{m})\Gamma)\vert
	\\&=\vert\E_{m\in[r]^{d}}\E_{n\in \iota^{-1}(r^{-1}(\Omega-m))\cap[pK]^{d}}F(\e \{\gamma(\tilde{m})\}g_{m}'(rn+m)\Gamma)\vert.
	\end{split}
	\end{equation}	
  %where we slightly abuse notation to {\color ???.}

	% denote
	% $$g'_{m}(n):=\{\gamma(\tilde{m})\}^{-1}g'(n)\{\gamma(\tilde{m})\}.$$
 	% By {\color ???} we may assume that 
	%   $g$  and $g'_{m}$ are $prK$-periodic for some $K\in\N_{+}$ for all $m\in [r]^{d}$. Then  
	%\begin{equation}\nonumber%\label{3:D61}
	%\begin{split}
	%&\quad\d\leq \vert\E_{n\in \iota^{-1}(\Omega)}F(g(n)\Gamma)\vert
	%=\vert\E_{m\in[r]^{d}}\E_{n\in \iota^{-1}(r^{-1}(\Omega-m))\cap[pK]^{d}}F(\e g'(rn+m)\gamma(rn+m)\Gamma)\vert
	%\\&=\vert\E_{m\in[r]^{d}}\E_{n\in \iota^{-1}(r^{-1}(\Omega-m))\cap[pK]^{d}}F(\e \{\gamma(m)\}g_{m}'(rn+m)\Gamma)\vert.
	%\end{split}
	%\end{equation}	

	By Theorem \ref{3:ct}, there exists $m\in [r]^{d}$ such that 
	\begin{equation}\label{3:D61}
	\begin{split}
	% \vert\E_{n\in \iota^{-1}(r^{-1}(\Omega-m)-z_{m})\cap[pK]^{d}}F(\e_{m}g_{m}'(rn+\tilde{m})\Gamma)\vert=
	\vert\E_{n\in \iota^{-1}(r^{-1}(\Omega-m))\cap[pK]^{d}}F(\e_{m}g_{m}'(rn+m)\Gamma)\vert\geq \d.
	\end{split}
	\end{equation}	
	where   $\e_{m}:=\e \{\gamma(\tilde{m})\}$.

	Let $\psi\colon G\to\R^{m}$ be the Mal'cev coordinate map.  
	Since $\gamma$ is partially $r$-rational on $\iota^{-1}(\Omega)$ with base point $n_{\ast}$, for all $h\in\Z^{d}$, we have that $\psi(\gamma(n_{\ast}+prh))\in \Z^{\dim(G)}$. Since the map $h\mapsto \psi(\gamma(n_{\ast}+prh))$ is a polynomial of degree at most $s$ by the  Baker-Campbell-Hausdorff formula, we have that $\psi(\gamma(n_{\ast}+ph))\in \Z^{\dim(G)}/r^{s}$ for all $h\in\Z^{d}$. Since $\tilde{m}-n_{\ast}\in p\Z^{d}$, we have that $\psi(\gamma(\tilde{m}))\in \Z^{\dim(G)}/r^{s}$. Since $\psi([\gamma(\tilde{m})])\in  \Z^{\dim(G)}$, by the  Baker-Campbell-Hausdorff formula, enlarging $r$ if necessary,  we may assume without loss of generality that  $\psi(\{\gamma(\tilde{m})\})\in \Z^{\dim(G)}/r$.

	%	For all $n\in\iota^{-1}(\Omega)$, since $\psi(\gamma(n))\in \Z^{\dim(G)}/r$ and $\psi([\gamma(n)])\in \Z^{\dim(G)}$, by the  Baker-Campbell-Hausdorff formula, enlarging $r$ if necessary,  we may assume without loss of generality that  $\psi(\{\gamma(n)\})\in \Z^{\dim(G)}/r$ for all $n\in\iota^{-1}(\Omega)$, where $\psi\colon G\to\R^{m}$ is the Mal'cev coordinate map.  

	Let $G'_{P}:=\{\gamma(\tilde{m})\}^{-1}G_{p}\{\gamma(\tilde{m})\}$. Since $\psi(\{\gamma(\tilde{m})\})\in \Z^{\dim(G)}/r$, by Lemma A.13 of \cite{GT12b}, $G'_{P}$ is a $C'$-rational subgroup  of $G$ relative to $\mathcal{X}$.
	 	Since $(g'(n)\Gamma)_{n\in \iota^{-1}(\Omega)}$ is $\mathcal{F}(C')^{-1}$-totally equidistributed on $G_{p}/(G_{P}\cap\Gamma)$, we have that the sequence $(g'(rn+m)\Gamma)_{n\in \iota^{-1}(r^{-1}(\Omega-m))}$ is $\mathcal{F}(C')^{-1}$-equidistributed on $G_{p}/(G_{P}\cap\Gamma)$ (which is equivalent of saying that the sequence $(g'(n)\Gamma)_{n\in\iota^{-1}(\Omega)\cap(r\Z^{d}+m)}$ is $\mathcal{F}(C')^{-1}$-equidistributed on $G_{p}/(G_{P}\cap\Gamma)$).
%		
%		
%		
%		 {\color   $(g(rn+m)\Gamma)_{n\in\iota^{-1}(r^{-1}(\Omega-m))}$ (or equivalently, the sequence $(g(n)\Gamma)_{n\in\iota^{-1}(\Omega)\cap (r\Z^{d}+m)}$) is $\d$-equidistributed on $G/\Gamma$.}
%
%		
		  By Lemma \ref{3:sftg},  
	the sequence $(g'_{m}(rn+m)\Gamma)_{n\in \iota^{-1}(r^{-1}(\Omega-m))}$ is $C''\mathcal{F}(C')^{-\frac{1}{C''}}$-equidistributed on $G'_{p}/(G'_{P}\cap\Gamma)$ for some $C''=C''(C,d)\geq 1$.  
%	
	%
%	By {\color the bijiection,} (setting $B=r^{-1}(\Omega-m)$, $c=-r^{-1}m$, $L(z):=r^{-1}z$ and $K'=rK$), we have that 
%	\begin{equation}\label{3:D612}
%	\begin{split}
%	&\quad\d\leq \vert\E_{m\in[r]^{d}}\E_{n\in\iota^{-1}(\Omega)\cap[rpK]^{d}}\E_{w\in[K]^{d}}F(\e \{\gamma(m)\}g_{m}'(rr_{\ast}n+rpw+(1-rr^{\ast})m)\Gamma)\vert.
%	\end{split}
%	\end{equation}	
%	
%	Since $(g'(n)\Gamma)_{n\in \iota^{-1}(\Omega)}$ is $\mathcal{F}(C')^{-1}$-equidistributed on $G_{P}/(G_{P}\cap \Gamma)$,
We have that
\begin{equation}\nonumber 
	\begin{split}
	&\quad \Bigl\vert \E_{n\in \iota^{-1}(r^{-1}(\Omega-m))}F(\e_{m} g'_{m}(rn+m)\Gamma)-\int_{G'_{P}/\Gamma'_{P}}F(\e_{m} x)\,dm_{G'_{P}/\Gamma'_{P}}(x)\Bigr\vert
	\\& \leq C''\mathcal{F}(C')^{-\frac{1}{C''}}\Vert F(\e_{m}\cdot)\Vert_{\Lip(G_{P}/\Gamma_{P})}.
	\end{split}
	\end{equation}	
		Since $\Vert F\Vert_{\Lip(G/\Gamma)}\leq C$, we have that $\Vert F\Vert_{\Lip(G'_{P}/\Gamma'_{P})}\leq O_{C,C',\d,s}(1)$. So $\Vert F(\e_{m}\cdot)\Vert_{\Lip(G'_{P}/\Gamma'_{P})}\leq O_{C,C',\d,s}(1)$ by Lemma A.5 of \cite{GT12b}. Therefore,
	   if $\mathcal{F}$ grows sufficiently fast depending on $C,\d,d,D$, then we have that 
		$$\Bigl\vert \E_{n\in \iota^{-1}(r^{-1}(\Omega-m))}F(\e_{m} g'_{m}(rn+m)\Gamma)-\int_{G'_{P}/\Gamma'_{P}}F(\e_{m}x)\,dm_{G'_{P}/\Gamma'_{P}}(x)\Bigr\vert<\d/2$$
	and so it follows
	from (\ref{3:D61})  that $$\Bigl\vert\int_{G'_{P}/\Gamma'_{P}}F(\e_{m} x)\,dm_{G'_{P}/\Gamma'_{P}}(x)\Bigr\vert\geq \d/2.$$
	
	It remains to show that  $$\pi_{\Hor_{i}(G)}(G'_{P}\cap G_{i})\geq \Xi^{\perp}_{\iota^{-1}(\Omega),i,Q}(g)$$ 
	for all $i\in\N_{+}$ for some $Q\in\N_{+}$ with $Q\leq O_{C,\d,d,D}(1)$.  If not, then by duality and the rational nature of $G'_{P}$, there exists   $\xi_{i}\in \mathfrak{N}_{i}(G/\Gamma)$ not belonging to $\Xi_{\iota^{-1}(\Omega),i,Q}(g)$ which annihilates $\pi_{\Hor_{i}(G)}(G'_{P}\cap G_{i})$.
	% 
	% Write $g'=\tilde{g}\circ \tau$ and $\gamma=\tilde{\gamma}\circ\tau$ for some $\tilde{g}\in\poly_{p}(\tau(V(M)\cap(V+c))+p\Z^{d}\to (G_{P})_{\N}\vert G_{P}\cap\Gamma)$ and $\tilde{\gamma}\in\poly(\tau(V(M)\cap(V+c))+p\Z^{d}\to G_{\N}\vert\Gamma)$. Then $g=(\e \tilde{g}\tilde{\gamma})\circ \tau$ and it follows from Lemma \ref{3:grg} that $\e \tilde{g}\tilde{\gamma}\in \poly_{p}(\tau(V(M)\cap(V+c))+p\Z^{d}\to G_{\N}\vert\Gamma)$.
	 By Lemma \ref{3:magictaylor},
	 \begin{equation}\label{3:r1}
	 \begin{split}
	 &\quad \xi_{i}(\Delta_{h_{i}}\dots\Delta_{h_{1}} g(n) \mod G_{[i,2]})	  
	% \\&=\xi_{i}(\Delta_{h_{i}}\dots\Delta_{h_{1}}\e_{m} \mod G_{[i,2]})\xi_{i}(\Delta_{h_{i}}\dots\Delta_{h_{1}}g'(n) \mod G_{[i,2]})
	% \\&\qquad\qquad\xi_{i}(\Delta_{h_{i}}\dots\Delta_{h_{1}}(\{\gamma(\tilde{m})\}^{-1}\gamma(n)) \mod G_{[i,2]})
	% \\&
	\equiv\xi_{i}(\Delta_{h_{i}}\dots\Delta_{h_{1}}\gamma(n) \mod G_{[i,2]}) \mod\Z.
	 \end{split}
	 \end{equation}
	 
	 Since $\gamma$ is partially $r$-periodic on $\iota^{-1}(\Omega)$, for all $(n,h_{1},\dots,h_{i})\in \Gow_{p,i}(\iota^{-1}(\Omega))$ with $h_{1},\dots,h_{i}$ belonging to $r\Z$, we have that $\Delta_{h_{i}}\dots\Delta_{h_{1}} \gamma(n)\in\Gamma$ and so
	  \begin{equation}\nonumber
	 \begin{split}
	\xi_{i}(\Delta_{h_{i}}\dots\Delta_{h_{1}} \gamma(n) \mod G_{[i,2]})\in\Z.   
	\end{split}
	 \end{equation}
	 
	 Now fix any $(n,h_{1},\dots,h_{i})\in \Gow_{p,i}(\iota^{-1}(\Omega))$. We may write $h_{i'}=rv_{i'}+pu_{i'}$ for some $v_{i'},u_{i'}\in\Z^{d}$ for all $1\leq i'\leq i$. For all $y_{1},\dots,y_{i}\in\Z^{d}$, note that $(n,r(v_{1}+py_{1}),\dots,r(v_{i}+py_{i}))\in \Gow_{p,i}(\iota^{-1}(\Omega))$. So by assumption,
	 $$H(y_{1},\dots,y_{i}):=\xi_{i}(\Delta_{r(v_{i}+py_{i})}\dots\Delta_{r(v_{1}+py_{1})} \gamma(n) \mod G_{[i,2]})\in\Z.$$
	 By interpolation, $H$ is a polynomial of degree at most $i$ with coefficients in $\Z/(s!)^{ds}$. So
	 $$\xi_{i}(\Delta_{h_{i}}\dots\Delta_{h_{1}} \gamma(n) \mod G_{[i,2]})=H(u_{1}/r,\dots,u_{i}/r)\in \Z/Q$$
	for some $Q\in\N_{+}$ with $Q\leq O_{C,\d,d,D}(1)$.
%	  \begin{equation}\nonumber
%	 \begin{split}
%	\xi_{i}(\Delta_{h_{i}}\dots\Delta_{h_{1}} \gamma(n) \mod G_{[i,2]})\in\Z/Q   
%	\end{split}
%	 \end{equation}
	 for all $(n,h_{1},\dots,h_{i})\in \Gow_{p,i}(\iota^{-1}(\Omega))$.
	% 	Since $\gamma(n)$ takes values in $\Gamma$ when $n\in \iota^{-1}(\Omega)$,  we have that $$\xi_{i}(\Delta_{h_{i}}\dots\Delta_{h_{1}}\gamma(n) \mod G_{[i,2]})\in\Z$$ for all $(n,h_{1},\dots,h_{i})\in \Gow_{p,i}(\iota^{-1}(\Omega))$.
	 By (\ref{3:r1}), we have that $$\xi_{i}(\Delta_{h_{i}}\dots\Delta_{h_{1}}g(n) \mod G_{[i,2]})\in\Z/Q$$ for all $(n,h_{1},\dots,h_{i})\in \Gow_{p,i}(\iota^{-1}(\Omega))$. So    $\xi_{i}\in\Xi_{\iota^{-1}(\Omega),i,Q}(g)$, a contradiction.
	  This finishes the proof. 
\end{proof}

\subsection{Nilsequences and nilcharacters}\label{3:s:nn}

We start with the definition of vertical torus and character, which plays an important role in the equiditribution properties on nilmanifolds.

\begin{defn}[Vertical torus and character]\label{3:vtc}
	Let $I$ be the degree, multi-degree, or degree-rank ordering, and $J$ be a finite down set of $I$ having a maximum element $s$. Let $G/\Gamma$ be a nilmanifold with an $I$-filtration $(G_{i})_{i\in I}$ with a Mal'cev basis $\mathcal{X}$ adapted to it. Then $G_{s}$ lies in the center of $G$. The \emph{vertical torus} of $G/\Gamma$ is the set $G_{s}/(\Gamma\cap G_{s})$.
	A \emph{vertical character} of $G/\Gamma$ (with respect to the filtration $(G_{i})_{i\in I}$) is a continuous homomorphism $\xi\colon G_{s}\to\R$ such that $\xi(\Gamma\cap G_{s})\subseteq \Z$ (in particular, $\xi$ descents to a continuous homomorphism from the vertical torus $G_{s}/(\Gamma\cap G_{s})$ to $\T$). Let $\psi\colon G\to \R^{m}$ be the Mal'cev coordinate map and denote $m_{s}=\dim(G_{s})$. Then there exists $k\in\Z^{m_{s}}$ such that $\xi(g_{s})=(\bold{0},k)\cdot\psi(g_{s})$ for all $g_{s}\in G_{s}$ (note that the first $\dim(G)-m_{s}$ coordinates of $\psi(g_{s})$ are all zero). We call the quantity $\Vert\xi\Vert:=\vert k\vert$ the \emph{complexity} of $\xi$  (with respect to $\mathcal{X}$). 
\end{defn}

We now define nilsequences and nilcharacters. In addition to the conventional definitions, we also define periodic and partially periodic nilsequences and nilcharacters. Recall that for $D\in\N_{+}$,  $\mathbb{S}^{D}$ denotes the set of $(z_{1},\dots,z_{D})\in\C^{D}$ with $\vert z_{1}\vert^{2}+\dots+\vert z_{D}\vert^{2}=1$.

	\begin{defn}[Nilsequences and nilcharacters]
		Let $D,k\in\N_{+}$, $I$ be the degree, multi-degree, or degree-rank ordering such that $\dim(I)\vert k$, and $J$ be a finite down set of $I$. Let $\Omega$ be a subset $\Z^{k}$. We say that a function $\phi\colon\Omega\to \C^{D}$ is an \emph{($I$-filtered) nilsequence} on $\Omega$ of degree $\subseteq J$ if 
		\begin{equation}\label{3:varr}
		\phi(n)=F(g(n)\Gamma) \text{ for all } n\in\Omega
		\end{equation}
		for some nilmanifold $G/\Gamma$ with an $I$-filtration $(G_{i})_{i\in I}$ of degree $\subseteq J$ equipped with some smooth Riemannian metric $d_{G/\Gamma}$, some $g\in\poly(\Z^{k}\to G_{I})$, and some Lipschitz function $F\colon G/\Gamma\to \C^{D}$. We say that $D$ is the \emph{dimension} of $\phi.$
	    If in addition, 
	    \begin{itemize}
	    	\item $J$ has a maximal element $s$;
	    	\item $F(g_{s}x)=\exp(\eta(g_{s}))F(x)$ for all $x\in G/\Gamma$ and $g_{s}\in G_{s}$ for some vertical character $\eta\colon G_{s}\to \R$ (we say that $\eta$ is the \emph{vertical frequency} of $F$);
	    	\item $F$ takes values in $\mathbb{S}^{D}$,
	    \end{itemize}	
	    then we say that $\phi$ is a \emph{$k'$-integral ($I$-filtered) nilcharacter} on $\Omega$ of degree $\subseteq J$, where $k'=\frac{k}{\dim(I)}$.\footnote{Note that we only define $Q$-periodic nilsequence/nilcharacter on the designated set $\Omega$. However, one can easily extend the definition of such a sequence to the whole space $\Z^{k}$ by requiring (\ref{3:varr}) to hold for all $n\in\Z^{k}$.} 	Following Convention \ref{3:ckk}, we omit the phrase $k'$-integral throughout as the value of $k'$ is clear given $I$ and $\Omega$.
	 A similar remark applies to the degree and the degree-rank filtrations.

	    For any $s\in I$, we say that a function $\phi\colon\Omega\to \C^{D}$ is a ($I$-filtered) nilsequence/nilcharacter on $\Omega$ of degree $\leq s$ if it is a ($I$-filtered) nilsequence/nilcharacter on $\Omega$ of degree $\subseteq \{i\in I\colon i\leq s\}$.
	    
	      If in addition, the polynomial sequence $g$ in (\ref{3:varr}) % is $Q$-rational, 
	       belongs to $\poly_{\approx Q}(\Z^{k}\to G_{I}\vert\Gamma)$, then we say that $\phi$ is a \emph{$Q$-rational} ($I$-filtered)  nilsequence/nilcharacter on $\Omega$.%\footnote{One can defined partially rational  nilsequence/nilcharacter on $\Omega$ with some base point $n_{\ast}\in\Omega$ in a similar way, but we do not need such a concept in this paper.} 

	%     If in addition, the polynomial sequence $g$ in (\ref{3:varr}) belongs to $\poly_{Q}(\Omega\to G_{I}\vert\Gamma)$, then we say that $\phi$ is a \emph{partially $Q$-periodic} ($I$-filtered)  nilsequence/nilcharacter on $\Omega$. 

	    If in addition, the polynomial sequence $g$ in (\ref{3:varr}) belongs to $\poly_{Q}(\Z^{k}\to G_{I}\vert\Gamma)$, then we say that $\phi$ is a \emph{$Q$-periodic} ($I$-filtered)  nilsequence/nilcharacter on $\Omega$.

%		Let $\Omega$ be a subset of $\F_{p}^{k}$. %Denote $\Omega'=\iota^{-1}(\Omega)$. 
%	   A function $\phi\colon\Omega\to \C^{D}$ is an \emph{($I$-filtered) nilsequence/nilcharacter} on $\Omega$
%	of degree $\leq s$ (or $\subseteq J$) if $\phi(n)=\phi'\circ\tau(n)$ for all $n\in\Omega$ for some ($I$-filtered) nilsequence/nilcharacter $\phi'\colon \iota^{-1}(\Omega)\to\C^{D}$  on $\Omega$
%	of degree $\leq s$ (or $\subseteq J$). We say that $\phi$ is \emph{partially $p$-periodic/$p$-periodic
%	} (or simply \emph{partially perodic/periodic}) on $\Omega$ if we may require $\phi'$ to be partially $p$-perodic/$p$-periodic on $\iota^{-1}(\Omega)$.	

		Let $\Omega$ be a subset of $\F_{p}^{k}$. %Denote $\Omega'=\iota^{-1}(\Omega)$. 
	   A function $\phi\colon\Omega\to \C^{D}$ is an \emph{($I$-filtered) nilsequence/nilcharacter} on $\Omega$
	of degree $\leq s$ (or $\subseteq J$) if $\phi(n)=\phi'\circ\tau(n)$ for all $n\in\Omega$ for some ($I$-filtered) nilsequence/nilcharacter $\phi'\colon \iota^{-1}(\Omega)\to\C^{D}$  on $\Omega$
	of degree $\leq s$ (or $\subseteq J$). We say that $\phi$ is \emph{$p$-periodic/$p$-periodic
	} (or simply \emph{perodic/periodic}) on $\Omega$ if we may require $\phi'$ to be   $p$-perodic/$p$-periodic on $\iota^{-1}(\Omega)$.	
	\end{defn}
	
Note that we allow nilsequences and nilcharacters to be vector valued (i.e. $D\geq 2$) in order to avoid certain topological obstructions in the constructions of nilsequences. See \cite{GTZ12} pages 1252--1255  for further discussions.

It is natural to extend nilsequences to \emph{pre-nilsequences} by allowing the underlying space $G/\Gamma$ to be pre-nilmanifolds. However, it turns out that such an extension is not necessary, as every pre-nilsequence can be expressed as a nilsequence with comparable complexities.

  \begin{lem}[Pre-nilsequences are nilsequences]\label{3:pre2g}
      Let the notation be as above. Let $D,Q\in\N_{+},C>0$, $J$ be a finite downset of $I$ and $s\in I$. Let $g\in\poly(\Z^{k}\to (G/\Gamma)_{I})$, and $F\colon G/\Gamma\to \C^{D}$ be a Lipschitz function. Then there exist   $g'\in\poly(\Z^{k}\to (G'/\Gamma')_{I})$ with $g'(\bold{0})=id_{G'}$, and  a Lipschitz function $F'\colon G'/\Gamma'\to \C^{D}$  such that
 $$F(g(n)\Gamma)=F'(g'(n)\Gamma')$$
 for all $n\in \Z^{k}$, and that the followings hold:
    \begin{itemize}
       \item if $G/\Gamma$ is of complexity at most $C$, then  $G'/\Gamma'$ is of complexity at most $O(C)$;
       \item if $G/\Gamma$ is of degree $\subseteq J$, then so is  $G'/\Gamma'$;
       \item if $g\in\poly_{Q}(\Z^{k}\to G_{I}\vert\Gamma)$, then $g'\in\poly_{Q}(\Z^{k}\to G'_{I}\vert\Gamma')$;
        \item if $g\in\poly_{\approx Q}(\Z^{k}\to G_{I}\vert\Gamma)$, then $g'\in\poly_{\approx rQ^{r}}(\Z^{k}\to G'_{I}\vert\Gamma')$ for some $r\in\N_{+}$, $r\leq O_{C,k}(1)$;  
       \item if $F$ is of Lipschitz norm at most $C$, then  $F'$ is of of Lipschitz norm at most $O(C)$;
       \item if $F$ takes values in $\mathbb{S}^{D}$, then so does $F'$;
       \item if some vertical character $\eta\colon G_{s}\to \R$ is the vertical frequency of $F$ and if $G_{s}$ is in the center of $G$, then $\eta$ is also the vertical frequency of $F'$.  
   \end{itemize} 
%     Let $D,k\in\N_{+},C>0$, $p$ be a prime,  $I$ be the degree, multi-degree, or degree-rank ordering with $\dim(I)\vert k$, and let $J$ be a finite down set of $I$. %Let $(\mathcal{A},\mathcal{B})$ be one of the following pairs:
%%      $$(\poly(\Omega'\to (G/\Gamma)_{\N^{k'}}),\poly(\Omega'\to (G'/\Gamma')_{\N^{k'}}))$$
%%      {\color{} here $\Omega'$ is a subset of $\Z^{k}$, }   
 %Let $(G/\Gamma)_{I}$ be an $I$-filtered pre-nilmanifold of degree $\subseteq J$ and complexity at most $C$, $g\in\poly(\Z^{k}\to (G/\Gamma)_{I})$, and $F\colon G/\Gamma\to \C^{D}$ be a Lipschitz function of Lipschitz norm at most $C$. When $I$ is the degree-rank filtration, we further assume that $G_{[i,0]}=G_{[i,1]}$ for all $i\geq 1$. Then there exist an  $I$-filtered nilmanifold $(G'/\Gamma')_{I}$  of degree $\subseteq J$ and complexity at most $O(C)$, $g'\in\poly(\Z^{k}\to (G'/\Gamma')_{I})$ with $g'(\bold{0})=id_{G'}$, and $F'\colon G'/\Gamma'\to \C^{D}$ be a Lipschitz function of Lipschitz norm at most $O(C)$ such that
% $$F(g(n)\Gamma)=F'(g'(n)\Gamma')$$
% for all $n\in \Z^{k}$. Moreover, if $\vert F\vert=1$
  \end{lem}
  \begin{proof}
   %    If $I$ is the degree ordering, then denote $G':=G_{1}$. If $I$ is the multi-degree ordering, then let $G'$ denote the group generated by $G_{e_{1}},\dots,G_{e_{\dim(I)}}$.  {\color{} What about the degree-rank filtration???}
  %
 % Define $G'_{i}:=G'\cap G_{i}$ for all $i\in I$ and let $\Gamma':=G'\cap\Gamma$. It is clear to see that $(G'/\Gamma')_{I}$ is an $I$-filtered nilmanifold of degree $\subseteq J$ and complexity at most $C$. 
 We assume that $s\neq \bold{0}$ since otherwise there is nothing to prove.
%Let $V$ be a fundamental domain of $G/\Gamma$ which is contained in a ball of radius $O(C)$ centered at $id_{G}$. Then there is a unique way to write every $h\in G$ in the form $h=\{h\}[h]$ for some $\{h\}\in V$ and $[h]\in \Gamma$.
  Let $F'\colon G'/\Gamma'\to\C$ be the map given by $$F'(h\Gamma'):=F(\{g(\bold{0})\}h\Gamma)$$ for all $h\in G'$. Let $g'\colon \Z^{k}\to G'_{I}$ be the map given by $$g'(n):=\{g(\bold{0})\}^{-1}g(n)g(\bold{0})^{-1}\{g(\bold{0})\}.$$
 
By Lemma \ref{3:r2p},
if $g\in\poly_{\approx Q}(\Z^{k}\to G_{I}\vert\Gamma)$, then $g'\in\poly_{\approx rQ^{r}}(\Z^{k}\to G'_{I}\vert\Gamma')$ for some $r\in\N_{+}$, $r\leq O_{C,k}(1)$.
      Since $\{g(\bold{0})\}\Gamma=g(\bold{0})\Gamma$, we have that 
      $$F'(g'(n)\Gamma')=F(g(n)g(\bold{0})^{-1}\{g(\bold{0})\}\Gamma)=F(g(n)\Gamma)$$
      for all $n\in \Z^{k}$.
%Since $\{g(0)\}$ belongs to $V$, we have that
Note that  $F'$ is a Lipschitz function of Lipschitz norm at most $O(C)$.
Also note that $g'(\bold{0})=id_{G'}$ and that
$$\Delta_{h_{m}}\dots\Delta_{h_{1}}g'(n)=\{g(\bold{0})\}^{-1}(\Delta_{h_{m}}\dots\Delta_{h_{1}}g(n))\{g(\bold{0})\}$$
for all $m\geq 1$ and $h_{1},\dots,h_{m}\in \Z^{k}$. So it follows from the assumption $g\in \poly(\Z^{k}\to(G/\Gamma)_{I})$ and the fact $[G_{0},G_{i}]\subseteq G_{i}, i\in I$ that $g'\in \poly(\Z^{k}\to(G'/\Gamma')_{I})$. 
%
%
%Finally, since  $g\in \poly_{p}(\F_{p}^{k}\to G_{I})$, we have that $g'\in \poly_{p}(\F_{p}^{k}\to G_{I})$ by ???. On the other hand, we have 
%$$g'(n)=[\{g(\bold{0})\},g(0)g(n)^{-1}]g(n)g(0)^{-1}\in G'.$$

The remaining parts of this lemma can be checked easily. 
  \end{proof}

We next define the spaces of nilsequences and nilcharacters that we will work with in this paper.

\begin{defn}[Spaces of nilsequences and nilcharacters]\label{3:snn}
	Let   $I$ be the degree, multi-degree, or degree-rank ordering, $k,Q\in\N_{+}$ with $\dim(I)\vert k$,  $J$ be a finite down set of $I$, $p$ be a prime, and $\Omega$ be a subset of  $\Z^{k}$ or $\F_{p}^{k}$. We set $Q=p$ if $\Omega$ is a subset of $\F_{p}^{k}$.
	\begin{itemize}
		\item Let $\Nil^{J}(\Omega)$ (resp. $\Nil^{J}_{Q}(\Omega)$, $\Nil^{J}_{\approx Q}(\Omega)$) denote the set of all nilsequences (resp. $Q$-periodic nilsequences, $Q$-rational nilsequences) on $\Omega$ of degree $\subseteq J$.
%		\item Let $\Nil^{J}_{Q}(\Omega)$ denote the set of all $Q$-periodic nilsequences on $\Omega$ of degree $\subseteq J$. 
%		\item Let $\Nil^{J}_{\approx Q}(\Omega)$ denote the set of all $Q$-rational nilsequences on $\Omega$ of degree $\subseteq J$. 
\item Let $\Xi^{J}(\Omega)$ (resp. $\Xi^{J}_{Q}(\Omega)$, $\Xi^{J}_{\approx Q}(\Omega)$) denote the set of all nilcharacters (resp. $Q$-periodic nilcharacters, $Q$-rational nilcharacters) on $\Omega$ of degree $\subseteq J$, if $J$ has a maximum element.
%		\item Let $\Xi^{J}_{Q}(\Omega)$ denote the set of  $Q$-periodic nilcharacters on $\Omega$, if $J$ has a maximum element.
		\item We define $\Nil^{\prec J}(\Omega):=\Nil^{J'}(\Omega)$, %$\Nil^{\prec J}_{Q}(\Omega):=\Nil^{J'}_{Q}(\Omega)$ and $\Nil^{\prec J}_{\approx Q}(\Omega):=\Nil^{J'}_{\approx Q}(\Omega)$, 
		where $J'$ is the largest down set of $I$ which is contained in but not equals to $J$. 
		Since $I$ is the degree, multi-degree or the degree-rank ordering, it is not hard to see that such $J'$ exists and is unique. We adopt similar notations for $\Nil^{\prec J}_{Q}(\Omega)$ and $\Nil^{\prec J}_{\approx Q}(\Omega)$, 
	\item For $s\in I$, we write $\Nil^{s}(\Omega):=\Nil^{J}(\Omega)$ and $\Nil^{\prec s}(\Omega):=\Nil^{J'}(\Omega)$,   where $J$ is the down set consisting of all $i\in I$ which does not exceed $s$ and $J'$ is the down set consisting of all $i\in I$ which does not exceed $s'$, where $s'$ is the largest element which is smaller than $s$ (if such $s'$ exists). We use similar notations for $\Nil^{s}_{Q}(\Omega)$, $\Nil^{\prec s}_{Q}(\Omega)$, $\Nil^{s}_{\approx Q}(\Omega)$, $\Nil^{\prec s}_{\approx Q}(\Omega)$, $\Xi^{s}(\Omega)$, $\Xi^{\prec s}(\Omega)$, $\Xi^{s}_{Q}(\Omega)$, $\Xi^{\prec s}_{Q}(\Omega)$, $\Xi^{s}_{\approx Q}(\Omega)$ and $\Xi^{\prec s}_{\approx Q}(\Omega)$.
 	\end{itemize}		 
\end{defn}

For convenience, we introduce the following definition.

	\begin{defn}[Representations of  nilsequences and nilcharacters]
		Let   $I$ be the degree, multi-degree, or degree-rank ordering, $k,Q\in\N_{+}$ with $\dim(I)\vert k$,  $J$ be a finite down set of $I$, $p$ be a prime, and $\Omega$ be a subset of  $\Z^{k}$. 
\begin{itemize}
\item For any $\phi\in\Nil^{J}(\Omega)$ (resp. $\Nil^{J}_{Q}(\Omega)$, $\Nil^{J}_{\approx Q}(\Omega)$), we may write $\phi(n)=F(g(n)\Gamma), n\in\Omega$ for some degree-$J$ nilmanifold $(G/\Gamma)_{I}$, $g\in \poly(\Z^{k}\to G_{I})$ (resp. $\poly_{Q}(\Z^{k}\to G_{I}\vert\Gamma)$, $\poly_{\approx Q}(\Z^{k}\to G_{I}\vert\Gamma)$), and Lipschitz function $F\colon G/\Gamma\to\C^{D}$. We say that $((G/\Gamma)_{I},g,F)$ is a \emph{$\Nil^{J}(\Omega)$ (resp. $\Nil^{J}_{Q}(\Omega)$, $\Nil^{J}_{\approx Q}(\Omega)$)-representation} of $\phi$. 
%\item For any $\phi\in\Nil^{J}_{Q}(\Omega)$, we may write $\phi(n)=F(g(n)\Gamma), n\in\Omega$ for some degree-$J$ nilmanifold $(G/\Gamma)_{I}$, $g\in \poly_{Q}(\Z^{k}\to G_{I}\vert\Gamma)$, and Lipschitz function $F\colon G/\Gamma\to\C^{D}$. We say that $((G/\Gamma)_{I},g,F)$ is a \emph{$\Nil^{J}_{Q}(\Omega)$-representation} of $\phi$. 
\item For any $\xi\in\Xi^{J}(\Omega)$ (resp. $\Xi^{J}_{Q}(\Omega), \Xi^{J}_{\approx Q}(\Omega)$), we may write $\chi(n)=F(g(n)\Gamma), n\in\Omega$ for some degree-$J$ nilmanifold $(G/\Gamma)_{I}$, $g\in \poly(\Z^{k}\to G_{I}\vert\Gamma)$ (resp. $\poly_{Q}(\Z^{k}\to G_{I}\vert\Gamma), \poly_{\approx Q}(\Z^{k}\to G_{I}\vert\Gamma)$), and Lipschitz function $F\colon G/\Gamma\to\C^{D}$ having a vertical frequency $\eta$. We say that $((G/\Gamma)_{I},g,F,\eta)$ is a \emph{$\Xi^{J}(\Omega)$ (resp. $\Xi^{J}_{Q}(\Omega), \Xi^{J}_{\approx Q}(\Omega)$)-representation} of $\chi$. 
\item The complexity of a $\Nil^{J}(\Omega)$ (resp. $\Nil^{J}_{Q}(\Omega)$, $\Nil^{J}_{\approx Q}(\Omega)$, $\Xi^{J}(\Omega)$, $\Xi^{J}_{Q}(\Omega)$, $\Xi^{J}_{\approx Q}(\Omega)$)-representation is the the maximum of the complexities of $G/\Gamma$, $F$ and $\eta$ (if $\eta$ appears in the corresponding definitions). We say that $\phi$ is of \emph{complexity}  at most $C$ as an element of $\Nil^{J}(\Omega)$ (resp. $\Nil^{J}_{Q}(\Omega)$, $\Nil^{J}_{\approx Q}(\Omega)$, $\Xi^{J}(\Omega)$, $\Xi^{J}_{Q}(\Omega)$, $\Xi^{J}_{\approx Q}(\Omega)$) if  $\phi$ admits a  $\Nil^{J}(\Omega)$ (resp. $\Nil^{J}_{Q}(\Omega)$, $\Nil^{J}_{\approx Q}(\Omega)$, $\Xi^{J}_{Q}(\Omega)$, $\Xi^{J}_{\approx Q}(\Omega)$)-representation of complexity at most $C$.
\item Let $\Omega\subseteq \F_{p}^{k}$ and $\phi\in \Nil^{J}(\Omega)$. We say that $((G/\Gamma)_{I},g,F)$ is a \emph{$\Nil^{J}(\Omega)$-representation} of $\phi$ if it is a $\Nil^{J}(\iota^{-1}(\Omega))$-representation  of some $\phi'\in \Nil^{J}(\iota^{-1}(\Omega'))$ such that $\phi(n)=\phi'\circ \tau(n)$ for all $n\in\Omega$. We say that $\phi$  is of \emph{complexity}  at most $C$ as an element of $\Nil^{J}(\Omega)$ if  $\phi$ admits a  $\Nil^{J}(\Omega)$-representation of complexity at most $C$. We adopt similar notations for $\Nil^{J}_{p}(\Omega)$- and $\Xi^{J}_{p}(\Omega)$-representations and complexities.  
\item We define similar notations for $\Nil^{\prec J}(\Omega)$,  $\Nil^{\prec s}(\Omega)$, ect.
  \end{itemize}
\end{defn}

\begin{rem}
In certain situations, a sequence can belong to multiple sets defined above, and have different complexities in different sets. For example,
a sequence $\phi$ can simultaneously belong to $\Xi^{s+1}_{p}(\Omega)$ and  $\Nil^{s}(\Omega)$. In this case, the complexity of $\phi$ as an element in $\Xi^{s+1}_{p}(\Omega)$ and the complexity of $\phi$ as an element in $\Nil^{s}(\Omega)$ are different. We will specify which complexity we are referring to when such ambiguity happens.
\end{rem}

\begin{defn}[Complexity of  nilsequences and nilcharacters]
	Let $\Nil^{J;C}(\Omega)$ denote the set of all nilsequences in $\Nil^{J}(\Omega)$ of complexity at most $C$ (as an element in $\Nil^{J;C}(\Omega)$), and  $\Nil^{J;C,D}(\Omega)$ denote the set of all nilsequences in $\Nil^{J;C}(\Omega)$ of complexity at most $C$ and dimension at most $D$.
	We adopt similar notations for $\Nil^{J}_{Q}(\Omega)$, $\Xi^{J}_{Q}(\Omega)$, $\Nil^{\prec J}(\Omega)$, $\Nil^{s}(\Omega)$, ect.
\end{defn}

We refer the readers to Appendix \ref{3:s:AppB} for some approximation properties for nilsequeneces which will be used in this paper.

\section{Some preliminary results}\label{3:s:b0}

In this section, we provide some  preliminary results and  setup the initial steps for the proof of $\SGI(s)$.

\subsection{The proof of $\SGI(1)$}\label{3:s:b1}
Note that $\SGI(0)$ holds trivially by taking $\d=\e$ and $\phi\equiv 1$.
We now prove $\SGI(1)$.
It turns out that the second local Gowers norm is connected to the Fourier coefficients.
 	 For a function $f\colon\V\to \C$, let $\widehat{f}\colon \V\to\C$ denote the Fourier transform of $f$ given by $\widehat{f}(\xi):=\E_{x\in\V}f(x)\exp(-\frac{1}{p}\tau(\xi\cdot x)).$ We have

\begin{lem}\label{3:pla}
	Let $d\in\N$ with $d\geq 9$, $p$ be a prime, $M\colon\V\to\F_{p}$ be a non-degenerate quadratic form, and $f\colon\V\to\C$ with $\vert f\vert\leq 1$. Then
	$$\Vert f\Vert^{4}_{U^{2}(V(M))}\leq\sup_{\xi\in\V}p^{1-d}\vert\widehat{\bold{1}_{V(M)}f}(\xi)\vert(1+O(p^{-\frac{1}{8}})).$$
\end{lem}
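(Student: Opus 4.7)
The plan is to use the classical Fourier-analytic approach of Lyall--Magyar--Parshall \cite{LMP19}, adapted to the finite field setting. First, I would set $g := \bold{1}_{V(M)}f$, so that $g$ vanishes off $V(M)$ and the four vertex conditions defining $\Gow_{2}(V(M))$ are automatically enforced by the support of $g$. This lets me write
\[
\|f\|_{U^{2}(V(M))}^{4}=\frac{1}{|\Gow_{2}(V(M))|}\sum_{n,h_{1},h_{2}\in\V} g(n)\,\overline{g(n+h_{1})}\,\overline{g(n+h_{2})}\,g(n+h_{1}+h_{2}).
\]
Opening the four factors by Fourier inversion and collapsing the orthogonality relations in $n$, $h_{1}$, $h_{2}$ converts the right-hand side into a constant multiple of $\sum_{\xi\in\V}|\widehat{g}(\xi)|^{4}$.

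Next I would apply the standard inequality $\sum_{\xi}|\widehat{g}(\xi)|^{4}\leq \sup_{\xi}|\widehat{g}(\xi)|^{2}\sum_{\xi}|\widehat{g}(\xi)|^{2}$, use Parseval to bound the second factor by $\E_{x}|g(x)|^{2}\leq |V(M)|/p^{d}$, and bound one copy of $\sup_{\xi}|\widehat{g}|$ in $\sup_{\xi}|\widehat{g}|^{2}$ by the trivial $L^{\infty}$--$L^{1}$ estimate $|\widehat{g}|\leq \E_{x}|g(x)|\leq |V(M)|/p^{d}$. This peels off a single linear factor of $\sup_{\xi}|\widehat{g}(\xi)|$ and reduces the problem to estimating $|V(M)|$ and $|\Gow_{2}(V(M))|$ precisely enough to recover the prefactor $p^{1-d}$ claimed in the lemma.

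For these counts I would use the identity
\[
M(n+h_{1}+h_{2})-M(n+h_{1})-M(n+h_{2})+M(n) = 2\,h_{1}Ah_{2},
\]
where $A$ is the matrix associated to $M$. For odd $p$ this reduces the four vertex conditions defining $\Gow_{2}(V(M))$ to the three conditions $n,n+h_{1},n+h_{2}\in V(M)$ together with the single bilinear constraint $h_{1}Ah_{2}=0$. Writing each condition as a character sum via $\bold{1}_{X=0}=\frac{1}{p}\sum_{t\in\F_{p}}\exp(\tau(tX)/p)$ then expresses $|V(M)|$ and $|\Gow_{2}(V(M))|$ as sums of Gauss sums of quadratic forms on $\V$ and $\V^{3}$, indexed by small tuples of twist parameters, with the trivial twist contributing the main terms of the expected order in $p$ and $d$.

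The main obstacle is the precise control of the nontrivial twists in the expansion of $|\Gow_{2}(V(M))|$. For each nonzero tuple $(t_{0},t_{1},t_{2},s)\in\F_{p}^{4}$, I would need to verify that the combined quadratic form $t_{0}M(n)+t_{1}M(n+h_{1})+t_{2}M(n+h_{2})+s\,h_{1}Ah_{2}$ on $(\V)^{3}$ has rank growing linearly in $d$; this will follow from the non-degeneracy of $M$ via a case analysis on which of the $t_{i}$ and $s$ vanish. The classical Gauss sum evaluation then yields a savings of at least $p^{-cd}$ per nontrivial twist, and summing over the $O(p^{4})$ twists and normalizing by the main term converts the accumulated error into a multiplicative factor $1+O(p^{-1/8})$ once $d\geq 9$. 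An analogous but simpler computation handles $|V(M)|$, and combining these estimates with the Fourier bound from the previous step yields the claimed inequality.
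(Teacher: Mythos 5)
Your starting identity
\[
\|f\|^4_{U^2(V(M))}=\frac{1}{|\Gow_2(V(M))|}\sum_{n,h_1,h_2}g(n)\overline{g(n+h_1)}\,\overline{g(n+h_2)}g(n+h_1+h_2)
=\frac{p^{-d}}{|\Gow_2(V(M))|}\sum_\xi|\widehat g(\xi)|^4
\]
(with $\widehat g(\xi)=\sum_x g(x)e(-\xi\cdot x/p)$) is correct and is, in substance, the paper's first step. The plan for counting $|V(M)|$ and $|\Gow_2(V(M))|$ via Gauss sums is also fine, though the paper simply quotes these counts from an earlier part of the series (Lemma~\ref{3:countingh}), giving $|\Gow_2(V(M))|=p^{3d-4}(1+O(p^{-1/2}))$ and $|V(M)|=p^{d-1}(1+O(p^{-1/2}))$.

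The genuine gap is in the middle. Peeling off two copies of $\sup_\xi|\widehat g(\xi)|$ — one via Parseval on $\sum_\xi|\widehat g|^2$ and one via the trivial $L^\infty$--$L^1$ bound $|\widehat g(\xi)|\leq|V(M)|$ — gives
\[
\sum_\xi|\widehat g(\xi)|^4\leq\sup_\xi|\widehat g(\xi)|\cdot p^{d-1}\cdot p^{2d-1}(1+O(p^{-1/2})),
\]
hence $\|f\|^4_{U^2(V(M))}\leq p^{2-d}\sup_\xi|\widehat g(\xi)|(1+O(p^{-1/2}))$. This is a factor of $p$ weaker than the claimed $p^{1-d}$, and no amount of precision in the counts improves it: testing $f\equiv 1$ (where $\widehat g(\bold{0})=|V(M)|\approx p^{d-1}$ and $\|1\|^4_{U^2(V(M))}=1$) shows the lemma's $p^{1-d}$ is already tight, while your bound overshoots by a factor of $p$. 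That extra $p$ is not cosmetic downstream: in the deduction of $\SGI(1)$ the $p^{2-d}$ version would only give a correlation of order $p^{-1}(\e/2)^4$ with a linear phase, which degenerates as $p\to\infty$.

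The paper recaptures the missing power of $p$ by a bootstrap that your proposal omits. It iterates the Cauchy--Schwarz inequality three times (passing through $\|f\|^8$ and $\|f\|^{16}$), each round reintroducing $V(M)$-membership constraints on the new shifted variables via the indicator $\delta=\bold{1}_{V(M)}$; this produces $\|f\|^{16}_{U^2(V(M))}\lesssim p^{8}\sum_\xi|\widehat g_{\E}(\xi)|^8$ (here $\widehat g_{\E}=\E_x g(x)e(\cdots)$). One then peels off $\sup^4$ and feeds the remaining fourth moment back into the exact relation $\sum_\xi|\widehat g_{\E}|^4=p^{-4}\|f\|^4_{U^2(V(M))}(1+O(p^{-1/2}))$, obtaining the self-referential bound $\|f\|^{16}\lesssim p^4\sup^4\cdot\|f\|^4$. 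Using $\|f\|^4\leq 1$ and taking a fourth root (which is where the $O(p^{-1/8})$ in the error term comes from) gives the stated $p^{1-d}$. This reuse of the Gowers norm itself in the final Fourier step — rather than closing the argument with one more $L^\infty$--$L^1$ bound — is the key idea you would need to add.
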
	
\begin{proof}
The approach we use  is similar to the proof of Theorem 10 of \cite{LMP19}.
	By Example %\ref{1:rreepp} 
	9.5 of \cite{SunA}, $\Gow_{2}(V(M))$ is  an $M$-set 	of total co-dimension 4.
	By Lemma \ref{3:countingh}, since $d\geq 9$, we have that  
	$$\vert \Gow_{2}(V(M))\vert=p^{3d-4}(1+O(p^{-1/2})).$$
	For convenience denote $\delta:=\bold{1}_{V(M)}$. Then
	\begin{equation}\nonumber
	\begin{split}
	&\quad\Vert f\Vert^{4}_{U^{2}(V(M))}
	=\E_{(n,h_{1},h_{2})\in \Gow_{2}(V(M))}\prod_{\e=(\e_{1},\e_{2})\in\{0,1\}^{2}}\mathcal{C}^{\vert\e\vert}f(n+\e_{1}h_{1}+\e_{2}h_{2})
	\\&=\frac{p^{3d}}{\vert \Gow_{2}(V(M))\vert}\cdot\E_{(n,h_{1},h_{2})\in (\V)^{3}}\prod_{\e=(\e_{1},\e_{2})\in\{0,1\}^{2}}\mathcal{C}^{\vert\e\vert}(\delta f)(n+\e_{1}h_{1}+\e_{2}h_{2})
	\\&\leq  p^{4}\E_{n\in\V}\d(n)\Bigl\vert\E_{y,z,w\in\V}\d\overline{f}(y)\d\overline{f}(z)
	\d f(w)\bold{1}_{n=y+z-w}\Bigr\vert\Bigl(1+O(p^{-\frac{1}{2}})).
	\end{split}
	\end{equation}		
	By the Cauchy-Schwartz inequality,
	\begin{equation}\nonumber
	\begin{split}
	\Vert f\Vert^{8}_{U^{2}(V(M))}\leq p^{8} \Bigl(\E_{n\in\V}\d(n)\Bigr)\Bigl(\E_{n\in\V}\d(n)\Bigl\vert \E_{y,z,w\in\V}\d\overline{f}(y)\d\overline{f}(z)\d f(w)\bold{1}_{n=y+z-w}\Bigr\vert^{2}\Bigr)\Bigl(1+O(p^{-1})\Bigr).
	\end{split}
	\end{equation}	
	By Lemma \ref{3:countingh},
	\begin{equation}\nonumber
	\begin{split}
	&\quad\Vert f\Vert^{8}_{U^{2}(V(M))}
	\\&\leq p^{7} \E_{n\in\V}\d(n) \E_{y_{1},z_{1},w_{1},y_{2},z_{2},w_{2}\in\V}\prod_{j=1,2}\mathcal{C}^{j}\Bigl(\d\overline{f}(y_{j})\d\overline{f}(z_{j})\d f(w_{j})
	\bold{1}_{n=y_{j}+z_{j}-w_{j}}\Bigr)
	\Bigl(1+O(p^{-\frac{1}{2}})\Bigr)
	\\&=p^{7-d} \E_{y_{1},z_{1},w_{1},y_{2},z_{2},w_{2}\in\V}
	\\&\qquad\quad\prod_{j=1,2}\mathcal{C}^{j}\Bigl(\d\overline{f}(y_{j})\d\overline{f}(z_{j})\d f(w_{j})\Bigr)		
	\d(y_{1}+z_{1}-w_{1})
	\bold{1}_{y_{1}+z_{1}-w_{1}=y_{2}+z_{2}-w_{2}}\Bigl(1+O(p^{-\frac{1}{2}})\Bigr)
	\\&\leq p^{7-d}\E_{w_{1},w_{2}\in\V}\d(w_{1})\d(w_{2})
	\\&\qquad\quad\cdot\Bigl\vert\E_{y_{1},z_{1},y_{2},z_{2}\in\V}\d(y_{1}+z_{1}-w_{1})\bold{1}_{y_{1}+z_{1}-w_{1}=y_{2}+z_{2}-w_{2}}\prod_{j=1,2}\mathcal{C}^{j}(\d f(y_{j})\d\overline{f}(z_{j}))\Bigr\vert\Bigl(1+O(p^{-\frac{1}{2}})\Bigr).
	\end{split}
	\end{equation}
	Again by a similar argument using the Cauchy-Schwartz inequality and Lemma \ref{3:countingh}, we may bound $\Vert f\Vert^{16}_{U^{2}(V(M))}$ by
	\begin{equation}\nonumber
	\begin{split}
	p^{12-3d}\E_{y_{j},z_{j}\in\V, 1\leq j\leq 4}\prod_{j=1}^{4}\mathcal{C}^{j}(\d f(y_{j})\d\overline{f}(z_{j}))\bold{1}_{y_{2}-y_{1}+z_{2}-z_{1}=y_{4}-y_{3}+z_{4}-z_{3}}W_{y_{1},\dots,z_{4}}\Bigl(1+O(p^{-\frac{1}{2}})\Bigr),
	\end{split}
	\end{equation}
	where $W_{y_{1},\dots,z_{4}}$ is the qunatitiy
	\begin{equation}\nonumber
	\begin{split}
	\E_{w\in\V}\d (w)\d(y_{2}-y_{1}+z_{2}-z_{1}+w)\d(y_{1}+z_{1}-w)\d(y_{3}+z_{3}-w)
	=\frac{\vert V(M)\cap V_{y_{1},\dots,z_{4}}\vert}{p^{d}},
	\end{split}
	\end{equation}
	with $V_{y_{1},\dots,z_{4}}$ being the set of $w\in\V$ such that $M(w)=M(y_{2}-y_{1}+z_{2}-z_{1}+w)=M(y_{1}+z_{1}-w)=M(y_{3}+z_{3}-w)$.
	
	If
	$y_{2}-y_{1}+z_{2}-z_{1}, y_{1}+z_{1}, y_{3}+z_{3}$ are linearly independent, then $V_{y_{1},\dots,z_{4}}$ is an affine subspace of $\V$ of co-dimension 3. Since $d\geq 9$, by Lemma \ref{3:countingh}, we have that
	$W_{y_{1},\dots,z_{4}}=p^{-4}(1+O(p^{-\frac{1}{2}})).$
	On the other hand, by Lemma \ref{3:iiddpp}, it is not hard to compute that  the number of tuples $(y_{1},y_{2},y_{3},z_{1},z_{2},z_{3})\in(\V)^{6}$ such that $y_{2}-y_{1}+z_{2}-z_{1}, y_{1}+z_{1}, y_{3}+z_{3}, V$ are not linearly independent is at most $3p^{5d+2}$. 
	So $\Vert f\Vert^{16}_{U^{2}(V(M))}$ is bounded by
	\begin{equation}\label{3:temp1}
	\begin{split}
	 p^{8-3d}\E_{y_{j},z_{j}\in\V, 1\leq j\leq 4}\prod_{j=1}^{4}\mathcal{C}^{j}(\d f(y_{j})\d\overline{f}(z_{j}))\bold{1}_{y_{2}-y_{1}+z_{2}-z_{1}=y_{4}-y_{3}+z_{4}-z_{3}}\Bigl(1+O(p^{-\frac{1}{2}})\Bigr).
	\end{split}
	\end{equation}
	Since
	$$\bold{1}_{y_{2}-y_{1}+z_{2}-z_{1}=y_{4}-y_{3}+z_{4}-z_{3}}=p^{-d}\sum_{\xi\in\V}\exp\Bigl(-\frac{1}{p}\tau(\xi\cdot (y_{4}-y_{3}-y_{2}+y_{1}+z_{4}-z_{3}-z_{2}+z_{1}))\Bigr),$$
	we may rewrite (\ref{3:temp1}) as $p^{8-4d}\sum_{\xi\in\V}\vert\widehat{\d f}(\xi)\vert^{8}(1+O(p^{-\frac{1}{2}})).$
	So
	\begin{equation}\nonumber
	\begin{split}
	&\quad\Vert f\Vert^{16}_{U^{2}(V(M))}
	\leq p^{8-4d}\sum_{\xi\in\V}\vert\widehat{\d f}(\xi)\vert^{8}(1+O(p^{-\frac{1}{2}}))
	\\&\leq  p^{8-4d}\sup_{\xi\in\V}\vert\widehat{\d f}(\xi)\vert^{4}\sum_{\xi\in\V}\vert\widehat{\d f}(\xi)\vert^{4}(1+O(p^{-\frac{1}{2}}))
	\\&=p^{8-4d}\sup_{\xi\in\V}\vert\widehat{\d f}(\xi)\vert^{4}\Vert\d f\Vert^{4}_{U^{2}(\V)}(1+O(p^{-\frac{1}{2}})) \text{ (see page 14 of \cite{LMP19})}
	\\&=p^{4-4d}\sup_{\xi\in\V}\vert\widehat{\d f}(\xi)\vert^{4}\Vert f\Vert^{4}_{U^{2}(V(M))}(1+O(p^{-\frac{1}{2}}))
	\\&\leq p^{4-4d}\sup_{\xi\in\V}\vert\widehat{\d f}(\xi)\vert^{4}(1+O(p^{-\frac{1}{2}})).
	\end{split}
	\end{equation}
	This finishes the proof.
\end{proof}

We are now ready to  prove $\SGI(1)$.
Suppose that $\Vert f\Vert_{U^{2}(V(M))}>\e$.  
By Lemma \ref{3:pla}, if $d\geq 9$ and $p\gg_{\e}1$,  then there exists $\xi\in \V$ such that
$p^{1-d}\Bigl\vert\widehat{\bold{1}_{V(M)}f}(\xi)\Bigr\vert\geq (\e/2)^{4}.$
By Lemma \ref{3:countingh},
 \begin{equation}\nonumber
 \begin{split}
 &\quad(\e/2)^{4}\leq p^{1-d}\Bigl\vert\widehat{\bold{1}_{V(M) }f}(\xi)\Bigr\vert=p^{1-d}\Bigl\vert\E_{n\in \V}\bold{1}_{V(M)}f(n)\cdot \exp(-\frac{\tau(\xi\cdot n)}{p})\Bigr\vert
 \\&=\Bigl\vert\E_{n\in V(M)}f(n)\cdot \exp(-\frac{\tau(\xi\cdot n)}{p})\Bigr\vert+O(p^{-\frac{1}{2}}).
 \end{split}
 \end{equation}
It is clear that the map $n\mapsto \exp(-\frac{\tau(\xi\cdot n)}{p})$ is a 1-step $p$-periodic nilsequence of complexity $O(1)$. This completes the proof of $\SGI(1)$.

\subsection{A preliminary reduction for $\SGI(s)$}

For our applications, we also need to use the following improved version of the inverse theorem, which we denote as $\SGI(s)+$:

\begin{prop}\label{3:inv+}[%Inverse theorem for spherical Gowers norms
	$\SGI(s)+$]
	Let $d\in\N_{+},r,s\in\N$, with $d-r\geq N(s-1)$ and $\e>0$. There exist $\delta:=\d(d-r,\e), C:=C(d-r,\e)>0$, and $p_{0}:=p_{0}(d-r,\e)\in\N$
	such that for every prime $p\geq p_{0}$, every
	non-degenerate quadratic form $M\colon\V\to\F_{p}$, every  affine subspace $V+c$ of $\V$ of co-dimension $r$, and every function $f\colon\V\to\C$ bounded in magnitude by 1, if $\rank(M\vert_{V+c})=d-r$, and $\Vert f\Vert_{U^{s+1}(V(M)\cap(V+c))}>\e$, then there exists 
     $\phi\in\Nil^{s;C,1}_{p}(\V)$
	such that
	$$\Bigl\vert\E_{n\in V(M)\cap (V+c)}f(n)\cdot \phi(n)\Bigr\vert>\delta.$$
\end{prop}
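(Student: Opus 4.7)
The plan is to reduce $\SGI(s)+$ on the affine subspace $V+c\subseteq\V$ directly to $\SGI(s)$ (Theorem \ref{3:inv}) applied on the space $\F_{p}^{d-r}$, using the fact that $\rank(M\vert_{V+c})=d-r$ makes the restriction of $M$ to $V+c$ a non-degenerate quadratic form in $d-r$ variables, which is exactly the dimension threshold $N(s-1)$ required by $\SGI(s)$.

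First I would fix any bijective linear transformation $\psi\colon\F_{p}^{d-r}\to V$ (this is where the complexity $C$ and $\d$ will acquire their dependence on $d-r$ only, not on $d$, since $\psi$ conjugates intrinsic data), and define $f'\colon\F_{p}^{d-r}\to\C$ and $M'\colon\F_{p}^{d-r}\to\F_{p}$ by
\[
f'(m):=f(\psi(m)+c),\qquad M'(m):=M(\psi(m)+c).
\]
By the definition of $\rank(M\vert_{V+c})$ recalled in Section \ref{3:s:defn}, the hypothesis $\rank(M\vert_{V+c})=d-r$ says exactly that $M'$ is a non-degenerate quadratic form on $\F_{p}^{d-r}$.

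Next I would check that the Gowers norms correspond under this parameterization. Any $(n,h_{1},\dots,h_{s+1})\in\Gow_{s+1}(V(M)\cap(V+c))$ must satisfy $n\in V+c$ and $h_{1},\dots,h_{s+1}\in V$, obtained by subtracting $n$ from $n+h_{i}$ and using that $V+c$ is affine. The map
\[
(m,h'_{1},\dots,h'_{s+1})\mapsto(\psi(m)+c,\psi(h'_{1}),\dots,\psi(h'_{s+1}))
\]
is therefore a bijection from $\Gow_{s+1}(V(M'))$ onto $\Gow_{s+1}(V(M)\cap(V+c))$, which yields $\Vert f'\Vert_{U^{s+1}(V(M'))}=\Vert f\Vert_{U^{s+1}(V(M)\cap(V+c))}>\e$. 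Since $d-r\geq N(s-1)$ and $p\gg_{d-r,\e}1$, Theorem \ref{3:inv} applied to $f'$ and $M'$ on $\F_{p}^{d-r}$ produces $\d=\d(d-r,\e)$, $C=C(d-r,\e)>0$, and $\phi'\in\Nil^{s;C,1}_{p}(\F_{p}^{d-r})$ with $\vert\E_{m\in V(M')}f'(m)\phi'(m)\vert>\d$.

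Finally I would pull $\phi'$ back to a $p$-periodic $s$-step nilsequence on $\V$. Extend the affine bijection $n\mapsto\psi^{-1}(n-c)\colon V+c\to\F_{p}^{d-r}$ to any surjective affine map $L\colon\V\to\F_{p}^{d-r}$ (choosing a linear complement of $V$ in $\V$ and sending it to $\bold{0}$ will do), and set $\tilde\phi(n):=\phi'(L(n))$. Writing $\phi'(m)=F(g(m)\Gamma)$ with $g\in\poly_{p}(\F_{p}^{d-r}\to G_{\N}\vert\Gamma)$ of complexity at most $C$, Proposition \ref{3:BB}(ii) applied to the linear part of $L$ (together with Proposition \ref{3:BB}(i) for the translation by $c$) shows $g\circ L\in\poly_{p}(\V\to G_{\N}\vert\Gamma)$, so $\tilde\phi\in\Nil^{s;C',1}_{p}(\V)$ with $C'=O_{C}(1)=O_{d-r,\e}(1)$. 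The bijection from the Gowers-norm step gives
\[
\bigl\vert\E_{n\in V(M)\cap(V+c)}f(n)\tilde\phi(n)\bigr\vert=\bigl\vert\E_{m\in V(M')}f'(m)\phi'(m)\bigr\vert>\d,
\]
completing the proof.

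The main (minor) obstacle is the last step: verifying that the pull-back $\tilde\phi=\phi'\circ L$ really is a $p$-periodic nilsequence on $\V$ of controlled complexity. This is not a closure property that is immediate from the definition, but it is exactly what Proposition \ref{3:BB} provides (closure of $\poly_{p}(\cdot\to G_{\N}\vert\Gamma)$ under translations and linear transformations, modulo $\Gamma$), so the argument is ultimately a direct reduction rather than a new piece of analysis.
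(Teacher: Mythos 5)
Your proposal is correct and follows essentially the same route as the paper's proof of Lemma~\ref{3:sgi++}: parameterize $V+c$ by a bijective linear map from $\F_{p}^{d-r}$, observe that the pulled-back quadratic form is non-degenerate because $\rank(M\vert_{V+c})=d-r$, match the Gowers norms through the induced bijection of Gowers sets, apply $\SGI(s)$ in dimension $d-r$, and then use Proposition~\ref{3:BB} to push the resulting nilsequence forward to $\V$. The only cosmetic difference is that you make explicit the choice of a linear extension $L\colon\V\to\F_{p}^{d-r}$ of $\psi^{-1}(\cdot-c)$ needed before invoking Proposition~\ref{3:BB}(ii), a point the paper leaves implicit; this is a welcome clarification but not a different argument.
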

\begin{lem}\label{3:sgi++}  
	We have that $\SGI(s)\Rightarrow \SGI(s)+$.
\end{lem}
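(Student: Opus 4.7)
\medskip

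\noindent\textbf{Proof plan.} The natural strategy is to reduce $\SGI(s)+$ to $\SGI(s)$ by linearizing the affine subspace $V+c$. Since $\rank(M\vert_{V+c})=d-r=\dim(V+c)$, I would fix any linear bijection $\psi\colon\F_{p}^{d-r}\to V$ (which can be taken to have complexity $O(1)$ with respect to a chosen basis of $V$), and set
$$M'(m):=M(\psi(m)+c)\in\F_{p}[m_{1},\dots,m_{d-r}], \qquad f'(m):=f(\psi(m)+c).$$
By the definition of $\rank(M\vert_{V+c})$ in Section~\ref{3:s:defn} and the hypothesis that this rank equals $d-r$, the quadratic form $M'$ is non-degenerate on $\F_{p}^{d-r}$.

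The affine bijection $m\mapsto \psi(m)+c$ sends $V(M')$ onto $V(M)\cap(V+c)$, and more importantly, it sends $\Gow_{s+1}(V(M'))$ onto $\Gow_{s+1}(V(M)\cap(V+c))$: the base point $n\in V+c$ corresponds to $\psi(m)+c$, while each increment $h_{i}$ must lie in $V$ (since both $n$ and $n+h_{i}$ belong to $V+c$) and hence equals $\psi(k_{i})$ for a unique $k_{i}\in\F_{p}^{d-r}$. Consequently $\Vert f'\Vert_{U^{s+1}(V(M'))}=\Vert f\Vert_{U^{s+1}(V(M)\cap(V+c))}>\e$. Applying $\SGI(s)$ in dimension $d-r$ (which is admissible since $d-r\geq N(s-1)$) yields, for $p\geq p_{0}(d-r,\e)$, a nilsequence $\phi'\in\Nil^{s;C(d-r,\e),1}_{p}(\F_{p}^{d-r})$ with
$$\Bigl\vert\E_{m\in V(M')}f'(m)\phi'(m)\Bigr\vert>\d(d-r,\e),$$
and this is exactly the average of $f\cdot(\phi'\circ\psi^{-1}(\,\cdot\,-c))$ over $V(M)\cap(V+c)$ after transporting via $\psi$.

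The last step is to extend $\phi'\circ\psi^{-1}(\,\cdot\,-c)$, which is only defined on $V+c$, to an honest element of $\Nil^{s;C',1}_{p}(\V)$ with $C'=C'(d-r,\e)$. To do this, I would choose any linear complement $W$ of $V$ in $\V$ and extend $\psi^{-1}\colon V\to\F_{p}^{d-r}$ to a linear map $L\colon\V\to\F_{p}^{d-r}$ by setting $L\vert_{W}=0$; then $L(n-c)=\psi^{-1}(n-c)$ for every $n\in V+c$. Applying Proposition~\ref{3:BB}(i) (translation by $c$) followed by Proposition~\ref{3:BB}(ii) (the linear map $L$) to the $\Nil^{s}_{p}$-representation of $\phi'$ produces a $p$-periodic polynomial sequence on $\V$, so that $\Phi(n):=\phi'(L(n-c))$ lies in $\Nil^{s;C',1}_{p}(\V)$ for some $C'$ depending only on $C$ and hence only on $d-r$ and $\e$. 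Setting $\phi:=\Phi$ in the definition of $\SGI(s)+$ then yields the desired correlation bound.

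The one technical point deserving care is bookkeeping the complexity of the pulled-back nilsequence through Proposition~\ref{3:BB}; because the Mal'cev and Lipschitz data of $G/\Gamma$ and $F$ are untouched by the affine change of variables and only the polynomial sequence is reparametrized, no quantitative difficulty arises and the complexity $C'$ indeed depends only on $d-r$ and $\e$ as required.
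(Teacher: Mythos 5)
Your proposal is correct and takes essentially the same route as the paper: linearize $V+c$ via a bijection $\psi\colon\F_{p}^{d-r}\to V$, transfer the Gowers norm through the affine change of variables, apply $\SGI(s)$ in dimension $d-r$, and transport the resulting nilsequence back to $\V$ via Proposition~\ref{3:BB}. Your explicit extension of $\psi^{-1}$ to a linear map $L\colon\V\to\F_{p}^{d-r}$ via a complement $W$ of $V$ makes precise a step the paper's proof leaves implicit when it writes $g(L^{-1}(n-c))$ for all $n\in\V$.
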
	
\begin{proof} 
	Let $L\colon\F_{p}^{d-r}\to V$ be a bijective linear transformation and $\tilde{M}\colon \F_{p}^{d-r}\to \F_{p}$ be the quadratic form given by $\tilde{M}(m):=M(L(m)+c)$. It is not hard to see that
	$$\Vert f\Vert_{U^{s+1}(V(M)\cap(V+c))}=\Vert f(L(\cdot)+c)\Vert_{U^{s+1}(V(\tilde{M}))}.$$
	Since $\rank(M\vert_{V+c})=d-r$, $\tilde{M}$ is non-degenerate.
	By $\SGI(s)$, there exist $\delta:=\d(d-r,\e), C:=C(d-r,\e)>0$,   $p_{0}:=p_{0}(d-r,\e)\in\N$, 
	and   $\phi\in \Nil^{s;C,1}_{p}(\F_{p}^{d-r})$   
	such that if $p\geq p_{0}$, then
	$$\Bigl\vert\E_{m\in V(\tilde{M})}f(L(m)+c)\cdot \phi(m)\Bigr\vert=\Bigl\vert\E_{n\in V(M)\cap (V+c)}f(n)\cdot \phi(L^{-1}(n-c))\Bigr\vert>\delta.$$
	Assume that $\phi(m)=F(g(m)\Gamma)$ for some $s$-step nilmanifold $G/\Gamma$ of complexity at most $C$, Lipschitz function $F\colon G/\Gamma\to\C$ of  complexity at most $C$, and some $g\in \poly_{p}(\V\to G_{\N}\vert\Gamma)$. 
	By Proposition \ref{3:BB}, 
	there exists $g'\in \poly_{p}(\V\to G_{\N}\vert\Gamma)$ such that $g'(n)\Gamma=g(L^{-1}(n-c))\Gamma$ for all $n\in\V$.
So
		$$\Bigl\vert\E_{n\in V(M)\cap (V+c)}f(n)\cdot F(g'(n)\Gamma)\Bigr\vert=\Bigl\vert\E_{n\in V(M)\cap (V+c)}f(n)\cdot \phi(L^{-1}(n-c))\Bigr\vert>\delta.$$
\end{proof}

Now suppose that $\SGI(s)$ holds for some $s\geq 1$ and our goal is to show $\SGI(s+1)$.
By Lemma \ref{3:sgi++}, $\SGI(s)+$ holds.
 Suppose that $\Vert f\Vert_{U^{s+2}(V(M))}>\e$. 
 By Example %\ref{1:rreepp} 
 9.5 of \cite{SunA},  $\Gow_{s+2}(V(M))$ is an $M$-set  of total co-dimension $(s^{2}+5s+8)/2$.
  Note that for all $h_{s+2}\in \V$, $(n,h_{1},\dots,h_{s+2})\in \Gow_{s+2}(V(M))$ if and only if $(n,h_{1},\dots,h_{s+1})\in \Gow_{s+1}(V(M)^{h_{s+2}})$ (recall Section \ref{3:s:defn} for definitions).
Since  $d\geq s^{2}+5s+9$, by Theorem \ref{3:ct}, we have that 
 \begin{equation}\nonumber
 \begin{split}
 	&\quad\Vert f\Vert^{2^{s+2}}_{U^{s+2}(V(M))}=\E_{(n,h_{1},\dots,h_{s+2})\in \Gow_{s+2}(V(M))}\prod_{\e=(\e_{1},\dots,\e_{s+2})\in\{0,1\}^{s+2}}\mathcal{C}^{\vert\e\vert}f(n+\e_{1}h_{1}+\dots+\e_{s+2}h_{s+2})
 	\\&=\E_{h_{s+2}\in \V}\E_{(n,h_{1},\dots,h_{s+1})\in \Gow_{s+1}(V(M)^{h_{s+2}})}\prod_{\e=(\e_{1},\dots,\e_{s+1})\in\{0,1\}^{s+1}}\mathcal{C}^{\vert\e\vert}\overline{\Delta_{h}f}(n+\e_{1}h_{1}+\dots+\e_{s+1}h_{s+1})+O_{s}(p^{-\frac{1}{2}})
 	\\&=\E_{h_{s+2}\in \V}\Vert \Delta_{h_{s+2}}f\Vert^{2^{s+1}}_{U^{s+1}(V(M)^{h_{s+2}})}+O_{s}(p^{-\frac{1}{2}}).
 \end{split}
\end{equation}

 By the Pigeonhole Principle,  if $p\gg_{\e,s} 1$, then there exists a subset $H$ of $\V$ of cardinality $\gg_{d,\e}p^{d}$ such that for all $h\in H$, we have that $$\Vert \Delta_{h}f\Vert^{2^{s+1}}_{U^{s+1}(V(M)^{h})}\gg_{d,\e}1.$$
 Since $d\geq 3$, by Lemma \ref{3:countingh} and passing to a subset if necessary, we may further assume that $(hA)\cdot h\neq 0$ for all $h\in H$, where $A$ is the matrix associated with $M$.
 
 Fix $h\in H$.
 Note that $V(M)^{h}$ is the intersection of $V(M)$ with an affine subspace $W_{h}$ of $\V$ of co-dimension 1. Since $(hA)\cdot h\neq 0$, $\sp_{\F_{p}}\{h\}\cap \sp_{\F_{p}}\{h\}^{\pp}=\{\bold{0}\}$. By Proposition \ref{3:iissoo}, $\rank(M\vert_{W_{h}})=d-1$.  
  By $\SGI(s)+$, 	
 Corollary \ref{3:LE.6}  and the Pigeonhole Principle, for all  $h\in H$, there exists an $s$-step nilcharacter $\chi_{h}\in\Xi^{s;O_{d,\e}(1),O_{d,\e}(1)}_{p}(\V)$ such that
\begin{equation}\label{3:ini10.58}
\Bigl\vert\E_{n\in V(M)^{h}}f(n+h)\overline{f}(n)\otimes \chi_{h}(n)\Bigr\vert\gg_{d,\e} 1.
\end{equation}

We now need to translate (\ref{3:ini10.58}) into the $\Z$-setting. We may extend $f$ periodically to a function defined on $\Z^{d}$ by setting $\tilde{f}(n):=f(\iota(n))$ for all $n\in\Z^{d}$. Then $\tilde{f}(n+pm)=\tilde{f}(n)$ for all $m,n\in\Z^{d}$.
With a slight abuse of notation, we also denote $\tilde{f}$ by $f$. Let $\tilde{M}\colon\Z^{d}\to\Z/p$ be the regular lifting of $M$. 
Then for any $h\in\Z^{d}$, we have that $\iota(V_{p}(\tilde{M})^{h})=V(M)^{\iota(h)}$. Since $\chi_{h}$ is $p$-periodic, it follows from (\ref{3:ini10.58}) that 
\begin{equation}\label{3:ini10.5}
\Bigl\vert\E_{n\in V_{p}(\tilde{M})^{h}}f(n+h)\overline{f}(n)\otimes \chi_{h}(n)\Bigr\vert\gg_{d,\e} 1
\end{equation}
for some $\chi_{h}\in\Xi^{s;O_{d,\e}(1),O_{d,\e}(1)}_{p}(\Z^{d})$ for a subset $H$ of $[p]^{d}$ of cardinality $\gg_{d,\e} p^{d}$.
This completes Step 1 described in Section \ref{3:s:otl}.

\section{A Furstenberg-Weiss type argument}\label{3:s:b2}

In this section, we conduct Step 2 described in Section \ref{3:s:otl} by showing the following intermediate result:

\begin{prop}\label{3:inductioni1}
Let $d,D,r,s\in\N_{+}, d\geq 9, C,\e>0$,  $p\gg_{C,d,D,\e,s} 1$ be a prime, $M\colon\V\to\F_{p}$ be a non-degenerate quadratic form with $\tilde{M}\colon\Z^{d}\to\Z/p$ being its regular lifting, and $f\colon\Z^{d}_{p^{r}}\to\C$ be a sequence bounded by 1 with $f(n+pm)=f(n)$ for all $m,n\in\Z_{p^{r}}^{d}$.
Suppose that for some $1\leq r_{\ast}\leq s$,  there exist 
a subset $H\subseteq \Z^{d}_{p^{r}}$ with $\vert H\vert\geq \e p^{rd}$, some $\chi_{0}\in\Xi_{p}^{(1,s);C,D}((\Z^{d})^{2})$,  some $\chi_{h}\in \Xi_{p^{r}}^{[s,r_{\ast}];C,D}(\Z^{d})$, and some $\psi_{h}\in\Nil_{p}^{s-1;C,1}(\Z^{d})$ for all $h\in H$ such that
\begin{equation}\label{3:longlong2} 
	\Bigl\vert\E_{n\in V_{p}(\tilde{M})^{h}}f(n+h)\overline{f}(n)\chi_{0}(h,n)\otimes \chi_{h}(n)\psi_{h}(n)\Bigr\vert>\e
	\end{equation}
for all $h\in H$.\footnote{In (\ref{3:longlong2}), $h$ is understood as any vector $\tilde{h}$ in $\Z^{d}$ whose projection to $\Z_{p^{r}}^{d}$ is $h$. It is clear that (\ref{3:longlong2}) is independent of the choices of  $\tilde{h}$.}
Then there exist a subset $U\subseteq H^{3}$ with $\vert U\vert\gg_{C,d,D,\e,s} p^{3rd}$ 
	   such that writing
	   $$\chi_{h_{1},h_{2},h_{3}}(n):= \chi_{h_{1}}(n)\otimes\chi_{h_{2}}(n+h_{3}-h_{2})\otimes\overline{\chi}_{h_{3}}(n)\otimes\overline{\chi}_{h_{1}+h_{2}-h_{3}}(n+h_{3}-h_{2}),$$
	   we have that 
	   	\begin{equation}\label{3:notlongago}
	   \Bigl\vert\E_{n\in V_{p}(\tilde{M})^{h_{1},h_{3},h_{3}-h_{2}}}\chi_{h_{1},h_{2},h_{3}}(n) \psi_{h_{1},h_{2},h_{3}}(n)\Bigr\vert\gg_{C,d,D,\e,s} 1
	   	\end{equation}
	for some $\psi_{h_{1},h_{2},h_{3}}\in \Nil_{p}^{s-1;O_{C,d,D,\e,s}(1),1}(\Z^{d})$ for all $(h_{1},h_{2},h_{3})\in U$. Moreover, for all $(h_{1},h_{2},h_{3})$ in $U$, $\iota(h_{1}),\iota(h_{2}),\iota(h_{3})$ are linearly independent and $M$-non-isotropic (see Appendix \ref{3:s:AppA2} for the definition).
\end{prop}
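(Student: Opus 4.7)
The strategy is the Furstenberg--Weiss double Cauchy--Schwarz argument from Section~8 of~\cite{GTZ12}, adapted to the spherical setting. Writing $g_{h}(n):=\chi_{0}(h,n)\otimes\chi_{h}(n)\psi_{h}(n)$, the hypothesis (\ref{3:longlong2}) reads $|\E_{n\in V(M)^{h}}\Delta_{h}f(n)\otimes g_{h}(n)|>\e$ for each $h\in H$, which (after absorbing the density $|V(M)^{h}|/p^{d}\sim p^{-2}$ via Lemma~\ref{3:countingh}) is a correlation on the ambient $\V$. The goal is to perform several Cauchy--Schwarz moves in the shift $h$, interlaced with the cocycle identity $\Delta_{a+b}f(n)=\Delta_{b}f(n+a)\Delta_{a}f(n)$ and its variants for differences, in order to eliminate the $f$-dependence and produce a parallelogram configuration of four shifts $h_{1},h_{2},h_{3},h_{4}$ satisfying $h_{4}=h_{1}+h_{2}-h_{3}$.

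Concretely, squaring the hypothesis and averaging over $h\in H$ introduces a doubling variable $k$ together with the $U^{2}$-symmetry $\Delta_{h}f(n)\overline{\Delta_{h}f(n+k)}=\Delta_{k}f(n)\overline{\Delta_{k}f(n+h)}$. A second Cauchy--Schwarz splits off the $\Delta_{k}f(n)$-factor (independent of $h$), doubles $h$ into $(h_{1},h_{3})$, and by the rewrite $\overline{\Delta_{k}f(n+h_{1})}\Delta_{k}f(n+h_{3})=\Delta_{h_{1}-h_{3}}f(n+h_{3})\overline{\Delta_{h_{1}-h_{3}}f(n+h_{3}+k)}$, a third Cauchy--Schwarz that feeds the surviving $\Delta_{h_{1}-h_{3}}f$-factor back through the hypothesis introduces the fourth parameter $h_{2}$ subject to $h_{1}+h_{2}=h_{3}+h_{4}$. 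After re-indexing the $n$-shifts so that each $\chi_{h_{j}}$ appears at the location dictated by the statement, all $f$-factors telescope into $|f|^{2}$-products at the four vertices $n,n+h_{1},n+h_{3},n+h_{3}-h_{2}$, bounded above by $1$. The passage from the nested averaging with intermediate $\bold{1}_{V(M)^{\cdots}}$-indicators to the clean integral over $V(M)^{h_{1},h_{3},h_{3}-h_{2}}$ is provided by the new Fubini-type Theorem~\ref{3:ct}, which applies because the total codimension at each step is at most $4\leq d-5$ so that Lemma~\ref{3:countingh} yields asymptotic counts with $O(p^{-1/2})$ error.

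The surviving integrand is a pure product of $g$-factors. The four $\chi_{h_{j}}$-contributions reassemble into exactly $\chi_{h_{1},h_{2},h_{3}}(n)$ as defined in the statement. The four $\chi_{0}(h_{j},\cdot)$-contributions form a parallelogram combination of a multi-degree-$(1,s)$ nilcharacter; by the standard fact that a Gowers parallelogram lowers each coordinate's degree by one, this combination is multi-degree $(0,s-1)$ in $n$ (heuristically, writing $\chi_{0}\sim\exp(2\pi i(\sum_{i}h_{i}Q_{i}(n)+R(n)))$ with $\deg Q_{i},\deg R\leq s$, the combination kills $R$ and leaves $\sum_i(h_{1}-h_{3})_{i}\Delta_{h_{3}-h_{2}}Q_{i}(n)$, of degree at most $s-1$ in $n$). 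Since each $\psi_{h_{j}}$ is already $(s-1)$-step by hypothesis, tensoring all these lower-order factors and applying Corollary~\ref{3:LE.6} to reduce the tensor dimension back to $1$ via pigeonhole yields the required $\psi_{h_{1},h_{2},h_{3}}\in\Nil_{p}^{s-1;O(1),1}(\V)$. A final pigeonhole over $(h_{1},h_{2},h_{3})$ produces the set $U$ with $|U|\gg p^{3d}$; restricting to linearly independent, $M$-non-isotropic triples costs only an $O(p^{-1})$ density by Lemmas~\ref{3:countingh} and~\ref{3:iiddpp}.

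The main technical obstacle will be the bookkeeping of the three Cauchy--Schwarz moves together with the accompanying cocycle rewrites: one must choose the doubling variable at each step with foresight so that the final $n$-shifts match the precise parallelogram pattern $(n,n+h_{3}-h_{2},n,n+h_{3}-h_{2})$ of the statement (rather than a ``twisted'' pattern that a naive sequence of Cauchy--Schwarz moves tends to produce), and one must correctly invoke Theorem~\ref{3:ct} to convert the intermediate $M$-set indicators arising at each step into a single $\bold{1}_{V(M)^{h_{1},h_{3},h_{3}-h_{2}}}$ with the correct Fubini measure.
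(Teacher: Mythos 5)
Your overall strategy is the one the paper uses: two or three Cauchy--Schwarz moves in the shift variable, interlaced with changes of variables, feeding the intermediate $M$-set constraints through the Fubini-type Theorem~\ref{3:ct}, then pigeonholing and discarding linearly dependent / $M$-isotropic triples via Lemmas~\ref{3:countingh} and~\ref{3:iiddpp}. The $f$-elimination and the role of $\Gow_{1}(V(M))$ are correctly identified, and your heuristic computation of what the four $\chi_{0}$-factors reduce to (writing $\chi_{0}\sim\exp\bigl(\sum_{i}h_{i}Q_{i}(n)+R(n)\bigr)$ and observing cancellation of $R$, leaving $\sum_{i}(h_{1}-h_{3})_{i}\Delta_{h_{3}-h_{2}}Q_{i}(n)$) is exactly the calculation the paper makes rigorous.

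The genuine gap is that you invoke ``the standard fact that a Gowers parallelogram lowers each coordinate's degree by one'' as if the four points $(h_{1},n)$, $(h_{2},n+h_{3}-h_{2})$, $(h_{3},n)$, $(h_{1}+h_{2}-h_{3},n+h_{3}-h_{2})$ form a clean two-dimensional parallelogram in $(h,n)$-space. They do not: the second-coordinate shifts are fixed offsets rather than free parallelogram legs, and the signs match a more delicate nested-difference pattern (the second difference must be taken in the $h$-coordinate alone before differencing in $n$). A generic ``parallelogram lowers the degree'' lemma for nilcharacters does not directly apply, and it is precisely to bridge this gap that the paper invokes Proposition~\ref{3:pp83}: it decomposes $\chi_{0}(h,n)$ (up to a small error) into linearized $(1,s)$-functions $c(n)^{\tau(h)}\psi(n)$ with the property that $n\mapsto c(n-\ell)c(n)^{\tau(h)}$ lies in $\Nil^{s-1}_{p}$, and this $h$-linear structure is what makes the four $\chi_{0}$-factors combine, through the Cauchy--Schwarz bookkeeping, into a degree-$(s-1)$ object absorbed into $\psi_{h_{1},h_{2},h_{3}}$. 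Without naming this linearization tool (or producing an alternative rigorous replacement for your heuristic, e.g.\ a multi-degree analogue of Lemma~\ref{3:LE88} adapted to this non-rectangular configuration), the degree-reduction step is not justified. The remaining discrepancy in the number of Cauchy--Schwarz moves and the exact cocycle rewrites is, as you note yourself, bookkeeping and would be resolved in the writing-up; the missing linearization is the substantive issue.
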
	
\begin{proof}
Our strategy is similar to the one used in \cite{Gow98,GTZ12,GTZ24}. An additional difficulty is that the Fubini's theorem in our setting (Theorem \ref{3:ct}) is more delicate, which makes the computations more complicated.

 Throughout the proof we assume that $p\gg_{C,d,D,\e,s} 1$.
By (\ref{3:longlong2}) and by taking an average over $H$, we have that 
$$\Bigl\vert\E_{h\in [p^{r}]^{d}}\E_{n\in V_{p}(\tilde{M})^{h}\cap [p^{r}]^{d}}\Delta_{h}f(n)\chi_{0}(h,n)\otimes \chi_{h}(n)\psi_{h}(n)\Bigr\vert\gg_{C,d,D,\e,s} 1$$
(for $h\in\Z^{d}_{p^{r}}\backslash H$, set $\chi_{h}=\psi_{h}\equiv 0$\footnote{Note that the function $\chi_{h}$ defined this way is not a nilcharacter as its modulo is not 1. However, this does not affect our proof.}).
%Since $\vert \chi_{0}(h,n)\otimes \chi_{h}(n)\vert=1$, 
By restricting to the component with the largest absolute value and by
  switching $\psi_{h}(n)$ to $a_{h}\psi_{h}(n)$ for some $a_{h}\in \mathbb{S}$ if necessary, we may assume without loss of generality that 
\begin{equation}\label{3:443}
\vert\E_{h\in [p^{r}]^{d}}\E_{n\in V_{p}(\tilde{M})^{h}\cap [p^{r}]^{d}}\Delta_{h}f(n)\chi_{0}(h,n)\otimes \chi_{h}(n)\psi_{h}(n)\vert\gg_{C,d,D,\e,s} 1.
\end{equation}
It is not hard to see that $\Gow_{1}(V(M))$ is a consistent $M$-set of total co-dimension 2 and that the set 
$$\Omega:=\{(n,h,k)\in(\V)^{2}\colon n, n+h, n+h+k\in V(M)\}$$
is a consistent $M$-set of total co-dimension 3. Since  $d\geq 7$,   it follows from Theorem \ref{3:ct} and the Cauchy-Schwartz inequality that
 \begin{equation}\label{3:444}
\begin{split}
&\quad\Bigl\vert\E_{h\in [p^{r}]^{d}}\E_{n\in V_{p}(\tilde{M})^{h}\cap [p^{r}]^{d}}\Delta_{h}f(n)\chi_{0}(h,n)\otimes \chi_{h}(n)\psi_{h}(n)\Bigr\vert^{2}
\\&=\Bigl\vert\E_{n\in V_{p}(\tilde{M})\cap [p^{r}]^{d}}\E_{h\in (V_{p}(\tilde{M})-n)\cap [p^{r}]^{d}}\Delta_{h}f(n)\chi_{0}(h,n)\otimes \chi_{h}(n)\psi_{h}(n)\Bigr\vert^{2}+O(p^{-1/2})
\\&\ll_{C,d,D,\e,s}\E_{n\in V_{p}(\tilde{M})\cap [p^{r}]^{d}}\Bigl\vert\E_{h\in (V_{p}(\tilde{M})-n)\cap [p^{r}]^{d}}\Delta_{h}f(n)\chi_{0}(h,n)\otimes \chi_{h}(n)\psi_{h}(n)\Bigr\vert^{2}+O(p^{-1/2})
\\&=\Bigl\vert\E_{n\in V_{p}(\tilde{M})}\E_{h,h'\in V_{p}(\tilde{M})-n}f(n+h)\overline{f}(n+h')
\chi_{0}(h,n)\otimes\overline{\chi}_{0}(h',n)\otimes \chi_{h}(n)\otimes \overline{\chi}_{h'}(n)\psi_{h,h'}(n)\Bigr\vert+O(p^{-1/2})
\\&=\Bigl\vert\E_{(n,h,k)\in\iota^{-1}(\Omega)}
 f(n+h)\overline{f}(n+h+k)\chi_{0}(h,n)\otimes\overline{\chi}_{0}(h+k,n)\otimes \chi_{h}(n)\otimes \overline{\chi}_{h+k}(n)\psi_{h,h+k}(n)\Bigr\vert+O(p^{-1/2})
\end{split}
\end{equation}
where $\psi_{h,h'}$ is some element in $\Nil_{p}^{s-1;O_{C,d,D,\e,s}(1),1}(\Z^{d})$.  
By Proposition \ref{3:newappr}, (\ref{3:443}) and (\ref{3:444}) and the Pigeonhole Principle, there exist some $\psi_{k}\in\Xi_{p}^{s;O_{C,d,D,\e,s}(1),O_{C,d,D,\e,s}(1)}(\Z^{d})$ and  some $\psi'_{k}\in\Nil_{p}^{(1,s-1);O_{C,d,D,\e,s}(1),1}((\Z^{d})^{2})$ such that 
\begin{equation}\label{3:new323}
\begin{split}
&\quad\Bigl\vert\E_{(n,h,k)\in\iota^{-1}(\Omega)} f(n+h)\overline{f}(n+h+k)\psi_{k}(n)\otimes\chi_{h}(n)\otimes \overline{\chi}_{h+k}(n)\psi'_{k}(h,n)\psi_{h,h+k}(n)\Bigr\vert
\gg_{C,d,D,\e,s} 1.
\end{split}
\end{equation}
%By Lemma \ref{3:LE.5} and the Pigeonhole Principle, we may   assume without loss of generality that the term $\psi_{k}$ in (\ref{3:new323}) belongs to $\Nil_{p}^{s;O_{C,D,\e,s}(1),1}(\V)$. ???
%
Since $\psi'_{k}(h,\cdot)\in \Nil_{p}^{s-1;O_{C,d,D,\e,s}(1),1}(\Z^{d})$, we may absorb the term $\psi'_{k}(h,n)$ by $\psi_{h,h+k}(n)$ in (\ref{3:new323}). 
Since $\psi_{k}\in\Xi_{p}^{s;O_{C,D,\e,s}(1),O_{C,d,D,\e,s}(1)}(\Z^{d})$, it follows from Lemma \ref{3:LE8} (viii) that
  the map $n\mapsto \psi_{k}(n+h)\otimes\overline{\psi}_{k}(n)$ belongs to $\Xi_{p}^{s-1;O_{C,d,D,\e,s}(1),O_{C,d,D,\e,s}(1)}(\Z^{d})$. So we may replace the term $\psi_{k}(n)$ by $\psi_{k}(n+h)$ in (\ref{3:new323}). 
Therefore, we may assume without loss of generality that 
\begin{equation}\label{3:new324}
\begin{split}
\Bigl\vert\E_{(n,h,k)\in\iota^{-1}(\Omega)\cap [p^{r}]^{d}}f(n+h)\overline{f}(n+h+k)\psi_{k}(n+h)\otimes\chi_{h}(n)\otimes \overline{\chi}_{h+k}(n)\psi_{h,h+k}(n)\Bigr\vert
\gg_{C,d,D,\e,s} 1.
\end{split}
\end{equation}
Finally, by the Pigeonhole Principle and by decomposing $\psi_{k}$ into its components, we may assume that the term $\psi_{k}(n+h)$ in (\ref{3:new324}) belongs to $\Nil_{p}^{s;O_{C,d,D,\e,s}(1),1}(\Z^{d})$ instead of $\Xi_{p}^{s;O_{C,d,D,\e,s}(1),O_{C,d,D,\e,s}(1)}(\Z^{d})$.

It is not hard to see that the set
$$\Omega':=\{(m,h,h',k)\in(\V)^{4}\colon m,m+k,m-h,m-h'\in V(M)\}$$
is a consistent $M$-set of total co-dimension 4, since $d\geq 9$,
by Theorem \ref{3:ct}, (\ref{3:new324}) and the Cauchy-Schwartz inequality, we have that 
\begin{equation}\nonumber
\begin{split}
&\quad1\ll_{C,d,D,\e,s} 
\Bigl\vert\E_{m-h,m,m+k\in V_{p}(\tilde{M})}f(m)\overline{f}(m+k)\psi_{k}(m)\chi_{h}(m-h)\otimes \overline{\chi}_{h+k}(m-h)\psi_{h,h+k}(m-h)\Bigr\vert^{2}
\\&=\Bigl\vert\E_{m,m+k\in V_{p}(\tilde{M})\cap[p^{r}]^{d}}f(m)\overline{f}(m+k)\psi_{k}(m)
\\&\qquad\qquad\E_{h\in (m-V_{p}(\tilde{M}))\cap[p^{r}]^{d}}\chi_{h}(m-h)\otimes \overline{\chi}_{h+k}(m-h)\psi_{h,h+k}(m-h)\Bigr\vert^{2}+O(p^{-1/2})
\\&\ll_{C,d,D,\e,s} \E_{m,m+k\in V_{p}(\tilde{M})}\Bigl\vert\E_{h\in (m-V_{p}(\tilde{M}))\cap[p^{r}]^{d}}\chi_{h}(m-h)\otimes \overline{\chi}_{h+k}(m-h)\psi_{h,h+k}(m-h)\Bigr\vert^{2}+O(p^{-1/2})
\\&=\Bigl\vert\E_{(m,h,h',k)\in\iota^{-1}(\Omega')}\chi_{h}(m-h)\otimes \overline{\chi}_{h+k}(m-h)\otimes\overline{\chi}_{h'}(m-h')\otimes \chi_{h'+k}(m-h')\psi'_{h,h',k}(m-h)\Bigr\vert+O(p^{-1/2}),
\end{split}
\end{equation}
where $\psi'_{h,h',k}$ is some element in $\Nil_{p}^{s-1;O_{C,d,D,\e,s}(1),1}(\Z^{d})$.  
Setting $h_{1}:=h, h_{2}:=h'+k, h_{3}:=h+k,$ and $n:=m-h$, we have that the average of 
\begin{equation}\nonumber
\begin{split}
\Bigl\vert\E_{n\in V_{p}(\tilde{M})^{h_{1},h_{3},h_{3}-h_{2}}\cap [p^{r}]^{d}}\chi_{h_{1}}(n)\otimes \overline{\chi}_{h_{3}}(n)\otimes\overline{\chi}_{h_{1}+h_{2}-h_{3}}(n+h_{3}-h_{2})\otimes\chi_{h_{2}}(n+h_{3}-h_{2})\psi_{h_{1},h_{2},h_{3}}(n)\Bigr\vert
\end{split}
\end{equation}
over all $h_{1},h_{2},h_{3}\in\Z_{p^{r}}^{d}$ is at least $\gg_{C,d,D,\e,s} 1$,
where $\psi_{h_{1},h_{2},h_{3}}$ is some element in the set $\Nil_{p}^{s-1;O_{C,d,D,\e,s}(1),1}(\Z^{d})$.  
By the Pigeonhole Principle, there exists a subset $U\subseteq H^{3}$ with $\vert U\vert\gg_{C,d,D,\e,s} p^{3rd}$  such that 
(\ref{3:notlongago}) holds for all $(h_{1},h_{2},h_{3})\in U$ (note that (\ref{3:notlongago}) fails if $(h_{1},h_{2},h_{3})\notin H^{3}$ by our construction). Finally, since $d\geq 3$, by Lemma \ref{3:iiddpp} we may remove from $U$ the tuples  $(h_{1},h_{2},h_{3})\in U$ with $\iota(h_{1}),\iota(h_{2}),\iota(h_{3})$ being linearly dependent or $M$-isotropic while keeping the cardinality of $U$ to be $\gg_{C,d,D,\e,s} p^{3rd}$. This completes the proof.
\end{proof}

We also have the following simpler version of Proposition \ref{3:inductioni1}.

\begin{prop}\label{3:inductioni2}
	Let $d,D,s\in\N_{+}$ with $d\geq 9$, $C,\e>0$, $p\gg_{C,d,D,\e,s} 1$ be a prime,
	 $M\colon\V\to\F_{p}$ be a non-degenerate quadratic form, and
	 $f\colon\V\to\C$ be bounded by 1.
	Suppose that there exist  a subset $H\subseteq \V$ with $\vert H\vert\geq \e p^{d}$,  and some functions $g_{h}\colon\V\to\C^{D}$ bounded by $C$ for all $h\in H$ such that 
		\begin{equation}\nonumber
		\Bigl\vert\E_{n\in V(M)^{h}}f(n+h)\overline{f}(n)g_{h}(n)\Bigr\vert>\e
		\end{equation}
	for all $h\in H$.  Denote $g_{h}\equiv 0$ if $h\notin H$.
	Then for all $h_{1},h_{2},h_{3}\in H$, writing 
		$$g_{h_{1},h_{2},h_{3}}(n):= g_{h_{1}}(n)\otimes g_{h_{2}}(n+h_{3}-h_{2})\otimes\overline{g}_{h_{3}}(n)\otimes\overline{g}_{h_{1}+h_{2}-h_{3}}(n+h_{3}-h_{2}),$$
	we have that 
	\begin{equation}\nonumber
	\Bigl\vert\E_{h_{1},h_{2},h_{3}\in\V}\E_{n\in V(M)^{h_{1},h_{3},h_{3}-h_{2}}}g_{h_{1},h_{2},h_{3}}(n) \Bigr\vert\gg_{C,d,D,\e,s} 1.
	\end{equation}
	In particular,
	there exist a subset $U\subseteq H^{3}$ with $\vert U\vert\gg_{C,d,D,\e,s}p^{3d}$ 
	such that for all $(h_{1},h_{2},h_{3})\in U$, $h_{1},h_{2},h_{3}$ are linearly independent and $M$-non-isotropic,  and that 
	\begin{equation}\nonumber
	\Bigl\vert\E_{n\in V(M)^{h_{1},h_{3},h_{3}-h_{2}}}g_{h_{1},h_{2},h_{3}}(n) \Bigr\vert\gg_{C,d,D,\e,s} 1.
	\end{equation}
\end{prop}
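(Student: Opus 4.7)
The plan is to mirror the proof of Proposition \ref{3:inductioni1} while stripping away the nilcharacter and nilsequence bookkeeping, so the argument reduces to two Cauchy--Schwarz inequalities interspersed with Fubini-type moves via Theorem \ref{3:ct} (applicable since $d\geq 9$).

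First I reduce to the scalar case. For each $h\in H$, the identity $|\E_{n\in V(M)^h}\Delta_h f(n)g_h(n)|^2=\sum_{i=1}^D|\E_{n\in V(M)^h}\Delta_h f(n)g_{h,i}(n)|^2>\e^2$ supplies, by pigeonhole in $i$, an index $i_h$ with $|\E_{n\in V(M)^h}\Delta_h f(n)g_{h,i_h}(n)|>\e/\sqrt D$; a second pigeonhole over $h$ isolates a single coordinate $i^*$ and a subset $H'\subseteq H$ of size $\gg_{\e,D}p^d$ on which this scalar lower bound holds. Replacing $H$ by $H'$ and $g_h$ by the scalar $g_{h,i^*}$ (extended by $0$ for $h\notin H'$) and absorbing a unimodular phase $a_h$, we obtain
\begin{equation*}
\Bigl|\E_{h\in\V}\E_{n\in V(M)^h}\Delta_h f(n)g_h(n)\Bigr|\gg_{\e,D}1.
\end{equation*}
Now Theorem \ref{3:ct} together with the substitution $y=n+h$ converts this to $|\E_{y,n\in V(M)}f(y)\overline f(n)g_{y-n}(n)|\gg 1$ up to $O(p^{-1/2})$ error. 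Cauchy--Schwarz in $y$ (using $|f|\leq 1$), expansion of the square, and a second application of Theorem \ref{3:ct} combined with the change of variables $\ell=n'-n$ and $h=y-n$ yield an average over $(n,\ell,h)$ subject to $n,n+\ell,n+h\in V(M)$. A second Cauchy--Schwarz in $(n,\ell)\in\Gow_1(V(M))$ introduces an auxiliary variable $h'$, and a final application of Theorem \ref{3:ct} plus the reparameterization $h_1=h$, $h_2=h'-\ell$, $h_3=h'$, $h_4=h-\ell=h_1+h_2-h_3$ (so $\ell=h_3-h_2$) produces
\begin{equation*}
\Bigl|\E_{h_1,h_2,h_3\in\V}\E_{n\in V(M)^{h_1,h_3,h_3-h_2}}g_{h_1}(n)g_{h_2}(n+h_3-h_2)\overline{g_{h_3}(n)}\,\overline{g_{h_4}(n+h_3-h_2)}\Bigr|\gg_{C,d,D,\e,s}1.
\end{equation*}
Since the scalar quantity inside is the $(i^*,i^*,i^*,i^*)$-th coordinate of the vector $g_{h_1,h_2,h_3}(n)$, the desired vector bound follows immediately.

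For the ``In particular'' statement, $|\E_n g_{h_1,h_2,h_3}|\leq C^4$ together with the main bound and a standard pigeonhole argument produce a subset $U_0\subseteq\V^3$ of cardinality $\gg p^{3d}$ on which $|\E_n g_{h_1,h_2,h_3}|\gg 1$. The convention $g_h\equiv 0$ for $h\notin H$ forces $h_1,h_2,h_3\in H$ for every such triple, so $U_0\subseteq H^3$; by Lemma \ref{3:iiddpp} at most $O(p^{3d-1})$ triples have $h_1,h_2,h_3$ linearly dependent or $M$-isotropic, and removing these leaves $U$ with $|U|\gg p^{3d}$ as required. The only technical subtlety lies in accumulating the several $O(p^{-1/2})$ Fubini errors from Theorem \ref{3:ct} together with the $D$-dependent losses from the vector-to-scalar reduction; all these are absorbed into $\gg_{C,d,D,\e,s}1$ provided $p\gg_{C,d,D,\e,s}1$, and no genuinely new difficulty arises beyond what appears in the proof of Proposition \ref{3:inductioni1}.
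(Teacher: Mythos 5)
Your Cauchy--Schwarz and Fubini skeleton is the one the paper uses for Proposition~\ref{3:inductioni1}, and the two Cauchy--Schwarz steps, the Fubini moves via Theorem~\ref{3:ct}, and the reparameterization $h_1=h$, $h_2=h'-\ell$, $h_3=h'$, $h_4=h-\ell$ (so $\ell=h_3-h_2$) are all correct. Your opening scalarization by pigeonhole on a coordinate $i^*$ is a legitimate substitute for the paper's vector-valued Cauchy--Schwarz, and the phase adjustment $a_h$ parallels the ``switching $\psi_h$ to $a_h\psi_h$'' step in the paper's proof of Proposition~\ref{3:inductioni1}.

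The gap is in the concluding sentence. After the two Cauchy--Schwarz passes, your chain bounds the $(h_1,h_2,h_3)$-average of
\begin{equation*}
a_{h_1}a_{h_2}\overline{a}_{h_3}\overline{a}_{h_4}\,g_{h_1,i^*}(n)\,g_{h_2,i^*}(n+h_3-h_2)\,\overline{g}_{h_3,i^*}(n)\,\overline{g}_{h_4,i^*}(n+h_3-h_2)\cdot\bold{1}_{h_1,\dots,h_4\in H'},
\end{equation*}
and the $h$-dependent unimodular prefactor $a_{h_1}a_{h_2}\overline{a}_{h_3}\overline{a}_{h_4}$ is not constant, so this quantity is \emph{not} the $(i^*,i^*,i^*,i^*)$-th entry of $g_{h_1,h_2,h_3}(n)$ and the first displayed conclusion does not follow. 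In fact that display is false as written: take $f\equiv1$ and $g_h:=(e^{i\theta_h},0)\in\C^2$ with generic $\theta_h$; then $\bigl|\E_{n\in V(M)^h}\Delta_hf(n)g_h(n)\bigr|=1$ for every $h$, yet $\E_{h_1,h_2,h_3}\E_n g_{h_1,h_2,h_3}(n)$ has norm $\bigl|\E_{h_1,h_2,h_3}e^{i(\theta_{h_1}+\theta_{h_2}-\theta_{h_3}-\theta_{h_4})}\bigr|=o(1)$, and no amount of Cauchy--Schwarz will make this $\gg1$. What your argument \emph{does} correctly give is $\E_{h_1,h_2,h_3}\bigl|\E_n g_{h_1,h_2,h_3}(n)\bigr|\gg_{C,d,D,\e,s}1$ --- the modulus kills the phases and the norm of the vector dominates the modulus of its $(i^*,\dots,i^*)$-th entry --- and from that the ``in particular'' conclusion follows by the pigeonhole and Lemma~\ref{3:iiddpp} steps you give. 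So your derivation of the ``in particular'' part is sound, but the passage from the scalar inequality to the first display is unjustified, and the first display itself needs either the modulus moved inside the $h$-average or an accompanying allowance to twist the $g_h$ by phases (as the conclusion of Proposition~\ref{3:inductioni1} does via the auxiliary $\psi_{h_1,h_2,h_3}$).
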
	

The proof of Proposition \ref{3:inductioni2} is almost the same as that of Proposition \ref{3:inductioni1}. We leave its proof to the interested readers.

\section{The sunflower lemma}\label{3:s:b3}

We will use Proposition \ref{3:inductioni1} to analysis the ``frequency" of nilcharacters. It is helpful for the readers to imagine each nilcharacter $\chi_{h}$ as a function of the form $\exp(P_{h}(n))$ for some polynomial $P_{h}\colon\Z^{d}\to \Z/p^{r}$, and to imagine its ``frequency" as the leading terms of $P_{h}$.
In Section 10 of \cite{GTZ12},  Green, Tao and Ziegler proved a sunflower lemma (Lemma 10.10 of \cite{GTZ12}) explaining how to decompose a sequence of real number as linear combinations of generators with some ``sunflower-type" independence properties. In our setting, the ``frequency" of a nilcharacter is a homogeneous polynomial over $d$-variables (which can be viewed as a ``string" of real numbers) instead of a real number. Therefore, we need to extend many concepts in  \cite{GTZ12} from real numbers to strings of real numbers.
For convenience, throughout this section, we use $m$ to denote elements in $\N^{d}$.

\begin{defn}[Strings]
Let $d,p,r,s\in\N_{+}$,
$\vec{D}=(D_{j})_{1\leq j\leq s}\in\N^{s}$ and  $\mathcal{T}=(\xi_{m,k})_{1\leq \vert m\vert\leq s, 1\leq k\leq D_{\vert m\vert}}$ be a tuple of elements of the form  $\xi_{m,k}\in\Z/p^{r}$ (here $m\in\N^{d}$).  We say that $(\vec{D},\mathcal{T})$ (or $\mathcal{T}$) is a \emph{$(d,p^{r})$-tower} (or simply a \emph{tower} when there is no confusion). We can also write $\mathcal{T}=(\mathcal{F}_{j})_{1\leq j\leq s}$, where $\mathcal{F}_{j}:=(\xi_{m,k})_{\vert m\vert=j, 1\leq k\leq D_{j}}$ is the \emph{$j$-th floor} of $\mathcal{T}$. We can further write $\mathcal{F}_{j}=(\vec{\xi}_{j,k})_{1\leq k\leq D_{j}}$, where $\vec{\xi}_{j,k}:=(\xi_{m,k})_{\vert m\vert=j}$ is a \emph{$(j,d,p^{r})$-string} (when there is no confusion, we call it to be a \emph{$(j,d)$-string}, \emph{$j$-string} or simply a \emph{string}).
We may define the addition and scalar multiplication operators for strings, floors and towers by treating them as vectors.
\end{defn}

It is helpful to think of a string as a homogeneous polynomial. Indeed,
%for any $a\in\Z\backslash\{0\}$ with $\vert a\vert<p$, let $a^{\ast}$ denote the unique integer in $\{1,\dots,p-1\}$ such that $aa^{\ast}\equiv 1 \mod p\Z$.  
we may \emph{associate} every $(j,d,p^{r})$-string $\vec{\xi}=(\xi_{m})_{\vert m\vert=j}$ with %a homogeneous $\Z/p^{r}$-coefficient polynomial
some  $f\in\st_{\Z/p^{r},s}(d)$  (recall the definition in Section \ref{3:s:defn})
%$f\in\poly(\Z^{d}\to\Z/p^{r})$ 
given by $f(n)=\sum_{\vert m\vert=j}(m!)^{\ast}\xi_{m}n^{m}$, where $(m!)^{\ast}$ is the unique integer in $\{0,\dots,p^{r}-1\}$ with $(m!)^{\ast}m!\equiv 1 \mod p^{r}\Z$, and conversely \emph{associate} every %$\Z/p^{r}$-coefficient polynomial $f\in\poly(\Z^{d}\to\Z/p^{r})$ 
 $f\in\st_{\Z/p^{r},s}(d)$ given by $f(n)=\sum_{\vert m\vert=j}a_{m}n^{m}, a_{m}\in\frac{1}{p^{r}}\Z$ with a $j$-string $\vec{\xi}=(m!a_{m})_{\vert m\vert=j}$.

In our setting, it is convenient to identify two polynomials if their difference vanishes on a subset of $\Z^{d}$ of our interest. This leads to the following definition:

\begin{defn}[Reducible strings]\label{3:redst}
Let  $\Omega$ be a subset of $\Z^{d}$. We say that a polynomial $f\colon\Z^{d}\to \R$ of degree $j$ is  \emph{$(\Omega,p)$-reducible} if $\Delta_{h_{j}}\dots\Delta_{h_{1}}f(n)\in\Z$
for all $(n,h_{1},\dots,h_{j})\in \Gow_{p,j}(\Omega)$ (recall the definition in Section \ref{3:s:defn}). 
A $j$-string $\vec{\xi}=(\xi_{m})_{\vert m\vert=j}$ is \emph{$(\Omega,p)$-reducible} if the polynomial associated to it is  $(\Omega,p)$-reducible.
 We say that a tower is \emph{$(\Omega,p)$-reducible} if every string of the tower is $(\Omega,p)$-reducible. 
 \end{defn}
 
 We have the following characterization for $(V_{p}(\tilde{M})^{h_{1},\dots,h_{s}},p)$-reducible strings, using the language developed in \cite{SunB} (see Appendix \ref{3:s:AppA8} for definitions).

 \begin{lem}\label{3:att55}
 	Let $d,j,k,r\in\N_{+}, d\geq k+j^{2}+j+3$, $p\gg_{d} 1$ be a prime, $M\colon\V\to\F_{p}$ be a non-degenerate quadratic form with $\tilde{M}\colon\Z^{d}\to\Z/p$ being its regular lifting, and $h_{1},\dots,h_{k}\in \Z^{d}$ with $\iota(h_{1}),\dots,\iota(h_{k})$ being linearly independent and $M$-non-isotropic.  Let  $\vec{\xi}=(\xi_{m})_{\vert m\vert=j}$ be a $(j,d,p^{r})$-string and  $f\in\st_{\Z/p^{r},s}(d)$ be the   polynomial associated to $\vec{\xi}$. %and $F\colon\V\to\F_{p}$ be the map induced by $f$. 
	Then $\vec{\xi}$ (or $f$) is  $(V_{p}(\tilde{M})^{h_{1},\dots,h_{k}},p)$-reducible if and only if $f\in J^{M}_{\iota(h_{1}),\dots,\iota(h_{k})}$.\footnote{Here $V_{p}(\tilde{M})^{h_{1},\dots,h_{k}}$ is understood as $V_{p}(\tilde{M})^{\tilde{h}_{1},\dots,\tilde{h}_{k}}$ for any $\tilde{h}_{i}$ whose projection to $\Z_{p^{r}}^{d}$ is $h_{i}$. Cleary the set $V_{p}(\tilde{M})^{h_{1},\dots,h_{k}}$ is independent of the choices of $\tilde{h}_{1},\dots,\tilde{h}_{k}$.}
 \end{lem}
 \begin{proof}
 Let $V$ be the span of $\iota(h_{1}),\dots,\iota(h_{k})$.
  Since $\iota(h_{1}),\dots,\iota(h_{k})$ are linearly independent, we have that $\iota(V_{p}(\tilde{M})^{h_{1},\dots,h_{k}})=V(M)^{\iota(h_{1}),\dots,\iota(h_{k})}=V(M)\cap (V^{\pp}+c)$ for some $c\in\V$. Since $\iota(h_{1}),\dots,\iota(h_{k})$ are $M$-non-isotropic, we have that $\rank(M\vert_{V^{\pp}})=\dim(V^{\pp})=d-k\geq j+3$.    Let $A$ be the matrix associated to $M$ and let $M_{0}\colon\V\to\F_{p}$ be the quadratic form given by $M_{0}(n):=(nA)\cdot n$.
 
 Since  $f$ is homogeneous,
   it follows from Proposition \ref{3:att3} that 
 $f$ is $(\Omega,p)$-reducible if and only if
      $qf\in \poly(\iota^{-1}(V(M_{0})\cap V^{\pp})\to \R\vert\Z)$ for some $q\in\N, p\nmid q$, or equivalently, $qf\in J^{M}_{\iota(h_{1}),\dots,\iota(h_{k})}$.  
      
      If $f\in  J^{M}_{\iota(h_{1}),\dots,\iota(h_{k})}$, then clearly $qf\in  J^{M}_{\iota(h_{1}),\dots,\iota(h_{k})}$.
     Conversely, if $qf\in J^{M}_{\iota(h_{1}),\dots,\iota(h_{k})}$, then since all the coefficients of $f$ are in $\Z/p^{r}$, we have that $(1-q^{\ast}q)f\in J^{M}$ and thus  
       $f=(1-q^{\ast}q)f+q^{\ast}(qf)\in J^{M}_{\iota(h_{1}),\dots,\iota(h_{k})}$,where  $q^{\ast}\in\Z$ is any integer such that $q^{\ast}q\equiv 1\mod p^{r}\Z$.
%		
%		  	By definition, $\vec{\xi}$ is \ $(\tau(V(M)^{h_{1},\dots,h_{s}}),p)$-reducible if and only if $\Delta_{n_{j}}\dots\Delta_{n_{1}}f(n)\in\Z$
 %	for all $(n,n_{1},\dots,n_{j})\in \Gow_{p,j}(\tau(V(M)^{h_{1},\dots,h_{s}}))$.
%	By Proposition \ref{3:att3} (the part that (i) $\Leftrightarrow$ (iii)) and the fact that $f$ is homogeneous, this is equivalent of saying that $F\in J^{M}_{h_{1},\dots,h_{s}}$.
 \end{proof}

 We may define linear independent/combination for strings in the natural way (modulo reducible strings):

\begin{defn}[Linear independent/combination for strings]
	Let  $\Omega$ be a subset of $\Z^{d}$ and  $c\in\N$. We say that $j$-strings $\vec{\xi_{(1)}},\dots, \vec{\xi_{(\ell)}}$ are \emph{$(\Omega,c,p)$-independent} if for every $1\leq j\leq s$ and $a_{1},\dots,a_{\ell}\in\Z$ with $-c\leq a_{k}\leq c$, if $\sum_{k=1}^{\ell}a_{k}\vec{\xi_{(k)}}$ is $(\Omega,p)$-reducible, then $a_{1}=\dots=a_{k}=0$.
A string which can be written in the form $\vec{\xi}+\sum_{k=1}^{\ell}a_{k}\vec{\xi_{(k)}}$ for some $-c\leq a_{k}\leq c$ and some $(\Omega,p)$-reducible string $\vec{\xi}$ is called an \emph{$(\Omega,c,p)$-linear combination}  of $\vec{\xi_{(1)}},\dots, \vec{\xi_{(\ell)}}$.
	
Let $\mathcal{T}=(\vec{\xi}_{j,k})_{1\leq j\leq s, 1\leq k\leq D_{j}}$ be a $(d,p)$-tower. We say that $\mathcal{T}$ is \emph{$(\Omega,c,p)$-independent} if for every $1\leq j\leq s$ and the $j$-strings $\vec{\xi}_{j,1},\dots,\vec{\xi}_{j,D_{j}}$ are $(\Omega,c,p)$-independent.
 \end{defn}

 We may also extend the definition of $p$-almost linear function/Freiman  homomorphism defined in \cite{SunB} to strings (see Appendix \ref{3:s:AppA8} for definitions):
 
 \begin{defn}[$p$-almost linear function/Freiman homomorphism for strings]
 	Let $d,j,L$, $r\in\N_{+}$, $p$ be a prime, and $H$ be a subset of $\Z_{p^{r}}^{d}$. 
 	We say that a map $h\mapsto \vec{\xi}_{h}=(\xi_{h,m})_{m\in\N^{d},\vert m\vert=j}$ from $H$ to the set of $(j,d,p^{r})$-strings is a \emph{$\Z/p^{r}$-almost linear function/Freiman homomorphism} (of complexity at most $L$)    if the map $h\mapsto \xi_{h,m}$ is a $\Z/p^{r}$-almost linear function/Freiman homomorphism (of complexity at most $L$) for all $m\in\N^{d},\vert m\vert=j$.
 \end{defn}	

We are now ready to prove the main result for this section, which says that any sequences of strings can be written as linear combinations of generators with some sunflower-type independence properties. This is where we make essential use of the  additive combinatorics result obtained in the previous part of the series \cite{SunB} (i.e. Theorem \ref{3:aadd} of this paper).

\begin{lem}[Sunflower lemma for frequencies]\label{3:sf1}
	Let $d,j,\ell,L,r\in\N_{+}$, $\d,\e>0$, $p\gg_{\d,d,\e,\ell,L,r} 1$ be a prime, $M\colon \V\to\F_{p}$ be a non-degenerate quadratic form with $\tilde{M}\colon\Z^{d}\to\Z/p$ being its regular lifting, and 
	  $H$ be a subset of $\Z^{d}_{p^{r}}$ with $\vert H\vert>\e p^{rd}$. For all $h\in H$, let $\vec{\xi}_{h,1},\dots,\vec{\xi}_{h,\ell}$ be $(j,d,p^{r})$-strings.  If $d\geq N(j)$, 	then there exist $a\in\N_{+}$, $a=O_{\ell}(1)$, a subset $H'$ of $H$ with $\vert H'\vert\gg_{\d,d,\e,\ell,L,r} p^{rd}$, integers
	 $\ell_{\cor},\ell_{\ind},\ell_{\lin}=O_{\ell}(1)$,
a collection of ``core" $(j,d,p^{r})$-strings $$\mathcal{S}^{\cor}=\{\vec{\xi}^{\cor}_{1},\dots,\vec{\xi}^{\cor}_{\ell_{\cor}}\}$$ 
	 collections of  ``petal" $(j,d,p^{r})$-strings
	$$\mathcal{S}^{\ind}_{h}=\{\vec{\xi}^{\ind}_{h,1},\dots,\vec{\xi}^{\ind}_{h,\ell_{\ind}}\},  \mathcal{S}^{\lin}_{h}=\{\vec{\xi}^{\lin}_{h,1},\dots,\vec{\xi}^{\lin}_{h,\ell_{\lin}}\}$$
	for all $h\in H'$ such that the following holds:
	\begin{enumerate}[(i)]
		\item For all $h\in H'$ and $1\leq k\leq \ell$, $\vec{\xi}_{h,k}$ is a $(V_{p}(\tilde{M})^{h},L^{a},p)$-linear combination of 
		$$\mathcal{S}^{\cor}\cup\mathcal{S}^{\ind}_{h}\cup \mathcal{S}^{\lin}_{h}.$$
		\item There exists a subset $\Gamma'$ of $\Gamma:=\{(h_{1},\dots,h_{4})\in {H'}^{4}\colon h_{1}+h_{2}=h_{3}+h_{4}\}$ with $\vert\Gamma'\vert>(1-\d)\vert \Gamma\vert$ such that for all $(h_{1},\dots,h_{4})\in \Gamma'$,		the $(j,d,p^{r})$-strings in $$\mathcal{S}^{\cor}\cup\cup_{i=1}^{4}\mathcal{S}^{\ind}_{h_{i}}\cup\cup_{i=1}^{3} \mathcal{S}^{\lin}_{h_{i}}$$
	are $(V_{p}(\tilde{M})^{h_{1},h_{2},h_{3}},L,p)$-independent. 		
	\item For all $1\leq k\leq \ell_{\lin}$, the map $h\mapsto\vec{\xi}^{\lin}_{h,k}, h\in H'$ is a $\Z/p^{r}$-almost linear Freiman homomorphism of complexity $O_{\d,d,\e,\ell,L,r}(1)$. 
	\end{enumerate}	
\end{lem}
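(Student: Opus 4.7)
The plan is to adapt the iterative extraction argument of Lemma 10.10 in \cite{GTZ12} to our string-valued setting, with the key enhancement being the use of the $M$-ideal linearization result (Theorem \ref{3:aadd}) in place of the classical Freiman tools used there. By Lemma \ref{3:att55}, a $(j,d)$-string $\vec{\xi}$ is $(\tau(V(M)^{h_1,\dots,h_s}),p)$-reducible exactly when the associated homogeneous polynomial $F\colon\V\to\F_p$ lies in the $M$-ideal $J^{M}_{h_1,\dots,h_s}$. This dictionary will allow me to convert statements about reducibility of linear combinations of strings into statements about membership in appropriate $M$-ideals, which is precisely the setting in which Theorem \ref{3:aadd} operates.

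The iteration alternates between two extraction steps, maintaining at each stage a subset $H'\subseteq H$ of density $\gg_{\d,d,\e,\ell,L} 1$, a core collection $\mathcal{S}^{\cor}$, and petal collections $\mathcal{S}^{\ind}_h, \mathcal{S}^{\lin}_h$ for $h\in H'$. First I would perform a \emph{core extraction}: if there exist bounded integer coefficients and a fixed nonzero $(j,d)$-string $\vec{\xi}^{*}$ such that $\vec{\xi}^{*}$ equals a bounded linear combination of the current petals modulo $J^{M}_{h}$ for a positive proportion of $h$, then I peel $\vec{\xi}^{*}$ off into $\mathcal{S}^{\cor}$, restrict $H'$ to the good set via pigeonhole, and subtract the core contribution from each affected petal. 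Second, I would perform a \emph{linearity extraction}: for a fixed index $k$, if the map $h\mapsto\vec{\xi}_{h,k}$ modulo $\mathcal{S}^{\cor}$ and $J^{M}_{h}$ satisfies the Freiman-type relation $\vec{\xi}_{h_1,k}+\vec{\xi}_{h_2,k}\equiv\vec{\xi}_{h_3,k}+\vec{\xi}_{h_4,k}$ modulo reducible strings for a positive proportion of additive quadruples in $H'$, then Theorem \ref{3:aadd} produces an almost linear Freiman homomorphism $h\mapsto\vec{\xi}^{\lin}_{h,k}$ of complexity $O_{\d,d,\e,\ell,L}(1)$ agreeing with the original map modulo $J^{M}_{h}$ on a large subset, which I record as a new linear petal and subtract off. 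Strings that survive both extractions are declared independent petals in $\mathcal{S}^{\ind}_h$. Termination follows because each core extraction strictly increases $\ell_{\cor}$ while the total number of essentially distinct directions produced is bounded by $O(\ell)$, and the linearity extractions are similarly bounded.

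The main obstacle will be the upgrade from pointwise independence (for most single $h$) to the quadruple-wise independence required in condition (ii). A naive application of the extraction procedure yields independence modulo $J^{M}_{h}$ for most $h$, whereas condition (ii) demands independence modulo the smaller ideal $J^{M}_{h_1,h_2,h_3}$ for most quadruples $(h_1,\dots,h_4)$ with $h_1+h_2=h_3+h_4$. I would bridge this by formulating the failure of quadruple-wise independence as the existence of a nontrivial bounded linear combination that is $(\tau(V(M)^{h_1,h_2,h_3}),p)$-reducible for a positive proportion of such quadruples, and then argue via pigeonhole in the coefficient tuple that this forces a Freiman-type relation on the extracted generators across the quadruple, which in turn triggers another round of linearity extraction via Theorem \ref{3:aadd}. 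The delicate quantitative bookkeeping---tracking the density loss $\d$, the complexity parameter $L^{a}$, and ensuring that the dimension bound $d\geq N(j)$ suffices for all invocations of Theorem \ref{3:aadd} and Lemma \ref{3:att55}---is the other main technical hurdle, since the threshold $N(j)$ is dictated by the dimensional requirements of the $M$-ideal machinery developed in \cite{SunB}.
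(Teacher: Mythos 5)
Your proposal correctly identifies the high-level structure (an iterative extraction/refinement as in Lemma 10.10 of \cite{GTZ12}, with Theorem \ref{3:aadd} playing the role of the Freiman-type linearization and Lemma \ref{3:att55} translating reducibility into $M$-ideal membership), but it has a genuine gap in the case analysis that makes the iteration incomplete.

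When condition (ii) fails, one obtains a nontrivial bounded linear combination of strings from $\mathcal{S}^{\cor}\cup\cup_{i}\mathcal{S}^{\ind}_{h_i}\cup\cup_{i}\mathcal{S}^{\lin}_{h_i}$ which is $(\tau(V(M)^{h_1,h_2,h_3}),p)$-reducible for a positive proportion of additive quadruples. Your proposal only processes the scenario in which the coefficients on the \emph{indeterminate} petals $\mathcal{S}^{\ind}_{h_i}$ are involved, in which case Theorem \ref{3:aadd} applies and produces a new linear petal. But there are two further cases you do not handle. First, the nonzero coefficients might lie \emph{only} on core strings: this says one core string is a bounded combination of the others modulo $J^{M}_{h_1,h_2,h_3}$ for many $(h_1,h_2,h_3)$, and one must upgrade this to a relation modulo $J^{M}$ using the ``intersection method'' (Proposition \ref{3:gr0}) before the redundant core string can be discarded. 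Second, the nonzero coefficients might lie on core and linear petals but no indeterminate petals: here one fixes many independent triples $(h_{i,2},h_{i,3})$ by pigeonhole, applies Proposition \ref{3:gr0} again, and then Proposition \ref{3:coco1} to show that the linear petal differs from a fixed core string modulo $J^{M}_h$, so it can be promoted to the core. Without these two cases the iteration has no mechanism to remove redundant core elements or to consolidate linear petals, and consequently condition (ii) may never be achieved.

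The second issue is the termination argument. You say termination follows because ``each core extraction strictly increases $\ell_{\cor}$'' and the number of directions is $O(\ell)$, but this is not quite how the bookkeeping works: in the paper's proof a \emph{lexicographic weight} $(\ell_{\ind},\ell_{\lin},\ell_{\cor})$ is defined together with the auxiliary conserved quantity $2\ell_{\ind}+\ell_{\lin}+\ell_{\cor}\le 2\ell$, and one shows that every failure of (ii) allows the weight to be strictly decreased while respecting the auxiliary bound. The core case decreases $\ell_{\cor}$ by one; the linear case trades $\ell_{\lin}\to\ell_{\lin}-1$, $\ell_{\cor}\to\ell_{\cor}+1$; the indeterminate case trades $\ell_{\ind}\to\ell_{\ind}-1$, $\ell_{\lin}\to\ell_{\lin}+1$, $\ell_{\cor}\to\ell_{\cor}+1$. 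The lexicographic ordering ensures strict descent and the auxiliary bound ensures the number of iterations is $O_\ell(1)$. Your description (``the total number of essentially distinct directions produced is bounded by $O(\ell)$'') is plausible intuition but would need to be made precise, because a core extraction alone does \emph{not} bound the process: a core string can later be removed (the first missing case above), so one needs a more careful monovariant.
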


\begin{proof}%[Proof of Lemma \ref{3:sf1}]
The outline of the proof of Lemma \ref{3:sf1} is similar to that of Lemma 10.5 of \cite{GTZ12}.  
	We say that a choice of $H',\ell_{\ind},\ell_{\lin},\ell_{\cor},a$, $\mathcal{S}^{\ind}_{h},\mathcal{S}^{\lin}_{h},\mathcal{S}^{\cor},h\in H'$ is a \emph{partial solution with weight} $(\ell_{\ind},\ell_{\lin},\ell_{\cor})$ if
	 all the stated conditions are satisfied expect Condition (ii). Define the lexicographical ordering  on the weights by setting $(\ell_{\ind},\ell_{\lin},\ell_{\cor})>(\ell'_{\ind},\ell'_{\lin},\ell'_{\cor})$ if $\ell_{\ind}>\ell'_{\ind}$; or $\ell_{\ind}=\ell'_{\ind}$, $\ell_{\lin}>\ell'_{\lin}$; or $\ell_{\ind}=\ell'_{\ind}, \ell_{\lin}=\ell'_{\lin}, \ell_{\cor}>\ell'_{\cor}$.
	
	By setting $H'=H$, $\ell_{\ind}=\ell,\ell_{\lin}=\ell_{\cor}=0, a=1$ and $\vec{\xi}^{\ind}_{h,k}:=\vec{\xi}_{h,k}$ for all $h\in H, 1\leq k\leq \ell$, we get a  partial solution with weight $(\ell,0,0)$. 
	
	\textbf{Claim:} If  
	$$H',\ell_{\ind},\ell_{\lin},\ell_{\cor},a,\mathcal{S}^{\ind}_{h},\mathcal{S}^{\lin}_{h},\mathcal{S}^{\cor}, h\in H'$$
	is a partial solution with $\vert H'\vert\gg_{\d,d,\e,\ell,L,r} p^{rd}$ such that Condition (ii) is not satisfied and that $2\ell_{\ind}+\ell_{\lin}+\ell_{\cor}\leq 2\ell$, then there exists a partial solution 
	$$H'',\ell'_{\ind},\ell'_{\lin},\ell'_{\cor},a+1,{\mathcal{S}'}^{\ind}_{h},{\mathcal{S}'}^{\lin}_{h},{\mathcal{S}'}^{\cor}, h\in H''$$
	with 
	\begin{equation}\label{3:llddll}
	(\ell'_{\ind},\ell'_{\lin},\ell'_{\cor})<(\ell_{\ind},\ell_{\lin},\ell_{\cor}), \text{ and } 2\ell'_{\ind}+\ell'_{\lin}+\ell'_{\cor}\leq 2\ell_{\ind}+\ell_{\lin}+\ell_{\cor},
	\end{equation}
	 and that $\vert H''\vert\gg_{\d,d,\e,\ell,L,r} p^{rd}$.
	
	Throughout the proof, for $a\in\Z\backslash p\Z$, we use $a^{\ast}$ to denote the unique integer in $\{0,\dots,p^{r}-1\}$ with $a^{\ast}a\equiv 1\mod p^{r}\Z$.
	By Lemma \ref{3:iiddpp},  we may assume without loss of generality that the image of all the vectors in $H'$ under $\iota$ are $M$-non-isotropic.
	We may write 
	$$\mathcal{S}^{\ind}_{h}=\{\vec{\xi}^{\ind}_{h,k}\colon 1\leq k\leq\ell_{\ind}\},\mathcal{S}^{\lin}_{h}=\{\vec{\xi}^{\lin}_{h,k}\colon 1\leq k\leq\ell_{\lin}\},\mathcal{S}^{\cor}=\{\vec{\xi}^{\cor}_{k}\colon 1\leq k\leq\ell_{\cor}\}$$
	for all $h\in H'$.
	For $h\in\Z_{p^{r}}^{d}$, let $H'_{h}$ denote the set of $(h_{1},h_{2})\in {H'}^{2}$ such that $h_{1}+h_{2}=h$. Then   
		\begin{equation}\label{3:thisisr}
		\vert\{(h_{1},h_{2},h_{3},h_{4})\in H'^{4}\colon h_{1}+h_{2}=h_{3}+h_{4}\}\vert=\sum_{h\in\Z_{p^{r}}^{d}}\vert H'_{h}\vert^{2}\geq \frac{1}{p^{rd}}\Bigl(\sum_{h\in\Z_{p^{r}}^{d}}\vert H'_{h}\vert\Bigr)^{2}=\frac{\vert H'\vert^{4}}{p^{rd}}.
		\end{equation}
		Since
$\vert H'\vert\gg_{\d,d,\e,\ell,L,r} p^{rd}$,
	by assumption, (\ref{3:thisisr}) and the Pigeonhole Principle,  there exist $-L\leq a^{\cor}_{k}, a^{\ind}_{i,k}, a^{\lin}_{i,k}\leq L$ not all equal to zero and a subset $W$ of $\{(h_{1},h_{2},h_{3},h_{4})\in H'^{4}\colon h_{1}+h_{2}=h_{3}+h_{4}\}$ with $\vert W\vert\gg_{\d,d,\e,\ell,L,r} p^{3rd}$   such that for all $(h_{1},h_{2},h_{3},h_{4})\in W$, we have that
	\begin{equation}\label{3:mm1}
		\sum_{k=1}^{\ell_{\cor}}a^{\cor}_{k}\vec{\xi}^{\cor}_{k}+\sum_{i=1}^{4}\sum_{k=1}^{\ell_{\ind}}a^{\ind}_{i,k}\vec{\xi}^{\ind}_{h_{i},k}+\sum_{i=1}^{3}\sum_{k=1}^{\ell_{\lin}}a^{\lin}_{i,k}\vec{\xi}^{\lin}_{h_{i},k}
		\text{ is $(V_{p}(\tilde{M})^{h_{1},h_{2},h_{3}},p)$-reducible.}
	\end{equation}
	
	Let $Z$ be the set of $h_{1},h_{2},h_{3},h_{4}\in\Z_{p^{r}}^{d}$ such that $h_{1}+h_{2}=h_{3}+h_{4}$ and that $\iota(h_{1}),\iota(h_{2}),\iota(h_{3})$ are either linearly dependent or $M$-non-isotropic. Let $W':=W\backslash Z$. 
	Since $d\geq 3$, by Lemma \ref{3:iiddpp}, we have that 
	  $\vert W'\vert\gg_{\d,d,\e,\ell,L,r} p^{3rd}$. We consider three different cases.

	\textbf{Case 1:} all of $a^{\ind}_{i,k}, a^{\lin}_{i,k}$ are zero. 
	 We may assume without loss of generality that $a^{\cor}_{1}\neq 0$. 
	 Let $\vec{\xi'}^{\cor}_{k}:=(a^{\cor}_{1})^{\ast}\vec{\xi}^{\cor}_{k}$, $\vec{\xi'}^{\ind}_{h,k}:=(a^{\cor}_{1})^{\ast}\vec{\xi}^{\ind}_{h,k}$
	 and $\vec{\xi'}^{\lin}_{h,k}:=(a^{\cor}_{1})^{\ast}\vec{\xi}^{\ind}_{h,k}$. 
		 Note that (\ref{3:mm1}) implies that $\vec{\xi}^{\cor}_{1}+\sum_{k=2}^{\ell_{\cor}}a^{\cor}_{k}\vec{\xi'}^{\cor}_{k}$
	 is $(V_{p}(\tilde{M})^{h_{1},h_{2},h_{3}},p)$-reducible for all $(h_{1},h_{2},h_{3},h_{4})\in W'$.
	 We show that $\vec{\xi}^{\cor}_{1}+\sum_{k=2}^{\ell_{\cor}}a^{\cor}_{k}\vec{\xi'}^{\cor}_{k}$
	 is $(V_{p}(\tilde{M}),p)$-reducible by the intersection method introduced in \cite{SunB}.
	 
	 Let $F\in\st_{\Z/p^{r},d}(j)$ be the polynomial  associated to the $j$-string $$\vec{\xi}^{\cor}_{1}+\sum_{k=2}^{\ell_{\cor}}a^{\cor}_{k}\vec{\xi'}^{\cor}_{k}.$$
	 Since $\vert W'\vert\gg_{\d,d,\e,\ell,L,r}p^{3rd}$ and $d\geq N(j)$,  there exist $(h_{i,1},h_{i,2},h_{i,3},h_{i,4})\in W'', 1\leq i\leq j+1$ such that $\iota(h_{i,1}),\iota(h_{i,2}),\iota(h_{i,3}), 1\leq i\leq j+1$ are linearly independent.
	So for all $1\leq i\leq j+1$,
	 the $j$-string in $\vec{\xi}^{\cor}_{1}+\sum_{k=2}^{\ell_{\cor}}a^{\cor}_{k}\vec{\xi'}^{\cor}_{k}$ is $(V_{p}(\tilde{M})^{h_{i,1},h_{i,2},h_{i,3}},p)$-reducible.  Since $\iota(h_{i,1}),\iota(h_{i,2}),\iota(h_{i,3})$ are not $M$-isotropic, by Lemma \ref{3:att55}, $F$ belongs to $J^{M}_{\iota(h_{i,1}),\iota(h_{i,2}),\iota(h_{i,3})}$.  
	 By Proposition    \ref{3:gr0} (setting $m=0, r=3$ and $L=j$), we have that $F$ belongs to $J^{M}$.
So by Lemma \ref{3:att55},
     $\vec{\xi}^{\cor}_{1}+\sum_{k=2}^{\ell_{\cor}}a^{\cor}_{k}\vec{\xi'}^{\cor}_{k}$
	 is $(V_{p}(\tilde{M}),p)$-reducible.

	 This means that  $\vec{\xi}^{\cor}_{1}$ is a $(V_{p}(\tilde{M}),L,p)$-linear combination of $\vec{\xi'}^{\cor}_{2},\dots,\vec{\xi'}^{\cor}_{\ell_{\cor}}$.
	 Since $\vec{\xi}_{h,k}$ is a $(V_{p}(\tilde{M}),L^{a},p)$-linear combination of the strings in $\mathcal{S}^{\cor}\cup\mathcal{S}^{\ind}_{h}\cup \mathcal{S}^{\lin}_{h}$, it is also a $(V_{p}(\tilde{M}),L^{a+1},p)$-linear combination of the strings in
	  ${\mathcal{S}'}^{\ind}_{h},{\mathcal{S}'}^{\lin}_{h},{\mathcal{S}'}^{\cor}$, where 
	 $${\mathcal{S}'}^{\ind}_{h}=\{\vec{\xi'}^{\ind}_{h,k}\colon 1\leq k\leq \ell_{\ind}\},
	 {\mathcal{S}'}^{\lin}_{h}=\{\vec{\xi'}^{\lin}_{h,k}\colon 1\leq k\leq \ell_{\lin}\},
	 {\mathcal{S}'}^{\cor}=\{\vec{\xi'}^{\cor}_{k}\colon 2\leq k\leq \ell_{\cor}\}.$$
	 This means that 
	 $$H',\ell_{\ind},\ell_{\lin},\ell_{\cor}-1,a+1,  
	 {\mathcal{S}'}^{\ind}_{h},{\mathcal{S}'}^{\lin}_{h},{\mathcal{S}'}^{\cor}, h\in H'$$
	  is a partial solution satisfying (\ref{3:llddll}).

	 \textbf{Case 2:} all of $a^{\ind}_{i,k}$ are zero, but not all of $a^{\lin}_{i,k}$ are zero. We may assume without loss of generality that $a^{\lin}_{1,1}\neq 0$. 
	 For $h\in H'$, let $U_{h}$ denote the set of $(h_{2},h_{3})\in {H'}^{2}$ such that $(h,h_{2},h_{3},h+h_{2}-h_{3})\in W'$ for all $1\leq i\leq j+3$. and $U'_{h}$ denote the set of $(h_{1,2},h_{1,3},\dots,h_{j+3,2},h_{j+3,3})\in {H'}^{2(j+3)}$ such that $(h,h_{i,2},h_{i,3},h+h_{i,2}-h_{i,3})\in W'$ for all $1\leq i\leq j+3$.
	  For $\bold{h}=(h_{1,2},h_{1,3},\dots,h_{j+3,2},h_{j+3,3})\in {H'}^{2(j+3)}$, let $U''_{\h}$ denote the set of $h\in H'$ such that  $\h\in U'_{h}$.
	 Then
	 $$\sum_{\h\in {H'}^{2(j+3)}}\vert U''_{\h}\vert=\sum_{h\in H'}\vert U'_{h}\vert=\sum_{h\in H'}\vert U_{h}\vert^{j+3}\geq \frac{1}{\vert H'\vert^{j+2}}\Bigl(\sum_{h\in H'}\vert U_{h}\vert\Bigr)^{j+3}=\frac{\vert W'\vert^{j+3}}{\vert H'\vert^{j+2}}\gg_{\d,d,\e,\ell,L,r} p^{(2j+7)rd}.$$	 
	 By the Pigeonhole Principle, there exists $\tilde{H}\subseteq {H'}^{2(j+3)}$ with $\vert \tilde{H}\vert\gg_{\d,d,\e,\ell,L,r} p^{2(j+3)rd}$ such that $\vert U''_{\h}\vert\gg_{\d,d,\e,\ell,L,r} p^{rd}$ for all $\h\in \tilde{H}$. By Lemma \ref{3:iiddpp}, we may pick some $$\h=(h_{1,2},h_{1,3},\dots,h_{j+3,2},h_{j+3,3})\in \tilde{H}$$ with $\iota(h_{1,2}),\iota(h_{1,3}),\dots,\iota(h_{j+3,2}),\iota(h_{j+3,3})$ being linearly independent. 
	 
	 Fix any such $\h$ and set $H''=U''_{\h}$. For all $h\in H''$,
	 let $\vec{\xi'}^{\cor}_{k}:=(a^{\lin}_{1,1})^{\ast}\vec{\xi}^{\cor}_{k}$, $\vec{\xi'}^{\ind}_{h,k}:=(a^{\lin}_{1,1})^{\ast}\vec{\xi}^{\ind}_{h,k}$ 
	 and $\vec{\xi'}^{\lin}_{h,k}:=(a^{\lin}_{1,1})^{\ast}\vec{\xi}^{\ind}_{h,k}$.
	 By (\ref{3:mm1}) and the construction of $H''$, for all $h\in H''$, we have that
	 \begin{equation}\label{3:neww22}
	 \sum_{k=1}^{\ell_{\cor}}a^{\cor}_{k}\vec{\xi'}^{\cor}_{k}+\sum_{j=2}^{3}\sum_{k=1}^{\ell_{\lin}}a^{\lin}_{j,k}\vec{\xi'}^{\lin}_{h_{i,j},k}+\sum_{k=2}^{\ell_{\lin}}a^{\lin}_{1,k}\vec{\xi'}^{\lin}_{h,k}+\vec{\xi}^{\lin}_{h,1}
	 \text{ is $(V_{p}(\tilde{M})^{h,h_{i,2},h_{i,3}},p)$-reducible.}
	 \end{equation}

	 Let $F_{i}\in\st_{\Z/p^{r},d}(j)$ be the polynomial  associated to the $j$-string $$ \sum_{k=1}^{\ell_{\cor}}a^{\cor}_{k}\vec{\xi'}^{\cor}_{k}+\sum_{j=2}^{3}\sum_{k=1}^{\ell_{\lin}}a^{\lin}_{j,k}\vec{\xi'}^{\lin}_{h_{i,j},k},$$
	 and $G_{h}\in\st_{\Z/p^{r},d}(j)$ be the polynomial  associated to the $j$-string $$ \sum_{k=2}^{\ell_{\lin}}a^{\lin}_{1,k}\vec{\xi'}^{\lin}_{h,k}+\vec{\xi}^{\lin}_{h,1}.$$
	  Since $\iota(h),\iota(h_{i,2}),\iota(h_{i,3})$ are linearly independent and $M$-non-isotropic, by Lemma \ref{3:att55} and (\ref{3:neww22}), $F_{i}+G_{h}$ belongs to $J^{M}_{\iota(h),\iota(h_{i,2}),\iota(h_{i,3})}$ for all $1\leq i\leq j+3$ and $h\in H''$. So for all $h,h'\in H''$ and $1\leq i\leq j+3$, we have that 
	  $$G_{h}-G_{h'}\equiv (G_{h}+F_{i})-(G_{h'}+F_{i})\equiv 0 \mod J^{M}_{\iota(h),\iota(h'),\iota(h_{i,2}),\iota(h_{i,3})}.$$
	  Since $\iota(h_{1,2}),\iota(h_{1,3}),\dots,\iota(h_{j+3,2}),\iota(h_{j+3,3})$ are linearly independent,
	  by Proposition \ref{3:gr0} (setting $m\leq 2, s=j, r=2$),  
	  we have that 
	  $$G_{h}\equiv G_{h'} \mod J^{M}_{\iota(h),\iota(h')}$$
	  for all $h,h'\in H''$. Since $\bold{0}\notin\iota(H'')$, by Proposition \ref{3:coco1p},   there exists $G\in\st_{\Z/p^{r},d}(j)$ (recall the definition in Section \ref{3:s:defn}) such that 
	   $$G_{h}\equiv G \mod J^{M}_{\iota(h)}$$
	   for all $h\in H''$. In other words, letting $\vec{\xi}_{0}$ denote the $j$-string associated to %any regular lifting of
	    $G$, by Lemma \ref{3:att55}, we have that 
	   $$-\vec{\xi}_{0}+\sum_{k=2}^{\ell_{\lin}}a^{\lin}_{1,k}\vec{\xi'}^{\lin}_{h,k}+\vec{\xi}^{\lin}_{h,1}$$
	   is $(V_{p}(\tilde{M})^{h},p)$-reducible for all $h\in H''$.  

	 This mean that 
	  $\vec{\xi}^{\lin}_{h,1}$ is a $(V_{p}(\tilde{M})^{h},L,p)$-linear combination of $\vec{\xi}_{0},\vec{\xi'}^{\lin}_{h,k}, 2\leq k\leq \ell_{\lin}$. Since $\vec{\xi}_{h,k}$ is a $(V_{p}(\tilde{M})^{h},L^{a},p)$-linear combination of the strings in
	   $\mathcal{S}^{\cor}\cup\mathcal{S}^{\ind}_{h}\cup \mathcal{S}^{\lin}_{h}$, it is also a $(V_{p}(\tilde{M})^{h},L^{a+1},p)$-linear combination of the strings in
	  ${\mathcal{S}'}^{\ind}_{h},{\mathcal{S}'}^{\lin}_{h},{\mathcal{S}'}^{\cor}$, where 
	  $${\mathcal{S}'}^{\ind}_{h}=\{\vec{\xi'}^{\ind}_{h,k}\colon 1\leq k\leq \ell_{\ind}\},
	  {\mathcal{S}'}^{\lin}_{h}=\{\vec{\xi'}^{\lin}_{h,k}\colon 2\leq k\leq \ell_{\lin}\},
	  {\mathcal{S}'}^{\cor}=\{\vec{\xi}_{0},\vec{\xi'}^{\cor}_{k}\colon 1\leq k\leq \ell_{\cor}\}.$$
	  This means that 
	  $$H'',\ell_{\ind},\ell_{\lin}-1,\ell_{\cor}+1,a+1,  
	  {\mathcal{S}'}^{\ind}_{h},{\mathcal{S}'}^{\lin}_{h},{\mathcal{S}'}^{\cor}, h\in H''$$
	  is a partial solution satisfying (\ref{3:llddll}).
 
	 \textbf{Case 3:}  
	  not all of $a^{\ind}_{i,k}$ are zero. We may assume without loss of generality that $a^{\lin}_{4,1}\neq 0$. Let $\Xi_{1},\dots,\Xi_{4}\colon \Vk\to (\Z/p^{r})^{\binom{j+d-1}{d-1}}$ be functions given by
	 $$\Xi_{i}(h):=\sum_{k=1}^{\ell_{\ind}}a^{\ind}_{i,k}\vec{\xi}^{\ind}_{h,k}+\sum_{k=1}^{\ell_{\lin}}a^{\lin}_{i,k}\vec{\xi}^{\lin}_{h,k}$$
	 for $i=1,2,3$, and
	  $$\Xi_{4}(h):=\sum_{k=1}^{\ell_{\cor}}a^{\cor}_{k}\vec{\xi}^{\cor}_{k}+\sum_{k=1}^{\ell_{\ind}}a^{\ind}_{4,k}\vec{\xi}^{\ind}_{h,k}.$$
	  Then (\ref{3:mm1}) implies that
	  \begin{equation}\label{3:1x2x3x4x} 
	  \text{$\Xi_{1}(h_{1})+\Xi_{2}(h_{2})+\Xi_{3}(h_{3})+\Xi_{4}(h_{4})$ is $(V_{p}(\tilde{M})^{h_{1},h_{2},h_{3}},p)$-reducible}
	  \end{equation} 
	  for all $(h_{1},h_{2},h_{3},h_{1}+h_{2}-h_{3})\in W'$. 
%	   For any $(j,d)$-string $\vec{\xi}:=(\xi_{m})_{\vert m\vert=j}$, let $\omega(\vec{\xi})$ the polynomial in $\st_{d}(j)$ given by
%	   $$\omega(\vec{\xi})(n):=\sum_{m\in\N^{d},\vert m\vert=j}\iota(p\xi_{m})(m!)^{\ast}n^{m}, n\in\V,$$ 
%	   i.e. $\omega(\vec{\xi})$ is the polynomial induced by the polynomial associated to $\vec{\xi}$.
	  For $1\leq i\leq 4$, let $\omega_{i}\colon H'\to \st_{\Z/p^{r},d}(j)$ be the map which maps $h\in H'$ to the $\Z/p^{r}$-coefficient polynomial associated to $\Xi_{i}(h)$.
%  given by $\omega_{i}(h):=\omega(\Xi_{i}(h))$.
	  Since $d\geq N(j)$ and $\iota(h_{1}),\iota(h_{2}),\iota(h_{3})$ are linearly independent and $M$-non-isotropic, by (\ref{3:1x2x3x4x}) and Lemma \ref{3:att55},  we have that
	  \begin{equation}\nonumber
	  \omega_{1}(h_{1})+\omega_{2}(h_{2})+\omega_{3}(h_{3})+\omega_{4}(h_{4})\in J^{M}_{\iota(h_{1}),\iota(h_{2}),\iota(h_{3})}.
	  \end{equation}
	 Since $d\geq N(j)$, by Theorem \ref{3:aadd}, 	 there exists
	   a subset $H''\subseteq H'$ with $\vert H''\vert\gg_{\d,d,\e,\ell,L,r} p^{rd}$, some $g\in \st_{\Z/p^{r},d}(j)$  and
	  a $\Z/p^{r}$-almost linear Freiman  homomorphism $T\colon H\to\st_{\Z/p^{r},d}(j)$ of complexity $O_{\d,d,\e,\ell,L,r}(1)$ such that $$\omega_{4}(h)-T(h)-g\in J^{M}_{\iota(h)}$$ for all $h\in H''$. By Lemma \ref{3:iiddpp}, we may assume without loss of generality that the image of all elements in $H''$ under $\iota$ are non-zero and $M$-non-isotropic.	  	  
By definition and Lemma \ref{3:att55}, there exist  a $\Z/p^{r}$-almost linear Freiman homomorphism $h\mapsto \vec{\xi}_{h}$ of complexity $O_{\d,d,\e,\ell,L,r}(1)$ and some $(j,d,p^{r})$-string $\vec{\xi}_{0}$ such that
	  $$\Xi_{4}(h)-\vec{\xi}_{h}-\vec{\xi}_{0}=a^{\ind}_{4,1}\vec{\xi}^{\ind}_{h,1}-\vec{\xi}_{h}-\vec{\xi}_{0}+\sum_{k=1}^{\ell_{\cor}}a^{\cor}_{k}\vec{\xi}^{\cor}_{k}+\sum_{k=2}^{\ell_{\ind}}a^{\ind}_{4,k}\vec{\xi}^{\ind}_{h,k}$$
	  is $(V_{p}(\tilde{M})^{h},p)$-reducible for all $h\in H''$. Let  $\vec{\xi'}_{0}:=(a^{\ind}_{4,1})^{\ast}\vec{\xi}_{0}$,
	  $\vec{\xi'}^{\cor}_{k}:=(a^{\ind}_{4,1})^{\ast}\vec{\xi}^{\cor}_{k}$,
	  $\vec{\xi'}^{\ind}_{h,k}:=(a^{\ind}_{4,1})^{\ast}\vec{\xi}^{\ind}_{h,k}$,  and	  $\vec{\xi}'_{h}:=(a^{\ind}_{4,1})^{\ast}\vec{\xi}_{h}$.
	  
	   This mean that 
	   $\vec{\xi}^{\ind}_{h,1}$ is a $(V_{p}(\tilde{M})^{h},L,p)$-linear combination of $\vec{\xi'}_{0}$, $\vec{\xi'}_{h}$, $\vec{\xi'}_{k}^{\cor}$, $\vec{\xi'}_{h,k}^{\ind}$. Since $\vec{\xi}_{h,k}$ is a $(V_{p}(\tilde{M})^{h},L^{a},p)$-linear combination of the strings in
	   $\mathcal{S}^{\cor}\cup\mathcal{S}^{\ind}_{h}\cup \mathcal{S}^{\lin}_{h}$, it is also a $(V_{p}(\tilde{M})^{h},L^{a+1},p)$-linear combination of the strings in
	   ${\mathcal{S}'}^{\ind}_{h},{\mathcal{S}'}^{\lin}_{h},{\mathcal{S}'}^{\cor}$, where 
	   $${\mathcal{S}'}^{\ind}_{h}=\{\vec{\xi'}^{\ind}_{h,k}\colon 2\leq k\leq \ell_{\ind}\},
	   {\mathcal{S}'}^{\lin}_{h}=\{\vec{\xi'}_{h},\vec{\xi'}^{\lin}_{h,k}\colon 1\leq k\leq \ell_{\lin}\},
	   {\mathcal{S}'}^{\cor}=\{\vec{\xi'}_{0},\vec{\xi'}^{\cor}_{k}\colon 1\leq k\leq \ell_{\cor}\}.$$
	   This means that 
	   $$H'',\ell_{\ind}-1,\ell_{\lin}+1,\ell_{\cor}+1,a+1,  
	   {\mathcal{S}'}^{\ind}_{h},{\mathcal{S}'}^{\lin}_{h},{\mathcal{S}'}^{\cor}, h\in H''$$
	   is a partial solution satisfying (\ref{3:llddll}).
	  This finishes the proof of the claim.
	  
	 \
	  
	  From the claim, we can find a series partial solutions with decreasing weights. Moreover, by (\ref{3:llddll}), it is not hard to see that this process terminates after at most $O_{\ell}(1)$ steps. By the claim, this partial solution satisfies Condition (ii), and we are done.
\end{proof}

Next we use the decomposition of strings in Lemma \ref{3:sf1} to obtain a decomposition of nilcharacters.
We need to recall some definitions introduced in \cite{GTZ12}:

\begin{defn}[Universal nilmanifold]
	A \emph{dimension vector} is a tuple
	$$\vec{D}=(D_{1},\dots,D_{s})\in\N^{s}.$$
	Given a dimension vector, we defined the \emph{universal nilpotent group} $G^{\vec{D},d}$ of degree-rank $[s,r_{\ast}]$ to be the Lip group generated by formal generators $e_{m,j}$ for $m\in\N^{d}, 1\leq \vert m\vert\leq s$ and $1\leq j\leq D_{i}$ such that any iterated commutator of $e_{m_{1},j_{1}},\dots,e_{m_{\ell},j_{\ell}}$ is trivial if either 
	$\vert m_{1}\vert+\dots+\vert m_{\ell}\vert>s$ or $\vert m_{1}\vert+\dots+\vert m_{\ell}\vert=s$ and $\ell\geq r_{\ast}+1$.
	
	We endow this group a degree-rank filtration $(G^{\vec{D},d}_{[s,r]})_{[s,r]\in\DR}$ by letting $G^{\vec{D},d}_{[s,r]}$ be the Lie group generated by all the elements of the  iterated commutators of $e_{m_{1},j_{1}},\dots,e_{m_{\ell},j_{\ell}}$ with $1\leq \vert m_{1}\vert,\dots,\vert m_{\ell}\vert\leq s$ and $1\leq j_{i}\leq D_{\vert m_{i}\vert}, 1\leq i\leq \ell$ such that either $\vert m_{1}\vert+\dots+\vert m_{\ell}\vert>s$ or $\vert m_{1}\vert+\dots+\vert m_{\ell}\vert=s$ and $\ell\geq r$. It is not hard to verify that this is indeed a filtration of degree-rank $\leq [s,r_{\ast}]$. We then let $\Gamma^{\vec{D},d}$ be the discrete group generated by $e_{m,j}, 1\leq \vert m\vert\leq s, 1\leq j\leq D_{\vert m\vert}$, and refer to $G^{\vec{D},d}/\Gamma^{\vec{D},d}$ as the \emph{universal nilmanifold} with dimension vector $(\vec{D},d)$ of degree-rank $[s,r_{\ast}]$.
	
	A \emph{universal vertical frequency} on the universal nilmanifold $G^{\vec{D},d}/\Gamma^{\vec{D},d}$  of degree-rank $[s,r_{\ast}]$ is a continuous homomorphism $\eta\colon G^{\vec{D},d}_{[s,r_{\ast}]}\to\R$ which sends $\Gamma^{\vec{D},d}_{[s,r_{\ast}]}$ to the integers. The \emph{complexity} of a universal vertical frequency can be defined similar to Definition \ref{3:vtc}.
 \end{defn}	
 
 Let $(g_{m})_{\vert m\vert=j}$ be a sequence of elements with $g_{m}\in G_{(j,1)}/G_{(j,2)}$ (for convenience we also call $(g_{m})_{\vert m\vert=j}$ a \emph{$j$-string}), and $\Omega$ be a subset of $\Z^{d}$. We say that $(g_{m})_{\vert m\vert=j}$ is \emph{$(\Omega,p)$-reducible} if $(\xi_{j}(g_{m}))_{\vert m\vert=j}$  is $(\Omega,p)$-reducible for all $\xi_{j}\in\mathfrak{N}_{j}(G/\Gamma)$.

\begin{defn}[Universal representation]
 Let $d,D,r\in\N_{+}, C>0$, $[s,r_{\ast}]\in\DR$, $p$ be a prime, $\Omega_{1},\Omega_{2}$ be  subsets of $\Z^{d}$, and $\chi\in\Xi^{[s,r_{\ast}]}_{p^{r}}(\Omega_{1})$ be a nilcharacter of degree-rank $\leq [s,r_{\ast}]$ of dimension $D$. A \emph{universal representation} of $\chi$ is a collection of the following data:
\begin{itemize}
	\item a filtered nilmanifold $G/\Gamma$ of degree-rank $\leq [s,r_{\ast}]$ and complexity at most $C$;
	\item a filtered nilmanifold $G_{0}/\Gamma_{0}$ of degree-rank $<[s,r_{\ast}]$ and complexity at most $C$;
	\item a function $F\in\Lip((G/\Gamma\times G_{0}/\Gamma_{0})\to \mathbb{S}^{D})$ of Lipschitz norm at most $C$;
	\item  $p^{r}$-periodic polynomial sequences $g\in \poly_{p^{r}}(\Z^{d}\to G_{\N^{d}}\vert\Gamma)$ and  $g_{0}\in \poly_{p^{r}}(\Z^{d}\to (G_{0})_{\N^{d}}\vert\Gamma_{0})$; 
	\item a dimension vector $\vec{D}=(D_{1},\dots,D_{s})\in\N^{s}$ with $\vert\vec{D}\vert\leq C$;
	\item a universal vertical frequency $\eta\colon G^{\vec{D},d}_{[s,r_{\ast}]}\to \R$ of complexity at most $C$ on the universal nilmanifold $G^{\vec{D},d}/\Gamma^{\vec{D},d}$ of degree-rank $[s,r_{\ast}]$;
	\item a filtered homomorphism $\phi\colon G^{\vec{D},d}/\Gamma^{\vec{D},d}\to G/\Gamma$ of complexity at most $C$;
	\item a frequency tower $\mathcal{T}=(\xi_{m,k})_{1\leq \vert m\vert\leq s, 1\leq k\leq D_{\vert m\vert}}$ with $\xi_{m,k}\in \Z/p^{r}$; 
	\end{itemize}		
such that the followings hold:
\begin{itemize}
	\item For all $n\in \Omega_{1}$, we have that
	\begin{equation}\label{3:9.4}
	     \chi(n)=F(g(n)\Gamma,g_{0}(n)\Gamma_{0}).
	\end{equation}
	\item For all $t\in G^{\vec{D},d}_{[s,r_{\ast}]}$ and $(x,x_{0})\in G/\Gamma\times G_{0}/\Gamma_{0}$, we have that
	\begin{equation}\label{3:9.5}
		F(\phi(t)x,x_{0})=\exp(\eta(t))F(x,x_{0}).
	\end{equation}
	\item For all $1\leq j\leq s$, the   $(j,d,p^{r})$-string 
	\begin{equation}\nonumber
	\Bigl(\Taylor_{m}(g)\cdot\Bigl(\pi_{\Hor_{j}(G/\Gamma)}\circ\phi(\prod_{k=1}^{D_{j}}e_{m,k}^{\xi_{m,k}})\Bigr)^{-1}\Bigr)_{\vert m\vert=j}\footnote{This is a $(j,d,p^{r})$-string by Lemma \ref{3:ratispp}.}
	\end{equation}
	is $(\Omega_{2},p)$-reducible, where $\pi_{\Hor_{j}(G/\Gamma)}\colon G_{[j,0]}\to \Hor_{i}(G/\Gamma)$ is the projection map.   
	\end{itemize}	
We say that the tuple $(\vec{D},\mathcal{T},\eta)$, or  $(\vec{D},\mathcal{T},\eta,G/\Gamma\times G_{0}/\Gamma_{0})$ or $(\vec{D},\mathcal{T},\eta,F,\phi,G/\Gamma\times G_{0}/\Gamma_{0})$ is an \emph{$(\Omega_{1},\Omega_{2},C)$-universal representation} of $\chi$.
\end{defn}

\begin{rem}
	Note that if $(\vec{D},\mathcal{T},\eta,F,\phi,G/\Gamma\times G_{0}/\Gamma_{0})$ is a universal representation  of $\chi$ such that (\ref{3:9.4}) holds, then by lifting $g$ to a polynomial sequence on the universal nilmanifold $G^{\vec{D},d},$ we may write $\chi$ as 
	$$\chi(n)=F(\phi\circ\tilde{g}(n)\Gamma^{\vec{D},d},g_{0}(n)\Gamma_{0})$$
	for some $\tilde{g}\in\poly(\Z^{d}\to G^{\vec{D},d}_{\N})$. However, we caution the readers that $\tilde{g}$ is not necessarily $p^{r}$-periodic. 
\end{rem}

We collect some basic properties on the universal representation of nilcharcters. The following lemma says that every nilcharcter admits a universal representation, which 
 is an extension of Lemma 9.12 of \cite{GTZ12}:
\begin{lem}[Existence of universal representation]\label{3:10.2}
	Let $d,r\in\N_{+}$, $\Omega_{1},\Omega_{2}$ be  subsets of $\Z^{d}$, $C>0$ and $[s,r_{\ast}]\in\DR$.
	Every $\chi\in\Xi^{[s,r_{\ast}];C}_{p^{r}}(\Omega_{1})$ has an $(\Omega_{1},\Omega_{2}, O_{C,d,s}(1))$-universal representation $(\vec{D},\mathcal{T},\eta,G/\Gamma)$ if  $p\gg_{C,d,s} 1$.\footnote{We remark that $G_{0}/\Gamma_{0}$ can be taken to be the trivial system, and $G/\Gamma$ can be taken to be the nilmanifold on which $\chi$ is defined.} 
	\end{lem}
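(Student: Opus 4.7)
The plan is to enhance a given $\Xi^{[s,r_{\ast}];C}_{p}(\Omega_{1})$-representation $\chi(n)=F(g\circ\tau(n)\Gamma)$ of $\chi$ into a universal representation, keeping the nilmanifold $(G/\Gamma)_{\DR}$, the polynomial sequence $g$, and the Lipschitz function $F$ unchanged, and taking $G_{0}/\Gamma_{0}$ to be a single point. The dimension vector is then forced by setting $D_{j}:=\dim\Hor_{j}(G/\Gamma)=\dim G_{[j,1]}-\dim G_{[j,2]}$ for $1\leq j\leq s$, which gives $|\vec{D}|\leq \dim G\leq C$.

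To build the filtered homomorphism $\phi\colon G^{\vec{D},d}/\Gamma^{\vec{D},d}\to G/\Gamma$, the first step is to use the $C$-rational Mal'cev basis of $G/\Gamma$ to fix, for each $1\leq j\leq s$, $O_{C,d,s}(1)$-rational lifts $u_{j,1},\ldots,u_{j,D_{j}}\in G_{[j,1]}\cap\Gamma$ whose images under $\pi_{\Hor_{j}(G)}$ form a $\Z$-basis of the lattice $\Hor_{j}(\Gamma)\subseteq\Hor_{j}(G)$. Define $\phi(e_{m,k}):=u_{|m|,k}$ on the free generators (depending on $m$ only through $|m|$); because $G^{\vec{D},d}$ is the free degree-rank-$\leq[s,r_{\ast}]$ nilpotent group on its generators and $G$ itself has degree-rank $\leq[s,r_{\ast}]$, every relation in $G^{\vec{D},d}$ is automatically satisfied in $G$, and the assignment extends uniquely to a group homomorphism. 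The commutator inclusions $[G_{[a,b]},G_{[c,d]}]\subseteq G_{[a+c,b+d]}$ together with $u_{j,k}\in\Gamma$ make this extension automatically filtered and $\Gamma$-preserving, of complexity $O_{C,d,s}(1)$.

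The frequency tower $\mathcal{T}=(\xi_{m,k})$ is extracted from the Taylor data of $g$. By Lemma \ref{3:ratispp}, one has $\Taylor_{m}(g)^{p}\in G_{[|m|,2]}\Gamma$ for every $m$ with $1\leq |m|\leq s$, so the image of $\Taylor_{m}(g)$ in the torus $\Hor_{|m|}(G/\Gamma)$ is $p$-torsion; since the basis $\{u_{|m|,k}\}_{k=1}^{D_{|m|}}$ spans $\Hor_{|m|}(\Gamma)$, there exist unique $\xi_{m,k}\in\Z/p$ with
\[
\Taylor_{m}(g)\equiv \prod_{k=1}^{D_{|m|}}u_{|m|,k}^{\xi_{m,k}}\pmod{\Hor_{|m|}(\Gamma)}\quad\text{in }\Hor_{|m|}(G).
\]
Finally, set $\eta:=\eta_{0}\circ\phi|_{G^{\vec{D},d}_{[s,r_{\ast}]}}$ where $\eta_{0}$ is the vertical frequency of $F$; this is a universal vertical frequency of complexity $O_{C,d,s}(1)$, since $\phi(\Gamma^{\vec{D},d}_{[s,r_{\ast}]})\subseteq\Gamma\cap G_{[s,r_{\ast}]}$ where $\eta_{0}$ takes integer values.

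Verification of the three required properties is then direct: \eqref{3:9.4} is inherited from the original representation (with the trivial $G_{0}/\Gamma_{0}$-factor absorbed); \eqref{3:9.5} holds because for $t\in G^{\vec{D},d}_{[s,r_{\ast}]}$ the element $\phi(t)\in G_{[s,r_{\ast}]}$ acts on $F$ by $\exp(\eta_{0}(\phi(t)))=\exp(\eta(t))$; and the displayed Taylor string vanishes in the quotient $\Hor_{|m|}(G/\Gamma)$ by construction, so each type-II horizontal character of $G/\Gamma$ sends its entries into $\Z$, making the string trivially $(\tau(\Omega_{2}),p)$-reducible for any $\Omega_{2}$. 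The main technical hurdle is uniformly controlling complexity through the Mal'cev coordinate arithmetic---the choice of rational lifts $u_{j,k}$, the structure constants of $\phi$, and the complexity of $\eta$---and the hypothesis $p\gg_{C,d,s}1$ is consumed by Lemma \ref{3:ratispp} and used to avoid small-prime torsion pathologies when identifying the $p$-torsion of $\Hor_{j}(G/\Gamma)$ with $\Z/p$-combinations of the chosen basis.
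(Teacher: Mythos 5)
Your proof is correct and follows essentially the same route as the paper: take $G_0/\Gamma_0$ trivial, map the universal generators to lattice elements of $G$, extract the tower from the $p$-torsion of the type-II Taylor coefficients via Lemma \ref{3:ratispp}, and pull back the vertical frequency. The only cosmetic difference is that the paper takes $D_i$ to be the number of Mal'cev generators of $\Gamma_i$ (so $D_i=\dim G_i$), whereas you take the more economical $D_j=\dim\Hor_j(G)$ with lifts whose images form a $\Z$-basis of $\Hor_j(\Gamma)$; both satisfy the definition and give the same complexity bounds.
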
		
\begin{proof}%[Proof of Lemma \ref{3:10.2}]
	Suppose that $\chi(n)=F(g(n)\Gamma), n\in \Omega_{1}$ for some nilmanifold $G/\Gamma$ of degree-rank $\leq [s,r_{\ast}]$ and complexity at most $C$, some $g\in\poly_{p^{r}}(\Z^{d}\to G_{\N^{d}}\vert\Gamma)$ and some $F\in \Lip(G/\Gamma\to \mathbb{S}^{D})$ of Lipschitz norm at most $C$ with a vertical frequency of complexity at most $C$. For each $1\leq i\leq s$, let $f_{i,1},\dots,f_{i,D_{i}}$ be a basis of generators of $\Gamma_{i}$. Set $\vec{D}:=(D_{1},\dots,D_{s})$. Then we have a filtered homomorphism $\phi\colon G^{\vec{D},d}\to G$ which maps $e_{m,k}$ to $f_{\vert m\vert,k}$.
	Since $G/\Gamma$ is of complexity at most $C$, $ G^{\vec{D},d}/\Gamma^{\vec{D},d}$ is of complexity at most $O_{C,d,s}(1)$ and $\phi$ is $O_{C,d,s}(1)$-rational.

	 Fix $m\in\N^{d}$ with $1\leq\vert m\vert\leq s$. 
	 Since $g\in\poly_{p^{r}}(\Z^{d}\to G_{\N^{d}}\vert\Gamma)$, by Lemma \ref{3:ratispp},   $\Taylor_{m}(g)^{p^{r}}\in G_{[i,2]}\Gamma$. Since $G_{[i,1]}/G_{[i,2]}$ is abelian and $f_{i,1},\dots,f_{i,D_{i}}$ is a basis of generators of $\Gamma_{i}$, we have that $$\Taylor_{m}(g)\equiv\prod_{k=1}^{D_{\vert m\vert}}f_{\vert m\vert,k}^{\xi_{m,k}}\equiv\Bigl(\pi_{\Hor_{\vert m\vert}(G/\Gamma)}(\phi(\prod_{k=1}^{D_{\vert m\vert}}e_{m,k}^{\xi_{m,k}}))\Bigr) \mod G_{[i,2]}\Gamma$$ for some $\xi_{m,k}\in\Z/p^{r}$.
Therefore,
	 \begin{equation}\nonumber
	 \Bigl(\Taylor_{m}(g)\cdot\Bigl(\pi_{\Hor_{j}(G/\Gamma)}\circ\phi(\prod_{k=1}^{D_{j}}e_{m,k}^{\xi_{m,k}})\Bigr)^{-1}\Bigr)_{\vert m\vert=j}
	 \end{equation}
	 is a $G_{[i,2]}\Gamma$-valued string and thus is $(\Omega_{2},p)$-reducible.
    
    Since $\phi$ is $O_{C,d,s}(1)$-rational, condition (\ref{3:9.5}) can be derived by pulling back the vertical frequency of $F$ by $\phi$ (and thus $\eta$ is of complexity at most $O_{C,d,s}(1)$).  This finishes the proof by setting $G_{0}/\Gamma_{0}$ to be trivial.
 \end{proof}	

Let  $\Omega$ be a subset of $\Z^{d}$ and $c\in\N$.
 Let $\mathcal{T}=(\vec{\xi}_{j,k})_{1\leq j\leq s, 1\leq k\leq D_{j}}$ and $\mathcal{T}'=(\vec{\xi}'_{j,k})_{1\leq j\leq s, 1\leq k\leq D'_{j}}$ be two $(d,p)$-towers. We say that $\mathcal{T}$ is \emph{$(\Omega,c,p)$-represented} by $\mathcal{T}'$ if for all $1\leq j\leq s$ and $1\leq k\leq D_{j}$, $\vec{\xi}_{j,k}$ is an $(\Omega,c,p)$-linear combination of $\vec{\xi}'_{j,1},\dots,\vec{\xi}'_{j,D'_{j}}$.
The following lemma explain how different ``basis" affects the representation of a nilcharacter, which is similar to Lemma 10.8 of \cite{GTZ12}:
\begin{lem}[Change of basis]\label{3:10.8}
	Let $d,r\in\N_{+}$, $\Omega_{1},\Omega_{2}$ be  subsets of $\Z^{d}$ and $[s,r_{\ast}]\in\DR$.
	Let  $C>0$ and $\chi\in \Xi^{[s,r_{\ast}];C}_{p^{r}}(\Omega_{1})$ be a degree-rank $[s,r_{\ast}]$ nilcharacter with an $(\Omega_{1},\Omega_{2},C)$-universal representation $(\vec{D},\mathcal{T},\eta,G/\Gamma\times G_{0}/\Gamma_{0})$. Suppose that the frequency tower $(\vec{D},\mathcal{T})$ is $(\Omega_{2},C,p)$-represented by another frequency tower $(\vec{D}',\mathcal{T}')$. Then there exists a vertical frequency $\eta'\colon G^{\vec{D}',d}_{[s,r_{\ast}]}\to\R$ such that $\chi$ has an $(\Omega_{1},\Omega_{2},O_{C,d,\vert\vec{D}'\vert}(1))$-universal representation $(\vec{D}',\mathcal{T}',\eta',G/\Gamma\times G_{0}/\Gamma_{0})$. 
\end{lem}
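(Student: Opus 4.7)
The plan is to construct, from the given universal representation, a new filtered homomorphism $\phi'\colon G^{\vec{D}',d}/\Gamma^{\vec{D}',d}\to G/\Gamma$ and a new universal vertical frequency $\eta'\colon G^{\vec{D}',d}_{[s,r_\ast]}\to\R$, while keeping the data $(F,g,g_0,G/\Gamma,G_0/\Gamma_0)$ of the original representation unchanged. By the hypothesis of $(\tau(\Omega_2),C,p)$-representability, for each $1\le j\le s$ and $1\le k\le D_j$ I can write
$$\vec{\xi}_{j,k}=\vec{r}_{j,k}+\sum_{k'=1}^{D'_j}a_{j,k,k'}\,\vec{\xi}'_{j,k'}$$
for some integer coefficients $a_{j,k,k'}$ with $|a_{j,k,k'}|\le C$ and $(\tau(\Omega_2),p)$-reducible strings $\vec{r}_{j,k}$.

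I would define $\phi'$ on generators of $G^{\vec{D}',d}$ by
$$\phi'(e'_{m,k'}):=\phi\Bigl(\prod_{k=1}^{D_{|m|}}e_{m,k}^{a_{|m|,k,k'}}\Bigr)\in G_{[|m|,1]},$$
and extend to the whole group via the universal (free-filtered) property of $G^{\vec{D}',d}$: the defining relations of $G^{\vec{D}',d}$ force every iterated commutator of generators to lie in the appropriate degree-rank piece, and the analogous containment holds automatically in the $\DR$-filtered target $G$, so the extension exists uniquely as a filtered homomorphism. The inclusion $\phi'(\Gamma^{\vec{D}',d})\subseteq\Gamma$ follows because all exponents $a_{|m|,k,k'}$ are integers and $\phi(\Gamma^{\vec{D},d})\subseteq\Gamma$.

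Next I would verify the new Taylor condition. Working in the abelian horizontal quotient $\Hor_j(G)=G_{[j,1]}/G_{[j,2]}$, substituting the decomposition $\xi_{m,k}=r_{m,k}+\sum_{k'}a_{j,k,k'}\xi'_{m,k'}$ and unfolding the definition of $\phi'$ yields, for each $m$ with $|m|=j$,
$$\pi_{\Hor_j}\Bigl(\phi\bigl(\textstyle\prod_k e_{m,k}^{\xi_{m,k}}\bigr)\Bigr)=\pi_{\Hor_j}\Bigl(\textstyle\prod_k\phi(e_{m,k})^{r_{m,k}}\Bigr)\cdot\pi_{\Hor_j}\Bigl(\phi'\bigl(\textstyle\prod_{k'}(e'_{m,k'})^{\xi'_{m,k'}}\bigr)\Bigr).$$
Combining this with the Taylor condition of the old universal representation reduces the new Taylor condition to showing that the $\Hor_j(G/\Gamma)$-valued $j$-string $\bigl(\pi_{\Hor_j}(\prod_k\phi(e_{m,k})^{r_{m,k}})\bigr)_{|m|=j}$ is $(\tau(\Omega_2),p)$-reducible; equivalently, for every type-II horizontal character $\xi_j\in\mathfrak{N}_j(G/\Gamma)$, the scalar $j$-string $\bigl(\sum_k r_{m,k}\,c_{m,k}\bigr)_{|m|=j}$ with $c_{m,k}:=\xi_j(\pi_{\Hor_j}(\phi(e_{m,k})))\in\Z$ must be $(\tau(\Omega_2),p)$-reducible.

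The main technical obstacle is precisely this last reducibility check: the integers $c_{m,k}$ can genuinely depend on $m$, so one cannot simply appeal to $\Z$-linearity of the reducible scalar polynomials $R_k(n)=\sum_{|m|=j}(m!)^{\ast}r_{m,k}n^m$. The route I would pursue is to recognize $\widetilde{\xi}_j:=\xi_j\circ\pi_{\Hor_j}\circ\phi$ as a type-II horizontal character on the universal nilmanifold $G^{\vec{D},d}/\Gamma^{\vec{D},d}$ and to verify reducibility directly at the level of $j$-fold discrete derivatives on $\Gow_{p,j}(\tau(\Omega_2))$: each reducible string $\vec{r}_{j,k}$ is witnessed by the integrality of a symmetric multilinear form on the Gowers set, and the tuple $(c_{m,k})_{m,k}$ assembles into a single $\Z$-linear functional on the aggregate of these forms through the universal structure, preserving integrality. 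Once this is established, I would set $\eta'(t):=\eta_F(\phi'(t))$ for $t\in G^{\vec{D}',d}_{[s,r_\ast]}$, where $\eta_F$ is the vertical frequency of $F$ on $G_{[s,r_\ast]}$; the identity $F(\phi'(t)x,x_0)=\exp(\eta'(t))F(x,x_0)$ and the containment $\eta'(\Gamma^{\vec{D}',d}_{[s,r_\ast]})\subseteq\Z$ follow immediately, and tracking complexities (integer exponents of size $O(C)$, at most $|\vec{D}|$ factors per generator, iterated commutators of depth at most $s$, Lipschitz norm of $F$ at most $C$) yields the claimed bound $O_{C,d,|\vec{D}'|}(1)$.
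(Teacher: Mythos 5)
Your construction (defining $\phi'$ on generators by $\phi'(e'_{m,k'})=\phi\bigl(\prod_k e_{m,k}^{a_{|m|,k,k'}}\bigr)$, unfolding the Taylor identity in $\Hor_j$, and pulling back the vertical frequency) is exactly the paper's approach: the paper defines an intermediate filtered map $\psi\colon G^{\vec D',d}\to G^{\vec D,d}$ with $\psi(e'_{m,k'})=\prod_k e_{m,k}^{a_{j,k,k'}}$ and then uses $\phi\circ\psi$ as the new homomorphism and $\eta\circ\psi$ as the new vertical frequency, which coincides with your $\phi'$ and $\eta'$.

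However, the paragraph where you resolve your own ``main technical obstacle'' is a genuine gap. You correctly note that after applying a type-II horizontal character $\xi_j$ the remaining check is the reducibility of the scalar $j$-string $\bigl(\sum_k c_{m,k}\,\zeta_{m,k}\bigr)_{|m|=j}$ with $c_{m,k}=\xi_j\bigl(\pi_{\Hor_j}\circ\phi(e_{m,k})\bigr)\in\Z$, and that if $c_{m,k}$ truly depends on $m$ this is \emph{not} a $\Z$-linear combination of the reducible strings $\vec\zeta_{j,k}$ --- coordinate-wise multiplication of the coefficients of a polynomial in $J^M_{\dots}$ by $m$-dependent integers does not preserve ideal membership. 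But your proposed fix, that ``the tuple $(c_{m,k})_{m,k}$ assembles into a single $\Z$-linear functional on the aggregate of these forms through the universal structure, preserving integrality,'' is not an argument: a family of integers indexed by both $m$ and $k$ does not define a $\Z$-linear functional on the family $\{R_k\}_k$ of reducible polynomials indexed only by $k$, and no feature of the universal nilmanifold forces such a collapse. The actual resolution is structural and much simpler: in every universal representation that arises in this paper (those produced by Lemma~\ref{3:10.2}, and the ones produced from them by repeated use of this lemma), the filtered homomorphism $\phi$ is constructed so that $\phi(e_{m,k})=f_{|m|,k}$ depends only on $|m|$ and $k$, not on $m$; the change of basis preserves this property because the coefficients $a_{j,k,k'}$ also depend only on $j=|m|$ and $k,k'$. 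Under this constraint $c_{m,k}=c_k$ is $m$-independent, so $\bigl(\sum_k c_k\zeta_{m,k}\bigr)_{|m|=j}=\sum_k c_k\vec\zeta_{j,k}$ is a $\Z$-linear combination (with coefficients bounded in terms of $C$ and $\vert\vec D\vert$) of reducible strings and is therefore reducible. You should have identified this concrete structural fact rather than reaching for the vague ``universal structure'' claim; as written the step does not go through.
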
	
\begin{proof}
	Let
	$(\vec{D},\mathcal{T},\eta,F,\phi,G/\Gamma\times G_{0}/\Gamma_{0})$ be an $(\Omega_{1},\Omega_{2},C)$-universal representation of $\chi$.
	Suppose that $\vec{D}=(D_{1},\dots,D_{s})$, $\vec{D}'=(D'_{1},\dots,D'_{s})$, $\mathcal{T}=(\vec{\xi}_{j,k})_{1\leq j\leq s,1\leq k\leq D_{j}}$,  and $\mathcal{T}'=(\vec{\xi}'_{j,k})_{1\leq j\leq s,1\leq k\leq D'_{j}}$.
By assumption, each $j$-string $\vec{\xi}_{j,k}$ of $\mathcal{T}$ can be written as
\begin{equation}\label{3:1081}
\vec{\xi}_{j,k}=\vec{\zeta}_{j,k}+\sum_{k'=1}^{D'_{j}}a_{j,k,k'}\vec{\xi}'_{j,k'}
\end{equation} 	
for some $(\Omega_{2},p)$-reducible $j$-string $\vec{\zeta}_{j,k}$ and $-C\leq a_{j,k,k'}\leq C$.
Let $\psi\colon G^{\vec{D}',d}\to G^{\vec{D},d}$ be the unique filtered homomorphism that maps $e'_{m,k'}$ to $\prod_{k=1}^{D_{j}}e_{m,k}^{a_{j,k,k'}}$. 
Write  $\vec{\zeta}_{j,k}=(\zeta_{m,k})_{\vert m\vert=j}$, $\vec{\xi}_{j,k}=(\xi_{m,k})_{\vert m\vert=j}$ and $\vec{\xi}'_{j,k}=(\xi'_{m,k})_{\vert m\vert=j}$.
By assumption, $\chi$ can be written as $$\chi(n)=F(g(n)\Gamma,g_{0}(n)\Gamma_{0}), n\in \Omega_{1}$$ with
\begin{equation}\nonumber
\Bigl(\Taylor_{m}(g)\cdot\Bigl(\pi_{\Hor_{j}(G/\Gamma)}\circ\phi(\prod_{k=1}^{D_{j}}e_{m,k}^{\xi_{m,k}})\Bigr)^{-1}\Bigr)_{\vert m\vert=j}
\end{equation}
being $(\Omega_{2},p)$-reducible
for some filtered homomorphism $\phi\colon G^{\vec{D},d}\to G$ and some tower $(\xi_{m,k})_{1\leq \vert m\vert\leq s, 1\leq k\leq D_{\vert m\vert}}$ for all $1\leq j\leq s$.
Then for all $\vert m\vert=j$, by (\ref{3:1081}),
\begin{equation}\label{3:1082}
\begin{split}
&\quad \Taylor_{m}(g)\cdot\Bigl(\pi_{\Hor_{j}(G/\Gamma)}\circ\phi\circ\psi(\prod_{k'=1}^{D'_{j}}{e'}_{m,k'}^{\xi_{m,k'}})\Bigr)^{-1}
\\&=\Taylor_{m}(g)\cdot\Bigl(\pi_{\Hor_{j}(G/\Gamma)}\circ\phi(\prod_{k=1}^{D_{j}}{e}_{m,k}^{\sum_{k'=1}^{D'_{j}}a_{j,k,k'}\xi_{m,k'}})\Bigr)^{-1}
\\&=\Taylor_{m}(g)\cdot\Bigl(\pi_{\Hor_{j}(G/\Gamma)}\circ\phi(\prod_{k=1}^{D_{j}}{e}_{m,k}^{\xi_{m,k}})\Bigr)^{-1}\cdot\prod_{k=1}^{D_{j}}\Bigl(\pi_{\Hor_{j}(G/\Gamma)}\circ\phi({e}_{m,k}^{\zeta_{m,k}})\Bigr).
\end{split}
\end{equation}
Since both $\Bigl(\Taylor_{m}(g)\cdot\Bigl(\pi_{\Hor_{j}(G/\Gamma)}\circ\phi(\prod_{k=1}^{D_{j}}{e}_{m,k}^{\xi_{m,k}})\Bigr)^{-1}\Bigr)_{\vert m\vert=j}$ and $\vec{\zeta}_{j,1},\dots,\vec{\zeta}_{j,D_{j}}$ are $(\Omega_{2},p)$-reducible, the left hand side of (\ref{3:1082}) 
 is also  $(\tau(\Omega_{2}),p)$-reducible.

On the other hand,
 $\phi\circ\psi\colon G^{\vec{D}',d}\to G$ is a filtered homomorphism and $\eta\circ \psi\colon G^{\vec{D}',d}_{[s,r_{\ast}]}\to\R$ is a vertical frequency. 
 Since $\eta,\phi$ and $G/\Gamma$ are   of complexities at most $C$ and $-C\leq a_{j,k,k'}\leq C$, it is not hard to see that $\phi\circ\psi$ is $O_{C,d,\vert\vec{D}'\vert}(1)$-rational and that $\eta\circ \psi$ is of complexity at most $O_{C,d,\vert\vec{D}'\vert}(1)$. 
 This finishes the proof.
\end{proof}	

We refer the readers to Section 9 of \cite{GTZ12} for further properties of universal representations of nilcharacters.

Let $\mathcal{T}=(\xi_{m,k})_{1\leq \vert m\vert\leq s,1\leq k\leq D_{\vert m\vert}}$  and $\mathcal{T}'=(\xi'_{m,k})_{1\leq \vert m\vert\leq s,1\leq k\leq D'_{\vert m\vert}}$ be two towers. Let $\mathcal{T}\uplus \mathcal{T}':=(\xi''_{m,k})_{1\leq \vert m\vert\leq s, 1\leq k\leq D_{\vert m\vert}+D'_{\vert m\vert}}$ be the tower defined by $\xi''_{m,k}=\xi_{m,k}$ for $1\leq k\leq D_{\vert m\vert}$ and $\xi''_{m,k'+D_{\vert m\vert}}=\xi'_{m,k'}$ for $1\leq k'\leq D'_{\vert m\vert}$. We have:

\begin{lem}[Sunflower lemma for nilcharacters]\label{3:sf2}  
	Let $[s,r_{\ast}]\in\DR$, $d,D,r\in\N_{+}$, with $d\geq N(s)$, $C,\d,\e,L>0$, $p\gg_{C,\d,d,\e,L,r} 1$ be a prime, and $\tilde{M}\colon\Z^{d}\to\Z/p$ be a non-degenerate quadratic form. Let $H$ be a subset of $\Z^{d}_{p^{r}}$ with $\vert H\vert>\e p^{rd}$
and $(\chi_{h})_{h\in H}$ be a family of nilcharacters in $\Xi^{[s,r_{\ast}];C,D}_{p^{r}}(V_{p}(\tilde{M}))$. If   $d\geq N(s)$, then there exist
	\begin{itemize}
		\item a subset $H'\subseteq H$ with $\vert H'\vert\gg_{C,\d,d,\e,L,r}p^{rd}$;
		\item a dimension vector $\vec{D}=(D_{j})_{1\leq j\leq s}$ with $\vert \vec{D}\vert=O_{C,\d,d,\e,L,r}(1)$ which can be further decomposed as $\vec{D}=\vec{D^{\cor}}+\vec{D^{\ped}}$ and $\vec{D^{\ped}}=\vec{D^{\lin}}+\vec{D^{\ind}}$ for some dimension vectors
		 $\vec{D^{\cor}}=(D^{\cor}_{j})_{1\leq j\leq s}$, $\vec{D^{\lin}}=(D^{\lin}_{j})_{1\leq j\leq s}$ and $\vec{D^{\ind}}=(D^{\ind}_{j})_{1\leq j\leq s}$;
		\item a core $(d,p^{r})$-frequency tower $(\vec{D^{\cor}},\mathcal{T}^{\cor})$;
		\item for each $h\in H'$ petal $(d,p^{r})$-frequency towers $(\vec{D^{\lin}},\mathcal{T}_{h}^{\lin}=(\vec{\xi}^{\lin}_{h,j,k})_{1\leq j\leq s, 1\leq k\leq D^{\lin}_{j}})$,  $(\vec{D^{\ind}},\mathcal{T}_{h}^{\ind}=(\vec{\xi}^{\lin}_{h,j,k})_{1\leq j\leq s, 1\leq k\leq D^{\ind}_{j}})$ and  $(\vec{D^{\ped}},\mathcal{T}_{h}^{\ped}=(\vec{\xi}^{\lin}_{h,j,k})_{1\leq j\leq s, 1\leq k\leq D^{\ped}_{j}})$ such that  $\mathcal{T}_{h}^{\ped}=\mathcal{T}_{h}^{\ind}\uplus \mathcal{T}_{h}^{\lin}$;
		\item a filtered nilmanifold $G/\Gamma$ of degree-rank $\leq [s,r_{\ast}]$ of complexity $O_{C,\d,d,\e,L,r}(1)$ and a filtered nilmanifold $G_{0}/\Gamma_{0}$ of degree-rank $\leq [s-1,r_{\ast}-1]$ of complexity $O_{C,\d,d,\e,L,r}(1)$;
			\item for each $h\in H'$ a function $F_{h}\in\Lip((G/\Gamma\times G_{0}/\Gamma_{0})\to \mathbb{S}^{D})$ of Lipschitz norm $O_{C,\d,d,\e}(1)$;	
	    \item a vertical frequency $\eta\colon G^{\vec{D},d}_{[s,r_{\ast}]}\to\R$ of complexity  $O_{C,d}(1)$ with dimension vector $\vec{D}$ on the universal nilmanifold $G^{\vec{D},d}/\Gamma^{\vec{D},d}$ of degree-rank $[s,r_{\ast}]$;
	    \item a $O_{C,\d,d,\e,L,r}(1)$-rational filtered homomorphism $\phi\colon G^{\vec{D},d}/\Gamma^{\vec{D},d}\to G/\Gamma$
	\end{itemize}	
	such that the followings hold:
		\begin{enumerate}[(i)]
			\item For all $h\in H'$, $\chi_{h}$ has an $(V_{p}(\tilde{M}),V_{p}(\tilde{M})^{h},O_{C,\d,d,\e,L,r}(1))$-universal representation  of the form 
			$$(\vec{D},\mathcal{T}^{\cor}\uplus\mathcal{T}^{\ped}_{h},\eta,F_{h},\phi,G/\Gamma\times G_{0}/\Gamma_{0}).$$
				\item For all but $\d p^{3rd}$ additive quadruples $h_{1}+h_{2}=h_{3}+h_{4}$ with $h_{1},\dots,h_{4}\in H'$, the tower
				$$\mathcal{T}^{\cor}\uplus \uplus_{i=1}^{3}\mathcal{T}^{\lin}_{h_{i}}\uplus\uplus_{i=1}^{4} \mathcal{T}^{\ind}_{h_{i}}$$
				is $(V_{p}(\tilde{M})^{h_{1},h_{2},h_{3}}, L,p)$-independent.  
			\item For all $1\leq j\leq s$ and $1\leq k\leq D^{\lin}_{j}$, the map $h\mapsto\vec{\xi}^{\lin}_{h,j,k}, h\in H'$ is a $\Z/p^{r}$-almost linear Freiman homomorphism of complexity $O_{C,\d,d,\e,L,r}(1)$.	
 		\end{enumerate}
\end{lem}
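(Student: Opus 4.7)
The plan is to combine the existence of a universal representation (Lemma \ref{3:10.2}) and the change-of-basis lemma (Lemma \ref{3:10.8}) with the string-level sunflower lemma (Lemma \ref{3:sf1}), interleaved with pigeonhole steps that force uniformity in $h$ of the nilmanifold data while only losing a constant density factor in $H$.

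First I would apply Lemma \ref{3:10.2} to each $\chi_h$ to obtain an $(V(M), V(M)^h, O_{C,d,s}(1))$-universal representation $(\vec{D}_h, \mathcal{T}_h, \eta_h, F_h, \phi_h, G_h/\Gamma_h \times G_{0,h}/\Gamma_{0,h})$. All of these data apart from $F_h$ and the frequency tower $\mathcal{T}_h$ have complexity $O_{C,d,s}(1)$ and thus range over a set of cardinality bounded uniformly in $p$. A pigeonhole step on these data extracts a subset $H_1 \subseteq H$ with $|H_1| \gg_{C,d,\e,s} p^d$ on which $\vec{D}_h = \vec{D}$, $G_h/\Gamma_h = G/\Gamma$, $G_{0,h}/\Gamma_{0,h} = G_0/\Gamma_0$, $\phi_h = \phi$, and $\eta_h = \eta$ are all $h$-independent. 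Only $\mathcal{T}_h$ and $F_h$ continue to depend on $h$ at this point.

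Next, for each degree $j = 1, \ldots, s$ in turn, I would invoke Lemma \ref{3:sf1} on the $j$-strings $\vec{\xi}_{h,j,1}, \ldots, \vec{\xi}_{h,j,D_j}$ forming the $j$-th floor of $\mathcal{T}_h$, working on the current subset. Each application contracts the subset by a constant factor and produces core $j$-strings $\mathcal{S}^{\cor}_j$ together with petal $j$-strings $\mathcal{S}^{\ind}_{h,j}, \mathcal{S}^{\lin}_{h,j}$ satisfying the three conclusions of Lemma \ref{3:sf1}. Assembling the outputs for $j = 1, \ldots, s$ yields a refined subset $H_{s+1}$ of density $\gg_{C,\d,d,\e,L} 1$, dimension vectors $\vec{D^{\cor}}, \vec{D^{\ind}}, \vec{D^{\lin}}$, and a decomposition of $\mathcal{T}_h$ as a $(\tau(V(M)^h), O(1), p)$-representation by the new tower $\mathcal{T}^{\cor} \uplus \mathcal{T}^{\ped}_h$ with $\mathcal{T}^{\ped}_h := \mathcal{T}^{\ind}_h \uplus \mathcal{T}^{\lin}_h$. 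Conclusions (ii) and (iii) of the statement are then immediate consequences of the corresponding conclusions of Lemma \ref{3:sf1} applied floor by floor, using the fact that a tower is independent iff each of its floors is.

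Finally, for each $h \in H_{s+1}$ I would invoke Lemma \ref{3:10.8} with this tower substitution. This produces an $(V(M), V(M)^h, O(1))$-universal representation of $\chi_h$ with the new tower $\mathcal{T}^{\cor} \uplus \mathcal{T}^{\ped}_h$, but with new filtered homomorphism $\phi'_h = \phi \circ \psi_h$ and new vertical frequency $\eta'_h = \eta \circ \psi_h$, where $\psi_h$ is determined by the $O(1)$ many linear-combination coefficients supplied by Lemma \ref{3:sf1}(i). Both $\phi'_h$ and $\eta'_h$ still have complexity $O_{C,\d,d,\e,L}(1)$, so one further pigeonhole extracts a subset $H' \subseteq H_{s+1}$ of density $\gg 1$ on which $\phi'_h$ and $\eta'_h$ are $h$-independent; with this uniform $(\phi', \eta')$ in place of $(\phi, \eta)$, conclusion (i) is satisfied. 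The hard part will be managing the compounded pigeonhole losses so that the final $H'$ retains density $\gg_{C,\d,d,\e,L} p^d$; this hinges on verifying that all the bounded-complexity classes (nilmanifolds, homomorphisms, vertical frequencies, representation coefficients) range over sets of size bounded uniformly in $p$, which in turn rests on Lemma \ref{3:sf1} delivering representation coefficients and petal-string counts that are bounded purely in terms of $C, \d, d, \e, L$.
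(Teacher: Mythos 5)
Your proposal is correct and follows essentially the same route as the paper's proof: Lemma \ref{3:10.2} for existence of a universal representation, a pigeonhole to make the bounded-complexity data uniform in $h$, Lemma \ref{3:sf1} applied floor by floor, Lemma \ref{3:10.8} for the change of basis, and a final pigeonhole. The only things you omit are the trivial preliminary step of padding each $\chi_h$ with zero coordinates so that they all have dimension exactly $D$ (needed to land in a fixed $\mathbb{S}^D$), and you could have deferred the first pigeonhole on $\phi_h, \eta_h, G_h/\Gamma_h$ since Lemma \ref{3:10.8} re-introduces $h$-dependence in these and you pigeonhole on them again at the end anyway.
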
	
\begin{proof}
	 First we may raise the dimension of $\chi_{h}$ to $D$ for all $h\in H$ by adding constant zero functions to the new coordinates. 
	By Lemma \ref{3:10.2},  each $\chi_{h}$ has a $(V_{p}(\tilde{M}),\Z^{d},O_{C,d}(1))$-universal representation $(\vec{D}_{h}=(D_{h,j})_{1\leq j\leq s},\mathcal{T}_{h}=(\vec{\xi}_{h,j,k})_{1\leq j\leq s, 1\leq k\leq D_{h,j}},\eta_{h})$.  By the Pigeonhole Principle, there exists a subset $H''\subseteq H$ with $\vert H''\vert\gg_{C,d,\e}p^{rd}$ such that $\vec{D}_{h}=\vec{D}=(D_{1},\dots,D_{s})$
	for all $h\in H''$. 
	Since $\vert\vec{D}\vert=O_{C,d}(1)$,
	applying Lemma \ref{3:sf1} to the $j$-strings $\vec{\xi}_{h,j,1},\dots,\vec{\xi}_{h,j,D_{j}}, h\in H''$ for each $1\leq j\leq s$, we have that if $p\gg_{C,\d,d,\e,L,r} 1$, then there exists a subset $H'\subseteq H''$ with $\vert H'\vert\gg_{C,\d,d,\e,L,r}  p^{rd}$ such that for all $h\in H'$, the tower $(\vec{D},\mathcal{T}_{h})$ is $(V_{p}(\tilde{M})^{h},O_{C,\d,d,\e,L,r}(1),p)$-represented by a  tower of the form  $(\vec{D^{\cor}}+\vec{D^{\ped}},\mathcal{T}^{\cor}\uplus\mathcal{T}^{\ped}_{h})$ satisfying Property (i) of Lemma \ref{3:sf1}, and Properties (ii) and (iii) in Lemma \ref{3:sf2}. By Lemma \ref{3:10.8}, we see that for all $h\in H'$, $\chi_{h}$ has a  $(V_{p}(\tilde{M}),V_{p}(\tilde{M})^{h},O_{C,\d,d,\e,L,r}(1))$-universal representation of the form $$(\vec{D^{\cor}}+\vec{D^{\ped}},\mathcal{T}^{\cor}\uplus\mathcal{T}^{\ped}_{h},\eta'_{h},F_{h},\phi_{h},G_{h}/\Gamma_{h}\times G_{0,h}/\Gamma_{0,h}).$$
	By the Pigeonhole Principle and passing to another subset,
    we may require 
    $\eta'_{h}=\eta$, $\phi_{h}=\phi$, $G_{h}/\Gamma_{h}=G/\Gamma$ and   $G_{0,h}/\Gamma_{0,h}=G_{0}/\Gamma_{0}$ to be independent of $h$.
    We are done.
\end{proof}	

\section{Information on the frequencies of $\chi_{h}$}\label{3:s:b4}

In this section, we use the sunflower lemma (Lemma \ref{3:sf2}) obtained in Section \ref{3:s:b3} to analysis the inequality (\ref{3:notlongago}). Our goal is to understand the relation between the frequencies of the characters $\chi_{h_{1}},\chi_{h_{2}},\chi_{h_{3}}$ and $\chi_{h_{1}+h_{2}-h_{3}}$ when $\chi_{h_{1},h_{2},h_{3}}$ correlates with an $(s-1)$-step nilsequence. 
Throughout this section, we assume that the conditions in Proposition \ref{3:inductioni1} are satisfies, i.e.
  there exist a subset $H\subseteq \Z^{d}_{p^{r}}$ with $\vert H\vert\geq \e p^{rd}$, some $\chi_{0}\in\Xi_{p}^{(1,s);C,D}((\Z^{d})^{2})$,  some $\chi_{h}\in \Xi_{p^{r}}^{[s,r_{\ast}];C,D}(\Z^{d})$, and some $\psi_{h}\in\Nil_{p}^{s-1;C,1}(\Z^{d})$ for all $h\in H$ such that 
(\ref{3:longlong2})
holds for all $h\in H$. 
We may 
apply Lemma \ref{3:sf2} to $(\chi_{h})_{h\in H}$ for some $\d,L>0$ depending only on $C,d,D,\e,r$ to be chosen later, and for $d\geq N(s), p\gg_{C,d,D,\e,r} 1$. For convenience we keep the same notions as in Lemma \ref{3:sf2}. 
Then we have a subset  $H'$ of $H$ with $\vert H'\vert\gg_{C,d,D,\e,r} p^{rd}$  such that
for all $h\in H'$, $\chi_{h}$ has a $(V_{p}(\tilde{M}),V_{p}(\tilde{M})^{h},O_{C,d,D,\e,r}(1))$-universal representation
$$(\vec{D^{\cor}}+\vec{D^{\ped}},\mathcal{T}^{\cor}\uplus\mathcal{T}^{\ped}_{h},\eta).$$

The main effort in this section is to understand the property of the common vertical frequency $\eta$, with the main result being Theorem \ref{3:vns} to be stated later. 
By Proposition \ref{3:inductioni1}, there exists a subset $U\subseteq (H')^{3}$ with $\vert U\vert\gg_{C,d,D,\e} p^{3rd}$ 
such that for all $(h_{1},h_{2},h_{3})\in U$, $\iota(h_{1}),\iota(h_{2}),\iota(h_{3})$ are linearly independent and $M$-non-isotropic, and  	\begin{equation}\label{3:recall1}
	\Bigl\vert\E_{n\in V_{p}(\tilde{M})^{h_{1},h_{3},h_{3}-h_{2}}}\chi_{h_{1},h_{2},h_{3}}(n) \psi_{h_{1},h_{2},h_{3}}(n)\Bigr\vert\gg_{C,d,D,\e} 1
	\end{equation}
	for some $\psi_{h_{1},h_{2},h_{3}}\in \Nil_{p}^{s-1;O_{C,d,D,\e}(1),1}(\Z^{d})$, where
$$\chi_{h_{1},h_{2},h_{3}}(n):= \chi_{h_{1}}(n)\otimes\chi_{h_{2}}(n+h_{3}-h_{2})\otimes\overline{\chi}_{h_{3}}(n)\otimes\overline{\chi}_{h_{1}+h_{2}-h_{3}}(n+h_{3}-h_{2}).$$

Let $U'$ be the set of $(h_{1},h_{2},h_{3})\in (H')^{3}$ such that $(h_{1},h_{2},h_{3},h_{1}+h_{2}-h_{3})$ does not fall in the exceptional set in which Condition (ii) in Lemma \ref{3:sf2} fails. Since $\vert U\vert\gg_{C,d,D,\e} p^{3rd}$, we may choose $\d$ sufficiently small depending on $C,d,D,\e$ such that $\vert U'\vert\geq \vert U\vert/2$. In particular, there exists at least one tuple $(h_{1},h_{2},h_{3})\in (H')^{3}$ with $\iota(h_{1}),\iota(h_{2}),\iota(h_{3})$ being linearly independent and $M$-non-isotropic such that (\ref{3:recall1}) holds and that
 $$\mathcal{T}^{\cor}\uplus \uplus_{i=1}^{3}\mathcal{T}^{\lin}_{h_{i}}\uplus\uplus_{i=1}^{4} \mathcal{T}^{\ind}_{h_{i}}$$
is $(V_{p}(\tilde{M})^{h_{1},h_{3},h_{3}-h_{2}},O_{C,d,D,\e,r}(1),p)$-independent, where $h_{4}=h_{1}+h_{2}-h_{3}$.

We now fix such a choice of $h_{1},h_{2},h_{3}$. Then
$\chi_{h_{1},h_{2},h_{3}}(n)\psi_{h_{1},h_{2},h_{3}}(n)$ can be written as a degree-rank $\leq [s,r_{\ast}]$ nilsequence $n\mapsto F(g(n)\Gamma)$. Here
$$G/\Gamma:=\Bigl(\prod_{i=1}^{4}G_{(i)}/\Gamma_{(i)}\Bigr)\times G_{(0)}/\Gamma_{(0)}$$
for some filtered nilmanifold $G_{(0)}/\Gamma_{(0)}$ of degree-rank $<[s,r_{\ast}]$ and complexity $O_{C,d,D,\e,r}(1)$, some  filtered nilmanifolds $G_{(i)}/\Gamma_{(i)}$ of degree-rank $\leq [s,r_{\ast}]$ and complexity $O_{C,d,D,\e,r}(1)$ for $1\leq i\leq 4$. 
Lemma \ref{3:ratispp}, the polynomial sequence $g$ can be written as
$$g(n)=(g_{1}(n),g_{2}(n),g_{3}(n),g_{4}(n),g_{0}(n))\in\poly_{p^{r}}(\Z^{d}\to G_{\DR}\vert\Gamma)$$
for some $g_{i}\in \poly_{p^{r}}(\Z^{d}\to (G_{(i)})_{\DR}\vert \Gamma_{(i)}), 1\leq i\leq 4$
 such that for all $1\leq i\leq 4$, we may write $g_{i}=g'_{i}g''_{i}$
for some   $g'_{i},g''_{i}\in \poly(\Z^{d}\to (G_{(i)})_{\DR})$,
where $\Taylor_{m}(g'_{i})$ is of the form
\begin{equation}\label{3:11.3}
\Taylor_{m}(g'_{i})=\Bigl(\pi_{\Hor_{\vert m\vert}(G_{(i)}/\Gamma_{(i)})}\circ\phi_{i}(\prod_{k=1}^{D_{\vert m\vert}}e_{m,k}^{\xi_{h_{i},m,k}})\Bigr),\footnote{For a core frequency, i.e. for $k\leq D_{\vert m\vert}^{\cor}$, we write $\xi_{h_{i},m,k}=\xi_{m,k}$ for convenience.}
\end{equation}
and 
$(\Taylor_{m}(g''_{i}))_{\vert m\vert=j}$
is  a $(V_{p}(\tilde{M})^{h_{i}},p)$-reducible $(j,d,p^{r})$-string for all $1\leq j\leq s$,
where $\vec{D}=(D_{1},\dots,D_{s}), \phi_{i}\colon G^{\vec{D}}/\Gamma^{\vec{D}}\to G_{(i)}/\Gamma_{(i)}$ is a filtered homomorphism and $\pi_{\Hor_{j}(G_{(i)}/\Gamma_{(i)})}\colon$ $(G_{(i)})_{j}\to \Hor_{j}(G_{(i)}/\Gamma_{(i)})$ is the projection to the $j$-th type-II horizontal torus. Finally, $F\in\Lip(G/\Gamma\to\mathbb{S}^{D^{4}})$ has Lipschitz norm $O_{C,d,D,\e}(1)$, and
\begin{equation}\label{3:11.4}
\begin{split}
  F(\phi_{1}(t_{1})x_{1},\phi_{2}(t_{2})x_{2},\phi_{3}(t_{3})x_{3},\phi_{4}(t_{4})x_{4},y)
=\exp(\eta(t_{1}t_{2}t_{3}^{-1}t_{4}^{-1}))F(x_{1},x_{2},x_{3},x_{4},y)
\end{split}
\end{equation} 
for all $(x_{1},x_{2},x_{3},x_{4},y)\in G/\Gamma$ and $t_{1},\dots,t_{4}\in G^{\vec{D},d}_{[s,r_{\ast}]}$ (note that shifting $\chi_{h_{i}}$ does not affect the type-II Taylor coefficients of $g_{i}$, see the remark following Definition 9.6 of \cite{GTZ12}).

By (\ref{3:recall1}), we have that,  
$$\vert\E_{n\in V_{p}(\tilde{M})^{h_{1},h_{3},h_{3}-h_{2}}}F(g(n)\Gamma)\vert\gg_{C,D,d,\e} 1.$$
Since $\iota(h_{1}),\iota(h_{2}),\iota(h_{3})$ are linearly independent, there exists an affine subspace $V+c$ of $\V$ of co-dimension 3 such that $V_{p}(\tilde{M})^{h_{1},h_{3},h_{3}-h_{2}}=\iota^{-1}(V(M)\cap (V+c))$.
Moreover, since   $\iota(h_{1}),\iota(h_{2}),\iota(h_{3})$ are  $M$-non-isotropic, 
it follows from Proposition \ref{3:iissoo} that $\rank(M\vert_{V+c})=d-3$.
By  Theorem \ref{3:rat} (applied to the $\N$-filtration induced by the degree-rank filtration of $G$),  since $p\gg_{C,d,D,\e} 1$ and $d\geq s+16$, we have that
\begin{equation}\label{3:11.5}
\begin{split}
\Bigl\vert\int_{G_{P}/\Gamma_{P}}F(\e_{0} x)\,dm_{G_{P}/\Gamma_{P}}(x)\Bigr\vert\gg_{C,D,d,\e} 1
\end{split}
\end{equation}
for some $\e_{0}\in G$ of complexity $O_{C,d,D,\e,r}(1)$, some rational subgroup $G_{P}$ of $G$ which is $O_{C,d,D,\e,r}(1)$-rational relative to the $O_{C,d,D,\e,r}(1)$-rational Mal'cev basis $\mathcal{X}$ of $G/\Gamma$, and some $\Gamma_{P}=G_{P}\cap\Gamma$ with  $m_{G_{P}/\Gamma_{P}}$ being the Haar measure of $G_{P}/\Gamma_{P}$  such that for all $1\leq j\leq s$,
\begin{equation}\label{3:11.6}
\begin{split}
\pi_{\Hor_{j}(G)}(G_{P}\cap G_{j})\geq \Xi^{\perp}_{V_{p}(\tilde{M})^{h_{1},h_{3},h_{3}-h_{2}},j,Q}(g)
\end{split}
\end{equation} 
for some $Q\in\N_{+}, Q\leq O_{C,d,D,\e}(1)$,
where
$\Xi_{V_{p}(\tilde{M})^{h_{1},h_{3},h_{3}-h_{2}},j,Q}(g)$ is the group of all continuous homomorphisms $\xi_{j}\colon\Hor_{j}(G/\Gamma)\to\R$  such that 
such that 
$$\xi_{j}(\Taylor_{j}(g)(m_{1},\dots,m_{j}))=\xi_{j}(\Delta_{m_{j}}\dots\Delta_{m_{1}}g(n) \mod G_{[j,2]})\in\Z/Q$$
for all $(n,m_{1},\dots,m_{j})\in \Gow_{p,j}(V_{p}(\tilde{M})^{h_{1},h_{3},h_{3}-h_{2}})$,
and
$$\Xi^{\perp}_{V_{p}(\tilde{M})^{h_{1},h_{3},h_{3}-h_{2}},j}(g):=\{x\in \Hor_{j}(G)\colon \xi_{j}(x)\in\Z \text{ for all } \xi_{j}\in\Xi_{V_{p}(\tilde{M})^{h_{1},h_{3},h_{3}-h_{2}},j,Q}(g)\}.$$

\begin{lem}\label{3:changeh2h3}
	We have that $\Xi_{V_{p}(\tilde{M})^{h_{1},h_{3},h_{3}-h_{2}},j,Q}(g)=\Xi_{V_{p}(\tilde{M})^{h_{1},h_{2},h_{3}},j,Q}(g)$. 
	\end{lem}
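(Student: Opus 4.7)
The plan is to reduce the equality of the two groups to an ideal-theoretic statement and then exploit the additive identity $h_1+h_2=h_3+h_4$. First, I would observe that for any $\xi_j \in \mathfrak{N}_j(G/\Gamma)$, the polynomial
$$P_{\xi_j}(m_1,\dots,m_j) := \xi_j(\Delta_{m_j}\dots\Delta_{m_1}g(n) \bmod G_{[j,2]})$$
is, by Lemma \ref{3:sbeq}, independent of $n$ and is a symmetric multilinear form in $m_1,\dots,m_j$. Hence membership in $\Xi_{\Omega,j}(g)$ depends only on the homogeneous polynomial of degree $j$ induced by $P_{\xi_j}$ on $\F_p^d$; by Lemma \ref{3:att55}, both $\xi_j\in\Xi_{V(M)^{h_1,h_3,h_3-h_2},j}(g)$ and $\xi_j\in\Xi_{V(M)^{h_1,h_2,h_3},j}(g)$ translate to conditions of the form ``the associated polynomial lies in $J^M_{h_1,h_3,h_3-h_2}$'' and ``it lies in $J^M_{h_1,h_2,h_3}$,'' respectively.

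Next, I would decompose $\xi_j = \xi_j^{(0)} + \xi_j^{(1)} + \cdots + \xi_j^{(4)}$ according to the product structure $G/\Gamma = \prod G_{(i)}/\Gamma_{(i)}$, and use the factorization $g_i = g'_i g''_i$ from (\ref{3:11.3}). Since Taylor coefficients are additive in $\Hor_j$, the polynomial $P_{\xi_j}$ splits as a universal part (depending only on $\xi_j^{(i)} \circ \Taylor_j(g'_i)$ and $\xi_j^{(0)}\circ\Taylor_j(g_0)$, hence independent of $\Omega$) plus a correction $\sum_{i=1}^{4} B_i$ where $B_i := \xi_j^{(i)}\circ\Taylor_j(g''_i)$. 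By the $(\tau(V(M)^{h_i}),p)$-reducibility of $g''_i$ combined with Lemma \ref{3:att55}, each $B_i$ induces a polynomial lying in $J^M_{h_i}$.

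The hard part will be handling the terms $B_2$ and $B_4$, since $J^M_{h_2}$ is contained in $J^M_{h_1,h_2,h_3}$ but not in general in $J^M_{h_1,h_3,h_3-h_2}$, and the same asymmetry affects $J^M_{h_4}$ with the roles reversed. The plan here is to exploit the fact $h_4 = h_1 + h_2 - h_3$ together with the cocycle-type identities for $M$-ideals developed in \cite{SunB} (namely Proposition \ref{3:gr0} and Proposition \ref{3:coco1}) to show that, for any quadruple satisfying the additive relation, $J^M_{h_2} + J^M_{h_4} \subseteq J^M_{h_1,h_3,h_3-h_2} \cap J^M_{h_1,h_2,h_3}$; together with the trivial containments $J^M_{h_1}, J^M_{h_3} \subseteq J^M_{h_1,h_3,h_3-h_2} \cap J^M_{h_1,h_2,h_3}$, this places the entire correction $\sum_i B_i$ in the intersection of both target ideals, which immediately yields the equivalence of the two membership conditions on the universal part of $P_{\xi_j}$, completing the proof.
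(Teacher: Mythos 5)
Your opening reduction is the right one and matches the paper: by Lemma~\ref{3:sbeq} and Lemma~\ref{3:ratispp}, membership of $\xi_j$ in $\Xi_{\Omega,j}(g)$ is equivalent, via Lemma~\ref{3:att55}, to the homogeneous polynomial $F$ induced by $n\mapsto\sum_{|m|=j}\xi_j(\Taylor_m(g))(m!)^{\ast}n^m$ lying in the ideal $J^M_{\dots}$ attached to $\Omega$. But from there you go off course. The observation you are missing is entirely elementary: $J^M_{h_1,\dots,h_k}=\langle (nA)\cdot n, (h_1A)\cdot n,\dots,(h_kA)\cdot n\rangle$ and $(hA)\cdot n$ is linear in $h$, so the ideal depends only on $\sp_{\F_p}\{h_1,\dots,h_k\}$. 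Since $h_3-h_2$ is an $\F_p$-linear combination of $h_2,h_3$ and conversely $h_2=h_3-(h_3-h_2)$, the two spans agree and $J^M_{h_1,h_3,h_3-h_2}=J^M_{h_1,h_2,h_3}$ outright. That is the entire content of the lemma: apply Lemma~\ref{3:att55} twice with the same ideal and you are done.

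In particular, your claim that ``$J^M_{h_2}$ is contained in $J^M_{h_1,h_2,h_3}$ but not in general in $J^M_{h_1,h_3,h_3-h_2}$'' is false: $(h_2A)\cdot n = (h_3A)\cdot n - ((h_3-h_2)A)\cdot n \in J^M_{h_1,h_3,h_3-h_2}$. Because of this misapprehension you build a machine you do not need: the decomposition of $\xi_j$ over the product $\prod_i G_{(i)}/\Gamma_{(i)}$, the factorization $g_i=g'_ig''_i$, the correction terms $B_i$, and the appeal to Propositions~\ref{3:gr0} and \ref{3:coco1}. That apparatus is used in the paper for Lemmas~\ref{3:L11.3} and \ref{3:L11.5}, where one genuinely needs to isolate which components of $\xi_j$ can appear, but for the present lemma it is both unnecessary and (given the false premise about the ideal containments) would not close the argument anyway. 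Note also that the paper never decomposes $\xi_j$ here; it is a single type-II horizontal character of the product nilmanifold, and the entire Taylor coefficient $\Taylor_m(g)$ is used at once.
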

\begin{proof}
Fix any $\xi_{j}\in\Xi_{V_{p}(\tilde{M})^{h_{1},h_{3},h_{3}-h_{2}},j,Q}(g)$. 
Denote
 $$f(n):=\sum_{m\in\N^{d},\vert m\vert=j}\xi_{j}(\Taylor_{m}(g))(m!)^{\ast}n^{m}.$$
By  Lemma  \ref{3:ratispp}, $\Taylor_{m}(g)\in\Z/p^{r}$ for all $m\in\N^{d}$ with $\vert m\vert=j$.
So $f$ belongs to $\st_{\Z/p^{r},d}(j)$. 
  By Lemma \ref{3:sbeq}, we have 
 \begin{equation}\nonumber
 \begin{split}
 &\quad 0\equiv Q\xi_{j}(\Taylor_{j}(g)(t_{1},\dots,t_{j}))\equiv Q\Delta_{t_{j}}\dots\Delta_{t_{1}}\Bigl(\sum_{m\in\N^{d},\vert m\vert=j}\xi_{j}(\Taylor_{m}(g))\binom{n}{m}\Bigr)
  \\&\equiv Q\Delta_{t_{j}}\dots\Delta_{t_{1}}f(n) \mod\Z
 \end{split} 
 \end{equation} 
  for all  $(n,m_{1},\dots,m_{j})\in \Gow_{p,j}(V_{p}(\tilde{M})^{h_{1},h_{3},h_{3}-h_{2}})$.
 This means that $Qf$ is $(V_{p}(\tilde{M})^{h_{1},h_{3},h_{3}-h_{2}},p)$-reducible. By Lemma \ref{3:att55}, the map induced by $Qf$ belongs to $J^{M}_{\iota(h_{1}),\iota(h_{3}),\iota(h_{3}-h_{2})}=J^{M}_{\iota(h_{1}),\iota(h_{2}),\iota(h_{3})}$. Again by  Lemma  \ref{3:att55}, 
 we have that
 $Qf$ is $(V_{p}(\tilde{M})^{h_{1},h_{2},h_{3}},p)$-reducible. In other words, $\xi_{j}\in\Xi_{V_{p}(\tilde{M})^{h_{1},h_{2},h_{3}},j,Q}(g)$ and thus $\Xi_{V_{p}(\tilde{M})^{h_{1},h_{3},h_{3}-h_{2}},j,Q}(g)\subseteq \Xi_{V_{p}(\tilde{M})^{h_{1},h_{2},h_{3}},j,Q}(g)$. For similar reasons, we have that $\Xi_{V_{p}(\tilde{M})^{h_{1},h_{3},h_{3}-h_{2}},j,Q}(g)\supseteq \Xi_{V_{p}(\tilde{M})^{h_{1},h_{2},h_{3}},j,Q}(g)$ and so $\Xi_{V_{p}(\tilde{M})^{h_{1},h_{3},h_{3}-h_{2}},j,Q}(g)=\Xi_{V_{p}(\tilde{M})^{h_{1},h_{2},h_{3}},j,Q}(g)$.
\end{proof}

\begin{lem}\label{3:L11.2}
For all $h_{1}\in (G_{(1)})_{[s,r_{\ast}]}$ with 
$(h_{1},id,id,id,id)\in G_{P}$, we have that $\eta(h_{1})\in\Z$.	
\end{lem}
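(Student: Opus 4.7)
The plan is to exploit the centrality of $(G_{(1)})_{[s,r_\ast]}$ and the non-vanishing of the integral in (\ref{3:11.5}), via the now-classical trick of showing that any element of the top filtration which lies in $G_P$ must act trivially under the vertical character.

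First, I would set $\bar{h}_1 := (h_1,id,id,id,id) \in G$. Because $[s,r_\ast]$ is the top of the degree-rank filtration of $G_{(1)}$, the group $(G_{(1)})_{[s,r_\ast]}$ is contained in the center of $G_{(1)}$; in particular, $h_1$ is central in $G_{(1)}$ and hence $\bar{h}_1$ is central in $G = \prod_{i=1}^4 G_{(i)} \times G_{(0)}$. By hypothesis $\bar{h}_1 \in G_P$, so $\bar{h}_1$ normalizes $\Gamma_P = G_P \cap \Gamma$ (trivially, by centrality), and left multiplication by $\bar{h}_1$ preserves the Haar measure $m_{G_P/\Gamma_P}$ on $G_P/\Gamma_P$.

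Next, I would use the invariance to write
\begin{equation*}
\int_{G_P/\Gamma_P} F(\epsilon_0 x)\, dm_{G_P/\Gamma_P}(x) = \int_{G_P/\Gamma_P} F(\epsilon_0 \bar{h}_1 x)\, dm_{G_P/\Gamma_P}(x) = \int_{G_P/\Gamma_P} F(\bar{h}_1 \epsilon_0 x)\, dm_{G_P/\Gamma_P}(x),
\end{equation*}
where the last equality uses that $\bar{h}_1$ is central. Then I would invoke the vertical frequency identity (\ref{3:11.4}) with $t_2=t_3=t_4=id$ and $t_1$ chosen to be any preimage of $h_1$ under $\phi_1$ (such a preimage exists because the generators $\phi_1(e_{m,k})$ span $(G_{(1)})_{[s,r_\ast]}$ modulo the trivial subgroup, so $\phi_1$ restricted to $(G^{\vec{D},d})_{[s,r_\ast]}$ is surjective onto $(G_{(1)})_{[s,r_\ast]}$); this yields $F(\bar{h}_1 y) = \exp(\eta(h_1)) F(y)$ for every $y \in G/\Gamma$, where $\eta(h_1)$ denotes $\eta(t_1)$ (which is independent of the choice of preimage since the two sides of (\ref{3:11.4}) determine $\eta$ on the image).

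Combining these, $\int F(\epsilon_0 x)\,dm_{G_P/\Gamma_P}(x) = \exp(\eta(h_1)) \int F(\epsilon_0 x)\,dm_{G_P/\Gamma_P}(x)$; since the integral is nonzero by (\ref{3:11.5}), we conclude $\exp(\eta(h_1)) = 1$ and therefore $\eta(h_1) \in \mathbb{Z}$.

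The only delicate point is the well-definedness of $\eta$ as a function on $(G_{(1)})_{[s,r_\ast]}$, i.e.\ that two preimages of $h_1$ under $\phi_1$ give the same value of $\eta$; this is automatic because (\ref{3:11.4}) already defines a single-valued action of $(G_{(1)})_{[s,r_\ast]}$ on $F$ by the character $\exp\circ\eta$, and the constant factor appearing when we left-multiply the first coordinate by $\phi_1(t_1)$ can depend only on $\phi_1(t_1) = h_1$, not on $t_1$ itself. Thus no genuine obstacle arises; the argument is essentially a one-line equidistribution calculation once centrality is observed.
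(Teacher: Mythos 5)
Your proof is correct and follows essentially the same route as the paper's one-line argument: observe that $\bar h_1=(h_1,id,id,id,id)$ is central in $G$ and lies in $G_P$, use translation-invariance of the Haar measure $m_{G_P/\Gamma_P}$ to insert $\bar h_1$, apply the vertical frequency identity (\ref{3:11.4}) with $t_2=t_3=t_4=id$, and divide out the nonzero integral from (\ref{3:11.5}). Your extra remark about $\phi_1$ hitting $(G_{(1)})_{[s,r_\ast]}$ and the well-definedness of $\eta(h_1)$ is a legitimate fine point that the paper glosses over, but it does not constitute a different method.
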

\begin{proof}
	Denote $h:=(h_{1},id,id,id,id)\in G_{P}$. Then $h$ is in the center of $G$ and so 
	\begin{equation}\nonumber
	\begin{split}
	&\quad\int_{G_{P}/\Gamma_{P}}F(\e_{0} x)\,dm_{G_{P}/\Gamma_{P}}(x)=\int_{G_{P}/\Gamma_{P}}F(h\e_{0} x)\,dm_{G_{P}/\Gamma_{P}}(x)
	\\&=\exp(\eta(h_{1}))\int_{G_{P}/\Gamma_{P}}F(\e_{0} x)\,dm_{G_{P}/\Gamma_{P}}(x),
	\end{split}
	\end{equation}
	 	where we used (\ref{3:11.4}) in the last equality. We may then deduce from (\ref{3:11.5}) that $\eta(h_{1})\in\Z$.
\end{proof}	

For $1\leq j\leq s$, let $V_{123,j}$ denote the subgroup of $\Hor_{j}(G_{(1)})\times\Hor_{j}(G_{(2)})\times \Hor_{j}(G_{(3)})$ generated by 
$$(\phi_{1}(e_{m,k}),\phi_{2}(e_{m,k}),\phi_{3}(e_{m,k})), \vert m\vert=j, 1\leq k\leq D^{\cor}_{j}$$
and
$$(\phi_{1}(e_{m,k}),id,id),(id,\phi_{2}(e_{m,k}),id),(id,id,\phi_{3}(e_{m,k})), \vert m\vert=j, D^{\cor}_{j}+1\leq k\leq D_{j}.$$
Define $V_{124,j}$, $V_{134,j}$ and $V_{234,j}$ in a similar way.
\begin{lem}\label{3:L11.3}
	There exists $L_{0}=L_{0}(c,d,D,\e,r)\in\N$ such that 
if $L\geq L_{0}$,  then
for $1\leq j\leq s$, the projection of $G_{P}\cap G_{j}$ to  $\Hor_{j}(G_{(1)})\times\Hor_{j}(G_{(2)})\times \Hor_{j}(G_{(3)})$ contains $V_{123,j}$, and similar results  hold with $V_{123,j}$ replaced by $V_{124,j}$, $V_{134,j}$ and $V_{234,j}$.	
\end{lem}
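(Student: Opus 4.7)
The plan is to deduce the lemma directly from Theorem~\ref{3:rat}, already applied in the form (\ref{3:11.6}), together with the elementary observation that each generator of $V_{123,j}$ is a lattice point of the corresponding horizontal torus.

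First, combining (\ref{3:11.6}) with Lemma~\ref{3:changeh2h3} yields
$$\pi_{\Hor_{j}(G)}(G_{P}\cap G_{j})\supseteq \Xi^{\perp}_{V(M)^{h_{1},h_{2},h_{3}},j}(g).$$
Using $G=\prod_{i\in\{0,1,2,3,4\}}G_{(i)}$ and $\Gamma=\prod_{i}\Gamma_{(i)}$, we identify $\Hor_{j}(G)=\prod_{i}\Hor_{j}(G_{(i)})$ and $\Hor_{j}(\Gamma)=\prod_{i}\Hor_{j}(\Gamma_{(i)})$. By the definition of $\mathfrak{N}_{j}(G/\Gamma)$, every $\xi_{j}\in\Xi_{V(M)^{h_{1},h_{2},h_{3}},j}(g)$ sends $\Hor_{j}(\Gamma)$ into $\Z$, so $\Hor_{j}(\Gamma)\subseteq \Xi^{\perp}_{V(M)^{h_{1},h_{2},h_{3}},j}(g)$.

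Next, each generator of $V_{123,j}$ has every nontrivial coordinate of the form $\pi_{\Hor_{j}(G_{(i)})}(\phi_{i}(e_{m,k}))$ with $|m|=j$. Since $\phi_{i}\colon G^{\vec{D},d}/\Gamma^{\vec{D},d}\to G_{(i)}/\Gamma_{(i)}$ is a filtered homomorphism, $\phi_{i}(e_{m,k})\in \Gamma_{(i)}\cap (G_{(i)})_{[j,1]}$, so its image in $\Hor_{j}(G_{(i)})$ lies in $\Hor_{j}(\Gamma_{(i)})$. Embedding a generator $(v_{1},v_{2},v_{3})\in V_{123,j}$ as $(v_{1},v_{2},v_{3},id,id)\in\Hor_{j}(G)$ therefore produces an element of $\prod_{i}\Hor_{j}(\Gamma_{(i)})=\Hor_{j}(\Gamma)$. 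Chaining the inclusions,
$$(v_{1},v_{2},v_{3},id,id)\in\Hor_{j}(\Gamma)\subseteq\Xi^{\perp}_{V(M)^{h_{1},h_{2},h_{3}},j}(g)\subseteq\pi_{\Hor_{j}(G)}(G_{P}\cap G_{j}),$$
and projecting to the first three factors yields $(v_{1},v_{2},v_{3})$ in the projection of $G_{P}\cap G_{j}$ to $\Hor_{j}(G_{(1)})\times\Hor_{j}(G_{(2)})\times\Hor_{j}(G_{(3)})$. The analogues for $V_{124,j}, V_{134,j}, V_{234,j}$ follow by placing the identity in whichever two slots are missing; the identical chain of inclusions applies since each nontrivial generator remains a lattice point in $\Hor_{j}(\Gamma_{(i)})$.

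No substantive obstacle appears in this step: the threshold $L_{0}$ is not invoked in the argument itself but is inherited from upstream, namely the sunflower lemma Lemma~\ref{3:sf2} and Theorem~\ref{3:rat}, which are required to produce the data $G_{(i)}/\Gamma_{(i)}$, $\phi_{i}$, $G_{P}$ and the inclusion (\ref{3:11.6}) in the first place. The essential content of the lemma is the recognition that, by construction, $V_{\bullet,j}$ consists of canonical integer-point images of the filtered homomorphism $\phi$, which are automatically annihilated mod $\Z$ by every type-II horizontal character of $G/\Gamma$.
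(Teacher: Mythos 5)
Your argument hinges on the chain
$$\Hor_{j}(\Gamma)\subseteq\Xi^{\perp}_{V(M)^{h_{1},h_{2},h_{3}},j}(g)\subseteq\pi_{\Hor_{j}(G)}(G_{P}\cap G_{j}),$$
where the first inclusion comes from reading the definition $\Xi_{\Omega,j}^{\perp}(g)=\{x:\xi_{j}(x)\in\Z \ \forall\ \xi_{j}\in\Xi_{\Omega,j}(g)\}$ literally. This reading cannot be the intended one, and with it the argument proves far too much. Since $G_{P}$ is a closed connected rational subgroup and $G_{j}$ is connected, $G_{P}\cap G_{j}$ is connected and $\pi_{\Hor_{j}(G)}(G_{P}\cap G_{j})$ is a \emph{linear subspace} of the vector group $\Hor_{j}(G)$. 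A linear subspace that contains the full-rank lattice $\Hor_{j}(\Gamma)$ contains its $\R$-span, i.e.\ it must equal all of $\Hor_{j}(G)$. So your argument would actually yield $\pi_{\Hor_{j}(G)}(G_{P}\cap G_{j})=\Hor_{j}(G)$ for every $j$, which is false — take $g$ constant, so that $G_{P}$ is trivial and the projection is $\{0\}$ — and in particular shows that (\ref{3:11.6}) cannot be true under this reading of $\perp$.

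What the proof of Theorem~\ref{3:rat} actually establishes, and what is needed for the duality step there, is the annihilator condition $\xi_{j}(x)=0$, not $\xi_{j}(x)\in\Z$; in other words $\Xi^{\perp}_{\Omega,j}(g)$ should be the linear subspace annihilated by $\Xi_{\Omega,j}(g)$, not its dual lattice. Under that (correct) reading, $\Hor_{j}(\Gamma)\not\subseteq\Xi^{\perp}_{\Omega,j}(g)$ in general, because elements of $\Xi_{\Omega,j}(g)\subseteq\mathfrak{N}_{j}(G/\Gamma)$ send the lattice into $\Z$, not to $0$. Your first inclusion therefore fails, and the fact that $\phi_{i}(e_{m,k})$ projects into $\Hor_{j}(\Gamma_{(i)})$ buys nothing: being a lattice point of the horizontal torus does not place you in the annihilator of $\Xi_{\Omega,j}(g)$.

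The red flag is exactly the observation you make at the end: your argument does not invoke $L_{0}$ at any point, and treats the independence of the tower as irrelevant to this lemma. That independence is in fact the entire content. The paper's proof argues by contradiction: assuming $\pi_{j}(G_{P}\cap G_{j})\not\supseteq V_{123,j}$, duality and the rationality of $G_{P}$ produce a bounded-complexity $\xi_{j}\in\Xi_{V(M)^{h_{1},h_{2},h_{3}},j}(g)$ that is nontrivial on $\pi_{j}'^{-1}(V_{123,j})$ and splits as $\xi_{j}=\xi_{(1),j}+\xi_{(2),j}+\xi_{(3),j}$; one then expands the Taylor coefficients via (\ref{3:11.3}), reduces modulo the $(\tau(V(M)^{h_{i}}),p)$-reducible pieces $g_{i}''$, and uses the $(\tau(V(M)^{h_{1},h_{2},h_{3}}),L,p)$-independence of $\mathcal{T}^{\cor}\uplus\uplus_{i}\mathcal{T}^{\lin}_{h_{i}}\uplus\uplus_{i}\mathcal{T}^{\ind}_{h_{i}}$ to force $\xi_{(i),j}\circ\phi_{i}(e_{m,k})=0$ for the petal indices and $\sum_{i}\xi_{(i),j}\circ\phi_{i}(e_{m,k})=0$ for the core indices, contradicting nontriviality on $V_{123,j}$. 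It is precisely here that the choice of $L\geq L_{0}$ matters.
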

\begin{proof}
  We only prove for the case $V_{123,j}$ since the proof of other cases are similar.
   Suppose that the statement fails for some $1\leq j\leq s$.
   Let $$\pi_{j}\colon G_{P}\cap G_{j}\to \Hor_{j}(G_{(1)})\times\Hor_{j}(G_{(2)})\times \Hor_{j}(G_{(3)})$$ and 	$$\pi'_{j}\colon \Hor_{j}(G)\to \Hor_{j}(G_{(1)})\times\Hor_{j}(G_{(2)})\times \Hor_{j}(G_{(3)})$$ denote the projection maps, and let $Z$ denote the set of $\tilde{\xi}_{j}\in\mathfrak{N}_{j}(G/\Gamma)$ which annihilates  ${\pi'}^{-1}_{j}\circ\pi_{j}(G_{P}\cap G_{j})$ (recall that $\mathfrak{N}_{j}(G/\Gamma)$ is the group of all $j$-th type-II horizontal characters).  
   By duality, there exists $\tilde{\xi}_{j}\in Z$ which is nontrivial on ${\pi'}^{-1}_{j}(V_{123,j})$.  Since $G_{P}$ is $O_{C,d,D,\e}(1)$-rational relative to the Mal'cev basis $\mathcal{X}$ of $G/\Gamma$,   there exist $t=O_{C,d,D,\e,r}(1)\in \N$ and $\xi_{j,1},\dots,\xi_{j,t}\in Z$ of complexity at most $O_{C,d,D,\e}(1)$
   such that every $\tilde{\xi_{j}}\in Z$  is a linear combination of $\xi_{j,1},\dots,\xi_{j,t}$. So there exists $\xi_{j}\in\{\xi_{j,1},\dots,\xi_{j,t}\}$ which is nontrivial on ${\pi'}^{-1}_{j}(V_{123,j})$.
    By (\ref{3:11.6}), Lemma \ref{3:changeh2h3} and duality, $\xi_{j}\in\Xi_{V_{p}(\tilde{M})^{h_{1},h_{2},h_{3}},j,Q}(g)$.

    Since 
    $\xi_{j}$ annihilates  ${\pi'}^{-1}_{j}\circ\pi_{j}(G_{P}\cap G_{j})$, we have that 
    \begin{equation}\label{3:11101}
    \xi_{j}(x_{1},x_{2},x_{3},x_{4},x_{0})=\xi_{(1),j}(x_{1})+\xi_{(2),j}(x_{2})+\xi_{(3),j}(x_{3})
    \end{equation}
    for all $x_{i}\in \Hor_{j}(G_{(i)}), 0\leq i\leq 4$ for some 
    $\xi_{(i),j}\in\mathfrak{N}_{j}(G_{(i)}/\Gamma_{(i)})$.
  Fix any $(n,t_{1},\dots,t_{j})\in \Gow_{p,j}(V_{p}(\tilde{M})^{h_{1},h_{2},h_{3}})$. Since
  the $j$-string
  $(\Taylor_{m}(g''_{i}))_{\vert m\vert=j}$
  is  $(V_{p}(\tilde{M})^{h_{i}},p)$-reducible, we have that 
    $$\sum_{\vert m\vert=j}\xi_{(i),j}(\Taylor_{m}(g''_{i}))(m!)^{\ast}n^{m}$$
    is $(V_{p}(\tilde{M})^{h_{i}},p)$-reducible for all $1\leq i\leq 3$. 
    Since $(n,t_{1},\dots,t_{j})\in \Gow_{p,j}(V_{p}(\tilde{M})^{h_{i}})$ for $1\leq i\leq 3$,    
     it follows from Lemma \ref{3:sbeq} that  
     \begin{equation}\nonumber
     \begin{split}
     &\quad\sum_{i=1}^{3}\xi_{(i),j}(\Taylor_{j}(g''_{i})(t_{1},\dots,t_{j}))\equiv\sum_{i=1}^{3}\Delta_{t_{j}}\dots\Delta_{t_{1}}\Bigl(\sum_{m\in\N^{d},\vert m\vert=j}\xi_{(i),j}(\Taylor_{m}(g''_{i}))\binom{n}{m}\Bigr)
     \\&\equiv\sum_{i=1}^{3}\Delta_{t_{j}}\dots\Delta_{t_{1}}\Bigl(\sum_{m\in\N^{d},\vert m\vert=j}\xi_{(i),j}(\Taylor_{m}(g''_{i}))(m!)^{\ast}n^{m}\Bigr)
     \equiv 0 \mod\Z.
     \end{split}
     \end{equation}
     On the other hand,  By the definition of $\Xi_{V_{p}(\tilde{M})^{h_{1},h_{2},h_{3}},j,Q}(g)$ and Lemma \ref{3:sbeq}, we have 
     \begin{equation}\nonumber
     \begin{split}
      \sum_{i=1}^{3}\xi_{(i),j}(\Taylor_{j}(g_{i})(t_{1},\dots,t_{j}))=\sum_{i=1}^{3}\Delta_{t_{j}}\dots\Delta_{t_{1}}\Bigl(\sum_{m\in\N^{d},\vert m\vert=j}\xi_{(i),j}(\Taylor_{m}(g_{i}))\binom{n}{m}\Bigr)\in\Z/Q.
     \end{split}
     \end{equation}
    So by (\ref{3:11.3}), 
   \begin{equation}\nonumber
   \begin{split}
   &\quad 
    \Delta_{t_{j}}\dots\Delta_{t_{1}}\Bigl(\sum_{m\in\N^{d},\vert m\vert=j}\Bigl(\sum_{i=1}^{3}\sum_{k=1}^{D_{j}}\xi_{(i),j}\circ \phi_{i}(e_{m,k})\Bigr)\xi_{h_{i},m,k}(m!)^{\ast}n^{m}\Bigr)
   \\&\equiv\sum_{i=1}^{3}\sum_{m\in\N^{d},\vert m\vert=j}\sum_{k=1}^{D_{j}}\xi_{(i),j}\circ \phi_{i}(e_{m,k})\xi_{h_{i},m,k}\Bigl(\Delta_{t_{j}}\dots\Delta_{t_{1}}\binom{n}{m}\Bigr)
   \\&\equiv\xi_{j}(\Taylor_{j}(g')(t_{1},\dots,t_{j}))
   \equiv\sum_{i=1}^{3}\xi_{(i),j}(\Taylor_{j}(g'_{i})(t_{1},\dots,t_{j}))
   \\&\equiv\sum_{i=1}^{3}\xi_{(i),j}(\Taylor_{j}(g_{i})(t_{1},\dots,t_{j}))-\sum_{i=1}^{3}\xi_{(i),j}(\Taylor_{j}(g''_{i})(t_{1},\dots,t_{j}))\equiv 0 \mod\Z/Q.
   \end{split}
   \end{equation}
    Therefore
     \begin{equation}\label{3:lili}
     \sum_{m\in\N^{d},\vert m\vert=j}\Bigl(Q\sum_{i=1}^{3}\sum_{k=1}^{D_{j}}\xi_{(i),j}\circ \phi_{i}(e_{m,k})\Bigr)\xi_{h_{i},m,k}(m!)^{\ast}n^{m} \text{ is $(V_{p}(\tilde{M})^{h_{1},h_{2},h_{3}},p)$-reducible.}
     \end{equation}
     Recall that $Q\leq O_{C,d,D,\e}(1)$.
   Since  $\xi_{j}$ is
   of complexity at most $O_{C,d,D,\e}(1)$, so are $\xi_{(i),j}, 1\leq i\leq 3$.  Since   $\phi_{i}$ is $O_{C,d,D,\e,r}(1)$-rational, we have that for all $1\leq i\leq 3$ and $D_{j}^{\cor}+1\leq k\leq D_{j}$,
   $\xi_{(i),j}\circ \phi_{i}(e_{m,k})$ is of complexity $O_{C,d,D,\e,r}(1)$.
   So if the tower
   $$\mathcal{T}^{\cor}\uplus \uplus_{i=1}^{3}\mathcal{T}^{\lin}_{h_{i}}\uplus\uplus_{i=1}^{4} \mathcal{T}^{\ind}_{h_{i}}$$
   is $(V_{p}(\tilde{M})^{h_{1},h_{2},h_{3}}, L,p)$-independent  for some $L=O_{C,d,D,\e,r}(1)$, then (\ref{3:lili}) implies that $\xi_{(i),j}\circ \phi_{i}(e_{m,k})$ vanishes for all $1\leq i\leq 3$, $\vert m\vert=j$ and $D_{j}^{\cor}+1\leq k\leq D_{j}$, and that $\sum_{i=1}^{3}\xi_{(i),j}\circ \phi_{i}(e_{m,k})$ vanishes for all $\vert m\vert=j$ and $1\leq k\leq D_{j}^{\cor}$. By (\ref{3:11101}), this implies that $\xi_{j}$ vanishes on ${\pi'}^{-1}_{j}(V_{123,j})$, a contradiction.
    \end{proof}

For $1\leq j\leq s$, let $V_{\ind,j}$ be the subspace of $\Hor_{j}(G_{(1)})\times\Hor_{j}(G_{(2)})\times\Hor_{j}(G_{(3)})\times\Hor_{j}(G_{(4)})$ generated by
$$(\phi_{1}(e_{m,k}),\phi_{2}(e_{m,k}),\phi_{3}(e_{m,k}),\phi_{4}(e_{m,k})), \vert m\vert=j, 1\leq k\leq D_{j}^{\cor}$$
and
\begin{equation}\nonumber
\begin{split}
 (\phi_{1}(e_{m,k}),id,id,id),  (id,\phi_{2}(e_{m,k}),id,id), (id,id,\phi_{3}(e_{m,k}),id),  (id,id,id,\phi_{4}(e_{m,k}))
\end{split}
\end{equation}
for $\vert m\vert=j$ and $D^{\cor}_{j}+1\leq k\leq D^{\cor}_{j}+D^{\ind}_{j}.$

\begin{lem}\label{3:L11.5}
		There exists $L_{0}=L_{0}(c,d,D,\e,r)\in\N$ such that 
	if $L\geq L_{0}$, then
	for $1\leq j\leq s$, the projection of $G_{P}\cap G_{j}$ to  $\Hor_{j}(G_{(1)})\times\Hor_{j}(G_{(2)})\times \Hor_{j}(G_{(3)})\times \Hor_{j}(G_{(4)})$ contains $V_{\ind,j}$.
\end{lem}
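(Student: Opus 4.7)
The plan is to run the same duality argument as in Lemma \ref{3:L11.3}, but now spread across all four factors $i=1,2,3,4$. Assume for contradiction that, for some $1\leq j\leq s$, the projection $\pi_{j}(G_{P}\cap G_{j})$ onto $\prod_{i=1}^{4}\Hor_{j}(G_{(i)})$ does not contain $V_{\ind,j}$, and write $\pi'_{j}\colon \Hor_{j}(G)\to \prod_{i=1}^{4}\Hor_{j}(G_{(i)})$ for the natural projection. The $O_{C,d,D,\e}(1)$-rationality of $G_{P}$ relative to $\mathcal{X}$ supplies a bounded list of generators of the annihilator of ${\pi'}^{-1}_{j}\pi_{j}(G_{P}\cap G_{j})$ in $\mathfrak{N}_{j}(G/\Gamma)$, so one can pick $\xi_{j}\in \mathfrak{N}_{j}(G/\Gamma)$ of complexity $O_{C,d,D,\e}(1)$ that annihilates ${\pi'}^{-1}_{j}\pi_{j}(G_{P}\cap G_{j})$ and is nontrivial on ${\pi'}^{-1}_{j}(V_{\ind,j})$. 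Since $\xi_{j}$ also annihilates $\ker\pi'_{j}=\{\mathrm{id}\}\times\Hor_{j}(G_{(0)})$, it factors as $\xi_{j}(x_{1},x_{2},x_{3},x_{4},x_{0})=\sum_{i=1}^{4}\xi_{(i),j}(x_{i})$ with each $\xi_{(i),j}\in \mathfrak{N}_{j}(G_{(i)}/\Gamma_{(i)})$ of complexity $O_{C,d,D,\e}(1)$, and by (\ref{3:11.6}) together with Lemma \ref{3:changeh2h3} we have $\xi_{j}\in \Xi_{V(M)^{h_{1},h_{2},h_{3}},j}(g)$.

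Next I would carry out the same Taylor computation as in the proof of Lemma \ref{3:L11.3}. For any $(n,t_{1},\dots,t_{j})\in \Gow_{p,j}(\tau(V(M)^{h_{1},h_{2},h_{3}}))$, use Lemmas \ref{3:magictaylor} and \ref{3:sbeq} together with the decomposition $g_{i}=g'_{i}g''_{i}$ of (\ref{3:11.3}). The $(\tau(V(M)^{h_{i}}),p)$-reducibility of the $g''_{i}$-strings — unaffected by the constant shift $h_{3}-h_{2}$ built into $g_{2},g_{4}$, since type-II Taylor coefficients are shift-invariant — combined with the containment $J^{M}_{h_{i}}\subseteq J^{M}_{h_{1},h_{2},h_{3}}$ and Lemma \ref{3:att55} upgrades this reducibility to $(\tau(V(M)^{h_{1},h_{2},h_{3}}),p)$-reducibility, so the $g''_{i}$-contribution is absorbed into the reducible part. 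Unwinding as in the proof of Lemma \ref{3:L11.3}, one concludes that the polynomial
\begin{equation*}
P(n):=\sum_{|m|=j}\Bigl(\sum_{i=1}^{4}\sum_{k=1}^{D_{j}}\bigl(\xi_{(i),j}\circ\phi_{i}(e_{m,k})\bigr)\xi_{h_{i},m,k}\Bigr)(m!)^{\ast}n^{m}
\end{equation*}
is $(\tau(V(M)^{h_{1},h_{2},h_{3}}),p)$-reducible.

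The final step is to force the relevant coefficients of $P$ to vanish by applying Condition (ii) of Lemma \ref{3:sf2}. The main obstacle is that $P$ may carry contributions from $\mathcal{T}^{\lin}_{h_{4}}$, which is \emph{not} part of the independent tower $\mathcal{T}^{\cor}\uplus\uplus_{i=1}^{3}\mathcal{T}^{\lin}_{h_{i}}\uplus\uplus_{i=1}^{4}\mathcal{T}^{\ind}_{h_{i}}$. I would handle this via Condition (iii) of Lemma \ref{3:sf2}: since $h\mapsto\vec{\xi}^{\lin}_{h,j,k}$ is a $p$-almost linear Freiman homomorphism of complexity $O_{C,d,D,\e}(1)$ and $h_{1}+h_{2}=h_{3}+h_{4}$, one has $\vec{\xi}^{\lin}_{h_{4},j,k}=\vec{\xi}^{\lin}_{h_{1},j,k}+\vec{\xi}^{\lin}_{h_{2},j,k}-\vec{\xi}^{\lin}_{h_{3},j,k}$ modulo a $(\tau(V(M)^{h_{1},h_{2},h_{3}}),p)$-reducible string of bounded complexity. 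Substituting this into $P$ rewrites it as a linear combination, with coefficients of size $O_{C,d,D,\e}(1)$, of strings drawn solely from the independent tower; choosing $L\geq L_{0}(C,d,D,\e)$ large enough and applying Condition (ii) forces all such coefficients to vanish. In particular the coefficient of each $\ind$-string vanishes, giving $\xi_{(i),j}\circ\phi_{i}(e_{m,k})=0$ for $i=1,2,3,4$ and every $\ind$-index $k$, and the coefficient of each $\cor$-string vanishes, giving $\sum_{i=1}^{4}\xi_{(i),j}\circ\phi_{i}(e_{m,k})=0$. These are precisely the relations needed for $\xi_{j}$ to vanish on ${\pi'}^{-1}_{j}(V_{\ind,j})$, contradicting the choice of $\xi_{j}$. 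The extra relations obtained on the $\lin$-coefficients, linking $\xi_{(4),j}\circ\phi_{4}(e_{m,k})$ to $\xi_{(i),j}\circ\phi_{i}(e_{m,k})$ for $i=1,2,3$, are irrelevant, since $V_{\ind,j}$ does not see the $\lin$-petals.
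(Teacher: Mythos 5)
Your proposal is correct and follows essentially the same route as the paper: you fix a $j$ where the containment fails, use duality and rationality of $G_{P}$ together with (\ref{3:11.6}) and Lemma \ref{3:changeh2h3} to produce a bounded-complexity $\xi_{j}\in\Xi_{V(M)^{h_{1},h_{2},h_{3}},j}(g)$ that annihilates ${\pi'}^{-1}_{j}\pi_{j}(G_{P}\cap G_{j})$ but not ${\pi'}^{-1}_{j}(V_{\ind,j})$, run the Taylor computation using Lemmas \ref{3:magictaylor}, \ref{3:sbeq} and (\ref{3:11.3}) to get the reducibility statement (\ref{3:lili2}), and then invoke the Freiman homomorphism relation from Condition (iii) of Lemma \ref{3:sf2} — exactly (\ref{3:572}) — to eliminate the $h_{4}$ linear petal before applying the $(\tau(V(M)^{h_{1},h_{2},h_{3}}),L,p)$-independence of the tower. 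The only minor imprecision is that the Freiman relation gives equality modulo $\Z$ rather than merely modulo a reducible string, but since integer strings are trivially reducible this does not affect the argument.
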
	
\begin{proof}
	  Suppose that the statement fails for some $1\leq j\leq s$.
	  Let $$\pi_{j}\colon G_{P}\cap G_{j}\to \Hor_{j}(G_{(1)})\times\Hor_{j}(G_{(2)})\times \Hor_{j}(G_{(3)})\times \Hor_{j}(G_{(4)})$$ and $$\pi'_{j}\colon \Hor_{j}(G)\to \Hor_{j}(G_{(1)})\times\Hor_{j}(G_{(2)})\times \Hor_{j}(G_{(3)})\times \Hor_{j}(G_{(4)})$$ denote the projection maps.
	  By (\ref{3:11.6}) and duality,  similar to the argument in Lemma \ref{3:L11.3}, we may find some $\xi_{j}\in \Xi_{V(M)^{h_{1},h_{2},h_{3}},j}(g)$ of complexity at most $O_{C,d,D,\e,r}(1)$ which annihilates  ${\pi'}^{-1}_{j}\circ\pi_{j}(G_{P}\cap G_{j})$ and  is nontrivial on ${\pi'}^{-1}_{j}(V_{\ind,j})$.

	  Since $\xi_{j}$ annihilates  ${\pi'}^{-1}_{j}\circ\pi_{j}(G_{P}\cap G_{j})$,
	 We may write
	  \begin{equation}\label{3:11102}
	  \xi_{j}(x_{1},x_{2},x_{3},x_{4},x_{0})=\xi_{(1),j}(x_{1})+\xi_{(2),j}(x_{2})+\xi_{(3),j}(x_{3})+\xi_{(4),j}(x_{4})
	  \end{equation}
	  for all $x_{i}\in \Hor_{j}(G_{(i)}), 0\leq i\leq 4$ for some characters $\xi_{(i),j}\in\mathfrak{N}_{j}(G_{(i)}/\Gamma_{(i)})$. 
	  Fix any $(n,t_{1},\dots,t_{j})\in \Gow_{p,j}(V_{p}(\tilde{M})^{h_{1},h_{2},h_{3}})$.
	  Since
	  the $j$-string
	  $(\Taylor_{m}(g''_{i}))_{\vert m\vert=j}$
	  is  $(V_{p}(\tilde{M})^{h_{i}},p)$-reducible, we have that
	  $$\sum_{\vert m\vert=j}\xi_{(i),j}(\Taylor_{m}(g''_{i}))(m!)^{\ast}n^{m}$$
	  is $(V_{p}(\tilde{M})^{h_{i}},p)$-reducible
	  for all $1\leq i\leq 4$.
	  By Lemma \ref{3:sbeq},
	   \begin{equation}\nonumber
	   \begin{split}
	   &\quad\sum_{i=1}^{4}\xi_{(i),j}(\Taylor_{j}(g''_{i})(t_{1},\dots,t_{j}))
	   \equiv
	   \sum_{i=1}^{4}\Delta_{t_{j}}\dots\Delta_{t_{1}}\Bigl(\sum_{m\in\N^{d},\vert m\vert=j}\xi_{(i),j}(\Taylor_{m}(g''_{i}))\binom{n}{m}\Bigr)
	   \\&\equiv
	   \sum_{i=1}^{4}\Delta_{t_{j}}\dots\Delta_{t_{1}}\Bigl(\sum_{m\in\N^{d},\vert m\vert=j}\xi_{(i),j}(\Taylor_{m}(g''_{i}))(m!)^{\ast}n^{m}\Bigr)
	  \equiv 0 \mod\Z/Q.
	   \end{split}
	   \end{equation}
	   On the other hand,  By the definition of $\Xi_{V(M)^{h_{1},h_{2},h_{3}},j,Q}(g)$ and Lemma \ref{3:sbeq}, we have 
	   \begin{equation}\nonumber
	   \begin{split}
	   \sum_{i=1}^{4}\xi_{(i),j}(\Taylor_{j}(g_{i})(t_{1},\dots,t_{j}))=\sum_{i=1}^{4}\Delta_{t_{j}}\dots\Delta_{t_{1}}\Bigl(\sum_{m\in\N^{d},\vert m\vert=j}\xi_{(i),j}(\Taylor_{m}(g_{i}))\binom{n}{m}\Bigr)\in\Z/Q.
	   \end{split}
	   \end{equation}
	   So by (\ref{3:11102}), 
	   \begin{equation}\nonumber
	   \begin{split}
	   &\quad 
	   \Delta_{t_{j}}\dots\Delta_{t_{1}}\Bigl(\sum_{m\in\N^{d},\vert m\vert=j}\Bigl(\sum_{i=1}^{4}\sum_{k=1}^{D_{j}}\xi_{(i),j}\circ \phi_{i}(e_{m,k})\Bigr)\xi_{h_{i},m,k}(m!)^{\ast}n^{m}\Bigr)
	   \\&\equiv\sum_{i=1}^{4}\sum_{m\in\N^{d},\vert m\vert=j}\sum_{k=1}^{D_{j}}\xi_{(i),j}\circ \phi_{i}(e_{m,k})\xi_{h_{i},m,k}\Bigl(\Delta_{t_{j}}\dots\Delta_{t_{1}}\binom{n}{m}\Bigr)
	   \\&\equiv\xi_{j}(\Taylor_{j}(g')(t_{1},\dots,t_{j}))
	   \equiv\sum_{i=1}^{4}\xi_{(i),j}(\Taylor_{j}(g'_{i})(t_{1},\dots,t_{j}))
	   \\&\equiv\sum_{i=1}^{4}\xi_{(i),j}(\Taylor_{j}(g_{i})(t_{1},\dots,t_{j}))-\sum_{i=1}^{4}\xi_{(i),j}(\Taylor_{j}(g''_{i})(t_{1},\dots,t_{j}))\equiv 0 \mod\Z.
	   \end{split}
	   \end{equation}
	 Therefore,
     \begin{equation}\label{3:lili2}
     \sum_{m\in\N^{d},\vert m\vert=j}\Bigl(Q\sum_{i=1}^{4}\sum_{k=1}^{D_{j}}\xi_{(i),j}\circ \phi_{i}(e_{m,k})\Bigr)\xi_{h_{i},m,k}(m!)^{\ast}n^{m} \text{ is $(V_{p}(\tilde{M})^{h_{1},h_{2},h_{3}},p)$-reducible.}
     \end{equation}
     Recall that $Q\leq O_{C,d,D,\e}(1)$.
	    Since the map $h\mapsto \xi_{h,m,k}, h\in H'$ is a $\Z/p^{r}$-almost linear Freiman homomorphism for all $D_{j}^{\cor}+D_{j}^{\ind}+1\leq k\leq D_{j}$ and $\vert m\vert=j$, we have that  
	    \begin{equation}\label{3:572}
	    \xi_{h_{4},m,k}\equiv\xi_{h_{1},m,k}+\xi_{h_{2},m,k}-\xi_{h_{3},m,k} \mod\Z.
	    \end{equation}
	    Since  $\xi_{j}$ is
	    of complexity at most $O_{C,d,D,\e,r}(1)$, so are $\xi_{(i),j}, 1\leq i\leq 4$.  Since   $\phi_{i}$ is $O_{C,d,D,\e,r}(1)$-rational, we have that for all $1\leq i\leq 4$ and $D_{j}^{\cor}+1\leq k\leq D_{j}$,
	    $\xi_{(i),j}\circ \phi_{i}(e_{m,k})$ is of complexity $O_{C,d,D,\e,r}(1)$.
	    So if
	    the tower
	    $$\mathcal{T}^{\cor}\uplus \uplus_{i=1}^{3}\mathcal{T}^{\lin}_{h_{i}}\uplus\uplus_{i=1}^{4} \mathcal{T}^{\ind}_{h_{i}}$$
	    is $(V_{p}(\tilde{M})^{h_{1},h_{2},h_{3}},L,p)$-independent for some $L=O_{C,d,D,\e,r}(1)$, then we may deduce from (\ref{3:lili2}) and (\ref{3:572}) that $\xi_{(i),j}\circ \phi_{i}(e_{m,k})$ vanishes for all $1\leq i\leq 4$, $\vert m\vert=j$ and $D_{j}^{\cor}+1\leq k\leq D_{j}^{\cor}+D_{j}^{\ind}$, and that $\sum_{i=1}^{4}\xi_{(i),j}\circ \phi_{i}(e_{m,k})$ vanishes for $\vert m\vert=j$ and $1\leq k\leq D_{j}^{\cor}$. By (\ref{3:11102}), this implies that $\xi_{j}$ vanishes on ${\pi'}^{-1}_{j}(V_{\ind,j})$, a contradiction.
\end{proof}

We may now summarize the discussion in this section by the following theorem:

\begin{thm}\label{3:vns}
	Let the notations be the same as in Section \ref{3:s:b4} and let $\d=\d(C,d,D,\e,r)$, $L=L_{0}(C,d,D,\e,r)$.  
	Let $w\in G^{\vec{D},d}$ be any iterated commutator of $e_{m_{1},j_{1}},\dots,e_{m_{r_{\ast}},j_{r_{\ast}}}$ for some $1\leq \vert m_{1}\vert,\dots,\vert m_{r_{\ast}}\vert\leq s$ with $\vert m_{1}\vert+\dots+\vert m_{r_{\ast}}\vert=s$ and for some $1\leq j_{\ell}\leq D_{\vert m_{\ell}\vert}$ for all $1\leq \ell\leq r_{\ast}$.
	\begin{enumerate}[(i)]
		\item If $j_{\ell}>D^{\cor}_{\vert m_{\ell}\vert}$ for at least two values of $\ell$, then $\eta(w)\in\Z$;
		\item If $D^{\cor}_{\vert m_{\ell}\vert}<j_{\ell}\leq D^{\cor}_{\vert m_{\ell}\vert}+D^{\ind}_{\vert m_{\ell}\vert}$ for at least one value of $\ell$, then $\eta(w)\in\Z$.
	\end{enumerate}	
\end{thm}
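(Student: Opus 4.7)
The plan is to treat both cases by a uniform strategy: for each case, construct an element $w' \in G_P \cap G_{[s,r_{\ast}]}$ of a prescribed coordinate form, then extract $\eta(w)$ from it via the vertical character identity (\ref{3:11.4}) together with Lemma \ref{3:L11.2} (or its analog for coordinates $2,3,4$). Such a $w'$ will be obtained as the iterated commutator, in the same bracket structure as $w$, of carefully chosen elements $g^{(\ell)} \in G_P \cap G_{\vert m_\ell\vert}$, one for each $\ell=1,\dots,r_{\ast}$. Two general observations apply throughout. First, because $G_{(0)}$ has degree-rank $<[s,r_{\ast}]$, the $0$-th coordinate of $w'$ is automatically trivial. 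Second, since $g^{(\ell)}_i = \phi_i(e_{m_\ell,j_\ell})\cdot(\text{level-}2)$ and level-$2$ corrections feed into filtration pieces that vanish at the top degree-rank $[s,r_{\ast}]$ in the universal nilmanifold (via BCH), the iterated commutator in coordinate $i$ equals either $\phi_i(w)$ (when every $g^{(\ell)}_i\equiv\phi_i(e_{m_\ell,j_\ell})$ mod level $2$) or the identity (as soon as some $g^{(\ell)}_i\equiv id$ mod level $2$, since commutators with identity propagate).

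For case (ii), fix $\ell_0$ with $j_{\ell_0}$ of ``ind'' type. Lemma \ref{3:L11.5} provides $(\phi_1(e_{m_{\ell_0},j_{\ell_0}}),id,id,id)\in V_{\ind,\vert m_{\ell_0}\vert}$ which lifts to $g^{(\ell_0)}\in G_P\cap G_{\vert m_{\ell_0}\vert}$. For $\ell\ne\ell_0$, Lemma \ref{3:L11.3} applied to $V_{123,\vert m_\ell\vert}$ yields $g^{(\ell)}$ with $g^{(\ell)}_1\equiv\phi_1(e_{m_\ell,j_\ell})$ mod level $2$ (regardless of whether $j_\ell$ is core, lin, or ind). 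By the observations above, the resulting $w'$ has shape $(\phi_1(w),id,id,id,id)\in G_P\cap G_{[s,r_{\ast}]}$, and Lemma \ref{3:L11.2} gives $\eta(w)\in\Z$.

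For case (i), if some $j_\ell$ is ind the result follows from case (ii), so I may assume all non-core indices are lin; pick two such positions $\ell_1,\ell_2$. The main work is to strengthen Lemma \ref{3:L11.5} to show that the projection of $G_P\cap G_{j}$ to $\prod_{i=1}^4\Hor_j(G_{(i)})$ also contains the ``lin-type'' elements $(\phi_1(e_{m,k}),id,id,\phi_4(e_{m,k}))$, $(id,\phi_2(e_{m,k}),id,\phi_4(e_{m,k}))$ and $(id,id,\phi_3(e_{m,k}),\phi_4(e_{m,k})^{-1})$ for lin $k$. The argument mirrors the proof of Lemma \ref{3:L11.5}: if the statement failed, duality would produce a type-II horizontal character of controlled complexity, which decomposes as $\xi_j=(\xi_{(1)},\xi_{(2)},\xi_{(3)},\xi_{(4)})$; reducibility on $\tau(V(M)^{h_1,h_2,h_3})$ combined with the $p$-almost-linear Freiman relation (\ref{3:572}), $\xi_{h_4,m,k}\equiv\xi_{h_1,m,k}+\xi_{h_2,m,k}-\xi_{h_3,m,k}\bmod\Z$ for lin $k$, forces the new relations $\xi_{(1)}\phi_1+\xi_{(4)}\phi_4=\xi_{(2)}\phi_2+\xi_{(4)}\phi_4=\xi_{(3)}\phi_3-\xi_{(4)}\phi_4=0$ on lin generators, contradicting tower independence.

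With this strengthening, lift $(\phi_1(e_{m_{\ell_1},j_{\ell_1}}),id,id,\phi_4(e_{m_{\ell_1},j_{\ell_1}}))$ to $g^{(\ell_1)}$ and $(id,\phi_2(e_{m_{\ell_2},j_{\ell_2}}),id,\phi_4(e_{m_{\ell_2},j_{\ell_2}}))$ to $g^{(\ell_2)}$; for every other $\ell$ choose any lift with $g^{(\ell)}_4\equiv\phi_4(e_{m_\ell,j_\ell})$ mod level $2$ (possible for core via $V_{\ind}$ and for lin via the new lin generators). By the propagation principle above, coordinate $1$ of $w'$ vanishes via $g^{(\ell_2)}_1=id$, coordinate $2$ via $g^{(\ell_1)}_2=id$, coordinate $3$ via both $g^{(\ell_1)}_3=g^{(\ell_2)}_3=id$, while coordinate $4$ equals $\phi_4(w)$. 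Thus $w'=(id,id,id,\phi_4(w),id)\in G_P\cap G_{[s,r_{\ast}]}$, and applying (\ref{3:11.4}) with $t_1=t_2=t_3=id$, $t_4=w$ together with (\ref{3:11.5}) yields $\exp(-\eta(w))=1$, so $\eta(w)\in\Z$. The main obstacle I expect is the bookkeeping for the strengthened Lemma \ref{3:L11.5}, in particular cleanly separating the lin contributions from the core/ind ones via tower independence, and carefully tracking the level-$2$ BCH corrections so that the coordinate-wise identification of $w'$ is exact rather than merely modulo higher filtration.
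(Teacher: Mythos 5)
Your proof is essentially correct, but for case (i) you take a detour that the paper avoids. The paper handles case (i) via Lemma \ref{3:L11.3} alone (which provides $V_{123,j}$, $V_{124,j}$, $V_{134,j}$, $V_{234,j}$ in the projection of $G_P\cap G_j$) together with the commutator calculus packaged in Corollary 11.4 of \cite{GTZ12}. Concretely: with two non-core positions $\ell_1,\ell_2$, lift $e_{m_{\ell_1},j_{\ell_1}}$ using $V_{123,\vert m_{\ell_1}\vert}$ so that coordinates $1,2,3$ read $(\phi_1(e),id,id)$, lift $e_{m_{\ell_2},j_{\ell_2}}$ using $V_{124,\vert m_{\ell_2}\vert}$ so that coordinates $1,2,4$ read $(\phi_1(e),id,id)$, and lift the remaining positions so coordinate $1$ is $\phi_1(e)$. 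The resulting iterated commutator then has the form $(\phi_1(w),id,id,id,id)\in G_P\cap G_{[s,r_\ast]}$ directly, without any Freiman/lin analysis, and Lemma \ref{3:L11.2} finishes. Your route instead strengthens Lemma \ref{3:L11.5} to obtain new ``lin-type'' generators $(\phi_1(e),id,id,\phi_4(e))$, etc., then produces $(id,id,id,\phi_4(w),id)\in G_P$ and invokes the coordinate-$4$ analog of Lemma \ref{3:L11.2}. Your strengthened lemma does look provable (the Freiman relation~\eqref{3:572} together with the tower's $(\tau(V(M)^{h_1,h_2,h_3}),L,p)$-independence indeed force the relations $\xi_{(1)}\circ\phi_1+\xi_{(4)}\circ\phi_4=\xi_{(2)}\circ\phi_2+\xi_{(4)}\circ\phi_4=\xi_{(3)}\circ\phi_3-\xi_{(4)}\circ\phi_4=0$ on lin generators), so the argument should close, but it is extra machinery you did not need: once you reduce to all non-core indices lin, the four $V$-lemmas from Lemma \ref{3:L11.3} already suffice. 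Your treatment of case (ii) via $V_{\ind}$ matches the paper (Corollary 11.6 of \cite{GTZ12}), and your two bookkeeping observations (coordinate $0$ is trivial by the degree-rank bound on $G_{(0)}$, and level-$2$ factors kill the top-degree-rank commutator) are both correct and are the ones implicitly used in the cited GTZ12 corollaries.
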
	
\begin{proof}	
Let $h_{1},h_{2},h_{3}\in H'$ be chosen as in this section.	
Suppose that $j_{\ell}>D^{\cor}_{\vert m_{\ell}\vert}$ for at least two values of $\ell$. By Corollary 11.4 of \cite{GTZ12}, we have that Lemma \ref{3:L11.3} implies that $(\phi_{1}(w),id,id,id,id)\in G_{P}$. So $\eta(w)\in\Z$ by Lemma \ref{3:L11.2}.

Similarly, 	suppose that $D^{\cor}_{\vert m_{\ell}\vert}<j_{\ell}\leq D^{\cor}_{\vert m_{\ell}\vert}+D^{\ind}_{\vert m_{\ell}\vert}$ for at least one value of $\ell$. By Corollary 11.6 of \cite{GTZ12}, we have that Lemma \ref{3:L11.5} implies that $(\phi_{1}(w),id,id,id,id)\in G_{P}$. So $\eta(w)\in\Z$ by Lemma \ref{3:L11.2}.
\end{proof}

\section{The construction of a degree-$(1,s)$ nilcharacter}\label{3:s:b5}

In this section, we complete Step 3 described in Section \ref{3:s:otl} by showing that the nilcharacters $\chi_{h}(n)$ described in Proposition \ref{3:inductioni1} can be expressed in the form $\chi(h,n)$ for some nilcharacters $\chi$ of  multi-degree $(1,s)$.
To be more precise, we show:

\begin{prop}\label{3:srthm}
	Let $d,D,r\in\N_{+}$, $C,\e>0$, $[s,r_{\ast}]\in\DR$, $p\gg_{C,d,D,\e,r} 1$ be a prime,   $M\colon\V\to\F_{p}$ be a non-degenerate quadratic form with $\tilde{M}\colon\Z^{d}\to\Z/p$ being its regular lifting, and
	 $f\colon\Z^{d}\to\C$ be a function bounded by 1 with $f(n+pm)=f(n)$ for all $m,n\in\Z^{d}$. Suppose that there exist a subset $H$  of $\Z^{d}_{p^{r}}$ with $\vert H\vert\geq \e p^{rd}$, some $\chi_{0}\in\Xi_{p}^{(1,s);C,D}((\Z^{d})^{2})$,  some $\chi_{h}\in \Xi_{p^{r}}^{[s,r_{\ast}];C,D}(\Z^{d})$, and some $\psi_{h}\in\Nil_{p}^{s-1;C,1}(\Z^{d})$ for all $h\in H$ such that 
	\begin{equation}\label{3:longlongago}
	\Bigl\vert\E_{n\in V_{p}(\tilde{M})^{h}}f(n+h)\overline{f}(n)\chi_{0}(h,n)\otimes \chi_{h}(n)\psi_{h}(n)\Bigr\vert>\e
	\end{equation}
	for all $h\in H$. 
	If $d\geq N(s)$,
	then there exist a subset $H'\subseteq H$ with $\vert H'\vert\gg_{C,d,D,\e,r} p^{rd}$,  some $Q\in\N_{+}$ with $Q\leq O_{C,d,D,\e,r}(1)$  some  $\chi\in\Xi_{p}^{(1,s);O_{C,d,D,\e,r}(1),O_{C,d,D,\e,r}(1)}((\Z^{d})^{2})$, some $\chi'_{h}\in \Xi_{p^{Q}}^{<[s,r_{\ast}];O_{C,d,D,\e,r}(1),O_{C,d,D,\e,r}(1)}(\Z^{d})$ and some $\psi'_{h}\in\Nil_{p}^{s-1;O_{C,d,D,\e,r}(1),1}(\Z^{d})$ for all $h\in H'$ such that
	\begin{equation}\nonumber
	\Bigl\vert\E_{n\in V_{p}(\tilde{M})^{h}}f(n+h)\overline{f}(n)\chi(h,n)\otimes \chi'_{h}(n)\psi'_{h}(n)\Bigr\vert\gg_{C,d,D,\e,r} 1
	\end{equation}
	for all $h\in H'$.
\end{prop}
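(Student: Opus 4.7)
The strategy follows the approach of Section 12 of \cite{GTZ12}, adapted to the $p$-periodic spherical setting, with the sunflower machinery of Section~\ref{3:s:b3} and the frequency analysis of Section~\ref{3:s:b4} as inputs. I would first apply Lemma \ref{3:sf2} to the family $(\chi_h)_{h\in H}$, with $\delta,L$ chosen depending on $C,d,D,\e$, to obtain, on a subset $H_1\subseteq H$ of density $\gg_{C,d,D,\e}1$, a common $(V(M),V(M)^h,O_{C,d,D,\e}(1))$-universal representation
\[
(\vec D,\ \mathcal{T}^{\cor}\uplus\mathcal{T}^{\ped}_h,\ \eta,\ F_h,\ \phi,\ G/\Gamma\times G_0/\Gamma_0)
\]
of $\chi_h$, in which the core tower $\mathcal{T}^{\cor}$ is $h$-independent and the petal tower splits as $\mathcal{T}^{\ped}_h=\mathcal{T}^{\ind}_h\uplus\mathcal{T}^{\lin}_h$, with $h\mapsto\vec\xi^{\lin}_{h,j,k}$ a $p$-almost linear Freiman homomorphism of bounded complexity. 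Combining (\ref{3:longlongago}) with Proposition~\ref{3:inductioni1} puts us exactly in the setting of Section~\ref{3:s:b4}, so Theorem~\ref{3:vns} applies and yields the structural dichotomy: $\eta(w)\in\Z$ whenever the iterated commutator $w$ uses either two or more petal generators, or any independent petal generator. Equivalently, mod $\Z$ the top-level vertical frequency of $\chi_h$ only sees commutators whose coordinates are all core, or exactly one is linear-petal (hence linear in $h$) and the rest are core.

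Next I would build the target multi-degree $(1,s)$ nilcharacter $\chi$. Starting from the universal group $G^{\vec D,d}/\Gamma^{\vec D,d}$ of degree-rank $[s,r_\ast]$, quotient out by the normal rational subgroup generated by $\phi^{-1}$ of those commutators on which $\eta$ is known to vanish by Theorem~\ref{3:vns}; call the quotient $\tilde G/\tilde\Gamma$. Endow $\tilde G$ with a multi-degree $(1,s)$ filtration in which the core generators live in multi-degree $(0,j)$ while the linear-petal generators live in multi-degree $(1,j)$ (with $h$-variable carrying degree $1$, and $n$-variable degree $j$). Using that $h\mapsto\vec\xi^{\lin}_{h,j,k}$ is $p$-almost linear Freiman, the extension principle of Proposition~\ref{3:att5} together with Appendix~\ref{3:s:AppB} allows one, after restricting to a further subset $H'\subseteq H_1$ of density $\gg 1$, to replace each such homomorphism by a genuine $\Z/p$-linear function of $h$ agreeing with it on $H'$, up to a bounded lower-degree-rank error absorbed later into $\chi'_h$. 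This produces a polynomial sequence $\tilde g\in\poly_p((\Z^d)^2\to\tilde G_{\N^2}\vert\tilde\Gamma)$ of multi-degree $(1,s)$ and a vertical character $\tilde\eta$ on $\tilde G_{(1,s)}$ that pulls back to the surviving part of $\eta$. Define $\chi(h,n):=\tilde F(\tilde g(h,n)\tilde\Gamma)$ for a Lipschitz $\tilde F$ with vertical frequency $\tilde\eta$; this belongs to $\Xi_p^{(1,s);O(1),O(1)}((\V)^2)$ by construction.

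Finally, the ratio $\chi_h(n)\otimes\overline{\chi(h,n)}$ has, by design, no top-level vertical frequency: every top-level contribution to $\eta$ is either absent from $\chi(h,n)$ (when it should cancel mod $\Z$, by Theorem~\ref{3:vns}) or matched by it. Hence this ratio is a nilcharacter of degree-rank $<[s,r_\ast]$ of bounded complexity, i.e. an element $\chi'_h\in\Xi_p^{<[s,r_\ast];O(1),O(1)}(\V)$, after absorbing the bounded correction via Proposition~\ref{3:pp83} and Corollary~\ref{3:LE.6}. Substituting $\chi_h(n)=\chi(h,n)\otimes\chi'_h(n)\cdot(\text{bounded lower-step})$ back into (\ref{3:longlongago}) and folding the residual lower-step factor into $\psi_h$ yields the conclusion. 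The principal obstacle is to carry out the promotion of a $p$-almost linear Freiman dependence $h\mapsto\vec\xi^{\lin}_h$ into a genuine $p$-periodic polynomial sequence on $\tilde G$ of multi-degree $(1,\cdot)$, while simultaneously preserving $p$-periodicity in $n$; the naive lift typically fails, and one must use the approximation results of Appendix~\ref{3:s:AppB} combined with the symmetrization cancellations provided by part~(i) of Theorem~\ref{3:vns} to absorb the unavoidable obstruction into a lower-step $\chi'_h$, in the spirit of pp.~1321--1326 of \cite{GTZ12} but with the additional bookkeeping forced by the residue class mod $p$.
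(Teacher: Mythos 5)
You have the right inputs (Lemma \ref{3:sf2}, Proposition \ref{3:inductioni1}, Theorem \ref{3:vns}, the quotient $\tilde{G}$ of the universal group by $G^{\ast}$) and the right endgame (showing $\chi'_h\otimes\overline{\tilde\chi}_h$ is of degree-rank $<[s,r_\ast]$, then cleaning up with Appendix \ref{3:s:AppB}). But the central mechanism of the construction is handled incorrectly. You claim that after passing to a subset $H'$ of density $\gg 1$ the $p$-almost linear Freiman homomorphism $h\mapsto\xi^{\lin}_{h,m,k}=\sum_{i=1}^K\{\alpha_{m,k,i}\cdot\tau(h)\}\beta_{m,k,i}$ can be replaced by a genuine $\Z/p$-linear function of $h$ that agrees with it on $H'$, ``up to a bounded lower-degree-rank error.'' This is false: the fractional-part function $h\mapsto\{\alpha\cdot\tau(h)\}$ is not affine on any density-$\gg1$ subset of $\F_p^d$, and the mismatch between it and any genuine linear function is not a lower-degree object --- it is a degree-$1$ bracket polynomial in $h$, contributing at exactly the critical multi-degree $(1,\cdot)$, not below it. The paper does not linearize the bracket at all. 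Step 4 of the actual proof retains the bracket expression verbatim and builds a \emph{larger} nilmanifold to house it: the semidirect product $G':=R\ltimes_\rho(\tilde G\ltimes\tilde G_{\ped}^{|I|})$, where $R$ is a commutative group of tuples indexed by the triples $(m,k,i)$ in the bracket expansion and the action $\rho$ is designed precisely so that the map $f(h,n)$, whose $R$-coordinates carry the arguments $\alpha_{m,k,i}\cdot h$ and whose $\tilde G_{\ped}^{|I|}$-coordinates carry $g_{m,k,i}(n)=\phi(e_{m,k})^{\beta_{m,k,i}\binom{n}{m}}$, becomes a \emph{genuine} $(1,s)$-polynomial sequence on $G'$ while the quotient $G'/\Gamma'$ recovers $\{\alpha_{m,k,i}\cdot\tau(h)\}$. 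The Bohr-type restriction of $H'$ (so that $\{\alpha_{m,k,i}\cdot\tau(h)\}$ lies in a short interval $I_{m,k,i}$) is used only to design the Lipschitz cutoff $\psi_{m,k,i}$ in the definition of $F'$, not to linearize the bracket. Your internal caveat at the end acknowledges the naive lift fails, but the proposed fix --- absorbing the obstruction into $\chi'_h$ --- cannot work because the obstruction is not of lower degree-rank.

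Two further omissions worth flagging. First, you skip Step~2 of the paper's proof: before anything else, one must re-normalize the polynomial sequences $g_h$ via the Mal'cev coordinates and the intrinsic characterization in Proposition \ref{3:att5}, so that the discrepancy between $\Taylor_m(g_h)$ and $\phi(\prod_k e_{m,k}^{\xi_{h,m,k}})$ lands in $\Gamma$ exactly (rather than being a reducible string). Without this, $g_0(n)\tilde g_h(n)$ and $g'_h(n)$ do not agree on $\tau(V(M)^h)+p\Z^d$ up to the intended lower-step correction, and Lemma \ref{3:L12.1} (the analogue of GTZ's Lemma 12.1) fails. Second, the $f'$ produced by the skew-product construction is \emph{not} $p$-periodic and $F'$ does not take values in a sphere, so you cannot directly assert $\tilde g\in\poly_p((\Z^d)^2\to\tilde G_{\N^2}\vert\tilde\Gamma)$; instead, a final approximation pass (Step~5, via Lemma \ref{3:LE.5} and Corollary \ref{3:LE.6}) is required to replace the output by a genuine $p$-periodic nilcharacter. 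You correctly cite Appendix \ref{3:s:AppB} as a resource, but it must be applied at the end, not to justify a premature linearization.
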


The rest of the section is devoted to the proof of Proposition \ref{3:srthm}. 

\textbf{Step 1.} We first use the sunflower lemma to represent all of $\chi_{h}$ in a uniform way.
Passing to a subset if necessary, by Lemma \ref{3:countingh}, we may assume without loss of generality that $\iota(h)$ is $M$-non-isotropic  for all $h\in H$. 
Let $\d=\d(C,d,D,\e), L=L(C,d,D,\e)>0$ to be chosen later.
Since $p\gg_{C,d,D,\e,r} 1$ and $d\geq N(s)$, there exists a subset $H'$ of $H$ with $\vert H'\vert\gg_{C,d,D,\e,r} p^{rd}$ such that all the requirements of Lemma \ref{3:sf2} are satisfied for $(\chi_{h})_{h\in H'}$.  For convenience we use the same notations as in Lemma \ref{3:sf2}.

Note that if we replace the function $F_{h}$ which defines $\chi_{h}$ in Lemma \ref{3:sf2} by a function $F'$ such that $\Vert F_{h}-F'\Vert_{L^{\infty}}<\e/2$. Then  (\ref{3:longlongago}) still holds with the right hand side replaced by $\e/2$. On the other hand, there exists $K=O_{C,d,D,\e,r}(1)$ such that all of $F_{h}, h\in H'$ belong to the set $S$ of all Lipschitz functions of Lipschitz norm at most $K$. Note that $S$
is  equicontinuous and pointwise bounded, and thus is relatively compact by Arzel\`a-Ascoli Theorem. So there exist open balls $B_{1},\dots,B_{N}$ of radius $\e/2$ centered at $F_{1},\dots,F_{N}$ respectively for some $N=O_{C,d,D,\e,r}(1)$ which covers $S$. By the Pigeonhole Principle, there exists a subset $H''$ of $H'$ of cardinality  $\gg_{C,d,D,\e,r}p^{rd}$ such that all of $F_{h},h\in H''$ lie in the same ball $B$ centered at $F$. Then $\Vert F-F_{h}\Vert_{L^{\infty}}<\e/2$. 
In conclusion, replacing the right hand side of (\ref{3:longlongago}) by $\e/2$ if necessary and denoting $H''$ as $H'$ for convenience, we may assume without loss of generality that 
  (\ref{3:longlongago}) holds and that  $F_{h}=F$ for all $h\in H'$.
We now choose $\d=\d(C,d,D,\e)$ to be sufficiently small and $L=L(C,d,D,\e)$ to be sufficiently large such that Theorem \ref{3:vns} applies to the family $(\chi_{h})_{h\in H'}$.

For all  $h\in H'$, we may write $\chi_{h}$ as %$\chi_{h}=\chi'_{h}\circ\tau$, where 
$$\chi_{h}(n)=F(g_{h}(n)\Gamma,g_{0,h}(n)\Gamma_{0}), \text{ for all } n\in\Z^{d}, h\in  H',$$
where
$g_{h}\in\poly_{p^{r}}(\Z^{d}\to G_{\N}\vert\Gamma)$, $g_{0,h}\in\poly_{p^{r}}(\Z^{d}\to (G_{0})_{\N}\vert\Gamma_{0})$,
 $F$ is a Lipschitz function of complexity $O_{C,d,D,\e,r}(1)$ with a vertical frequency $\eta\colon G^{\vec{D},d}_{[s,r_{\ast}]}\to\R$ of complexity $O_{C,d,D,\e,r}(1)$ in the sense that 
\begin{equation}\label{3:12.1}
F(\phi(t)x,x_{0})=\exp(\eta(t))F(x,x_{0})
\end{equation}
for all $t\in G^{\vec{D},d}_{[s,r_{\ast}]}, x\in G/\Gamma$ and $x_{0}\in G_{0}/\Gamma_{0}$,
 and that the $(j,d,p^{r})$-string
\begin{equation}\label{3:tlh}
\Bigl(\Taylor_{m}(g_{h})\cdot\Bigl(\pi_{\Hor_{j}(G/\Gamma)}\circ\phi(\prod_{k=1}^{D^{\cor}_{j}}e_{m,k}^{\xi_{m,k}}\prod_{k=D^{\cor}_{j}+1}^{D_{j}}e_{m,k}^{\xi_{h,m,k}})\Bigr)^{-1}\Bigr)_{\vert m\vert=j}
\end{equation}
is $(V_{p}(\tilde{M})^{h},p)$-reducible for all $1\leq j\leq s$.

 \textbf{Step 2.} We next show that by modifying the polynomial sequence $g_{h}$ appropriately, we can make (\ref{3:tlh}) take values in $\Gamma$.

	 Let $\mathcal{X}$ be a Mal'cev basis adapted to the degree filtration $(G_{[j,1]})_{j\in\N}$. 
	 Assume that  $\mathcal{X}$ can be partitioned into $\cup_{1\leq x\leq s}\mathcal{X}_{x}$ such that $G_{[j,1]}$ is generated by $\cup_{x\geq j}\mathcal{X}_{x}$. Since $G_{[j,1]}/G_{[j+1,1]}$ is abelian, 	 We may further partition $\mathcal{X}=\cup_{[x,y]\in \DR, [x,y]\leq [s,r_{\ast}]}\mathcal{X}_{x,y}$, where $\mathcal{X}_{x,y}=\{X_{x,y,1},\dots,X_{x,y,\ell_{x,y}}\}$ is such that $G_{[x,y]}$ is generated by $\cup_{[x',y']\in \DR, [x',y']\geq [x,y]}\mathcal{X}_{x',y'}$.
	By Lemma \ref{3:malfil}, we may write
	 $$g_{h}(n)=\prod_{1\leq x\leq s}\prod_{1\leq y\leq x}\prod_{z=1}^{\ell_{x,y}}\exp(P^{h}_{x,y,z}(n)X_{x,y,z})$$
	 for some polynomial $P^{h}_{x,y,z}\colon\Z^{d}\to\R$ of degree at most $x$. 
	 Assume that 
	   \begin{equation}\label{3:talor0}
	   P^{h}_{x,1,z}(n)=\sum_{m\in\N^{d},\vert m\vert\leq x}a^{h}_{x,z,m}\binom{n}{m}
	   \end{equation}
for some $a^{h}_{x,z,m}\in\R$ for all $1\leq x\leq s$, $1\leq z\leq \ell_{x,1}$.
 By Lemma \ref{3:magictaylor}, for all $m\in\N^{d}$, we have that 
 \begin{equation}\label{3:talora}
 \Taylor_{m}(g_{h})=\prod_{z=1}^{\ell_{\vert m\vert,1}}\exp(a^{h}_{\vert m\vert,z,m}X_{\vert m\vert,1,z})  \mod G_{[\vert m\vert,2]}.
 \end{equation}
 Assume that
 \begin{equation}\label{3:talorb}
 \phi\Bigl(\prod_{k=1}^{D^{\cor}_{j}}e_{m,k}^{\xi_{m,k}}\prod_{k=D^{\cor}_{j}+1}^{D_{j}}e_{m,k}^{\xi_{h,m,k}}\Bigr)=\prod_{z=1}^{\ell_{\vert m\vert,1}}\exp(b^{h}_{z,m}X_{\vert m\vert,1,z})  \mod G_{[\vert m\vert,2]}
 \end{equation}
for some $b^{h}_{z,m}\in\R$	for all $m\in\N^{d}$. Since
$g_{h}\in\poly_{p^{r}}(\Z^{d}\to G_{\N}\vert\Gamma)$, by Lemma \ref{3:ratispp}, we have that $a^{h}_{\vert m\vert,z,m}\in\Z/p^{r}$. Since 
  $\xi_{m,k},\xi_{h,m,k}\in\Z/p^{r}$, we have that $b^{h}_{z,m}\in\Z/p^{r}$.
	 Since (\ref{3:tlh}) is $(V_{p}(\tilde{M})^{h},p)$-reducible for all $1\leq x\leq s$, it follows from (\ref{3:talora}) and (\ref{3:talorb}) that the polynomial
	 $$f^{h}_{x,z}(n):=\sum_{m\in\N^{d},\vert m\vert=x}(a^{h}_{x,z,m}-b^{h}_{z,m})(m!)^{\ast}n^{m}$$
	 is $(V_{p}(\tilde{M})^{h},p)$-reducible for all $1\leq x\leq s$ and $1\leq z\leq \ell_{x,1}$.
	  Since $d\geq N(s)$ and $\iota(h)$ is $M$-non-isotropic, by Lemma \ref{3:att55},  we have that $f^{h}_{x,z}\in J^{M}_{\iota(h)}$.
	  % we may write 
	%   \begin{equation}\label{3:abxy}
	%   \sum_{m\in\N^{d},\vert m\vert=x}(a^{h}_{x,z,m}-b^{h}_{z,m})(m!)^{\ast}n^{m}=f^{h}_{x,z}(n)+g^{h}_{x,z}(n)
	%   \end{equation}
	%   for some $\Z/p^{x}$-valued polynomial $f^{h}_{x,z}$ of degree at most $x$ 
%	with $f^{h}_{x,z}(n)\in\Z$ for all $n\in V_{p}(\tilde{M})^{h}$, and some $\Z/p^{x}$-valued polynomial $g^{h}_{x,z}$  of degree at most $x-1$. By Lemma \ref{3:ivie}, {\color need and extension???} modifying $f^{h}_{x,z}$ and $g^{h}_{x,z}$ if necessary, we may further require that $g^{h}_{x,z}$ has   $\Z/p^{x}$-coefficients.

	Denote 
	$$g''_{h}(n):=\prod_{1\leq x\leq s}\prod_{z=1}^{\ell_{x,1}}\exp((P^{h}_{x,1,z}(n)+f^{h}_{x,z}(n))X_{x,1,z}).$$
%	where
%	 \begin{equation}\label{3:abxy2}
%	 Q^{h}_{x,1,z}(n):=P^{h}_{x,1,z}(n)+\sum_{m\in\N^{d},\vert m\vert=x}(b^{h}_{z,m}-a^{h}_{x,z,m})(m!)^{\ast}n^{m}%+g^{h}_{x,z}(n)
%	 \end{equation}
%	is a polynomial with $\Z/p^{x}$-coefficients. 
	Since  $P^{h}_{x,1,z}+f^{h}_{x,z}$ is a polynomial of degree at most $x$, we have that the map $n\mapsto \exp((P^{h}_{x,1,z}(n)+f^{h}_{x,z}(n))X_{x,1,z})$ belongs to $\poly(\Z^{d}\to G_{\N})$, and thus $g''_{h}\in\poly(\Z^{d}\to G_{\N})$ by Corollary B.4 of \cite{GTZ12}. %(however, we caution the readers that $g''_{h}$ needs not necessarily belong to $\poly_{p}(\Z^{d}\to G_{\N}\vert\Gamma)$). 
	By Lemma \ref{3:magictaylor}, (\ref{3:talor0}) and (\ref{3:talorb}),
	\begin{equation}\label{3:tlh20}
	\begin{split}
	&\quad\Taylor_{m}(g''_{h})\equiv\prod_{z=1}^{\ell_{\vert m\vert,1}}\Taylor_{m}(\exp((P^{h}_{x,1,z}(n)+f^{h}_{x,z}(n))X_{\vert m\vert,1,z}))
	\\&\equiv\prod_{z=1}^{\ell_{\vert m\vert,1}}\exp\Bigl(((1-m!(m!)^{\ast})a^{h}_{\vert m\vert,z,m}+m!(m!)^{\ast}b^{h}_{z,m})X_{\vert m\vert,1,z}\Bigr)
	\\&\equiv\prod_{z=1}^{\ell_{\vert m\vert,1}}\exp(b^{h}_{z,m}X_{\vert m\vert,1,z})
	\equiv \phi\Bigl(\prod_{k=1}^{D^{\cor}_{j}}e_{m,k}^{\xi_{m,k}}\prod_{k=D^{\cor}_{j}+1}^{D_{j}}e_{m,k}^{\xi_{h,m,k}}\Bigr) \mod G_{[j,2]}
	\end{split}
	\end{equation}
	for all $1\leq j\leq s$ and $m\in\N^{d}$ with $\vert m\vert=j$.	
	%
	%On the other hand, by (\ref{3:abxy}) and (\ref{3:abxy2}),
	% $P^{h}_{x,1,z}(n)-Q^{h}_{x,1,z}(n)=f^{h}_{x,z}(n)$.
	So by the Baker-Campbell-Hausdorff formula, there exists $g'''_{h}\in\poly(\Z^{d}\to [G,G]_{\N})$ (with the filtration induced by $G_{\N}$) such that 
	$$g_{h}(n)=g''_{h}(n)g'''_{h}(n)\prod_{1\leq x\leq s}\prod_{z=1}^{\ell_{x,1}}\exp(f^{h}_{x,z}(n)X_{x,1,z})$$
	for all $n\in\Z^{d}$. Write $g'_{h}:=g''_{h}g'''_{h}\in\poly(\Z^{d}\to G_{\N})$. Since $g'''_{h}$ takes values in $G_{[0,2]}$, it follows from Lemma \ref{3:magictaylor} and (\ref{3:tlh20}) that 
		\begin{equation}\label{3:tlh2}
		\Taylor_{m}(g'_{h})=\Taylor_{m}(g''_{h})= \phi\Bigl(\prod_{k=1}^{D^{\cor}_{j}}e_{m,k}^{\xi_{m,k}}\prod_{k=D^{\cor}_{j}+1}^{D_{j}}e_{m,k}^{\xi_{h,m,k}}\Bigr) \mod G_{[j,2]}
		\end{equation}
		for all $1\leq j\leq s$ and $m\in\N^{d}$ with $\vert m\vert=j$.
		Finally, for all $n\in V_{p}(\tilde{M})^{h}$, since  $f^{h}_{x,z}(n)\in\Z$, we have that $g_{h}(n)\Gamma=g'_{h}(n)\Gamma$. So 
	$$\chi_{h}(n)=F(g'_{h}(n)\Gamma,g_{0,h}(n)\Gamma_{0}), \text{ for all } n\in V_{p}(\tilde{M})^{h}, h\in  H'.$$

 \textbf{Step 3.} 
 Our next step is to show that we can remove the core frequencies $\xi_{m,k}$ in the expression (\ref{3:tlh2}) by studying a nilcharacter which differs from $\chi'_{h}$ by a nilsequence of lower degree. We use a variation of the method in Section 12 of \cite{GTZ12}.
Let $\tilde{G}$ be the free Lie group generated by the generators $\tilde{e}_{m,k}, m\in\N^{d}, 1\leq \vert m\vert\leq s, k\in [1,D^{\cor}_{\vert m\vert}]\cup[D^{\cor}_{\vert m\vert}+D^{\ind}_{\vert m\vert}+1,D_{\vert m\vert}]$
subject to the following relations:
\begin{itemize}
	\item any iterated commutators $\tilde{e}_{m_{1},k_{1}},\dots,\tilde{e}_{m_{r},k_{r}}$ vanishes if $\vert m_{1}\vert+\dots+\vert m_{r}\vert>s$;
	\item any iterated commutators $\tilde{e}_{m_{1},k_{1}},\dots,\tilde{e}_{m_{r},k_{r}}$ vanishes if $\vert m_{1}\vert+\dots+\vert m_{r}\vert=s$ and $r>r_{\ast}$;
 	\item any iterated commutators $\tilde{e}_{m_{1},k_{1}},\dots,\tilde{e}_{m_{r},k_{r}}$ vanishes if $k_{\ell}>D^{\cor}_{\vert m_{\ell}\vert}$ for at least two values of $\ell$.
\end{itemize}	
We may endow a $\DR$-filtration $\tilde{G}_{\DR}$ on $\tilde{G}$ by setting $\tilde{G}_{[t,r]}, r\neq 0$ to be the group generated by iterated commutators $\tilde{e}_{m_{1},k_{1}},\dots,\tilde{e}_{m_{r'},k_{r'}}$ with $\vert m_{1}\vert+\dots+\vert m_{r}\vert\geq t$ and $r'\geq r$, or with  $\vert m_{1}\vert+\dots+\vert m_{r}\vert>t$, and by setting $\tilde{G}_{[0,0]}:=\tilde{G}$, $\tilde{G}_{[t,0]}:=\tilde{G}_{[t,1]}$ for all $t\in\N$. . Let $\tilde{\Gamma}$ be the discrete group generated by all of $\tilde{e}_{m,k}$. Then $\tilde{G}/\tilde{\Gamma}$ is a nilmanifold of degree-rank at most $[s,r_{\ast}]$.

Let $G^{\ast}$ be the subgroup of $G^{\vec{D},d}$ generated by iterated commutators of $e_{m_{1},k_{1}},\dots,e_{m_{r},k_{r}}$ for some $r\in\N_{+}$ 
such that either $k_{\ell}>D^{\cor}_{\vert m_{\ell}\vert}$ for at least two values of $\ell$, or  $D^{\cor}_{\vert m_{\ell}\vert}<k_{\ell}\leq D^{\cor}_{\vert m_{\ell}\vert}+D^{\ind}_{\vert m_{\ell}\vert}$ for at least one value of $\ell$.
Then $\tilde{G}$ is isomorphic to the quotient of $G^{\vec{D},d}$ by $G^{\ast}$.
 Let $\tilde{\phi}\colon G^{\vec{D},d}\to \tilde{G}$ denote the quotient map.
It is clear that $\tilde{G}_{[s,r_{\ast}]}$ is isomorphic to the quotient of $G^{\vec{D},d}_{[s,r_{\ast}]}$ by $G^{\ast}\cap G^{\vec{D},d}_{[s,r_{\ast}]}$ under the map $\phi$.
 
Let $G^{\ast\ast}$ be the subgroup of $G^{\vec{D},d}$ generated by iterated commutators of $e_{m_{1},j_{1}},\dots,e_{m_{r_{\ast}},j_{r_{\ast}}}$ for some $1\leq \vert m_{1}\vert,\dots,\vert m_{r_{\ast}}\vert\leq s$ with $\vert m_{1}\vert+\dots+\vert m_{r_{\ast}}\vert=s$ and for some $1\leq j_{\ell}\leq D_{\vert m_{\ell}\vert}$ for all $1\leq \ell\leq r_{\ast}$ such that either
 $j_{\ell}>D^{\cor}_{\vert m_{\ell}\vert}$ for at least two values of $\ell$ or   $D^{\cor}_{\vert m_{\ell}\vert}<j_{\ell}\leq D^{\cor}_{\vert m_{\ell}\vert}+D^{\ind}_{\vert m_{\ell}\vert}$ for at least one value of $\ell$.
	It is clear that 
	 $G^{\ast}\cap G^{\vec{D},d}_{[s,r_{\ast}]}$ is a subgroup of the central group $G^{\vec{D},d}_{[s,r_{\ast}]}$ containing $G^{\ast\ast}$.

By Theorem \ref{3:vns}, $\eta\colon G^{\vec{D},d}_{[s,r_{\ast}]}\to\R$ annihilates $G^{\ast\ast}$ and thus descends to a group homomorphism $\eta'\colon G^{\vec{D},d}_{[s,r_{\ast}]}/G^{\ast\ast}\to \R$. Since $G^{\ast}\cap G^{\vec{D},d}_{[s,r_{\ast}]}$ contains $G^{\ast\ast}$, $\eta'$ induces a group homomorphism $\eta''\colon G^{\vec{D},d}_{[s,r_{\ast}]}/(G\cap G^{\vec{D},d}_{[s,r_{\ast}]})\to\R$.
Since  $\tilde{G}_{[s,r_{\ast}]}$ is isomorphic to $G^{\vec{D},d}_{[s,r_{\ast}]}/(G\cap G^{\vec{D},d}_{[s,r_{\ast}]})$ under the quotient map $\tilde{\phi}$, $\eta''$ induces a group homomorphism $\tilde{\eta}\colon \tilde{G}_{[s,r_{\ast}]}\to\R$. Finally, since $\eta$ is a vertical frequency of $G^{\vec{D},d}/\Gamma^{\vec{D},d}$ of complexity $O_{C,d,D,\e,r}(1)$, we have that $\tilde{\eta}$ is a vertical frequency of $\tilde{G}/\tilde{\Gamma}$ of complexity $O_{C,d,D,\e,r}(1)$.
By (\ref{3:12.1}) and similar to the construction in (6.3) of \cite{GTZ12}, it is not hard to construct a function $\tilde{F}\in\Lip(\tilde{G}/\tilde{\Gamma}\to \mathbb{S}^{O_{C,d,D,\e,r}(1)})$ of complexity $O_{C,d,D,\e,r}(1)$ with vertical frequency $\tilde{\eta}$. 

 For all $D^{\cor}_{\vert m\vert}+D^{\ind}_{\vert m\vert}+1\leq k\leq D_{\vert m\vert}$, we may write
  $\xi_{h,m,k}=\xi_{m,k}(h)$ for some  almost $\Z/p^{r}$-linear Freiman homomorphism $\xi_{m,k}$ defined on $H'$ given by
 $$\xi_{h,m,k}:=\xi_{m,k}(h)=\sum_{i=1}^{L}\{\alpha_{m,k,i}\cdot h\}\beta_{m,k,i}$$
 for some $L=O_{C,d,D,\e,r}(1)$, $\alpha_{m,k,i}\in (\Z/p^{r})^{d},\beta_{m,k,i}\in\mathbb{Z}/p^{r}$. 
 Passing to another subset of $H'$ if necessary, we may assume without loss of generality that for all $m,k$ and $i$, all of  $\{\alpha_{m,k,i}\cdot h\}, h\in H'$ lie in an interval $I_{m,k,i}$ of length at most $1/10$.

Denote
\begin{equation}\label{3:12.3}
g_{0}(n):=\prod_{1\leq \vert m\vert\leq s}\prod_{k=1}^{D^{\cor}_{\vert m\vert}}\tilde{\phi}(e_{m,k})^{\xi_{m,k}\binom{n}{m}},
\end{equation}
and
\begin{equation}\label{3:12.4}
\tilde{g}_{h}(n):=\prod_{1\leq \vert m\vert\leq s}\prod_{k=D^{\cor}_{\vert m\vert}+D^{\ind}_{\vert m\vert}+1}^{D_{\vert m\vert}}\tilde{\phi}(e_{m,k})^{\xi_{m,k}(h)\binom{n}{m}}=\prod_{1\leq \vert m\vert\leq s}\prod_{k=D^{\cor}_{\vert m\vert}+D^{\ind}_{\vert m\vert}+1}^{D_{\vert m\vert}}\prod_{i=1}^{K}\tilde{\phi}(e_{m,k})^{\{\alpha_{m,k,i}\cdot \tau(h)\}\beta_{m,k,i}\binom{n}{m}}.
\end{equation}
Consider the nilcharacter 
\begin{equation}\label{3:121e1}
\tilde{\chi}_{h}(n):=\tilde{F}(g_{0}(n)\tilde{g}_{h}(n)\tilde{\Gamma}).
\end{equation}
Clearly, $\tilde{\chi}_{h}\in \Xi^{[s,r_{\ast}];O_{C,d,D,\e,r}(1),O_{C,d,D,\e,r}(1)}(\Z^{d})$.
%Again we remark that $g_{0}\tilde{g}_{h}$ is not necessarily $p$-periodic.
%However, 
Similar to Lemma 12.1 of \cite{GTZ12}, we have

\begin{lem}\label{3:L12.1}
	For all $h\in H'$, we have that  the map
	$$n\mapsto \chi_{h}(n)\otimes\overline{\tilde{\chi}}_{h}(n)$$ belongs to $\Nil^{<[s;r_{\ast}];O_{C,d,D,\e,r}(1)}_{\approx Qp^{Q}}(V_{p}(\tilde{M})^{h})$ for some $Q\in\N_{+}$ with $Q\leq O_{C,d,D,\e,r}(1)$.
	Moreover,
	$\chi_{h}$ is an $O_{C,d,D,\e,r}(1)$-complexity linear combination of the components $\tilde{\chi}_{h}\otimes \psi'_{h}$ for some $\psi'_{h}\in\Nil^{<[s,r_{\ast}];O_{C,d,D,\e,r},O_{C,d,D,\e,r}}_{\approx Qp^{Q}}(V_{p}(\tilde{M})^{h})$.
\end{lem}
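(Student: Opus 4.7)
The plan is to adapt the argument of Lemma 12.1 of \cite{GTZ12} to our partially $p$-periodic setting. I would begin by assembling both nilcharacters on the product nilmanifold
$$(G/\Gamma) \times (G_0/\Gamma_0) \times (\tilde G/\tilde\Gamma)$$
endowed with the product degree-rank filtration, viewing
$$\chi'_h(n) \otimes \overline{\tilde\chi}_h(n) = (F \otimes \overline{\tilde F})\bigl(g'_h(n)\Gamma,\, g_{0,h}(n)\Gamma_0,\, g_0(n)\tilde g_h(n)\tilde\Gamma\bigr).$$
The central task is to verify that at the top degree-rank level $[s,r_\ast]$, the joint vertical frequency $(\eta, -\tilde\eta)$, when applied to the joint type-II Taylor coefficients of $(g'_h, g_0\tilde g_h)$, is integer-valued on $\tau(V(M)^h)+p\Z^d$. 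Once this holds, Fourier expansion on the top-level vertical torus (cf.\ (6.3) of \cite{GTZ12}) lets $\chi'_h \otimes \overline{\tilde\chi}_h$ descend through the quotient by $G_{[s,r_\ast]}\times \tilde G_{[s,r_\ast]}$, producing a degree-rank $<[s,r_\ast]$ nilsequence of controlled complexity.

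The verification of the vanishing of the joint vertical frequency proceeds as follows. By Lemma \ref{3:magictaylor} and (\ref{3:tlh2}), the top Taylor coefficients of $g'_h$ at $m$ equal $\phi$ applied to the product $\prod_{k=1}^{D^{\cor}_j} e_{m,k}^{\xi_{m,k}}\prod_{k=D^{\cor}_j+1}^{D_j} e_{m,k}^{\xi_{h,m,k}}$, modulo $G_{[j,2]}$. From (\ref{3:12.3}) and (\ref{3:12.4}), the top Taylor coefficients of $g_0\tilde g_h$ at $m$ are $\tilde\phi$ applied to the corresponding product over only the core and linear ranges (the independent petals being absent from $\tilde G$ by construction). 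By Theorem \ref{3:vns}, $\eta$ annihilates $G^{\ast\ast}$, so the independent-range contributions give no phase; meanwhile $\tilde\eta$ was defined precisely as the descent of $\eta$ through $G^\ast\cap G^{\vec D,d}_{[s,r_\ast]}$, so the core and linear contributions on the two sides cancel exactly. The residual integer ambiguity coming from the $G_{[j,2]}\Gamma$ part in (\ref{3:tlh2}) is absorbed using Lemma \ref{3:ratispp} and the observation that $g'_h = g_h \mod \Gamma$ on $\tau(V(M)^h)+p\Z^d$, giving the integrality on the intended set. For the moreover part, the same cancellation applied component-by-component in the Fourier decomposition of $F \otimes \overline{\tilde F}$ along the vertical torus of the joint nilmanifold yields the representation of $\chi'_h$ as an $O_{C,d,D,\epsilon}(1)$-complexity combination of tensor products $\tilde\chi_h \otimes \psi'_h$ with $\psi'_h$ of degree-rank $<[s,r_\ast]$.

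The main obstacle will be the bookkeeping around the partial periodicity, which is more delicate than in \cite{GTZ12}. The polynomial sequence $g'_h$ is genuinely not $p$-periodic (only $g_h$ is), and $g_0\tilde g_h$ carries fractional-part data $\{\alpha_{m,k,i}\cdot \tau(h)\}$ from the linear expansion of $\xi_{h,m,k}$; neither side satisfies $p$-periodicity on all of $\Z^d$. One must verify that on the designated set $\tau(V(M)^h)+p\Z^d$ the equivalences hold integrally, rather than merely modulo $G_{[j,2]}$. This boils down to combining the partial $p$-periodicity of the original $g_h$ provided by Lemma \ref{3:ratispp}, the reducibility of the correction strings from Step 2 of the ambient argument (the polynomials $f_{x,z}^h$), and the symmetric fact that the frequencies $\xi_{m,k}(h)\in\Z/p$ make the $\tilde g_h$-Taylor coefficients also $p$-periodic modulo the appropriate commutator subgroup. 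Once these integrality checks are carried out uniformly in $h\in H'$, the complexity bounds in the conclusion follow from the explicit $O_{C,d,D,\epsilon}(1)$ bounds on $\vec D$, $\eta$, $\phi$, and $F$ already recorded in Lemma \ref{3:sf2} and Step 1.
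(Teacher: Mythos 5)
The key structural step of the paper's proof is absent from your outline, and the step you propose in its place would not work.

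You propose to quotient the product nilmanifold by the full top-level subgroup $G_{[s,r_\ast]}\times \tilde G_{[s,r_\ast]}$. But $F'_h=F_h\otimes\overline{\tilde F}$ is \emph{not} invariant under this group: it has vertical frequency $(\eta,-\tilde\eta)$, which is nontrivial. Quotienting by the full product top piece does not eliminate the phase and yields nothing. Moreover, the condition you frame as ``the central task'' --- that the joint vertical frequency applied to the joint type-II Taylor coefficients of $(g'_h,g_0\tilde g_h)$ is integer-valued on $\tau(V(M)^h)+p\Z^d$ --- is the kind of condition one checks in a Ratner-type argument (to determine the orthogonal group $\Xi^\perp$ in Theorem~\ref{3:rat}), not a condition that licenses descending a function through a quotient. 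These are distinct things, and conflating them makes the proposed step unsound.

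What the paper actually does is construct a \emph{new} degree-rank filtration $G'_{\DR}$ on the product $G'=G\times G_0\times\tilde G$ for which $G'_{[s,r_\ast]}$ is the \emph{diagonal} $\{(\phi(g),id,\tilde\phi(g))\colon g\in G^{\vec D,d}_{[s,r_\ast]}\}$, and more generally $G'_{[i,r]}$ is generated by $G_{[i,r+1]}\times(G_0)_{[i,r]}\times\tilde G_{[i,r+1]}$ together with $\{(\phi(g),id,\tilde\phi(g))\colon g\in G^{\vec D,d}_{[s,r]}\}$. The function $F'_h$ \emph{is} invariant under this diagonal top piece precisely because $\tilde\eta\circ\tilde\phi=\eta$ on $G^{\vec D,d}_{[s,r_\ast]}$. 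The main technical work is then to verify that $f_h(n)=(g'_h(n),g_{0,h}(n),g_0(n)\tilde g_h(n))$ is a polynomial sequence with respect to this \emph{nonstandard} filtration; this is where the Taylor-coefficient identity (\ref{3:tlh2}), Lemma~\ref{3:B.9}, Corollary B.4 of \cite{GTZ12}, and the Baker-Campbell-Hausdorff formula enter. Your outline never constructs a filtration for which $f_h$ is a polynomial and the top piece is the diagonal; without it the representation as a nilsequence of degree-rank $<[s,r_\ast]$ is not established.

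Two further remarks. First, your extended discussion of partial $p$-periodicity and ``integrality on the designated set'' is a red herring: the lemma asserts membership in $\Nil^{<[s,r_\ast];O(1)}(\tau(V(M)^h)+p\Z^d)$, not $\Nil^{<[s,r_\ast];O(1)}_p(\cdot)$, so no $p$-periodicity of the product is claimed or needed. Second, the ``moreover'' part in the paper is a one-line trace trick --- write $\chi'_h\otimes(\overline{\tilde\chi}_h\otimes\tilde\chi_h)=(\chi'_h\otimes\overline{\tilde\chi}_h)\otimes\tilde\chi_h$ and use that $1$ is a bounded-complexity linear combination of the components of $\overline{\tilde\chi}_h\otimes\tilde\chi_h$ --- not a Fourier decomposition as you suggest.
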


\begin{proof}
		We may rewrite the sequence $n\mapsto \chi_{h}(n)\otimes\overline{\tilde{\chi}}_{h}(n)$ as 
	\begin{equation}\label{3:12.6}
	n\mapsto F'_{h}(f_{h}(n)\Gamma') \text{ for all } n\in V_{p}(\tilde{M})^{h},
	\end{equation}
	where $G':=G\times G_{0}\times \tilde{G}$, $\Gamma':=\Gamma\times\Gamma_{0}\times\tilde{\Gamma}$, $$f_{h}(n):=(g'_{h}(n),g_{0,h}(n),g_{0}(n)\tilde{g}_{h}(n))$$	
	and $F'_{h}$ is the Lipschitz function on $G'/\Gamma'$ given by
	$$F'_{h}(x,x_{0},y):=F_{h}(x,x_{0})\otimes \overline{F}(y).$$	
	We define a $\DR$-filtration $G'_{\DR}$ on $G'$ as follows.
	 For $[i,r]\in\DR$ with $i,r\geq 1$, let $G'_{[i,r]}$  be the Lie group generated by $G_{[i,r+1]}\times (G_{0})_{[i,r]}\times \tilde{G}_{[i,r+1]}$ and $\{(\phi(g),id, \tilde{\phi}(g))\colon g\in G_{[s,r]}^{\vec{D},d}\}$, where we take the convention that $[i,i+1]=[i+1,0]$. 
	 Set also  $G'_{[i,0]}:=G'_{[i,1]}$ for $i\geq 1$ and $G'_{[0,0]}=G'$. 
	 It is not hard to see that this defines a degree-rank pre-filtration. 
	
	We first show that $f_{h}$ is a polynomial with respect to the filtration $G'_{\DR}$. 
	Note that the sequence $n\mapsto (id,g_{0,h}(n),id)$ is already a polynomial with respect to the pre-filtration $G'_{\DR}$. %By Corollary B.4 of \cite{GTZ12},  
	Since this sequence is commutative with
 the sequence 	
	\begin{equation}\label{3:12.7}
	n\mapsto (g'_{h}(n),id,g_{0}(n)\tilde{g}_{h}(n)),
	\end{equation}	
	it suffices to show that (\ref{3:12.7})
	is a polynomial sequence with respect to  the pre-filtration $G'_{\DR}$ as well. 
		Let $(G''/\Gamma'')_{\DR}$ be the essential component of $(G'/\Gamma')_{\DR}$.
	 By Lemma \ref{3:B.9}, we may write $g'_{h}(n)=\prod_{0\leq \vert m\vert\leq s}{g'_{h,m}}^{\binom{n}{m}}$ for some $g'_{h,m}\in G_{[\vert m\vert,0]}$. By (\ref{3:tlh2}) and Lemma \ref{3:magictaylor}, we have
	$$g'_{h,m}\equiv\Taylor_{m}(g'_{h})\equiv\phi\Bigl(\prod_{k=1}^{D^{\cor}_{\vert m\vert}}e_{m,k}^{\xi_{m,k}}\prod_{k=D^{\cor}_{\vert m\vert}+1}^{D_{\vert m\vert}}e_{m,k}^{\xi_{h,m,k}}\Bigr) \mod G_{[\vert m\vert,2]}.$$
	By the construction of the filtration $G'_{\DR}$, this implies that 
	$$\Bigl(g'_{h,m},id, \prod_{k=1}^{D^{\cor}_{\vert m\vert}}e_{m,k}^{\xi_{m,k}}\prod_{k=D^{\cor}_{\vert m\vert}+1}^{D_{\vert m\vert}}e_{m,k}^{\xi_{h,m,k}} \mod G^{\ast} \Bigl)\in G'_{[\vert m\vert,1]}=G''_{[\vert m\vert,1]}.$$
	for all $m\neq \bold{0}.$
	Applying Corollary B.4 of  \cite{GTZ12}, we have that the sequence
	$$n\mapsto \Bigl(g'_{h}(\bold{0})^{-1}g'_{h}(n),id, \prod_{0<\vert m\vert\leq s}\Bigl(\prod_{k=1}^{D^{\cor}_{\vert m\vert}}e_{m,k}^{\xi_{m,k}}\prod_{k=D^{\cor}_{\vert m\vert}+1}^{D_{\vert m\vert}}e_{m,k}^{\xi_{h,m,k}}\Bigr)^{\binom{n}{m}} \mod G^{\ast}\Bigl)$$
	is a polynomial with respect to the filtration $G''_{\DR}$. By the Baker-Campbell-Hausdorff formula, (\ref{3:12.3}) and (\ref{3:12.4}), we have that 
	$$n\mapsto \prod_{0\leq \vert m\vert\leq s}\Bigl(\prod_{k=1}^{D^{\cor}_{\vert m\vert}}e_{m,k}^{\xi_{m,k}}\prod_{k=D^{\cor}_{\vert m\vert}+1}^{D_{\vert m\vert}}e_{m,k}^{\xi_{h,m,k}}\Bigr)^{\binom{n}{m}} \mod G^{\ast}$$
	differs from the sequence $n\mapsto g_{0}(n)\tilde{g}_{h}(n)$ by a sequence that is polynomial in the shifted pre-filtration $(\tilde{G}_{[d,r+1]})_{[d,r]\in\DR}$ (note that $\tilde{\phi}(e_{m,k})=id_{\tilde{G}}$ if $D_{\vert m\vert}^{\cor}+1\leq k\leq D_{\vert m\vert}^{\cor}+D_{\vert m\vert}^{\ind}$). Therefore, we have that (\ref{3:12.7}) is the product of a constant in $G'$ with a polynomial sequence with respect to the filtration $G''_{\DR}$, and thus is a polynomial sequence with respect to the pre-filtration $G'_{\DR}$ by Lemma \ref{3:pre3g}.

	On the other hand,
	it is obvious that $F'_{h}$ is invariant with respect to the action of the central group
	$$G'_{[s,r_{\ast}]}=\{(\phi(g),id,\tilde{\phi}(g))\colon g\in G^{\vec{D},d}_{[s,r_{\ast}]}\}.$$	
	By the above construction and by Lemma \ref{3:pre2g}, there exists $Q\in\N_{+}$ with $Q\leq O_{C,d,D,\e,r}(1)$ such that 
	we may  quotient $G'$ and $G''$ by $G'_{[s,r_{\ast}]}$  to get a representation of (\ref{3:12.6}) as a $Qp^{Q}$-rational nilsequence of degree-rank $<[s,r_{\ast}]$, whose complexity and dimension are clearly $O_{C,d,D,\e,r}(1)$. 
	So we have that  $\chi_{h}\otimes\overline{\tilde{\chi}}_{h}\in\Nil^{<[s,r_{\ast}];O_{C,d,D,\e,r}(1),O_{C,d,D,\e,r}(1)}_{\approx Qp^{Q}}(V_{p}(\tilde{M})^{h})$.

	Finally, since 
	$$\chi_{h}\otimes(\overline{\tilde{\chi}}_{h}\otimes\tilde{\chi}_{h})=(\chi_{h}\otimes\overline{\tilde{\chi}}_{h})\otimes\tilde{\chi}_{h},$$  
	using the fact that 1 is an $O_{C,d,D,\e,r}(1)$-complexity linear combination of the components of $\overline{\tilde{\chi}}_{h}\otimes\tilde{\chi}_{h}$, we deduce that $\chi_{h}$ is an $O_{C,d,D,\e,r}(1)$-complexity linear combination of the components $\tilde{\chi}_{h}\otimes \psi'_{h}$ for some $\psi'_{h}\in\Nil^{<[s,r_{\ast}];O_{C,d,D,\e,r}(1),O_{C,d,D,\e,r}(1)}_{\approx Qp^{Q}}(V_{p}(\tilde{M})^{h})$.
\end{proof}

\textbf{Step 4.} We now realize $(h,n)\mapsto\tilde{\chi}_{h}(n)$ as a nilcharacter of multi-degree $(1,s)$. The construction is similar to Section 12 of \cite{GTZ12}. 
Since it is convenience for us to treat each term $\{\alpha_{m,k,i}\cdot h\}\beta_{m,k,i}$ in the summation of $\xi_{h,m,k}$ separately,
a difference between of our construction and the one in \cite{GTZ12} is that instead of taking the skew product of $G$ with $\tilde{G}_{\ped}$ (the ``pedal" component of $\tilde{G}$), we take the skew product of $\tilde{G}$ with $\tilde{G}_{\ped}^{L}$ for some large power $L$.
To be more precise, let $\tilde{G}_{\ped}$ be the subgroup of $\tilde{G}$ generated by any iterated commutators of
$\tilde{e}_{m_{1},k_{1}},\dots,\tilde{e}_{m_{\ell},k_{\ell}}$ for all $\ell\geq 1$,  $1\leq \vert m_{i}\vert\leq s, 1\leq i\leq \ell$ such that there exists $1\leq i_{0}\leq \ell$ with $k_{i_{0}}\geq D^{\cor}_{\vert m_{i_{0}}\vert}+D^{\ind}_{\vert m_{i_{0}}\vert}+1$ and $k_{i}\leq D^{\cor}_{\vert m_{i}\vert}$ for all $i\neq i_{0}$. 
Let $I$ denote the collection of all $(m,k,i)\in\mathbb{N}^{d}\times\mathbb{N}\times\mathbb{N}$ with $1\leq \vert m\vert\leq s$, $D_{\vert m\vert}^{\cor}+D_{\vert m\vert}^{\ind}+1\leq k\leq D_{\vert m\vert}$ and $1\leq i\leq K$.
Using the identities
$$z^{-1}[x,y]z=[z^{-1}xz,z^{-1}yz] \text{ and } z^{-1}xz=x[x,z],$$
it is not hard to see that $\tilde{G}_{\ped}$ is a rational abelian normal subgroup of $\tilde{G}$.
Therefore, $\tilde{G}$ acts on $\tilde{G}_{\ped}$ by conjugation, leading to the semidirect product on $\tilde{G}\ltimes \tilde{G}_{\ped}^{\vert I\vert}$ given by
$$(g,(g_{m,k,i})_{(m,k,i)\in I})\ast (g',(g'_{m,k,i})_{(m,k,i)\in I}):=(gg',(g_{m,k,i}^{g'}g'_{m,k,i})_{(m,k,i)\in I}),$$
where $a^{b}:=b^{-1}ab$.

 Let $R$ be the commutative ring of tuples $t=(t_{m,k,i})_{(m,k,i)\in I}$ with $t_{m,k,i}\in \R$, which we endow with the pointwise product and addition. 
 Define an action $\rho$ of $R$ (viewed now as an additive group) on $\tilde{G}\ltimes \tilde{G}_{\ped}^{\vert I\vert}$ by 
 $$\rho(t)(g,(g_{m,k,i})_{(m,k,i)\in I}):=\Bigl(g\prod_{(m,k,i)\in I}g_{m,k,i}^{t_{m,k,i}},(g_{m,k,i})_{(m,k,i)\in I}\Bigr).$$
 It is not hard to see that this is a well-defined action.  
 We can then define the semi-direct product $G':=R\ltimes_{\rho}(\tilde{G}\ltimes \tilde{G}_{\ped}^{\vert I\vert})$ by setting
 \begin{equation}\nonumber
 \begin{split}
&\quad (t,(g,(g_{m,k,i})_{(m,k,i)\in I}))\ast'(t',(g',(g'_{m,k,i})_{(m,k,i)\in I}))
\\&=(t+t',(\rho(t')(g,(g_{m,k,i})_{(m,k,i)\in I}))\ast (g',(g'_{m,k,i})_{(m,k,i)\in I})).
 \end{split}
 \end{equation}
 This is a Lie group. We can endow $G'$ with an $\N^{2}$-filtration $(G'_{(s_{1},s_{2})})_{(s_{1},s_{2})\in\N^{2}}$ as follows:
 \begin{itemize}
 	\item If $s_{1}>1$, then $G'_{(s_{1},s_{2})}=\{id\}$.
 	\item If $s_{1}=1$ and $s_{2}>0$, then $G'_{(1,s_{2})}$ consists of the elements $(0,(g,id^{\vert I\vert}))$ with $g\in \tilde{G}_{s_{2}}\cap \tilde{G}_{\ped}$.
 	\item If $s_{1}=1$ and $s_{2}=0$, then $G'_{(1,0)}$ consists of the elements $(t,(g,id^{\vert I\vert}))$ with $t\in R$ and $g\in \tilde{G}_{\ped}$.
 	\item  If $s_{1}=0$ and $s_{2}>0$, then $G'_{(0,s_{2})}$ consists of the elements $(0,(g,(g_{m,k,i})_{(m,k,i)\in I}))$ with $g\in \tilde{G}_{s_{2}}$ and $g_{m,k,i}\in  \tilde{G}_{s_{2}}\cap \tilde{G}_{\ped}$.
 	\item If $s_{1}=s_{2}=0$, then $G'_{(0,0)}=G'$.
 \end{itemize}	
 On can verify that this is indeed an $\N^{2}$-filtration of degree $\leq (1,s)$.   Let $\Gamma'$ be the subgroup of $\tilde{G}$ consisting of tuples $(t,(g,(g_{m,k,i})_{(m,k,i)\in I}))$ with $g\in \tilde{\Gamma}, g_{m,k}\in \tilde{G}_{\ped}\cap \tilde{\Gamma}$ and with all the coefficients of $t$ being integers. It is not hard to see that $\Gamma'$ is a cocompact subgroup of $G'$, and that the above $\N^{2}$-filtration of $G'$ is rational with respect to $\Gamma'$. Therefore, $G'/\Gamma'$ is a nilmanifold, and is clearly of complexity $O_{C,d,D,\e,r}(1)$.
 
 For $(m,k,i)\in I$, let $g_{m,k,i}\in\poly(\Z^{d}\to \tilde{G}_{\N})$ be given by
 $$g_{m,k,i}(n):=\tilde{\phi}(e_{m,k})^{\beta_{m,k,i}\binom{n}{m}}.$$
 Then
  \begin{equation}\label{3:121e2}
  \tilde{g}_{h}(n)=\prod_{(m,k,i)\in I}g_{m,k,i}(n)^{\{\alpha_{m,k,i}\cdot h\}}.
  \end{equation}
 
 Consider 
 the map $f\colon (\Z^{d})^{2}\to G'$ given by
 $$f(h,n):=(\bold{0},(g_{0}(n),(g_{m,k,i}(n))_{(m,k,i)\in I}))\ast'(\alpha h,(id,id^{\vert I\vert})),$$
 where
 $$\alpha h:=(\alpha_{m,k,i}\cdot h)_{(m,k,i)\in I}.$$
 It is not hard to check that $f\in\poly((\Z^{d})^{2}\to (G')_{\N^{2}})$  
and that
 \begin{equation}\label{3:121e3}
 f'(h,n)\Gamma'=\Bigl(\{\alpha h\},\Bigl(g_{0}(n)\prod_{(m,k,i)\in I}g_{m,k,i}(n)^{\{\alpha_{m,k,i} h\}},(g_{m,k,i}(n))_{(m,k,i)\in I}\Bigr)\Bigr)\Gamma'.
 \end{equation}

 Recall that for all $h\in H'$, each component $\{\alpha_{m,k,i}h\}$ of $\{\alpha h\}$ lies in an interval $I_{m,k,i}$ of length at most $1/10$. Let $2I_{m,k,i}$ be the interval of twice the length and with the same center as $I_{m,k,i}$, and let $\psi_{m,k,i}\colon\R\to\R$ be a smooth cutoff function supported on $2I_{m,k,i}$ and taking value 1 on $I_{m,k,i}$. We then define a function $F'\colon G'/\Gamma'\to \C^{O_{C,d,D,\e,r}(1)}$ by setting
 \begin{equation}\label{3:121e4}
 F'((t,(g,(g_{m,k,i})_{(m,k,i)\in I}))\Gamma'):=\Bigl(\prod_{(m,k,i)\in I}\psi_{m,k,i}(t_{m,k,i})\Bigr)\tilde{F}(g\Gamma)
 \end{equation}
 for all $(g,(g_{m,k,i})_{(m,k,i)\in I})\in G\ltimes\tilde{G}_{\ped}^{\vert I\vert}$ and
 $t=(t_{m,k,i})_{(m,k,i)\in I}$ with $t_{m,k,i}\in 2I_{m,k,i}$, and set $F'$ to be zero whenever such an representation of $(t,(g,(g_{m,k,i})_{(m,k,i)\in I}))$ does not exist. One can easily verify that $F'$ is a well defined  Lipschitz function with Lipschitz norm bounded by $O_{C,d,D,\e,r}(1)$. 
Since
 $\tilde{F}$ has vertical frequency $\tilde{\eta}$, $F'$ has vertical frequency $\eta'\colon G'_{(1,s)}\to\R$ defined by 
 $$\eta'(0,(g,id^{\vert I\vert})):=\tilde{\eta}(g)$$
 for all $g\in \tilde{G}_{s}$, which is of complexity  $O_{C,d,D,\e,r}(1)$. Combining  (\ref{3:121e1}),  (\ref{3:121e2}),  (\ref{3:121e3}) and  (\ref{3:121e4}), we have that 
 \begin{equation}\label{3:121e5}
 \tilde{\chi}_{h}(n)=F'(f'(h,n)\Gamma')
 \end{equation}
 for all $h\in H'$ and $n\in\Z^{d}$.

\textbf{Step 5.}
Unfortunately $f'$ is not necessarily periodic and $F'$ does not take values in $\mathbb{S}^{O_{C,d,D,\e,r}(1)}$.
To complete  the proof of Proposition \ref{3:srthm}, we need to substitute expression (\ref{3:121e5}) with a periodic nilcharacter.
We use the approximate results obtained in Appendix \ref{3:s:AppB}.
By
 (\ref{3:longlongago}), (\ref{3:121e5}), Lemma \ref{3:L12.1}, and the Pigeonhole Principle,
  for all $h\in H'$,   
 there exists a  scalar-valued nilsequence  $\alpha_{h}\in\Nil_{\approx Qp^{Q}}^{<[s,r_{\ast}];O_{C,d,D,\e,r}(1),1}(\Z^{d})$ for some $Q\in\N_{+}$ with $Q\leq O_{C,d,D,\e,r}(1)$ such that
\begin{equation}\nonumber%\label{3:longlongago2}
\E_{h\in H'}\Bigl\vert\E_{n\in V_{p}(\tilde{M})^{h}}f(n+h)\overline{f}(n)\chi_{0}(h,n)\otimes F'(f'(h,n)\Gamma')\alpha_{h}(n)\psi_{h}(n)\Bigr\vert\gg_{C,d,D,\e,r} 1.
\end{equation} 
Since  $F'(f'(\cdot)\Gamma')\in\Nil^{(1,s);O_{C,d,D,\e,r}(1),O_{C,d,D,\e,r}(1)}((\Z^{d})^{2})$,
by Lemmas \ref{3:r2p}, \ref{3:LE.4}, \ref{3:LE.5}  and the Pigeonhole Principle, enlarging $Q$ if necessary, we deduce that there exist some $\chi'\in\Xi^{(1,s);O_{C,d,D,\e,r}(1),O_{C,d,D,\e,r}(1)}_{p}((\Z^{d})^{2})$, $\alpha'_{h}\in\Nil_{\approx Qp^{Q}}^{<[s,r_{\ast}];O_{C,d,D,\e,r}(1),1}(\Z^{d})$  and $\psi'_{h}\in\Nil_{p}^{s-1;O_{C,d,D,\e,r}(1),1}(\Z^{d})$   for each $h\in H'$ such that 
\begin{equation}\nonumber
\E_{h\in H'}\Bigl\vert\E_{n\in V_{p}(\tilde{M})^{h}}f(n+h)\overline{f}(n)\chi_{0}(h,n) \otimes\chi'(h,n)\alpha'_{h}(n)\psi'_{h}(n)\Bigr\vert\gg_{C,d,D,\e,r} 1.
\end{equation}
By the Pigeonhole Principle, there exists a subset $H''$ of $H'$ with $\vert H''\vert\gg_{C,d,D,\e,r}p^{rd}$ such that 
\begin{equation}\nonumber
\Bigl\vert\E_{n\in V_{p}(\tilde{M})^{h}}f(n+h)\overline{f}(n)\chi_{0}(h,n) \otimes\chi'(h,n)\alpha'_{h}(n)\psi'_{h}(n)\Bigr\vert\gg_{C,d,D,\e,r} 1
\end{equation}
for all $h\in H''$.  
By Corollary \ref{3:LE.6} and the Pigeonhole Principle, there exists $\chi'_{h}$ in the set $\Xi^{<[s,r_{\ast}];O_{C,d,D,\e,r}(1),O_{C,d,D,\e,r}(1)}_{\approx Qp^{Q}}(\Z^{d})$  for all $h\in H''$ such that 
\begin{equation}\nonumber
\Bigl\vert\E_{n\in V_{p}(\tilde{M})^{h}}f(n+h)\overline{f}(n)\chi_{0}(h,n)\otimes \chi'(h,n)\otimes\chi'_{h}(n)\psi'_{h}(n)\Bigr\vert\gg_{C,d,D,\e,r} 1
\end{equation}
for all $h\in H''$.

Finally, we need to upgrade $\chi'_{h}$ to a $p^{Q}$-periodic nilcharacter. Indeed, by Lemma \ref{3:r2p}, 
enlarging $Q$ if necessary, we may assume without loss of generality that 
  $\chi'_{h}\in\Xi^{<[s,r_{\ast}];O_{C,d,D,\e,r}(1),O_{C,d,D,\e,r}(1)}_{Qp^{Q}}(\Z^{d})$.
By the Pigeonhole Principle and Lemma \ref{3:countingh}, it is not hard to see that there exist a subset $H'''$ of $H''$ with  $\vert H'''\vert\gg_{C,d,D,\e,r}p^{rd}$ and some $r\in[Q]^{d}$ such that 
\begin{equation}\label{3:ppddrre}
\Bigl\vert\E_{n\in V_{p}(\tilde{M})^{h}}f((Qn+r)+h)\overline{f}(Qn+r)\chi_{0}(h,Qn+r)\otimes \chi'(h,Qn+r)\otimes\chi'_{h}(Qn+r)\psi'_{h}(Qn+r)\Bigr\vert\gg_{C,d,D,\e,r} 1
\end{equation}
for all $h\in H'''$. 
Let $Q^{\ast}$ be any integer with $Q^{\ast}Q\equiv 1\mod p^{r}\Z$. Then $Q(Q^{\ast}(n-r))+r\equiv n \mod p^{Q}\Z^{d}$ for all $n\in\Z^{d}$. Since $f, \chi_{0},\chi', \chi'_{h}(Q\cdot+r)$ and $\psi'_{h}$ are $p^{Q}$-periodic (we may pick $Q\geq r$),   and since the map $n\mapsto Q^{\ast}(n-r) \mod p^{Q}\Z^{d}$ is a bijection from $[p^{Q}]^{d}$   to itself, it follows from (\ref{3:ppddrre}) that 
\begin{equation}\nonumber%\label{3:ppddrre}
\Bigl\vert\E_{n\in V_{p}(\tilde{M})^{h}}f(n+h)\overline{f}(n)\chi_{0}(h,n)\otimes \chi'(h,n)\otimes\chi'_{h}(Q(Q^{\ast}(n-r))+r)\psi'_{h}(n)\Bigr\vert\gg_{C,d,D,\e,r} 1
\end{equation}
for all $h\in H'''$, where clearly $\chi'_{h}(Q(Q^{\ast}(\cdot-r))+r)\in \Xi^{<[s,r_{\ast}];O_{C,d,D,\e,r}(1),O_{C,d,D,\e,r}(1)}_{p^{Q}}(\Z^{d})$.
This completes the proof of Proposition \ref{3:srthm} by setting $\chi:=\chi_{0}\otimes\chi''$.

\section{The symmetric argument for $s=1$}\label{3:s:b7}

In Sections  \ref{3:s:b7} and \ref{3:s:b72}, we complete the proof of $\SGI(s+1)$.
It is convenient to identify two nilcharacters when their difference is a nilsequence of lower degree. Therefore, we introduce the following notion:

\begin{defn}[An equivalence relation for nilcharacters]\label{3:deneq}
	Let $k\in\N_{+}$, $C>0$, $p$ be a prime, $\Omega$ be a subset of $\F_{p}^{k}$, $I$ be the degree, multi-degree,  or  degree-rank ordering with $\dim(I)\vert k$ and let $s\in I$. 	For $\chi,\chi'\in\Xi^{s}_{p}(\Omega)$, we write 
	$\chi\sim_{C}\chi' \mod\Xi^{s}_{p}(\Omega)$
	if $\chi\otimes \overline{\chi}\in\Nil^{\prec  s;C}(\Omega)$.\footnote{Note that if $\chi$ and $\chi'$ to are  $p$-periodic $s$-step nilcharacters, then so is $\chi\otimes \overline{\chi}$. However, if $\chi\otimes \overline{\chi}$ coincides with a $\prec s$-step nilsequence, it is unclear whether  $\chi\otimes \overline{\chi}$ is $p$-periodic as a $\prec s$-step nilsequence. In our definition, if $\chi\sim_{C}\chi' \mod\Xi^{s}_{p}(\Omega)$, then we do not require   $\chi\otimes \overline{\chi}$ to be $p$-periodic as a $\prec s$-step nilsequence.} We write $\chi\sim\chi' \mod\Xi^{s}_{p}(\Omega)$ if $\chi\sim_{C}\chi' \mod\Xi^{s}_{p}(\Omega)$ for some $C>0$. 
\end{defn}

We summarized some properties for the equivalence relation $\sim$ in Appendix \ref{3:s:AppC} for later uses.

	Recall that we assume that
	$\SGI(s)$ holds for some $s\geq 1$ and we wish to prove $\SGI(s+1)$. 
By Lemma \ref{3:changeh} and a change of variables,
	it is not hard to check that we only need to prove $\SGI(s+1)$ for the case when $M$ is a pure quadratic form. So we assume that $M$ is pure throughout Sections  \ref{3:s:b7} and \ref{3:s:b72}.
	By Example 6.11 of \cite{GTZ12}, every connected, simply-connected nilpotent Lie group $G$ of degree $s$ induces a filtration of degree-rank $[s,s]$. So $\SGI(s)$ and (\ref{3:ini10.5}) implies the initial hypothesis of Proposition \ref{3:srthm} for the case $r_{\ast}=s$ holds (with $r=1$). Since $\Xi_{\DR}^{<[s,1]}(\Z^{d})\subseteq \Nil^{s-1}(\Z^{d})$ and $d\geq N(s)$,
	we may then use  Proposition \ref{3:srthm} inductively combined with Corollary \ref{3:pppap} and the Pigeonhole Principle to conclude that there exist $H\subseteq\V$ with $\vert H\vert\gg_{d,\e}p^{d}$, some $\chi\in\Xi^{(1,s);O_{d,\e}(1),O_{d,\e}(1)}_{p}((\V)^{2})$, and some $\psi_{h}\in \Nil_{p}^{s-1;O_{d,\e}(1),1}(\V)$ for all $h\in\V$ such that   
	\begin{equation}\label{3:midpoint0}
	\begin{split}
	\vert\E_{n\in V(M)^{h}}\Delta_{h}f(n)\chi(h,n)\psi_{h}(n)\vert\gg_{d,\e} 1
	\end{split}
	\end{equation} 
	for all $h\in H$, where we translate everything back to the $\F_{p}$-setting since all the relevant objects are $p$-periodic. Therefore,
 	\begin{equation}\label{3:midpoint}
 	\begin{split}
 	\E_{h\in\V}\vert\E_{n\in V(M)^{h}}\Delta_{h}f(n)\chi(h,n)\psi_{h}(n)\vert\gg_{d,\e} 1.
 	\end{split}
 	\end{equation} 
	By Lemma \ref{3:iiddpp}, we may assume without loss of generality that all the elements in $H$ are $M$-non-isotropic.
	
	By Lemma \ref{3:LE8} (vii) and Theorem \ref{3:E.10}, there exists $\tilde{\chi}\in\Xi_{p}^{(1,\dots,1);O_{d,\e}(1),O_{d,\e}(1)}((\V)^{s+1})$ (with 1 repeated $s+1$ times) which is symmetric in the last $s$ variables   such that
	\begin{equation}\label{3:ini2}
\begin{split}
\chi(h,n)\sim_{O_{d,\e}(1)}\tilde{\chi}(h,n,\dots,n)^{\otimes (s+1)}\mod \Xi_{p}^{s+1}((\V)^{2}).
\end{split}
\end{equation}

 Our goal is to first use (\ref{3:midpoint}) to deduce some symmetric property for $\tilde{\chi}$. Then we use it to show that in (\ref{3:midpoint}), up to a nilsequence of lower degree, the term $\chi(h,n)$ can be ``replaced" by $\tilde{\chi}(n+h,\dots,n+h)\otimes\overline{\chi}(n,\dots,n)$, which can be then absorbed by the term $\Delta_{h}f$. Finally, we use the Cauchy-Schwartz inequality to complete the proof of  $\SGI(s+1)$. 
	
The proof for the case $s=1$ and the case $s\geq 2$ are slightly different. We prove $\SGI(2)$ in Section \ref{3:s:b7} and $\SGI(s+1), s\geq 2$ in Section \ref{3:s:b72}.

\subsection{A symmetric property for the case $s=1$}\label{3:s:smp1}
We start with the case $s=1$. We  first use (\ref{3:midpoint0}) to deduce some symmetric property on  $\tilde{\chi}$.
Since $s=1$, we may set $\psi_{h}(n)\equiv 1$ in (\ref{3:midpoint}).
Denote
$$\Lambda_{1}:=\{(h,n,m)\in(\V)^{3}\colon n,m,n+h,m+h\in V(M)\}.$$
Then $\Lambda_{1}$ 
is a consistent $M$-set of total co-dimension 4.
Since $d\geq 9$, by the (vector-valued) Cauchy-Schwartz inequality and Theorem \ref{3:ct},  we deduce from (\ref{3:midpoint}) that 
\begin{equation}\label{3:ff1}
\begin{split}
&\quad 1\ll_{d,\e}  \E_{h\in\V}\vert\E_{n\in V(M)^{h}}\Delta_{h}f(n)\chi(h,n)\psi_{h}(n)\vert^{2}
\\&=\vert\E_{h\in \V}\E_{n,m\in V(M)^{h}}f(n+h)\overline{f}(m+h)\overline{f}(n)f(m)\chi(h,n)\otimes\overline{\chi}(h,m)\vert
\\&=\vert\E_{(h,m,n)\in \Lambda_{1}}f(n+h)\overline{f}(m+h)\overline{f}(n)f(m)\chi(h,n)\otimes\overline{\chi}(h,m)\vert+O(p^{-1/2}).
\end{split}
\end{equation}
Denote
$$\Lambda_{2}:=\{(z,n,m)\in(\V)^{3}\colon n,m,z-n,z-m\in V(M)\},$$
which is a consistent $M$-set of total co-dimension 4. Let $\Lambda_{2}'$ denote the set of  $(z,m)\in(\V)^{2}$ with $m,z-m\in V(M)$.
For $z\in\V$, let $\Lambda_{2}(z)$ denote the set of $n\in\V$ with $n,z-n\in V(M)$.
Replacing $h$ with $z=n+m+h$ and using Theorem \ref{3:ct}, we have that the right hand side of (\ref{3:ff1}) is bounded by
\begin{equation}\label{3:ff2}
\begin{split}
&\quad\vert\E_{(z,n,m)\in \Lambda_{2}}f(z-m)\overline{f}(z-n)\overline{f}(n)f(m)\chi(z-n-m,n)\otimes\overline{\chi}(z-n-m,m)\vert+O(p^{-1/2})
\\&=\vert\E_{(z,m)\in\Lambda'_{2}}f(z-m)f(m)
\E_{n\in\Lambda_{2}(z)}\overline{f}(z-n)\overline{f}(n)\chi(z-n-m,n)\otimes\overline{\chi}(z-n-m,m)\vert+O(p^{-1/2})
\\&\leq\E_{(z,m)\in\Lambda'_{2}}
 \vert\E_{n\in\Lambda_{2}(z)}\overline{f}(z-n)\overline{f}(n)\chi(z-n-m,n)\otimes\overline{\chi}(z-n-m,m)\vert+O(p^{-1/2}).
\end{split}
\end{equation}
Denote 
$$\Lambda_{3}:=\{(z,m,n_{1},n_{2})\in(\V)^{4}\colon m,n_{1},n_{2},z-m,z-n_{1},z-n_{2}\in V(M)\},$$
which is a consistent $M$-set of total co-dimension 6.
Using a similar method, the square of the right hand side of (\ref{3:ff2}) is bounded by $O_{d,\e}(1)$ times
\begin{equation}\label{3:ff3}
\begin{split}
&\quad\E_{(z,m)\in \Lambda_{2}'}\vert\E_{n\in \Lambda_{2}(z)}\overline{f}(z-n)\overline{f}(n)\chi(z-n-m,n)\otimes\overline{\chi}(z-n-m,m)\vert^{2}+O(p^{-1/2})
\\&=\Bigl\vert\E_{(z,m)\in \Lambda_{2}'}\E_{n_{1},n_{2}\in \Lambda_{2}(z)}
\\&\qquad\prod_{i=1}^{2}\mathcal{C}^{i+1}(f(z-n_{i})f(n_{i}))\bigotimes_{i=1}^{2}\mathcal{C}^{i}(\chi(z-n_{i}-m,n_{i})\otimes\overline{\chi}(z-n_{i}-m,m))\Bigr\vert+O(p^{-1/2})
\\&=\Bigl\vert\E_{(z,m,n_{1},n_{2})\in \Lambda_{3}}
 \\&\qquad\prod_{i=1}^{2}\mathcal{C}^{i+1}(f(z-n_{i})f(n_{i}))\bigotimes_{i=1}^{2}\mathcal{C}^{i}(\chi(z-n_{i}-m,n_{i})\otimes\overline{\chi}(z-n_{i}-m,m))\Bigr\vert+O(p^{-1/2})
 \\&\leq\E_{(z,n_{1},n_{2})\in \Lambda_{2}}
 \Bigl\vert\E_{m\in \Lambda_{2}(z)}\bigotimes_{i=1}^{2}\mathcal{C}^{i}(\chi(z-n_{i}-m,n_{i})\otimes\overline{\chi}(z-n_{i}-m,m))\Bigr\vert+O(p^{-1/2}).
\end{split}
\end{equation}

Denote
$$\Lambda_{4}:=\{(z,n_{1},n_{2},m_{1},m_{2})\in(\V)^{5}\colon n_{1},n_{2},m_{1},m_{2},z-n_{1},z-n_{2},z-m_{1},z-m_{2}\in V(M)\}.$$
which is a consistent $M$-set of total co-dimension 8.
Similarly, since $d\geq 17$, the square of the right hand side of (\ref{3:ff3}) is bounded by $O_{d,\e}(1)$ times
\begin{equation}\nonumber
\begin{split}
&\quad\E_{(z,n_{1},n_{2})\in \Lambda_{2}}
\Bigl\vert\E_{m\in \Lambda_{2}(z)}\bigotimes_{i=1}^{2}\mathcal{C}^{i}(\chi(z-n_{i}-m,n_{i})\otimes\overline{\chi}(z-n_{i}-m,m))\Bigr\vert^{2}+O(p^{-1/2})
\\&=\Bigl\vert\E_{(z,n_{1},n_{2})\in \Lambda_{2}}\E_{m_{1},m_{2}\in\Lambda_{2}(z)}
\bigotimes_{i,j\in\{1,2\}}\mathcal{C}^{i+j}(\chi(z-n_{i}-m_{j},n_{i})\otimes\overline{\chi}(z-n_{i}-m_{j},m_{j}))\Bigr\vert+O(p^{-1/2})
\\&=\Bigl\vert\E_{(z,n_{1},n_{2},m_{1},m_{2})\in \Lambda_{4}}
\bigotimes_{i,j\in\{1,2\}}\mathcal{C}^{i+j}(\chi(z-n_{i}-m_{j},n_{i})\otimes\overline{\chi}(z-n_{i}-m_{j},m_{j}))\Bigr\vert+O(p^{-1/2}).
\end{split}
\end{equation}
In conclusion, we have that
\begin{equation}\label{3:sse1}
\begin{split}
\Bigl\vert\E_{(z,n_{1},n_{2},m_{1},m_{2})\in \Lambda_{4}}
\bigotimes_{i,j\in\{1,2\}}\mathcal{C}^{i+j}(\chi(z-n_{i}-m_{j},n_{i})\otimes\overline{\chi}(z-n_{i}-m_{j},m_{j}))\Bigr\vert\gg_{d,\e} 1
\end{split}
\end{equation}
if $p\gg_{d,\e} 1$.  
By Lemma \ref{3:LE80}, the map
$$(z,n_{1},n_{2},m_{1},m_{2})\mapsto\bigotimes_{i,j\in\{1,2\}}\mathcal{C}^{i+j}(\chi(z-n_{i}-m_{j},n_{i})\otimes\overline{\chi}(z-n_{i}-m_{j},m_{j}))$$
belongs to $\Xi^{2;O_{d,\e}(1),O_{d,\e}(1)}_{p}((\V)^{5})$. On the other hand,
 Note that $\Lambda_{4}$ is the set $\Omega_{1}$ in Example %\ref{1:mainex}
 B.4 of \cite{SunA}, which is a nice and consistent $M$-set of total co-dimension 8. 
 Since $d\geq 33$,
it follows from Lemma  \ref{3:LE.11} and (\ref{3:sse1}) that
\begin{equation}\label{3:cnm1}
\begin{split}
\bigotimes_{i,j\in\{1,2\}}\mathcal{C}^{i+j}(\chi(z-n_{i}-m_{j},n_{i})\otimes\overline{\chi}(z-n_{i}-m_{j},m_{j}))\sim_{O_{d,\e}(1)}1 \mod \Xi^{2}_{p}(\Lambda_{4}).
\end{split}
\end{equation}
By Lemmas \ref{3:eqqeqq}, \ref{3:LE8} (iv) and \ref{3:L13.2},
it is not hard to simplify (\ref{3:cnm1}) as  
\begin{equation}\label{3:cnm2}
\begin{split}
\chi(n_{1}-n_{2},m_{1}-m_{2})\sim_{O_{d,\e}(1)}\chi(m_{1}-m_{2},n_{1}-n_{2})\mod\Xi^{2}_{p}(\Lambda_{4}).
\end{split}
\end{equation}

In order to better apply (\ref{3:cnm2}) to study (\ref{3:ff1}), we need to do a change of variable with $h_{1}=m_{1}-m_{2}$ and $h_{2}=n_{1}-n_{2}$, and add an extra variable $x$.   let $L\colon (\V)^{6}\to(\V)^{5}$ be the linear transformation given by
$$L(x,y_{1},y_{2},y_{3},h_{1},h_{2}):=(y_{1},y_{2}+h_{1},y_{2},y_{3}+h_{2},y_{3}).$$
Note that $L^{-1}(\Lambda_{4})$ consists of $(x,y_{1},y_{2},y_{3},h_{1},h_{2})\in (\V)^{6}$ with 
$$y_{2}+h_{1},y_{2},y_{3}+h_{2},y_{3},y_{1}-y_{2}-h_{1},y_{1}-y_{2},y_{1}-y_{3}-h_{2},y_{1}-y_{3}\in V(M).$$
By (\ref{3:cnm2}) and Lemma \ref{3:LE8} (vi),   we have that 
\begin{equation}\label{3:cnm33}
\begin{split}
\chi(h_{1},h_{2})\sim_{O_{d,\e}(1)}\chi(h_{2},h_{1})\mod\Xi^{2}_{p}(L^{-1}(\Lambda_{4}))
\end{split}
\end{equation}
(where the sequences are viewed as in the variables $(x,y_{1},y_{2},y_{3},h_{1},h_{2})$ in $L^{-1}(\Lambda_{4})$).
By  (\ref{3:ini2}), (\ref{3:cnm33}), Lemmas \ref{3:eqqeqq} and  \ref{3:LE.13}, we have that
\begin{equation}\label{3:cnm33k}
\begin{split}
\tilde{\chi}(h_{1},h_{2})\sim_{O_{d,\e}(1)}\tilde{\chi}(h_{2},h_{1})\mod\Xi^{2}_{p}(L^{-1}(\Lambda_{4})).
\end{split}
\end{equation}
Let $\Lambda_{5}$ be the set of $(x,y_{1},y_{2},y_{3},h_{1},h_{2})\in L^{-1}(\Lambda_{4})$  
such that
$(x,h_{1},h_{2})\in \Gow_{2}(V(M)).$ 
By  (\ref{3:cnm33k}) and Lemma \ref{3:LE8} (iv), we have that  
\begin{equation}\label{3:cnm3}
\begin{split}
\tilde{\chi}(h_{1},h_{2})\sim_{O_{d,\e}(1)}\tilde{\chi}(h_{2},h_{1})\mod\Xi^{2}_{p}(\Lambda_{5}).
\end{split}
\end{equation} 
By the description of $L^{-1}(\Lambda_{4})$, it is not hard to compute that $\Lambda_{5}$ is a consistent $M$-set of total co-dimension 12.

\begin{rem}\label{3:sbr1}
In  Section 13 of \cite{GTZ12}, the authors only kept track of the domain of  the parameters $h_{1},h_{2}$, and treated all other parameters  as constants.  In our setting,  if we treated $x,y_{1},y_{2},y_{3}$ as constants, then there will be a significant loss of information in expressions such as (\ref{3:cnm3}).
This is why in our method  we must  keep track of the domain of all the parameters $x,y_{1},y_{2},y_{3},h_{1},h_{2}$ (which makes the computation more complicated). 
The same remark applies to the proof for the case $s\geq 2$ in Section \ref{3:s:ss03}.
\end{rem}

\subsection{Completion of the proof of $\SGI(2)$}\label{3:s:ss02}

In Section 13 of \cite{GTZ12}, the authors deduced a conclusion similar to (\ref{3:cnm3}), and then used it to conclude that $\chi(h,n)$ can be written as $\tilde{\chi}(n+h,\dots,n+h)\otimes\overline{\tilde{\chi}}(n,\dots,n)$ for some nilcharacter $\Theta$ of degree $\leq s+1$. However, this method does not apply to our case. This is because that  $\Lambda_{5}$ is a sparse subset of $(\V)^{6}$, and that equation (\ref{3:cnm3}) does not provide us any information on whether $\tilde{\chi}(h_{1},h_{2})$ is equivalent to $\tilde{\chi}(h_{2},h_{1})$ outside $\Lambda_{5}$. So we need to use a more sophisticated method.

\begin{conv}\label{3:d4n}
	Throughout Sections \ref{3:s:b7} and \ref{3:s:b72} only, $n\in\V$ is treated as a special variable. If $F(n,m_{1},\dots,m_{k})$ is a map with one of the variables being $n$, then we write $$\Delta_{h}F(n,m_{1},\dots,m_{k}):=F(n+h,m_{1},\dots,m_{k})\otimes \overline{F}(n,m_{1},\dots,m_{k})$$   for all $h\in\V$ (i.e. the difference is taken with respect to the special variable $n$). For example, the expression $\Delta_{h}\chi(n+m,m+y,n+y)$ is understood as $\chi(n+h+m,m+y,n+h+y)\otimes \overline{\chi}(n+m,m+y,n+y)$.
\end{conv}

 \textbf{Step 1:  reformulate  (\ref{3:midpoint0}).}  
Under a change of variable, we may rewrite the right hand side of (\ref{3:ff1}) as
\begin{equation}\label{3:ff5}
\begin{split}
 \vert\E_{(n,h_{1},h_{2})\in \Gow_{2}(V(M))}\Delta_{h_{2}}\Delta_{h_{1}}\tilde{f}(n)\otimes \beta(h_{1},h_{2},n) \vert+O(p^{-1/2}),
\end{split}
\end{equation} 
where $\tilde{f}(n):=f(n)\tilde{\chi}(n,n)$ and 
$$\beta(h_{1},h_{2},n):=\Delta_{h_{2}}(\chi(h_{1},n)\otimes\overline{\tilde{\chi}}(n+h_{1},n+h_{1})\otimes \tilde{\chi}(n,n)).$$

We may write $\Lambda_{5}=V(\mathcal{J})$ for some independent $(M,6)$-family $\mathcal{J}\subseteq \F_{p}[n,y_{1},y_{2},y_{3},h_{1},h_{2}]$ of total dimension 12. Let $(\mathcal{J}',\mathcal{J}'')$ be an $\{n,h_{1},h_{2}\}$-decomposition of $\mathcal{J}$.
For convenience denote $\h:=(h_{1},h_{2})$ and $\y:=(y_{1},y_{2},y_{3})$. 
 It is not hard to compute that $(n,\y,\h)\in V(\mathcal{J}')$ if and only if $(n,\h)\in \Gow_{2}(V(M))$. For $(n,\h)\in (\V)^{3}$, let 
  $\Lambda_{5}(n,\h)$ denote the set of $\y\in(\V)^{3}$ such that $(n,\y,\h)\in V(\mathcal{J}'')$. 
Since   $\Lambda_{5}$  
is a consistent $M$-set of total co-dimension 12,
   by Theorem \ref{3:ct}, (\ref{3:ff1}) and (\ref{3:ff5}),
\begin{equation}\label{3:mmdd1}
\begin{split}	 
 &\quad 1\ll_{d,\e}  
 \vert\E_{(n,h_{1},h_{2})\in \Gow_{2}(V(M))}\Delta_{h_{2}}\Delta_{h_{1}}\tilde{f}(n)\otimes \beta(h_{1},h_{2},n) \vert+O(p^{-1/2})
 \\&=
 \vert\E_{(n,\h)\in \Gow_{2}(V(M))}\E_{\y\in \Lambda_{5}(n,\h)} \Delta_{h_{2}}\Delta_{h_{1}}\tilde{f}(n)\otimes\beta(\h,n)\vert+O(p^{-1/2})
\\&=\vert\E_{(n,\y,\h)\in\Lambda_{5}} \Delta_{h_{2}}\Delta_{h_{1}}\tilde{f}(n)\otimes\beta(\h,n)\vert+O(p^{-1/2}),
\end{split}
\end{equation}

\textbf{Step 2:  use (\ref{3:ini2}) to convert $\beta$ into a lower degree nilsequence.} 
Using (\ref{3:ini2}), (\ref{3:cnm3}) and Lemma \ref{3:L13.2},
it is not hard to compute that 
\begin{equation}\nonumber
\begin{split}
 \beta(h_{1},h_{2},n)\sim_{O_{d,\e}(1)}\tilde{\chi}(h_{1},h_{2})\otimes\overline{\tilde{\chi}}(h_{2},h_{1})\sim_{O_{d,\e}(1)} 1 \mod \Xi^{2}_{p}(\Lambda_{5}).
\end{split}
\end{equation} 
Therefore, $\beta(h_{1},h_{2},n)\in\Nil^{1;O_{d,\e}(1)}(\Lambda_{5})$. Then $\beta(h_{1},h_{2},n)\in\Nil^{J;O_{d,\e}(1)}(\Lambda_{5})$, where
$$J:=\{(a,b_{1},b_{2},b_{3},c_{1},c_{2})\in\N^{5}\colon a+b_{1}+b_{2}+b_{3}+c_{1}+c_{2}\leq 1\}.$$
For $1\leq i\leq 2$, let 
$$J_{i}:=\{(a,b_{1},b_{2},b_{3},c_{1},c_{2})\in J\colon c_{i}=0\}.$$
Then $J=J_{1}\cup J_{2}$. Note that any sequence in $\Nil^{J_{i}}(\Lambda_{5})$ is independent of $h_{i}$. By (\ref{3:mmdd1}), Lemma \ref{3:LE.4} and the Pigeonhole Principle, for $1\leq i\leq 2$, there exists a scalar valued sequence $\psi_{i}(n,\y,\h)$ bounded by 1 which is independent of $h_{i}$ such that
\begin{equation}\nonumber
\begin{split}
\Bigl\vert\E_{(n,\y,\h)\in \Lambda_{5}}\Delta_{h_{2}}\Delta_{h_{1}}\tilde{f}(n)\prod_{i=1}^{2}\psi_{i}(n,\y,\h)\Bigr\vert\gg_{d,\e} 1
\end{split}
\end{equation}    
if $p\gg_{d,\e} 1$.  By the definition of $\tilde{f}$ and the Pigeonhole Principle,  there exists
$\phi'\in\Nil^{2;O_{d,\e}(1),1}_{p}(\V)$
  such that writing $f'(n):=f(n)\phi(n)$, we have that 
\begin{equation}\label{3:1pprr0}
\begin{split}
\Bigl\vert\E_{(n,\y,\h)\in \Lambda_{5}}\Delta_{h_{2}}\Delta_{h_{1}}f'(n)\prod_{i=1}^{2}\psi_{i}(n,\y,\h)\Bigr\vert\gg_{d,\e} 1.
\end{split}
\end{equation}

\textbf{Step 3:  use the Cauchy-Schwartz inequality to remove $\psi_{i}$.} 	
For $r\leq 3$, we say that \emph{Property-$r$} holds if there exist a scalar valued function $\psi_{i}(n,\y,\h,h'_{1},$ $\dots,h'_{r-1})$   bounded by 1 and independent of $h_{i}$ for all $1\leq i\leq r$  such that if $p\gg_{d,\e} 1$, then
\begin{equation}\label{3:1pprr}
\begin{split}
\Bigl\vert\E_{(n,\y,\h,h'_{1},\dots,h'_{r-1})\in \Lambda_{5,r}}\Delta_{h_{2}}\Delta_{h_{1}}f'(n)\prod_{i=r}^{2}\psi_{i}(n,\y,\h,h'_{1},\dots,h'_{r-1})\Bigr\vert\gg_{d,\e} 1,
\end{split}
\end{equation} 	
where $\Lambda_{5,r}$ is the set of all $(n,\y,\h,h'_{1},\dots,h'_{r-1})\in (\V)^{r+5}$ such that $$(n-h'_{1}-\dots-h'_{r-1},\y,h'_{1}+\e_{1}h_{1},\dots,h'_{r-1}+\e_{r-1}h_{r-1},h_{r},\dots,h_{2})\in \Lambda_{5}$$
for all $\e_{1},\dots,\e_{r-1}\in \{0,1\}$
 (for $r=3$, the term $\prod_{i=r}^{2}\psi_{i}(n,\y,\h,h'_{1},\dots,h'_{r-1})$ is considered to be constant 1).
Since $M$ is pure, $\Lambda_{5}$ is a pure and consistent $M$-set. 
By
 Propositions \ref{3:yy3} and \ref{3:yy33}, $\Lambda_{5,r}$ is a pure and consistent $M$-set of totally co-dimension at most 36.\footnote{The total co-dimension of $\Lambda_{5,r}$ can be computed explicitly, but here we do not care about its precise value.}

Clearly, Property-1 holds by (\ref{3:1pprr0}). Now suppose that Property-$r$ holds for some $1\leq r\leq 2$. 
For convenience denote $\x:=(n,\y,h_{1},\dots,h_{r-1},h_{r+1},\dots,h_{2},h'_{1},\dots,h'_{r-1})$, and let $(\x;h)$ be the vector obtained by inserting $h$ in the middle of $h_{r-1}$ and $h_{r+1}$.
Let $\tilde{\Lambda}_{5,r}$ denote the set of $(\x,h'_{r},h''_{r})\in (\V)^{r+6}$ such that $(\x;h'_{r}),(\x;h'_{r})\in \Lambda_{5,r}$. Since $\Lambda_{5,r}$ is a pure and consistent $M$-set, by Propositions \ref{3:yy3} and \ref{3:yy33}, $\tilde{\Lambda}_{5,r}$  is a pure and consistent $M$-set of total co-dimension  at most 45.
 	We may then write $\Lambda_{5,r}=V(\mathcal{J})$ for some consistent $(M,r+6)$-family $\mathcal{J}\subseteq \F_{p}[n,y_{1},y_{2},y_{3},h_{1},h_{2},h'_{1},\dots,h'_{r-1}]$ of total dimension at most 45. Let $(\mathcal{J}',\mathcal{J}'')$ be a  $\{n,y_{1},y_{2},y_{3},h_{1},\dots,h_{r-1},h_{r+1},\dots,h_{2},h'_{1},\dots,h'_{r-1}\}$-decomposition of $\mathcal{J}$. Let $\Lambda'_{5,r}$ be the set of $\x\in(\V)^{r+4}$  such that $(\x;h_{r})\in V(\mathcal{J}')$ for all $h_{r}\in \V$. For $\x\in(\V)^{r+4}$, let  $\Lambda_{5,r}(\x)$ denote the set of $h_{r}\in \V$ such that $(\x;h_{r})\in V(\mathcal{J}'')$.
Then
by pulling out the $h_{r}$-independent term $$\Delta_{h_{2}}\dots\Delta_{h_{r+1}}\Delta_{h_{r-1}}\dots\Delta_{h_{1}}\overline{f}'(n)\psi_{r}(n,\y,\h,h'_{1},\dots,h'_{r-1})$$ in (\ref{3:1pprr}) and then apply the Cauchy-Schwartz inequality and Theorem \ref{3:ct}, we have that 
\begin{equation}\nonumber
\begin{split}
&\quad 1\ll_{d,\e}
\E_{\x\in \Lambda'_{5,r}}
\Bigl\vert\E_{h_{r}\in \Lambda_{5,r}(\x)}\Delta_{h_{2}}\dots\Delta_{h_{r+1}}\Delta_{h_{r-1}}\dots\Delta_{h_{1}}f'(n+h_{r})\prod_{i=r+1}^{2}\psi_{i}(\x;h_{r})\Bigr\vert^{2}
\\&=\Bigl\vert\E_{(\x,h'_{r},h''_{r})\in \tilde{\Lambda}_{5,r}}\Delta_{h_{2}}\dots\Delta_{h_{r+1}}\Delta_{h_{r-1}}\dots\Delta_{h_{1}}(f'(n+h''_{r})\overline{f}'(n+h'_{r}))\prod_{i=r+1}^{2}\psi_{i}(\x;h''_{r})\overline{\psi}_{i}(\x;h'_{r})\Bigr\vert+O(p^{-1/2})
\\&=\Bigl\vert\E_{(\x;h'_{r}),(\x;h'_{r}+h_{r})\in \Lambda_{5,r}}\Delta_{h_{2}}\dots\Delta_{h_{r+1}}\Delta_{h_{r}}\Delta_{h_{r-1}}\dots\Delta_{h_{1}}f'(n+h'_{r})\prod_{i=r+1}^{2}\psi_{i}(\x;h'_{r}+h_{r})\overline{\psi}_{i}(\x;h'_{r})\Bigr\vert+O(p^{-1/2}).
\end{split}
\end{equation}

For all $r+1\leq i\leq 2$, since $\psi_{i}$ is independent of $h_{i}$ and bounded by 1,
$\psi_{i}(\x;h'_{r}+h_{r})\overline{\psi}_{i}(\x;h'_{r})$ is also independent of $h_{i}$ and bounded by 1.
On the other hand, note
  that $(\x;h'_{r}),(\x;h'_{r}+h_{r})\in \Lambda_{5,r}$ if and only if
$$(n-h'_{1}-\dots-h'_{r-1},\y,h'_{1}+\e_{1}h_{1},\dots,h'_{r-1}+\e_{r-1}h_{r-1},h'_{r},h_{r+1},\dots,h_{2})\in \Lambda_{5}$$
and 
$$(n-h'_{1}-\dots-h'_{r-1},\y,h'_{1}+\e_{1}h_{1},\dots,h'_{r-1}+\e_{r-1}h_{r-1},h'_{r}+h_{r},h_{r+1},\dots,h_{2})\in \Lambda_{5}$$
for all $\e_{1},\dots,\e_{r-1}\in \{0,1\}$. 	Writing $n'=n+h'_{r}$, this is equivalent of saying that 
$$(n'-h'_{1}-\dots-h'_{r},\y,h'_{1}+\e_{1}h_{1},\dots,h'_{r}+\e_{r}h_{r},h_{r+1},\dots,h_{2})\in \Lambda_{5}$$
for all $\e_{1},\dots,\e_{r-1}\in \{0,1\}$, or equivalently, $(n',h_{1},h_{2},h'_{1},\dots,h'_{r})\in  \Lambda_{5,r+1}$. So we have that Property-$(r+1)$ holds. 

Inductively, we have that Property-3 holds. Since $\Lambda_{5,3}$ is a pure and consistent $M$-set of total co-dimension at most 45, we may write 
 $\Lambda_{5,3}=V(\tilde{\mathcal{J}})$ for some consistent $(M,8)$-family $\tilde{\mathcal{J}}\subseteq \F_{p}[n,y_{1},y_{2},y_{3},h_{1},h_{2},h'_{1},h'_{2}]$ of total dimension at most 45. 
 For convenience denote $\h':=(h'_{1},h'_{2})$.
 Let $(\tilde{\mathcal{J}}',\tilde{\mathcal{J}}'')$ be an $\{n,h_{1},h_{2}\}$-decomposition of $\tilde{\mathcal{J}}$. Let $\Lambda''_{5,3}$ denote the set of $(n,\h)\in(\V)^{3}$ such that $(n,\y,\h,\h')\in V(\tilde{\mathcal{J}}')$ for all $(\y,\h')\in(\V)^{5}$.
It is not hard to compute that $V(\tilde{\mathcal{J}}')=\Gow_{2}(V(M))$. For $(n,\h)\in (\V)^{3}$, let 
  $\Lambda''_{5,3}(n,\h)$ denote the set of $(\y,\h')\in(\V)^{5}$ such that $(n,\y,\h,\h')\in V(\tilde{\mathcal{J}}'')$.
It follows from Property-3 and Theorem \ref{3:ct} that
\begin{equation}\nonumber
\begin{split}
&\quad 1\ll_{d,\e}
\vert\E_{(n,\y,\h,\h')\in \Lambda_{5,3}}\Delta_{h_{2}}\Delta_{h_{1}}f'(n)\vert
=\vert\E_{(n,\h)\in \Gow_{2}(V(M))}\Delta_{h_{2}}\Delta_{h_{1}}f'(n)\E_{(\y,\h')\in\Lambda''_{5,3}(n,\h)}1\vert+O(p^{-1/2})
\\&=\vert\E_{(n,\h)\in \Gow_{2}(V(M))}\Delta_{h_{2}}\Delta_{h_{1}}(f\phi')(n)\vert+O(p^{-1/2})
=\Vert f\phi'\Vert^{4}_{U^{2}(V(M))}+O(p^{-1/2}).
\end{split}
\end{equation}
Since $\SGI(1)$ holds by assumption, if $p\gg_{d,\e} 1$, then there exists $\phi''\in \Nil_{p}^{1;O_{d,\e}(1),1}(\V)$ such that 
$$\vert\E_{n\in V(M)}f(n)\phi'(n)\phi''(n)\vert\gg_{d,\e} 1.$$
This completes the proof of $\SGI(2)$ since $\phi'\phi''\in \Nil_{p}^{2;O_{d,\e}(1),1}(\V)$.

\section{The symmetric argument for $s\geq 2$}\label{3:s:b72}
\subsection{A symmetric property for the case $s\geq 2$}\label{3:s:ss03}
In this section, we prove $\SGI(s+1)$ for $s\geq 2$. Again we first use (\ref{3:midpoint0}) to deduce some symmetric property on  $\tilde{\chi}$.
By (\ref{3:midpoint0}), Corollary \ref{3:LE.6} and the Pigeonhole Principle,  for all $h\in \V$, there exists  $\varphi_{h}\in\Xi_{p}^{s-1;O_{d,\e}(1),O_{d,\e}(1)}(\V)$  such that
\begin{equation}\label{3:ff0}
\begin{split}
\vert\E_{n\in V(M)^{h}}\Delta_{h}f(n)\chi(h,n)\otimes\varphi_{h}(n)\vert\gg_{d,\e} 1
\end{split}
\end{equation} 
for all $h\in H$. For convenience denote $h_{4}:=h_{1}+h_{2}-h_{3}$.
  By Proposition \ref{3:inductioni2}, if $p\gg_{d,\e} 1$ and $d\geq 9$, then 
   the absolute value of the average of the sequence	
\begin{equation}\nonumber
\begin{split}
&\quad (n,h_{1},h_{2},h_{3})\mapsto \chi(h_{1},n)\otimes\chi(h_{2},n+h_{3}-h_{2})\otimes\overline{\chi}(h_{3},n)\otimes\overline{\chi}(h_{4},n+h_{3}-h_{2})
\\&\otimes \varphi_{h_{1}}(n)\otimes\varphi_{h_{2}}(n+h_{3}-h_{2})\otimes\overline{\varphi}_{h_{3}}(n)\otimes\overline{\varphi}_{h_{4}}(n+h_{3}-h_{2})
\end{split}
\end{equation}
along the set $\{(n,h_{1},h_{2},h_{3})\in(\V)^{4}\colon n\in V(M)^{h_{1},h_{3},h_{3}-h_{2}}\}$ is $\gg_{d,\e} 1$, where we set $\varphi_{h}\equiv 0$ for $h\notin H$.\footnote{Note that $\varphi_{h}$ is not a nilcharacter for $h\notin H$ since it does not have absolute value 1. But this does not affect the proof.}
By the change of variable $(h_{1},h_{2},h_{3},h_{4})=(h_{0}+a,h_{0}+b,h_{0}+a+b,h_{0})$, 
we have that 
\begin{equation}\label{3:ff6}
\begin{split}
\vert\E_{h_{0},a,b\in \V,n\in V(M)^{h_{0}+a,h_{0}+a+b,a}}\tau(h_{0},a,b,n)\otimes \varphi_{h_{0}+a}(n)\otimes\varphi_{h_{0}+b}(n+a)\otimes\overline{\varphi}_{h_{0}+a+b}(n)\otimes\overline{\varphi}_{h_{0}}(n+a)\vert
\end{split}
\end{equation}
is $\gg_{d,\e} 1,$
where
\begin{equation}\label{3:13.6}
\begin{split}
\tau(h_{0},a,b,n):=\chi(h_{0}+a,n)\otimes\chi(h_{0}+b,n+a)\otimes\overline{\chi}(h_{0}+a+b,n)\otimes\overline{\chi}(h_{0},n+a).
\end{split}
\end{equation}
Note that the square of (\ref{3:ff6}) is bounded by $O_{d,\e}(1)$ times
\begin{equation}\label{3:ff7}
\begin{split}
&\quad\vert\E_{h_{0},a\in \V}\E_{n\in V(M)^{h_{0}+a,a}}\E_{b\in V(M(h_{0}+a+n+\cdot))}\tau(h_{0},a,b,n)\otimes \varphi_{h_{0}+a}(n)\otimes\varphi_{h_{0}+b}(n+a)
\\&\qquad\otimes\overline{\varphi}_{h_{0}+a+b}(n)\otimes\overline{\varphi}_{h_{0}}(n+a)\vert^{2}+O(p^{-1/2})
\\&\ll_{d,\e} \E_{h_{0},a\in \V}\E_{n\in V(M)^{h_{0}+a,a}}\vert\E_{b\in V(M(h_{0}+a+n+\cdot))}\tau(h_{0},a,b,n)\otimes\varphi_{h_{0}+b}(n+a)\otimes\overline{\varphi}_{h_{0}+a+b}(n)\vert^{2}+O(p^{-1/2})
\\&\ll_{d,\e}\vert\E_{h_{0},a\in \V}\E_{n\in V(M)^{h_{0}+a,a}}\E_{b,b'\in V(M(h_{0}+a+n+\cdot))}\tau(h_{0},a,b,n)\otimes\overline{\tau}(h_{0},a,b',n)
\\&\qquad\otimes\varphi_{h_{0}+b}(n+a)\otimes\overline{\varphi}_{h_{0}+b'}(n+a)\otimes\overline{\varphi}_{h_{0}+a+b}(n)\otimes\varphi_{h_{0}+a+b'}(n)\vert+O(p^{-1/2})
\\&\ll_{d,\e}\vert \E_{h_{0},a,b,b'\in\V}\E_{n\in V(M)^{h_{0}+a,h_{0}+a+b,h_{0}+a+b',a}}\tau(h_{0},a,b,n)\otimes\overline{\tau}(h_{0},a,b',n)
\\&\qquad\otimes\varphi_{h_{0}+b}(n+a)\otimes\overline{\varphi}_{h_{0}+b'}(n+a)
\otimes\overline{\varphi}_{h_{0}+a+b}(n)\otimes\varphi_{h_{0}+a+b'}(n)\vert+O(p^{-1/2}), 
\end{split}
\end{equation}
where we used Theorem \ref{3:ct} and the fact that the sets 
$$\{(h_{0},a,b,n)\in(\V)^{4}\colon n\in V(M)^{h_{0}+a,h_{0}+a+b,a}\}$$
and
$$\{(h_{0},a,b,b',n)\in(\V)^{4}\colon n\in V(M)^{h_{0}+a,h_{0}+a+b,h_{0}+a+b',a}\}$$
are   consistent $M$-sets of total co-dimensions 4 and 5 respectively.
Substituting $c:=a+b+b'$ in (\ref{3:ff7}), we have that
\begin{equation}\label{3:13.9}
\begin{split}
\vert \E_{h_{0},c,b,b'\in\V}\E_{n\in V(M)^{h_{0}+c-b-b',h_{0}+c-b',h_{0}+c-b,c-b-b'}}\alpha(h_{0},c,b,b',n)\otimes\varphi'_{h_{0},b,c}(n)\otimes\overline{\varphi'}_{h_{0},b',c}(n)\vert\gg_{d,\e} 1,
\end{split}
\end{equation}
provided that $p\gg_{d,\e} 1$,
where 
\begin{equation}\label{3:13.10}
\begin{split}
\alpha(h_{0},c,b,b',n):=\tau(h_{0},c-b-b',b,n)\otimes\overline{\tau}(h_{0},c-b-b',b',n)
\end{split}
\end{equation}
and
%\begin{equation}\nonumber
%\begin{split}
$\varphi'_{h_{0},b,c}(n):=\varphi_{h_{0}+b}(n+c-b-b')\otimes\varphi_{h_{0}+c-b}(n).$
%\end{split}
%\end{equation}
Let $\Lambda_{1}$ be the set of $(h_{0},c,b_{1},b_{2},b',n)\in (\V)^{6}$ such that 
$$n,n+h_{0}+c-b',  n+h_{0}+c-b_{i}-b',n+h_{0}+c-b_{i},n+c-b_{i}-b'\in V(M) \text{ for } i=1,2,$$
and $\Lambda'_{1}$ be the set of $(h_{0},c,b_{1},b_{2},n)\in(\V)^{5}$ such that
$$n, n+h_{0}+c-b_{1}, n+h_{0}+c-b_{2}\in V(M).$$ For $(h_{0},c,b_{1},b_{2},n)\in (\V)^{5}$, let $\Lambda_{1}(h_{0},c,b_{1},b_{2},n)$ denote the set of $b'$ such that
$$n+h_{0}+c-b',  n+h_{0}+c-b_{i}-b',n+c-b_{i}-b'\in V(M) \text{ for } i=1,2.$$
Let $A$ be the matrix associated to $M$.
Note that $(h_{0},c,b_{1},b_{2},n,b')\in \Lambda_{1}$ if and only if 
\begin{itemize}
	\item $h_{0},c,b_{1}\in\V$;
	\item $((b_{1}-b_{2})A)\cdot h_{0}=0$;
	\item $M(n)=M(n+c+h_{0}-b_{1})=M(n+c+h_{0}-b_{2})=0$;
	\item $M(n+h_{0}+c-b')=M(n+h_{0}+c-b_{1}-b')=M(n+h_{0}+c-b_{2}-b')=M(n+c-b_{1}-b')=0$.\footnote{In particular, the condition $M(n+c-b_{2}-b')=0$ is a consequence of other restrictions by Lemma \ref{3:or}.}  
\end{itemize}	 
This implies that  $\Lambda_{1}$ is a consistent  $M$-set  of total co-dimension 8, and that $\Lambda_{1}'$ is a consistent $M$-set  of total co-dimension 4. By Theorem \ref{3:ct},
the square of left hand side of (\ref{3:13.9}) is bounded by $O_{d,\e}(1)$ times
\begin{equation}\label{3:ff8}
\begin{split}
&\quad \vert\E_{h_{0},c,b'\in\V}\E_{n\in V(M)^{h_{0}+c-b'}}\E_{b\in \V\colon n+h_{0}+c-b-b',n+h_{0}+c-b,n+c-b-b'\in V(M)}
\\&\qquad\alpha(h_{0},c,b,b',n)\otimes\varphi'_{h_{0},b,c}(n)\otimes\overline{\varphi'}_{h_{0},b',c}(n)\vert^{2}+O(p^{-1/2})
\\&\ll_{d,\e}\E_{h_{0},c,b'\in\V}\E_{n\in V(M)^{h_{0}+c-b'}}\vert\E_{b\in \V\colon n+h_{0}+c-b-b',n+h_{0}+c-b,n+c-b-b'\in V(M)}
\\&\qquad\alpha(h_{0},c,b,b',n)\otimes\varphi'_{h_{0},b,c}(n)\vert^{2}+O(p^{-1/2})
\\&\ll_{d,\e}\vert\E_{h_{0},c,b'\in\V}\E_{n\in V(M)^{h_{0}+c-b'}}\E_{b_{1},b_{2}\in \V\colon n+h_{0}+c-b_{i}-b',n+h_{0}+c-b_{i},n+c-b_{i}-b' \in V(M) \text{ for } i=1,2}
\\&\qquad \alpha(h_{0},c,b_{1},b',n)\otimes \overline{\alpha}(h_{0},c,b_{2},b',n)\otimes\varphi'_{h_{0},b_{1},c}(n)\otimes\overline{\varphi'}_{h_{0},b_{2},c}(n)\vert+O(p^{-1/2})
\\&\ll_{d,\e}\vert\E_{(h_{0},c,b_{1},b_{2},n)\in\Lambda'_{1}}\E_{b'\in\Lambda_{1}(h_{0},c,b_{1},b_{2},n)}
\\&\qquad \alpha(h_{0},c,b_{1},b',n)\otimes \overline{\alpha}(h_{0},c,b_{2},b',n)\otimes\varphi'_{h_{0},b_{1},c}(n)\otimes\overline{\varphi'}_{h_{0},b_{2},c}(n)\vert+O(p^{-1/2})
\\&\ll_{d,\e}\E_{(h_{0},c,b_{1},b_{2},n)\in\Lambda'_{1}}\vert\E_{b'\in\Lambda_{1}(h_{0},c,b_{1},b_{2},n)}
 \alpha(h_{0},c,b_{1},b',n)\otimes \overline{\alpha}(h_{0},c,b_{2},b',n)\vert+O(p^{-1/2}),
\end{split}
\end{equation}
where we used  the fact that  the set    
$$\{(h_{0},c,b,b',n)\in(\V)^{5}\colon n\in V(M)^{h_{0}+c-b-b',h_{0}+c-b',h_{0}+c-b,c-b-b'}\}$$
is a consistent $M$-set   of total co-dimensions 5.

Let $\Lambda_{2}$ denote the set of $(h_{0},c,b_{1},b'_{1},b_{2},b'_{2},n)\in(\V)^{7}$ such that $(h_{0},c,b_{1},b_{2},b'_{1},n)$ and $(h_{0},c,b_{1},b_{2},b'_{2},n)$ belong to $\Lambda_{1}$. Since $M$ is pure,
by Propositions \ref{3:yy3} and \ref{3:yy33}, $\Lambda_{2}$ is  a pure and consistent $M$-set and is of total co-dimension at most 28.
So by Theorem \ref{3:ct}, 
the square of the right hand side of (\ref{3:ff8}) is bounded by $O_{d,\e}(1)$ times
\begin{equation}\label{3:ff9}
\begin{split}
&\quad \E_{(h_{0},c,b_{1},b_{2},n)\in\Lambda'_{1}}\vert\E_{b'\in\Lambda_{1}(h_{0},c,b_{1},b_{2},n)}
\alpha(h_{0},c,b_{1},b',n)\otimes \overline{\alpha}(h_{0},c,b_{2},b',n)\vert^{2}+O(p^{-1/2})
\\&\ll_{d,\e}\vert\E_{(h_{0},c,b_{1},b_{2},n)\in\Lambda'_{1}}\E_{b'_{1},b'_{2}\in\Lambda_{1}(h_{0},c,b_{1},b_{2},n)}\alpha'(h_{0},c,b_{1},b'_{1},b_{2},b'_{2},n)\vert+O(p^{-1/2})
\\&\ll_{d,\e}\vert\E_{(h_{0},c,b_{1},b'_{1},b_{2},b'_{2},n)\in\Lambda_{2}}\alpha'(h_{0},c,b_{1},b'_{1},b_{2},b'_{2},n)\vert+O(p^{-1/2}),
\end{split}
\end{equation}
where 
\begin{equation}\label{3:13.12}
\begin{split}
&\quad\alpha'(h_{0},c,b_{1},b'_{1},b_{2},b'_{2},n)
\\&:=\alpha(h_{0},c,b_{1},b'_{1},n)\otimes \overline{\alpha}(h_{0},c,b_{2},b'_{1},n)\otimes \overline{\alpha}(h_{0},c,b_{1},b'_{2},n)\otimes \alpha(h_{0},c,b_{2},b'_{2},n).
\end{split}
\end{equation}

We have now successfully eliminated the $\varphi_{h}$ term in (\ref{3:ff0}), and we could use Lemma \ref{3:LE.11} to deduce from (\ref{3:ff9}) that $\alpha'$ coincides with a nilsequence of degree at most $s$ on $\Lambda_{2}$. However, in order to better use  (\ref{3:ff9}) to derive the information we need, we need to make some further transformations for  $\alpha'$ before applying Lemma \ref{3:LE.11}.

Firstly, we need to take sufficiently many derivatives of $\alpha$ in the variable $n$ in order to ``annihilate" the appearance of other variables.
Since $\Lambda_{2}$ is  a consistent $M$-set of total co-dimension at most 28, we may write $\Lambda_{2}=V(\mathcal{J})$ for some consistent $(M,7)$-family $\mathcal{J}\subseteq\F_{p}[h_{0},c,b_{1},b'_{1},b_{2},b'_{2},n]$ of total dimension at most 28. Let $(\mathcal{J}',\mathcal{J}'')$ be an $\{h_{0},c,b_{1},b'_{1},b_{2},b'_{2}\}$-decomposition of $\mathcal{J}$. Let $\Lambda'_{2}$ be the set of $\y:=(h_{0},c,b_{1},b'_{1},b_{2},b'_{2})\in(\V)^{6}$  such that $(\y,n)\in V(\mathcal{J}')$ for all $n\in\V$. For $\y\in(\V)^{6}$, let  $\Lambda_{2}(\y)$ denote the set of $n\in\V$ such that $(\y,n)\in V(\mathcal{J}'')$.
Since $\Lambda_{2}$ is a pure and consistent $M$-set of total co-dimension at most 28, so is $\Lambda'_{2}$ by Propositions \ref{3:yy3} and \ref{3:yy33}. For convenience write $\h'=(h_{1},\dots,h_{s-1})\in(\V)^{s-1}$.
 Let $\Lambda_{3}$ denote the set of $(\y,n,\h')\in (\V)^{s+6}$ such that $(\y,n+\e\cdot\h')\in \Lambda_{2}$ for all $\e\in\{0,1\}^{s-1}$, where 
 $$\e\cdot\h':=(\e_{1},\dots,\e_{s-1})\cdot (h_{1},\dots,h_{s-1})=\e_{1}h_{1}+\dots+\e_{s-1}h_{s-1}.$$
   By repeatedly using Propositions \ref{3:yy3} and \ref{3:yy33}, we have that $\Lambda_{3}$ is a pure and consistent $M$-set of total co-dimension at most $\binom{s+7}{2}$.
By Theorem \ref{3:ct},  the $2^{s-1}$-th power of the right hand side of (\ref{3:ff9}) is bounded by 
$O_{d,\e}(1)$ times
\begin{equation}\nonumber
\begin{split}
&\quad \vert\E_{\y\in\Lambda_{2}'}\E_{n\in \Lambda_{2}(\y)}\alpha'(\y,n)\vert^{2^{s-1}}+O(p^{-1/2})
\ll_{d,\e} \E_{\y\in\Lambda_{2}'}\vert\E_{n\in \Lambda_{2}(\y)}\alpha'(\y,n)\vert^{2^{s-1}}+O(p^{-1/2})
\\&\ll_{d,\e} \Bigl\vert\E_{\y\in\Lambda_{2}'}\E_{(n,\h')\in\Gow_{s-1}( \Lambda_{2}(\y))}\bigotimes_{\e\in\{0,1\}^{s-1}}\mathcal{C}^{\vert\e\vert}\alpha'(\y,n+\e\cdot \h')\Bigr\vert+O(p^{-1/2})
\\&\ll_{d,\e} \Bigl\vert\E_{(\y,n,\h')\in\Lambda_{3}}\bigotimes_{\e\in\{0,1\}^{s-1}}\mathcal{C}^{\vert\e\vert}\alpha'(\y,n+\e\cdot \h')\Bigr\vert+O(p^{-1/2}).
\end{split}
\end{equation}
Combining all the computations above, we have that 
\begin{equation}\label{3:ff12}
\begin{split}
\vert\E_{(\y,n,\h')\in \Lambda_{3}} \sigma(\y,n,\h')\vert\gg_{d,\e} 1,
\end{split}
\end{equation}
where 
\begin{equation}\label{3:119}
\sigma(\y,n,\h'):=\bigotimes_{\e\in\{0,1\}^{s-1}}\mathcal{C}^{\vert\e\vert}\alpha'(\y,n+\e\cdot\h').
\end{equation}

We next provide a better description for 
  the set $\Lambda_{3}$  under a change of variables.
Let $L\colon (\V)^{s+6}\to(\V)^{s+6}$ be the  bijective linear transformation 
which maps  $(h_{0},c,b_{1},$ $b_{2}, b'_{1},b'_{2},$ $n,\h')$ to
\begin{equation}\nonumber
\begin{split}
(n+c-b_{1}-b'_{1},n+c-b_{1}-b'_{1}+h_{0},n+c-b_{1}+h_{0},n+c-b'_{1}+h_{0},n,\h',b_{1}-b_{2},b'_{1}-b'_{2}).
\end{split}
\end{equation}
One can compute that 
\begin{equation}\label{3:L-1}
\begin{split}
&\quad L^{-1}(w_{1},w_{2},w_{3},w_{4},n,\h',h_{s},h_{s+1})
\\&=(w_{2}-w_{1},w_{1}-2w_{2}+w_{3}+w_{4}-n,w_{4}-w_{2},w_{4}-w_{2}-h_{s},w_{3}-w_{2},w_{3}-w_{2}-h_{s+1},n,\h').
\end{split}
\end{equation}

Denote $\sigma':=\sigma\circ L^{-1}$, $\Lambda_{4}:=L(\Lambda_{3})$. 
By Propositions \ref{3:yy3} and \ref{3:yy33},  $\Lambda_{4}$ is a pure and consistent $M$-set of total co-dimension at most $\binom{s+7}{2}$.

Then (\ref{3:ff12}) implies that
\begin{equation}\label{3:13.146}
\begin{split}
\vert\E_{(w_{1},w_{2},w_{3},w_{4},n,\h',h_{s},h_{s+1})\in \Lambda_{4}}
\sigma'(w_{1},w_{2},w_{3}.w_{4},n,\h',h_{s},h_{s+1})\vert\gg_{d,\e} 1.
\end{split}
\end{equation}

We need a precise description for the set $\Lambda_{4}$.
Note that $(h_{0},c,b_{1},b_{2},b'_{1},b'_{2},n,\h')\in \Lambda_{3}$ if and only if the following vectors are in $V(M)$:
$$n+\e'\cdot \h',n+\e'\cdot \h'+h_{0}+c-b'_{i},n+\e'\cdot \h'+h_{0}+c-b_{i},n+\e'\cdot \h'+h_{0}+c-b'_{i}-b_{j},n+\e'\cdot \h'+c-b'_{i}-b_{j}$$
for all $i,j\in\{1,2\}$ and $\e'\in\{0,1\}^{s-1}$.  
This is equivalent of saying that
$$n+\e'\cdot \h',w_{4}+\e'\cdot \h'+\e_{s+1} h_{s+1},w_{3}+\e'\cdot \h'+\e_{s}h_{s},w_{2}+\e'\cdot \h'+\e_{s}h_{s}+\e_{s+1}h_{s+1},w_{1}+\e'\cdot \h'+\e_{s}h_{s}+\e_{s+1}h_{s+1}$$
for all $\e'\in\{0,1\}^{s-1}$ and $\e_{s},\e_{s+1}\in\{0,1\}$, which is further equivalent of saying that
 $(w_{1},\h',h_{s},h_{s+1}),(w_{2},\h',h_{s},h_{s+1})\in \Gow_{s+1}(V(M)),$ $(w_{3},\h',h_{s})\in \Gow_{s}(V(M)),$ $(w_{4},\h',h_{s+1})\in \Gow_{s}(V(M))$ and $(n,\h')\in \Gow_{s-1}(V(M)).$

 In conclusion, $\Lambda_{4}$ is the set $\Omega_{2}$ in Example %\ref{1:mainex} 
 B.4 of \cite{SunA} (with the variable $w_{5}$ replaced by $n$), which is a nice and consistent $M$-set of total co-dimension $(s^{2}+11s+12)/2$.  By 
 Lemma \ref{3:LE80}, we have $\sigma,\sigma'\in\Xi_{p}^{s+1;O_{d,\e}(1),O_{d,\e}(1)}((\V)^{s+6})$.
 Since  $d\geq N(s)$ and $p\gg_{d,\e} 1$,
it follows from Lemma \ref{3:LE.11} and (\ref{3:13.146}) that 
\begin{equation}\nonumber
\begin{split}
\sigma'(w_{1},w_{2},w_{3},w_{4},n,\h',h_{s},h_{s+1})\sim_{O_{d,\e}(1)} 1\mod\Xi^{s+1}_{p}(\Lambda_{4}).
\end{split}
\end{equation}
Therefore, by Lemma \ref{3:LE8} (vi),
\begin{equation}\label{3:13.1455}
\begin{split}
\sigma(h_{0},c,b_{1},b'_{1},b_{2},b'_{2},n,\h')\sim_{O_{d,\e}(1)} 1\mod\Xi^{s+1}_{p}(\Lambda_{3}).
\end{split}
\end{equation}

\subsection{Calculation of the equivalence classes}
We now simplify (\ref{3:13.1455}) up to the equivalence relation $\sim$ by using the nilcharacters defined in (\ref{3:13.6}),  (\ref{3:13.10}), (\ref{3:13.12}) and (\ref{3:119}).
We first recapture the definitions:
\begin{equation}\label{3:13all}
\begin{split}
&\tau(h_{0},a,b,n):=\chi(h_{0}+a,n)\otimes\chi(h_{0}+b,n+a)\otimes\overline{\chi}(h_{0}+a+b,n)\otimes\overline{\chi}(h_{0},n+a),
\\&\alpha(h_{0},c,b,b',n):=\tau(h_{0},c-b-b',b,n)\otimes\overline{\tau}(h_{0},c-b-b',b',n),
\\&\alpha'(h_{0},c,b_{1},b'_{1},b_{2},b'_{2},n)
\\&:=\alpha(h_{0},c,b_{1},b'_{1},n)\otimes \overline{\alpha}(h_{0},c,b_{2},b'_{1},n)\otimes \overline{\alpha}(h_{0},c,b_{1},b'_{2},n)\otimes \alpha(h_{0},c,b_{2},b'_{2},n)
\\&\sigma(h_{0},c,b_{1},b'_{1},b_{2},b'_{2},n,\h'):=\bigotimes_{\e\in\{0,1\}^{s-1}}\mathcal{C}^{\vert \e\vert}\alpha'(h_{0},c,b_{1},b'_{1},b_{2},b'_{2},n+\e\cdot\h').
\end{split}
\end{equation}

By Lemma \ref{3:LE8} (iv), (vi), Lemma \ref{3:L13.2}, (\ref{3:ini2}) and (\ref{3:13all}), for all $i,j\in\{1,2\}$, we have that 
\begin{equation}\nonumber
\begin{split}
&\quad \tau(h_{0},c-b_{i}-b'_{j},b_{i},n)
\\&\sim_{O_{d,\e}(1)}\chi(h_{0}+c-b_{i}-b'_{j},n)\otimes\chi(h_{0}+b_{i},n+c-b_{i}-b'_{j})  
\\&\qquad\qquad \otimes\overline{\chi}(h_{0}+c-b'_{j},n)\otimes\overline{\chi}(h_{0},n+c-b_{i}-b'_{j}) 
\\&\sim_{O_{d,\e}(1)}\chi(b_{i},n+c-b_{i}-b'_{j})\otimes\overline{\chi}(b_{i},n)\mod\Xi^{s+1}_{p}(\Lambda_{3}),
\end{split}
\end{equation}
where all the sequences are viewed as nilcharacters in the variables  $(h_{0},c,b_{1},b'_{1},b_{2}$, $b'_{2}$, $n$, $\h')$.
Similarly, for all $i,j\in\{1,2\}$, we have that 
\begin{equation}\nonumber
\begin{split}
&\quad \alpha(h_{0},c,b_{i},b'_{j},n)
\\&\sim_{O_{d,\e}(1)}\tau(h_{0},c-b_{i}-b'_{j},b_{i},n)\otimes\overline{\tau}(h_{0},c-b_{i}-b'_{j},b'_{j},n) 
\\&\sim_{O_{d,\e}(1)}\chi(b_{i},n+c-b_{i}-b'_{j})\otimes\overline{\chi}(b_{i},n)\otimes\overline{\chi}(b'_{j},n+c-b_{i}-b'_{j})\otimes\chi(b'_{j},n)
\\&\sim_{O_{d,\e}(1)}\chi(b_{i}-b'_{j},n+c-b_{i}-b'_{j})\otimes\overline{\chi}(b_{i}-b'_{j},n)\mod\Xi^{s+1}_{p}(\Lambda_{3})
\end{split}
\end{equation}
and 
\begin{equation}\label{3:thisisap}
\begin{split}
&\quad\alpha'(h_{0},c,b_{1},b_{2},b'_{1},b'_{2},n)
\\&\sim_{O_{d,\e}(1)}	\bigotimes_{i,j\in\{1,2\}}\mathcal{C}^{i+j}(\chi(b_{i}-b'_{j},n+c-b_{i}-b'_{j})\otimes\overline{\chi}(b_{i}-b'_{j},n))
\\&\sim_{O_{d,\e}(1)}\bigotimes_{i,j\in\{1,2\}}\mathcal{C}^{i+j}\chi(b_{i}-b'_{j},n+c-b_{i}-b'_{j})\mod \Xi^{s+1}_{p}(\Lambda_{3}).
\end{split}
\end{equation}

\begin{lem}\label{3:ee33}	
	We have that 
	\begin{equation}\label{3:eee21}
	\begin{split}
	&\quad\sigma(h_{0},c,b_{1},b_{2},b'_{1},b'_{2},n,\h')
	\\&\sim_{O_{d,\e}(1)}\mathcal{C}^{s-1}(\tilde{\chi}(b'_{1}-b'_{2},b_{1}-b_{2},\h')\otimes\overline{\tilde{\chi}}(b_{1}-b_{2},b'_{1}-b'_{2},\h'))^{\otimes (s+1)!}\mod \Xi^{s+1}_{p}(\Lambda_{3}).
	\end{split}
	\end{equation}
\end{lem}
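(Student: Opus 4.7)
The plan is to expand $\sigma$ using the equivalences already obtained for $\alpha'$ in (\ref{3:thisisap}) and for $\chi$ in (\ref{3:ini2}), then collapse the resulting cube product via the multi-linearity of $\tilde{\chi}$.

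First I would substitute (\ref{3:thisisap}) (combined with Lemmas \ref{3:LE8} (iv)-(vi) and \ref{3:L13.2}, which allow $\sim$ to pass through the $\bigotimes_{\e}\mathcal{C}^{|\e|}$ operation) into the definition (\ref{3:119}) of $\sigma$, obtaining
\begin{equation}\nonumber
\sigma(h_{0},c,b_{1},b'_{1},b_{2},b'_{2},n,\h')\sim_{O_{d,\e}(1)} \bigotimes_{\e\in\{0,1\}^{s-1}}\bigotimes_{i,j\in\{1,2\}}\mathcal{C}^{|\e|+i+j}\chi(b_{i}-b'_{j},n+\e\cdot\h'+c-b_{i}-b'_{j})
\end{equation}
modulo $\Xi^{s+1}_{p}(\Lambda_{3})$. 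Then I would apply the fundamental equivalence (\ref{3:ini2}), which gives $\chi(h,m)\sim_{O_{d,\e}(1)}\tilde{\chi}(h,m,\dots,m)^{\otimes(s+1)}\!\!\mod\Xi^{s+1}_{p}$, in each factor to replace $\chi$ by $\tilde{\chi}$ evaluated on $s+1$ copies.

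Second, I would exploit the fact that $\tilde{\chi}$ is a degree-$(1,\dots,1)$ nilcharacter on $(\V)^{s+1}$: modulo $\Xi^{s+1}_{p}$, it is essentially multi-linear in its arguments (this is the standard consequence of Lemma \ref{3:L13.2} combined with the multi-degree structure). The outer Gowers cube product $\bigotimes_{\e\in\{0,1\}^{s-1}}\mathcal{C}^{|\e|}$ acts as $(s-1)$ successive discrete derivatives in the $n$-slot along the directions $h_{1},\dots,h_{s-1}$. Since $\tilde{\chi}(h, m, \dots, m)$ is of degree $s$ in $m$, only the top multi-linear term survives these $s-1$ derivatives modulo $\Xi^{s+1}_{p}(\Lambda_{3})$, and by the symmetry of $\tilde{\chi}$ in its last $s$ coordinates this top term equals $s!/(s-(s-1))! = s!$ copies of $\tilde{\chi}(h, h_{1},\dots,h_{s-1}, \bullet)$, where the $\bullet$-slot collects a constant (independent of $\h'$) coordinate.

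Third, the inner alternating product $\bigotimes_{i,j}\mathcal{C}^{i+j}$ amounts to the double discrete derivative $\Delta_{b_{1}-b_{2}}^{(b_{i})}\Delta_{b'_{1}-b'_{2}}^{(b'_{j})}$ applied to the expression in the variables $b_{i}$ and $b'_{j}$. Since $\tilde{\chi}$ is (modulo $\Xi^{s+1}_{p}$) multi-linear in each of its $s+1$ slots, only terms linear in both $b_{1}-b_{2}$ and $b'_{1}-b'_{2}$ survive. A routine expansion then produces, for each of the $s+1$ slots into which $b_{i}-b'_{j}$ can be placed (one for the first argument and $s$ for the last $s$ arguments), a contribution of the form $\tilde{\chi}(b_{1}-b_{2},\dots,h_{1},\dots,h_{s-1},\dots)\otimes \overline{\tilde{\chi}}(b'_{1}-b'_{2},\dots)$ (with the usual sign coming from $\mathcal{C}^{i+j}$ placing $b_{1}-b_{2}$ and $b'_{1}-b'_{2}$ in opposite roles). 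Using the symmetry of $\tilde{\chi}$ in its last $s$ coordinates, all these contributions collapse into the single pair $\tilde{\chi}(b'_{1}-b'_{2},b_{1}-b_{2},\h')\otimes\overline{\tilde{\chi}}(b_{1}-b_{2},b'_{1}-b'_{2},\h')$ (with the constant $\bullet$-slot absorbed by a further lower-degree nilsequence); counting multiplicities gives the exponent $(s+1)\cdot s!=(s+1)!$ asserted in the lemma, and the outer conjugation $\mathcal{C}^{s-1}$ appears because the cube product contributes a sign $(-1)^{s-1}$ on one of the two paired factors.

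The main obstacle is the bookkeeping in the second and third steps: one must justify carefully that all cross-terms (where a discrete derivative hits a ``lower-order'' contribution of $\tilde{\chi}$, or where the $s-1$ derivatives distribute non-uniformly over the $s$ symmetric slots) are absorbed into $\Xi^{s+1}_{p}(\Lambda_{3})$. This is precisely where the multi-linearity of degree-$(1,\dots,1)$ nilcharacters modulo lower degree (Lemma \ref{3:L13.2}) and the symmetry of $\tilde{\chi}$ in its last $s$ variables combine to ensure that the only surviving term is the claimed one; the constant factor $(s+1)!$ then follows by counting the number of ways to distribute the $s-1$ derivatives across the $s$ symmetric slots, times the $s+1$ slots in which the original $b_{i}-b'_{j}$ variable lands under (\ref{3:ini2}).
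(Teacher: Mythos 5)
Your outline follows the same skeleton as the paper's proof---substitute (\ref{3:thisisap}) into (\ref{3:119}), apply (\ref{3:ini2}) and the multi-linearity Lemma \ref{3:L13.2}, then count---but there is a genuine gap in the middle step. You assert that after the $s-1$ discrete derivatives $\bigotimes_{\e\in\{0,1\}^{s-1}}\mathcal{C}^{|\e|}$ in the $n$-slot, ``only the top multi-linear term survives'' modulo $\Xi^{s+1}_p(\Lambda_3)$. This is false. The relevant generating polynomial $\sum_{\e\in\{0,1\}^{s-1}}(-1)^{|\e|}(x_0 + \sum_{i=1}^{s-1}\e_i x_i)^s$ has, besides the coefficient $C_{1,\dots,1}=(-1)^{s-1}s!$ of $x_0x_1\cdots x_{s-1}$, the nonzero coefficients $C_{0,1,\dots,1,2,1,\dots,1}=(-1)^{s-1}s!/2$ (one for each $\ell$, where $i_0=0$ and $i_\ell=2$). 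Each produces a surviving factor $\tilde{\chi}(b_i-b'_j,h_\ell,\h')$, in which $h_\ell$ occupies the slot vacated by $h_0$. These are genuine degree-$(s+1)$ nilcharacters on $\Lambda_3$, not lower-degree nilsequences; they cannot be absorbed into $\Xi^{s+1}_p(\Lambda_3)$ at this stage, so your intermediate step as written is wrong.

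These extra factors do eventually disappear, but only after the outer cube $\bigotimes_{i,j\in\{1,2\}}\mathcal{C}^{i+j}$ is applied: since $\tilde{\chi}(b_i-b'_j,h_\ell,\h')$ is independent of $n$ and $c$, multi-linearity of $\tilde{\chi}$ in its first argument (Lemma \ref{3:L13.2}) collapses $\bigotimes_{i,j}\mathcal{C}^{i+j}\tilde{\chi}(b_i-b'_j,h_\ell,\h')$ to a tensor power of $\tilde{\chi}(b_1-b_2,h_\ell,\h')\otimes\overline{\tilde{\chi}}(b_1-b_2,h_\ell,\h')$, which is trivial modulo $\Xi^{s+1}_p(\Lambda_3)$ by Lemma \ref{3:eqqeqq}. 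Your proposal discards these terms one step too early, before the cancellation mechanism that actually kills them is available. Relatedly, the paper proves the inner-box identity for $\chi^{\otimes 2}$ rather than $\chi$, so that the half-integer coefficient $s!/2$ becomes an integer, and then recovers the stated equivalence via the torsion-free Lemma \ref{3:LE.13}; this passage is absent from your sketch and is needed to make the exponents legitimate. The final exponent $(s+1)!$ and conjugation $\mathcal{C}^{s-1}$ do come out correctly in your count, but the justification requires the explicit coefficient computation and the deferred cancellation that you have skipped.
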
	
\begin{proof}
 By (\ref{3:13all}) and (\ref{3:thisisap}), we have that
	\begin{equation}\label{3:ee32}
	\begin{split}
	&\quad\sigma(h_{0},c,b_{1},b_{2},b'_{1},b'_{2},n,\h')=\bigotimes_{\e\in\{0,1\}^{s-1}}\mathcal{C}^{\vert\e\vert}\alpha'(h_{0},c,b_{1},b_{2},b'_{1},b'_{2},n+\e\cdot\h')
	\\&\sim_{O_{d,\e}(1)}\bigotimes_{i,j\in\{1,2\}}\mathcal{C}^{i+j}\bigotimes_{\e\in\{0,1\}^{s-1}}\mathcal{C}^{\vert\e\vert}\chi(b_{i}-b'_{j},n+c-b_{i}-b'_{j}+\e\cdot\h')\mod \Xi^{s+1}_{p}(\Lambda_{3}).
	\end{split}
	\end{equation}
	
	We first claim that for all $i,j\in\{1,2\}$,  
	\begin{equation}\label{3:ee31}
	\begin{split}
	&\quad\bigotimes_{\e\in\{0,1\}^{s-1}}\mathcal{C}^{\vert\e\vert}\chi(b_{i}-b'_{j},n+c-b_{i}-b'_{j}+\e\cdot\h')^{\otimes 2}
	\\&\sim_{O_{d,\e}(1)}\mathcal{C}^{s-1} \Bigl(\tilde{\chi}(b_{i}-b'_{j},n+c-b_{i}-b'_{j},\h')^{\otimes 2} \bigotimes_{\ell=1}^{s-1}\tilde{\chi}(b_{i}-b'_{j},h_{\ell},\h')\Bigr)^{\otimes (s+1)!}\mod\Xi^{s+1}_{p}(\Lambda_{3}).
	\end{split}
	\end{equation}

	For convenience denote $h_{0}=n+c-b_{i}-b'_{j}$.
	By (\ref{3:ini2}), Lemma \ref{3:LE8} (iv) and (vi), Lemma \ref{3:L13.2}, and the fact that $\tilde{\chi}$ is symmetric in the last $s$ variables, 
		\begin{equation}\nonumber
		\begin{split}
		&\quad\bigotimes_{\e\in\{0,1\}^{s-1}}\mathcal{C}^{\vert\e\vert}\chi(b_{i}-b'_{j},n+c-b_{i}-b'_{j}+\e\cdot\h')^{\otimes 2}
		\\&\sim_{O_{d,\e}(1)} \bigotimes_{i_{0}+\dots+i_{s-1}=s}\Bigl(\tilde{\chi}(b_{i}-b'_{j},h_{0},\dots,h_{0},\dots,h_{s-1},\dots,h_{s-1})^{\otimes 2C_{i_{0},\dots,i_{s+1}}}\Bigr)^{\otimes (s+1)}\mod\Xi^{s+1}_{p}(\Lambda_{3})
		\end{split}
		\end{equation}
	for some $C_{i_{0},\dots,i_{s-1}}\in\Z$, where in the last $s$ variables of 
	$\tilde{\chi}(b_{i}-b'_{j},h_{0},\dots,h_{0},\dots,h_{s-1},\dots,h_{s-1})$, $h_{j}$ appears $i_{j}$ times for all $0\leq j\leq s+1$. Moreover,
	$C_{i_{0},\dots,i_{s-1}}$ are the unique integers such that the polynomial
	$$F(x_{0},\dots,x_{s-1}):=\sum_{\e=(\e_{1},\dots,\e_{s-1})\in\{0,1\}^{s-1}}(-1)^{\vert\e\vert}(x_{0}+\sum_{i=1}^{s-1}\e_{i}x_{i})^{s} \in \F_{p}[x_{0},\dots,x_{s-1}]$$
	can be written as
	$$F(x_{0},\dots,x_{s-1}):=\sum_{i_{0}+\dots+i_{s-1}=s} C_{i_{0},\dots,i_{s-1}}x_{0}^{i_{0}}\dots x_{s-1}^{i_{s-1}}.$$
In other words,
	$$C_{i_{0},\dots,i_{s-1}}=\sum_{\e=(\e_{1},\dots,\e_{s-1})\in E_{i_{0},\dots,i_{s-1}}}(-1)^{\vert\e\vert}\frac{s!}{i_{0}!\dots i_{s-1}!},$$
	where $E_{i_{0},\dots,i_{s-1}}$ is the set of $\e=(\e_{1},\dots,\e_{s-1})\in \{0,1\}^{s-1}$ such that $\e_{j}=1$ whenever $i_{j}\geq 1$ for all $1\leq j\leq s-1$.
	If one of $i_{1},\dots,i_{s-1}$ is 0, say $i_{\ell}=0$, then by separating those $(\e_{1},\dots,\e_{s-1})\in E_{i_{0},\dots,i_{s-1}}$ with $\e_{\ell}=0$ and those with $\e_{\ell}=1$ in the sum, we have that $C_{i_{0},\dots,i_{s-1}}=0$. So if  $C_{i_{0},\dots,i_{s-1}}\neq 0$, then all of $i_{1},\dots,i_{s-1}$ are at least 1. Since $i_{0}+\dots+i_{s-1}=s$, we have that the only nontrivial terms $C_{i_{0},\dots,i_{s-1}}$ are $C_{1,\dots,1}$, which equals to $(-1)^{s-1}s!$,
	and $C_{0,1,\dots,1,2,1,\dots,1}$ (where the coordinate 2 corresponds to the unique $i_{\ell}$ which equals to 2), which equals to $(-1)^{s-1}s!/2$.
	Since $\tilde{\chi}$ is symmetric in the last $s$ variables, this implies (\ref{3:ee31}) and proves the claim. 
	
	\

    After some simple computations,
    one can deduce (\ref{3:eee21}) from (\ref{3:ee32}) and (\ref{3:ee31}) by the claim and Lemmas \ref{3:LE8} (iv), \ref{3:L13.2} and \ref{3:LE.13}. 
	  We leave the details to the interested readers.
\end{proof}	

Since $\Lambda_{4}$ is a nice and consistent $M$-set of total co-dimension $(s^{2}+11s+12)/2$, so  is $\Lambda_{3}$ by Propositions \ref{3:yy3} (iii) and \ref{3:yy33}.
Combining Lemmas \ref{3:LE.13}, \ref{3:ee33} and equation (\ref{3:13.1455}),  we have that 
$$\tilde{\chi}(b'_{1}-b'_{2},b_{1}-b_{2},h_{1},\dots,h_{s-1})\sim_{O_{d,\e}(1)}\tilde{\chi}(b_{1}-b_{2},b'_{1}-b'_{2},h_{1},\dots,h_{s-1})\mod\Xi^{s+1}_{p}(\Lambda_{3}).$$
By Lemma \ref{3:LE8} (vi), we have that 
\begin{equation}\nonumber
\tilde{\chi}(h_{s},h_{s+1},h_{1},\dots,h_{s-1})\sim_{O_{d,\e}(1)}\tilde{\chi}(h_{s+1},h_{s},h_{1},\dots,h_{s-1})\mod\Xi^{s+1}_{p}(\Lambda_{4}),
\end{equation}
where all the sequences are viewed as nilcharacters in the variables  $(w_{1},\dots,w_{4},n,h_{1},\dots$, $h_{s+1})$.
Since $\tilde{\chi}$ is symmetric in the last $s$ variables, we have that 
\begin{equation}\label{3:keysim}
\tilde{\chi}(h_{1},\dots,h_{s+1})\sim_{O_{d,\e}(1)}\tilde{\chi}(h_{\sigma(1)},\dots,h_{\sigma(s+1)})\mod\Xi^{s+1}_{p}(\Lambda_{4})
\end{equation}
for any permutation $\sigma\colon \{1,\dots,s+1\}\to\{1,\dots,s+1\}$.

\subsection{Completion of the proof of $\SGI(s+1)$ for $s\geq 2$}\label{3:s:sss02}
	
Our final step is to use (\ref{3:keysim}) to complete the proof.

\textbf{Step 1:  reformulate (\ref{3:midpoint0}).}  
	Since every $h\in H$ is  $M$-non-isotropic,  $V(M)^{h}$ can be written as $V(M)\cap(V+c)$ for some affine subspace $V+c$ of $\V$ of co-dimension 1 such that $V\cap V^{\pp}=\{\bold{0}\}$. By Proposition \ref{3:iissoo}, $\rank(M\vert_{V+c})=d-1$. 
	So by (\ref{3:midpoint0}) and Proposition \ref{3:cinv},   
	$\Vert\Delta_{h}f\cdot\chi(h,\cdot)\Vert_{U^{s}(V(M)^{h})}\gg_{d,\e} 1$ for all $h\in H$ and thus
	\begin{equation}\nonumber
	\begin{split}
	\E_{h\in\V}\Vert\Delta_{h}f\cdot\chi(h,\cdot)\Vert_{U^{s}(V(M)^{h})}\gg_{d,\e} 1.
	\end{split}
	\end{equation}

	Denote $\tilde{f}(n):=f(n)\tilde{\chi}(n,\dots,n)$ for all $n\in\V$.
	Since $\Gow_{s+1}(V(M))$ 
 is an $M$-set of total co-dimension  $(s^{2}+3s+4)/2$ and $d\geq s^{2}+3s+5$,
 by Theorem \ref{3:ct},   	
 \begin{equation}\label{3:mmdd3}
	\begin{split}
	&\quad 1\ll_{d,\e}  \E_{h\in\V}\Vert \Delta_{h}f\cdot\chi(h,\cdot)\Vert_{U^{s}(V(M)^{h})}^{2^{s}}
	\\&=\vert\E_{(n,h_{1},\dots,h_{s+1})\in \Gow_{s+1}(V(M))}\Delta_{h_{s+1}}\dots\Delta_{h_{1}}\tilde{f}(n)\otimes\beta(h_{1},\dots,h_{s+1},n)\vert+O_{d,\e}(p^{-1/2}),
	\end{split}
	\end{equation} 
	where (using Convention \ref{3:d4n}) $$\beta(h_{1},\dots,h_{s+1},n):=\Delta_{h_{s+1}}\dots\Delta_{h_{2}}(\chi(h_{1},n)\otimes\overline{\tilde{\chi}}(n+h_{1},\dots,n+h_{1})\otimes \tilde{\chi}(n,\dots,n)).$$

\textbf{Step 2:  use (\ref{3:keysim}) to convert $\beta$ into a lower degree nilsequence.} 
	For convenience from now on we write the variables in $\Lambda_{4}$ as $(n,w_{2},\dots,w_{5},h_{1},\dots,h_{s+1})$ instead of  $(w_{1},\dots,$ $w_{4},n,h_{1},\dots,h_{s+1})$, and 
	 denote $\h:=(h_{1},\dots,h_{s+1})$ and $\w:=(w_{2},\dots,w_{5})$.

	Since $\Lambda_{4}$ is  a consistent $M$-set is of total co-dimension $(s^{2}+11s+12)/2$, we may write $\Lambda_{4}=V(\mathcal{J})$ for some consistent $(M,s+6)$-family $\mathcal{J}\subseteq\F_{p}[n,w_{2},\dots,w_{5},h_{1},\dots,h_{s+1}]$ of total dimension $(s^{2}+11s+12)/2$. Let $(\mathcal{J}',\mathcal{J}'')$ be an  $\{n,h_{1},\dots,h_{s+1}\}$-decomposition of $\mathcal{J}$. Let $\Lambda'_{4}$ be the set of $(n,\h)\in(\V)^{s+2}$  such that $(n,\w,\h)\in V(\mathcal{J}')$ for all $\w\in(\V)^{4}$.
	 For $(n,\h)\in(\V)^{s+2}$, let  $\Lambda_{4}(n,\h)$ denote the set of $\w\in(\V)^{4}$ such that $(n,\w,\h)\in V(\mathcal{J}'')$. 	By the description of $\Lambda_{4}$, we have that $\Lambda'_{4}=\Gow_{s+1}(V(M))$.
So
by Theorem \ref{3:ct} and (\ref{3:mmdd3}), we have
  		\begin{equation}\label{3:mmdd2}
		\begin{split}	 
			&\quad \vert\E_{(n,\w,\h)\in\Lambda_{4}} \Delta_{h_{s+1}}\dots\Delta_{h_{1}} \tilde{f}(n)\otimes\beta(\h,n)\vert
		\\& \gg_{d,\e}\vert\E_{(n,\h)\in \Gow_{s+1}(V(M))}\Delta_{h_{s+1}}\dots\Delta_{h_{1}}\tilde{f}(n)\otimes\beta(\h,n)\E_{\w\in \Lambda_{4}(n,\h)} 1\vert+O_{d,\e}(p^{-1/2})\gg_{d,\e} 1.
		\end{split}
		\end{equation} 
Using (\ref{3:ini2}), (\ref{3:keysim}), Lemma  \ref{3:LE8} (iii), (iv), (vi) and Lemma \ref{3:L13.2},
	 it is not hard to compute that (the details are left to the interested readers)
$$\beta(h_{1},\dots,h_{s+1},n)\sim_{O_{d,\e}(1)} 1\mod\Xi^{s+1}_{p}(\Lambda_{4}),$$
where now the variables in $\Lambda_{4}$ are labeled as $(n,w_{2},\dots,w_{5},h_{1},\dots,h_{s+1})$.
Therefore, $\beta(h_{1},\dots,h_{s+1},n)$ belongs to $\Nil^{s;O_{d,\e}(1),O_{d,\e}(1)}(\Lambda_{4})$ and thus belongs to  
$\Nil^{J;O_{d,\e}(1),O_{d,\e}(1)}(\Lambda_{4})$, where
  $$J:=\{(a_{1},\dots,a_{4},b,c_{1},\dots,c_{s+1})\in\N^{s+6}\colon a_{1}+\dots+a_{4}+b+c_{1}+\dots+c_{s+1}\leq s\}.$$
For all $1\leq i\leq s+1$, let 
$$J_{i}:=\{(a_{1},\dots,a_{4},b,c_{1},\dots,c_{s+1})\in J\colon c_{i}=0\}.$$
Then $J=\cup_{i=1}^{s+1}J_{i}$. Note that any sequence in $\Nil^{J_{i}}(\Lambda_{4})$ is independent of $h_{i}$. By (\ref{3:mmdd2}), Lemma \ref{3:LE.4} and the Pigeonhole Principle, for $1\leq i\leq s+1$,
there exists a scalar valued sequence $\psi_{i}(\w,n,\h)$ bounded by 1 which is independent of $h_{i}$ such that
	\begin{equation}\nonumber
	\begin{split}
	\Bigl\vert\E_{(n,\w,\h)\in \Lambda_{4}}\Delta_{h_{s+1}}\dots\Delta_{h_{1}}\tilde{f}(n)\prod_{i=1}^{s+1}\psi_{i}(n,\w,\h)\Bigr\vert\gg_{d,\e} 1
	\end{split}
	\end{equation}    
	since $p\gg_{d,\e} 1$.  By the definition of $\tilde{f}$ and the Pigeonhole Principle, there exists a scalar valued nilsequence $\phi'$ of step at most $s+1$ of complexity $O_{d,\e}(1)$ such that writing $f':=f\phi'$, we have that 
		\begin{equation}\label{3:pprr0}
		\begin{split}
		\Bigl\vert\E_{(n,\w,\h)\in \Lambda_{4}}\Delta_{h_{s+1}}\dots\Delta_{h_{1}}f'(n)\prod_{i=1}^{s+1}\psi_{i}(n,\w,\h)\Bigr\vert\gg_{d,\e} 1.
		\end{split}
		\end{equation}

\textbf{Step 3:  use the Cauchy-Schwartz inequality to remove $\psi_{i}$.} 	
	For $1\leq r\leq s+2$, we say that Property-$r$ holds if there exist a scalar valued function $\psi_{i}(n,\w,\h,h'_{1},\dots,h'_{r-1})$ bounded by 1 and independent of $h_{i}$ for all  $1\leq i\leq r$  such that  
	\begin{equation}\label{3:pprr}
	\begin{split}
	\Bigl\vert\E_{(n,\w,\h,h'_{1},\dots,h'_{r-1})\in \Lambda_{4,r}}\Delta_{h_{s+1}}\dots\Delta_{h_{1}}f'(n)\prod_{i=r}^{s+1}\psi_{i}(n,\w,\h,h'_{1},\dots,h'_{r-1})\Bigr\vert\gg_{d,\e} 1,
	\end{split}
	\end{equation} 	
	where $\Lambda_{4,r}$ is the set of all $(n,\w,\h,h'_{1},\dots,h'_{r-1})\in (\V)^{s+r+5}$ such that $$(n-h'_{1}-\dots-h'_{r-1},\w,h'_{1}+\e_{1}h_{1},\dots,h'_{r-1}+\e_{r-1}h_{r-1},h_{r},\dots,h_{s+1})\in \Lambda_{4}$$
	for all $\e_{1},\dots,\e_{r-1}\in \{0,1\}$, and the term $\prod_{i=r}^{s+1}\psi_{i}(n,\w,\h,h'_{1},\dots,h'_{r-1})$ is understood as 1 if $r=s+2$.
	
	Clearly, Property-1 holds by (\ref{3:pprr0}).
	Now suppose that Property-$r$ holds for some $1\leq r\leq s+1$. 
	For convenience denote $\x:=(n,\w,h_{1},\dots,h_{r-1},h_{r+1},\dots,h_{s+1},h'_{1},\dots,h'_{r-1})$, and let $(\x;h)$ be the vector obtained by inserting $h$ in the middle of $h_{r-1}$ and $h_{r+1}$.
Let $\tilde{\Lambda}_{4,r}$ denote the set of $(\x,h'_{r},h''_{r})\in(\V)^{s+r+6}$ such that $(\x;h'_{r}),(\x;h''_{r})\in\Lambda_{4,r}$.
	Since $\Lambda_{4}$  is a pure and consistent $M$-set, by Propositions \ref{3:yy3} and \ref{3:yy33}, $\Lambda_{4,r}$ and $\tilde{\Lambda}_{4,r}$ are  pure and consistent $M$-sets of total co-dimension at most $\binom{2s+8}{2}$.  
	We may then write $\Lambda_{4,r}=V(\mathcal{J})$ for some consistent $(M,s+r+5)$-family $\mathcal{J}\subseteq\F_{p}[n,w_{2},\dots,w_{5},h_{1},\dots,h_{s+1},h'_{1},\dots,h'_{r-1}]$ of total dimension at most $\binom{2s+8}{2}$.   Let $(\mathcal{J}',\mathcal{J}'')$ be an  $\{n,w_{2},\dots,w_{5},h_{1},\dots,h_{r-1},h_{r+1},\dots,h_{s+1},h'_{1},\dots,h'_{r-1}\}$-decomposition of $\mathcal{J}$. Let $\Lambda'_{4,r}$ be the set of $\x\in(\V)^{s+r+4}$  such that $(\x;h_{r})\in V(\mathcal{J}')$ for all $h_{r}\in \V$. For $\x\in(\V)^{s+r+4}$, let  $\Lambda_{4,r}(\x)$ denote the set of $h_{r}\in \V$ such that $(\x;h_{r})\in V(\mathcal{J}'')$.
	Since $d\geq (2s+7)(2s+6)+1$,
	by pulling out the $h_{r}$-independent term $$\Delta_{h_{s+1}}\dots\Delta_{h_{r+1}}\Delta_{h_{r-1}}\dots\Delta_{h_{1}}\overline{f'}(n)\psi_{r}(n,\w,h_{1},\dots,h_{s+1},h'_{1},\dots,h'_{r-1})$$ in (\ref{3:pprr}) and apply the Cauchy-Schwartz inequality and Theorem \ref{3:ct}, we have that 
	\begin{equation}\nonumber
	\begin{split}
	&\quad 1\ll_{d,\e}
	\E_{\x\in \Lambda'_{4,r}}
	\Bigl\vert\E_{h_{r}\in \Lambda_{4,r}(\x)}\Delta_{h_{s+1}}\dots\Delta_{h_{r+1}}\Delta_{h_{r-1}}\dots\Delta_{h_{1}}f'(n+h_{r})\prod_{i=r+1}^{s+1}\psi_{i}(\x;h_{r})\Bigr\vert^{2}
	\\&=\Bigl\vert\E_{(\x;h'_{r},h''_{r})\in \tilde{\Lambda}_{4,r}}\Delta_{h_{s+1}}\dots\Delta_{h_{r+1}}\Delta_{h_{r-1}}\dots\Delta_{h_{1}}(f'(n+h''_{r})\overline{f'}(n+h'_{r}))\cdot\prod_{i=r+1}^{s+1}\psi_{i}(\x;h''_{r})\overline{\psi}_{i}(\x;h'_{r})\Bigr\vert
	\\&=\Bigl\vert\E_{(\x;h'_{r}),(\x;h'_{r}+h_{r})\in \Lambda_{4,r}}\Delta_{h_{s+1}}\dots\Delta_{h_{r+1}}\Delta_{h_{r}}\Delta_{h_{r-1}}\dots\Delta_{h_{1}}f'(n+h'_{r})\cdot\prod_{i=r+1}^{s+1}\psi_{i}(\x;h'_{r}+h_{r})\overline{\psi}_{i}(\x;h'_{r})\Bigr\vert.
	\\&\qquad 
	\end{split}
	\end{equation}
	
	For all $r+1\leq i\leq s+1$, since $\psi_{i}$ is independent of $h_{i}$ and bounded by 1,
 $\psi_{i}(\x;h'_{r}+h_{r})\overline{\psi}_{i}(\x;h'_{r})$ is independent of $h_{i}$ and bounded by 1.
On the other hand, note that $(\x;h'_{r}),(\x;$ $h'_{r}+h_{r})\in \Lambda_{4,r}$ if and only if
	$$(n-h'_{1}-\dots-h'_{r-1},\w,h'_{1}+\e_{1}h_{1},\dots,h'_{r-1}+\e_{r-1}h_{r-1},h'_{r},h_{r+1},\dots,h_{s+1})\in \Lambda_{4}$$
	and 
	$$(n-h'_{1}-\dots-h'_{r-1},\w,h'_{1}+\e_{1}h_{1},\dots,h'_{r-1}+\e_{r-1}h_{r-1},h'_{r}+h_{r},h_{r+1},\dots,h_{s+1})\in \Lambda_{4}$$
	for all $\e_{1},\dots,\e_{r-1}\in \{0,1\}$. 	Writing $m=n+h'_{r}$, this is equivalent of saying that 
	$$(m-h'_{1}-\dots-h'_{r},\w,h'_{1}+\e_{1}h_{1},\dots,h'_{r}+\e_{r}h_{r},h_{r+1},\dots,h_{s+1})\in \Lambda_{4}$$
	for all $\e_{1},\dots,\e_{r-1}\in \{0,1\}$, or equivalently, $(m,\w,h_{1},\dots,h_{s+1},h'_{1},\dots,h'_{r})\in  \Lambda_{4,r+1}$. So we have that Property-$(r+1)$ holds. 
	
	Inductively, we have that Property-$(s+2)$ holds. 
	Since $\Lambda_{4}$  is a pure and consistent $M$-set, by Propositions \ref{3:yy3} and \ref{3:yy33}, $\Lambda_{4,s+2}=V(\tilde{\mathcal{J}})$ for some pure and consistent $(M,2s+7)$-family $\tilde{\mathcal{J}}\subseteq \F_{p}[n,w_{2},\dots,w_{5},h_{1},\dots,h_{s+1},h'_{1},\dots,h'_{s+1}]$ of total dimension at most $\binom{2s+8}{2}$. Let $(\tilde{\mathcal{J}}',\tilde{\mathcal{J}}'')$ be an  $\{n,h_{1},\dots,h_{s+1}\}$-decomposition of $\tilde{\mathcal{J}}$. 
	For convenience denote $\h'=(h'_{1},\dots,h'_{s+1})$. 
	Let $\Lambda''_{4,s+2}$ be the set of $(n,\h)\in(\V)^{s+2}$  such that $(n,\w,\h,\h')\in V(\tilde{\mathcal{J}}')$ for all $(\w,\h')\in (\V)^{s+5}$. 
	For $(n,\h)\in(\V)^{s+2}$, let  $\Lambda_{4,r}(n,\h)$ denote the set of $(\w,\h')\in (\V)^{s+5}$ such that $(n,\w,\h,\h')\in V(\tilde{\mathcal{J}}'')$. It is not hard to see that $\Lambda''_{4,s+2}=\Gow_{s+1}(V(M))$.
	By Property-$(s+2)$ and Theorem \ref{3:ct},	
	\begin{equation}\nonumber
	\begin{split}
	&\quad 1\ll_{d,\e}
	\vert\E_{(n,\w,\h,\h')\in \Lambda_{4,s+2}}\Delta_{h_{s+1}}\dots\Delta_{h_{1}}f'(n)\vert
	\\&=\vert\E_{(n,\h)\in \Gow_{s+1}(V(M))}\Delta_{h_{s+1}}\dots\Delta_{h_{1}}f'(n)\E_{(\w,\h')\in\Lambda_{4,s+2}(x,\h)}1\vert+O(p^{-1/2})
	\\&=\vert\E_{(n,\h)\in \Gow_{s+1}(V(M))}\Delta_{h_{s+1}}\dots\Delta_{h_{1}}(f\phi')(n)\vert+O(p^{-1/2})
	=\Vert f\phi'\Vert^{2^{s+1}}_{U^{s+1}(V(M))}+O(p^{-1/2}).
	\end{split}
	\end{equation}
	Since $\SGI(s)$ holds by assumption, there exists $\phi''\in \Nil_{p}^{s;O_{d,\e}(1),1}(\V)$   such that 
	$$\vert\E_{n\in V(M)}f(n)\phi'(n)\phi''(n)\vert\gg_{d,\e} 1.$$
This completes the proof of $\SGI(s+1)$ since $\phi'\phi''\in \Nil_{p}^{s+1;O_{d,\e}(1),1}(\V)$.

\appendix

\section{Results from previous papers}\label{3:s:AppA}

In this appendix, we collect some results proved in \cite{SunA,SunB} on quadratic forms which are used in this paper.

\subsection{Liftings of polynomials}

We refer the readers to Section \ref{3:s:defn} for the definition of liftings of polynomials.

 \begin{lem}[Lemma %\ref{1:ivie} 
 2.8 of \cite{SunA}]\label{3:ivie}
Let $d\in\N$, $p$ be a prime, and $f\in\poly(\Z^{d}\to \Z)$ be an integer valued polynomial of degree smaller than $p$. There exist integer valued polynomial $f_{1}\in\poly(\Z^{d}\to \Z)$ and integer coefficient polynomial $f_{2}\in\poly(\Z^{d}\to \Z)$ both having degrees at most $\deg(f)$ such that $\frac{1}{p}f=f_{1}+\frac{1}{p}f_{2}$.
\end{lem}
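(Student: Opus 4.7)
\textbf{Proof plan for Lemma \ref{3:ivie}.}

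The plan is to expand $f$ in the monomial basis and exploit the hypothesis $\deg(f)<p$ to reduce each coefficient modulo $p$ inside the ring $\Z_{(p)}$ (the localization of $\Z$ at the prime $p$). First I would write
$$f(n)=\sum_{m\in\N^{d},\vert m\vert\leq \deg(f)}c_{m}n^{m}$$
with $c_{m}\in\Q$, and observe that since $f$ is integer-valued of degree less than $p$, standard finite-difference arguments (or the integer-valued binomial basis expansion $f=\sum a_{m}\binom{n}{m}$ with $a_{m}\in\Z$ combined with expanding each $\binom{n}{m}$ in monomials) show that every denominator appearing in $c_{m}$ divides some $\vert m\vert!$ and is therefore coprime to $p$. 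In particular $c_{m}\in \Z_{(p)}$ for all such $m$.

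Because $\Z_{(p)}/p\Z_{(p)}\cong\F_{p}$, each $c_{m}$ admits a unique decomposition $c_{m}=c'_{m}+pd_{m}$ with $c'_{m}\in\{0,1,\dots,p-1\}\subseteq\Z$ and $d_{m}\in\Z_{(p)}$. I then define
$$f_{2}(n):=\sum_{m}c'_{m}n^{m},\qquad f_{1}(n):=\sum_{m}d_{m}n^{m}.$$
By construction $f_{2}$ is a polynomial with integer coefficients and degrees at most $\deg(f)$, and $f=pf_{1}+f_{2}$, which rearranges to $\tfrac{1}{p}f=f_{1}+\tfrac{1}{p}f_{2}$, as required.

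It remains to check that $f_{1}$ is integer-valued. For any $n\in\Z^{d}$, both $f(n)$ and $f_{2}(n)$ lie in $\Z$, hence $pf_{1}(n)=f(n)-f_{2}(n)\in\Z$. On the other hand, $f_{1}(n)=\sum_{m}d_{m}n^{m}\in\Z_{(p)}$ because each $d_{m}\in\Z_{(p)}$ and $n^{m}\in\Z$. An element of $\Z_{(p)}$ whose product with $p$ lies in $\Z$ must itself lie in $\Z$, so $f_{1}(n)\in\Z$.

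The only place where care is needed is the assertion $c_{m}\in\Z_{(p)}$; this is where the hypothesis $\deg(f)<p$ is essential, since otherwise denominators $\vert m\vert!$ can contain the factor $p$ (as in $\tfrac{1}{p}n(n-1)\cdots(n-p+1)$). Apart from this, the argument is a direct base-change computation, so I do not anticipate any serious obstacle.
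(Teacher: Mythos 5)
Your proof is correct, and the key observation — that the monomial coefficients of an integer-valued polynomial of degree $<p$ lie in $\Z_{(p)}$, so each one decomposes as an integer in $\{0,\dots,p-1\}$ plus $p$ times something in $\Z_{(p)}$ — is exactly the natural way to obtain the decomposition. The verification that $f_1$ is integer-valued (combining $pf_1(n)=f(n)-f_2(n)\in\Z$ with $f_1(n)\in\Z_{(p)}$) is clean and closes the argument; this matches the standard proof of this fact, including the version given in the cited source.

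One small point of precision: when you expand a basis element $\binom{n}{m}=\binom{n_1}{m_1}\cdots\binom{n_d}{m_d}$ in monomials, the common denominator is $m!=m_1!\cdots m_d!$ rather than $\vert m\vert!$; both are coprime to $p$ once $\vert m\vert<p$ (and $m!$ divides $\vert m\vert!$ anyway), so the conclusion $c_m\in\Z_{(p)}$ stands, but it is worth stating the denominator correctly.
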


We refer the readers to Appendix \ref{3:s:AppA4} for the definition of $d$-integral  linear transformations. 

 \begin{lem}[Lemma %\ref{1:lifting2}
 9.1 of \cite{SunA}]\label{3:lifting2}
Every $d$-integral linear transformation  $\tilde{L}\colon(\Z^{d})^{k}\to (\frac{1}{p}\Z^{d})^{k'}$ induces a $d$-integral linear transformation  $L\colon(\V)^{k}\to (\V)^{k'}$. Conversely, every $d$-integral linear transformation  $L\colon(\V)^{k}\to (\V)^{k'}$ admits a regular lifting $\tilde{L}\colon(\Z^{d})^{k}\to (\frac{1}{p}\Z^{d})^{k'}$ which is a $d$-integral linear transformation.

Moreover, we have that $p\tilde{L}\circ \tau\equiv \tau\circ L \mod p(\Z^{d})^{k'}$.
\end{lem}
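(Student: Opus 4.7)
The plan is to view the linear transformations as degree-$1$ polynomial maps and apply the polynomial lifting correspondence already established in Lemma A.1 of \cite{SunA} coordinate by coordinate. Concretely, I would represent a $d$-integral linear transformation $\tilde{L}\colon(\Z^{d})^{k}\to(\frac{1}{p}\Z^{d})^{k'}$ as a $kd\times k'd$ matrix $\tilde{A}$ with entries in $\frac{1}{p}\Z$. Each entry of the output $\tilde{L}(n)$ is then a linear form $\tilde{L}_{j}\colon(\Z^{d})^{k}\to\frac{1}{p}\Z$, which fits into the polynomial framework of Section \ref{3:s:defn} with $\deg\tilde{L}_{j}\le 1<p$.

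First I would handle the forward direction. Apply $\iota\circ p(\cdot)\circ\tau$ coordinate-wise to each $\tilde{L}_{j}$; since $p\tilde{L}_{j}$ has integer coefficients and since $\iota$ is additive modulo $p$, each induced map $L_{j}\colon(\V)^{k}\to\F_{p}$ is $\F_{p}$-linear. Packaging the $L_{j}$ together yields the desired $d$-integral linear transformation $L\colon(\V)^{k}\to(\V)^{k'}$, and the ``moreover'' congruence $p\tilde{L}\circ\tau\equiv\tau\circ L\pmod{p(\Z^{d})^{k'}}$ falls out directly from the definition of $\iota$ since $\tau\circ\iota$ is the identity modulo $p\Z$.

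For the converse, given $L\colon(\V)^{k}\to(\V)^{k'}$ of degree $1$, write $L$ via its matrix $A$ over $\F_{p}$ and define $\tilde{A}$ entrywise by $\tilde{A}_{i,j}:=\tau(A_{i,j})/p\in\{0,\tfrac{1}{p},\dots,\tfrac{p-1}{p}\}$. The resulting $\tilde{L}$ is a $d$-integral linear transformation of the required form, has degree $1=\deg L$, and satisfies the regularity condition on its coefficients; feeding it back into the forward construction recovers $L$, which verifies that $\tilde{L}$ is a regular lifting. The compatibility $p\tilde{L}\circ\tau\equiv\tau\circ L\pmod{p(\Z^{d})^{k'}}$ follows by the same coordinate-wise computation.

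There is essentially no serious obstacle here: the statement is a direct specialization of the polynomial induced/lifting correspondence (Lemma A.1 of \cite{SunA}) to linear polynomials, the only minor bookkeeping being to verify that linearity is preserved by $\iota\circ p(\cdot)\circ\tau$ and that the lift retains $\{0,1/p,\dots,(p-1)/p\}$-coefficients. Given that the referenced polynomial lemma already packages all the nontrivial modular arithmetic, I would present the argument in a few lines and omit the routine verifications, just as the author plans to do.
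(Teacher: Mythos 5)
This lemma is stated in Appendix~\ref{3:s:AppA} as a result cited from \cite{SunA} (Lemma 9.1 there) and is not re-proved in the present paper, so there is no in-paper proof to compare against directly. That said, your argument is correct and uses the natural route: a $d$-integral linear transformation is completely determined by the $k\times k'$ scalar array $(a_{i,j})$ in $\Z/p$ (respectively $\F_p$), and the correspondence is simply $a_{i,j}\mapsto\iota(pa_{i,j})$ in one direction and $a_{i,j}\mapsto\tau(a_{i,j})/p$ in the other, which is exactly the degree-$1$ instance of the general polynomial induced/lifting correspondence from \cite{SunA}. Your verification that $\iota\circ p\tilde L\circ\tau$ is $\F_p$-linear (via $\tau(m+n)\equiv\tau(m)+\tau(n)\bmod p$ and the integrality of the coefficients of $p\tilde L$) and the entrywise derivation of $p\tilde L\circ\tau\equiv\tau\circ L\bmod p(\Z^d)^{k'}$ are sound. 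The only stylistic quibble is your framing of $\tilde L$ as a general $kd\times k'd$ matrix; the $d$-integral structure forces each $d\times d$ block to be a scalar multiple of the identity, so working directly with the $k\times k'$ array of scalars is both cleaner and closer to the definition in Appendix~\ref{3:s:AppA4}, but this does not affect correctness.
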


\subsection{The rank of quadratic forms}\label{3:s:AppA2}
 
 \begin{lem}[Lemma %\ref{1:or} 
 4.2 of \cite{SunA}]\label{3:or}
	Let $M\colon\V\to\F_{p}$ be a quadratic form associated with the matrix $A$, and $x,y,z\in \V$. Suppose that $M(x)=M(x+y)=M(x+z)=0$. Then $M(x+y+z)=0$ if and only if $(yA)\cdot z=0$.
\end{lem}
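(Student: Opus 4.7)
The plan is to reduce the equivalence to a single bilinear identity by polarizing $M$ twice. Writing $M(n)=(nA)\cdot n+n\cdot u+v$ with $A$ symmetric, a routine expansion shows that the linear and constant parts cancel in any second difference, producing the polarization identity
\[
M(a+b)-M(a)-M(b)+M(0)=2(aA)\cdot b
\]
for all $a,b\in\V$. I would take this as the only nontrivial input.

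First I would apply the identity to the grouping $x+y+z=(x+y)+z$ to get
\[
M(x+y+z)=M(x+y)+M(z)-M(0)+2\bigl((x+y)A\bigr)\cdot z.
\]
Then I would apply it again, in the equivalent form $2(xA)\cdot z=M(x+z)-M(x)-M(z)+M(0)$, and substitute into the previous display. The $M(z)$ and $M(0)$ contributions cancel in pairs, leaving the clean symmetric identity
\[
M(x+y+z)=M(x+y)+M(x+z)-M(x)+2(yA)\cdot z.
\]

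Under the three vanishing hypotheses $M(x)=M(x+y)=M(x+z)=0$, this collapses to $M(x+y+z)=2(yA)\cdot z$. Since $p$ is odd throughout the paper, $2$ is invertible in $\F_p$, and the claimed equivalence $M(x+y+z)=0\Longleftrightarrow (yA)\cdot z=0$ follows immediately. There is essentially no obstacle here: the whole argument is a two-step symbolic cancellation, and the only subtlety is the characteristic-$2$ case, which is excluded by the standing regime $p\gg 1$ used throughout the series.
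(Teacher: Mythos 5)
Your proof is correct and is essentially the standard (and really the only reasonable) argument: expand $M$, polarize twice, and read off $M(x+y+z)-M(x+y)-M(x+z)+M(x)=2(yA)\cdot z$. You also correctly flag the only subtlety, namely that the equivalence needs $2$ to be invertible, which is covered by the series' standing assumption $p\gg 1$.
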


Let $M\colon\V\to\F_{p}$ be a quadratic form associated with the matrix $A$ and $V$ be a subspace of $\V$. Let $V^{\pp}$ denote the set of $\{n\in\V\colon (mA)\cdot n=0 \text{ for all } m\in V\}$.
		A subspace $V$ of $\V$ is \emph{$M$-isotropic} if $V\cap V^{\pp}\neq\{\bold{0}\}$. 
	We say that a tuple $(h_{1},\dots,h_{k})$ of vectors in $\V$ is \emph{$M$-isotropic} if the span of $h_{1},\dots,h_{k}$ is an $M$-isotropic subspace.
	We say that a subspace or tuple of vectors is \emph{$M$-non-isotropic} if it is not $M$-isotropic.

\begin{prop}[Proposition %\ref{1:iissoo} 
4.8 of \cite{SunA}]\label{3:iissoo}
Let $M\colon \V\to\F_{p}$ be a quadratic form and $V$ be a subspace of $\V$ of co-dimension $r$, and $c\in\V$.
	\begin{enumerate}[(i)]
		\item We have $\dim(V\cap V^{\pp})\leq \min\{d-\rank(M)+r,d-r\}$.
		\item The rank of $M\vert_{V+c}$ equals to $d-r-\dim(V\cap V^{\pp})$ (i.e. $\dim(V)-\dim(V\cap V^{\pp})$). 		
		\item The rank of $M\vert_{V+c}$ is at most $d-r$ and at least $\rank(M)-2r$.
		\item $M\vert_{V+c}$ is non-degenerate (i.e. $\rank(M\vert_{V+c})=d-r$) if and only if $V$ is not an $M$-isotropic subspace.
	\end{enumerate}	
\end{prop}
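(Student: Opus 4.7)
The plan is to reduce all four claims to elementary linear algebra on the symmetric bilinear form $b(x,y):=(xA)\cdot y$ restricted to $V$. First, I would fix any bijective linear $\phi\colon \F_p^{d-r}\to V$, represent it by a $(d-r)\times d$ matrix $P$, and expand
$$M(\phi(n)+c)=(\phi(n)A)\cdot \phi(n)+2(\phi(n)A)\cdot c+\phi(n)\cdot u+M(c).$$
Since the linear and constant parts do not affect the rank of a quadratic form, $\rank(M\vert_{V+c})$ equals the rank of the quadratic form $n\mapsto (nP)A\cdot (nP)$, equivalently the rank of the symmetric matrix $PAP^{T}$, equivalently the rank of $b$ restricted to $V\times V$. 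This identification is independent of $c$ and of the choice of $\phi$ (already established in \cite{SunA}).

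Part (ii) then follows by unpacking the definition of the radical of a symmetric bilinear form: $v\in V$ lies in the radical of $b\vert_{V\times V}$ iff $(vA)\cdot w=0$ for all $w\in V$, i.e.\ iff $v\in V\cap V^{\pp}$. Hence
$$\rank(M\vert_{V+c})=\dim(V)-\dim(V\cap V^{\pp})=(d-r)-\dim(V\cap V^{\pp}).$$
This already yields $\rank(M\vert_{V+c})\leq d-r$ and the bound $\dim(V\cap V^{\pp})\leq d-r$ in (i). For the other half of (i), I would describe $V^{\pp}$ as the preimage under the standard dot product of the subspace $VA:=\{vA\colon v\in V\}\subseteq \V$, giving $\dim(V^{\pp})=d-\dim(VA)$. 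Rank-nullity applied to $v\mapsto vA$ on $V$ gives $\dim(VA)=\dim(V)-\dim(V\cap\ker A)$, and since $\dim(\ker A)=d-\rank(M)$, we obtain
$$\dim(V\cap V^{\pp})\leq \dim(V^{\pp})=d-\dim(V)+\dim(V\cap\ker A)\leq r+(d-\rank(M)),$$
proving (i).

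Parts (iii) and (iv) are then immediate book-keeping: substitute the bound from (i) into (ii) for the lower bound $\rank(M\vert_{V+c})\geq \rank(M)-2r$, use $\dim(V\cap V^{\pp})\geq 0$ for the upper bound, and observe that equality $\rank(M\vert_{V+c})=d-r$ in (ii) holds iff $\dim(V\cap V^{\pp})=0$, i.e.\ $V\cap V^{\pp}=\{\bold{0}\}$, which is exactly the $M$-non-isotropy condition. There is no real obstacle beyond routine linear algebra over $\F_p$; the only mildly delicate point is the well-definedness of $\rank(M\vert_{V+c})$ under the choice of parametrization $\phi$, which is handled in \cite{SunA}, and (should $p=2$ occur) the harmless factor of $2$ in the cross-term expansion of $M(\phi(n)+c)$, which does not affect the quadratic part and hence not the rank computation.
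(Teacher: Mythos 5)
Your argument is correct, and it is the natural one: identify $\rank(M\vert_{V+c})$ with $\rank(PAP^{T})$, recognize the kernel of $n\mapsto nPAP^{T}$ as the preimage of $V\cap V^{\pp}$ under $\phi$, and then read off all four parts from $\rank(M\vert_{V+c})=(d-r)-\dim(V\cap V^{\pp})$ together with the dimension count $\dim(V^{\pp})=d-\dim(VA)=r+\dim(V\cap\ker A)$. Note, however, that the present paper does not prove this proposition itself; it imports it as Proposition 4.8 of \cite{SunA}, so there is no in-text proof for direct comparison. That said, the steps you take are precisely the ones forced by the definitions (rank via a choice of $\phi$, $V^{\pp}$ as the orthogonal of $VA$, rank–nullity for $v\mapsto vA$ on $V$), and your remark that the cross-term in the expansion of $M(\phi(n)+c)$ is linear in $n$ and hence irrelevant to the quadratic part — so the characteristic-$2$ factor of $2$ causes no trouble — is exactly the right observation to close the one potential gap. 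Nothing is missing.
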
	
 
  \begin{lem}[Lemma %\ref{1:iiddpp}
  4.11 of \cite{SunA}]\label{3:iiddpp}
	Let $d,k,K\in\N_{+}$, $p$ be a prime dividing $K$, and $M\colon\V\to\F_{p}$ be a non-degenerate quadratic form.
	\begin{enumerate}[(i)]
%		\item The number of tuples $(h_{1},\dots,h_{k})\in (\V)^{k}$ such that $h_{1},\dots,h_{k}$ are linearly  dependent is at most $kp^{(d+1)(k-1)}$.
	%	\item The number of $M$-isotropic tuples $(h_{1},\dots,h_{k})\in (\V)^{k}$ is at most $O_{d,k}(p^{kd-1})$. 
		\item %For $K\in\N_{+}$ with $K\in p\Z$, if $d\geq k$, 
		The number of tuples $(h_{1},\dots,h_{k})\in (\Vk)^{k}$ such that $\iota(h_{1}),\dots,\iota(h_{k})$ are  linearly  dependent is at most %$kK^{(d+1)(k-1)}(\frac{K}{p})^{d-k+1}\leq kK^{(d+1)(k-1)}$.
		$k\frac{K^{dk}}{p^{d-k+1}}$.
		\item %For $K\in\N_{+}$ with $K\in p\Z$, if $d\geq k$, 
		The number of tuples $(h_{1},\dots,h_{k})\in (\Vk)^{k}$ such that $\iota(h_{1}),\dots,\iota(h_{k})$ are $M$-isotropic is at most %$kK^{(d+1)(k-1)}(\frac{K}{p})^{d-k+1}\leq kK^{(d+1)(k-1)}$.
		$O_{d,k}(\frac{K^{dk}}{p})$.
		\footnote{Although  Lemma 4.11 of \cite{SunA} was stated for the case $K=p$, the general case can be deduced immidiately.}
	\end{enumerate}	
\end{lem}

\subsection{Some basic counting properties}
  
  We refer the readers to Section \ref{3:s:defn} for the notations used in this section.
 
\begin{lem}[Lemma %\ref{1:ns}
4.10 of \cite{SunA}]\label{3:ns}
Let $P\in\poly(\V\to\F_{p})$ be  of degree at most $r$.
	Then $\vert V(P)\vert\leq O_{d,r}(p^{d-1})$ unless $P\equiv 0$.
\end{lem}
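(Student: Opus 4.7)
The plan is to prove Lemma \ref{3:ns} by induction on the dimension $d$; this is the classical Schwartz--Zippel argument adapted to $\F_p$. The base case $d=1$ is immediate: a non-zero univariate polynomial over $\F_p$ of degree at most $r$ has at most $r$ roots, so $|V(P)|\leq r=O_r(1)$, which is $O_r(p^0)$ as required.

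For the inductive step, I would fix $d\geq 2$ and assume the bound in dimension $d-1$. Choosing a representative of $P$ in $\F_p[x_1,\dots,x_d]$ of degree at most $r$, I expand it in the last variable:
\[
P(x_1,\dots,x_d)=\sum_{i=0}^{r}Q_i(x_1,\dots,x_{d-1})\,x_d^i,
\]
where each $Q_i\in\poly(\F_p^{d-1}\to\F_p)$ has degree at most $r-i$. Since $P\not\equiv 0$, there is a largest index $j$ for which $Q_j\not\equiv 0$. I would then split the count of zeros according to the value of $Q_j$ at the first $d-1$ coordinates. When $Q_j(x_1,\dots,x_{d-1})\neq 0$, the specialisation $x_d\mapsto P(x_1,\dots,x_{d-1},x_d)$ is a non-zero univariate polynomial of degree exactly $j\leq r$ and hence has at most $r$ zeros in $x_d$; when $Q_j(x_1,\dots,x_{d-1})=0$, it may contribute up to $p$ zeros. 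Applying the inductive hypothesis to $Q_j$ (a polynomial in $d-1$ variables of degree at most $r$, non-zero by choice of $j$) bounds the number of the latter tuples by $O_{d-1,r}(p^{d-2})$. Combining,
\[
|V(P)|\leq r\cdot p^{d-1}+O_{d-1,r}(p^{d-2})\cdot p=O_{d,r}(p^{d-1}),
\]
which closes the induction.

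The only mild subtlety is that polynomial functions on $\V$ do not have a canonical degree because of the relation $x^p=x$, but since the hypothesis supplies $P$ as an element of $\poly(\V\to\F_p)$ of degree at most $r$, we may simply fix such a representative in $\F_p[x_1,\dots,x_d]$ and the argument above is purely algebraic. I do not anticipate any genuine obstacle here.
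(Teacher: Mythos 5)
Your argument is correct and is precisely the standard Schwartz--Zippel induction, which is essentially the only natural proof of this statement. The paper does not actually reprove this lemma (it is cited from part I of the series, Lemma 4.10 of \cite{SunA}), so there is no in-text proof to compare against, but your route is surely the intended one.

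One small point worth making explicit: the expansion $P=\sum_{i=0}^{r}Q_i\,x_d^i$ produces the $Q_i$ as polynomials, while the inductive hypothesis --- the lemma itself in dimension $d-1$ --- is stated in terms of the \emph{function} $Q_j$ not being identically zero on $\F_p^{d-1}$. You should choose $j$ to be the largest index with $Q_j$ nonvanishing as a function (which exists: if every $Q_i$ were the zero function, then every specialisation $x_d\mapsto P(x_1,\dots,x_{d-1},x_d)$ would be the zero polynomial and $P$ would vanish identically, contrary to hypothesis). With this choice, for any $(x_1,\dots,x_{d-1})$ with $Q_j(x_1,\dots,x_{d-1})\neq 0$ the specialisation indeed has degree exactly $j\leq r$ (the coefficients $Q_{j'}$ with $j'>j$ all evaluate to $0$ since those are zero functions), and the inductive hypothesis does apply to $Q_j$. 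Your write-up blurs polynomial nonvanishing with function nonvanishing; for $r<p$, which is the regime the paper works in, these coincide, so nothing goes wrong, but the cleaner version is to fix the function interpretation throughout as sketched above. With that clarification the induction closes as you say.
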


 \begin{lem}[Corollaries %\ref{1:counting01}
 4.14 and   %\ref{1:countingh}
 C.7 of \cite{SunA}]\label{3:countingh}
Let $d\in \N_{+},r,s\in\N$ and $p$ be a prime. Let $M\colon\V\to\F_{p}$ be a quadratic form and $V+c$ be an affine subspace of $\V$ of co-dimension $r$. If either $\rank(M\vert_{V+c})$ or $\rank(M)-2r$ is at least $s^{2}+s+3$, then
	$$\vert \Gow_{s}(V(M)\cap (V+c))\vert=p^{(s+1)(d-r)-(\frac{s(s+1)}{2}+1)}(1+O_{s}(p^{-1/2})).$$
	
	In particular, If either $\rank(M\vert_{V+c})$ or $\rank(M)-2r$ is at least $3$, then
	$$\vert V(M)\cap (V+c)\vert=p^{d-r-1}(1+O(p^{-1/2})).$$
\end{lem}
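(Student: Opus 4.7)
The plan is to reduce the count to a Gauss-sum estimate on the restricted quadratic form. First I would observe that the linear conditions $n + \sum_i \epsilon_i h_i \in V+c$ for all $\epsilon \in \{0,1\}^s$ are equivalent, by linearity of $V$, to the single assertion $n \in V+c$ and $h_1, \ldots, h_s \in V$, so the raw count without the $V(M)$ restriction is $p^{(s+1)(d-r)}$. Choosing a bijective linear isomorphism $\F_p^{d-r} \cong V$, we may replace $M|_{V+c}$ by a quadratic form $\tilde M$ on $\F_p^{d-r}$ of rank $\rank(M|_{V+c})$; by Proposition \ref{3:iissoo}(iii), in either case of the hypothesis we may assume $\rank(\tilde M) \geq s^{2} + s + 3$, and we are reduced to estimating $\vert\Gow_s(V(\tilde M))\vert$.

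Next I would exploit that $M$ has degree $2$ to collapse the $2^s$ quadratic conditions $M(n + \sum \epsilon_i h_i) = 0$ down to only $\frac{s(s+1)}{2}+1$ essentially independent ones. Indeed, all third and higher discrete derivatives of a quadratic form vanish identically, so by inclusion--exclusion the system is equivalent to its restriction to $\vert\epsilon\vert \leq 2$, namely $\tilde M(n) = 0$, $\tilde M(n+h_i) = 0$ for $1 \leq i \leq s$, and $\tilde M(n + h_i + h_j) = 0$ for $1 \leq i < j \leq s$. Lemma \ref{3:or} further rewrites the last family: given the preceding vanishings, $\tilde M(n + h_i + h_j) = 0$ is equivalent to the purely bilinear relation $(h_i A) \cdot h_j = 0$, where $A$ is the matrix of $\tilde M$. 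So the problem reduces to counting tuples in $(V+c)\times V^s$ satisfying $1 + s + \binom{s}{2} = \frac{s(s+1)}{2} + 1$ scalar equations (one quadratic in $n$, $s$ quadratic in $(n,h_i)$, and $\binom{s}{2}$ bilinear in $(h_i,h_j)$).

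The count itself I would carry out by Fourier expansion on $\F_p$. Writing $\mathbf{1}_{x=0} = p^{-1}\sum_{t\in\F_p}\exp(t\tau(x)/p)$ for each of the residual equations, the diagonal contribution (all Fourier characters trivial) yields exactly $p^{(s+1)(d-r) - (s(s+1)/2 + 1)}$, while the off-diagonal terms become Gauss sums and mixed bilinear character sums in $(n,h_1,\dots,h_s)$. The rank hypothesis $\rank(\tilde M) \geq s^{2}+s+3$ is precisely what guarantees that every such sum, after fixing the remaining variables, retains enough non-degeneracy to satisfy the Weil bound with an $O(p^{-1/2})$ saving uniformly in the fixed data. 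Summing these contributions gives the total error $O_s(p^{-1/2})$. The ``in particular'' statement is the $s=0$ case of this computation, which is just the standard count for the vanishing locus of a high-rank quadratic form.

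The main obstacle is bookkeeping: each bilinear constraint $(h_iA)\cdot h_j = 0$, when unfolded via Fourier, perturbs the matrix of the quadratic character sum in the remaining variables and potentially drops its effective rank. Imposing all $s$ quadratic and $\binom{s}{2}$ bilinear equations can erode the rank by roughly $s^2$, which is exactly the order of magnitude encoded in the hypothesis $\rank(\tilde M) \geq s^2+s+3$. A clean way to handle this is by induction on $s$, using the recursive description
\[
\Gow_{s}(\Omega) = \{(n,h_1,\dots,h_{s-1},h_s) : (n,h_1,\dots,h_{s-1}),\,(n+h_s,h_1,\dots,h_{s-1}) \in \Gow_{s-1}(\Omega)\},
\]
and invoking Proposition \ref{3:iissoo}(iii) at each stage to control the rank drop caused by the codimension-one slicing that the recursion introduces. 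With this setup, the base case $s=0$ reduces to the classical Gauss-sum estimate, and the inductive step accumulates only bounded (depending on $s$) error from the Weil bounds.
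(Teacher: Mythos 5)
This lemma is not proved in the present paper; it is cited verbatim from Lemma 4.14 and Corollary C.7 of \cite{SunA}, so there is no in-text proof to compare against. That said, your reductions are the right ones and in fact reproduce exactly the characterization recorded in Lemma \ref{3:changeh}: $(n,h_1,\dots,h_s)\in\Gow_s(V(M)\cap(V+c))$ iff $n\in V+c$, $h_i\in V$, $n\in V(M)^{h_1,\dots,h_s}$, and $(h_iA)\cdot h_j=0$ for $i\neq j$, which is $1+s+\binom{s}{2}=\frac{s(s+1)}{2}+1$ scalar equations. You also correctly identify $s^2+s+3=2\bigl(\frac{s(s+1)}{2}+1\bigr)+1$ as the natural $2r+1$ threshold that shows up throughout the series (e.g. Theorem \ref{3:ct}). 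The difference from the papers' route is one of packaging: \cite{SunA} encodes $\Gow_s(V(M)\cap(V+c))$ as a nice, consistent $M$-set of total co-dimension $\frac{s(s+1)}{2}+1$ and invokes the general counting statement Theorem \ref{3:ct}(i), whereas you propose to inline the Fourier/Gauss-sum argument that presumably underlies that theorem. Your approach is more elementary and self-contained; theirs amortizes the bookkeeping across the whole series.

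Where I would push back is on the final step, which you flag as ``the main obstacle'' but do not resolve. After Fourier-expanding the $r:=\frac{s(s+1)}{2}+1$ constraints, the error sum has roughly $p^r$ off-diagonal frequencies $t\neq 0$, and to beat the main term you need each resulting exponential sum over $(n,h_1,\dots,h_s)\in\F_p^{(s+1)(d-r)}$ to save a factor of at least $p^{r+1/2}$. That forces the effective rank of the combined quadratic form in the exponent to be $\geq 2r+1$ \emph{uniformly in $t\neq 0$}, and this is genuinely delicate: a $t$ supported only on the bilinear constraints $(h_iA)\cdot h_j=0$ produces an exponent that is completely independent of $n$, so the $n$-sum contributes nothing and all of the cancellation must come from $(h_1,\dots,h_s)$. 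Your proposed inductive fix via $\Gow_s(\Omega)=\{(n,h_1,\dots,h_s):(n,h_{<s}),(n+h_s,h_{<s})\in\Gow_{s-1}(\Omega)\}$ does not obviously close this, because the recursion \emph{doubles} the $\Gow_{s-1}$-constraints rather than merely adding a codimension-one slice, and the two copies share $(n,h_{<s})$ in a correlated way; controlling the overlap count requires exactly the kind of rank-stability-under-slicing statement you are trying to prove. This is the gap the $M$-set formalism in \cite{SunA} (consistency, the $2r+1$ hypothesis, Theorem \ref{3:ct}) is designed to fill systematically; your outline reaches the right ledge but stops short of the jump.
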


\begin{lem}[Lemma %\ref{1:changeh} 
4.18 of \cite{SunA}]\label{3:changeh}
Let $s\in\N$, $M\colon\V\to\F_{p}$ be a quadratic form associated with the matrix $A$, and $V+c$ be an affine subspace of $\V$  of dimension $r$. For $n,h_{1},\dots,h_{s}\in\V$, we have that $(n,h_{1},\dots,h_{s})\in \Gow_{s}(V(M)\cap (V+c))$ if and only if 
	\begin{itemize}
		\item $n\in V+c$, $h_{1},\dots,h_{s}\in V$;
		\item $n\in V(M)^{h_{1},\dots,h_{s}}$;
		\item $(h_{i}A)\cdot h_{j}=0$ for all $1\leq i,j\leq s, i\neq j$.
	\end{itemize}	
	
	In particular, let $\phi\colon\F_{p}^{r}\to V$ be any bijective linear transformation, $M'(m):=M(\phi(m)+c)$. We have that $(n,h_{1},\dots,h_{s})\in \Gow_{s}(V(M)\cap (V+c))$ if and only if 
	$(n,h_{1},\dots,h_{s})=(\phi(n')+c,\phi(h'_{1}),\dots,\phi(h'_{s}))$
	for some $(n',h'_{1},\dots,h'_{s})\in \Gow_{s}(V(M'))$.
\end{lem}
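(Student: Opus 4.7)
The statement essentially unpacks the definition of $\Gow_s(V(M)\cap(V+c))$ and then identifies the ``sphere-like'' obstructions to membership in terms of the bilinear form associated with $M$. I will treat the two assertions separately.

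\textbf{Part 1: the three bullet-point characterization.} By definition, $(n,h_1,\ldots,h_s)\in\Gow_s(V(M)\cap(V+c))$ if and only if, for every $\epsilon=(\epsilon_1,\ldots,\epsilon_s)\in\{0,1\}^s$, the point $n+\epsilon_1 h_1+\dots+\epsilon_s h_s$ lies in $V(M)\cap(V+c)$. First I separate the affine part from the quadratic part. Taking $\epsilon=0$ forces $n\in V+c$; then taking $\epsilon$ to have exactly one nonzero coordinate forces each $h_i\in V$ (since $V+c$ is an affine subspace). Conversely, these two conditions ensure $n+\sum\epsilon_i h_i\in V+c$ for every $\epsilon$. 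Thus the ``$V+c$'' part of the definition is exactly the first bullet.

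For the ``$V(M)$'' part, I need $M(n+\sum\epsilon_i h_i)=0$ for all $\epsilon\in\{0,1\}^s$. Taking $|\epsilon|\leq 1$ is exactly $n\in V(M)^{h_1,\ldots,h_s}$ (the second bullet). The forward direction of the third bullet is immediate from Lemma~\ref{3:or}: taking $x=n$, $y=h_i$, $z=h_j$ with $i\neq j$, the four identities $M(n)=M(n+h_i)=M(n+h_j)=M(n+h_i+h_j)=0$ force $(h_iA)\cdot h_j=0$. For the converse, I argue by induction on $|S|$ that if $n\in V(M)^{h_1,\ldots,h_s}$ and $(h_iA)\cdot h_j=0$ for $i\neq j$, then $M(n+\sum_{k\in S}h_k)=0$ for every $S\subseteq\{1,\ldots,s\}$. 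The cases $|S|\leq 1$ are the hypothesis; the case $|S|=2$ is Lemma~\ref{3:or}. For $|S|\geq 3$, pick distinct $i,j\in S$, let $R=S\setminus\{i,j\}$, and apply Lemma~\ref{3:or} with $x=n+\sum_{k\in R}h_k$, $y=h_i$, $z=h_j$; all three hypotheses $M(x)=M(x+h_i)=M(x+h_j)=0$ follow from the induction hypothesis applied to $R$, $R\cup\{i\}$, $R\cup\{j\}$, and the orthogonality $(h_iA)\cdot h_j=0$ then yields $M(x+h_i+h_j)=M(n+\sum_{k\in S}h_k)=0$.

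\textbf{Part 2: the reduction via the linear isomorphism $\phi$.} Given a bijective linear $\phi\colon\F_p^r\to V$, every pair $(n,h_i)$ with $n\in V+c$ and $h_i\in V$ can be uniquely written as $n=\phi(n')+c$, $h_i=\phi(h'_i)$ for some $n',h'_i\in\F_p^r$. Since $M'(m):=M(\phi(m)+c)$, I compute
\[
M'\!\Bigl(n'+\sum_i\epsilon_i h'_i\Bigr)=M\!\Bigl(\phi(n')+c+\sum_i\epsilon_i\phi(h'_i)\Bigr)=M\!\Bigl(n+\sum_i\epsilon_i h_i\Bigr),
\]
so the vanishing of one side for all $\epsilon\in\{0,1\}^s$ is equivalent to the vanishing of the other. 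Combined with Part~1 (which shows that once $n\in V+c$ and $h_i\in V$, membership in $\Gow_s(V(M)\cap(V+c))$ is controlled purely by the values of $M$ on the $2^s$ vertices), this gives the claimed bijection between $\Gow_s(V(M)\cap(V+c))$ and $\Gow_s(V(M'))$ via $\phi$ and the translation by $c$.

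\textbf{Main obstacle.} There is no serious obstacle; the only nontrivial ingredient is the inductive step that propagates orthogonality into vanishing of $M$ on all partial sums, and this is a direct iteration of Lemma~\ref{3:or}. The rest is bookkeeping on the definition of $\Gow_s$ and on the affine-linear change of variables $\phi(\cdot)+c$.
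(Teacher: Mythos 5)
Your proof is correct, and it uses the natural approach via the orthogonality criterion in Lemma~\ref{3:or}: extracting the vertex conditions at Hamming weight $\leq 2$, deriving the orthogonality constraints $(h_iA)\cdot h_j=0$, and then propagating vanishing of $M$ to all $2^s$ vertices of the parallelepiped by induction on the size of the index set $S$, applying Lemma~\ref{3:or} at $x=n+\sum_{k\in R}h_k$. The inductive step is carefully justified (all three hypotheses of the orthogonality lemma come from strictly smaller index sets), and the change-of-variables in Part~2 is a routine unwinding of $M'=M(\phi(\cdot)+c)$. Note that this paper only quotes the lemma from the first part of the series \cite{SunA} and does not reproduce a proof in this document, so there is no text here to compare against line by line; but your argument is a complete and correct proof that matches the tools the paper does supply, namely Lemma~\ref{3:or} and the definition of the Gowers set.
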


 \subsection{$M$-families and $M$-sets}\label{3:s:AppA4}

We say that a linear transformation  $L\colon(\V)^{k}\to (\V)^{k'}$ is \emph{$d$-integral} if there exist $a_{i,j}\in\F_{p}$ for $1\leq i\leq k$ and $1\leq j\leq k'$ such that 
$$L(n_{1},\dots,n_{k})=\Bigl(\sum_{i=1}^{k}a_{i,1}n_{i},\dots,\sum_{i=1}^{k}a_{i,k'}n_{i}\Bigr)$$
for all $n_{1},\dots,n_{k}\in \V$. 
Let $L\colon(\V)^{k}\to \V$ be a $d$-integral linear transformation given by
$L(n_{1},\dots,n_{k})=\sum_{i=1}^{k}a_{i}n_{i}$ for some  $a_{i}\in\F_{p},1\leq i\leq k$. We say that $L$ is the $d$-integral linear transformation \emph{induced} by $(a_{1},\dots,a_{k})\in\F_{p}^{k}$.

Similarly, we say that a linear transformation  $L\colon(\Z^{d})^{k}\to (\frac{1}{p}\Z^{d})^{k'}$ is \emph{$d$-integral} if there exist $a_{i,j}\in\Z/p$ for $1\leq i\leq k$ and $1\leq j\leq k'$ such that 
$$L(n_{1},\dots,n_{k})=\Bigl(\sum_{i=1}^{k}a_{i,1}n_{i},\dots,\sum_{i=1}^{k}a_{i,k'}n_{i}\Bigr)$$
for all $n_{1},\dots,n_{k}\in \Z^{d}$.

Let $d\in\N_{+}$, $p$  be a prime, $M\colon\V\to\F_{p}$ be a quadratic form with $A$ being the associated matrix.  We say that a function $F\colon(\V)^{k}\to\F_{p}$ is an \emph{$(M,k)$-integral quadratic function} 
if 
\begin{equation}\label{3:thisisf2}
F(n_{1},\dots,n_{k})=\sum_{1\leq i\leq j\leq k}b_{i,j}(n_{i}A)\cdot n_{j}+\sum_{1\leq i\leq k} v_{i}\cdot n_{i}+u
\end{equation}
for some $b_{i,j}, u\in \F_{p}$ and $v_{i}\in\F_{p}^{d}$.
We say that an $(M,k)$-integral quadratic function $F$ is \emph{pure}
if $F$ can be written in the form of (\ref{3:thisisf2}) with $v_{1}=\dots=v_{k}=\bold{0}$. 
We say that  an $(M,k)$-integral quadratic function $F\colon(\V)^{k}\to\F_{p}$ is \emph{nice}
if 
\begin{equation}\nonumber
F(n_{1},\dots,n_{k})=\sum_{1\leq i\leq k'}b_{i}(n_{k'}A)\cdot n_{i}+u
\end{equation}
for some $0\leq k'\leq k$, $b_{i}, u\in \F_{p}$.
 
For $F$ given in (\ref{3:thisisf2}), denote 
$$v_{M}(F):=(b_{k,k},b_{k,k-1},\dots,b_{k,1},v_{k},b_{k-1,k-1},\dots,b_{k-1,1},v_{k-1},\dots,b_{1,1},v_{1},u)\in\F_{p}^{\binom{k+1}{2}+kd+1},$$
and
$$v'_{M}(F):=(b_{k,k},b_{k,k-1},\dots,b_{k,1},v_{k},b_{k-1,k-1},\dots,b_{k-1,1},v_{k-1},\dots,b_{1,1},v_{1})\in\F_{p}^{\binom{k+1}{2}+kd}.$$
Informally, we say that $b_{i,j}$ is the $n_{i}n_{j}$-coefficient, $v_{i}$ is the $n_{i}$-coefficient, and $u$ is the constant term coefficient for these vectors.

An \emph{$(M,k)$-family} is a collections of $(M,k)$-integral quadratic functions.
Let $\mathcal{J}=\{F_{1},\dots$, $F_{r}\}$ be an $(M,k)$-family.
\begin{itemize}
    \item We say that $\mathcal{J}$ is \emph{pure} if all of $F_{1},\dots,F_{r}$ are pure.
    \item We say that $\mathcal{J}$ is \emph{consistent} if $(0,\dots,0,1)$ does not belong to the span of $v_{M}(F_{1}),$ $\dots,$ $v_{M}(F_{r})$, or equivalently, there is no linear combination of $F_{1},\dots,F_{r}$ which is a constant nonzero function, or equivalently, for all $c_{1},\dots,c_{r}\in\F_{p}$, we have
    $$c_{1}v'_{M}(F_{1})+\dots+c_{r}v'_{M}(F_{r})=\bold{0}\Rightarrow c_{1}v_{M}(F_{1})+\dots+c_{r}v_{M}(F_{r})=\bold{0}.$$
    \item We say that $\mathcal{J}$ is \emph{independent} if $v'_{M}(F_{1}),\dots,v'_{M}(F_{r})$ are linearly independent, or equivalently, there is no nontrivial linear combination of $F_{1},\dots,F_{r}$ which is a constant function, or equivalently, for all $c_{1},\dots,c_{r}\in\F_{p}$, we have
    $$c_{1}v'_{M}(F_{1})+\dots+c_{r}v'_{M}(F_{r})=\bold{0}\Rightarrow c_{1}=\dots=c_{r}=0.$$
\item We say that $\mathcal{J}$ is \emph{nice} if there exist some bijective $d$-integral  linear transformation $L\colon(\V)^{k}\to(\V)^{k}$ and some $v\in(\V)^{k}$ such that $F_{i}(L(\cdot)+v)$ is nice for all $1\leq i\leq r$.
\end{itemize}

The dimension of the span of $v'_{M}(F_{1}),\dots,v'_{M}(F_{r})$ is called the \emph{dimension} of an $(M,k)$-family $\{F_{1},\dots,F_{r}\}$.

When there is no confusion, we call an $(M,k)$-family to be an \emph{$M$-family} for short.

We say that a subset $\Omega$ of $(\V)^{k}$ is an  \emph{$M$-set} if there exists an $(M,k)$-family $\{F_{i}\colon (\V)^{k}\to\V\colon 1\leq i\leq r\}$
 such that $\Omega=\cap_{i=1}^{r}V(F_{i})$. 
 We call either $\{F_{1},\dots,F_{r}\}$ or the ordered set $(F_{1},\dots,F_{r})$ an \emph{$M$-representation} of $\Omega$. 
 
  Let $\P\in\CP$.
We say that   $\Omega$  is $\P$  if one can choose the  $M$-family $\{F_{1},\dots,F_{r}\}$ to be $\P$.
 We say that the $M$-representation $(F_{1},\dots,F_{r})$ is $\P$ if $\{F_{i}\colon 1\leq i\leq r\}$ is $\P$.
  We say that $r$ is the \emph{dimension} of the $M$-representation $(F_{1},\dots,F_{r})$.
  The \emph{total co-dimension} of a consistent $M$-set $\Omega$, denoted by $r_{M}(\Omega)$, is the minimum of the dimension of the independent $M$-representations of $\Omega$. It was shown in Proposition %\ref{1:yy33} 
  C.8 of \cite{SunA} that $r_{M}(\Omega)$ is independent of the choice of the independent $M$-representation if $d$ and $p$ are sufficiently large.

  	We say that an independent $M$-representation $(F_{1},\dots,F_{r})$ of $\Omega$ is \emph{standard} if
	the matrix 
		$\begin{bmatrix}
		v_{M}(F_{1})\\
		\dots \\
		v_{M}(F_{r})
		\end{bmatrix}$ is in the reduced row echelon form (or equivalently, the matrix 
		$\begin{bmatrix}
		v'_{M}(F_{1})\\
		\dots \\
		v'_{M}(F_{r})
		\end{bmatrix}$ is in the reduced row echelon form).
	If $(F_{1},\dots,F_{r})$ is a standard $M$-representation of $\Omega$, then we may relabeling $(F_{1},\dots,F_{r})$ as $$(F_{k,1},\dots,F_{k,r_{k}},F_{k-1,1},\dots,F_{k-1,r_{k-1}},\dots,F_{1,1},\dots,F_{1,r_{1}})$$
  for some $r_{1},\dots,r_{k}\in \N$ such that $F_{i,j}$ is non-constant with respect to $n_{i}$ and is independent of $n_{i+1},\dots,n_{k}$.
	We also call $(F_{i,j}\colon 1\leq i\leq k, 1\leq j\leq r_{i})$	a \emph{standard $M$-representation} of $\Omega$. The vector $(r_{1},\dots,r_{k})$ is called the \emph{dimension vector} of this representation.  
	
	\begin{conv}\label{1:fpism}
	We allow $(M,k)$-families, $M$-representations and   dimension vectors to be empty. In particular, $(\V)^{k}$ is considered as a nice and consistent $M$-set with total co-dimension zero. 
	\end{conv}

\begin{prop}[Proposition %\ref{1:yy3}
B.3 of \cite{SunA}]\label{3:yy3}
Let $d,k,r\in\N_{+}$, $p$ be a prime, $M\colon\V\to\F_{p}$ be a quadratic form and $\mathcal{J}=\{F_{1},\dots,F_{r}\}, F_{i}\colon(\V)^{k}\to \F_{p}, 1\leq i\leq r$ be an $(M,k)$-family.
	\begin{enumerate}[(i)]
	\item For any subset $I\subseteq \{1,\dots,r\}$, $\{F_{i}\colon i\in I\}$ is an    $(M,k)$-family.
	\item Let $I\subseteq \{1,\dots,r\}$ be a subset such that all of $F_{i},i\in I$ are independent of the last $d$-variables. Then writing $G_{i}(n_{1},\dots,n_{k-1}):=F_{i}(n_{1},\dots,n_{k-1})$, we have that $\{G_{i}\colon i\in I\}$ is an $(M,k-1)$-family.  
	\item For any bijective $d$-integral linear transformation  $L\colon (\V)^{k}\to(\V)^{k}$ and any $v\in(\V)^{k}$, $\{F_{1}(L(\cdot)+v),\dots,F_{r}(L(\cdot)+v)\}, F_{i}\colon(\V)^{k}\to \F_{p}, 1\leq i\leq r$ is an $(M,k)$-family.
		\item Let $1\leq k'\leq k$ and $1\leq r'\leq r$ be such that $F_{1},\dots,F_{r'}$ are independent of $n_{k-k'+1},\dots,n_{r}$ and that every nontrival linear combination of $F_{r'+1},\dots,F_{r}$ dependents nontrivially on some of $n_{k-k'+1},\dots,n_{r}$. 
		Define $G_{i},H_{i}\colon (\V)^{k+k'}\to \F_{p}, 1\leq i\leq r$ by 
		$$\text{$G_{i}(n_{1},\dots,n_{k+k'}):=F_{i}(n_{1},\dots,n_{k})$ and $H_{i}(n_{1},\dots,n_{k+k'}):=F_{i}(n_{1},\dots,n_{k-k'},n_{k+1},\dots,n_{k+k'})$}.$$ Then $\{F_{1},\dots,F_{r'},G_{r'+1},\dots,G_{r},H_{r'+1},\dots,H_{r}\}$\footnote{Here we regard $F_{1},\dots,F_{r'}$ as functions of $n_{1},\dots,n_{k+k'}$, which acutally depends only on $n_{1},\dots,n_{k-k'}$.} is an $(M,k+k')$-family. 
	\end{enumerate}
Moreover, if $\mathcal{J}$ is nice, then so are the sets mentioned in Parts (i)-(iii);
if $\mathcal{J}$ is 
consistent/independent, then so are the sets mentioned in Parts (i)-(iv);  
if $\mathcal{J}$ is pure, then so are the sets mentioned in Parts (i), (ii), (iv), and so is the set mentioned in Part (iii) when $v=\bold{0}$.
\end{prop}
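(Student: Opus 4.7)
The plan is to verify each of the four parts of the proposition directly from the definitions, since each is essentially a closure property of the collection of $(M,k)$-integral quadratic functions under a natural operation. Part (i) is immediate: the condition that each $F_i$ has the form $\sum_{i \leq j} b_{i,j}(n_iA)\cdot n_j + \sum_i v_i \cdot n_i + u$ is hereditary, so any subfamily is again an $(M,k)$-family. Part (ii) follows by noting that if $F_i$ has the stated form and is independent of $n_k$, then uniqueness of the decomposition forces the coefficients $b_{j,k}$ and $v_k$ to vanish, so restricting gives an $(M,k-1)$-integral quadratic function.

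Part (iii) is the most computationally involved. Writing $L(n_1,\ldots,n_k) = (L_1(n),\ldots,L_k(n))$ with $L_\ell(n) = \sum_i a_{i,\ell}n_i$ for scalars $a_{i,\ell}\in\F_p$, and $v = (v_1^0, \ldots, v_k^0)$, I would expand
$$F_i(L(n)+v) = \sum_{i\leq j} b_{i,j}\bigl((L_i(n)+v_i^0)A\bigr)\cdot (L_j(n)+v_j^0) + \sum_i v_i\cdot(L_i(n)+v_i^0) + u$$
and collect terms by degree. The quadratic part $(L_i(n)A)\cdot L_j(n)$ becomes $\sum_{\ell, m} a_{\ell,i}a_{m,j}(n_\ell A)\cdot n_m$, which is a valid $(M,k)$-quadratic form; the cross terms $(L_i(n)A)\cdot v_j^0$ and $v_i\cdot L_j(n)$ contribute linear expressions in the $n_\ell$'s, and the remaining pieces contribute to the constant term. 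All of these fit the prescribed shape.

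Part (iv) is straightforward from the definitions: $G_i$ depends only on $n_1,\ldots,n_k$ and $H_i$ only on $n_1,\ldots,n_{k-k'},n_{k+1},\ldots,n_{k+k'}$, and each inherits its quadratic structure directly from $F_i$. For the preservation of the optional properties \emph{pure}, \emph{nice}, \emph{consistent}, and \emph{independent}, I would argue as follows. Purity (vanishing of all linear coefficients) is preserved under (i), (ii), (iv) by inspection; for (iii), the shift $v \neq \bold{0}$ can produce linear terms coming from $(L_i(n)A)\cdot v_j^0$, which is why the statement restricts to $v = \bold{0}$ in this case. Niceness under (iii) is handled by composing with the witnessing $d$-integral change of variables. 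Consistency and independence are linear-algebraic conditions on the vectors $v_M(F_i)$ and $v'_M(F_i)$; under (i), (ii), (iv) they reduce to hereditary properties of subspaces, and under (iii) they follow from the fact that $F \mapsto F(L(\cdot)+v)$ is a linear isomorphism on the space of quadratic functions of the given form.

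The main obstacle I anticipate is in Part (iv), specifically in verifying that consistency is preserved: the doubling of $F_{r'+1},\ldots,F_r$ to $G_{r'+1},\ldots,G_r,H_{r'+1},\ldots,H_r$ could in principle create new nontrivial linear combinations equal to a nonzero constant. Ruling this out requires the hypothesis that every nontrivial combination of $F_{r'+1},\ldots,F_r$ depends nontrivially on the variables $n_{k-k'+1},\ldots,n_k$; this forces the $G$-block and $H$-block, which involve disjoint sets of these extra variables, to be ``independent'' of each other, so that any alleged constant combination must separately collapse on each block and thereby contradict the assumed consistency of the original family. A careful bookkeeping of which coefficients in $v_M$ are affected by each operation is the unifying technical thread throughout.
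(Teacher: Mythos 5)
This proposition is imported verbatim from Part I of the series (Proposition B.3 of [SunA]), so the present paper contains no proof for you to have been graded against; I can only assess your proposal on its own merits. Your strategy---direct verification from the explicit definitions of $(M,k)$-integral quadratic function, of the coefficient vectors $v_M(F_i)$, $v'_M(F_i)$, and of the adjectives pure/nice/consistent/independent---is the only natural one, and surely matches what [SunA] does. Your treatment of parts (i), (iii), (iv) is sound, and you correctly isolate the one substantive point in (iv): that the nondegeneracy hypothesis on $F_{r'+1},\dots,F_r$ is what forces the coefficients of the $G$-block and $H$-block to vanish separately when one assumes a constant combination and differences in the disjoint sets of auxiliary variables $n_{k-k'+1},\dots,n_k$ and $n_{k+1},\dots,n_{k+k'}$.

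There is, however, a real gap in the ``moreover'' clause of your outline. You verify that niceness is preserved under (iii) (by composing the change-of-variable witnesses) and implicitly under (i) (the same $(L,v)$ works for a subfamily), but you never address niceness under (ii), which the statement also asserts. This is not ``by inspection.'' The witness $(L,v)$ certifying that $\{F_i : i\in I\}$ is nice lives on $(\V)^k$, and $L$ is only required to be a bijective $d$-integral linear map of $(\V)^k$; nothing prevents it from mixing the coordinate $n_k$ into the others. To conclude that the reduced family $\{G_i : i\in I\}$ is nice as an $(M,k-1)$-family you must manufacture a bijective $d$-integral witness $(L',v')$ on $(\V)^{k-1}$, and you cannot simply ``restrict'' $L$ to a hyperplane (its image need not avoid the $n_k$-direction). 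Exploiting the hypothesis that every $F_i$, $i\in I$, is genuinely independent of $n_k$, one must argue that the niceness witness can be modified so that it respects the splitting $(\V)^{k-1}\times \V$; this is the one place where, as you anticipate, the bookkeeping is nontrivial, and the argument should be supplied before the proof of the ``moreover'' part is considered complete. The remaining moreover claims (purity under (i),(ii),(iv) and under (iii) with $v=\bold{0}$; consistency and independence under all four parts) are handled correctly: for (i),(ii),(iv) they indeed reduce to observations about the vectors $v_M$, $v'_M$, while for (iii) they follow because $F\mapsto F(L(\cdot)+v)$ is a linear automorphism of the space of $(M,k)$-integral quadratic functions that preserves the subspace of constants.
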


\begin{prop}[Proposition %\ref{1:yy33} 
C.8 of \cite{SunA}]\label{3:yy33}
Let $d,k,r\in\N_{+}$, $p$ be a prime, $M\colon\V\to\F_{p}$ be a quadratic form and $\Omega\subseteq (\V)^{k}$ be a consistent $M$-set. Suppose that $\Omega=\cap_{i=1}^{r}V(F_{i})$ for some consistent $(M,k)$-family $\{F_{1},\dots,F_{r}\}$. 
Let $1\leq k'\leq k$.
Suppose that $\rank(M)\geq 2r+1$ and $p\gg_{k,r} 1$.
\begin{enumerate}[(i)]
\item The dimension of all independent $M$-representations of $\Omega$ equals to $r_{M}(\Omega)$. 
\item We have that $r_{M}(\Omega)\leq r$, and that $r_{M}(\Omega)=r$ if the $(M,k)$-family $\{F_{1},\dots,F_{r}\}$ is independent.
\item For all $I\subseteq\{1,\dots,r\}$, the set $\Omega':=\cap_{i\in I}V(F_{i})$ is a consistent  $M$-set such that $r_{M}(\Omega')\leq r_{M}(\Omega)$. Moreover, if  the $(M,k)$-family $\{F_{1},\dots,F_{r}\}$ is independent, then $r_{M}(\Omega')=\vert I\vert$.
\item For any bijective $d$-integral linear transformation $L\colon(\V)^{k}\to(\V)^{k}$ and $v\in(\V)^{k}$, we have that $L(\Omega)+v$ is a consistent $M$-set and that $r_{M}(L(\Omega)+v)=r_{M}(\Omega)$.   
\item Assume that $\Omega$ admits a standard $M$-representation with dimension vector $(r_{1},\dots,$ $r_{k})$, Then for all $1\leq k'\leq k$, the set $$\{(n_{1},\dots,n_{k+k'})\in(\V)^{k+k'}\colon (n_{1},\dots,n_{k}), (n_{1},\dots,n_{k-k'},n_{k+1},\dots,n_{k+k'})\in\Omega\}$$ admits a standard $M$-representation with dimension vector $(r_{1},\dots,r_{k'},r_{k+1},\dots,r_{k'})$. 
\item Assume that $\Omega$ admits a standard $M$-representation with dimension vector $(r_{1},\dots,$ $r_{k})$.
For $I=\{1,\dots,k'\}$ for some $1\leq k'\leq k$, the $I$-projection $\Omega_{I}$ of $\Omega$ 
admits a standard $M$-representation with dimension vector $(r_{1},\dots,r_{k'})$. 
\item If $\Omega$ is a nice and consistent $M$-set or a  pure and consistent $M$-set, then $r_{M}(\Omega)\leq \binom{k+1}{2}$.
\end{enumerate}
\end{prop}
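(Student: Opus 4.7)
\medskip

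\noindent\textbf{Proof proposal.} The core of the argument is a uniform counting estimate for $|V(\mathcal{J})|$ attached to any consistent $(M,k)$-family $\mathcal{J}$ of dimension $r$, together with linear algebra performed on the vectors $v'_{M}(F_{i})\in\F_{p}^{\binom{k+1}{2}+kd}$. Concretely, I would first prove, by induction on $k$, that under the hypothesis $\rank(M)\geq 2r+1$ and $p\gg_{k,r}1$, any consistent independent $M$-representation $(F_{1},\dots,F_{r})$ of $\Omega$ satisfies
\[
|\Omega|=p^{kd-r}\bigl(1+O_{k,r}(p^{-1/2})\bigr).
\]
The base case $k=1$ is Lemma \ref{3:countingh}; the inductive step slices $\Omega$ in the last coordinate $n_{k}$ and uses Proposition \ref{3:yy3}(ii)-(iv) to split the family into relations independent of $n_{k}$ (handled by the inductive hypothesis) and relations genuinely quadratic in $n_{k}$, which fit Lemma \ref{3:countingh} after restricting $M$ to the affine subspace carved out by the linear coefficients. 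Consistency is precisely the condition that prevents the slice from being empty. This counting estimate depends only on $r$, not on the choice of representation, and so gives (i) and (ii) immediately: two independent representations of the same $\Omega$ must have the same $r$, and a consistent but dependent family can have a member removed, strictly lowering $r$.

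For (iii), replacing $\{F_{1},\dots,F_{r}\}$ by $\{F_{i}\colon i\in I\}$ still gives a consistent (resp.\ independent) $M$-family by Proposition \ref{3:yy3}(i), so $\Omega'$ is a consistent $M$-set and applying (i)--(ii) to this smaller family yields $r_{M}(\Omega')\leq r_{M}(\Omega)$ in general and $r_{M}(\Omega')=|I|$ in the independent case. For (iv), the push-forward of a consistent independent $M$-family under a bijective $d$-integral affine change of variables is again a consistent independent $M$-family of the same dimension by Proposition \ref{3:yy3}(iii), so the invariance follows from (i).

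Parts (v) and (vi) are then essentially linear algebra. Given a standard $M$-representation, the dimension vector $(r_{1},\dots,r_{k})$ records the number of generators whose leading nontrivial variable is $n_{i}$. To prove (v), one takes the standard representation $(F_{i,j})$ and constructs a representation of
\[
\Omega^{\ast}:=\{(n_{1},\dots,n_{k+k'})\colon(n_{1},\dots,n_{k}),\,(n_{1},\dots,n_{k-k'},n_{k+1},\dots,n_{k+k'})\in\Omega\}
\]
by using Proposition \ref{3:yy3}(iv); the consistency and independence of the doubled family, and the fact that the result is already in reduced row echelon form, are verified directly from the echelon structure of the original representation. For (vi), the projection $\Omega_{I}$ is defined by those $F_{i,j}$ with $i\leq k'$, again in echelon form. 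In both cases the cardinality check from the counting estimate corroborates that no further generators are needed.

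Finally, for (vii), if $\mathcal{J}$ is nice then after an affine change of coordinates (which preserves all relevant invariants by (iv) and Proposition \ref{3:yy3}(iii)) every $F_{i}$ has the special form $\sum_{j}b_{j}(n_{k'}A)\cdot n_{j}+u$; the vectors $v'_{M}(F_{i})$ lie in a subspace of dimension at most $\binom{k+1}{2}$, bounding $r_{M}(\Omega)$. In the pure case one notes that $v'_{M}(F_{i})$ has only quadratic entries $b_{i,j}$, giving the same bound $\binom{k+1}{2}$.

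The main obstacle will be the inductive counting in the first paragraph: when the last-coordinate slice of $\mathcal{J}$ degenerates because many $F_{i}$ vanish identically in $n_{k}$, one has to track carefully how the rank hypothesis $\rank(M)\geq 2r+1$ propagates to $M$ restricted to successive affine subspaces, and invoke Proposition \ref{3:iissoo}(iii) to guarantee that the restricted form still meets the rank threshold required by Lemma \ref{3:countingh}. Once this bookkeeping is set up cleanly, the rest of the proposition is a consequence of the counting invariant plus standard manipulations of row-reduced matrices.
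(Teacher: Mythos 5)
Your strategy---use the counting invariant $|\Omega| = p^{kd-r}(1+O_{k,r}(p^{-1/2}))$ to pin down $r_M$, then reduce the structural parts to linear algebra on the vectors $v'_M(F_i)$ and on standard (row-echelon) representations---is the right one, and it is essentially the route that the original proof (in the reference paper for Proposition C.8) must take, since that counting estimate is the only quantitative handle available on $r_M$. A few comments.

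First, you re-derive the counting estimate from scratch by induction on $k$, but this estimate is already available as Theorem~\ref{3:ct}(i), which gives $|\Omega| = p^{dk-r}(1+O_{k,r}(p^{-1/2}))$ once $\Omega$ admits a \emph{standard} $M$-representation of dimension $r$. To bridge from ``independent consistent'' to ``standard,'' one only needs to row-reduce the matrix of the $v'_M(F_i)$; consistency guarantees this row reduction is well-defined on the functions themselves (a vanishing combination of $v'_M$'s forces a vanishing combination of $v_M$'s, i.e.\ the constants also agree), and the reduced family still cuts out $\Omega$ and is independent. So the inductive argument in your first paragraph, while not wrong, is redundant work; citing Theorem~\ref{3:ct}(i) plus a one-line row-reduction remark would be cleaner and would avoid the rank bookkeeping you flag as the ``main obstacle.''

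Second, and this is the one genuine gap: your argument for the first half of (iii) does not quite close. You write that ``applying (i)--(ii) to this smaller family yields $r_M(\Omega')\leq r_M(\Omega)$ in general.'' But (ii) applied to the subfamily $\{F_i : i\in I\}$ only gives $r_M(\Omega')\leq |I|$, and without the independence assumption on the original family you have no bound on $|I|$ by $r_M(\Omega)$ (only $r_M(\Omega)\leq r$). The correct argument is the one already implicit in your toolkit: $\Omega\subseteq\Omega'$, so $|\Omega|\leq|\Omega'|$, and feeding both sides into the counting estimate (which applies to $\Omega'$ since $r_M(\Omega')\leq|I|\leq r$ so the rank hypothesis still holds) forces $r_M(\Omega)\geq r_M(\Omega')$ for $p$ large. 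You should make this explicit rather than route it through (ii).

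Everything else checks out: (i) and the second half of (iii) follow directly from the counting plus $\sim$-uniqueness; (iv) follows from Proposition~\ref{3:yy3}(iii) and invariance of the count under bijective $d$-integral affine maps; (v) and (vi) are read off from the echelon structure as you say (for (vi), the generators $F_{i,j}$ with $i\leq k'$ are exactly those independent of $n_{k'+1},\dots,n_k$, hence they form a standard representation of $\Omega_I$ with dimension vector $(r_1,\dots,r_{k'})$); and (vii) is the observation that after the change of coordinates (pure or nice), every $v'_M(F_i)$ has vanishing $v_j$-coordinates, so the span lives in a $\binom{k+1}{2}$-dimensional subspace, bounding any independent family and hence $r_M(\Omega)$.
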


\subsection{Fubini's theorem for $M$-sets}\label{3:s:AppA5}		
Let $n_{i}=(n_{i,1},\dots,n_{i,d})\in\V$ denote a $d$-dimensional variable for $1\leq i\leq k$. For convenience we denote $\F^{d}_{p}[n_{1},\dots,n_{k}]$ to be the polynomial ring $\F_{p}[n_{1,1},\dots,n_{1,d},\dots,n_{k,1},\dots,n_{k,d}]$.
Let $J,J',J''$ be  finitely generated ideals of the polynomial ring $\F^{d}_{p}[n_{1},\dots,n_{k}]$ and $I\subseteq \{n_{1},\dots,n_{k}\}$. 
Suppose that $J=J'+J''$, $J'\cap J''=\{0\}$, all the polynomials in $J'$ are independent of  $\{n_{1},\dots,n_{k}\}\backslash I$, and all the non-zero polynomials in $J''$ depend nontrivially on $\{n_{1},\dots,n_{k}\}\backslash I$. Then we say that $J'$ is an  \emph{$I$-projection} of $J$ and that $(J',J'')$ is an \emph{$I$-decomposition} of $J$.
 It was proved in Proposition %\ref{1:ip}
 C.1 of \cite{SunA} that the $I$-projection of  $J$ exists and is unique.

Let $\mathcal{J}$, $\mathcal{J}'$ and $\mathcal{J}''$ be  finite subsets of $\F^{d}_{p}[n_{1},\dots,n_{k}]$, and $I\subseteq \{n_{1},\dots,n_{k}\}$. 
Let $J,J'$, and $J''$ be the ideals generated by $\mathcal{J}$, $\mathcal{J}'$ and $\mathcal{J}''$ respectively. If $J'$ is an $I$-projection of $J$ with $(J',J'')$ being an $I$-decomposition of $J$, then we say that $\mathcal{J}'$ is an \emph{$I$-projection} of $\mathcal{J}$ and that $(\mathcal{J}',\mathcal{J}'')$ is an \emph{$I$-decomposition} of $\mathcal{J}$.
Note that the $I$-projection $\mathcal{J}'$ of $\mathcal{J}$ is not unique. However, by Proposition %\ref{1:ip}
C.1 of \cite{SunA}, the ideal generated by $\mathcal{J}'$ and the set of zeros $V(\mathcal{J}')$ are unique.

If $I$ is a subset of $\{1,\dots,k\}$, for convenience we say that $J'$ is an \emph{$I$-projection} of $J$ if $J'$ is an $\{n_{i}\colon i\in I\}$-projection of $J$. Similarly, we say that $(J',J'')$ is an \emph{$I$-decomposition} of $J$ if $(J',J'')$ is an $\{n_{i}\colon i\in I\}$-decomposition of $J$. Here $J,J',J''$ are either ideals or finite subsets of the polynomial ring.

We now define the projections of $M$-sets.
Let $\mathcal{J}\subseteq \F_{p}[n_{1},\dots,n_{k}]$ be a consistent $(M,k)$-family and 
let $\Omega=V(\mathcal{J})\subseteq(\V)^{k}$.
Let $I\cup J$ be a partition of $\{1,\dots,k\}$ (where $I$ and $J$ are non-empty), and $(\mathcal{J}_{I},\mathcal{J}'_{I})$ be an $I$-decomposition of $\mathcal{J}$.
Let $\Omega_{I}$ denote the set of $(n_{i})_{i\in I}\in(\V)^{\vert I\vert}$ such that $f(n_{1},\dots,n_{k})=0$ for all $f\in \mathcal{J}_{I}$ and $(n_{j})_{j\in J}\in(\V)^{\vert J\vert}$. 
Note that all $f\in \mathcal{J}_{I}$ are independent of $(n_{j})_{j\in J}$, and that $\Omega_{I}$ is independent of the choice of the $I$-decomposition.
We say that $\Omega_{I}$ is an \emph{$I$-projection} of $\Omega$.

 For $(n_{i})_{i\in I}\in (\V)^{\vert I\vert}$, let $\Omega_{I}((n_{i})_{i\in I})$ be the set of $(n_{j})_{j\in J}\in(\V)^{\vert J\vert}$ such that $f(n_{1},\dots,n_{k})=0$ for all $f\in \mathcal{J}'_{I}$. By construction, for any $(n_{i})_{i\in I}\in \Omega_{I}$, we have that $(n_{j})_{j\in J}\in\Omega_{I}((n_{i})_{i\in I})$ if and only if $(n_{1},\dots,n_{k})\in\Omega$. So for all  $(n_{i})_{i\in I}\in \Omega_{I}$, $\Omega_{I}((n_{i})_{i\in I})$ is independent of the choice of the $I$-decomposition.

\begin{thm}[Theorem %\ref{1:ct}
C.3 of \cite{SunA}]\label{3:ct}
Let $d,k\in\N_{+}$, $r_{1},\dots,r_{k}\in\N$ and $p$ be a prime.  Set $r:=r_{1}+\dots+r_{k}$.   
	Let  $M\colon\V\to\F_{p}$ be a  quadratic form with $\rank(M)\geq 2r+1$, and $\Omega\subseteq (\V)^{k}$ be a consistent $M$-set admitting a  standard $M$-representation of $\Omega$ with dimension vector $(r_{1},\dots,r_{k})$. 
		\begin{enumerate}[(i)]
		\item We have $\vert\Omega\vert=p^{dk-r}(1+O_{k,r}(p^{-1/2}))$;
		\item If $I=\{1,\dots,k'\}$ for some $1\leq k'\leq k-1$, then 
		$\Omega_{I}$ is a consistent $M$-set admitting a  standard  $M$-representation with dimension vector $(r_{1},\dots,r_{k'})$. Moreover,
		for all but at most $(k-k')r'_{k'}p^{dk'+r'_{k'}-1-\rank(M)}$ many $(n_{i})_{i\in I}\in(\V)^{k'}$, we have that $\Omega_{I}((n_{i})_{i\in I})$ has a standard $M$-representation with dimension vector $(r_{k'+1},\dots,r_{k})$  and that $\vert\Omega_{I}((n_{i})_{i\in I})\vert=p^{d(k-k')-(r_{k'+1}+\dots+r_{k})}(1+O_{k,r}(p^{-1/2}))$, where $r'_{k'}:=\max_{k'+1\leq i\leq k}r_{i}$. 
		\item 
		If $k\geq 2$, and $I\cup J$ is a partition of $\{1,\dots,k\}$ (where  $I$ and $J$ are non-empty), then for any function $f\colon \Omega\to\C$ with norm bounded by 1,   we have that
		\begin{equation}\nonumber
		\E_{(n_{1},\dots,n_{k})\in\Omega}f(n_{1},\dots,n_{k})=\E_{(n_{i})_{i\in I}\in\Omega_{I}}\E_{(n_{j})_{j\in J}\in\Omega_{I}((n_{i})_{i\in I})}f(n_{1},\dots,n_{k})+O_{k,r}(p^{-1/2}).
		\end{equation}
	\end{enumerate} 
\end{thm}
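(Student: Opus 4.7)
The plan is to prove Theorem~\ref{3:ct} by induction on $k$, using Lemma~\ref{3:countingh} as the basic counting input and Proposition~\ref{3:yy33} to track the propagation of standard $M$-representations. The base case $k=1$ reduces directly to Lemma~\ref{3:countingh}: a standard $M$-representation of $\Omega \subseteq \V$ with dimension $r_1$ consists, after row reduction, of at most one genuinely quadratic polynomial in $n_1$ together with at most $r_1 - 1$ linear polynomials. Hence $\Omega = V(M') \cap (V' + c')$ for some quadratic form $M'$ and affine subspace $V' + c'$ of codimension at most $r_1 - 1$. The rank hypothesis $\rank(M) \geq 2r_1 + 1$, together with Proposition~\ref{3:iissoo}, ensures that either $\rank(M')$ or $\rank(M') - 2(r_1 - 1)$ exceeds the threshold $3$ required by Lemma~\ref{3:countingh}, yielding $|\Omega| = p^{d - r_1}(1 + O(p^{-1/2}))$.

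For the inductive step, I would peel off the variable $n_k$. Given a standard $M$-representation $(F_{i,j})$ with dimension vector $(r_1, \ldots, r_k)$, Proposition~\ref{3:yy33}(vi) gives that the projection $\Omega' := \Omega_{\{1,\ldots,k-1\}}$ admits a standard $M$-representation with dimension vector $(r_1, \ldots, r_{k-1})$. For any fixed $(n_1, \ldots, n_{k-1}) \in \Omega'$, the fiber in $n_k$ is cut out by $F_{k,1}, \ldots, F_{k,r_k}$ evaluated at the fixed variables, each of which is itself an $(M,1)$-integral quadratic function in $n_k$ whose coefficients depend polynomially on $(n_1, \ldots, n_{k-1})$. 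For ``generic'' $(n_1, \ldots, n_{k-1})$, these functions form a consistent independent $M$-family in $n_k$, and the base case applies to give a fiber of size $p^{d - r_k}(1 + O(p^{-1/2}))$. Summing over $\Omega'$ and applying the inductive hypothesis then yields (i) for $\Omega$, and an analogous argument with $k$ replaced by $k'$, $k - k'$ respectively, extracts the fiber statement in (ii).

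The main technical obstacle lies in bounding the exceptional set of $(n_i)_{i \in I}$ for which the fiber fails to be a consistent $M$-set of the anticipated dimension vector. The conditions for a fiber to be ``good'' (i.e.\ for $v'_M$ of the induced quadratic functions to remain linearly independent, and for the rank of the resulting quadric to stay above the Lemma~\ref{3:countingh} threshold) are polynomial conditions on $(n_i)_{i \in I}$, each of degree at most $r'_{k'}$. Applying Lemma~\ref{3:ns} to each such polynomial and summing bounds the bad locus by $(k - k')r'_{k'} p^{dk' + r'_{k'} - 1 - \rank(M)}$, giving precisely the stated estimate; the negative exponent $-\rank(M)$ comes from interpreting the degeneracy as a quadratic condition modulo $M$ using Proposition~\ref{3:iissoo}. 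The hypothesis $\rank(M) \geq 2r + 1$ is exactly what is needed to keep this error term negligible compared with $|\Omega_I| \sim p^{dk' - (r_1 + \cdots + r_{k'})}$.

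Finally, (iii) follows by combining (i) and (ii): write
\[
\E_{(n_1, \ldots, n_k) \in \Omega} f = \frac{1}{|\Omega|} \sum_{(n_i)_{i \in I} \in \Omega_I} |\Omega_I((n_i)_{i \in I})| \cdot \E_{(n_j)_{j \in J} \in \Omega_I((n_i)_{i \in I})} f,
\]
split the outer sum into a ``good'' and ``bad'' part. On the good part, $|\Omega_I((n_i)_{i \in I})| / |\Omega| \cdot |\Omega_I| = 1 + O(p^{-1/2})$ by the size estimate in (ii), so the sum collapses to the iterated average $\E_{\Omega_I} \E_{\text{fiber}} f$ up to error $O(p^{-1/2})$. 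The bad part contributes at most $\|f\|_\infty \leq 1$ times the ratio of the exceptional count to $|\Omega_I|$, which is $O(p^{-1/2})$ by the estimate from (ii) and the rank hypothesis. This completes the plan.
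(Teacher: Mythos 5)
This statement is not proved in the present paper. Theorem~\ref{3:ct} appears in Appendix~\ref{3:s:AppA} (``Results from previous papers''), subsection~\ref{3:s:AppA5}, and is cited verbatim as Theorem~C.3 of \cite{SunA}, the first paper in the series. The present paper treats it as a black box, so there is no proof here to compare your attempt against.

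That said, there is a concrete gap in your outline that is worth flagging. Your overall strategy (induct on $k$, peel off one variable, reduce the base case to Lemma~\ref{3:countingh}, and combine (i) and (ii) to get (iii)) is the natural one. The weak point is the exceptional-set estimate in part~(ii). You propose to bound the set of bad $(n_i)_{i\in I}$ by applying Lemma~\ref{3:ns} to polynomial degeneracy conditions of degree at most $r'_{k'}$, but Lemma~\ref{3:ns} alone only gives a bound of order $p^{dk'-1}$. The theorem asserts the much sharper bound $(k-k')r'_{k'}\,p^{dk'+r'_{k'}-1-\rank(M)}$, which carries an extra power-saving of roughly $p^{\rank(M)-r'_{k'}}$. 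Your one-sentence appeal to ``interpreting the degeneracy as a quadratic condition modulo $M$ using Proposition~\ref{3:iissoo}'' does not explain how this saving appears. The degeneracy in question is a rank condition on a matrix whose rows are expressions of the form $v_k + \sum_i b_{k,i}(n_iA)$, and extracting the $\rank(M)$-dependent exponent from that requires an actual analysis of the solution count for the associated system (e.g.\ a linear-algebraic parametrization of when the $n_iA$ conspire to produce a dependency, which has codimension governed by $\rank(A)=\rank(M)$), not just a degree bound on the defining polynomials. Without that analysis the contribution of the bad locus to part~(iii) cannot be shown to be $O(p^{-1/2})$ under the hypothesis $\rank(M)\geq 2r+1$, which is precisely the hypothesis your argument is supposed to exploit. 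The cited proof in \cite{SunA} presumably carries out this sharper count.
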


 \subsection{Intrinsic definitions for polynomials}\label{3:s:AppA40}
 
   We refer the readers to Section \ref{3:s:defn} for the notations used in this section.

\begin{prop}[Proposition 7.12 of \cite{SunA}]\label{3:att3}
  		Let $d,s\in\N_{+}, k\in\N$, 
		$p\gg_{d} 1$ be a prime, $c\in\V$, 
		$V$ be a subspace of $\V$ of dimension $k$ with a basis $\iota(h_{1}),\dots,\iota(h_{k})$ for some $h_{1},\dots,h_{k}\in\Z^{d}$, 
		$M\colon\V\to\F_{p}$ be a non-degenerate quadratic form with $\rank(M\vert_{V^{\pp}})\geq s^{2}+s+3$ associated with a matrix $A$. Let $M_{0}\colon\V\to\F_{p}$ be the quadratic form given by $M_{0}(n):=(nA)\cdot n$.
	Then for any  $g\in\poly(\Z^{d}\to \Q)$  of degree at most $s$, the followings are equivalent:
 	\begin{enumerate}[(i)]
 		\item $\Delta_{n_{s}}\dots\Delta_{n_{1}}g(n_{0})\in \frac{1}{q}\Z$ for some $q\in\N,p\nmid q$ for all $(n_{0},n_{1},\dots,n_{s})\in \Gow_{p,s}(\iota^{-1}(V(M)\cap (V^{\pp}+c)))$;
  		\item we have  $$g=\frac{1}{q}g_{1}+g_{2}$$ for some $q\in\N,p\nmid q$,
 		some $g_{1}\in \poly(\iota^{-1}(V(M)\cap (V^{\pp}+c))\to \R\vert\Z)$ of degree at most $s$ and some $g_{2}\in\poly(\Z^{d}\to \Q)$ of degree at most $s-1$;
 		\item we have $$g=\frac{1}{q}g_{1}+g_{2}$$ for some $q\in\N,p\nmid q$,
 		some $g_{1}\in \poly(\iota^{-1}(V(M_{0})\cap V^{\pp})\to \R\vert\Z)$ of degree at most $s$, and some $g_{2}\in\poly(\Z^{d}\to \Q)$ of degree at most $s-1$.
 	\end{enumerate}	 
	 Moreover, if $g$ is a homogeneous polynomial of degree $s$, then we may require $g_{2}=0$ in (iii).
 \end{prop}

\subsection{Strong factorization property for nice $M$-sets}\label{3:s:AppA7}
  %A fundamental question in higher order Fourier analysis is to study the equidistribution property of polynomial sequences.  
  Let $\Omega$ be a non-empty subset of $\Z^{d}$ and $G/\Gamma$ be an $\N$-filtered nilmanifold. A sequence $\O\colon\Omega\to G/\Gamma$ is \emph{$\d$-equidistributed} on $G/\Gamma$ if for all Lipschitz function $F\colon G/\Gamma\to\C$, we have that
 	$$\limsup_{N\to\infty}\Bigl\vert\frac{1}{\vert \Omega\cap [N]^{d}\vert}\sum_{n\in\Omega\cap [N]^{d}}F(\O(n))-\int_{G/\Gamma}F\,dm_{G/\Gamma}\Bigr\vert\leq \d\Vert F\Vert_{\Lip},$$
	where  $m_{G/\Gamma}$ is the Haar measure of $G/\Gamma$ and the \emph{Lipschitz norm} is defined as 
	$$\Vert F\Vert_{\Lip}:=\sup_{x\in G/\Gamma}\vert F(x)\vert+\sup_{x,y\in G/\Gamma, x\neq y}\frac{\vert F(x)-F(y)\vert}{d_{G/\Gamma}(x,y)}.$$
	    We say that $\O$ is \emph{$\d$-totally equidistributed} on $G/\Gamma$ if for all $r\in\N_{+}$ with $r<\d^{-1}$ and $m\in\Z^{d}$, the sequence  $(\O(n))_{n\in\Omega\cap (r\Z^{d}+m)}$ is $\d$-equidistributed on $G/\Gamma$.

%  Let $\Omega$ be a non-empty finite set and $G/\Gamma$ be an $\N$-filtered nilmanifold. A sequence $\O\colon\Omega\to G/\Gamma$ is \emph{$\d$-equidistributed} on $G/\Gamma$ if for all Lipschitz function $F\colon G/\Gamma\to\C$, we have that
 %	$$\Bigl\vert\frac{1}{\vert \Omega\vert}\sum_{n\in\Omega}F(\O(n))-\int_{G/\Gamma}F\,dm_{G/\Gamma}\Bigr\vert\leq \d\Vert F\Vert_{\Lip},$$
%	where  $m_{G/\Gamma}$ is the Haar measure of $G/\Gamma$.
 
 %Let $g\in \poly(\Z^{d}\to G_{\N})$ be $Q$-periodic and $\Omega\subseteq \Z^{d}$ be a $Q$-periodic set. We say that $(g(n)\Gamma)_{\Omega}$ is \emph{$\e$-equidistributed} on $G/\Gamma$ if $(g(n)\Gamma)_{\Omega\cap [Q]^{d}}$ is $\e$-equidistributed on $G/\Gamma$.

 \begin{lem} [Corollary 8.6 of \cite{SunA}]\label{3:sftg}
  	Let $0<\d<1/2, d,k,m\in\N_{+},s,r\in\N$ with $d\geq r$, and $p\gg_{d,m} \d^{-O_{d,m}(1)}$ be a prime. Let $M\colon\V\to\F_{p}$ be a quadratic form  and $V+c$ be an affine subspace of $\V$ of co-dimension $r$. Suppose that $\rank(M\vert_{V+c})\geq s+13$.  Let $G/\Gamma$ be an $s$-step $\N$-filtered nilmanifold of dimension $m$, equipped with an $\frac{1}{\d}$-rational Mal'cev basis $\mathcal{X}$, and that  $g\in \poly(\Z^{d}\to G_{\N})$ be a rational polynomial sequence. Let  $\gamma\in G$ be an element of complexity at most 1, and let $g'\in \poly(\Z^{d}\to G'_{\N})$
	be the map given by
	 $g'(n):=\gamma^{-1}g(n)\gamma$ for all $n\in\Z^{d}$, where $G'_{i}:=\gamma^{-1} G_{i}\gamma$ for all $i\in\N$. Denote $\Gamma':=\gamma^{-1}\Gamma\gamma$.
	 If  $(g(n)\Gamma)_{n\in \iota^{-1}(V(M)\cap(V+c))}$ is not $\d$-equidistributed on $G/\Gamma$, then $(g'(n)\Gamma')_{n\in \iota^{-1}(V(M)\cap(V+c))}$ is not $C^{-1}\d^{C}$-equidistributed on $G'/\Gamma'$ for some $C=C(d,m)\geq 1$.
	   \end{lem}

\begin{thm}[Theorem %\ref{1:facf3}
11.2 of \cite{SunA}]\label{3:facf3}
  		Let $d,k\in\N_{+},r,s\in\N$, $C>0$, $\mathcal{F}\colon\R_{+}\to\R_{+}$ be a growth function, $p\gg_{C,d,\mathcal{F},k,r,s} 1$ be a prime, $M\colon\V\to \F_{p}$ be a quadratic form, and $\Omega\subseteq (\V)^{k}$ satisfying one of the following assumptions:
  		\begin{enumerate}[(i)] 
  			\item $r=0$ and $\Omega=(\V)^{k}$;
  			\item $k=1,r=0$ and $\Omega=V(M)\cap (V+c)$ for some affine subspace $V+c$ of $\V$ with $\rank(M\vert_{V+c})\geq s+13$;
  			\item $M$ is non-degenerate, $d\geq \max\{4r+1,4k+3,2k+s+11\}$ and $\Omega$ is a  nice and consistent $M$-set of total co-dimension $r$.
  		\end{enumerate}
  		Let $G/\Gamma$ be an $s$-step $\N$-filtered nilmanifold of  complexity at most $C$, and let $g$ be a polynomial sequence in $\poly_{p}(\iota^{-1}(\Omega)\to G_{\N}\vert\Gamma)$.	
  	There 		
  		exist some $C\leq C'\leq O_{C,d,\mathcal{F},k,r,s}(1)$, 
  		 a proper subgroup $G'$ of $G$ which is $C'$-rational relative to $\mathcal{X}$, and
  	 a factorization $$g(n)=\e g'(n)\gamma(n)  \text{ for all }  n\in (\Z^{d})^{k}$$  such that $\e\in G$ is of complexity $O_{C'}(1)$,
  		$g'\in \poly_{p}(\iota^{-1}(\Omega)\to G'_{\N}\vert\Gamma')$, $g'(\bold{0})=id_{G}$ and $(g'(n)\Gamma)_{n\in\iota^{-1}(\Omega)}$ is $\mathcal{F}(C')^{-1}$-equidistributed on $G'/\Gamma'$, where $\Gamma':=G'\cap \Gamma$, and that $\gamma\in\poly(\iota^{-1}(\Omega)\to G_{\N}\vert\Gamma)$, $\gamma(\bold{0})=id_{G}$.   		
  \end{thm}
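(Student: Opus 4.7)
The plan is to prove Theorem \ref{3:facf3} by adapting the Green--Tao factorization theorem (Theorem 10.2 of their ``Quantitative behaviour of polynomial orbits on nilmanifolds'') to the spherical finite-field setting, via an iterative argument on the complexity of the target nilmanifold $G/\Gamma$. The key conceptual point is that equidistribution of $g(n)\Gamma$ as $n$ ranges over a nice consistent $M$-set $\Omega$ (rather than over an interval in $\Z$, or all of $\V$) is controlled, via a spherical Leibman-type criterion, by how the compositions $\eta \circ g$ with type-I horizontal characters $\eta$ of $G$ behave on $\Omega$. Concretely, I would first prove a quantitative spherical equidistribution criterion: if $g \in \poly_p(\Omega \to G_\N \vert \Gamma)$ is such that $(g(n)\Gamma)_{n\in\Omega}$ is \emph{not} $\delta$-equidistributed on $G/\Gamma$, then there exists a nontrivial type-I horizontal character $\eta\colon G\to\R$ of complexity $O_{C,\delta,s}(1)$ such that the polynomial $\eta \circ g\colon \Z^{dk}\to\R/\Z$ is ``biased'' on $\Omega$ in the sense that it is within $O_{C,\delta,s}(1)/p$ of a polynomial of strictly lower degree when restricted to $\Omega+p\Z^{dk}$. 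This is the spherical analogue of the Green--Tao equidistribution theorem; its proof uses the Fubini-type counting (Theorem \ref{3:ct}) together with a van der Corput argument adapted to nice $M$-sets, which is why the hypothesis $d\geq 2k+s+11$ enters.

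Once such a criterion is in hand, the iteration is standard in structure. Starting from the input $g$, one runs the following loop: if $g$ is already $\mathcal{F}(C)^{-1}$-equidistributed on $G/\Gamma$, set $G'=G$, $\epsilon = g(\bold 0)$, $g' = \epsilon^{-1} g$, $\gamma = \mathrm{id}$, and stop. Otherwise invoke the criterion to obtain a horizontal character $\eta$ witnessing non-equidistribution, use the ``bias implies subgroup'' mechanism (combined with Proposition \ref{3:att5} to encode the spherical obstruction intrinsically) to produce a proper rational subgroup $G'\leq G$ of complexity $O_{C'}(1)$ and a decomposition $g = \epsilon_0 g_0 \gamma_0$ such that $g_0 \in \poly_p(\Omega\to G'_\N\vert \Gamma')$, with $\epsilon_0$ smooth of complexity $O_{C'}(1)$ and $\gamma_0$ periodic. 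Then replace $g$ by $g_0$ and $G$ by $G'$ and repeat. The loop terminates because $\dim G$ strictly decreases at each step and is bounded by $C$; at the end the accumulated smooth and periodic parts on the left and right must be collected into a single $\epsilon$ and $\gamma$, using Lemma \ref{3:grg} to ensure that $\gamma$ remains $\Gamma$-valued on $\Omega+p\Z^{dk}$ when $g'$ is merely partially $p$-periodic. The growth-function formalism is used here to drive the equidistribution threshold $\mathcal{F}(C')^{-1}$ down sufficiently fast relative to the (bounded) number of iterations, so that the final $C'$ depends only on $C,d,\mathcal{F},k,r,s$.

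The main obstacle is the first step, namely the spherical Leibman criterion for nice $M$-sets, because the standard proof runs over arithmetic progressions and relies crucially on van der Corput differencing along $\Z$, while here differencing must happen along directions that remain inside the $M$-set. This is where the hypothesis that $\Omega$ be nice and consistent of low total co-dimension (and the lower bound on $d$) becomes essential: by Proposition \ref{3:yy3} and Theorem \ref{3:ct}, ``$h$-fibers'' of $\Omega$ are again nice $M$-sets of controlled total co-dimension and of the expected size $p^{dk-r}(1+O(p^{-1/2}))$, so a van der Corput/Cauchy--Schwarz step against a smoothing parameter $h\in\V$ stays within the category to which the inductive equidistribution hypothesis applies. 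The spherical/$p$-periodic passage from a biased horizontal character back to a rational subgroup then uses Proposition \ref{3:att5} to recognize when a polynomial is $(\Omega,p)$-reducible, exactly in the sense of Definition \ref{3:redst}. The cases (i) and (ii) are essentially the classical finite-field and arithmetic-progression factorization theorems and serve both as base and as reference models for the general case (iii).
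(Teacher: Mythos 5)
Theorem \ref{3:facf3} is not proved in this paper at all: it is imported verbatim as Theorem 11.2 of \cite{SunA}, and appears here only in Appendix \ref{3:s:AppA7} as one of the results recalled from the earlier parts of the series. So there is no ``paper's own proof'' to compare against. What you have written is a sketch of how the result might be proved in \cite{SunA}, and at the level of architecture it is indeed the standard Green--Tao factorization paradigm (equidistribution criterion on $\Omega$, horizontal character witnessing bias, passage to a proper rational subgroup, iteration driven by a growth function). The auxiliary tools you point to are all genuinely relevant: Theorem \ref{3:ct} for Fubini on $M$-sets, Proposition \ref{3:yy3} for stability of $M$-sets under projections and fibering, Proposition \ref{3:att5} for an intrinsic characterization of $(\Omega,p)$-reducibility, and Lemma \ref{3:grg} for keeping the accumulated right factor $\Gamma$-valued. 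Two minor corrections: termination of the loop is not by strict decrease of $\dim G$ per step (a single non-equidistribution step yields a proper rational subgroup, but the dimension bound only controls the total number of iterations when combined with the growth-function pigeonhole exactly as in Green--Tao's quantitative factorization theorem); and van der Corput differencing $n\mapsto g(n+h)\overline{g(n)}$ does not require $h$ to ``stay inside the $M$-set'' --- one averages $h$ over all of $\V$ and uses the Fubini theorem to control the $n$-fibers.

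The substantive gap is that your entire plan hinges on a spherical Leibman-type equidistribution criterion for $\poly_p(\Omega\to G_\N\vert\Gamma)$, and you leave that criterion undeveloped. That criterion is exactly the hard content of \cite{SunA}: it is why the full $M$-set calculus, the lower bounds on $d$, and the hypothesis that $\Omega$ be nice and consistent all exist. The one-line gesture at ``a van der Corput argument adapted to nice $M$-sets'' skips the steps where the proof actually lives: one has to show that the derivative polynomial $\Delta_h g$ remains a partially $p$-periodic polynomial on the doubled $M$-set obtained from $\Omega$, that the bias produced by the inductive hypothesis lives naturally on the $p$-Gowers set $\Gow_{p,j}$ rather than on $\Omega$ itself and can be symmetrized back, and that converting the resulting bias of $\eta\circ g$ into a proper $C'$-rational subgroup $G'$ of $G$ requires a $p$-periodic analogue of the ``bias implies subgroup'' lemma and not just the intrinsic reducibility characterization of Proposition \ref{3:att5}. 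Without those steps the sketch is a correct description of the target but not a proof.
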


%     \begin{defn}[Rational polynomial sequences]
 %      Let $d,Q,s\in\N_{+}$ and $G/\Gamma$ be a nilmanifold of step at most $s$, %with $\psi\colon G\to\R^{m}$ being its Mal'cev coordinate map, and 
 %      $\Omega$ be a subset of $\Z^{d}$ and $m\in\Omega$. We say that a rational polynomial sequence $g\in\poly(\Z^{d}\to G_{\N})$ is \emph{$(Q,\Omega,m)$-rational} if $g(m+Qn)\in\Gamma$ for all $n\in\Z^{d}$ with $m+Qn\in \Omega$.
 %      We say that  $g$ is \emph{$(Q,\Omega)$-periodic} if $g(m+Qn)^{-1}g(m)\in\Gamma$ for all $m,n\in \Z^{d}$ with $m+Qn, m\in\Omega$.
 %    % We say that $g$ is \emph{$Q$-power rational} if it is $Q^{r}$-rational for some $r\in\N$.
 %   \end{defn}
	
  %     Let $d\in\N_{+}$, $\d>0$, $p>\d^{-1}$ be a prime, $\Omega$ be a non-empty subset of $\V$ and $g\in\poly(\Z^{d}\to G_{\N})$ be rational for some nilmanifold $G/\Gamma$. We say that $(g(n)\Gamma)_{n\in\iota^{-1}(\Omega)}$ is \emph{$\d$-totally equidistributed on $G/\Gamma$} if for all $r\in\N_{+}$ with $r<\d^{-1}$ and $m\in\Z^{d}$, the sequence $(g(rn+m)\Gamma)_{n\in\iota^{-1}(r^{-1}(\Omega-m))}$ (or equivalently, the sequence $(g(n)\Gamma)_{n\in\iota^{-1}(\Omega)\cap (r\Z^{d}+m)}$) is $\d$-equidistributed on $G/\Gamma$.  
       
   \begin{thm}[Theorem %\ref{1:facf3}
11.5 of \cite{SunA}]\label{3:facf3r}
  		Let $d\in\N_{+},r,s\in\N$, $C>0$, $\mathcal{F}\colon\R_{+}\to\R_{+}$ be a growth function, $p\gg_{C,d,\mathcal{F},s} 1$ be a prime, $M\colon\V\to \F_{p}$ be a quadratic form, and $\Omega\subseteq \V$ satisfying one of the following assumptions:
  		\begin{enumerate}[(i)] 
  			\item $\Omega=\V$;
  			\item $\Omega=V(M)\cap (V+c)$ for some affine subspace $V+c$ of $\V$ with $\rank(M\vert_{V+c})\geq s+13$.
  			%\item $M$ is non-degenerate, $d\geq \max\{4r+1,4k+3,2k+s+11\}$ and $\Omega$ is a  nice and consistent $M$-set of total co-dimension $r$.
  		\end{enumerate}
  		Let $n_{\ast}\in\iota^{-1}(\Omega)$, $G/\Gamma$ be an $s$-step $\N$-filtered nilmanifold of  complexity at most $C$, and let $g\in \poly(\Z^{d}\to G_{\N})$ be rational.	
  	There exist some $C\leq C'\leq O_{C,d,\mathcal{F},s}(1)$, 
  		 a proper subgroup $G'$ of $G$ which is $C'$-rational relative to $\mathcal{X}$, and
  	 a factorization $$g(n)=\e g'(n)\gamma(n)  \text{ for all }  n\in \Z^{d}$$  such that $\e\in G$ is of complexity $O_{C'}(1)$,
  		$g'\in \poly(\Z^{d}\to G'_{\N})$ is  rational  %with $g'(\bold{0})=id_{G}$ 
		and $(g'(n)\Gamma)_{n\in\iota^{-1}(\Omega)}$  is $\mathcal{F}(C')^{-1}$-totally equidistributed on $G'/\Gamma'$, where $\Gamma':=G'\cap \Gamma$, and that $\gamma$ belongs to both $\poly_{\approx r,n_{\ast}}(\iota^{-1}(\Omega)\to G_{\N}\vert\Gamma)$ and $\poly_{r}(\iota^{-1}(\Omega)\to G_{N}\vert\Gamma)$ for 
	%	
	%	$\gamma\in\poly(\Z^{d}\to G_{\N})$ is $(r,\iota^{-1}(\Omega),n_{\ast})$-rational and $(r,\iota^{-1}(\Omega))$-periodic
	 for some $r\in\N_{+}$ with $r\leq O_{C,C',d,s}(1)$. %$\gamma(\bold{0})=id_{G}$.   	
  \end{thm}

 \subsection{Additive combinatorial properties for $M$-modules}\label{3:s:AppA8}

\begin{defn}[Almost linear function and Freiman homomorphism]\label{2:llfh}
Let $L\in\N_{+}$,  
$G$ be an additive group, $H$ be a subset of $G$, $R$ be a subset of $\R$, and $\xi\colon H\to\R$ be a map.
We say that $\xi$ is an \emph{$R$-almost linear function} of \emph{complexity} at most $L$ if 
there exist  $\alpha_{i}\in \widehat{G}$ and $\beta_{i}\in R$ for all $1\leq i\leq L$ such that for all $h\in H$,
$$\xi(h):=\sum_{i=1}^{L}\{\alpha_{i}\cdot h\}\beta_{i}.%\footnote{In particular, $\sum_{i=1}^{K}\{\alpha_{i}\cdot \tau(h)\}\beta_{i}$ is required to take values in $\Z/p$.}
$$
We say that an $R$-almost linear function $\xi$  is an \emph{$R$-almost linear Freiman homomorphism} if  for all $h_{1},h_{2},h_{3},h_{4}\in H$ with $h_{1}+h_{2}=h_{3}+h_{4}$, we have that $$\xi(h_{1})+\xi(h_{2})\equiv\xi(h_{3})+\xi(h_{4}) \mod \Z.$$

%Let $d\in\N_{+}$, $p$ be a prime, and $H$ be a subset of $\V$.
%We say that a map $\xi\colon H\to \F_{p}$ is an \emph{almost linear function/Freiman homomorphism} of complexity at most $L$ if there exists a $\Z/p$-almost linear function/Freiman homomorphism $\xi'$ with $\xi'(H)\subseteq\Z/p$ of complexity at most $L$ such that
%$\xi(h)=\iota(p\xi'(h))$ for all $h\in H$.

%Let $d,L\in\N_{+}$,  $p$ be a prime, $H$ be a subset of $\V$, and $\xi\colon H\to \Z/p$ be a map. 
%We say that $\xi$ is a \emph{$p$-almost linear function} of \emph{complexity} at most $L$ if 
%there exist  $\alpha_{i}\in (\Z/p)^{d}$ and $\beta_{i}\in\Z/p$ for all $1\leq i\leq L$ such that for all $h\in H$,
%$$\xi(h):=\sum_{i=1}^{L}\{\alpha_{i}\cdot \tau(h)\}\beta_{i}.\footnote{In particular, $\sum_{i=1}^{K}\{\alpha_{i}\cdot \tau(h)\}\beta_{i}$ is required to take values in $\Z/p$.}$$
%We say that a $p$-almost linear function $\xi$  is a \emph{$p$-almost linear Freiman homomorphism} if  for all $h_{1},h_{2},h_{3},h_{4}\in H$ with $h_{1}+h_{2}=h_{3}+h_{4}$, we have that $$\xi(h_{1})+\xi(h_{2})\equiv\xi(h_{3})+\xi(h_{4}) \mod \Z.$$
%
%
%We say that a map $\xi\colon H\to \F_{p}$ is an \emph{almost linear function/Freiman homomorphism} of complexity at most $K$ if there exists a $p$-almost linear function/Freiman homomorphism $\xi'\colon H\to \Z/p$ of complexity at most $K$ such that
%$\xi(h)=\iota(p\xi'(h))$ for all $h\in H$.

We say that a map $\xi\colon H\to \R[x_{1},\dots,x_{d}]$ %(resp. $\F_{p}[x_{1},\dots,x_{d}]$)
 is an \emph{$R$-almost linear function/$R$-Freiman homomorphism} %(resp. \emph{almost linear function/Freiman homomorphism}) 
 of complexity at most $L$ if $\xi_{m}$ is an $R$-almost linear function/$R$-Freiman homomorphism %(resp. almost linear function/Freiman homomorphism) 
 of complexity at most $L$ for all $m=(m_{1},\dots,m_{d})\in\N^{d}$, where $\xi_{m}(h)$ is the coefficient of the monomial $x_{1}^{m_{1}}\dots x_{d}^{m_{d}}$ of $\xi(h)$.
\end{defn}

 \begin{defn}[$M$-module for real polynomials]
		Let $d\in\N_{+}, p$ be a prime and $M\colon\V\to\F_{p}$ be a quadratic form associated with the matrix $A$. 
	We say that a subset $I$ of the polynomial ring $\R[x_{1},\dots,x_{d}]$ 
	is an \emph{$M$-module} if there exists a subspace $V$ of $\V$ such that 
	$$f\in I\Leftrightarrow f(n)\in\Z \text{ for all } n\in\Z^{d} \text{ with } (\iota(n)A)\cdot\iota(n)=0 \text{ and } (hA)\cdot \iota(n)=0 \text{ for all } h\in V$$ 
	(or equivalently, $f\in I\Leftrightarrow f(n)\in\Z \text{ for all } n\in \iota^{-1}(V(M_{0})\cap V^{\pp})$, where $M_{0}\colon\V\to\F_{p}$ is the quadratic form given by $M_{0}(n):=(nA)\cdot n$). 
	In this case we denote $I$ as $J^{M}_{V}$. For convenience we denote 
	$$J^{M}:=J^{M}_{\{\bold{0}\}} \text{ and } J^{M}_{h_{1},\dots,h_{k}}:=J^{M}_{\sp_{\F_{p}}\{h_{1},\dots,h_{k}\}}.$$ 
	for all $h_{1},\dots,h_{k}\in\V$.%\footnote{To simplify notations, we use the same   notation $J^{M}_{V}$ for $M$-modules over $\F_{p}$ and over $\R$. This will not cause confusions as the meaning of $J^{M}_{V}$ will be clear from the context.}
\end{defn}

 \begin{thm}[Theorem 1.5 of \cite{SunB}]\label{3:aadd}
	Let $d,K,r,s\in\N_{+}$, with $d\geq N(s)$, $\d>0$ and $p\gg_{\d,d,r}1$ be a prime dividing $K$,
	  $M\colon\V\to\F_{p}$ be a non-degenerate quadratic form,  $H\subseteq \Vk$ with $\vert H\vert>\d K^{d}$, and $\xi_{i}$ be a map from $H$ to $\st_{\Z/p^{r},d}(s)$ for $1\leq i\leq 4$.
	Suppose that there exists 
	  $U\subseteq \{(h_{1},h_{2},h_{3},h_{4})\in H^{4}\colon h_{1}+h_{2}=h_{3}+h_{4}\}$
	with $\vert U\vert>\d \vert H\vert^{3}$ such that for all $(h_{1},h_{2},h_{3},h_{4})\in U$, 
	\begin{equation}\nonumber
	\xi_{1}(h_{1})+\xi_{2}(h_{2})-\xi_{3}(h_{3})-\xi_{4}(h_{4})\in J^{M}_{\iota(h_{1}),\iota(h_{2}),\iota(h_{3})}.
	\end{equation}
	Then there exist   $H'\subseteq H$ with $\vert H'\vert\gg_{\d,d,r} K^{d}$, some $g\in\st_{\Z/p^{r},d}(s)$, and an $\Z/p^{r}$-almost linear Freiman homomorphism  $T\colon H'\to \st_{\Z/p^{r},d}(s)$  of complexity $O_{\d,d}(1)$.  % for some $Q\in\N_{+}, Q\leq O_{\d,d}(1)$.
	 such that 
	$$\xi_{1}(h)-T(h)-g\in J^{M}_{\iota(h)} \text{ for all $h\in H'$.}$$
\end{thm}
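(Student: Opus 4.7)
The plan is to reduce the hypothesis to a classical Freiman-type problem in a suitable quotient of $\st_d(s)$ modulo the $M$-ideal obstruction, and then to linearize using additive combinatorics together with the intrinsic characterization of $M$-ideals from Proposition \ref{3:att5}.

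First I would apply a Balog-Szemer\'edi-Gowers type argument to the set $U$ of additive quadruples. Since $|U| \geq \d |H|^3$ and $|H| \geq \d p^d$, one can extract a subset $H_1 \subseteq H$ of size $\gg_{\d,d} p^d$ such that, after discarding a small set of ``bad'' vectors, the fraction of additive quadruples in $H_1^4$ lying in $U$ is $\gg_{\d,d} 1$. Using Lemma \ref{3:iiddpp}, the $M$-isotropic or linearly dependent tuples contribute a negligible amount, so we may assume in addition that all quadruples we work with are linearly independent and $M$-non-isotropic, which is crucial for applying Proposition \ref{3:att5}.

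Next I would convert the four-variable relation into a two-variable cocycle. For fixed $h_3 = h_3^{(0)}, h_4 = h_4^{(0)}$ in a typical ``popular'' position, one obtains that $\xi_1(h_1) + \xi_2(h_2) \equiv c \mod J^M_{h_1,h_2,h_3^{(0)}}$ for $\gg 1$ fraction of pairs $(h_1,h_2)$ with $h_1 + h_2 = h_3^{(0)} + h_4^{(0)}$, where $c$ is a fixed polynomial. Varying the fixed base, and using Proposition \ref{3:att5} (the equivalences (i) $\Leftrightarrow$ (iii) in particular) to pull out the ideal-theoretic obstructions, I would extract the homogeneous degree-$s$ component of $\xi_1$ modulo $J^M_h$ as a well-defined quantity. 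The intersection method from \cite{SunB} then allows reconciling the different $J^M_{h_1,h_2,h_3}$-obstructions via  Proposition \ref{3:gr0}-style arguments to collapse them to the single ideal $J^M_h$.

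Having reduced the problem to a classical Freiman homomorphism statement in the quotient $\st_d(s) / (J^M_h \cap \st_d(s))$ varying with $h$, I would then invoke Freiman's theorem over $\V$ in the form needed to produce a subset $H' \subseteq H_1$ of size $\gg_{\d,d} p^d$ and an affine map $h \mapsto T(h) + g$ with $T$ an honest group homomorphism on $H'$, with bounded complexity. The final step is to promote $T$ from a genuine Freiman homomorphism into an \emph{almost linear} Freiman homomorphism of complexity $O_{\d,d}(1)$ by expressing each component via the $\{\alpha \cdot \tau(\cdot)\}$-type representation of Definition \ref{3:llfh}, which is the standard output of Freiman-type theorems in this context.

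The main obstacle is that the ideals $J^M_{h_1,h_2,h_3}$ genuinely vary with the variables, so the classical Freiman machinery does not apply off the shelf: one must work modulo an $h$-dependent ideal throughout. Overcoming this requires the polynomial decomposition of Proposition \ref{3:att5} (which demands the dimension condition $d \geq N(s)$) to separate the ``transverse'' part of $\xi_1$ from the $J^M_{\cdot}$-part in a way that is consistent across the varying fibres, and it requires the intersection-based linearization tools developed for $M$-ideals in \cite{SunB} to glue the locally extracted linear data $\xi_1(h) \mod J^M_h$ into a single globally defined almost linear Freiman homomorphism $T$.
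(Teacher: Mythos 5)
Theorem \ref{3:aadd} is not proved in this paper: it appears in Appendix \ref{3:s:AppA8} as an imported result, cited verbatim as Theorem 1.5 of \cite{SunB} (the second paper in the series). There is consequently no in-paper proof against which to compare your attempt, and any genuine proof would have to be reconstructed from \cite{SunB}.

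Evaluated on its own terms, your outline has the right shape — Balog--Szemer\'edi--Gowers to densify the quadruple set, Proposition \ref{3:att5} to peel off the $M$-ideal obstruction, a Freiman-type theorem to produce the almost linear homomorphism, and the $\{\alpha\cdot\tau(\cdot)\}$-representation from Definition \ref{3:llfh} as the expected output. But the step you flag as the main obstacle is not actually addressed, and it is the entire content of the theorem. The ideal $J^M_{h_1,h_2,h_3}$ varies with three of the four variables, so there is no single quotient of $\st_d(s)$ in which the hypothesis becomes the classical approximate-cocycle relation $\xi_1(h_1)+\xi_2(h_2)=\xi_3(h_3)+\xi_4(h_4)$, and ``reduce to a classical Freiman homomorphism statement in the quotient $\st_d(s)/(J^M_h\cap\st_d(s))$ varying with $h$'' names the difficulty without resolving it. Your appeal to Proposition \ref{3:gr0} also does not close the gap: that proposition intersects ideals $J^M_{V+V_i}$ over \emph{all} of a linearly independent family, whereas here the relation holds only on a dense but unstructured subset $U$ of additive quadruples, and converting this statistical hypothesis into the deterministic intersection input is precisely the hard combinatorial work carried out in \cite{SunB}. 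As written, the proposal is a roadmap of what a proof must accomplish rather than a proof; the step from ``many quadruples satisfy a varying-ideal relation'' to ``a globally defined almost linear Freiman homomorphism of bounded complexity exists'' remains unestablished.
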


Let $V_{1},\dots,V_{k}$ be subspaces of $\V$.
We say that $V_{1},\dots,V_{k}$ are \emph{linearly independent} if for all $v_{i}\in V_{i}, 1\leq i\leq k$, $\sum_{i=1}^{k}v_{i}=\bold{0}$ implies that $v_{1}=\dots=v_{k}=\bold{0}$.
In this case we also say that   $(V_{1},\dots,V_{k})$ is a \emph{linearly independent tuple} (or a \emph{linearly independent pair} if $k=2$). 

Let $V_{1},\dots,V_{k}$ be subspaces of $\V$ and $v_{1},\dots,v_{k'}\in\V$. 
We say that $v_{1},\dots,v_{k'},V_{1},\dots,V_{k}$ are \emph{linearly independent} if for all $c_{i}\in\F_{p}, 1\leq i\leq k$ and $v'_{i'}\in V_{i'}, 1\leq i'\leq k'$, $\sum_{i=1}^{k}c_{i}v_{i}+\sum_{i'=1}^{k'}v'_{i'}=\bold{0}$ implies that $c_{1}=\dots=c_{k}=0$ and $v'_{1}=\dots=v'_{k'}=\bold{0}$.
In this case we also say that 
 $(v_{1},\dots,v_{k'},V_{1},\dots,V_{k})$ is a \emph{linearly independent tuple} (or a \emph{linearly independent pair} if $k+k'=2$).

 \begin{prop}[Propositions %\ref{2:gri}
 3.6 and %\ref{2:gr0}
 3.7 of \cite{SunB}]\label{3:gr0}
	Let $m,s\in\N$, $d,N,r\in\N_{+}$, $p\gg_{d} 1$ be a prime, $M\colon\V\to \F_{p}$ be a non-degenerate quadratic form, and $V,V_{1},\dots,V_{N}$ be  subspaces of $\V$ such that   $\dim(V)=m$ and $\dim(V_{i})\leq r, 1\leq i\leq N$.	Suppose that 
	either
	\begin{itemize}
	\item $V_{1},\dots,V_{N}$ are linearly independent and $N\geq s+m+1$; or
		\item $V,V_{1},\dots,V_{N}$ are linearly independent and $N\geq s+1$,	
	\end{itemize}	
	  and that
	either $\rank(M\vert_{V^{\pp}})\geq 2N(r-1)+7$ or $d\geq 2m+2N(r-1)+7$.
	Then for all $f\in\st_{\zp,d}(s)$, we have that
	$$f\in\cap_{i=1}^{N}J^{M}_{V+V_{i}}\Leftrightarrow f\in J^{M}_{V}.$$
\end{prop}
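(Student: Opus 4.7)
The $\Leftarrow$ direction is immediate: since $V \subseteq V + V_i$ for every $i$, we have $J^M_V \subseteq J^M_{V+V_i}$, and hence $J^M_V \subseteq \bigcap_{i=1}^{N} J^M_{V+V_i}$.

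For the substantive $\Rightarrow$ direction, the plan is induction on $r := \max_i \dim V_i$. The base case $r = 0$ forces $V_i = \{\mathbf{0}\}$ for all $i$, so the hypothesis already reads $f \in J^M_V$. For $r \geq 1$ one may assume every $V_i$ is nontrivial (otherwise the corresponding hypothesis $f \in J^M_{V + V_i} = J^M_V$ is already the desired conclusion), and fix for each $i$ a decomposition $V_i = V_i' \oplus \mathrm{span}(u_i)$ with $\dim V_i' \leq r - 1$. The membership $f \in J^M_{V + V_i}$ unfolds as
\begin{equation*}
f = \phi_i + \bigl((u_i A)\cdot n\bigr)\, \psi_i,
\end{equation*}
where $\phi_i \in J^M_{V + V_i'}$ and $\deg \psi_i \leq s - 1$. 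The goal of the inductive step is to show $\psi_i \in J^M_V$ (hence $f \in J^M_{V + V_i'}$), after which the inductive hypothesis applied with each $V_i$ replaced by $V_i'$ will complete the argument.

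The main tool I would use is Proposition \ref{3:att5}, which reformulates ideal membership in $J^M_{h_1,\dots,h_k}$ as a $p$-reducibility condition on the $p$-Gowers set of $V_p(M)^{h_1,\dots,h_k}$. Pairwise differences
\begin{equation*}
\bigl((u_i A)\cdot n\bigr)\psi_i - \bigl((u_j A)\cdot n\bigr)\psi_j \in J^M_{V + V_i'} + J^M_{V + V_j'}
\end{equation*}
translate into reducibility statements for $\psi_i$ and $\psi_j$ on overlapping $p$-Gowers sets. Exploiting the linear independence of $V_1,\dots,V_N$ in the first alternative hypothesis (respectively, of $V, V_1,\dots,V_N$ in the second), together with the quantitative count $N \geq s + m + 1$ (resp.\ $N \geq s + 1$), one gathers enough such relations to annihilate any potential residue of degree at most $s-1$ modulo $J^M_V$, forcing $\psi_i \in J^M_V$ for every $i$. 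The thresholds $s+m+1$ and $s+1$ reflect the number of testing directions needed to detect a nonzero polynomial of degree $s-1$ modulo $J^M_V$: roughly $s+1$ independent linear forms suffice in general, and the extra $m$ in the first alternative compensates for the absence of $V$ itself from the linearly independent collection.

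The principal obstacle I anticipate is propagating the rank hypothesis through the induction. Each application of Proposition \ref{3:att5} demands that $\rank(M)$ (or the rank of $M$ restricted to the relevant orthogonal subspace) exceed a threshold growing with the combined dimension of the testing subspaces, and this threshold accumulates as one iterates the reduction $V_i \mapsto V_i'$. The specific bounds $\rank(M\vert_{V^{\pp}}) \geq 2N(r-1) + 7$ and $d \geq 2m + 2N(r-1) + 7$ are calibrated precisely so that the hypothesis of Proposition \ref{3:att5} is met at every stage: the factor $r - 1$ counts the induction steps, while $2N$ measures the dimension cost per step. Verifying simultaneously that the linear independence condition and the quantitative rank condition both persist under the replacement $V_i \mapsto V_i'$, uniformly across all $i$, will be the technically delicate heart of the argument.
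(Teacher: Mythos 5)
This proposition is cited (in Appendix \ref{3:s:AppA}) as Propositions 3.6 and 3.7 of \cite{SunB} and is not proved in the present paper, so there is no internal proof to compare against. Evaluating the proposal on its own: the $\Leftarrow$ direction, the framework of induction on $r = \max_i \dim V_i$, and the base case $r=0$ are all fine, and the decomposition $f = \phi_i + ((u_iA)\cdot n)\psi_i$ with $\phi_i \in J^M_{V+V_i'}$, $\deg \psi_i \leq s-1$ is indeed available for each $i$. The genuine gap is the central claim of the inductive step --- that $\psi_i \in J^M_V$ --- which is what the theorem is really asserting after one peeling. You propose to derive this from pairwise relations $((u_iA)\cdot n)\psi_i \equiv ((u_jA)\cdot n)\psi_j \mod J^M_{V+V_i'}+J^M_{V+V_j'}$ plus linear independence and the count $N \geq s+1$ (resp.\ $N \geq s+m+1$), but this is an announcement, not an argument: each such relation couples two unknown polynomials, each lives modulo a different ideal depending on $(i,j)$, and no mechanism is given for isolating a single $\psi_i$ or for explaining why the stated threshold on $N$ suffices, nor why the two alternative hypotheses need different thresholds. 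You flag this yourself as the technically delicate heart, but the whole content of the $\Rightarrow$ direction is packed into this unspoken step.

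There is a further well-definedness concern: the decomposition $f = \phi_i + ((u_iA)\cdot n)\psi_i$ determines $\psi_i$ only modulo $J^M_{V+V_i'}$, so any reducibility argument through Proposition \ref{3:att5} must be checked for invariance under that ambiguity, and this is not addressed. Finally, note that the rank threshold $2N(r-1)+7$ has $N$ appearing multiplicatively; this strongly suggests a simultaneous argument over all $N$ subspaces, whereas the hypothesis of Proposition \ref{3:att5} involves only the number $k\leq m+r$ of spanning vectors of a single $V+V_i$, a much smaller quantity. This numerical mismatch is evidence that the mechanism in \cite{SunB} is not the pairwise reduction you envision --- a plausible alternative is to pass to the quotient $\F_{p}[x_1,\dots,x_d]/\langle (nA)\cdot n\rangle$ and argue with divisibility by the linear forms $(hA)\cdot n$ there, using the rank bound to keep the relevant varieties irreducible --- and the original proof should be consulted rather than reconstructed from the present paper.
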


  \begin{prop}[Proposition 4.24 of \cite{SunB}]\label{3:coco1p}
Let $d,K\in\N_{+}$, $r,s\in\N$, $\d>0$, $p\gg_{\d,d,r} 1$ be a prime dividing $K$,   $H$ be a subset of $\Vk$ with $\vert H\vert>\delta K^{d}$, $\bold{0}\notin\iota(H)$ and $M\colon\V\to\F_{p}$ be a non-degenerate quadratic form.
	Let $F\colon H\to\st_{\Z/p^{r},d}(s)$ be a map such that
	\begin{equation}\nonumber
	F(x)\equiv F(y)\mod J^{M}_{\iota(x),\iota(y)}
	\end{equation}
	 for all $x,y\in H$.  If  $d\geq s+8$, then there exists $G\in\st_{\Z/p^{r},d}(s)$ such that 
	\begin{equation}\nonumber
	F(x)\equiv G\mod J^{M}_{\iota(x)}
	\end{equation}
	  for all $x\in H$. 
\end{prop}

%\begin{prop}[A special case of Proposition %\ref{2:coco1} 
%4.6 of \cite{SunB}]\label{3:coco1}
%Let $d\in\N_{+}$, $s\in\N$, $\d>0$, $p\gg_{\d,d} 1$ be a prime,   $H$ be a subset of $\V$ with $\vert H\vert>\delta p^{d}$ and $M\colon\V\to\F_{p}$ be a non-degenerate quadratic form.
%	Let $F\colon \V\to\st_{d}(s)$ be a map such that
%	\begin{equation}\nonumber
%	F(x)\equiv F(y)\mod J^{M}_{x,y}
%	\end{equation}
%	 for all $x,y\in H$.  If 
%	 $d\geq s+8$, then there exists $G\in\st_{d}(s)$ such that 
%	\begin{equation}\nonumber
%	F(x)\equiv G\mod J^{M}_{x}
%	\end{equation}
%	  for all $x\in H$. 
%\end{prop}

\section{Approximation properties for nilsequences}\label{3:s:AppB}

In this section, we provide some useful properties which allow us to approximate a nilsequence or nilcharacter by one with better properties.

The following lemma demonstrates the type of operations we can apply to nilcharacters. This is  a generalization of Lemma E.3 of \cite{GTZ12}.  Compared with Lemma E.3,
our statement needs to take the periodicity property of nilcharacters into consideration,  and our statement is more quantitative since we do not use the non-standard analysis approach as in \cite{GTZ12}.

\begin{lem}\label{3:LE80}
	Let $d,D,k,k'\in\N_{+}$, $C>0$, $I$ be the degree, multi-degree,  or  degree-rank ordering with $\dim(I)\vert k$, and $J$ be a finite downset of  $I$.	
	Let $\Omega\subseteq\F_{p}^{k}$ and $\chi,\chi'\in\Xi^{J;C,D}_{p}(\Omega)$. 
	Then for all $h\in \F_{p}^{k}$ and $q\in\F_{p}\backslash\{0\}$, we have that $\chi\otimes\chi'\in \Xi^{J;2C,D^{2}}_{p}(\Omega)$, $\chi(\cdot+h)\in \Xi^{J;C,D}_{p}(\Omega-h)$, $\chi(q\cdot)\in\Xi^{J;C,D}_{p}(q^{-1}\Omega)$ and $\overline{\chi}\in\Xi^{J;C,D}_{p}(\Omega)$.	
	
	If $I$ is the degree filtration, then 	for all    linear transformation $L\colon \F_{p}^{k'}\to\F_{p}^{k}$, we have that  $\chi\circ L\in\Xi^{J;C,D}_{p}(L^{-1}(\Omega))$.
\end{lem}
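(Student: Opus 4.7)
The plan is to verify each of the five assertions by an explicit construction of a $\Xi^{J}_{p}$-representation, invoking Proposition \ref{3:BB} whenever $p$-periodicity of the underlying polynomial sequence is at stake. Throughout, fix a $\Xi^{J;C,D}_{p}(\Omega)$-representation $((G/\Gamma)_{I},g,F,\eta)$ of $\chi$ (and similarly $((G'/\Gamma')_{I},g',F',\eta')$ of $\chi'$), so that $\chi(n)=F(g\circ\tau(n)\Gamma)$ with vertical character $\eta$, and similarly for $\chi'$.

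For the tensor product $\chi\otimes\chi'$, form the product nilmanifold $G\times G'/\Gamma\times\Gamma'$ of Definition \ref{3:id4}, which inherits an $I$-filtration of degree $\subseteq J$ with Mal'cev basis obtained by concatenation; its vertical torus is the product of the two vertical tori (when $J$ has a maximal element $s$), and $(\eta,\eta')\colon G_{s}\times G'_{s}\to\R$ is a vertical character of complexity $\leq 2C$. Take $(g,g')\in\poly_{p}(\Z^{k}\to (G\times G')_{I}\vert \Gamma\times\Gamma')$ and Lipschitz function $F\otimes F'\in\Lip(G/\Gamma\times G'/\Gamma'\to \mathbb{S}^{D^{2}})$ of norm $\leq C^{2}\leq 2C$; checking the vertical frequency identity $(F\otimes F')((g_{s},g'_{s})(x,x'))=\exp(\eta(g_{s})+\eta'(g'_{s}))(F\otimes F')(x,x')$ is immediate. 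This gives $\chi\otimes\chi'\in\Xi^{J;2C,D^{2}}_{p}(\Omega)$.

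For the translation $\chi(\cdot+h)$, keep the same nilmanifold, function $F$, and vertical character $\eta$; replace $g$ by the sequence $g''$ produced by Proposition \ref{3:BB}(i), which ensures $g''\in\poly_{p}((\Omega-h)\to G_{I}\vert\Gamma)$ with $g(n+h)\Gamma=g''(n)\Gamma$ for all $n\in\Omega-h$. This representation has complexity $\leq C$ and dimension $\leq D$. For the scalar multiplication $\chi(q\cdot)$, the map $n\mapsto qn$ is a bijective linear transformation from $\F_{p}^{k}$ to itself that respects any of the three filtration types (since it acts diagonally on each copy of $\F_{p}^{d}$ in $(\F_{p}^{d})^{\dim(I)}$ and so preserves the multi-degree type of any derivative); hence for all three filtrations one verifies directly that $g\circ(q\cdot)\in\poly(\Z^{k}\to G_{I})$, and the $p$-periodicity of the resulting sequence on $q^{-1}\Omega$ follows from $qp\Z^{k}\subseteq p\Z^{k}$. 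For conjugation $\overline{\chi}$, take the same nilmanifold and $g$ but replace $F$ by $\overline{F}$, whose vertical character is $-\eta$ with the same complexity. Finally, for $\chi\circ L$ in the degree-filtration case, Proposition \ref{3:BB}(ii) produces a $g''\in\poly_{p}(L^{-1}(\Omega)\to G_{\N}\vert\Gamma)$ with $g(L(n))\Gamma=g''(n)\Gamma$, and combining with the unchanged $F$ and $\eta$ gives the desired representation of complexity $\leq C$ and dimension $\leq D$.

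The only subtlety, and the part that most deserves care, is ensuring that the new polynomial sequences lie in $\poly_{p}(\cdot\to G_{I}\vert\Gamma)$ rather than merely $\poly$: this is exactly what Proposition \ref{3:BB} is designed to furnish in the translation and linear-transformation cases, while for the tensor product one checks directly from the definition that if $g,g'$ are partially $p$-periodic then so is $(g,g')$, and for conjugation no change in $g$ is required. The complexity bookkeeping is routine since Mal'cev bases, Lipschitz norms, and vertical character complexities all combine additively under products and are unchanged under translations, sign flips of $\eta$, and precomposition with integer linear maps of bounded complexity.
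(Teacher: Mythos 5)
Your proof follows essentially the same route as the paper's: form the product nilmanifold and $(\eta,\eta')$ for $\chi\otimes\chi'$, apply Proposition \ref{3:BB}(i) for translation, replace $F$ by $\overline{F}$ (and $\eta$ by $-\eta$) for conjugation, and apply Proposition \ref{3:BB}(ii) for $\chi\circ L$ in the degree case. The one place you genuinely diverge is $\chi(q\cdot)$: the paper also appeals to Proposition \ref{3:BB}, but \ref{3:BB}(ii) as stated only covers the degree filtration, whereas the lemma claims the result for all three orderings. Your direct argument — that the integer lift of $n\mapsto qn$ acts block-diagonally on $(\Z^{d})^{\dim(I)}$ and therefore carries $\poly(\Z^{k}\to G_{I})$ to itself for all three filtration types, with $p$-periodicity preserved because $pq\Z^{k}\subseteq p\Z^{k}$ — is cleaner and in fact plugs a small gap in how the paper invokes \ref{3:BB} for the multi-degree and degree-rank cases. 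You should, however, make explicit that the polynomial sequence you are constructing is $\tilde g(m):=g(\tau(q)\,m)$ and that $\tilde g(\tau(n))\Gamma = g(\tau(qn))\Gamma$ by the full $p$-periodicity of $g$, since $\tau(q)\tau(n)$ and $\tau(qn)$ agree only modulo $p\Z^{k}$.

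One bookkeeping slip: you assert the Lipschitz norm of $F\otimes F'$ is ``$\leq C^{2}\leq 2C$'', but $C^{2}\leq 2C$ fails for $C>2$. The right estimate is the direct one: since $F,F'$ are $\mathbb{S}$-valued, $|F(x)\otimes F'(x)-F(y)\otimes F'(y)|\leq |F(x)-F(y)|+|F'(x)-F'(y)|$, so $\Vert F\otimes F'\Vert_{\Lip}\leq 1+(C-1)+(C-1)=2C-1\leq 2C$. This does not affect the validity of the final complexity bound $2C$ that the lemma asserts.
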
	
\begin{proof}
Let $((G/\Gamma)_{I},g,F,\eta)$ and $((G'/\Gamma')_{I},g',F',\eta')$ be  $\Xi^{J}_{p}(\Omega)$-representations of $\chi$ and $\chi'$ respectively of complexities at most  $C$ and dimensions $D',D''$ respectively. 
Then $$\chi\otimes\chi'(n)=F\otimes F'(g(\tau(n))\Gamma,g'(\tau(n))\Gamma')$$ for all $n\in \Omega$. 
	Note that $(G\times G')/(\Gamma\times \Gamma')$ is an $I$-filtered nilmanifold of complexity at most $2C$ and degree $\subseteq J$, and $F\otimes F'$ is a function taking values in $\mathbb{S}^{D'D''}$ with  Lipschitz norm bounded by $2C$ and has a vertical frequency $(\eta,\eta')$ of complexity at most $2C$. Moreover, since $g\in \poly_{p}(\Z^{k}\to G_{I}\vert \Gamma)$ and $g'\in \poly_{p}(\Z^{k}\to G'_{I}\vert \Gamma')$,
	we have that 
	$(g(\cdot),g'(\cdot))\in \poly_{p}(\Z^{k}\to (G\times G')_{I}\vert \Gamma\times\Gamma')$. So  $\chi\otimes\chi'\in \Xi^{J;2C,D^{2}}_{p}(\Omega)$.

	Since $g\circ\tau\in \poly_{p}(\F_{p}^{k}\to G_{I}\vert\Gamma)$,
	by Proposition \ref{3:BB}, there exist $g',g''\in \poly_{p}(\F_{p}^{k}\to G_{I}\vert\Gamma)$ such that 
	$g'(n)\Gamma=g\circ\tau(n+h)\Gamma$ for all $n\in\Omega-h$ and that $g''(n)\Gamma=g\circ\tau(qn)\Gamma$ for all $n\in q^{-1}\Omega$. From this it is not hard to see that $\chi(\cdot+h)\in \Xi^{J;C,D}_{p}(\Omega-h)$ and $\chi(q\cdot)\in\Xi^{J;C,D}_{p}(q^{-1}\Omega)$.	
	We also have $\overline{\chi}\in\Xi^{J;C,D}_{p}(\Omega)$  since $\overline{\chi}(n)=\overline{F}(g(n)\Gamma)$ for all $n\in\Omega$.
	
	If $I$ is the degree filtration, then
	by Proposition \ref{3:BB}, there exists $g'''\in \poly_{p}(\F_{p}^{k}\to G_{I}\vert\Gamma)$ such that 
 $g'''(n)\Gamma=g\circ\tau\circ L(n)\Gamma$ for all $n\in L^{-1}(\Omega)$. From this it is not hard to see that   $\chi\circ L\in\Xi^{J;C,D}_{p}(L^{-1}(\Omega))$.	
	\end{proof}

Given any non-periodic nilsequence, the method of Manners \cite{Man14} allows us to approximate it with a $p$-periodic one. We summarize this approach in the following theorem, and tailor it for our purposes:

\begin{thm}[Manners' approximation theorem for degree and multi-degree filtrations]\label{3:papprox}
	Let $D,k\in\N_{+}, C>0$, $p$ be a prime, $I$ be the degree or multi-degree ordering with $\dim(I)\vert k$, $G/\Gamma$ be a nilmanifold with flirtation $G_{I}$ of complexity at most $C$, $g\in\poly(\Z^{k}\to G_{I})$ be a polynomial, and $F\in\Lip(G/\Gamma\to\C^{D})$ be of Lipschitz norm at most $C$. Let $\phi\colon\T^{k}\to [0,1]$ be a smooth function supported on a box of edge length $1/2$.
	Then there exist a nilmanifold $\tilde{G}/\tilde{\Gamma}$  with flirtation $\tilde{G}_{I}$ of complexity at most $O_{C,k}(1)$ and degree the same as $G/\Gamma$, a $p$-periodic polynomial $\tilde{g}\in\poly_{p}(\Z^{k}\to \tilde{G}_{I}\vert\tilde{\Gamma})$, and a function $\tilde{F}\in\Lip(\tilde{G}/\tilde{\Gamma}\to\C^{D})$  of Lipschitz norm at most $O_{C,k}(1)$ such that
	$$\tilde{F}(\tilde{g}(n)\tilde{\Gamma})=\phi(n/p)\otimes F(g(n \mod p\Z^{k})\Gamma)$$
	for all $n\in\Z^{k}$, where $n \mod p\Z^{k}$ is the representative of $n$ in $\{0,\dots,p-1\}^{k}$.	 
\end{thm}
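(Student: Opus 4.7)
The plan is to adapt Manners' periodization technique from \cite{Man14} to our filtered setting. Writing $r := n \mod p\Z^k \in \{0, \ldots, p-1\}^k$ and $t := n/p \in \R^k$, one has the identity $r = p\{t\}$ where $\{\cdot\}$ denotes the componentwise fractional part. Thus $F(g(r)\Gamma)$ is $p$-periodic in $n$ but, as a function of $t \in \T^k = \R^k/\Z^k$, is discontinuous across the integer boundaries; the cutoff $\phi(n/p)$ is supported on a box of edge length $1/2$ on $\T^k$, which keeps us in the interior of a single fundamental domain of $\R^k \to \T^k$ and hence away from these discontinuities.

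First I would apply the Taylor expansion of Lemma \ref{3:B.9} to write $g(n) = \prod_m g_m^{\binom{n}{m}}$ with each $g_m$ in the appropriate piece of the filtration, so that $g(r) = \prod_m g_m^{\binom{p\{t\}}{m}}$ is exhibited as a ``$\{t\}$-polynomial'' valued in $G$. Next, I would build the extended nilmanifold $\tilde{G}/\tilde{\Gamma}$: as a Lie group $\tilde{G} := G \times \R^k$ with the $\R^k$ factor placed in the degree-$1$ piece of the filtration (appropriately adapted for the multi-degree and degree-rank cases so that $\tilde{G}_I$ has the same degree/downset as $G_I$), and with lattice $\tilde{\Gamma}$ generated by $\Gamma \times \{0\}$ together with twist elements $(\gamma_{m,i}, e_i)$ (where $e_i$ is the $i$-th standard basis vector of $\R^k$) chosen so that they absorb the jumps of $\prod_m g_m^{\binom{p\{t\}}{m}}$ as $t$ crosses an integer coordinate. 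I then set $\tilde{g}(n) := \bigl(\prod_m g_m^{\binom{p\{n/p\}}{m}}, n/p\bigr)$, lifted to a polynomial in $n \in \Z^k$; the twist structure of $\tilde{\Gamma}$ forces $\tilde{g} \in \poly_p(\Z^k \to \tilde{G}_I \vert \tilde{\Gamma})$.

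Finally, I would define $\tilde{F} \in \Lip(\tilde{G}/\tilde{\Gamma} \to \C^D)$ by $\tilde{F}((x\Gamma, t\Z^k)) := \tilde{\phi}(t) \otimes F(x\Gamma)$ for a smoothed representative $\tilde{\phi}$ of $\phi$ supported on the prescribed box; because $\phi$ is supported on a half-size box, the formula is well-defined on the quotient and agrees with $\phi(n/p) \otimes F(g(r)\Gamma)$ when evaluated at $\tilde{g}(n)\tilde{\Gamma}$. The Lipschitz norms of $\tilde{\phi}$ and of $F$ combine to give $\Vert \tilde{F}\Vert_{\Lip} = O_{C,k}(1)$, and the complexity of $\tilde{G}/\tilde{\Gamma}$ is likewise $O_{C,k}(1)$ by inspection of the construction.

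The main obstacle is the bookkeeping for the filtration and lattice compatibility: one must verify (i) that the twist generators $(\gamma_{m,i}, e_i)$ can be chosen consistently so that $\tilde{\Gamma}_i := \tilde{\Gamma} \cap \tilde{G}_i$ remains cocompact in $\tilde{G}_i$ for every $i \in I$, and (ii) that $\tilde{g}$ lies in $\poly(\Z^k \to \tilde{G}_I)$ with the same degree/downset as $g$, not a larger one. In the multi-degree and degree-rank cases, this requires assigning the $\R^k$ factor to multi-degree $(1,0,\dots,0)$ (or degree-rank $[1,1]$) and checking via the Baker--Campbell--Hausdorff computations that the commutator structure imposed by $\tilde{\Gamma}$ does not spill outside the filtration of $G_I$. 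Once this is set up, $p$-periodicity of $\tilde{g}$ modulo $\tilde{\Gamma}$ is a direct consequence of the definition of the twist generators.
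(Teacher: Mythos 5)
Your proposal correctly identifies the periodization strategy and the role of the cutoff $\phi$ in staying inside a single fundamental domain, and the final step (defining $\tilde F$ via the cutoff on the $\R^k$-coordinate) matches the paper. However, the central construction you propose — taking $\tilde G := G\times\R^k$ as a Lie group, placing $\R^k$ in degree $1$, and absorbing the periodization defects by adjoining finitely many twist generators $(\gamma_{m,i},e_i)$ to the lattice — does not work, and this is where the argument breaks down.

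The difficulty is that the defect that the lattice must absorb is not a fixed element of $G$, but a polynomial family. If you set $\tilde g(n) := (g(n), n/p)$ (the only way to get a genuine polynomial sequence, since $n\mapsto g(n\bmod p\Z^k)$ is piecewise polynomial and not a polynomial sequence at all), then $p$-periodicity of $\tilde g$ modulo $\tilde\Gamma$ requires
\[
\tilde g(n+pm)^{-1}\tilde g(n)=\bigl(g(n+pm)^{-1}g(n),\,-m\bigr)\in\tilde\Gamma
\]
for \emph{every} $n\in\Z^k$. But for $\deg g\geq 2$ the first coordinate $g(n+pm)^{-1}g(n)$ is a nonconstant polynomial in $n$ of degree $\deg g -1$; already in the toy case $G=\R$, $g(n)=\alpha n^2$, one gets $g(n+p)-g(n)=\alpha(2pn+p^2)$, which varies with $n$. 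No fixed lattice in the finite-dimensional group $G\times\R^k$ can contain a one-parameter (let alone $k$-parameter, degree-$(\deg g-1)$) family $\{(g(n+pm)^{-1}g(n),-m):n\}$ unless that family is constant, i.e.\ unless $\deg g\leq 1$. The twist generators $(\gamma_{m,i},e_i)$ would have to be adapted to $n$, which contradicts the lattice being a fixed discrete subgroup; and if one attempts to fix things by passing to the subgroup generated by all such defects, one is driven back to a much larger ambient group than $G\times\R^k$.

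The paper's proof resolves this by replacing $G$ with the (finite-dimensional but much larger) polynomial-sequence group $\poly(\R^k\to G_I)$, endowed with the shift action $T(t):h\mapsto h(\cdot+t)$, and taking $\tilde G:=\R^k\ltimes_T\poly(\R^k\to G_I)$ with lattice $\tilde\Gamma:=\Z^k\ltimes_T\poly(\R^k\to\Gamma_I)$. Here $\tilde g(n):=(\bold 0,h)\ast(n/p,\mathrm{id})$ with $h(t):=g(pt)$, and the identity $\tilde g(n+pm)=\tilde g(n)\ast(m,\mathrm{id})$ makes $p$-periodicity immediate; the $n$-dependent defects are automatically absorbed because the lattice factor $\poly(\R^k\to\Gamma_I)$ contains whole polynomial families, not just individual elements. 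This enlargement from $G\times\R^k$ to $\R^k\ltimes_T\poly(\R^k\to G_I)$ is the essential idea you are missing; the rest of your outline (the fundamental-domain cutoff, the Lipschitz and complexity bounds) then goes through essentially as you describe.
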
	
\begin{proof}
	Consider the polynomial group $\poly(\R^{k}\to G_{I})$ with the pointwise multiplication. By Lemma \ref{3:B.9}, there is an isomorphism $\poly(\Z^{k}\to G_{I})\simeq\poly(\R^{k}\to G_{I})$ given by restrictions.
	Similar to Lemma C.1 of \cite{GTZ12}, $\poly(\Z^{k}\to \Gamma_{I})$ is a discrete and cocompact subgroup of $\poly(\R^{k}\to G_{I})$, where $\Gamma_{I}$ is the filtration induced by $G_{I}$.

	Let $T\colon \R^{k}\to\poly(\R^{k}\to G_{I})$, $T(t,p):=p(\cdot+t)$ denote the shift action. Then $T$ induces a semi-product structure $\tilde{G}:=
	\R^{k}\ltimes_{T}\poly(\R^{k}\to G_{I})$ with the group operation given by
	$$(t,g)\ast(t',g'):=(t+t',T(t',g)\cdot g').$$
	Using the type-I Taylor expansion in Lemma \ref{3:B.9}, it is clear that the group $\poly(\R^{k}\to G_{I})$ is connected and simply connected. So
	$\tilde{G}$  is connected and simply connected.
	Similar to the argument in Theorem 1.5 of \cite{Man14}, $\tilde{\Gamma}:=\Z^{k}\ltimes_{T}\poly(\R^{k}\to \Gamma_{I})$ is a discrete and cocompact subgroup of $\tilde{G}.$

	We now define a pre-filtration $\tilde{G}_{I}$ as follows.  For $i\in I$, let $G^{+i}_{I}$ denote the pre-filtration given by $G^{+i}_{j}:=G_{i+j}$ for all $j\in I$. If $I$ is the multi-degree ordering, then set 
	$\tilde{G}_{0}=\tilde{G}$, $\tilde{G}_{i}=\R^{k}\ltimes_{T}\poly(\R^{k}\to G^{+i}_{I})$ for $\vert i\vert=1$ and $\tilde{G}_{i}=\{\bold{0}\}\ltimes_{T}\poly(\R^{k}\to G^{+i}_{I})$ for  $\vert i\vert\geq 2$. If  $I$ is the degree ordering, then set 
	$\tilde{G}_{0}=\tilde{G}$, $\tilde{G}_{i}=\R^{k}\ltimes_{T}\poly(\R^{k}\to G^{+i}_{I})$ for $i=1$ and $\tilde{G}_{i}=\{\bold{0}\}\ltimes_{T}\poly(\R^{k}\to G^{+i}_{I})$ for  $i\geq 2$.
%	If $I$ is the degree-rank ordering, then let $G_{\N}$ be the degree pre-filtration generated by $G_{[i,0]}$ and set $\tilde{G}'_{0}=\tilde{G}$, $\tilde{G}'_{i}=\R^{k}\ltimes_{T}\poly(\R^{k}\to G^{+i}_{\N})$ for $i=1$ and $\tilde{G}'_{i}=\{\bold{0}\}\ltimes_{T}\poly(\R^{k}\to G^{+i}_{\N})$ for  $i\geq 2$. We then set $\tilde{G}_{\DR}$ to be the degree-rank pre-filtration induced by $\tilde{G}'_{\N}$.
	In both cases, $\tilde{G}_{I}$ is an $I$-pre-filtration and defines a pre-nilmanifold $\tilde{G}/\tilde{\Gamma}$  of complexity $O_{C,k}(1)$ and having  the same degree as $G/\Gamma$.

	Let $h\in \poly(\R^{k}\to G_{I})$ be the rescaling of $g$ given by $h(n):=g(pn)$ (where we view $g$ as a polynomial in $\poly(\R^{k}\to G_{I})$). Let $\tilde{g}\colon\Z^{k}\to\tilde{G}$ be the map given by 	$$\tilde{g}(n):=	(\bold{0},h)\ast(n/p,id_{G}).$$
	It is not hard to compute that $\tilde{g}$ belongs to $\poly(\Z^{k}\to \tilde{G}_{I})$ in both cases. %(recall that $\poly(\Z^{k}\to G_{\DR})=\poly(\Z^{k}\to (G_{[i,0]})_{i\in\N})$ when $I$ is the degree-rank ordering). 
	Moreover, since
	$$\tilde{g}(n+pm)=(\bold{0},h)\ast(n/p+m,id_{G})=(\bold{0},h)\ast(n/p,id_{G})\ast(m,id_{G})\in\tilde{g}(n)\tilde{\Gamma}$$
	for all $m,n\in\Z^{k}$, we have that $\tilde{g}$ is $p$-periodic.
	
	Finally, we define $\tilde{F}$ on the fundamental domain $[0,1)^{k}\ltimes_{T}\poly(\R^{k}\to G_{I})/\poly(\Z^{k}\to \Gamma_{I})$ by
	$$\tilde{F}(t,g\cdot\poly(\Z^{k}\to \Gamma_{I})):=\phi(t)\otimes F(g(\bold{0})\Gamma)$$
	(which is obviously well defined), and extend periodically by $\tilde{\Gamma}$, namely
	$$\tilde{F}(t,g\cdot\poly(\Z^{k}\to \Gamma_{I})):=\phi(\{t\})\otimes F(g(-\lfloor t\rfloor)\Gamma)$$
	for $t\in\R^{k}$. It is clear that the Lipschitz norm of $\tilde{F}$ is $O_{C,k}(1)$. Then
	\begin{equation}
	\begin{split}
	&\quad \tilde{F}(\tilde{g}(n)\tilde{\Gamma})=\tilde{F}((n/p,h(\cdot+n/p))\tilde{\Gamma})
	\\&=\phi(\{n/p\})\otimes F(h(n/p-\lfloor n/p\rfloor))=\phi(\{n/p\})\otimes F(g(n \mod p\Z^{k})).
	\end{split}
	\end{equation}
We are done by Lemma \ref{3:pre2g}.
\end{proof}

We caution the readers that we do not know whether a result similar to Theorem \ref{3:papprox} holds for the degree-rank filtrations.
As a consequence of Theorem \ref{3:papprox}, %and \ref{3:papproxdr}, 
we have the following result which allows us to approximate   nilsequences by   periodic ones:

\begin{coro}[Approximating a nilsequence by  periodic ones]\label{3:pppap}
	Let $C>0$, $D,k\in\N_{+}$,  $I$ be the degree or multi-degree  ordering with $\dim(I)\vert k$,  $J$ be a down set of $I$, and $p$ be a prime. 
%	 If $I$ is the degree-rank ordering, then let $s_{\ast}$ denote the smallest positive integer with $[s_{\ast},s_{\ast}]\geq i$ for all $i\in J$. If $I$ is the degree or multi-degree ordering, denote $s_{\ast}:=1$.
	For any $\Omega\subseteq \Z^{k}$ and  $\phi\in\Nil^{J;C,D}(\Omega)$, there exist $\phi_{1},\dots,\phi_{20^{k}}\in \Nil^{J;O_{C,k}(1),D}_{p}(\Omega)$ such that for all $n\in \Omega$, we have $$\phi(n \mod p\Z^{k})=\sum_{m=1}^{20^{k}}\phi_{m}(n).$$	 
	
	Similarly, for any prime $p$, any subset $\Omega'\subseteq \F_{p}^{k}$ and any $\phi'\in\Nil^{J;C,D}(\Omega')$, there exist $\phi'_{1},\dots,\phi'_{20^{k}}\in \Nil^{J;O_{C,k}(1),D}_{p}(\Omega')$ such that for all $n\in\Omega'$, we have $$\phi'(n)=\sum_{m=1}^{20^{k}}\phi'_{m}(n).$$ 
\end{coro}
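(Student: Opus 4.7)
The strategy is to combine Theorem \ref{3:papprox} (Manners' approximation theorem) with a smooth partition of unity on $\T^{k}$. The point of Theorem \ref{3:papprox} is that for any smooth bump $\psi\colon\T^k\to[0,1]$ supported on a box of edge length $1/2$, one can convert the non-periodic nilsequence $F(g(\cdot \mod p\Z^k)\Gamma)$ into a $p$-periodic nilsequence at the cost of multiplying by the cutoff $\psi(\cdot/p)$. Since we can cover $\T^k$ by finitely many such boxes so that the corresponding $\psi$'s form a partition of unity, summing the outputs of Theorem \ref{3:papprox} reconstructs the original nilsequence as a sum of $p$-periodic nilsequences.

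Concretely, I would first build a smooth partition of unity $1 \equiv \sum_{m=1}^{N}\psi_{m}$ on $\T^{k}$, where each $\psi_{m}\colon\T^{k}\to[0,1]$ is supported on a box of edge length at most $1/2$, $N\leq 20^{k}$, and each $\psi_{m}$ has Lipschitz norm $O_{k}(1)$. This is achieved, e.g., by covering $\T=\R/\Z$ with four intervals of length $1/2$ centered at $0,1/4,1/2,3/4$, taking a standard smooth partition of unity subordinate to this cover, and then forming $k$-fold tensor products to get at most $4^{k}\leq 20^{k}$ bumps on $\T^{k}$.

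Next, fix any $\Nil^{J;C,D}(\Omega)$-representation $((G/\Gamma)_{I},g,F)$ of $\phi$, so that $\phi(n)=F(g(n)\Gamma)$ for $n\in\Omega$. For each $m=1,\dots,N$, I would apply Theorem \ref{3:papprox} with the cutoff $\psi_{m}$ together with the triple $(G/\Gamma,g,F)$. This produces an $I$-filtered nilmanifold $\tilde{G}^{(m)}/\tilde{\Gamma}^{(m)}$ of complexity $O_{C,k}(1)$ and of the same degree as $G/\Gamma$, a polynomial sequence $\tilde{g}^{(m)}\in\poly_{p}(\Z^{k}\to \tilde{G}^{(m)}_{I}\vert\tilde{\Gamma}^{(m)})$, and a Lipschitz function $\tilde{F}^{(m)}$ of norm $O_{C,k}(1)$, such that the map
\begin{equation*}
\phi_{m}(n):=\tilde{F}^{(m)}(\tilde{g}^{(m)}(n)\tilde{\Gamma}^{(m)})=\psi_{m}(n/p)\otimes F(g(n \mod p\Z^{k})\Gamma)
\end{equation*}
belongs to $\Nil^{J;O_{C,k}(1),D}_{p}(\Omega)$. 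Summing over $m$ and using $\sum_{m}\psi_{m}\equiv 1$ on $\T^{k}$, one obtains $\sum_{m=1}^{N}\phi_{m}(n)=F(g(n \mod p\Z^{k})\Gamma)=\phi(n \mod p\Z^{k})$ for all $n\in\Omega$. Padding with zero nilsequences brings the total to $20^{k}$.

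For the second statement with $\Omega'\subseteq\F_{p}^{k}$, I would simply reduce to the first: by definition, $\phi'$ lifts to some $\phi\in\Nil^{J;C,D}(\tau(\Omega')+p\Z^{k})$ with $\phi'=\phi\circ\tau$, and applying the first part to $\phi$ (noting that $\tau(n) \mod p\Z^{k}=\tau(n)$) and composing with $\tau$ yields the desired decomposition. There is no serious obstacle here, since Theorem \ref{3:papprox} already supplies the crucial $p$-periodification step; the only mild subtlety is verifying that the partition of unity can be chosen with boxes of edge length exactly $1/2$ and bounded Lipschitz norms depending only on $k$, which is a routine construction.
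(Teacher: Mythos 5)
Your proposal is correct and follows essentially the same route as the paper: reduce to a smooth partition of unity on $\T^k$ by boxes small enough for Theorem \ref{3:papprox}, apply that theorem to each bump, and sum. The only differences are cosmetic (the paper uses $20^k$ boxes of edge $1/10$ where you use $4^k$ of edge $1/2$, and the paper first extends $\phi$ to all of $\Z^k$ while you apply the theorem directly), neither of which affects the argument.
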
	
\begin{proof}
Since any nilsequence defined on $\Omega$ naturally extends to a nilsequence defined on $\Z^{k}$, we may assume without loss of generality that $\Omega=\Z^{k}$. Similarly, we may assume without loss of generality that $\Omega'=\F_{p}^{k}$.

	We choose $20^{k}$ closed boxes $I_{1},\dots,I_{20^{k}}$ in $\T^{k}$, each of edge length exactly $1/10$, whose interiors cover $\T^{k}$. Let $\rho_{m}, 1\leq m\leq 20^{k}$ be a smooth partition of unity on $\T^{k}$ adapted to $I_{m}, 1\leq m\leq 20^{k}$.   Then each $\rho_{m}$ is supported on a box $J_{m}$ of edge 1/10. 
	
	Denote
	$$\phi_{m}(n):=\rho_{m}(\{n/p\})\otimes\phi(n \mod p\Z^{k})$$ for all $1\leq m\leq 20^{k}$. Then $$\sum_{m=1}^{20^{k}}\phi_{m}(n)=\phi(n \mod p\Z^{k}).$$
	By Theorem \ref{3:papprox}, we have that $\phi_{m}\in \Nil^{J;O_{C,k}(1),D}_{p}(\Z^{k})$. 
	
	For any $\phi'\in\Nil^{J;C,D}(\F_{p}^{k})$, we may write $\phi'=\phi\circ \tau$ for some $\phi\in\Nil^{J;C,D}(\Z^{k})$. So  there exist $\phi_{1},\dots,\phi_{20^{k}}\in \Nil^{J;O_{C,k}(1),D}_{p}(\Z^{k})$ such that $$\phi(n \mod p\Z^{k})=\sum_{m=1}^{20^{k}}\phi_{m}(n)$$ for all $n\in\Z^{k}$.	 
Then $\phi_{1}\circ \tau,\dots,\phi_{20^{k}}\circ \tau\in \Nil^{J;O_{C,k}(1),D}_{p}(\F_{p}^{k})$ and
$$\phi'(n)=\phi(\tau(n))=\sum_{m=1}^{20^{k}}\phi_{m}(\tau(n))=\sum_{m=1}^{20^{k}}\phi'_{m}(n)$$ for all $n\in\F_{p}^{k}$.
\end{proof}

The next lemma allows us to approximate nilsequences of degree $J\cup J'$ by the combinations of ($p$-periodic) nilsequences of degrees $J$ and $J'$. 

\begin{lem}[Splitting a nilsequence into nilsequences of smaller degrees]\label{3:LE.4}
	Let $k,Q\in\N_{+}$, $C,\e>0$, $I$ be the degree, multi-degree or degree-rank ordering with $\dim(I)\vert k$, $J,J'$ be finite downsets of $I$,
	 $p$ be a prime, and $\Omega\subseteq \Z^{k}$. 
	% If $I$ is the degree-rank ordering, then let $s_{\ast}$ denote the smallest positive integer with $[s_{\ast},s_{\ast}]\geq i$ for all $i\in J\cup J'$.If $I$ is the degree or multi-degree ordering, denote $s_{\ast}:=1$.
	 For any $\psi\in\Nil^{J\cup J';C,1}_{\approx Q}(\Omega)$, 	$$\Bigl\Vert\psi(n)-\sum_{j=1}^{K}\psi_{j}(n)\psi'_{j}(n)\Bigr\Vert_{\ell^{\infty}(\Omega)}<\e$$
	for some $K:=K(C,\e,J,J',k)\in\N_{+}$, $\psi_{j}\in\Nil^{J;O_{C,\e,J,J',k}(1),1}_{\approx Q}(\Omega)$ and  $\psi'_{j}\in\Nil^{J';O_{C,\e,J,J',k}(1),1}_{\approx Q}(\Omega)$ for all $1\leq j\leq K$.
	
	In addition, if 	$I$ is the degree or multi-degree  ordering, then we may instead require that  $\psi_{j}\in\Nil^{J;O_{C,\e,J,J',k}(1),1}_{p}(\Omega)$ and  $\psi'_{j}\in\Nil^{J';O_{C,\e,J,J',k}(1),1}_{p}(\Omega)$ for all $1\leq j\leq K$.
\end{lem}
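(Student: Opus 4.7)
The plan is to first reduce to a single $p$-periodic nilsequence via Corollary \ref{3:pppap}, then factor the underlying polynomial sequence into one piece living in a subgroup of degree $\subseteq J$ and another in a subgroup of degree $\subseteq J'$, and finally approximate the Lipschitz function on the product nilmanifold by a sum of tensor products using Stone-Weierstrass. First I would apply Corollary \ref{3:pppap} to reduce to the case where $\psi$ itself is $p$-periodic, at the cost of a multiplicative factor of $20^{k}$ in the number of resulting summands. Thus I may write $\psi(n) = F(g \circ \tau(n) \Gamma)$ with $g \in \poly_p(\Z^k \to G_I\vert \Gamma)$, $G/\Gamma$ an $I$-filtered nilmanifold of degree $\subseteq J \cup J'$ and complexity $O_{C,k}(1)$, and $F \in \Lip(G/\Gamma \to \C)$ of norm $O_{C,k}(1)$.

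Next, let $G^{(J)}$ (resp.\ $G^{(J')}$) denote the closed subgroup of $G$ generated by $\bigcup_{i \in J} G_i$ (resp.\ $\bigcup_{i \in J'} G_i$), equipped with the filtration $G^{(J)}_i := G_i$ for $i \in J$ and $G^{(J)}_i := \{\mathrm{id}\}$ otherwise, which is a bona fide filtration of degree $\subseteq J$ thanks to the downset property of $J$. Using the type-I Taylor expansion of Lemma \ref{3:B.9}, write $g(n) = \prod_m g_m^{\binom{n}{m}}$ in a prescribed order, partition the factors according to whether the corresponding Taylor index lies in $J \setminus J'$, $J' \setminus J$, or $J \cap J'$ (the $J \cap J'$ contributions may be assigned arbitrarily), and apply the Baker-Campbell-Hausdorff formula repeatedly to reassemble the product as $g(n) = g^{(J)}(n) g^{(J')}(n) \gamma(n)$, with $g^{(J)} \in \poly_p(\Z^k \to G^{(J)}_I\vert G^{(J)} \cap \Gamma)$, $g^{(J')} \in \poly_p(\Z^k \to G^{(J')}_I\vert G^{(J')} \cap \Gamma)$, and $\gamma$ taking values in $\Gamma$. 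Whenever a factor in $G_i$ with $i \in J$ is commuted past one in $G_j$ with $j \in J'$, the resulting correction lies in $G_{i+j}$, which is trivial unless $i+j \in J \cup J'$, in which case it can be absorbed into either $g^{(J)}$ or $g^{(J')}$ depending on whether $i+j \in J$ or $i+j \in J'$.

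With this factorization in hand, define $\tilde F\colon G^{(J)}/\Gamma^{(J)} \times G^{(J')}/\Gamma^{(J')} \to \C$ by $\tilde F(x \Gamma^{(J)}, y \Gamma^{(J')}) := F(xy \Gamma)$, where $\Gamma^{(J)} := G^{(J)} \cap \Gamma$ and similarly for $\Gamma^{(J')}$. This function is Lipschitz of complexity $O_{C,k}(1)$ since multiplication in $G$ is smooth and $F$ is Lipschitz. I then apply the Stone-Weierstrass theorem (or equivalently a smooth cutoff together with Fourier approximation on the product nilmanifold) to approximate $\tilde F$ uniformly within $\e$ by a finite sum $\sum_{j=1}^K F_j(x) F'_j(y)$ with $K = K(C,\e,J,J',k)$ and each $F_j, F'_j$ being $O_{C,\e,J,J',k}(1)$-Lipschitz. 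Setting $\psi_j(n) := F_j(g^{(J)}(n) \Gamma^{(J)})$ and $\psi'_j(n) := F'_j(g^{(J')}(n) \Gamma^{(J')})$ then yields the required decomposition.

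The main obstacle I anticipate lies in the careful execution of the BCH rewriting in the second step: one must ensure that the commutator corrections, which arise iteratively as factors are commuted past one another, remain polynomial, $p$-periodic modulo $\Gamma$, and of controlled complexity. In particular, successive applications of BCH will generate nested commutators, so the rewriting must be carried out inductively along the partial order of $J \cup J'$, processing the highest-degree terms first, so that by the time one commutes across the $J$--$J'$ boundary the residual terms can be absorbed without a blow-up of complexity. This inductive rewriting is similar in spirit to the group-theoretic bookkeeping used in the universal-representation construction of Section \ref{3:s:b5}.
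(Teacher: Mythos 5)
Your overall architecture is genuinely different from the paper's, but it contains a gap at a critical step that I do not see how to repair.

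The paper projects \emph{downward} to two quotient nilmanifolds $Y_1 = G/G_{>J}\Gamma$ and $Y_2 = G/G_{>J'}\Gamma$ (after first lifting $G$ to a universal nilpotent group), shows that $\pi_1\times\pi_2\colon G/\Gamma \to Y_1\times Y_2$ is injective, and then works with the pushforward of $F$ on the image $\pi(X)\subseteq Y_1\times Y_2$, extending it to all of $Y_1\times Y_2$ via McShane's Lipschitz extension theorem and a partition of unity. You instead factor the polynomial sequence \emph{upward} into components $g = g^{(J)}g^{(J')}\gamma$ living in subgroups $G^{(J)}, G^{(J')}\leq G$ and then try to work on the product $G^{(J)}/\Gamma^{(J)} \times G^{(J')}/\Gamma^{(J')}$.

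The fatal problem is the definition $\tilde F(x\Gamma^{(J)}, y\Gamma^{(J')}) := F(xy\Gamma)$. This is not a well-defined function on the product nilmanifold. If $x\mapsto x\gamma_1$ with $\gamma_1\in\Gamma^{(J)}$ and $y\mapsto y\gamma_2$ with $\gamma_2\in\Gamma^{(J')}$, then $xy\Gamma$ becomes $x\gamma_1 y\gamma_2\Gamma = xy\cdot(y^{-1}\gamma_1 y)\gamma_2\Gamma$, and for this to equal $xy\Gamma$ one needs $y^{-1}\gamma_1 y\in\Gamma$, i.e., $[\gamma_1,y]\in\Gamma$. In a nilpotent Lie group with lattice $\Gamma$ this fails in general (already in the Heisenberg group: take $\gamma_1=(1,0,0)$ a lattice generator and $y=(0,1/2,0)$; then $[\gamma_1,y]=(0,0,\pm 1/2)\notin\Gamma$). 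So the Stone-Weierstrass approximation step has nothing to approximate. The paper avoids this exact problem by working with \emph{quotients}: a $\Gamma$-invariant function on $G$ automatically descends along $G\to G/G_{>J}$, so the analogue of your $\tilde F$ is well-defined on $Y_1\times Y_2$ by construction, and the remaining issue (that $\pi_1\times\pi_2$ need not be surjective) is handled with McShane extension.

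A secondary gap: even if $\tilde F$ were well-defined, a qualitative Stone--Weierstrass argument does not produce the required quantitative bounds — you need $K$ and the Lipschitz norms of $F_j, F'_j$ to depend only on $(C,\e,J,J',k)$, uniformly in $p$ and $\psi$. The paper explicitly notes this obstruction and replaces Stone--Weierstrass by an explicit covering of $Y_1$ by $\d$-balls with a partition of unity, together with a McShane extension of the pushforward of $F$; this is what makes the bound effective. Your parenthetical "(or equivalently a smooth cutoff together with Fourier approximation)" points in the right direction but would need to be developed; as written it does not close the gap.

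Your use of Corollary \ref{3:pppap} is sound (though note the paper applies it in the opposite direction — it weakens the conclusion, proving the lemma first without requiring $\psi_j,\psi'_j$ to be $p$-periodic, rather than strengthening the hypothesis to $\psi$ $p$-periodic). The BCH bookkeeping in your second step is plausible as far as it goes, but it is wasted effort since the object it produces cannot be fed into a well-defined function on the product nilmanifold.
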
	
\begin{proof}
The idea of the proof is similar to Lemma E.4 of \cite{GTZ12}. However, our case is more intricate in the sense that we need  a more quantitative approximation, and that we need   to make the nilsequences to be $p$-periodic.

By Corollary \ref{3:pppap}, it suffices to prove this lemma under the milder restriction that $\psi_{j}\in\Nil^{J;O_{C,\e,J,J',k}(1),1}_{\approx Q}(\Omega)$ and  $\psi'_{j}\in\Nil^{J';O_{C,\e,J,J',k}(1),1}_{\approx Q}(\Omega)$, which does not require $p$-periodicity.
Let $((G/\Gamma)_{I},g,F)$ be a $\Nil^{J\cup J'}_{\approx Q}(\Omega)$-representation of $\psi$ of complexity at most $C$ and dimension 1. 
For each $j\in J\cup J'$, let $e_{j,1},\dots,e_{j,d_{j}}$ be a basis of generators for $\Gamma_{j}$. We may then lift $G$ to the \emph{universal nilpotent Lie group} that is formally generated by $e_{j,1},\dots,e_{j,d_{j}}$ under the only restriction that any iterated commutator of $e_{j_{1},i_{1}},\dots,e_{j_{r},i_{r}}$ is trivial if $j_{1}+\dots+j_{r}\notin J\cup J'$. Similarly, we can lift $\Gamma$, $F$ and $g$ (with the help of Corollary B.10 of \cite{GTZ12}). Therefore, since the universal nilpotent Lie group is $O(C)$-rational relative to $\mathcal{X}$, we may assume without loss of generality that $G$ is universal. 
	
	The degree $\subseteq J\cup J'$ nilmanifold $G/\Gamma$ projects down to the degree $\subseteq J$ nilmanifold $G/G_{>J}\Gamma$, which is $O_{C,J,J'}(1)$-rational relative to $\mathcal{X}$, where $G_{>J}$ is the group generated by the $G_{j}$ for all $j\in J'\backslash J$. Similarly, we have a projection from $G/\Gamma$ to the degree  $\subseteq J'$ nilmanifold $G/G_{>J'}\Gamma$ which is $O_{C,J,J'}(1)$-rational relative to $\mathcal{X}$. The algebras $\Lip(G/G_{>J}\Gamma\to \C)$ and $\Lip(G/G_{>J'}\Gamma\to \C)$ pull back to sub algebras of $\Lip(G/\Gamma\to \C)$. By the universality of $G$, $G_{>J}$ and $G_{>J'}$ are disjoint. So the union of these two algebras separate points in $G/\Gamma$. 
	
 We next approximate $F$ pointwise by the union of the pullbacks of $\Lip(G/G_{>J}\Gamma\to \C)$ and $\Lip(G/G_{>J'}\Gamma\to \C)$. We remark that we can not directly use Stone-Weierstrass theorem since we need this approximation to be quantitative.

	For convenience denote $X=G/\Gamma$, $Y_{1}=G/G_{>J}\Gamma$, $Y_{2}=G/G_{>J'}\Gamma$. 
	Let $\pi_{1}\colon X\to Y_{1}$ and $\pi_{2}\colon X\to Y_{2}$ denote the projection maps. 
	Denote $\pi=\pi_{1}\times\pi_{2}$ and $Y=Y_{1}\times Y_{2}$. We first claim that $\pi\colon X\to Y$ is an injection. Indeed, for all $x,x'\in X$, if $\pi_{i}(x)=\pi_{i}(x')$ for $i=1,2$, then $g(\pi_{i}(x))=g(\pi_{i}(x'))$ for all $g\in \Lip(Y_{i}\to \C)$. Since the union of the pullbacks of $\Lip(Y_{1}\to \C)$ and $\Lip(Y_{2}\to \C)$ separate points in $G/\Gamma$, we have that $x=x'$. This proves the claim.

Let $d_{X}$, $d_{Y_{1}}$ and $d_{Y_{2}}$ denote the metrices associated to $X,Y_{1}$ and $Y_{2}$ induced by their Mal'cev basis respectively. It is clear that there exists $L=O_{C,J,J'}(1)>1$ such that
$$L^{-1}d_{X}(x,x')\leq d_{Y_{i}}(\pi_{i}(x),\pi_{i}(x'))\leq L d_{X}(x,x')$$
for all $x,x'\in X$ and $i=1,2$.
    Let $S:=\pi(X)\subseteq Y$, and let $F'\colon S\to \C$ be given by $F'(y)=F(x)$ for all $x\in X, y\in Y$ with $\pi(x)=y$. Since $\pi$ is an injection, $F'$ is well defined. Moreover, for all $x,x'\in X, y,y'\in Y, y\neq y'$ with $\pi(x)=y$ and $\pi(x')=y'$, we have that 
    $$\frac{\vert F'(y)-F'(y')\vert}{d_{Y}(y,y')}=\frac{\vert F(x)-F(x')\vert}{d_{X}(x,x')}\cdot \frac{d_{X}(x,x')}{d_{Y}(\pi(x),\pi(x'))}\leq CL^{2}=O_{C,J,J'}(1),$$
    where $d_{Y}$ is the product metric of $d_{Y_{1}}$ and $d_{Y_{2}}$.
    By McShane's theorem \cite{Mic34}, writing 
    $$F''(y):=\sup_{z\in S}(F'(z)-CL^{2}d_{Y}(y,z))$$
    for all $y\in Y$, we have that $F''\vert_{S}=F'$ and that $\Vert F''\Vert_{\Lip(Y)}=O_{C,J,J'}(1)$.
   
   Let $\d>0$ to be chosen later. It is not hard to see that there exist $N=N(C,\d,J,J')\in\N_{+}, K\leq N$ and $y_{1,1},\dots,y_{1,K}\in Y_{1}$ such that writing  $U(y_{1,j})$ to be the open ball in $Y_{1}$ of radius $\d$ centered at $y_{1,j}$ for all $1\leq j\leq K$, we have that   $U(y_{1,1}),\dots,U(y_{1,K})$ cover $Y_{1}$, and that there exist a partition of unity $f_{1},\dots,f_{K}\colon Y_{1}\to[0,1]$  subordinate to this cover with $\Vert f_{j}\Vert_{\Lip(Y_{1})}\leq N$ for all $1\leq j\leq K$. Let
  $$g(x):=\sum_{j=1}^{K}f_{j}(\pi_{1}(x))F''(y_{1,j},\pi_{2}(x))$$
  for all $x\in X$. Then for all $x\in X$, writing $y_{1}=\pi_{1}(x)$ and $y_{2}=\pi_{2}(x)$, we have that
    \begin{equation}\nonumber
    \begin{split}
   &\quad\vert g(x)-F(x)\vert=\Bigl\vert \sum_{j=1}^{K}f_{j}(y_{1})(F''(y_{1,j},y_{2})-F''(y_{1},y_{2}))\Bigr\vert
   \\&\leq     \sum_{j=1}^{K}\bold{1}_{y_{1}\in U(y_{1,j})}f_{j}(y_{1})d_{Y_{1}}(y_{1},y_{1,j})\Vert F''\Vert_{\Lip(Y)}
    \leq\sum_{j=1}^{K}\bold{1}_{y_{1}\in U(y_{1,j})}f_{j}(y_{1})O_{C,J,J'}(\d)=O_{C,J,J'}(\d)<\e
   \end{split}
    \end{equation}
    if we set  $\d$ to be sufficiently small depending on $C,\e,J,J'$. Then $N=N(C,\d,J,J')=O_{C,\e,J,J'}(1)$.

 Finally,  for all $1\leq j\leq K$, recall that $\Vert f_{j}\Vert_{\Lip(Y_{1})}\leq N=N(C,\d,J,J')=O_{C,\e,J,J'}(1)$ and $\Vert F''(y_{1,j},\cdot)\Vert_{\Lip(Y_{2})}=O_{C,J,J'}(1)$. So $\Vert f_{j}\circ\pi_{1}\Vert_{\Lip(X)}$,   $\Vert F''(y_{1,j},\pi_{2}(\cdot))\Vert_{\Lip(X)}=O_{C,\e,J,J'}(1)$.
 	We are done by setting $\psi_{j}:=f_{j}\circ\pi_{1}\circ g$ and $\psi'_{j}:=F''(y_{1,j},\pi_{2}\circ g(\cdot))$.
\end{proof}

 The next lemma allows us to approximate a nilsequence with nilcharacters. The method we use is similar to the discussion in Section 6 and Proposition 8.3 of \cite{GTZ12}.

\begin{lem}[Approximating a nilsequence with nilcharacters]\label{3:LE.5}
	Let $D,k,Q\in\N_{+}$, $C,\e>0$, $I$ be the degree, multi-degree or degree-rank ordering with $\dim(I)\vert k$, $s\in I$, 
	 $p$ be a prime, and $\Omega\subseteq \Z^{k}$.  %If $I$ is the degree-rank ordering, then let $s_{\ast}$ denote the smallest positive integer with $[s_{\ast},s_{\ast}]\geq s$. If $I$ is the degree or multi-degree ordering, denote $s_{\ast}:=1$.
	For any $\psi\in\Nil^{s;C,D}_{\approx Q}(\Omega)$, there exist $N=N(C,D,\e,k,s)\in\N$, and for all $1\leq j\leq N$ 
	a linear transformation $T_{j}\colon\mathbb{C}^{O_{C,D,\e,k,s}(1)}\to \mathbb{C}^{D}$ of suitable dimensions of complexity $O_{C,D,\e,k,s}(1)$, a nilcharacter
	$\chi_{j}\in\Xi^{s;O_{C,D,\e,k,s}(1),O_{C,D,\e,k,s}(1)}_{\approx Q}(\Omega)$, and a nilsequence $\psi_{j}\in\Nil^{\prec s;O_{C,D,\e,k,s}(1),1}_{\approx Q}(\Omega)$ such that 
	$$\Bigl\Vert\psi-\sum_{j=1}^{N}T_{j}(\psi_{j}\otimes \chi_{j})\Bigr\Vert_{\ell^{\infty}(\Omega)}<\e.$$
	
	In addition, if $I$ is  the degree or multi-degree ordering, then we may instead require $\chi_{j}\in\Xi^{s;O_{C,D,\e,k,s}(1),O_{C,D,\e,k,s}(1)}_{p}(\Omega)$  and $\psi_{j}\in\Nil^{\prec s;O_{C,D,\e,k,s}(1),1}_{p}(\Omega)$.
\end{lem}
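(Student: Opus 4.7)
The plan is to perform a vertical Fourier decomposition on $\psi$, peel off a bounded-complexity nilcharacter carrying each vertical frequency, and then invoke Corollary \ref{3:pppap} to convert the resulting (not necessarily periodic) pieces into $p$-periodic ones. Fix a $\Nil^{s;C,D}(\Omega)$-representation $\psi(n)=F(g(n)\Gamma)$ with $G/\Gamma$ of degree $\leq s$ and complexity $O(C)$. The vertical torus $G_{s}/(G_{s}\cap\Gamma)$ acts on the fibres, so $F$ admits a vertical Fourier expansion $F=\sum_{\xi}F_{\xi}$ indexed by vertical characters $\xi$, where $F_{\xi}(g_{s}x)=\exp(\xi(g_{s}))F_{\xi}(x)$. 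Standard Fourier estimates on the torus (cf.\ Lemma A.9 of \cite{GTZ12}) give, from the $O(C)$-Lipschitz bound on $F$, a truncation $\bigl\Vert F-\sum_{\Vert\xi\Vert\leq K}F_{\xi}\bigr\Vert_{L^{\infty}}<\varepsilon/2$ for some $K=K(C,D,\varepsilon,k,s)$, producing $N=O_{C,D,\varepsilon,k,s}(1)$ nontrivial frequencies, each $F_{\xi}$ being Lipschitz of complexity $O_{C,D,\varepsilon,k,s}(1)$.

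First I would realise each truncated frequency as a nilcharacter. For $\xi\neq 0$ with $\Vert\xi\Vert\leq K$, one constructs a bounded-complexity Lipschitz function $F'_{\xi}\colon G/\Gamma\to\mathbb{S}^{D'}$ with vertical frequency exactly $\xi$, exactly as in Section~6 and the proof of Lemma \ref{3:10.2} of this paper; setting $\chi_{\xi}(n):=F'_{\xi}(g(n)\Gamma)$ gives a nilcharacter in $\Xi^{s;O(1),O(1)}(\Omega)$. Next, the pointwise product $F_{\xi}\otimes\overline{F'_{\xi}}$ has vertical frequency $0$, hence descends to a Lipschitz function on the lower-degree nilmanifold $G/G_{s}\Gamma$; this yields $\psi_{\xi}\in\Nil^{\prec s;O(1),O(1)}(\Omega)$. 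Since $\sum_{i}|F'_{\xi,i}|^{2}=1$, there is a linear contraction $T_{\xi}$ of complexity $O(1)$ with $T_{\xi}(\psi_{\xi}(n)\otimes\chi_{\xi}(n))=F_{\xi}(g(n)\Gamma)$; the $\xi=0$ term is already a nilsequence of degree $\prec s$ and can be absorbed by a trivial $\chi_{0}\equiv 1$. Summing over $\Vert\xi\Vert\leq K$ gives the desired approximation up to error $\varepsilon/2$, with $\psi_{\xi}$ and $\chi_{\xi}$ not yet periodic.

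Finally I would convert everything to the $p$-periodic setting by applying Corollary \ref{3:pppap} to each $\psi_{\xi}$ and each component of $\chi_{\xi}$, incurring an extra $O(20^{k})$ blow-up in the number of terms and a factor of $O(1)$ in complexity, which I would then repackage into the final sum $\sum_{j=1}^{N}T_{j}(\psi_{j}\otimes\chi_{j})$ with a fresh choice of $T_{j}$. The main obstacle is the last step: Corollary \ref{3:pppap} is stated for nilsequences, and the multiplication $\chi_{\xi}\mapsto\phi(\cdot/p)\otimes\chi_{\xi}$ from Theorem \ref{3:papprox} a priori destroys the vertical-frequency property needed for $\chi_{j}$ to be a nilcharacter. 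The fix is to observe that in the Manners construction $\tilde{G}=\mathbb{R}^{k}\ltimes\poly(\mathbb{R}^{k}\to G_{I})$, the original vertical direction $G_{s}$ sits naturally inside $\tilde{G}_{s}$ (as constant polynomials), and the cutoff $\phi$ depends only on the $\mathbb{R}^{k}/\mathbb{Z}^{k}$ factor, which commutes with the vertical action; hence the lifted $\tilde{F}$ still carries the vertical character $\eta$ of $F'_{\xi}$, so each piece remains a genuine $p$-periodic nilcharacter of vertical frequency $\xi$. This observation upgrades Corollary \ref{3:pppap} from nilsequences to nilcharacters without enlarging the constants, completing the argument.
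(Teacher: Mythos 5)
Your decomposition strategy (vertical Fourier analysis, peeling off a nilcharacter per frequency, absorbing the residue into a lower-degree nilsequence) is essentially the same core idea as the paper's proof. The gap is in the order of operations and in the claimed ``upgrade'' of Corollary \ref{3:pppap}.

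You perform the vertical Fourier decomposition on a not-necessarily-periodic representation of $\psi$, obtaining non-periodic pieces $\psi_{\xi}$ and $\chi_{\xi}$, and then attempt to periodize afterwards by invoking Corollary \ref{3:pppap}. You correctly observe that in the Manners construction the lifted function $\tilde F$ inherits the vertical character $\eta$ from $F'_{\xi}$ (the cutoff lives on the $\T^{k}$ factor, which commutes with the vertical action). But the conclusion ``each piece remains a genuine $p$-periodic nilcharacter'' does not follow: $\tilde F$ equals $\phi(t)\otimes F'_{\xi}$, and since the cutoff $\phi$ takes values in $[0,1]$ and vanishes off a box, $\tilde F$ is no longer $\mathbb{S}$-valued. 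A nilcharacter must have a Lipschitz function valued in $\mathbb{S}^{D}$ (this is part of the definition, not a cosmetic normalisation), so the modulus condition fails and the pieces produced by Corollary \ref{3:pppap} are genuine $p$-periodic nilsequences but not nilcharacters. This is precisely the failure mode that prevents a blind post-hoc periodization of the $\chi_{\xi}$'s.

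The paper avoids this issue by periodizing \emph{first}: it applies Corollary \ref{3:pppap} to $\psi$ at the outset to reduce to the case where the underlying polynomial sequence $g$ already lies in $\poly_{p}(\Z^{k}\to G_{I}\vert\Gamma)$. Then the torus-bundle/partition-of-unity/Fourier decomposition is carried out entirely on the nilmanifold side: every piece $\psi_{j}$ is a Lipschitz function on the quotient $G/G_{s}\Gamma$ composed with this same $g$, and every $\chi_{j}$ is an $\mathbb{S}$-valued Lipschitz function with a vertical frequency composed with $g$. Since $g$ is already $p$-periodic, the $\psi_{j}$'s and $\chi_{j}$'s are automatically $p$-periodic, and no further approximation is needed. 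So the fix to your argument is not an extension of Corollary \ref{3:pppap} to nilcharacters (which runs into the modulus obstruction), but simply reordering the two steps.
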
	
\begin{proof}
	It suffices to show this for scalar valued nilsequences $\psi$. By Corollary \ref{3:pppap}, 
it suffices to prove Lemma \ref{3:LE.5} under the milder restriction that 	$\chi_{j}\in\Xi^{s;O_{C,D,\e,k,s}(1),O_{C,D,\e,k,s}}_{\approx Q}(\Omega)$, and a nilsequence $\psi_{j}\in\Nil^{\prec s;O_{C,D,\e,k,s}(1),1}_{\approx Q}(\Omega)$.
%	we may assume without loss of generality that $\psi\in\Nil^{s;C,1}_{p^{s_{\ast}}}(\Omega)$.  
	Let $((G/\Gamma)_{I},g,F)$ be a $\Nil^{s}_{\approx Q}(\Omega)$-representation of $\psi$  of complexity at most $C$ and dimension 1. For convenience denote $X=G/\Gamma$ and $X'=G/G_{s}\Gamma$.
		Let $\pi\colon X\to X'$ be the quotient map. Note that filtering $G/G_{s}$ with the groups $G_{i}/G_{s}$, we may view $X'$ as an $I$-filtered nilmanifold of degree $\prec s$, whose complexity is obviously $O_{C,s}(1)$. The fibers of the map $\pi$ are isomorphic to $T:=G_{s}/(G_{s}\cap\Gamma)$. Since $G_{s}$ is abelian, $T$ is a torus, and so $G/\Gamma$ is a torus bundle over $X'$ with the structure group $T$.
	
	Let $\d>0$ to be chosen later, and $\sum_{j=1}^{K}\varphi_{j}$ be a smooth partition of unity on $X'$, where each $\varphi_{j}\in\Lip(X'\to\C)$ is supported on an open ball $B_{j}$ of radius $\d$ (with respect to the metric induced by the $O_{C,s}(1)$-rational Mal'cev basis of $X'$). Then $K=O_{C,\d,s}(1)$. This induces a partition $\psi=\sum_{j=1}^{K}\psi_{j}$, where
	$$\psi_{j}(n):=F(g(n)\Gamma)\varphi_{j}(\pi(g(n)\Gamma)):=F_{j}(g(n)\Gamma) \text{ for all } n\in \Omega.$$
	Then $F_{j}$ is compactly supported in the cylinder $\pi^{-1}(B_{j})$ an has Lipschitz constant $O_{C,\d,s}(1)$.

	We now pick $\d>0$ in a way such that for each $1\leq j\leq K$, there is a smooth section
	$\iota_{j}\colon B_{j}\to G$ which partially inverts the projection from $G$ to $X'$. Then we can take $\d=O_{C,s}(1)$.
	Fix some $1\leq j\leq K$.
	We can then parametrize any element $x$ of $\pi^{-1}(B_{j})$ uniquely as $\iota_{j}(x_{0})t\Gamma$ for some $x_{0}\in B_{j}$ and $t\in T$ (note that $t\Gamma$ is well defined as an element of $G/\Gamma$). 
	
	Now fix $1\leq j\leq K$.
	We can now view the Lipschitz function $F_{j}$ as a compactly supported Lipschitz function in $B_{j}\times T$ (again with Lipschitz constant $O_{C,s}(1)$). 	
	For convenience denote  
 $r:=\dim(G_{s})$ and 
$R:=\dim(G)$ (then $T\simeq\T^{r}$). Then there exists a smooth function $F'_{j}\colon X\to\C$ with
$$\vert F_{j}(x)-F'_{j}(x)\vert\leq \e/2K \text{ for all } x\in X, \text{ and } \Vert F'_{j}\Vert_{\mathcal{C}^{2R}(X)}=O_{C,\e,s}(1).$$

Let $\phi\colon G\to \R^{R}$ be the Mal'cev coordinate map. For $h\in\Z^{r}$, let $\xi_{h}\colon G_{s}\to \R$ denote the vertical character  such that $\xi_{h}(t)=(\bold{0},h)\cdot\phi(t)$ for all $t\in G_{s}$. 
We may apply Fourier decomposition in the $T$ direction to write 
$$F'_{j,h}(\iota_{j}(x_{0})t\Gamma):=\int_{T}e(-\xi_{h}(u))F'_{j}(\iota_{j}(x_{0})tu\Gamma)\,d m_{T}(u(G_{s}\cap\Gamma))$$
for all $(x_{0},t)\in B_{j}\times T$ and  $u\in G_{s}$, where $m_{T}$ is the Haar measure of $T$. 
We write $F'_{j,h}(x):=0$ if $x$ can not be written in the form $\iota_{j}(x_{0})t\Gamma$.
It is not hard to see that $\Vert F'_{j,h}\Vert_{\Lip(X)}=O_{C,\e,s}(1)$ and  
\begin{equation}\label{3:fhx}
F'_{j,h}(\iota_{j}(x_{0})t\Gamma)=e(\xi_{h}(t))F'_{j,h}(\iota_{j}(x_{0})\Gamma) \text{ for all } x_{0}\in B_{j}, t\in G_{s}.
\end{equation}
 Moreover, since $\Vert F'_{j}\Vert_{\mathcal{C}^{2R}(X)}=O_{C,\e,s}(1)$, we have that $\Vert F'_{j,h}\Vert_{L^{\infty}}=O_{C,\e,s}((1+\vert h\vert)^{-2R})$ and that 
$$F'_{j}(\iota_{j}(x_{0})t\Gamma)=\sum_{h\in\Z^{r}}F'_{j,h}(\iota_{j}(x_{0})t\Gamma) \text{ for all } (x_{0},t)\in B_{j}\times T.$$
So there exists $m=O_{C,\e,s}(1)$ such that
$$\Bigl\vert F_{j}(x)-\sum_{h\in\Z^{r}, \vert h\vert\leq m} F'_{j,h}(x)\Bigr\vert<\e/2K \text{ for all } x\in X.$$

Fix also $h\in\Z^{r}, \vert h\vert\leq m$. 
Let $F''_{j,h}\colon X'\to\C$ be the function given by $$F''_{j,h}(y):=F'_{j,h}(\iota_{j}(x_{0})\Gamma)=\int_{T}e(-\xi_{h}(t))F'_{j}(\iota_{j}(x_{0})t\Gamma)\,dm_{T}(t(G_{s}\cap\Gamma))$$
if $y=\iota_{j}(x_{0})\Gamma$ for some $x_{0}\in B_{j}$, and $F''_{j,h}(y):=0$ if not such expression exists.
Then $F''_{j,h}$ is a compactly supported function on $X'$ with Lipschitz norm bounded by $O_{C,\e,s}(1)$. 
On the other hand, by an argument similar to pages 1253--1255 of \cite{GTZ12}, there exist $D'=O_{C,k,s}(1)$ and a vector valued function $f_{j,h}=(f_{j,h,1},\dots,f_{j,h,D'})\in\Lip(X\to\mathbb{S}^{D'})$ supported on $\pi^{-1}(B_{j})$ such that the map $\iota(x_{0})t\Gamma\mapsto e(u\cdot t), (x_{0},t)\in B_{j}\times T$ can be viewed as linear combination of the components of $f_{j,h}$ of complexity $O_{C,\e,j,s}(1)$, and that for all $1\leq j\leq D'$, 
$$f_{j,h,j}(\iota(x_{0})tu\Gamma)=e(\xi_{h}(u))f_{j,h,j}(\iota(x_{0})t\Gamma) \text{ for all } u\in G_{s}, (x_{0},t)\in B_{j}\times T.$$
	 It then follows from (\ref{3:fhx}) that 
	 $$F'_{j,h}(x)=T_{j,h}(F''_{j,h}(\pi(x))\otimes f_{j,h}(x))$$
for some linear transformation $T_{j,h}$ of suitable dimension of complexity $O_{C,\e,k,s}(1)$ for all $x\in X$. By the triangle inequality, we have that 
\begin{equation}\label{3:fhx2}
\Bigl\vert F(x)-\sum_{j=1}^{K}\sum_{h\in\Z^{r}, \vert h\vert\leq m}T_{j,h}(F''_{j,h}(\pi(x))\otimes f_{j,h}(x))\Bigr\vert<\e
\end{equation}
	 	 for all $x\in X$.  The conclusion follows by replacing $x$ with $g(n)\Gamma$ in (\ref{3:fhx2}).
\end{proof}	

As a consequence of Lemma \ref{3:LE.5}, we have

\begin{coro}\label{3:LE.6}
Let $D,D',k\in\N_{+}$, $C,\e>0$, $I$ be the degree, multi-degree, or degree-rank ordering with $\dim(I)\vert k$, $s\in I$,
 $p$ be a prime,
and 	$\Omega\subseteq \Z^{k}$.  %If $I$ is the degree-rank ordering, then let $s_{\ast}$ denote the smallest positive integer with $[s_{\ast},s_{\ast}]\geq s$. If $I$ is the degree or multi-degree ordering, denote $s_{\ast}:=1$.
	For all $\psi\in\Nil^{s;C,D}_{\approx Q}(\Omega)$ and functions $f\colon \Omega\to\C^{D'}$ with $\vert f\vert\leq 1$, if
	$$\vert\E_{n\in\Omega}f(n)\otimes\psi(n)\vert>\e,$$
	then there exists $\chi\in\Xi^{s;O_{C,D,D',\e,k,s}(1),O_{C,D,D',\e,k,s}(1)}_{\approx Q}(\Omega)$ such that 
	$$\vert\E_{n\in\Omega}f(n)\otimes\chi(n)\vert\gg_{C,D,D',\e,k,s} 1.$$
	
	In addition, if $I$ is  the degree or multi-degree ordering, then we may instead require $\chi\in\Xi^{s;O_{C,D,D',\e,k,s}(1),O_{C,D,D',\e,k,s}(1)}_{p}(\Omega)$.
\end{coro}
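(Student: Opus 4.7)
The plan is to prove the corollary by induction on the ``size'' of $s$ in the ordering $I$ — concretely, on the cardinality of the downset $\{i\in I\colon i\preceq s\}$, which handles the three cases of degree, multi-degree, and degree-rank orderings uniformly. For the base case (when this downset has cardinality one, i.e.\ $s$ is the minimum element of $I$), any $\psi\in\Nil^{s;C,D}(\Omega)$ is a constant vector $c\in\C^{D}$; the hypothesis forces $|c|>\e$, and the constant nilcharacter $\chi\equiv c/|c|\in\mathbb{S}$ is trivially $p$-periodic of complexity $O(1)$.

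For the inductive step, I would apply Lemma \ref{3:LE.5} to $\psi$ with tolerance $\e/2$ to obtain
$$\Bigl\Vert \psi-\sum_{j=1}^{N}T_{j}(\psi_{j}\otimes\chi_{j}) \Bigr\Vert_{\ell^{\infty}(\Omega)}<\e/2,$$
where $N$, the linear transformations $T_{j}$, the scalar $p$-periodic nilsequences $\psi_{j}\in\Nil^{\prec s;O(1),1}_{p}(\Omega)$, and the $p$-periodic nilcharacters $\chi_{j}\in\Xi^{s;O(1),O(1)}_{p}(\Omega)$ all have complexity and dimension $O_{C,D,\e,k,s}(1)$. Combined with $|\E f\otimes\psi|>\e$ and the fact that $|f|\leq 1$, the Pigeonhole Principle over $j$ produces some index with $|\E f\otimes T_{j}(\psi_{j}\otimes\chi_{j})|\gg 1$. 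Since $\psi_{j}$ is scalar and $T_{j}$ has $O(1)$-bounded entries, further pigeonholing over the matrix entries of $T_{j}$ and over the $O(1)$-many coordinates of $f$ and of $\chi_{j}$ yields indices $i,l$ with
$$\Bigl|\E f_{i}(n)\,\chi_{j,l}(n)\,\psi_{j}(n)\Bigr| \gg_{C,D,D',\e,k,s} 1.$$

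Next, I would invoke the inductive hypothesis on the scalar $1$-bounded function $g(n):=f_{i}(n)\chi_{j,l}(n)$ paired with the scalar lower-degree $p$-periodic nilsequence $\psi_{j}$. This produces a $p$-periodic nilcharacter $\eta\in\Xi^{\prec s;O(1),O(1)}_{p}(\Omega)$ of strictly lower degree with $|\E g\otimes\eta|\gg 1$; pigeonholing over the coordinates of $\eta$ gives $|\E f_{i}(n)\chi_{j,l}(n)\eta_{m}(n)|\gg 1$ for some $m$. The desired nilcharacter is then $\chi:=\chi_{j}\otimes\eta$: on the product nilmanifold, the degree-$s$ piece of the filtration has the form $(G_{\chi_{j}})_{s}\times\{id\}$ because $\eta$ is of strictly lower degree, so $\chi$ inherits the vertical frequency of $\chi_{j}$ cleanly; its modulus is identically $1$ as a tensor of unit-modulus factors; and it is $p$-periodic with complexity and dimension $O_{C,D,D',\e,k,s}(1)$. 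Since $f_{i}\chi_{j,l}\eta_{m}$ is one coordinate of the vector $f\otimes\chi$, the Euclidean norm $|\E f\otimes\chi|$ dominates this coordinate and so is $\gg 1$, completing the induction.

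The main obstacle is precisely that Lemma \ref{3:LE.5} outputs $\psi_{j}$ as a nilsequence rather than a nilcharacter, so the conclusion cannot be extracted in a single application and a genuine induction on degree is required. A subtler technical point is verifying that $\chi_{j}\otimes\eta$ qualifies as a nilcharacter of degree exactly $s$ (and not merely of a smaller downset), which is what lets us place $\chi$ in $\Xi^{s;\cdot,\cdot}_{p}(\Omega)$ as required by the statement; this is ensured by the triviality at level $s$ of the filtration of $\eta$'s nilmanifold.
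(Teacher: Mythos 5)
Your base case, the pigeonholing steps, and the observation that $\chi_{j}\otimes\eta$ remains a degree-$s$ nilcharacter because $\eta$ has strictly smaller degree are all sound. The overall plan is close to the paper's, which follows Corollary E.6 of \cite{GTZ12} with Lemmas \ref{3:LE80}, \ref{3:LE.4}, \ref{3:LE.5} substituted for their counterparts.

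There is, however, a genuine gap in the multi-degree case. Your inductive step applies the hypothesis to the lower-degree nilsequence $\psi_{j}\in\Nil^{\prec s}_{p}(\Omega)$, but in the multi-degree ordering the downset $J\setminus\{s\}$ (where $J=\{i\in I\colon i\preceq s\}$) typically has several pairwise incomparable maximal elements (e.g.\ for $s=(1,1)$ these are $(1,0)$ and $(0,1)$). Thus $\psi_{j}$ is \emph{not} of the form $\Nil^{s'}_{p}(\Omega)$ for any single $s'\in I$, so the inductive hypothesis — which is phrased for a specific $s\in I$ — does not apply to it. Relatedly, the object $\Xi^{\prec s}_{p}(\Omega)$ you quote for the output of the IH is not defined in this case: $\Xi^{J'}_{p}$ requires $J'$ to have a unique maximum. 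Your induction is therefore well-posed only for the degree and degree-rank orderings (which are total), and the multi-degree case really is needed — for instance the proof of Theorem \ref{3:E.10} applies Corollary \ref{3:LE.6} with multi-degree nilsequences.

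The missing ingredient is Lemma \ref{3:LE.4} (splitting a $\Nil^{J\cup J'}$ nilsequence into sums of products of $\Nil^{J}$ and $\Nil^{J'}$ nilsequences), which the paper explicitly lists among the tools. One can either use it to further factor $\psi_{j}$ into pieces each lying in $\Nil^{s_{i}}_{p}$ for single elements $s_{i}\prec s$ and iterate, or — equivalently and more cleanly — first strengthen the statement to allow $\psi\in\Nil^{J;C,D}(\Omega)$ for an \emph{arbitrary} finite downset $J$, induct on $|J|$, and use Lemma \ref{3:LE.4} inside the induction to reduce to principal downsets; the nilcharacter produced then lies in some $\Xi^{s''}_{p}(\Omega)$ with $s''\preceq s$, which embeds into $\Xi^{s}_{p}(\Omega)$ with trivial vertical frequency. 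Without one of these devices the argument does not close in the multi-degree case.
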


The proof of Corollary \ref{3:LE.6} is almost identical to the proof of Corollary E.6 of \cite{GTZ12}, except that we replace the applications of Lemmas E.3, E.4 and E.5 in  \cite{GTZ12} by that of Lemmas \ref{3:LE80}, \ref{3:LE.4} and \ref{3:LE.5} in this paper. We omit the details.

Finally, we provide an approximation property for a special type of nilcharacters, which is similar to Proposition 3.1 of \cite{GTZ24} in spirit.

\begin{prop}\label{3:newappr}
Let $d,D,s\in\N_{+}$, $C,\e>0$ and $p$ be a prime. Let $k\in\V$ and $\chi\in\Xi_{p}^{(1,s);C,D}((\Z^{d})^{2})$. For $k,h\in\Z^{d}$, denote $$\chi_{k,h}(n):=\chi(h+k,n)\otimes \overline{\chi}(h,n)$$ for all $n\in\Z^{d}$. For $1\leq j\leq D^{2}$, let $\chi_{k,h,j}$ denote the $j$-th component of $\chi_{k,h}$.
Then there exist $L=O_{C,d,D,\e,s}(1)$ and for each $1\leq i\leq L$, $1\leq j\leq D^{2}$ a
linear transformation $T_{k,i,j}\colon\mathbb{C}^{O_{C,d,D,\e,s}(1)}\to \mathbb{C}$ of suitable dimensions of complexity $O_{C,d,D,\e,s}(1)$, and some  $\psi_{k,i,j}\in\Xi_{p}^{s;O_{C,d,D,\e,s}(1),O_{C,d,D,\e,s}(1)}(\Z^{d}), \psi'_{k,i,j}\in\Nil_{p}^{(1,s-1);O_{C,d,D,\e,s}(1),1}((\Z^{d})^{2})$ for all  $1\leq j\leq D^{2}$ such that 
\begin{equation}\label{3:gporwpeofg}
\Bigl\Vert \chi_{k,h,j}-\sum_{i=1}^{K}T_{k,i,j}(\psi_{k,i,j})\psi'_{k,i,j}(h,\cdot)\Bigr\Vert_{\ell^{\infty}(\Z^{d})}<\e
\end{equation}
    for all $h\in\Z^{d}$ and $1\leq j\leq D^{2}$.
\end{prop}
\begin{proof}
For $k\in\Z^{d}$, denote $$\chi'_{k}(h,n):=\chi(h+k,n)\otimes \overline{\chi}(h,n)$$ for all $h,n\in\Z^{d}$.
For $1\leq j\leq D^{2}$, let $\chi'_{k,j}$ denote the $j$-th component of $\chi'_{k}$.
By Lemma \ref{3:LE8} (viii) (translated in the $\Z^{d}$-setting), for all $k\in\Z^{d}$, we have that  $$\chi'_{k,j}\in\Nil_{p}^{\prec (1,s);O_{C,D,s}(1),1}((\Z^{d})^{2})\subseteq \Nil_{p}^{J\cup J';O_{C,D,s}(1),1}((\Z^{d})^{2}),$$ where $J=\{(i,j)\in\N^{2}\colon (i,j)\preceq (0,s)\}$ and $J'=\{(i,j)\in\N^{2}\colon (i,j)\preceq (1,s-1)\}$.
By Lemma \ref{3:LE.4}, there exist $L=O_{C,d,D,\e,s}(1)$ and $\phi_{k,i,j}\in\Nil^{J;O_{C,d,D,\e,s}(1),1}((\Z^{d})^{2}), \phi'_{k,i,j}\in\Nil^{J';O_{C,d,D,\e,s}(1),1}((\Z^{d})^{2})$ for all $1\leq i\leq L$ such that
\begin{equation}\label{3:gporwpeofg2}
\Bigl\Vert \chi'_{k,j}-\sum_{i=1}^{L}\phi_{k,i,j}\phi'_{k,i,j}\Bigr\Vert_{\ell^{\infty}((\Z^{d})^{2})}<\e/2.
\end{equation} 
Note that $\chi_{k,h,j}=\chi'_{k,j}(h,\cdot)$. 
By applying Corollary \ref{3:pppap} and enlarging $K$ if necessary, we may upgrade $\phi_{k,i,j}$ to an element in $\Nil_{p}^{(0,s);O_{C,d,D,\e,s}(1),1}((\Z^{d})^{2})$, and  $\phi'_{k,i,j}$ to an element in $\Nil_{p}^{(1,s-1);O_{C,d,D,\e,s}(1),1}((\Z^{d})^{2})$.
Since $\phi_{k,i,j}$ is independent of $h$, we may write $\phi_{k,i,j}(h,n)$ as $\phi_{k,i,j}(n)$ and consider $\phi_{k,i,j}$ as an element in $\Nil_{p}^{s;O_{C,d,D,\e,s}(1),1}(\Z^{d})$. Then (\ref{3:gporwpeofg2}) implies that
\begin{equation}\label{3:gporwpeofg3}
\Bigl\Vert \chi_{k,h,j}-\sum_{i=1}^{L}\phi_{k,i,j}\phi'_{k,i,j}(h,\cdot)\Bigr\Vert_{\ell^{\infty}(\Z^{d})}<\e/2
\end{equation}
    for all $h\in\Z^{d}$ and $1\leq j\leq D^{2}$. Let $\d>0$ to be chosen later depending only on $C,d,D,\e,s$.
    By Lemma \ref{3:LE.5}, there exist $L'=O_{C,d,D,\e,s}(1)$, and for all $1\leq i'\leq L'$ 
	a linear transformation $T_{k,i,i',j}\colon\mathbb{C}^{O_{C,d,D,\e,s}(1)}\to \mathbb{C}$ of suitable dimensions of complexity $O_{C,d,D,\e,s}(1)$, a nilcharacter
	$\phi_{k,i,i',j}\in\Xi^{s;O_{C,d,D,\e,s}(1),O_{C,d,D,\e,s}(1)}_{p}(\Z^{d})$, and a nilsequence $\phi_{k,i,i',j}\in\Nil^{s-1;O_{C,d,D,\e,s}(1),1}_{p}(\Z^{d})$	such that 
	\begin{equation}\label{3:gporwpeofg4}
\Bigl\Vert\phi_{k,i,j}-\sum_{j'=1}^{L'}T_{k,i,i',j}(\phi_{k,i,i',j})\phi_{k,i,i',j}\Bigr\Vert_{\ell^{\infty}(\Z^{d})}<\d.
\end{equation}
   Note that $\phi_{k,i,i',j}$ can be viewed as an element in $\Nil^{(0,s-1);O_{C,d,D,\e,s}(1),1}_{p}((\Z^{d})^{2})$ by treating it as a sequence 	which is independent of the first coordinate. So (\ref{3:gporwpeofg}) follows from (\ref{3:gporwpeofg3}) and (\ref{3:gporwpeofg4}) by choosing $\d$ to be sufficiently small depending  only on $C,d,D,\e,s$.
\end{proof}

\section{Equivalence of nilcharacters}\label{3:s:AppC}

 In this appendix, we prove some properties for the equivalence relation on nilsequences defined in Definition \ref{3:deneq}. We recall here Definition \ref{3:deneq} for the convenience of the readers:

\begin{defn}[An equivalence relation for nilcharacters]
	Let $k\in\N_{+}$, $C>0$, $p$ be a prime, $\Omega$ be a subset of $\F_{p}^{k}$, $I$ be the degree, multi-degree,  or  degree-rank ordering with $\dim(I)\vert k$ and let $s\in I$.	For $\chi,\chi'\in\Xi^{s}_{p}(\Omega)$, we write 
	$\chi\sim_{C}\chi' \mod\Xi^{s}_{p}(\Omega)$
	if $\chi\otimes \overline{\chi}\in\Nil^{\prec s;C}(\Omega)$. We write $\chi\sim\chi' \mod\Xi^{s}_{p}(\Omega)$ if $\chi\sim_{C}\chi' \mod\Xi^{s}_{p}(\Omega)$ for some $C>0$. 
\end{defn}

 Most (but not all) of the results in this appendix are very similar to the results in Appendix E of \cite{GTZ12}, and their proofs are routine. However, since we are working in a completely different setting from \cite{GTZ12}, it is inevitable for us to write down the proofs in full details to insures the correctness of the statements.

We first show that $\sim$ is an equivalent relation.

\begin{lem}\label{3:eqqeqq}
	Let  $D,k\in\N_{+}$, $C,C'>0$, $p$ be a prime,
	$\Omega$ be a subset of $\F_{p}^{k}$, $I$ be the degree, multi-degree,  or  degree-rank ordering with $\dim(I)\vert k$, and let $s\in I$. Let $\chi_{1},\chi_{2},\chi_{3}\in\Xi^{I;C,D}_{p}(\Omega)$. Then
	\begin{enumerate}[(i)]
		\item $\chi_{1}\sim_{O_{C}(1)} \chi_{1} \mod \Xi^{s}_{p}(\Omega)$ and $\chi_{1}\otimes\overline{\chi}_{1} \sim_{O_{C}(1)} 1 \mod \Xi^{s}_{p}(\Omega)$;
		\item if $\chi_{1}\sim_{C'} \chi_{2} \mod \Xi^{s}_{p}(\Omega)$, then $\chi_{2}\sim_{C'} \chi_{1} \mod \Xi^{s}_{p}(\Omega)$;
		\item if $\chi_{1}\sim_{C'} \chi_{2} \mod \Xi^{s}_{p}(\Omega)$ and $\chi_{2}\sim_{C'} \chi_{3} \mod \Xi^{s}_{p}(\Omega)$, then $\chi_{1}\sim_{C''} \chi_{3} \mod \Xi^{s}_{p}(\Omega)$
		for some $C''=O_{C,C',D}(1)$.
	\end{enumerate}	
\end{lem}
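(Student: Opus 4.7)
The plan is to interpret the equivalence relation as $\chi \sim_C \chi' \mod \Xi^s_p(\Omega)$ meaning $\chi \otimes \overline{\chi'} \in \Nil^{\prec s;C}(\Omega)$ (the stated condition $\chi \otimes \overline{\chi}$ is evidently a typo, as otherwise the relation would not depend on $\chi'$), and then verify the three axioms in turn. The core observation driving everything is that if $\chi(n) = F(g(n)\Gamma)$ with $F$ having vertical frequency $\eta$, then any tensor product that combines $\eta$ with $-\eta$ becomes $G_s$-invariant, and hence descends from $G/\Gamma$ to the lower-degree nilmanifold $G/(G_s \Gamma)$.

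For part (i), suppose $\chi_1$ has a $\Xi^s_p(\Omega)$-representation $((G/\Gamma)_I, g, F, \eta)$ of complexity at most $C$. Then $\chi_1 \otimes \overline{\chi}_1(n) = (F \otimes \overline{F})(g(n)\Gamma)$, and for any $g_s \in G_s$ the defining property $F(g_s x) = \exp(\eta(g_s)) F(x)$ yields $(F \otimes \overline{F})(g_s x) = F \otimes \overline{F}(x)$, so $F \otimes \overline{F}$ descends to a Lipschitz function on the degree $\prec s$ nilmanifold $(G/G_s)/(\Gamma/(\Gamma \cap G_s))$, whose complexity is controlled by $C$ via Definition \ref{3:id2}. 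The image of $g$ in this quotient is still in $\poly(\Z^k \to (G/G_s)_I)$, giving $\chi_1 \otimes \overline{\chi}_1 \in \Nil^{\prec s; O_C(1)}(\Omega)$. This is simultaneously the statement $\chi_1 \otimes \overline{\chi}_1 \sim_{O_C(1)} 1$ and (taking $\chi' = \chi_1$) the reflexivity $\chi_1 \sim_{O_C(1)} \chi_1$.

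For part (ii), observe that $\chi_2 \otimes \overline{\chi}_1$ is obtained from $\chi_1 \otimes \overline{\chi}_2$ by componentwise complex conjugation together with a permutation of tensor coordinates. Conjugation preserves the complexity of a nilsequence representation (replace $F$ by $\overline{F}$, whose Lipschitz norm is unchanged), and reordering of tensor coordinates does not affect complexity either; hence $\chi_1 \sim_{C'} \chi_2$ immediately gives $\chi_2 \sim_{C'} \chi_1$.

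For part (iii), the key identity is
\begin{equation*}
\chi_{1,j}(n)\,\overline{\chi}_{3,k}(n) \;=\; \chi_{1,j}(n)\,\overline{\chi}_{3,k}(n) \sum_{i=1}^{D} \chi_{2,i}(n)\,\overline{\chi}_{2,i}(n) \;=\; \sum_{i=1}^{D} \bigl(\chi_{1,j}\,\overline{\chi}_{2,i}\bigr)(n)\cdot \bigl(\chi_{2,i}\,\overline{\chi}_{3,k}\bigr)(n),
\end{equation*}
which holds because $\chi_2$ takes values in $\mathbb{S}^D$. Each factor on the right is a component of $\chi_1 \otimes \overline{\chi}_2$ or $\chi_2 \otimes \overline{\chi}_3$, and these are nilsequences in $\Nil^{\prec s;C'}(\Omega)$ by hypothesis. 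The pointwise product of two $\prec s$-step nilsequences is a $\prec s$-step nilsequence on the product nilmanifold (cf.\ Definition \ref{3:id4}), with complexity $O_{C'}(1)$, and a sum of $D$ such nilsequences has complexity $O_{C',D}(1)$; combining over all $(j,k)$ shows $\chi_1 \otimes \overline{\chi}_3 \in \Nil^{\prec s;C''}(\Omega)$ for some $C'' = O_{C,C',D}(1)$. The only minor subtlety, and the main bookkeeping point, is ensuring that the ambient nilmanifold used to represent $\chi_1 \otimes \overline{\chi}_3$ has its complexity controlled by $C, C', D$ rather than depending on the (possibly larger) dimensions of the nilmanifolds underlying $\chi_1, \chi_2, \chi_3$ themselves, which is handled by the fact that $C$ already bounds their dimensions and Mal'cev complexities.
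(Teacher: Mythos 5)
Your proof is correct and follows the same route as the paper's: part (i) by quotienting out $G_s$ using the vertical-frequency invariance of $F \otimes \overline{F}$, part (ii) by complex conjugation and tensor reordering, and part (iii) by inserting the factor $\sum_i \chi_{2,i}\overline{\chi}_{2,i} = 1$ (what the paper phrases as ``the trace of $\overline{\chi}_2 \otimes \chi_2$ is $1$'') and expanding into components of $(\chi_1 \otimes \overline{\chi}_2) \otimes (\chi_2 \otimes \overline{\chi}_3)$. You also correctly spotted that the definition in Definition~\ref{3:deneq} should read $\chi \otimes \overline{\chi'}$.
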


\begin{proof}
	The proof   is very similar to Lemma E.7 of \cite{GTZ12}. 	We first  prove Part (i). Let 
	$((G/\Gamma)_{I},g,F,\eta)$ be a $\Xi^{s}_{p}(\Omega)$-representation $\chi_{1}$ of complexity at most $C$. Then 
	$$\chi_{1}\otimes\overline{\chi}_{1}(n)=F\otimes \overline{F}(g(\tau(n))\Gamma,g(\tau(n))\Gamma):=F'(g(\tau(n))\Gamma)$$
	for all $n\in\Omega$.	
	It is clear that $F\otimes \overline{F}$ and $F'$ are of complexities $O_{C}(1)$.
	Since $F'$ is invariant under $G_{s}$, we may quotient $G$ by $G_{s}$ and represent $F'(g(\tau(n))\Gamma)$ using a nilmanifold of degree $\prec s$ with all the relevant complexities being $O_{C}(1)$.
	So $\chi_{1}\otimes\overline{\chi}_{1}\in\Nil^{\prec s;O_{C}(1)}(\Omega)$. This proves Part (i).
	
	Part (ii) is obvious and so it remains to prove Part (iii). Note that each component of 
	$$(\chi_{1}\otimes\overline{\chi}_{2})\otimes (\chi_{2}\otimes\overline{\chi}_{3})=\chi_{1}\otimes (\overline{\chi}_{2}\otimes\chi_{2})\otimes\overline{\chi}_{3}$$
	belongs to $\Nil^{\prec s;O_{C,C',D}(1)}(\Omega)$. Since the trace of $\overline{\chi}_{2}\otimes\chi_{2}$ is 1, $\chi_{1}\otimes\overline{\chi}_{3}$ is an $O_{C,C',D}(1)$-complexity linear combination of the components of $\chi_{1}\otimes(\overline{\chi}_{2}\otimes\chi_{2})\otimes\overline{\chi}_{3}$. So $\chi_{1}\otimes\overline{\chi}_{3}$ belongs to $\Nil^{\prec s;O_{C,C',D}(1)}(\Omega)$. This proves Part (iii).
\end{proof}

\begin{rem}
	Since $\sim$ is  an equivalent relation, in \cite{GTZ12}, the authors defined the equivalence class of a nilcharacter $\chi$ under the relation $\sim$ as the \emph{symbol} of $\chi$. However, although $\sim$ is an equivalent relation, $\sim_{C}$ is not an   equivalent relation for any fixed $C>0$. Since we do not use the non-standard analysis approach, instead of working with the relation $\sim$, we use the more quantitative relation $\sim_{C}$. Therefore, we will not use the notion of symbol introduced in \cite{GTZ12}.
\end{rem}

Here are some basic properties for the relation $\sim$.

\begin{lem}\label{3:LE8}
	Let $D,k\in\N_{+}$,  $C>0$, $p$ be a prime,
	$\Omega$ be a subset of $\F_{p}^{k}$, $I$ be the degree, multi-degree, or degree-rank ordering with $\dim(I)\vert k$, and let $s\in I$. 
	\begin{enumerate}[(i)]
		\item If $\chi,\chi'\in\Xi^{s;C,D}_{p}(\Omega)$ and $\psi\in\Nil^{\prec s;C,D}(\Omega)$, and the components of $\chi'$ are $C$-complexity linear combinations of those of $\chi\otimes \psi$, then $\chi\sim_{O_{C,D}(1)}\chi'\mod\Xi^{s}_{p}(\Omega)$.  
		\item Conversely, if $\chi\sim_{C}\chi'\mod\Xi^{s}_{p}(\Omega)$ for some $\chi,\chi'\in\Xi^{s;C,D}_{p}(\Omega)$, then $\chi$ is an $O_{C,D}(1)$-complexity linear combinations 	of $\chi\otimes \psi$ for some $\psi\in\Nil^{\prec s;C,D^{2}}(\Omega)$.
		\item If $\chi,\chi',\chi''\in\Xi^{s;C}_{p}(\Omega)$ and $\chi\sim_{C}\chi'\mod\Xi^{s}_{p}(\Omega)$, then   $\chi\otimes\chi'' \sim_{O_{C}(1)}\chi'\otimes\chi''\mod\Xi^{s}_{p}(\Omega)$. 
		\item If $\chi\in\Xi^{s}_{p}(\Omega)$ and $\Omega'\subseteq \Omega$, then $\chi\vert_{\Omega'}\in\Xi^{s}_{p}(\Omega')$.  Moreover, if $\chi'\sim_{C}\chi\mod\Xi^{s}_{p}(\Omega)$, then $\chi'\vert_{\Omega'}\sim_{C}\chi\vert_{\Omega'}\mod\Xi^{s}_{p}(\Omega')$.
		\item If $\chi\in\Xi^{s;C}_{p}(\Omega)$, $I$ is either the degree or multi-degree ordering, and $q\in\Z, p\nmid q$, then $\chi(\iota(q)\cdot)\sim_{O_{C,q}(1)} \chi^{\otimes q^{\vert s\vert}}\mod \Xi^{s}_{p}(\iota(q)^{-1}\Omega)$. 
		\item   If $\chi\in\Xi^{s}_{p}(L(\Omega))$ for some $d$-integral linear transformation $L\colon (\V)^{k}\to(\V)^{k'}$ and $I$ is the degree filtration, then $\chi\circ L\in\Xi^{s}_{p}(\Omega)$. Moreover, if $\chi'\sim_{C}\chi\mod\Xi^{s}_{p}(L(\Omega))$, then $\chi'\circ L\sim_{C}\chi\circ L\mod\Xi^{s}_{p}(\Omega)$. 
		\item If $\chi\in\Xi^{s;C,D}_{p}(\Omega)$, $I$ is either the degree or multi-degree ordering, and $q\in\Z, p>q$, then there exists $\tilde{\chi}\in\Xi^{s;O_{C,q,s}(1),O_{D,q,s}(1)}_{p}(\Omega)$   such
		$\chi\sim_{O_{C,q}(1)}\tilde{\chi}^{\otimes q}\mod\Xi^{s}_{p}(\Omega)$.	Moreover, if $\Omega=\F_{p}^{k}$, then we may further require $\tilde{\chi}$ to satisfy to following property:  for any linear transformation $L\colon \F_{p}^{k}\to \F_{k}$, if $\chi\circ L(n)=\chi(n)$ for all $n\in\F_{p}^{k}$, then $\tilde{\chi}\circ L(n)=\tilde{\chi}(n)$ for all $n\in\F_{p}^{k}$.
		\item   If $\chi\in\Xi^{s;C,D}_{p}(\F_{p}^{k})$ and  $I$ is the degree or multi-degree filtration, then for all $h\in\F_{p}^{k}$,  the sequence $n\mapsto \chi(n+h)\otimes\overline{\chi}(n), n\in\V$ belongs to $\Nil_{p}^{\prec s;O_{C,D,s}(1),D^{2}}(\F_{p}^{k})$.
	\end{enumerate}
\end{lem}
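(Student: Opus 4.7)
The seven parts split into three groups: (i)--(iv) are routine consequences of the definition of $\sim$ combined with the algebraic closure properties of nilsequences (Lemma \ref{3:LE80}) and the trace/Schur orthogonality trick exploiting $|\chi|=1$; (v) and (vi) are structural transfers that hinge on analyzing how the polynomial-sequence data transforms; and (vii) is a genuine construction problem, requiring us to extract a ``$q$-th root'' nilcharacter.

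For (i), the hypothesis gives a linear map $T$ of complexity $O(C)$ with $\chi' = T(\chi \otimes \psi)$. Then $\chi \otimes \overline{\chi'}$ is an $O_{C,D}(1)$-complexity linear combination of the components of $\chi \otimes \overline{\chi \otimes \psi} = (\chi \otimes \overline{\chi}) \otimes \overline{\psi}$; since $\chi \otimes \overline{\chi}$ lies in $\Nil^{\prec s; O_C(1)}(\Omega)$ (it is $G_s$-invariant, so descends to $G/G_s\Gamma$), so does the whole expression. For (ii), just write $\chi' = \chi \otimes (\overline{\chi} \otimes \chi')$, where $\psi := \overline{\chi} \otimes \chi' \in \Nil^{\prec s;C}(\Omega)$ by hypothesis; the components of $\chi'$ extract from $\chi \otimes \psi$ via the obvious linear map. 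For (iii), rearrange $(\chi \otimes \chi'') \otimes \overline{(\chi' \otimes \chi'')} = (\chi \otimes \overline{\chi'}) \otimes (\chi'' \otimes \overline{\chi''})$; both factors are in $\Nil^{\prec s}(\Omega)$ (the first by hypothesis, the second by (i) applied to $\chi''$), and the tensor of two $\prec s$-nilsequences is again $\prec s$. For (iv), restriction commutes with all operations involved, so both the nilcharacter property and the $\Nil^{\prec s}$ conclusion restrict verbatim.

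For (v) and (vi) I unpack the $\Xi^s_p$-representation of $\chi$. In (vi), Proposition \ref{3:BB}(ii) directly produces a partially $p$-periodic polynomial sequence $g'$ with $g'(n)\Gamma = g(\tau L(n))\Gamma$, so $\chi\circ L \in \Xi^{s;C,D}_p(\Omega)$; the equivalence statement then follows by applying $\circ L$ to both sides of $\chi \otimes \overline{\chi'} \in \Nil^{\prec s;C}(L(\Omega))$ and again invoking Proposition \ref{3:BB}(ii) (which is the reason we restrict to the degree filtration). In (v), writing $g$ in its type-I Taylor expansion (Lemma \ref{3:B.9}), the substitution $n \mapsto \iota(q)n$ scales the type-I Taylor coefficient at multi-index $m$ by $q^{|m|}$; on the top level $|m|=|s|$ this matches exactly the effect of replacing the vertical frequency $\eta$ by $q^{|s|}\eta$, which is the vertical frequency of $\chi^{\otimes q^{|s|}}$. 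Hence the difference $\chi(\iota(q)\cdot) \otimes \overline{\chi^{\otimes q^{|s|}}}$ has trivial vertical frequency and so, by quotienting by $G_s$, descends to $\Nil^{\prec s; O_{C,q}(1)}(\iota(q)^{-1}\Omega)$.

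The main obstacle is (vii), the ``$q$-th root'' construction, because naively dividing the vertical frequency $\eta$ by $q$ destroys the integrality $\eta(\Gamma \cap G_s) \subseteq \Z$. The plan is to pass to a finer lattice. Let $(G/\Gamma, g, F, \eta)$ represent $\chi$. Consider the sublattice $\Gamma' \subseteq G$ of finite index that agrees with $\Gamma$ outside the top stratum but replaces $\Gamma \cap G_s$ by the subgroup $q(\Gamma \cap G_s)$; then $\tilde{\eta} := \eta/q$ is a well-defined vertical character on $G_s/(\Gamma' \cap G_s)$, and one builds a Lipschitz function $\tilde{F}$ on $G/\Gamma'$ with vertical frequency $\tilde{\eta}$ by lifting $F$ through the $q^{\dim G_s}$-fold cover (the bound $p > q$ ensures we can still arrange the polynomial sequence to be partially $p$-periodic with respect to $\Gamma'$, since $q$ is invertible mod $p$). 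Setting $\tilde{\chi}(n) := \tilde{F}(g(\tau(n))\Gamma')$ gives $\tilde{\chi} \in \Xi^{s;O_{C,q,s}(1),O_{D,q,s}(1)}_p(\Omega)$, and by construction $\tilde{\chi}^{\otimes q}$ has the same top-level vertical frequency as $\chi$, so the argument of (v) gives $\chi \sim_{O_{C,q}(1)} \tilde{\chi}^{\otimes q}$. For the moreover clause, when $\Omega = \F_p^k$ and $\chi \circ L = \chi$, one symmetrizes the above construction by averaging $\tilde{F}$ over the finite cyclic group generated by $L$ (which acts on the representation), using that $q < p$ to invert the order of this group if needed; this forces $\tilde{\chi} \circ L = \tilde{\chi}$ while preserving the complexity bounds. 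The hardest technical point here is checking that this averaged $\tilde{F}$ still carries the correct vertical frequency $\tilde{\eta}$, which follows because $\tilde{\eta}$ is a homomorphism to $\R$ and is invariant under the inner action induced by $L$ on $G_s$.
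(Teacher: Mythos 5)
Parts (i)--(vi) of your proposal match the paper's arguments in substance: (i)--(iv) use the same tensor-rearrangement and trace-extraction ideas, (v) follows the same top-level Taylor-coefficient accounting cited from GTZ, and (vi) uses the pullback of polynomial sequences under linear maps (the paper routes this through Lemma \ref{3:lifting2}, you through Proposition \ref{3:BB}(ii), both correct).

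Your part (vii), however, takes the GTZ-style lattice-refinement route, and this is precisely the approach the paper explicitly chooses \emph{not} to follow (``We present a proof which is different from Lemma E.8 (vii) of \cite{GTZ12}''). There are two genuine gaps. First, after replacing $\Gamma \cap G_s$ by $q(\Gamma\cap G_s)$ to obtain $\Gamma'$, the original polynomial sequence $g$ need not lie in $\poly_p(\Z^k\to G_I\vert\Gamma')$: the $p$-periodicity defect $g(n+pm)g(n)^{-1}$ is guaranteed to lie in $\Gamma$ but not in the finite-index sublattice $\Gamma'$, and your appeal to $q$ being invertible mod $p$ does not, on its own, produce such a $g$ — establishing that is essentially the content the paper has to prove, not something that can be ``arranged.'' Second, in the ``moreover'' clause, averaging $\tilde{F}$ over the cyclic group generated by $L$ destroys the defining property of a nilcharacter: the average of $\mathbb{S}^D$-valued functions need not take values in $\mathbb{S}^D$ (it can shrink in norm, even to zero), and the group $\langle L\rangle$ inside $GL_k(\F_p)$ may have order growing with $p$, so you also cannot control complexity or carry out the average Lipschitz-uniformly. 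The paper sidesteps both problems at once by setting $\tilde{\chi}(n) := F(g(q^*\tau(n))\Gamma)$ where $q^*$ is the inverse of $q$ mod $p$: composing with scalar multiplication by $q^*\in\Z$ manifestly preserves $p$-periodicity, one then has $\tilde{\chi}(qn) = \chi(n)$, and combining with part (v) gives $\chi\sim\tilde{\chi}^{\otimes q^{|s|}}$ (so one takes $\tilde{\chi}^{\otimes q^{|s|-1}}$ as the final root); the moreover clause then falls out of the identity $\tilde{\chi}(L(n))=\chi(qL(n))=\chi(L(qn))=\chi(qn)=\tilde{\chi}(n)$ by linearity of $L$ and $\chi\circ L=\chi$, without any averaging.
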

\begin{proof}
 Throughout the proof we assume that $s\neq \bold{0}$ since otherwise there is nothing to prove.

	Proof of Part (i). Assume that $\chi'=\sum_{i,j}c_{i,j}(\chi\otimes \psi)_{i,j}$ for some vectors $c_{i,j}$ of complexity at most $C$ and dimension at most $D^{2}$, where $(\chi\otimes \psi)_{i,j}$ denotes the $(i,j)$-th entry of $\chi\otimes \psi$. Then
	$$\chi\otimes\overline{\chi}'=\sum_{i,j}c_{i,j}\chi\otimes (\overline{\chi}'\otimes \overline{\psi})_{i,j}.$$
	Since $\chi\otimes (\overline{\chi}'\otimes \overline{\psi})=(\chi\otimes \overline{\chi}')\otimes \overline{\psi}\in \Nil^{\prec s;O_{C,D}(1)}(\Omega)$, we have that $\chi\otimes\overline{\chi}'\in \Nil^{\prec s;O_{C,D}(1)}(\Omega)$. This proves Part (i).
	
	Proof of Part (ii). Note that 
	$$\chi\otimes(\overline{\chi'}\otimes\chi')=(\chi\otimes\overline{\chi'})\otimes\chi'.$$
	By Lemma  \ref{3:LE80}, $\chi\otimes\overline{\chi'}\in\Nil^{\prec s;2C,D^{2}}(\Omega)$. The conclusion follows from  the fact that 1 is an $O_{D}(1)$-complexity linear combination of the components of $\overline{\chi'}\otimes\chi'$.

	Proof of Part (iii). Note that $(\chi\otimes\chi'')\otimes \overline{(\chi'\otimes\chi'')}$ is an reordering of the entries of $(\chi\otimes\overline{\chi}')\otimes(\chi''\otimes\overline{\chi}'')$. Since $\chi\otimes\overline{\chi}'\in \Nil^{\prec s;C}(\Omega)$ and by Lemma \ref{3:eqqeqq} $\chi''\otimes\overline{\chi}''\in \Nil^{\prec s;O_{C}(1)}(\Omega)$, we have that $(\chi\otimes\overline{\chi}')\otimes(\chi''\otimes\overline{\chi}'')$ and thus $(\chi\otimes\chi'')\otimes \overline{(\chi'\otimes\chi'')}$ belongs to $\Nil^{\prec s;O_{C}(1)}(\Omega)$. This proves Part (iii).

	Proof of Part (iv). It follows from the definitions.   
	
	Proof of Part (v). Let $((G/\Gamma)_{I},g,F,\eta)$ be a $\Xi^{s}_{p}(\Omega)$-representation of $\chi$ of complexity at most $C$ and dimension $D$.	%By Lemma \ref{3:pre2g}, increasing the complexity from $C$ to $O(C)$ if necessary, we may assume without loss of generality that $g(\bold{0})=id_{G}$.
	With a slight abuse of notations, we use $q$ to denote both the integer and its image in $\F_{p}$ under $\iota$.
	Since $g\in \poly_{p}(\Z^{k}\to G_{I}\vert\Gamma)$, we have that $\chi(qn)=F(g(\tau(qn))\Gamma)=F(g(q\tau(n))\Gamma)$ for all $n\in q^{-1}\Omega$. 
	
 	We may define an $I$-pre-filtration of $G^{2}$ by setting $G^{2}_{\bold{0}}:=G^{2}$ and $G^{2}_{i}, i\neq \bold{0}$ to be the group generated by $G_{j}\times G_{j}$ for all $j\succ i$ and by $(g^{q^{\vert i\vert}},g), g\in G_{i}$. Similar to the proof of Lemma E.8 (v) of \cite{GTZ12}, this is an $I$-pre-filtration of $G^{2}$. Moreover, $G^{2}_{s}$ is in the center of $G^{2}$. Let $(\tilde{G}/\tilde{\Gamma})_{I}$ be the essential part of $(G^{2}/\Gamma^{2})_{I}$. %Then since  $g(\bold{0})=id_{G}$, 
	By the type-I Taylor expansion, it is not hard to see that the map $n\mapsto (g(qn),g(n))$ is equal to a constant in $G^{2}$ times a polynomial sequence with respect to the filtration $\tilde{G}_{I}$, and thus is a polynomial sequence with respect to the pre-filtration $(G^{2})_{I}$ by Lemma \ref{3:pre3g}. 
	Similar to the proof of Lemma E.8 (v) of \cite{GTZ12}, by using Lemma \ref{3:pre2g}, it is not hard to show that   
		$$\tilde{\chi}(n):=F(g(qn)\Gamma)\otimes \overline{F}(g(n)\Gamma)^{\otimes q^{\vert s\vert}} n\in \iota^{-1}(q^{-1}\Omega)$$
	is a nilsequence of step $\prec s$ and complexity at most $O_{C,q}(1)$, namely $\tilde{\chi}\in\Nil^{\prec s;O_{C,q}(1)}(\iota^{-1}(\Omega))$. Composing $\tilde{\chi}$ with $\tau$, we have that $\chi(q\cdot)\sim_{O_{C,q}(1)} \chi^{\otimes q^{\vert s\vert}}\mod \Xi^{s}_{p}(\iota(q)^{-1}\Omega)$.
	
	Proof of Part (vi). The fact that $\chi\circ L$ belongs to $\Xi^{s}_{p}(\Omega)$ follows from Lemma \ref{3:LE80} and the inclusion $L^{-1}(L(\Omega))\supseteq \Omega$.
	If $\chi'\sim_{C}\chi \mod \Xi^{s}_{p}(L(\Omega))$, then writing $\chi=\tilde{\chi}\circ \tau$ and $\chi'=\tilde{\chi'}\circ \tau$ for some $\tilde{\chi},\tilde{\chi'}\in\Xi_{p}^{s}(\iota^{-1}(L(\Omega)))$, we have that 
	$$\chi(n)\otimes\overline{\chi'}(n)=(\tilde{\chi}\circ\tau(n))\otimes(\overline{\tilde{\chi'}}\circ\tau(n))=\phi\circ\tau(n)$$ 
	for some $\phi\in \Nil^{<s;C}(\iota^{-1}(L(\Omega)))$ for all $n\in L(\Omega)$.
	By Lemma \ref{3:lifting2}, there exists a  linear transformation $\tilde{L}\colon \Z^{k}\to\Z^{k'}$ such that $\tilde{L}\circ\tau\equiv \tau\circ L \mod p\Z^{k'}$. 
	So
	\begin{equation}\nonumber
	\begin{split}
	&\quad (\chi\circ L(n))\otimes(\overline{\chi'}\circ L(n))
	=(\tilde{\chi}\circ\tau \circ L(n))\otimes(\overline{\tilde{\chi'}}\circ\tau\circ L(n))
	\\&=(\tilde{\chi}\circ\tilde{L}\circ\tau)\otimes(\overline{\tilde{\chi'}}\circ\tilde{L}\circ\tau)
	=\phi\circ \tilde{L}\circ \tau(n)
	\end{split}
	\end{equation}
	for all $n\in\Omega$. Since $\phi\in \Nil^{<s;C}(\iota^{-1}(L(\Omega)))$, we have  $\phi\circ \tilde{L}\in \Nil^{<s;C}(\iota^{-1}(\Omega))$. So 
	$\chi'\circ L\sim_{C}\chi\circ L\mod\Xi^{s}_{p}(\Omega)$.

	Proof of Part (vii). 	We present a  proof which is different from Lemma E.8 (vii) of \cite{GTZ12}.
	Let $((G/\Gamma)_{I},g,F,\eta)$ be a $\Xi^{s}_{p}(\Omega)$-representation of $\chi$ of complexity at most $C$ and dimension at most $D$.
	Let $q^{\ast}$ be the unique integer in $\{1,\dots,p-1\}$ such that $qq^{\ast}\equiv 1\mod p\Z$  (such $q^{\ast}$ exists since $p>q$).
	With a slight abuse of notations, we use $q,q^{\ast}$ to denote both the integer and their images in $\F_{p}$ under $\iota$.
	Since $g$ belongs to $\poly_{p}(\Z^{k}\to G_{I}\vert\Gamma)$, 
	so does $g(q^{\ast}\cdot)$. Writing $\tilde{\chi}(n):=F(g(q^{\ast}\tau(n))\Gamma)$ for all $n\in\F_{p}^{k}$, we have that
	$\tilde{\chi}\in\Xi^{s;C,D}_{p}(\Omega)$ and that
	\begin{equation}\label{3:tempeeqq1}
	\tilde{\chi}(qn)=F(g(q^{\ast}\tau(qn))\Gamma)=F(g(\tau(qq^{\ast}n))\Gamma)=F(g(\tau(n))\Gamma)=\chi(n)
	\end{equation}
	for all $n\in\Omega$.
	By Part (v), $\chi\sim_{O_{C,q}(1)}\chi'^{q^{\vert s\vert}}\mod\Xi^{s}_{p}(\Omega)$. The claim follows by setting $\tilde{\chi}:=(\chi')^{\otimes q^{\vert s\vert-1}}$ (whose complexity is $O_{C,q,s}(1)$ and dimension is $O_{D,q,s}(1)$ by Lemma \ref{3:LE80}).
	
	For the ``moreover" part, 
	note that for all $n\in \F_{p}^{k}$, it follows from (\ref{3:tempeeqq1}) that 
	$$\tilde{\chi}(n)=\tilde{\chi}(q^{\ast}qn)=\chi(qn).$$
	So
	$$\tilde{\chi}(L(n))=\chi(qL(n))=\chi(L(qn))=\chi(qn)=\tilde{\chi}(n).$$	
	
	Proof of Part (viii). Let $((G/\Gamma)_{I},g,F,\eta)$ be a $\Xi^{s}_{p}(\F_{p}^{k})$-representation of $\chi$ of complexity at most $C$ and dimension $D$. %By Lemma \ref{3:pre2g}, increasing the complexity from $C$ to $O(C)$ if necessary, we may assume without loss of generality that $g(\bold{0})=id_{G}$. 
	Write 
	$$\chi'(n):=F(g(n+\tau(h))\Gamma)\otimes \overline{F}(g(n)\Gamma) \text{ for all } n\in\Z^{k}.$$
	Since $g\in\poly_{p}(\Z^{k}\to G_{\N}\vert\Gamma)$, it is clear that 
	$$\chi'(\tau(n))=F(g(\tau(n)+\tau(h))\Gamma)\otimes \overline{F}(g(\tau(n))\Gamma)=F(g(\tau(n+h))\Gamma)\otimes \overline{F}(g(\tau(n))\Gamma)=\chi(n+h)\otimes \overline{\chi}(n)$$
	for all $n\in\F_{p}^{k}$. It suffices to show that $\chi'\in\Nil^{\prec s;O_{C,D,s}(1),D^{2}}_{p}(\Z^{k})$.

	We may define an $I$-pre-filtration of $G^{2}$ by setting $G^{2}_{\bold{0}}:=G^{2}$ and $G^{2}_{i}, i\neq \bold{0}$ to be the group generated by $G_{j}\times id_{G}$ for all $j\succ i$ and by $(g,g), g\in G_{i}$. Similar to the proof of Lemma E.8 (v) of \cite{GTZ12}, this is an $I$-pre-filtration of $G^{2}$. %(with $(G^{2})_{[i,0]}=(G^{2})_{[i,1]}$ {\color{} is it correct????}). 
	Moreover, $G^{2}_{s}$ is in the center of $G^{2}$. 
		Let $(\tilde{G}/\tilde{\Gamma})_{I}$ be the essential part of $(G^{2}/\Gamma^{2})_{I}$. 
		
			Since $(g(n+\tau(h)),g(n))=(\Delta_{\tau(h)}g(n),id_{G})\cdot (g(n),g(n))$, it is not hard to see that the map $n\mapsto (g(n+\tau(h)),g(n))$ takes values in $\tilde{G}$ and thus belongs to $\poly(\Z^{k}\to(\tilde{G}/\tilde{\Gamma})_{I})$.				%Then since  $g(\bold{0})=id_{G}$, 
			Similar to the proof of Lemma E.8 (iv) of \cite{GTZ12},   by using Lemma \ref{3:pre2g},  one can show that 
	  $\chi'$ belongs to $\Nil^{\prec s;O_{C,D,s}(1)}(\Z^{k})$ (we omit the details).  The fact that $\chi'$ is $p$-periodic follows from the fact that both $g$ and $g(\cdot+\tau(h))$ are $p$-periodic. Finally, it is clear that the dimension of $\chi'$ is at most $D^{2}$.
%	
%	For $i\in I$, let $G^{2}_{i}$ denote the subgroup of $G^{2}$ generated by $G_{i+j}\times id_{G}, j\in I\backslash\{\bold{0}\}$ and $(g,g), g\in G_{i}$. It is easy to check that $(G^{2})_{i\in I}$ is an $I$-filtered pre-filtration of $G^{2}$ which is rational with respect to $\Gamma^{2}$. Setting $G^{\square}:=(G^{2})_{0}$ and $\Gamma^{\square}:=\Gamma^{2}\cap G^{\square}$, we have that  $G^{\square}/\Gamma^{\square}$ is an $I$-filtered nilmanifold of degree at most $s$.
\end{proof}

The following is a generalization of Lemma 13.2 of \cite{GTZ12}:  
\begin{lem}[Multi-linearity lemma]\label{3:L13.2}
	Let $d,k,s\in\N_{+}$, $C>0$,  $p$ be a prime, $L_{1},\dots,L_{s}$, $L'_{1}\colon (\V)^{k}\to \V$ be $d$-integral linear transformations, and 
	let $\chi\in\Xi^{(1,\dots,1);C}_{p}((\V)^{s})$ (with 1 repeated $s$ times).
	We have that 
	\begin{equation}\nonumber
	\begin{split}
	&\quad \chi(L_{1}(n)+L'_{1}(n),L_{2}(n),\dots,L_{s}(n))
	\\&\sim_{O_{C}(1)}\chi(L_{1}(n),L_{2}(n),\dots,L_{s}(n))\otimes\chi(L'_{1}(n),L_{2}(n),\dots,L_{s}(n))\mod\Xi^{s}_{p}((\V)^{k}),
	\end{split}
	\end{equation}
	where $n\in(\V)^{k}$.
	A similar result holds for the other $s-1$ variables.	 
	\end{lem}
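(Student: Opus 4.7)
My plan is to show that
\begin{equation}
\phi(n) := \chi(L_1(n)+L_1'(n), L_2(n),\dots,L_s(n))\otimes\overline{\chi}(L_1(n),L_2(n),\dots,L_s(n))\otimes\overline{\chi}(L_1'(n),L_2(n),\dots,L_s(n))
\end{equation}
belongs to $\Nil^{\prec s; O_C(1)}((\V)^k)$, which by Definition \ref{3:deneq} gives the claimed equivalence. The key is that the top type-I Taylor coefficient of a multi-degree $(1,\dots,1)$ nilcharacter lives in the abelian group $G_{(1,\dots,1)}$ and is literally multilinear in its $s$ arguments; the substitution $x_1 = a + b$ therefore acts additively at the top and the three factors in $\phi$ produce cancelling top-degree contributions.

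I would interpret the tensor $\chi(a+b, y) \otimes \overline{\chi}(a, y) \otimes \overline{\chi}(b, y)$ as a nilsequence on $G^3/\Gamma^3$ driven by $\bar g(a, b, y) := (g(a+b, y), g(a, y), g(b, y))$ and $\bar F := F \otimes \overline{F} \otimes \overline{F}$, where $((G/\Gamma)_{\N^s}, g, F, \eta)$ is a complexity-$O(C)$ representation of $\chi$ and $y = (x_2, \dots, x_s)$. Endow $G^3/\Gamma^3$ with the natural $\N^{s+1}$-multi-degree filtration in $(a, b, x_2, \dots, x_s)$ bounded by $(1,\dots,1)$; the combined vertical character $(\eta, -\eta, -\eta)$ on $G_{(1,\dots,1)}^3$ vanishes precisely on the sum-diagonal $D := \{(uv, u, v) : u, v \in G_{(1,\dots,1)}\}$, since $\eta(uv) = \eta(u) + \eta(v)$. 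Using the type-I Taylor expansion (Lemma \ref{3:B.9}) and the Baker-Campbell-Hausdorff formula, I would check that every Taylor coefficient of $\bar g$ at a multi-degree $(i_1, i_2, i_3, \dots, i_{s+1})$ with $i_1 + i_2 \geq 1$ either vanishes identically or lands in $D$: each multi-degree-$(1, j_2, \dots, j_s)$ Taylor coefficient of $g$ splits additively under $x_1 = a + b$, so its contributions from $g(a+b, y)$ to the multi-degrees $(1, 0, j_2, \dots, j_s)$ and $(0, 1, j_2, \dots, j_s)$ match exactly those from $g(a, y)$ and $g(b, y)$ and land in $D$; BCH commutator corrections stay within the multi-degree ceiling $(1,\dots,1)$ by the bound $[G_{(i)}, G_{(j)}] \subseteq G_{(i+j)}$ and similarly fall into $D$ at the top.

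Consequently $\bar F \circ \bar g$ descends to a nilsequence on a quotient of $G^3/\Gamma^3$ whose effective multi-degree lies in the downset $\{(0, 0, i_3, \dots, i_{s+1}) : i_j \in \{0,1\}\}$, of total degree at most $s - 1$. Substituting $a = L_1(n), b = L_1'(n), x_i = L_i(n)$ via Proposition \ref{3:BB} to preserve $p$-periodicity then yields $\phi \in \Nil^{s-1; O_C(1)}((\V)^k) = \Nil^{\prec s; O_C(1)}((\V)^k)$, as required. The main technical obstacle will be the explicit BCH bookkeeping in $G^3$ to verify the sum-diagonal landing at every multi-degree with $i_1 + i_2 \geq 1$, and the rational quotient-nilmanifold construction while preserving complexity; I would follow the approach of Lemma 13.2 of \cite{GTZ12} mutatis mutandis, with the approximation tools of Appendix \ref{3:s:AppB} handling the $p$-periodic setting.
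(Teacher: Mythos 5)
Your proposal follows essentially the same route as the paper's proof: you form the product nilmanifold $G^3/\Gamma^3$, drive the tensor $\chi(a+b,y)\otimes\overline{\chi}(a,y)\otimes\overline{\chi}(b,y)$ by $\bar g=(g(a+b,y),g(a,y),g(b,y))$ with the combined vertical frequency $(\eta,-\eta,-\eta)$, identify the sum-diagonal $D=\{(uv,u,v):u,v\in G_{(1,\dots,1)}\}$ as the kernel at the top, and conclude by quotienting after verifying via type-I Taylor expansion and BCH that $\bar g$ is polynomial for a filtration that makes $D$ the top group. This is precisely the construction in the paper: the paper's $(G^3)_i$ is defined to be generated by $G^3_{(j_1,\dots,j_s)}$ for $j_1+\dots+j_s>i$ together with the diagonal elements $(g_1g_2,g_1,g_2)$ at total degree exactly $i$, and then $(G^3)_s=D$. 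One small imprecision in your writeup: you assert that "every Taylor coefficient of $\bar g$ at multi-degree $(i_1,\dots,i_{s+1})$ with $i_1+i_2\geq 1$ either vanishes or lands in $D$," but $D$ sits inside $G^3_{(1,\dots,1)}$, the top group, so coefficients at intermediate multi-degrees cannot literally land in $D$. What you actually need (and what the paper's filtration encodes) is that at each intermediate degree the Taylor coefficient lies in the corresponding degree-$i$ analogue of the diagonal, and only the top-degree coefficients need to lie in $D$ itself for the quotient to drop the degree. The other minor variation is cosmetic: the paper reduces to a universal statement in the variables $(h_1,\dots,h_s,h_1')$ at the outset using Lemmas \ref{3:LE80} and \ref{3:LE8} (vi), whereas you substitute the linear maps $L_i$ at the end via Proposition \ref{3:BB}; both handle the composition and the $p$-periodicity correctly.
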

\begin{proof}
	By Lemma \ref{3:LE80} and Lemma \ref{3:LE8} (vi) (and by viewing $\chi$ as a degree $s$ nilsequence),  	it suffices to show that
	\begin{equation}\label{3:l13.2e1}
	\begin{split}
	\chi(h_{1}+h'_{1},h_{2},\dots,h_{s})
	\sim_{O_{C,L}(1)}\chi(h_{1},h_{2},\dots,h_{s})\otimes\chi(h_{1}',h_{2},\dots,h_{s})\mod\Xi^{s}_{p}((\V)^{s+1}),
	\end{split}
	\end{equation}
	where both sides of (\ref{3:l13.2e1}) are viewed as functions of $(h_{1},\dots,h_{s},h'_{1})\in (\V)^{s+1}$.
	We may assume that for all $(h_{1},\dots,h_{s})\in (\V)^{s}$, $$\chi(h_{1},\dots,h_{s})=F(g\circ\tau(h_{1},\dots,h_{s})\Gamma)$$ for some $\N^{s}$-filtered nilmanifold $G/\Gamma$ of degree $\leq (1,\dots,1)$ and complexity at most $C$   endowed with a  Mal'cev basis $\mathcal{X}$, some $F\in\Lip(G/\Gamma\to\mathbb{S}^{D})$ with Lipschitz norm at most $C$ and with a vertical frequency $\eta$ for some $D\in\N_{+}$, and some $g\in \poly_{p}((\Z^{d})^{s}\to G_{\N^{s}}\vert \Gamma)$. 
%	By Lemma \ref{3:pre2g}, increasing the complexity from $C$ to $O(C)$ if necessary, we may assume without loss of generality that $g(\bold{0})=id_{G}$.
	It suffices to show that the map
	\begin{equation}\label{3:e13.6}
	\begin{split}
	(h_{1},\dots,h_{s},h'_{1})\mapsto F(g(h_{1}+h'_{1},h_{2},\dots,h_{s})\Gamma)\otimes \overline{F}(g(h_{1},\dots,h_{s})\Gamma)\otimes\overline{F}(g(h'_{1},h_{2},\dots,h_{s})\Gamma)
	\end{split}
	\end{equation}
	belongs to $\Nil^{s-1;O_{C}(1)}((\Z^{d})^{s+1})$.

	We may  write (\ref{3:e13.6}) in the form
	$$\tilde{F}(\tilde{g}(h_{1},\dots,h_{s},h'_{1})\Gamma^{3}),$$
	where 
	$$\tilde{g}(h_{1},\dots,h_{s},h'_{1}):=(g(h_{1}+h'_{1},\dots,h_{s}),g(h_{1},\dots,h_{s}),g(h'_{1},\dots,h_{s}))$$
	and $\tilde{F}\in\Lip(G^{3}/\Gamma^{3}\to \C^{D^{3}})$ is given by
	$$\tilde{F}(x_{1},x_{2},x_{3}):=F(x_{1})\otimes\overline{F}(x_{2})\otimes\overline{F}(x_{3}).$$
	Then $\tilde{F}$ is a function taking values in $\mathbb{S}^{D^{3}}$ of Lipschitz norm at most $3C$ with a vertical frequency $(\eta,-\eta,-\eta)$.
	Since $g\in \poly_{p}((\Z^{d})^{s}\to G_{\N^{s}}\vert\Gamma)$,
	we have that $\tilde{g}\in \poly_{p}((\Z^{d})^{s+1}\to (G^{3})_{\N^{s}}\vert\Gamma^{3})$.
	
	By Lemma \ref{3:B.9} and the Baker-Campbell-Hausdorff formula, we may write
	$$g(h_{1},\dots,h_{s})=\prod_{i_{1},\dots,i_{s}\in \N^{d}, \vert i_{j}\vert\in\{0,1\}}g_{i_{1},\dots,i_{s}}^{\binom{h_{1}}{i_{1}}\dots \binom{h_{s}}{i_{s}}}$$
	for some $g_{i_{1},\dots,i_{s}}\in G_{(\vert i_{1}\vert,\dots,\vert i_{s}\vert)}$, where we insists the constant term $g_{\bold{0},\dots,\bold{0}}$ being the leading term. %Since $g(\bold{0})=id_{G}$, we have that $g_{\bold{0},\dots,\bold{0}}=id_{G}$.
	We may now give $G^{3}$ an $\N$-pre-filtration by setting $G^{3}_{0}:=G^{3}$ and $(G^{3})_{i}, i\neq 0$ to be the group generated by $G_{(j_{1},\dots,j_{s})}^{3}$ for all $j_{1},\dots,j_{s}\in\N$ with $j_{1}+\dots+j_{s}>i$ together with the elements $(g_{1}g_{2},g_{1},g_{2})$ for all $g_{1},g_{2}\in G_{(j_{1},\dots,j_{s})}$ for some $j_{1}+\dots+j_{s}=i$. From the Baker-Campbell-Hausdorff formula one can verify that this is a pre-filtration on $G^{3}$ of complexity $O_{C}(1)$. 
	It is also clear that $(G^{3})_{s}$ is in the center of $G^{3}$.
	
	Let $(\tilde{G}/\tilde{\Gamma})_{\N}$ be the essential part of $(G^{3}/\Gamma^{3})_{\N}$. Then $\tilde{G}_{\N}$ is an $\N$-filtration of complexity $O_{C}(1)$ of degree at most $s$. 
	By the type-I Taylor expansion, it is not hard to see that  $\tilde{g}$ is equal to a constant in $G^{3}$ times a polynomial sequence with respect to the filtration $\tilde{G}_{\N}$, and thus is a polynomial sequence with respect to the pre-filtration $(G^{2})_{I}$ by Lemma \ref{3:pre3g}. %Since $g_{\bold{0},\dots,\bold{0}}=id_{G}$, it follows from
	%  the type-I Taylor expansion  that $\tilde{g}$ is a polynomial with respect to the filtration $\tilde{G}_{\N}$.  
	Finally, as $F$ has a vertical frequency $(\eta,-\eta,-\eta)$, $F$ is invariant under $\tilde{G}_{s}=(G^{3})_{s}=\{(g_{1}g_{2},g_{1},g_{2})\colon g_{1},g_{2}\in G_{(1,\dots,1)}\}$. 
	We may then quotient $G^{3}$ and $\tilde{G}$ by $\tilde{G}_{s}$ and apply Lemma \ref{3:pre2g}  to conclude that (\ref{3:e13.6}) is a degree $<s$ nilsequence on $(\Z^{d})^{s+1}$ (note that the quotient of $\tilde{g}$ belongs  $\poly_{p}((\Z^{d})^{s+1}\to (G^{3}/G^{3}_{s})_{\N^{s}}\vert\Gamma^{3}/(G^{3}_{s}\cap \Gamma^{3}))$), whose complexity is certainly $O_{C}(1)$.
\end{proof}

We now turn to nilcharacters defined on a set admitting a partially periodic Leibman dichotomy, where the factorization theorem (Theorem \ref{3:facf3}) applies. 
The following is a generalization of Lemma E.13 of \cite{GTZ12}, which says that if $\chi^{\otimes q}\sim 1 \mod \Xi_{p}^{s}(\Omega)$ for some small $q$, then $\chi\sim 1 \mod \Xi_{p}^{s}(\Omega)$.

\begin{lem}[Torsion-free lemma]\label{3:LE.13}
		Let $d,D,k,q\in\N_{+},r,s\in\N$, $C>0$, $p\gg_{C,d,D,k,q,r,s} 1$ be a prime, $M\colon\V\to\F_{p}$ be a quadratic form, and  $\Omega$ 
		be a subset of $(\V)^{k}$ such that one of the three assumptions (i)--(iii) in Theorem \ref{3:facf3} holds (here $r$ is the parameter appearing in the statement of Theorem \ref{3:facf3}).
	For any 	$\chi\in \Xi^{s;C,D}_{p}(\Omega)$, if $\chi^{\otimes q}\in \Nil^{s-1;C}(\Omega)$, then $\chi\in\Nil^{s-1;O_{C,d,D,k,q,r,s}(1)}(\Omega)$. 
	
	In particular, if $\chi^{\otimes q}\sim_{C}1 \mod \Xi^{s}_{p}(\Omega)$, then $\chi\sim_{O_{C,d,D,k,q,r,s}(1)}1 \mod \Xi^{s}_{p}(\Omega)$.  
\end{lem}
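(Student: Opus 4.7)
The plan is to reduce the statement to a vertical-frequency calculation after equidistribution, where the torsion-freeness of $\mathbb{R}$ takes over. Write $\chi(n) = F(g(\tau(n))\Gamma)$ for some $s$-step $\mathbb{N}$-filtered nilmanifold $G/\Gamma$ of complexity at most $C$ adapted to a Mal'cev basis $\mathcal{X}$, some $g \in \poly_{p}(\Omega \to G_{\mathbb{N}} \vert \Gamma)$, and some $F \in \Lip(G/\Gamma \to \mathbb{S}^{D})$ of Lipschitz norm at most $C$ with vertical frequency $\eta\colon G_{s} \to \mathbb{R}$ of complexity at most $C$. Choose a growth function $\mathcal{F}\colon \mathbb{R}_{+} \to \mathbb{R}_{+}$ depending on $C,d,D,k,q,r,s$ (to be specified below) and apply the factorization Theorem~\ref{3:facf3} to write $g = \varepsilon g' \gamma$, where $g' \in \poly_{p}(\Omega \to G'_{\mathbb{N}} \vert \Gamma')$ has $\mathcal{F}(C')^{-1}$-equidistributed orbit on a sub-nilmanifold $G'/\Gamma'$ of complexity $C' = O_{C,d,k,r,s,\mathcal{F}}(1)$, with $\varepsilon \in G$ of complexity $O_{C'}(1)$ and $\gamma \in \poly(\Omega \to G_{\mathbb{N}} \vert \Gamma')$.

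Absorbing $\gamma$ into $\Gamma'$ and $\varepsilon$ into the function (i.e.\ working with $F_{\varepsilon}(x) := F(\varepsilon x)$, which is again Lipschitz of complexity $O_{C'}(1)$ with vertical frequency $\eta$), we have $\chi(n) = F_{\varepsilon}(g'(\tau(n))\Gamma')$ for all $n \in \Omega$. The hypothesis $\chi^{\otimes q} \in \Nil^{s-1;C}(\Omega)$ says that $F_{\varepsilon}^{\otimes q}(g'(\tau(n))\Gamma') = H(h(\tau(n)) \Lambda)$ for some nilsequence of degree $\leq s-1$ with complexity $O(C)$, so that the composite function $(F_{\varepsilon}^{\otimes q}, H) \colon (G'/\Gamma') \times (H/\Lambda) \to \mathbb{C}^{D^{q}+D'}$ applied to the diagonal orbit $(g'(\tau(n))\Gamma', h(\tau(n))\Lambda)$ remains on the zero locus of $F_{\varepsilon}^{\otimes q}(x) - H(y)$. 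By the equidistribution of $g'$ on $G'/\Gamma'$ (combined with the factorization of the joint orbit on the product nilmanifold and a $\mathcal{F}(C')^{-1}$-quantitative argument), provided $\mathcal{F}$ is chosen sufficiently fast-growing depending on $C,D,q,s$, the function $F_{\varepsilon}^{\otimes q}$ must itself be constant along the $(G' \cap G_{s})$-fibers; that is, $F_{\varepsilon}^{\otimes q}(g_{s} x) = F_{\varepsilon}^{\otimes q}(x)$ for all $g_{s} \in G' \cap G_{s}$ and a.e.\ $x \in G'/\Gamma'$. Using that $F_{\varepsilon}^{\otimes q}$ has vertical frequency $q\eta$, this invariance forces $q \eta(g_{s}) \in \mathbb{Z}$ for every $g_{s}$ in $(G' \cap G_{s}) \Gamma'_{s}$, and after a further averaging, $\exp(q \eta(g_{s})) = 1$ on the connected group $G' \cap G_{s}$, i.e.\ $q \eta$ is trivial on $G' \cap G_{s}$.

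Because $\eta \colon G' \cap G_{s} \to \mathbb{R}$ is a continuous homomorphism into the torsion-free group $\mathbb{R}$, the equation $q \eta \equiv 0$ forces $\eta \equiv 0$ on $G' \cap G_{s}$. Consequently $F_{\varepsilon}$ itself is $(G' \cap G_{s})$-invariant, and descends to a Lipschitz function $\widetilde{F}$ on the quotient nilmanifold $(G'/(G' \cap G_{s}))/(\Gamma'/(\Gamma' \cap G_{s}))$, which carries the induced filtration of degree $\leq s-1$ and has complexity $O_{C'}(1)$ (cf.\ Definitions~\ref{3:id2} and \ref{3:id3}). The polynomial $g'$ projects to an element of $\poly_{p}(\Omega \to (G'/(G' \cap G_{s}))_{\mathbb{N}})$, and we obtain $\chi(n) = \widetilde{F}(\widetilde{g'}(\tau(n)) \widetilde{\Gamma'})$, exhibiting $\chi$ as an element of $\Nil^{s-1;O_{C,d,D,k,q,r,s}(1)}(\Omega)$, as required. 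The ``in particular'' statement follows by applying what was proven to $\chi \otimes \overline{\chi'}$ for $\chi' \equiv 1$, after first using Lemma~\ref{3:LE8}(i)--(iii) to replace $\chi$ by a representative whose $q$-th tensor power is a nilsequence of degree $< s$.

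The main obstacle I anticipate is the quantitative passage from the hypothesis ``$\chi^{\otimes q}$ coincides on $\Omega$ with a degree $\leq s-1$ nilsequence'' to ``$F_{\varepsilon}^{\otimes q}$ is genuinely $(G' \cap G_{s})$-invariant on $G'/\Gamma'$''; this is where equidistribution on the product nilmanifold and Fourier analysis in the vertical $G_{s}$-direction (together with the choice of $\mathcal{F}$) must be combined carefully to extract a clean algebraic conclusion from what is a priori only an approximate coincidence along an orbit.
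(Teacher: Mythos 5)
Your plan shares the paper's high-level strategy — factorize, look at the vertical frequency of a suitable product function, and use the torsion-freeness of $\mathbb{R}$ to divide by $q$ — but the execution has a genuine gap in how the factorization theorem is set up.

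The paper's proof does \emph{not} factorize $g$ alone and then argue invariance of $F_{\varepsilon}^{\otimes q}$ on $(G'\cap G_s)$-fibers. Instead, it first converts the hypothesis $\chi^{\otimes q}\in\Nil^{s-1;C}(\Omega)$ into a \emph{correlation} statement: $\chi^{\otimes q}$ trivially correlates with $\overline{\chi}^{\otimes q}$, and then Corollary~\ref{3:pppap} (Manners' approximation) combined with pigeonhole produces a $p$-periodic $\chi_0\in\Nil^{s-1;O_{C,d}(1),D^q}_p((\V)^k)$ with $\vert\E_{n\in\Omega}\chi(n)^{\otimes q}\otimes\chi_0(n)\vert\geq\e$. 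Only then does the paper apply Theorem~\ref{3:facf3} to the \emph{joint} sequence $(g\circ\tau,g'\circ\tau)$ on $G\times G'$, where $g'$ is the $p$-periodic polynomial representing $\chi_0$. The equidistribution of the joint orbit on a single rational subgroup $W\leq G\times G'$, together with $\vert\int_Y \tilde F\,dm_Y\vert>\e/2$, then forces the vertical character $(\eta^q,id)$ to annihilate $W_s\subseteq G_s\times\{id_{G'}\}$; torsion-freeness gives that $\eta$ itself annihilates the projection of $W_s$, and one descends.

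Your proposal has two problems that the paper's detour is designed to avoid. First, you apply Theorem~\ref{3:facf3} to $g$ alone and then invoke a second factorization ``of the joint orbit on the product nilmanifold'' inside the argument. This is not a coherent two-step scheme: the second factorization would alter the sequence you just arranged to be equidistributed, and the growth function $\mathcal{F}$ cannot be chosen once to serve both factorizations in the way you describe. Second, and more importantly, any joint factorization via Theorem~\ref{3:facf3} requires the polynomial sequence to lie in $\poly_p(\Omega\to G_\N\vert\Gamma)$, i.e.\ to be $p$-periodic. Your hypothesis only places $\chi^{\otimes q}$ in $\Nil^{s-1;C}(\Omega)$, whose representing polynomial $h$ need \emph{not} be $p$-periodic, so the joint factorization you invoke does not apply to $(g',h)$ as written. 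This is not a bookkeeping issue: without first replacing the degree-$(s-1)$ witness by a $p$-periodic one (Corollary~\ref{3:pppap}), the hypotheses of Theorem~\ref{3:facf3} simply fail. Your closing remark correctly identifies the passage from ``approximate coincidence along the orbit'' to ``genuine algebraic invariance'' as the hard point, but the fix is not more careful Fourier analysis in the vertical direction; it is the reformulation as a correlation with a \emph{$p$-periodic} lower-degree nilsequence followed by a single joint factorization.
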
	
\begin{proof}	
	By Lemma \ref{3:LE80}, we have that
  $\chi^{\otimes q}\in \Nil^{s-1;C,D^{q}}(\Omega)\cap \Xi^{s;O_{C,q}(1),D^{q}}_{p}(\Omega)$. So
	$$\vert\E_{n\in\Omega}\chi(n)^{\otimes q}\otimes\tilde{\chi}(n)\vert\geq 1$$
	for some $\tilde{\chi}\in \Nil^{s-1;C,D^{q}}(\Omega)$ (we may set $\tilde{\chi}:=\overline{\chi}^{\otimes q}$). 	By Corollary \ref{3:pppap}  and the Pigeonhole Principle, there exist $\chi_{0}\in \Nil^{s-1;O_{C,d}(1),D^{q}}_{p}((\V)^{k})$ and $\e=\e(C,d,k)>0$ such that 
	$$\vert\E_{n\in\Omega}\chi(n)^{\otimes q}\otimes \chi_{0}(n)\vert\geq \e.$$
	
	Let $((G/\Gamma)_{\N},g,F,\eta)$ be a $\Xi^{s}_{p}(\Omega)$-representation of $\chi$ of complexity at most $C$ and dimension at most $D$, and $((G'/\Gamma')_{\N},g',F')$ be a $\Nil^{s-1}_{p}((\V)^{k})$-representation of $\chi_{0}$ of complexity at most $O_{C,d}(1)$ and dimension at most $D^{q}$.	
	Let $\mathcal{F}$ be a growth function to be chosen later depending only on $C,d,D,k,q,r,s$.
	By Theorem \ref{3:facf3}, there exists $C\leq C'\leq O_{C,d,D,\mathcal{F},k,q,r,s}(1)=O_{C,d,D,k,q,r,s}(1)$ such that if $p\gg_{C,d,D,\mathcal{F},k,q,r,s} 1$, then there exist  
	a proper subgroup $W$ of $G\times G'$ which is $C'$-rational relative to $\mathcal{X}\times\mathcal{X}'$, and
	a factorization $$(g(n),g'(n))=(t,t')\cdot(h(n),h'(n))\cdot(\gamma(n),\gamma'(n)) \text{ for all } n\in (\Z^{d})^{k}$$  such that $t\in G$ and $t'\in G'$ are of complexities $O_{C'}(1)$,
	$(h,h')\in \poly_{p}(\iota^{-1}(\Omega)\to W_{\N}\vert\Gamma_{W})$, $\Gamma_{W}:=W\cap(\Gamma\times\Gamma')$ with $(h(n)\Gamma,h'(n)\Gamma')_{n\in\iota^{-1}(\Omega)}$ being $\mathcal{F}(C')^{-1}$-equidistributed on $W/\Gamma_{W}$, and that $\gamma\in\poly(\iota^{-1}(\Omega)\to G_{\N}\vert\Gamma)$, $\gamma'\in\poly(\iota^{-1}(\Omega)\to G'_{\N}\vert\Gamma)$.

	Denoting 
	$\tilde{F}(x,y):=F(t x)^{\otimes q}\otimes F'(t' y)$ for $x\in G/\Gamma$ and $y\in G'/\Gamma'$, 
	we have that
	$$\vert\E_{n\in \Omega}\chi(n)^{\otimes q}\otimes\chi_{0}(n)\vert=\vert\E_{n\in \Omega}\tilde{F}(h(\tau(n))\Gamma,h'(\tau(n))\Gamma')\vert=\vert\E_{n\in \iota^{-1}(\Omega)}\tilde{F}(h(n)\Gamma,h'(n)\Gamma')\vert>\e.$$ 
	Since $F$ and $F'$ are bounded in Lipschitz norm  by $C$, 
	and  $t,t'$ are of complexities $O_{C'}(1)$, we have that $\tilde{F}$ is bounded in Lipschitz norm  by some $Q_{C,D,q}(C')$, where $Q_{C,D,q}$ is viewed as a function of $C'$. Denote $Y:=W/\Gamma_{W}$ and let $m_{Y}$ be the Haar measure on $Y$.
	So
	$$\Bigl\vert\E_{n\in \iota^{-1}(\Omega)}\tilde{F}(h(n)\Gamma,h'(n)\Gamma')-\int_{Y}\tilde{F}\,dm_{Y}\Bigr\vert\leq D^{q+1}Q_{C,D,q}(C')\mathcal{F}(C')^{-1}\leq \e/2$$
	provided that $\mathcal{F}$ grows sufficiently fast such that $\mathcal{F}(C')\geq 4\e^{-1}D^{q+1}Q_{C,D,q}(C')$. Clearly $\mathcal{F}$ depends only on $C,d,D,k,q,r,s$.  
	Thus
	$$\Bigl\vert\int_{Y}\tilde{F}\,dm_{Y}\Bigr\vert>\e/2.$$ 
	Since $F$ has a vertical frequency $\eta$, $\tilde{F}$ has a vertical frequency $(\eta^{q},id)$. This means that $(\eta^{q},id)$ must annihilate $W_{s}$. On the other hand, since $W$ is a subgroup of $G\times G'$, we have that $W_{s}$ is a subgroup of $G_{s}\times \{id_{G'}\}$. Assume that the projection of $W_{j}$ to the first coordinate is $\tilde{G}_{j}$ for all $j\in \N$. It is not hard to see that $\tilde{G}_{\N}$ is an $\N$-filtration of complexity $O_{C,C',d,D,k,q,r,s}(1)$.
	Since $W_{s}=\tilde{G}_{s}\times \{id_{G'}\}$, $\eta^{q}$  must annihilate $\tilde{G}_{s}$, which is a subgroup of $G_{s}$. 
	Since $\eta$ is a continuous homomorphism on the connected abelian Lie group $G$, $\eta$ itself must also annihilates $\tilde{G}_{s}$.
	Let $G'':=\tilde{G}/\tilde{G}_{s}$, $\Gamma'':=(\tilde{G}\cap\Gamma)/(\tilde{G}_{s}\cap\Gamma)$, and $\pi\colon \tilde{G}\to G''$ be the projection map.  
	Then we may write $F(t\cdot)=F''\circ \pi$ for some function $F''\colon G''/\Gamma''\to\mathbb{C}^{D'}$. Writing
	$h'':=\pi\circ h\in\poly_{p}(\iota^{-1}(\Omega)\to G''_{I}\vert\Gamma'')$, %\subseteq \poly((\Z^{d})^{k}\to G''_{I})$, 
	we have that 
	$$\chi(n)=F(g(\tau(n))\Gamma)=F(h(\tau(n))\Gamma)=F''(\pi\circ h(\tau(n))\Gamma)=F''(h''(\tau(n))\Gamma'')$$
	for all $n\in\Omega$.
	Since $\tilde{G}$ is $O_{C,C',d,D,k,q,r,s}(1)$-rational relative to the $C$-rational Mal'cev basis $\mathcal{X}$, so is $G''$. 
	Since $F$ is bounded in Lipschitz norm  by $C$ and $t$ is of complexity $O_{C'}(1)$,  $F''$  has Lipschitz norm bounded by $O_{C,C'}(1)$. 
Since $C'\leq O_{C,d,D,k,q,r,s}(1)$,
we have that 
	$\chi\in \Nil^{s-1;O_{C,d,D,k,q,r,s}(1)}(\Omega)$.
\end{proof}

The following lemma  generalizes of Lemma E.12 of \cite{GTZ12},
which says that if a degree-$s$ nilchracter $\chi$ has nontrivial correlation with a degree-$(s-1)$ nilsequence, then $\chi$ is also a degree-$(s-1)$ nilsequence.

\begin{lem}\label{3:LE.11}
		Let $d,D,k\in\N_{+},r,s\in\N$,  $C,\e>0$, $p\gg_{C,d,D,\e} 1$ be a prime, $M\colon\V\to\F_{p}$ be a quadratic form, and $\Omega$ 
		be a subset of $(\V)^{k}$ such that one of the three assumptions (i)--(iii) in Theorem \ref{3:facf3} holds (here $r$ is the parameter appearing in the statement of Theorem \ref{3:facf3}). 
	Let $\chi\in \Xi^{s;C,D}_{p}(\Omega)$  
	and $\psi\in \Nil^{s-1;C,D}(\Omega)$. If
	$$\vert\E_{n\in \Omega}\chi(n)\otimes \psi(n)\vert>\e,$$
	then $\chi\in \Nil^{s-1;O_{C,d,D,\e,k,r,s}(1),D}(\Omega)$. In particular, $\chi\sim_{O_{C,d,D,\e,k,r,s}(1)} 1 \mod \Xi^{s}_{p}(\Omega)$.	 
\end{lem}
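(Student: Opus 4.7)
The plan is to mimic the strategy used in the proof of Lemma \ref{3:LE.13}: apply the factorization theorem to a joint polynomial sequence on a product nilmanifold, use the resulting equidistribution to convert an averaged correlation into an integral, and then exploit the vertical frequency of $\chi$ together with the fact that $\psi$ has degree strictly less than $s$ to force a descent of the group in which $\chi$ is represented.

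First, I would take $\Xi_p^s(\Omega)$- and $\Nil^{s-1}(\Omega)$-representations $((G/\Gamma)_\N, g, F, \eta)$ of $\chi$ and $((G'/\Gamma')_\N, g', F')$ of $\psi$, both of complexity $O_C(1)$. Form the product nilmanifold $(G \times G')/(\Gamma \times \Gamma')$ with the product degree filtration, and the joint polynomial $(g, g') \in \poly_p(\Omega \to (G \times G')_\N \vert \Gamma\times \Gamma')$. The Lipschitz function $\tilde F := F \otimes \overline{F'}$ on this nilmanifold satisfies $|\E_{n\in\Omega}\tilde F(g(\tau(n))\Gamma, g'(\tau(n))\Gamma')| > \e$. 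Since $\psi$ has degree $\leq s-1$, the degree filtration of $G'$ satisfies $G'_s = \{\id\}$, so the product filtration satisfies $(G\times G')_s = G_s \times \{\id\}$, and the vertical frequency of $\tilde F$ on this central subgroup is precisely $\eta$ (in the first coordinate).

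Next, I would apply Theorem \ref{3:facf3} with a growth function $\mathcal F$ to be chosen in terms of $C,d,D,\e,k,r,s$. This produces a factorization $(g,g')=(t,t')\cdot(h,h')\cdot(\gamma,\gamma')$, where $(h,h') \in \poly_p(\Omega \to W_\N \vert \Gamma_W)$ is $\mathcal F(C')^{-1}$-equidistributed on $W/\Gamma_W$ for some rational subgroup $W \leq G\times G'$ of complexity $C' = O_{C,d,D,\mathcal F,k,r,s}(1)$, and $t, t'$ of complexity $O_{C'}(1)$. Choosing $\mathcal F$ to grow fast enough compared to the Lipschitz norm of $(x,y)\mapsto \tilde F((t,t')(x,y))$, the equidistribution gives
\[
\Bigl|\int_{W/\Gamma_W} \tilde F((t,t')x)\, dm_{W/\Gamma_W}(x)\Bigr| > \e/2.
\]
Non-vanishing of this integral forces the vertical frequency $(\eta,0)$ of $\tilde F((t,t')\cdot)$ to annihilate the central group $W_s \subseteq G_s \times \{\id\}$, which is exactly the condition that $\eta$ annihilates $\tilde G_s := \pi_1(W_s) \subseteq G_s$, where $\pi_1$ is the projection to the first factor.

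Finally, let $\tilde G := \pi_1(W) \leq G$, which is a rational subgroup of complexity $O_{C'}(1)$ whose induced degree filtration has $s$-th level exactly $\tilde G_s$. Since $h(n) = \pi_1((h,h')(n)) \in \tilde G$ and $\eta$ annihilates $\tilde G_s$, the function $F(t \cdot)$ descends through the quotient $\tilde G \to \tilde G / \tilde G_s$ to a Lipschitz function $F''$ of complexity $O_{C,C'}(1)$ on $(\tilde G/\tilde G_s)/(\Gamma \cap \tilde G)(\tilde G_s \cap \Gamma)$, a degree $\leq s-1$ nilmanifold. Writing $h'' := \pi \circ h \in \poly_p(\Omega \to (\tilde G/\tilde G_s)_\N \vert \cdot)$ and using $g(n)\Gamma = t\,h(n)\,\gamma(n)\Gamma = t\,h(n)\Gamma$ on $\Omega$, we obtain
\[
\chi(n) = F''(h''(n)(\Gamma\cap\tilde G)(\tilde G_s\cap\Gamma))
\]
for all $n \in \Omega$, exhibiting $\chi$ as an element of $\Nil^{s-1; O_{C,d,D,\e,k,r,s}(1), D}(\Omega)$. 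The ``in particular'' statement then follows immediately from Lemma \ref{3:eqqeqq}(i) applied to the trivial nilcharacter $1$, or directly from the definition of $\sim$.

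The main obstacle will be bookkeeping around the filtrations: one must verify carefully that the product degree filtration on $G\times G'$ is rational of controlled complexity, that $W_s$ coincides with $W \cap (G_s \times \{\id\})$ under the induced filtration, and that the descent of $F$ modulo $\tilde G_s$ preserves $p$-periodicity and the Lipschitz bound. These are routine but require the same care as in the proof of Lemma \ref{3:LE.13}.
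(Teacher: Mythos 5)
Your overall strategy matches the paper's, but there is one concrete gap at the very start. You write ``the joint polynomial $(g,g')\in\poly_p(\Omega\to (G\times G')_\N\vert\Gamma\times\Gamma')$,'' but this is not justified: the hypothesis only gives $\psi\in\Nil^{s-1;C,D}(\Omega)$, not $\psi\in\Nil^{s-1;C,D}_p(\Omega)$, so the $\Nil^{s-1}(\Omega)$-representation of $\psi$ has $g'\in\poly(\Z^k\to G'_{\N})$ with no periodicity guarantee. In particular $(g,g')$ need not be a partially $p$-periodic polynomial sequence, and Theorem \ref{3:facf3} cannot be applied to it.

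The paper closes this gap by a preliminary step you omit: apply Corollary \ref{3:LE.6} (with $f=\chi$ and degree parameter $s-1$) to replace $\psi$ with a $p$-periodic $\psi'\in\Nil^{s-1;O_{C,d,D,\e,k,r,s}(1),O_{C,d,D,\e,k,r,s}(1)}_{p}(\Omega)$ that still correlates with $\chi$ at level $\e'=\e'(C,d,D,\e,k,r,s)>0$ — at the price of changing the constants. Once $\psi'$ is $p$-periodic, the joint polynomial $(g\circ\tau,g'\circ\tau)$ on $G\times G'$ is indeed in $\poly_p$, and everything you describe afterwards — the factorization via Theorem \ref{3:facf3}, the equidistribution forcing $(\eta,0)$ to annihilate $W_s\subseteq G_s\times\{\id\}$, and the descent of $F(t\cdot)$ through $\tilde G/\tilde G_s$ — goes through exactly as you outline, and it is the argument the paper uses. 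The ``in particular'' statement is just the definition of $\sim$, since $\Nil^{s-1}(\Omega)=\Nil^{\prec s}(\Omega)$ for the degree ordering; no appeal to Lemma \ref{3:eqqeqq}(i) is needed.
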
	
\begin{proof}
	By Corollary \ref{3:LE.6}, there exists $\psi'\in\Nil^{s-1;O_{C,d,D,\e,k,r,s}(1),O_{C,d,D,\e,k,r,s}(1)}_{p}(\Omega)$ and $\e'=\e'(C,d,$ $D,\e,k,r,s)>0$ such that 
	$$\vert\E_{n\in \Omega}\chi(n)\otimes \psi'(n)\vert>\e'.$$
	Let $((G/\Gamma)_{\N},g,F,\eta)$ be a $\Xi^{s}_{p}(\Omega)$-representation of $\chi$ of complexity at most $C$ and dimension at most $D$, and $((G'/\Gamma')_{\N},g',F')$ be a $\Nil^{s-1}_{p}(\Omega)$-representation of $\psi'$ of complexity  and dimension at most $O_{C,d,D,\e}(1)$.	
	Let $\mathcal{F}$ be a growth function to be chosen later depending only on $C,d,D,\e,k,r,s$.
	By Theorem \ref{3:facf3}, there exists $C\leq C'\leq O_{C,d,D,\e,\mathcal{F},k,r,s}(1)=O_{C,d,D,\e,k,r,s}(1)$ such that if $p\gg_{C,d,D,\e,\mathcal{F},k,r,s} 1$, then there exist 
	a proper subgroup $W$ of $G\times G'$ which is $C'$-rational relative to $\mathcal{X}\times\mathcal{X}'$, and
	a factorization $$(g(n),g'(n))=(t,t')\cdot(h(n),h'(n))\cdot(\gamma(n),\gamma'(n)) \text{ for all } n\in (\Z^{d})^{k}$$  such that $t\in G$ and $t'\in G'$ are of complexities at most $O_{C'}(1)$, 
	$(h,h')\in \poly_{p}(\iota^{-1}(\Omega)\to W_{I}\vert\Gamma_{W})$, $\Gamma_{W}:=W\cap(\Gamma\times\Gamma')$ with $(h(n)\Gamma,h'(n)\Gamma')_{n\in\iota^{-1}(\Omega)}$ being $\mathcal{F}(C')^{-1}$-equidistributed on $W/\Gamma_{W})$, and that $\gamma\in\poly(\iota^{-1}(\Omega)\to G_{I}\vert\Gamma)$, $\gamma'\in\poly(\iota^{-1}(\Omega)\to G'_{I}\vert\Gamma)$.

	Denoting 
	$\tilde{F}(x,y):=F(tx)\otimes F'(t' y)$ for $x\in G/\Gamma$ and $y\in G'/\Gamma'$, 
	we have that
	$$\vert\E_{n\in \Omega}\chi(n)\otimes\psi'(n)\vert=\vert\E_{n\in \Omega}\tilde{F}(h(\tau(n))\Gamma,h'(\tau(n))\Gamma')\vert=\vert\E_{n\in \iota^{-1}(\Omega)}\tilde{F}(h(n)\Gamma,h'(n)\Gamma')\vert>\e'.$$ 
	Since $F$ and $F'$ are bounded in Lipschitz norm  by $C$ and  $t,t'$ are of complexities $O_{C'}(1)$, we have that $\tilde{F}$ is bounded in Lipschitz norm  by some $Q_{C,D}(C')$, where $Q_{C,D}$ is viewed as a function of $C'$. Denote $Y:=W/\Gamma_{W}$ and let $m_{Y}$ be the Haar measure on $Y$.
	So
	$$\Bigl\vert\E_{n\in \iota^{-1}(\Omega)}\tilde{F}(h(n)\Gamma,h'(n)\Gamma')-\int_{Y}\tilde{F}\,dm_{Y}\Bigr\vert\leq D^{2}Q_{C,D}(C')\mathcal{F}(C')^{-1}\leq \e'/2$$
	provided that $\mathcal{F}'$ grows sufficiently fast such that $\mathcal{F}(C')\geq 2D^{2}Q_{C,D}(C')/\e'$. Clearly,  $\mathcal{F}'$ depends only on $C,d,D$, $\e,k,r,s$.  
	Thus
	$$\Bigl\vert\int_{Y}\tilde{F}\,dm_{Y}\Bigr\vert>\d/2.$$ 
	Since $F$ has a vertical frequency $\eta$, $\tilde{F}$ has a vertical frequency $(\eta,id)$. This means that $(\eta,id)$ must annihilate $W_{s}$. On the other hand, since $W$ is a subgroup of $G\times G'$, we have that $W_{s}$ is a subgroup of $G_{s}\times \{id_{G'}\}$. Assume that the projection of $W_{j}$ to the first coordinate is $\tilde{G}_{j}$ for all $j\in \N$. It is not hard to see that $\tilde{G}_{\N}$ is a filtration of complexity $O_{C,C',d,D,\e,k,r,s}(1)$.
	Since $W_{s}=\tilde{G}_{s}\times \{id_{G'}\}$, $\eta$  must annihilate $\tilde{G}_{s}$, which is a subgroup of $G_{s}$. 
	Let $G'':=\tilde{G}/\tilde{G}_{s}$, $\Gamma'':=(\tilde{G}\cap\Gamma)/(\tilde{G}_{s}\cap\Gamma)$, and $\pi\colon \tilde{G}\to G''$ be the projection map. 
	Then we may write $F(t\cdot)=F''\circ \pi$ for some function $F''\colon G''/\Gamma''\to\mathbb{C}^{D'}$. Writing
	$h'':=\pi\circ h\in\poly_{p}(\iota^{-1}(\Omega)\to G''_{\N}\vert\Gamma'')$, we have that 
	$$\chi(n)=F(g(\tau(n))\Gamma)=F(h(\tau(n))\Gamma)=F''(\pi\circ h(\tau(n))\Gamma)=F''(h''(\tau(n))\Gamma'')$$
	for all $n\in\Omega$.	
	Since $\tilde{G}$ is $O_{C,C',d,D,\e,k,r,s}(1)$-rational relative to the $C$-rational Mal'cev basis $\mathcal{X}$, so is $G''$. Since $F$  has Lipschitz norm bounded by $C$ and $t$ is of complexity $O_{C'}(1)$, $F''$ has Lipschitz norm bounded by $O_{C,C'}(1)$. 
Since  $C'\leq O_{C,d,D,\e,k,r,s}(1)$, we have that 
	$\chi\in \Nil^{s-1;O_{C,d,D,\e,k,r,s}(1),D}(\Omega)$.
\end{proof}

\begin{rem}
	It is natural to ask whether the assumptions in Lemma \ref{3:LE.11} imply that $\chi$ belongs to $\Nil_{p}^{s-1}(\Omega)$ (i.e. we further require $\chi$ is $p$-periodic as a degree-$(s-1)$ nilsequence). Unfortunately we do not know the answer to this question. This is because the factorization result (Theorem \ref{3:facf3}) we use in the proof of Lemma \ref{3:LE.11} does not ensure that $h,h'$ are $p$-periodic. 
\end{rem}

We conclude this section with an analog of Theorem E.10 of \cite{GTZ12}, which
asserts that every nilcharacter can be written as the composition  of a linear transformation and a nilcharacter of multi-degree $(1,\dots,1)$, up to a nilsequence of lower degree.

\begin{thm}\label{3:E.10}
	Let $d,D,k\in\N_{+}$, $s_{1},\dots,s_{k}\in\N_{+}$, $C>0$, $p$ be a prime and $\Omega\subseteq (\V)^{k}$. Denote $s:=(s_{1},\dots,s_{k})$ and $\vert s\vert:=s_{1}+\dots+s_{k}$. 
	For all $\chi\in\Xi^{s;C,D}_{p}(\Omega)$, if   $p\gg_{C,d,s} 1$, 	then there exists $\tilde{\chi}\in\Xi^{(1,\dots,1);O_{C,d,s}(1),O_{C,d,s}(1)}_{p}((\V)^{\vert s\vert})$ (with 1 repeated $\vert s\vert$ times)  such that writing
	$$\chi'(n_{1},\dots,n_{k}):=\tilde{\chi}(n_{1},\dots,n_{1},\dots,n_{k},\dots,n_{k}), n_{1},\dots,n_{k}\in\V,$$
	(with each $n_{i}$ repeated $s_{i}$ times), we have that $\chi'\in\Xi^{s;O_{C,d,s}(1),O_{C,d,s}(1)}_{p}((\V)^{k})$ and
	$\chi\sim_{O_{C,d,k,s}(1)}\chi'\mod\Xi^{\vert s\vert}_{p}(\Omega)$. Furthermore, one can select
	$$\tilde{\chi}(n_{1,1},\dots,n_{1,s_{1}},\dots,n_{k,1},\dots,n_{k,s_{k}})$$
	to be symmetric with respect to the permutations of $n_{i,1},\dots,n_{i,s_{i}}$ for all $1\leq i\leq k$. 
	\end{thm}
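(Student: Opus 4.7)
The plan is to prove Theorem \ref{3:E.10} by explicitly constructing $\tilde{\chi}$ via a polarization of the type-I Taylor expansion of the polynomial sequence defining $\chi$, in close analogy with Theorem E.10 of \cite{GTZ12}, and then averaging over the product $\Sigma_{s_1}\times\cdots\times\Sigma_{s_k}$ of symmetric groups to enforce the symmetry in each block of variables. First, I would fix a $\Xi^{s}_p(\Omega)$-representation $((G/\Gamma)_{\N^k}, g, F, \eta)$ of $\chi$ of complexity at most $C$, and apply Lemma \ref{3:B.9} to write the type-I Taylor expansion
\[
g(n_1,\dots,n_k)=\prod_{m=(m_1,\dots,m_k)\in (\N^d)^k,\, (|m_1|,\dots,|m_k|)\preceq s} g_m^{\binom{n_1}{m_1}\cdots\binom{n_k}{m_k}},
\]
with $g_m\in G_{(|m_1|,\dots,|m_k|)}$, and note that by Lemma \ref{3:ratispp} the coefficients relevant to the top floor live in $\Z/p$. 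As $p\gg_{C,d,s}1$, the combinatorial factors $(s_i!)$, $\binom{s_i}{|m_i|}$, $m_i!$ that appear below are all invertible mod $p$.

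Next, I would build a polarized universal $\N^{|s|}$-filtered nilmanifold $\tilde{G}/\tilde{\Gamma}$ of multi-degree $\le (1,\dots,1)$ as follows. For each Taylor generator $g_m$ of multi-degree $(|m_1|,\dots,|m_k|)$ and each tuple of subsets $S=(S_1,\dots,S_k)$ with $S_i\subseteq\{1,\dots,s_i\}$, $|S_i|=|m_i|$, introduce a formal generator $\tilde g_{m,S}$ placed in the slot of multi-degree $\mathbf e_S\in\N^{|s|}$ whose $1$'s lie exactly on the coordinates $S_1\sqcup\dots\sqcup S_k$; impose the relations coming from the Baker--Campbell--Hausdorff formula for $G$ (i.e. let $\tilde G$ be the appropriate universal group modulo the relations needed so that the projection $\tilde G\to G$ sending $\tilde g_{m,S}\mapsto g_m$ is a filtered homomorphism). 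Define
\[
\tilde g(n_{1,1},\dots,n_{k,s_k}) := \prod_{m,S}\, \tilde g_{m,S}^{\,Q_{m,S}(n_{1,1},\dots,n_{k,s_k})},\qquad
Q_{m,S}(\mathbf n):=\prod_{i=1}^k \sum_{\sigma_i:\{1,\dots,|m_i|\}\hookrightarrow S_i} \tfrac{1}{|m_i|!}\,n_{i,\sigma_i(1)}\cdots n_{i,\sigma_i(|m_i|)},
\]
so that $Q_{m,S}$ is multilinear of multi-degree $\le (1,\dots,1)$ supported on $S$. By Lemma \ref{3:B.9} and Corollary B.4 of \cite{GTZ12}, $\tilde g\in\poly(\Z^{d|s|}\to \tilde G_{\N^{|s|}})$; by Lemma \ref{3:ratispp} combined with Lemma \ref{3:grg} applied coordinate-by-coordinate, $\tilde g\in \poly_p((\V)^{|s|}\to \tilde G_{\N^{|s|}}\vert\tilde\Gamma)$. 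Pulling back $F$ through the projection $\tilde G\to G$ and multiplying by a cutoff in the fundamental domain, I obtain a Lipschitz $\tilde F$ of complexity $O_{C,d,s}(1)$ with a suitable vertical character, and set $\tilde\chi(\mathbf n):=\tilde F(\tilde g(\tau(\mathbf n))\tilde\Gamma)\in \Xi^{(1,\dots,1);O_{C,d,s}(1),O_{C,d,s}(1)}_p((\V)^{|s|})$.

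The verification step is to evaluate $\tilde\chi$ on the diagonal $n_{i,j}=n_i$. A direct combinatorial calculation gives
\[
\sum_{S:\,|S_i|=|m_i|}Q_{m,S}(n_1,\dots,n_1,\dots,n_k,\dots,n_k)=\prod_{i=1}^k \binom{s_i}{|m_i|}\,\frac{n_i^{|m_i|}}{|m_i|!},
\]
which differs from $\binom{n_1}{m_1}\cdots\binom{n_k}{m_k}$ by a polynomial of total degree strictly less than $|m_1|+\cdots+|m_k|$. Consequently, $\tilde g$ on the diagonal agrees with $g$ modulo the subgroup $G_{\prec |s|}$ (at the top floor the mismatch is an invertible scalar $\prod_i \binom{s_i}{|m_i|}$ which is absorbed by an application of Lemma \ref{3:LE8}(vii) after raising the whole construction to an appropriate fixed power). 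Therefore $\chi'(n)=\tilde F(\tilde g(n,\dots,n,\dots)\tilde\Gamma)$ equals $\chi(n)$ times a nilsequence factor of degree $\prec |s|$, yielding $\chi\sim_{O_{C,d,k,s}(1)}\chi'\mod\Xi^{|s|}_p(\Omega)$ via Lemma \ref{3:LE8}(i). Finally, to secure symmetry in each block, replace $\tilde\chi$ by the tensor product $\bigotimes_{\sigma\in\Sigma_{s_1}\times\cdots\times\Sigma_{s_k}}\tilde\chi\circ\sigma$ and then apply Lemma \ref{3:LE8}(vii) (with $q=|\Sigma_{s_1}|\cdots|\Sigma_{s_k}|=\prod_i s_i!$, which is invertible mod $p$) to extract a single symmetric $(1,\dots,1)$-nilcharacter; periodicity is preserved by Proposition \ref{3:BB} applied to each coordinate-permutation linear transformation.

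The main obstacle I expect is the bookkeeping in Paragraph 2: making the polarized universal group $\tilde G$ filtered of multi-degree $\le(1,\dots,1)$ while keeping the projection $\tilde G\to G$ a filtered homomorphism requires carefully choosing the Baker--Campbell--Hausdorff relations so that commutators of $\tilde g_{m,S}$ and $\tilde g_{m',S'}$ with overlapping supports lift commutators in $G$, but commutators with disjoint supports (which have nowhere to go in the multi-degree $\le(1,\dots,1)$ filtration) vanish. Once this is set up correctly, verifying $p$-periodicity of $\tilde g$ reduces to the $p$-periodicity of the individual Taylor coefficients, which is provided by Lemma \ref{3:ratispp}, and the lower-degree discrepancy on the diagonal is automatically a $\Nil^{\prec|s|}(\Omega)$-nilsequence by the dimension count of the surviving Taylor terms.
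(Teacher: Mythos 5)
Your proposal follows the overall polarization strategy of the paper, but it has a genuine gap at the crucial step where you assert that the polarized sequence $\tilde g$ is $p$-periodic, namely that $\tilde g\in\poly_p((\V)^{|s|}\to\tilde G_{\N^{|s|}}\mid\tilde\Gamma)$ ``by Lemma \ref{3:ratispp} combined with Lemma \ref{3:grg} applied coordinate-by-coordinate.'' Neither lemma delivers this. Lemma \ref{3:ratispp} controls only the \emph{top-floor, type-II} Taylor coefficients modulo $G_{[i,2]}\Gamma$ (it gives $\Taylor_m(g)^p\in G_{[i,2]}\Gamma$ and $\xi_i(\Taylor_m(g))\in\Z/p$); it says nothing about the full \emph{type-I} Taylor coefficients $g_m$ that enter your $\tilde g_{m,S}^{\,Q_{m,S}}$, and in particular it gives no constraint at all on coefficients of non-maximal multi-degree. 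Lemma \ref{3:grg} is about multiplying a $p$-periodic sequence by a $\Gamma$-valued one, which is unrelated to polarization. The deeper point is that the $p$-periodicity constraint $g(n+pm)^{-1}g(n)\in\Gamma$ couples the Taylor coefficients of \emph{different} degrees together, whereas your construction places each $g_m$ into a separate slot $\iota_J$ of $\tilde G$; this ``disentangles'' the coefficients, and the entangled constraints on $g$ do not translate into the coordinate-wise constraints that $p$-periodicity of $\tilde g$ would require. The paper flags this exact issue verbatim (``However, we caution the readers that $\tilde\xi$ and $\xi'$ need not to be $p$-periodic'') and then spends an entire final paragraph circumventing it.

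The fix the paper uses, and which your proposal is missing, is to work with the \emph{not-necessarily-periodic} $\tilde\xi$ only through a correlation inequality: one first proves the Claim that $\xi^{\otimes s!}\otimes\overline{\xi'}$ is a degree-$\prec s$ nilsequence (here $\xi'$ is the diagonal restriction of $\tilde\xi$), rephrases this as $|\E_{n\in[p]^{dk}}\xi(n)^{\otimes s!}\otimes\overline{\tilde\xi}(\dots)\otimes\psi(n)|=1$ for a lower-degree $\psi$, then uses Corollary \ref{3:LE.6} (correlation with an arbitrary nilcharacter implies correlation with a $p$-periodic one) to replace $\tilde\xi$ by a genuine $p$-periodic $\tilde\xi'\in\Xi^{(1,\dots,1)}_p$, and only afterwards carries out the symmetrization and the extraction of an $s!$-th root via Lemmas \ref{3:LE8}(vii) and \ref{3:LE.13}. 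Your symmetrization and root-extraction steps are essentially the same as the paper's, but they are applied to an object that has not been shown to be $p$-periodic, so the proof does not close. To repair the argument you would need to insert the detour through the Claim plus Corollary \ref{3:LE.6}, or else supply a genuinely new proof that your specific $\tilde g$ lies in $\poly_p$ — which I do not believe is true in general for the reasons above.
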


In order to prove Theorem \ref{3:E.10}, we need the following lemma:

\begin{lem}\label{3:sppsp}
	Let $d,D,k\in\N_{+}$, $s_{1},\dots,s_{k}\in\N_{+}$, $C>0$, and $p$ be a prime. Denote $s:=(s_{1},\dots,s_{k})$ and $\vert s\vert:=s_{1}+\dots+s_{k}$. Let $\chi\in\Xi^{(1,\dots,1);C,D}_{p}((\V)^{\vert s\vert})$  (with 1 repeated $\vert s\vert$ times).
	\begin{enumerate}[(i)]  
		\item 
		 Let $\sigma\colon\{1,\dots,\vert s\vert\}\to\{1,\dots,\vert s\vert\}$ be a permutation and denote  
		$$\chi'(n_{1},\dots,n_{\vert s\vert})
		 :=\chi(n_{\sigma(1)},\dots,n_{\sigma(\vert s\vert)})$$
		 for all $(n_{1},\dots,n_{\vert s\vert})\in (\V)^{\vert s\vert}$.
		 Then $\chi'\in\Xi^{(1,\dots,1);C,D}_{p}((\V)^{\vert s\vert})$.
		\item Let  
		$$\chi'(n_{1},\dots,n_{k})
		:=\chi(n_{1},\dots,n_{1},\dots,n_{k},\dots,n_{k})$$
		for all $(n_{1},\dots,n_{k})\in (\V)^{k}$, where each $n_{i}$ appears $s_{i}$ times in the above expression. Then $\chi'\in\Xi^{s;C,D}_{p}((\V)^{k})$.
	\end{enumerate}	 
\end{lem}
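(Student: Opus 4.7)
The plan is to prove both parts by retaining the underlying nilmanifold $G/\Gamma$, the Lipschitz function $F$, and the vertical frequency $\eta$ from a given representation of $\chi$, and only redefining the multi-degree filtration on $G$ so that the transformed polynomial sequence remains polynomial with respect to the new filtration. Provided that the new top subgroup coincides with the old one and the induced degree filtration is unchanged, the same Mal'cev basis applies, and the complexity bound $C$, the dimension $D$, and the vertical frequency transfer without loss.

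For part (i), given a permutation $\sigma$ of $\{1,\dots,|s|\}$, define $G^{\sigma}_{i} := G_{i\circ\sigma}$ where $(i\circ\sigma)_\ell := i_{\sigma(\ell)}$. Since both $(1,\dots,1)$ and $|i|$ are invariant under the $\sigma$-action on indices, $G^{\sigma}_{\N^{|s|}}$ is a multi-degree filtration of degree $\leq(1,\dots,1)$ with the same top subgroup $G_{(1,\dots,1)}$ and the same induced degree filtration as $G_{\N^{|s|}}$. A shift of $g'(n_1,\dots,n_{|s|}) := g(n_{\sigma(1)},\dots,n_{\sigma(|s|)})$ in direction $e_{i,j}$ translates to a shift of $g$ in direction $e_{\sigma^{-1}(i),j}$, so any iterated derivative of $g'$ with counts $t_\ell=\#\{r:i_r=\ell\}$ equals, up to a translation, the corresponding iterated derivative of $g$, which lies in $G_{t\circ\sigma} = G^{\sigma}_t$. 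Hence $g'\in\poly_p((\V)^{|s|}\to G^{\sigma}_{\N^{|s|}}\mid\Gamma)$ and part (i) follows.

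For part (ii), let $\pi\colon\{1,\dots,|s|\}\to\{1,\dots,k\}$ be the grouping map (so $|\pi^{-1}(\ell)|=s_\ell$, with $\pi(j)=\ell$ iff position $j$ belongs to the $\ell$th block), and define the pushforward $\pi_*\colon\N^{|s|}\to\N^k$ by $\pi_*(j)_\ell := \sum_{j'\in\pi^{-1}(\ell)}j_{j'}$. Set
\[
\tilde{G}_i := \langle G_j \colon \pi_*(j)\geq i\rangle, \qquad i\in\N^k.
\]
Using the identity $\pi_*(j+j') = \pi_*(j)+\pi_*(j')$ together with $[G_j,G_{j'}]\subseteq G_{j+j'}$, one checks that $\tilde{G}_{\N^k}$ is a multi-degree filtration of degree $\leq s$ on $G$, with top subgroup $\tilde{G}_s = G_{(1,\dots,1)}$ and the same induced degree filtration as $G_{\N^{|s|}}$ (using $|\pi_*(j)|=|j|$ for $j\leq(1,\dots,1)$). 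The main obstacle, and the only nontrivial point, is the verification that the diagonally restricted sequence
\[
g'(n_1,\dots,n_k) := g(n_1,\dots,n_1,\dots,n_k,\dots,n_k)
\]
belongs to $\poly_p((\V)^k\to\tilde{G}_{\N^k}\mid\Gamma)$, since Proposition \ref{3:BB}(ii) only applies to the degree filtration. To address this, expand $g$ via the type-I Taylor expansion of Lemma \ref{3:B.9} as $g(m)=\prod_{m'}\exp\bigl(\binom{m}{m'}X_{m'}\bigr)$ with $X_{m'}\in G_{(|m'_1|,\dots,|m'_{|s|}|)}$, and note that after composing with the diagonal embedding $L'$, each factor $\binom{L'(n)}{m'}=\prod_{j=1}^{|s|}\binom{n_{\pi(j)}}{m'_j}$ is a polynomial in $n=(n_1,\dots,n_k)$ of multi-degree $\pi_*((|m'_1|,\dots,|m'_{|s|}|))$, while by construction $X_{m'}\in\tilde{G}_{\pi_*((|m'_1|,\dots,|m'_{|s|}|))}$. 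Applying the converse direction of Lemma \ref{3:B.9} together with Baker--Campbell--Hausdorff to reorder factors identifies $g'$ as an element of $\poly(\Z^{dk}\to\tilde{G}_{\N^k})$, and the $p$-periodicity together with the cocompact behavior at $\Gamma$ pass through directly from $g$.
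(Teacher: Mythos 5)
Your proposal is correct, and it is essentially the approach the paper has in mind when it says the lemma "can be proved directly by using the type-I Taylor expansion (Lemma \ref{3:B.9}) and the Baker-Campbell-Hausdorff formula." The key observation in both parts — that one can keep $G/\Gamma$, $F$, $\eta$ and the Mal'cev basis fixed and merely re-index the $\N^{|s|}$-filtration into an $\N^k$-filtration (resp.\ a permuted $\N^{|s|}$-filtration) whose top group $G_{(1,\dots,1)}$ and induced degree filtration are unchanged — is exactly right, and your verification in part (ii) that the Taylor coefficient $X_{m'}$ lands in $\log\tilde G_{\pi_*(|m'_1|,\dots,|m'_{|s|}|)}$ while $\binom{L'(n)}{m'}$ has multi-degree $\pi_*(|m'_1|,\dots,|m'_{|s|}|)$ in the grouped variables is the crux. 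Two minor points worth tightening: you should not invoke Proposition \ref{3:BB}(ii) in the negative (it is not needed here at all, since you never leave the degree filtration), and rather than "the converse direction of Lemma \ref{3:B.9} together with BCH to reorder factors" it is cleaner to observe that each single factor $n\mapsto\exp(\binom{L'(n)}{m'}X_{m'})$ already lies in $\poly(\Z^{dk}\to\tilde G_{\N^k})$ (its iterated differences with count vector $t$ either vanish, when $t\not\preceq\pi_*(|m'|)$, or land in $\tilde G_{\pi_*(|m'|)}\subseteq\tilde G_t$), and then appeal to the group structure of $\poly$ (Corollary B.4 of \cite{GTZ12}), which is where BCH hides; this avoids the re-expansion of products of binomials into single binomial coefficients.
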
	

 Lemma \ref{3:sppsp} can be proved directly by using the type-I Taylor expansion (Lemma \ref{3:B.9}) and the Baker-Campbell-Hausdorff formula. We leave the details to the interested readers.

\begin{proof}[Proof of Theorem \ref{3:E.10}]
	The outline of the proof is similar to Theorem E.10 of \cite{GTZ12}.
	In our setting, the proof is more difficult in the following two senses. The first is that we are dealing with multi-dimensional nilcharacters, which make the constructions more complicated. The second is that we need some additional efforts to ensure that the nilcharacters $\tilde{\chi}$ and $\chi'$ we construct are $p$-periodic.
	
	By Lemma \ref{3:LE8} (iv),
	extending the domain of $\xi$ from $\Omega$ to $(\V)^{k}$ if necessary, we may assume without loss of generality that $\Omega=(\V)^{k}$. By Lemmas \ref{3:eqqeqq}, \ref{3:LE8} (vii), \ref{3:LE.13} and \ref{3:sppsp}, it suffices to construct $\tilde{\chi}\in\Xi^{(1,\dots,1);O_{C,d,s}(1),O_{C,d,s}(1)}_{p}((\V)^{\vert s\vert})$ which is symmetric with respect to the permutations of $n_{i,1},\dots,n_{i,s_{i}}$ for all $1\leq i\leq k$ such that $$\chi'\sim_{O_{C,d,s}(1)}\chi^{\otimes s!}\mod\Xi^{\vert s\vert}_{p}((\V)^{\vert s\vert}),$$ where $s!:=s_{1}!\dots s_{k}!$. 
	Assume that $\chi(n)=\xi\circ \tau(n),n\in (\V)^{k}$ for some $\xi\in \Xi^{s;C,D}_{p}((\Z^{d})^{k})$.
	We may write $\xi(n)=F(g(n)\Gamma)$ for all $n\in(\Z^{d})^{k}$, where $G/\Gamma$ is a nilmanifold of multi-degree $s$ and complexity at most $C$, $g\in\poly_{p}((\Z^{d})^{k}\to G_{\N^{k}}\vert\Gamma)$, and $F\in \Lip(G/\Gamma\to\mathbb{S}^{D'})$ is of Lipschitz norm at most $C$ for some $D'\leq D$, and has a vertical frequency $\eta$ of complexity at most $C$.
 %	By Lemma \ref{3:pre2g}, increasing the complexity from $C$ to $O(C)$ if necessary, we may assume without loss of generality that $g(\bold{0})=id_{G}$.
	
	We first build a multi-degree $(1,\dots,1)$ nilpotent group $\tilde{G}$. Let $\log\tilde{G}$ be the Lie algebra given by the direct sum (as a real vector space)
	$$\log\tilde{G}:=\oplus_{J\subseteq\{1,\dots,\vert s\vert\}}\log G_{\Vert J\Vert},$$
	where $\Vert J\Vert\in\N^{k}$ is the vector
	$$\Vert J\Vert:=(\vert J\cap\{s_{1}+\dots+s_{i-1}+1,\dots,s_{1}+\dots+s_{i-1}+s_{i}\}\vert)_{1\leq i\leq k}.$$
	For each $J\subseteq \{1,\dots,\vert s\vert\}$, there is a natural vector space embedding $\iota_{J}\colon \log G_{\Vert J\Vert}\to \log\tilde{G}$. We then endow $\log\tilde{G}$ with a Lie bracket structure such that for all $J,J'\subseteq\{1,\dots,\vert s\vert\}$ and $x_{J}\in \log G_{\Vert J\Vert}$, $x_{J'}\in \log G_{\Vert J'\Vert}$,
	$$[\iota_{J}(x_{J}),\iota_{J'}(x_{J'})]=0$$
	if $J\cap J'\neq \emptyset$ and 
	$$[\iota_{J}(x_{J}),\iota_{J'}(x_{J'})]=\iota_{J\cup J'}([x_{J},x_{J'}])$$
	otherwise. As is shown in Theorem E.10 of \cite{GTZ12}, such a definition complies with the Lie bracket axioms. 
	
	We then define the $\N^{\vert s\vert}$-pre-filtration of $\log \tilde{G}$. Let $\log\tilde{G}_{\bold{0}}:=\log\tilde{G}$.
 For any $(a_{1},\dots,a_{\vert s\vert})\in\N^{\vert s\vert}\backslash\{\bold{0}\}$, let $\log \tilde{G}_{(a_{1},\dots,a_{\vert s\vert})}$ denote the sub Lie algebra of $\log \tilde{G}$ generated by $\iota_{J}(x_{J})$ for all $J$ with $\bold{1}_{J}(j)\geq a_{j}$ for all $1\leq j\leq \vert s\vert$, and for all $x_{J}\in G_{\Vert J\Vert}$.  As is shown in Theorem E.10 of \cite{GTZ12}, this is an  $\N^{\vert s\vert}$-pre-filtration of multi-degree $(1,\dots,1)$ and it induces an  $\N^{\vert s\vert}$-filtration of multi-degree $(1,\dots,1)$ of the Lie group $\tilde{G}$ by taking the exponential function. It is clear that $\tilde{G}_{(1,\dots,1)}$ is in the center of $\tilde{G}$.
	
	Let $\tilde{\Gamma}$ be the group generated by $\exp(W!\iota_{J}(\log\gamma_{j}))$ for some $W\in\N$ to be chosen latter for all $J\subseteq \{1,\dots,\vert s\vert\}$ and $\gamma_{j}\in\Gamma_{\Vert J\Vert}$. Again by Theorem E.10 of \cite{GTZ12}, $\tilde{G}/\tilde{\Gamma}$ is a pre-nilmanifold and that $\tilde{\Gamma}_{(1,\dots,1)}$ is contained in $\iota_{(1,\dots,1)}(\log \Gamma_{s})$ if $W\gg_{d,s} 1$. Fix such a $W$. It is not hard to see that $\tilde{G}/\tilde{\Gamma}$ is of complexity $O_{C,d,s}(1)$.
	Let $\tilde{\eta}$ be the vertical frequency on $\tilde{G}_{(1,\dots,1)}$ given by
	$$\tilde{\eta}(\exp(\iota_{(1,\dots,1)}(\log g_{s}))):=\eta(g_{s}).$$
	Since $\tilde{\Gamma}_{(1,\dots,1)}\subseteq\iota_{(1,\dots,1)}(\log \Gamma_{s})$ and $G_{(1,\dots,1)}$ is a central subgroup of $G$, $\tilde{\eta}$ is indeed a vertical frequency. Moreover, $\tilde{\eta}$ is of complexity $O_{C,d,s}(1)$.
	
	We then construct a function $\tilde{F}\in\Lip (\tilde{G}/\tilde{\Gamma}\to\mathbb{S}^{O_{C,d,s}(1)})$ with vertical frequency $\tilde{\eta}$. Such a function can be constructed using partitions of unity as in (6.3) of \cite{GTZ12}, and so we omit the details. Moreover, we can make the complexity and dimension of $\tilde{F}$ to be $O_{C,d,s}(1)$. 
		
	Next we define a polynomial sequence $\tilde{g}$ as follows. 
	For all $j=(j_{1},\dots,j_{k}), j_{i}\in\N^{d}$, denote $\tilde{j}:=(\vert j_{1}\vert,\dots,\vert j_{k}\vert)\in \N^{k}$.
	For convenience denote $n:=(n_{1},\dots,n_{k})$ for $n\in(\Z^{d})^{k}$.
	By Lemma \ref{3:B.9}  and the Baker-Campbell-Hausdorff formula, we may write
	$$g(n_{1},\dots,n_{k})=\prod_{j=(j_{1},\dots,j_{k})\in (\N^{d})^{k}}g_{j}^{n_{1}^{j_{1}}\dots n_{k}^{j_{k}}}$$
	for some $g_{j}\in G_{j}$, where
	$j=(j_{1},\dots,j_{k}), j_{i}\in\N^{d}$ ranges over all elements in $(\N^{d})^{k}$ such that   $\tilde{j}\preceq s$, and these elements are arranged in some arbitrary order except we insist that the constant term $g_{\bold{0},\dots,\bold{0}}$ being the leading term. %Since $g(\bold{0})=id_{G}$, we may set $g_{\bold{0},\dots,\bold{0}}=id_{G}$.  
	
	For each $J\subseteq\{1,\dots,\vert s\vert\}$ and $j=(j_{1},\dots,j_{k})=(j_{i,t})_{1\leq i\leq k, 1\leq t\leq d}$, let $P(J,j)$ be the set of tuples $(I_{i,t})_{1\leq i\leq k, 1\leq t\leq d}, I_{i,t}\subseteq J\cap\{s_{1}+\dots+s_{i-1}+1,\dots,s_{1}+\dots+s_{i}\}$ with $\vert I_{i,t}\vert=j_{i,t}$ such that $J\cap \{s_{1}+\dots+s_{i-1}+1,\dots,s_{1}+\dots+s_{i}\}=\cup_{t=1}^{d}I_{i,t}$. Clearly, if $P(J,j)$ is non-empty, then $\Vert J\Vert=\tilde{j}$.
	For $j=(j_{1},\dots,j_{k})=(j_{i,t})_{1\leq i\leq k,1\leq t\leq d}, j_{i,t}\in\N$, denote $j!:=\prod_{1\leq i\leq k,1\leq t\leq d}j_{i,t}!$. 
	We then write
	\begin{equation}\nonumber
	\begin{split}
	&\quad\tilde{g}(n_{1},\dots,n_{\vert s\vert})
	\\&:=\prod_{j=(j_{1},\dots,j_{k})\in (\N^{d})^{k}, \tilde{j}\preceq s}
	\exp\Bigl(j!\sum_{J\subseteq\{1,\dots,\vert s\vert\}\colon \Vert J\Vert=\tilde{j}}\sum_{(I_{i,t})_{1\leq i\leq k, 1\leq t\leq d}\in P(J,j)}\bigl(\prod_{i=1}^{k}\prod_{t=1}^{d}\prod_{\ell\in I_{i,t}}n_{\ell,t}\bigr)\iota_{J}(\log g_{j})\Bigr)
	\end{split}
	\end{equation}
	for all $n_{1},\dots,n_{\vert s\vert}\in\V$, where $n_{\ell,t}$ is the $t$-th entry of $n_{\ell}$. 
	Let $(\tilde{G}'/\tilde{\Gamma'})_{\N^{\vert s\vert}}$ be the essential component of $(\tilde{G}/\tilde{\Gamma})_{\N^{\vert s\vert}}$.
	Since eachmonomial
	$$(n_{1},\dots,n_{\vert s\vert})\mapsto
	\exp\Bigl(j!\sum_{J\subseteq\{1,\dots,\vert s\vert\}\colon \Vert J\Vert=\tilde{j}}\sum_{(I_{i,t})_{1\leq i\leq k, 1\leq t\leq d}\in P(J,j)}\bigl(\prod_{i=1}^{k}\prod_{t=1}^{d}\prod_{\ell\in I_{i,t}}n_{\ell,t}\bigr)\iota_{J}(\log g_{j})\Bigr)$$
	belongs to $\poly((\Z^{d})^{\vert s\vert}\to \tilde{G}')$ when $j\neq \bold{0}$,
	It follows from  Corollary B.4 of \cite{GTZ12} that  $\tilde{g}$ is equal to a constant in $\tilde{G}$ times a polynomial sequence with respect to the filtration $\tilde{G}'_{I}$, and thus is a polynomial sequence with respect to the pre-filtration $\tilde{G}_{\N^{3}}$ by Lemma \ref{3:pre3g}.

	 and since $g_{\bold{0},\dots,\bold{0}}=id_{G}$, it follows from 
	  Corollary B.4 of \cite{GTZ12} that $\tilde{g}$ belongs to $\poly((\Z^{d})^{\vert s\vert}\to \tilde{G}')$.
	
	Set 
	$$\tilde{\xi}(n_{1},\dots,n_{\vert s\vert}):=\tilde{F}(\tilde{g}(\tau(n_{1},\dots,n_{\vert s\vert}))\tilde{\Gamma}) \text{ for all } (n_{1},\dots,n_{\vert s\vert})\in (\Z^{d})^{\vert s\vert}$$
	and
	$$\xi'(n_{1},\dots,n_{k}):=\tilde{\xi}(n_{1},\dots,n_{1},\dots,n_{k},\dots,n_{k}) \text{ for all } (n_{1},\dots,n_{k})\in (\Z^{d})^{k},$$
	(with each $n_{i}$ repeated $s_{i}$ times).
	From the construction, we see that  $\tilde{\xi}$ is a nilcharacter on $(\Z^{d})^{\vert s\vert}$ of multi-degree $(1,\dots,1)$ (with $1$ repeated $\vert s\vert$ times) and  $\xi'$ is a nilcharacter on $(\Z^{d})^{k}$ of multi-degree $s$, and that both of them are of
	complexity and dimension at most  $O_{C,d,s}(1)$.	It is also clear that $\tilde{\xi}$ is symmetric with respect to the permutations of $n_{s_{1}+\dots+s_{i-1}+1},\dots,n_{s_{1}+\dots+s_{i}}$ for all $1\leq i\leq k$.
	However, we caution the readers that $\tilde{\xi}$ and $\xi'$ need not to be $p$-periodic.
	
	\textbf{Claim.}
	The map
	\begin{equation}\label{3:EE2}
	n\mapsto\xi(n)^{\otimes s!}\otimes \overline{\xi'}(n) \text{ for all } n\in (\Z^{d})^{k}
	\end{equation}
	belongs to $\Nil^{\prec s;O_{C,d,s}(1),O_{C,d,s}(1)}((\Z^{d})^{k})$. 
	
	It is not hard to see that 
	$\tilde{j}!=j!\vert P(J,j)\vert$ whenever $\Vert J\Vert=\tilde{j}$. So
	we may   expand (\ref{3:EE2}) as
	$$(F^{\otimes s!}\otimes \overline{\tilde{F}})\Bigl(\prod_{j=(j_{1},\dots,j_{k})\in (\N^{d})^{k}, \tilde{j}\preceq s}\Bigl(g_{j},\exp\Bigl(\tilde{j}!\sum_{J\subseteq \{1,\dots,\vert s\vert\}\colon \Vert J\Vert=\tilde{j}}\iota_{J}(\log g_{j})\Bigr)\Bigr)^{n_{1}^{j_{1}}\dots n_{k}^{j_{k}}}(\Gamma\times\tilde{\Gamma})\Bigr).$$
	The function $(F^{\otimes s!}\otimes \overline{\tilde{F}})$ is a Lipschitz function of complexity $O_{C,d,s}(1)$ on the nilmanifold $(G\times \tilde{G})/(\Gamma\times\tilde{\Gamma})$ of complexity $O_{C,d,s}(1)$. Let $G^{\ast}$ be the subgroup of $G\times \tilde{G}$ defined by
	$$G^{\ast}:=\{(g,\exp(s!\iota_{(1,\dots,1)}(\log g)))\colon g\in G_{s}\}\leq G_{s}\times \tilde{G}_{(1,\dots,1)}.$$
	This is a rational central subgroup of $G\times \tilde{G}$. As $F$ and $\tilde{F}$ have vertical frequencies $\eta$ and $\tilde{\eta}$ respectively, $F^{\otimes s!}\otimes \overline{\tilde{F}}$ is invariant under the group action $G^{\ast}$ and thus descends to a Lipschitz function $F'$ on the nilmanifold $G'/\Gamma'$, where $G':=(G\times \tilde{G})/G^{\ast}$ and $\Gamma'$ is the projection of $\Gamma\times \tilde{\Gamma}$ to $G$. We thus have
	\begin{equation}\label{3:EE3}
	\xi(n)^{\otimes s!}\otimes \overline{\xi'}(n)=F'\Bigl(\prod_{j=(j_{1},\dots,j_{k})\in (\N^{d})^{k}, \tilde{j}\preceq s}(g'_{j})^{n^{j}}\Gamma'\Bigr),
	\end{equation}
	where $g'_{j}$ is the projection of $\Bigl(g_{j},\exp\Bigl(\tilde{j}!\sum_{J\subseteq \{1,\dots,\vert s\vert\}\colon \Vert J\Vert=\tilde{j}}\iota_{J}(\log g_{j})\Bigr)\Bigr)$ to $G'$.
	
	We now give $G'$ an $\N^{k}$-pre-filtration as follows. Let $G'_{\bold{0}}:=G'$. 
 For all $u\in\N^{k}\backslash\{\bold{0}\}, u\preceq s$, let $G'_{u}$ be the group generated by
	$$\Bigl(h_{j},\exp\Bigl(\tilde{j}!\sum_{J\subseteq \{1,\dots,\vert s\vert\}\colon \Vert J\Vert=\tilde{j}}\iota_{J}(\log h_{j})\Bigr)\Bigr) \mod G^{\ast}$$
	for all $j\in (\N^{d})^{k}$ with $\tilde{j}=u$ and $h_{j}\in G_{\Vert J\Vert}$,
	and 
	$$(h_{j},id), (id, \exp(\iota_{J}(\log h_{j}))) \mod G^{\ast}$$
	for all $J\subseteq\{1,\dots,s\}$ with $\Vert J\Vert=\tilde{j}\succ u$ and $h_{j}\in G_{\Vert J\Vert}$. 
	By the Baker-Campbell-Hausdorff formula, one can show that this is a pre-filtration of degree $\prec s$. (Here we used the fact that for any $K\subseteq \{1,\dots,\vert s\vert\}$ with $\Vert K\Vert=\tilde{j}+\tilde{j'}$, then number of partitions $K=J\cup J'$ with $\Vert J\Vert=\tilde{j}$ and $\Vert J'\Vert=\tilde{j'}$ is $\frac{(\tilde{j}+\tilde{j})!}{\tilde{j}!\tilde{j'}!}$, which cancels the $u!$ term appearing in the definition of $G'_{u}$.) 
	Moreover, $g'_{j}\in G'_{j}$ by construction. So by Corollary B.4 of \cite{GTZ12}, Lemmas \ref{3:pre3g} and \ref{3:pre2g}, it is not hard to see that the right hand side of (\ref{3:EE3}) is a nilsequence of degree $\prec s$, whose complexity and dimension are certainly bounded by $O_{C,d,s}(1)$. This proves the claim.
	
	\

	Due to the fact that $\tilde{\xi}$ is not $p$-periodic, we can not directly apply the claim and set $\tilde{\chi}:=\tilde{\xi}\circ\tau$ to complete the proof. Our strategy is to approximate $\tilde{\xi}$ with $p$-periodic nilcharacters. For convenience denote $n:=(n_{1},\dots,n_{k})\in (\Z^{d})^{k}$.
	Since $\xi^{\otimes s!}\otimes\overline{\xi}'$ takes values in $\mathbb{S}^{O_{C,d,s}(1)}$, by the claim, there exists $\psi\in\Nil^{\prec s;O_{C,d,s}(1),O_{C,d,s}(1)}((\Z^{d})^{k})\subseteq \Nil^{\vert s\vert-1;O_{C,d,s}(1),O_{C,d,s}(1)}((\Z^{d})^{k})$  (one can take $\psi:=\overline{\xi}^{\otimes s!}\otimes\xi'$) such that 
	\begin{equation}\nonumber%\label{3:sskeqp}
	\begin{split}
	\vert\E_{n\in [p]^{dk}} \xi(n)^{\otimes s!}\otimes \overline{\tilde{\xi}}(n_{1},\dots,n_{1},\dots,n_{k},\dots,n_{k})\otimes \psi(n)\vert=1,
	\end{split}
	\end{equation}
where we denote $[p]:=\{0,\dots,p-1\}$.
	Note that $\tilde{\xi}$ belongs to $\Nil^{(1,\dots,1);O_{C,d,s}(1),O_{C,d,s}(1)}((\Z^{d})^{\vert s\vert})$.
	By Corollary \ref{3:LE.6} (applied to the set of $(n_{1,1},\dots,n_{1,s_{1}},\dots,n_{k,1},\dots,n_{k,s_{k}})\in(\V)^{\vert s\vert}$ such that $n_{i,j}=n_{i,j'}$ for all $1\leq i\leq k, 1\leq j,j'\leq s_{i}$), 
	there exists  $\tilde{\xi}'\in \Xi^{(1,\dots,1);O_{C,d,s}(1),O_{C,d,s}(1)}_{p}((\Z^{d})^{\vert s\vert})$  
	such that 
	\begin{equation}\label{3:sskeq1}
	\begin{split}
	\vert\E_{n\in [p]^{dk}} \xi(n)^{\otimes s!}\otimes \overline{\tilde{\xi}'}(n_{1},\dots,n_{1},\dots,n_{k},\dots,n_{k})\otimes \psi(n)\vert\gg_{C,d,s} 1.
	\end{split}
	\end{equation}

	Denote 
	\begin{equation}\nonumber
	\begin{split}
	 \tilde{\alpha}(n_{1,1},\dots,n_{1,s_{1}},\dots,n_{k,1},\dots,n_{k,s_{k}})
	 :=\bigotimes_{\sigma_{1},\dots,\sigma_{k}}\tilde{\xi}'(n_{1,\sigma_{1}(1)},\dots,n_{1,\sigma_{1}(s_{1})},\dots,n_{k,\sigma_{k}(1)},\dots,n_{k,\sigma_{k}(s_{k})})
	\end{split}
	\end{equation}
	for all $n_{i,j}\in\Z^{d}$, 
	where $\sigma_{1},\dots,\sigma_{k}$ ranges over all the permutations $\sigma_{i}\colon\{1,\dots,s_{i}\}\to\{1,\dots,s_{i}\}, 1\leq i\leq k$. 
	By Lemmas \ref{3:LE80} and \ref{3:sppsp},
	we have   $\tilde{\alpha}\in \Xi^{(1,\dots,1);O_{C,d,s}(1),O_{C,d,s}(1)}_{p}((\Z^{d})^{\vert s\vert})$. By Lemma \ref{3:LE8} (vii), there exists $\tilde{\beta}\in \Xi^{(1,\dots,1);O_{C,d,s}(1),O_{C,d,s}(1)}_{p}((\Z^{d})^{\vert s\vert})$ such that
	$$\tilde{\beta}^{\otimes s!}\sim_{O_{C,d,s}(1)}\tilde{\alpha} \mod \Xi^{(1,\dots,1)}_{p}((\Z^{d})^{\vert s\vert}).$$
	Since $\tilde{\alpha}$ is symmetric with respect to the permutations of $n_{s_{1}+\dots+s_{i-1}+1},\dots,n_{s_{1}+\dots+s_{i}}$ for all $1\leq i\leq k$ by construction, it follows from Lemma \ref{3:LE8} (vii) that we may also require $\tilde{\beta}$ to be symmetric with respect to these permutations.

	Set
	$$\alpha'(n_{1},\dots,n_{k}):=\tilde{\alpha}(n_{1},\dots,n_{1},\dots,n_{k},\dots,n_{k})$$
	and 
$$\beta'(n_{1},\dots,n_{k}):=\tilde{\beta}(n_{1},\dots,n_{1},\dots,n_{k},\dots,n_{k})$$
for all  $n_{1},\dots,n_{k}\in\Z^{d}.$ 
Since $\tilde{\alpha},\tilde{\beta}\in \Xi^{(1,\dots,1);O_{C,d,s}(1),O_{C,d,s}(1)}_{p}((\Z^{d})^{\vert s\vert})$,
by  Lemma \ref{3:sppsp}, we have that   $\alpha',\beta'\in \Xi^{s;O_{C,d,s}(1),O_{C,d,s}(1)}_{p}((\Z^{d})^{k})$. 
 By Lemma \ref{3:LE8} (vi), we have that  $${\beta'}^{\otimes s!}\sim_{O_{C,d,s}(1)} \alpha' \mod \Xi^{\vert s\vert}_{p}((\Z^{d})^{k}).$$
So
	\begin{equation}\nonumber
	\begin{split}
	&\quad\tilde{\xi}'(n_{1},\dots,n_{1},\dots,n_{k},\dots,n_{k})^{\otimes s!}
	\\&=\alpha'(n_{1},\dots,n_{k})\sim_{O_{C,d,s}(1)}\beta'(n_{1},\dots,n_{k})^{\otimes s!}\mod \Xi^{\vert s\vert}_{p}((\Z^{d})^{k}).
	\end{split}
	\end{equation}
	By Lemmas \ref{3:eqqeqq}, \ref{3:LE8} (ii), \ref{3:LE.13}, (\ref{3:sskeq1}) and the Pigeonhole Principle, there exists $\psi'\in\Nil^{\vert s\vert-1;O_{C,d,s}(1),O_{C,d,s}(1)}((\Z^{d})^{k})$ such that 
	$$\vert\E_{n\in[p]^{dk}} \xi(n)^{\otimes s!}\otimes \overline{\beta'}(n) \otimes\psi'(n)\vert\gg_{C,d,s} 1.$$
	Denoting $\tilde{\chi}:=\tilde{\beta}\circ\tau\in \Xi^{(1,\dots,1);O_{C,d,s}(1),O_{C,d,s}(1)}_{p}((\V)^{\vert s\vert})$ and $\chi':=\beta'\circ\tau\in \Xi^{s;O_{C,d,s}(1),O_{C,d,s}(1)}_{p}((\V)^{k})$, we have that
	$$\chi'(n_{1},\dots,n_{k})=\tilde{\chi}(n_{1},\dots,n_{1},\dots,n_{k},\dots,n_{k}) \text{ for all } n_{1},\dots,n_{k}\in\V$$  
	and that
	\begin{equation}\label{3:s1s1}
	\vert\E_{n\in(\V)^{k}} \xi(\tau(n))^{\otimes s!}\otimes \overline{\chi'}(n) \otimes\psi'(\tau(n))\vert\gg_{C,d,s} 1.
	\end{equation} 
Since $\chi=\xi\circ\tau$,
	By Lemma \ref{3:LE.11} and (\ref{3:s1s1}), we have that 
	$\chi^{\otimes s!}\sim_{O_{C,d,s}(1)}\chi'\mod\Xi^{\vert s\vert}_{p}((\V)^{\vert s\vert})$.
	Finally, since $\tilde{\beta}$ is symmetric with respect to the permutations of $n_{s_{1}+\dots+s_{i-1}+1},\dots,n_{s_{1}+\dots+s_{i}}$ for all $1\leq i\leq k$, so is $\tilde{\chi}$. This completes the proof. 
\end{proof}

\section{The converse of the spherical Gowers inverse theorem}\label{3:s:AppD}

In this appendix, we provide the proof of  the converse direction of $\SGI(s)$. 

\begin{prop}[Converse of $\SGI(s)$]\label{3:cinv}
	Let $r,s\in\N$, $d,D\in\N_{+}$, 
	and $C,\e>0$. There exist $\delta:=\d(C,d,D,\e)>0$ and $p_{0}:=p_{0}(C,d,D,\e)\in\N$ such that for every prime $p\geq p_{0}$, every quadratic form $M\colon\V\to\F_{p}$, every  affine subspace $V+c$ of $\V$ of co-dimension $r$, every function $f\colon\V\to \mathbb{C}^{D}$ bounded by 1,
	and every $\psi\in\Nil^{s;C,D}(V(M)\cap (V+c))$,
	if $\rank(M\vert_{V+c})\geq s^{2}+3s+5$ and
	\begin{equation}\label{3:csg0}
	\begin{split}
	\Bigl\vert\E_{n\in V(M)\cap (V+c)}f(n)\otimes \psi(n)\Bigr\vert>\e,
	\end{split}
	\end{equation}
	then $\Vert f\Vert_{U^{s+1}(V(M)\cap (V+c))}>\d$.  
\end{prop}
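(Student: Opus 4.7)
Plan:
The plan is to prove by induction on $s$ the quantitative dual estimate
\[|\E_{n \in V(M) \cap (V+c)} f(n) \otimes \psi(n)| \leq K(C, d, D, s) \cdot \|f\|_{U^{s+1}(V(M) \cap (V+c))} + O_s(p^{-c_0})\]
for all $1$-bounded $f$ and $\psi \in \Nil^{s; C, D}(V(M) \cap (V+c))$ (where $c_0 > 0$ is absolute), from which the proposition follows by contraposition. Taking components reduces to the scalar-valued case. The base case $s = 0$ is immediate: such a $\psi$ is bounded by $C$, giving $|\E f \psi| \leq C |\E f| = C \|f\|_{U^1}$.

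For the inductive step with $\Omega := V(M) \cap (V+c)$, I would first square the correlation and apply the change of variables $m = n + h$; Theorem \ref{3:ct} applied to the consistent $M$-set $\Gow_1(\Omega)$ yields
\[|\E_{n \in \Omega} f \psi|^2 = \E_h \E_{n \in \Omega^h} (\Delta_h \bar f)(n) \cdot (\Delta_h \bar \psi)(n) + O_s(p^{-1/2}),\]
where $\Omega^h := V(M) \cap (V+c) \cap (V+c-h)$ is again of the form $V(M) \cap (V' + c')$; by Proposition \ref{3:iissoo} its restricted rank is at least $\rank(M\vert_{V+c}) - 2 \geq (s-1)^2 + 3(s-1) + 5$, so the inductive hypothesis will apply on $\Omega^h$. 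The critical input is that for each fixed $h$, the sequence $n \mapsto \Delta_h \psi(n)$ is an $(s-1)$-step nilsequence on $\Omega^h$ of complexity $O_C(1)$. To establish this, approximate $\psi$ by Lemma \ref{3:LE.5} up to $L^\infty$-error $\e/2$ by a linear combination of $O_{C,d,D,\e}(1)$ nilcharacters $\chi$ with a vertical frequency $\eta$; for each such $\chi(n) = F(g(n)\Gamma)$, the function $F \otimes \bar F$ has cancelling vertical characters $\eta$ and $-\eta$, and is therefore invariant under the anti-diagonal central subgroup $\{(g_s, g_s^{-1}) : g_s \in G_s\} \subseteq G_s \times G_s$, so $\Delta_h \chi(n) = (F \otimes \bar F)((g(n+h), g(n)) \cdot (\Gamma \times \Gamma))$ descends to a nilsequence on the quotient nilmanifold, whose step is $s-1$ since the anti-diagonal central part has been killed. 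Partial $p$-periodicity is preserved by Lemma \ref{3:grg} and Proposition \ref{3:BB}.

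Applying the inductive hypothesis to $\Delta_h f$ and $\Delta_h \psi$ on $\Omega^h$, then averaging over $h$ via Jensen's inequality combined with the Fubini-type identity
\[\|f\|_{U^{s+1}(\Omega)}^{2^{s+1}} = \E_h \|\Delta_h f\|_{U^s(\Omega^h)}^{2^s} + O_s(p^{-1/2})\]
(a direct consequence of Theorem \ref{3:ct} by isolating the $h_{s+1}$ direction in the Gowers cube definition), we obtain $|\E f \psi|^2 \leq K' \|f\|_{U^{s+1}(\Omega)}^2 + O_s(p^{-c_0'})$, and taking square roots closes the induction. The main obstacle is the step-reduction claim for $\Delta_h \psi$: for pure polynomial phases $\exp(P)$ with $\deg P \leq s$ it is immediate since $\Delta_h P$ has degree $\leq s-1$, but in general the reduction to nilcharacters via Lemma \ref{3:LE.5} and the subsequent anti-diagonal quotient require careful bookkeeping of the filtration on $(G \times G)/G_s^{\mathrm{antidiag}}$, of the Mal'cev basis complexity (which must remain uniform in $h$), and of the degree of the polynomial $n \mapsto (g(n+h), g(n))$ in the induced filtration, where one uses that $\Delta_h g(n) = g(n+h) g(n)^{-1} \in G_1$ is of type-I degree exactly $s-1$ by Lemma \ref{3:B.9}.
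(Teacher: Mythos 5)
Your overall scheme (induction on $s$, squaring via the $\Gow_1$ Fubini identity from Theorem~\ref{3:ct}, applying the inductive hypothesis on $\Omega^h := V(M)\cap(V+c)\cap(V+c-h)$ after controlling the restricted rank with Proposition~\ref{3:iissoo}, then closing with the identity $\|f\|_{U^{s+1}(\Omega)}^{2^{s+1}}=\E_h\|\Delta_hf\|_{U^s(\Omega^h)}^{2^s}+O(p^{-1/2})$) is the same as the paper's, and the base case is correct. However, there is a genuine gap in the order of operations at the heart of the inductive step. You first square the correlation to bring out $\Delta_h\psi(n)=\psi(n+h)\otimes\overline\psi(n)$, and only afterwards decompose $\psi$ via Lemma~\ref{3:LE.5} as a linear combination of nilcharacters $\chi_i$. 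But $\Delta_h\psi$ is not an $(s-1)$-step nilsequence for a general step-$s$ nilsequence $\psi$: writing $\psi$ as a combination of nilcharacters $\chi_i$ with distinct vertical frequencies $\eta_i$, the derivative produces cross terms $\chi_i(n+h)\otimes\overline\chi_j(n)$ for $i\neq j$, and the tensor $F_i\otimes\overline F_j$ on $(G\times G)/(\Gamma\times\Gamma)$ transforms under $(g_s,g_s)\in G_s\times G_s$ by the phase $\exp(\eta_i(g_s)-\eta_j(g_s))$, which does not vanish when $\eta_i\neq\eta_j$. Hence no central subgroup can be quotiented out and these cross terms remain step $s$. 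Your argument that ``$\Delta_h\psi$ is an $(s-1)$-step nilsequence of complexity $O_C(1)$'' is therefore not established, and the inductive hypothesis cannot be applied to it.

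The paper avoids this by reversing the order: it first uses Corollary~\ref{3:LE.6} (which itself packages Lemma~\ref{3:LE.5} with a pigeonhole) to replace $\psi$ by a \emph{single} $p$-periodic nilcharacter $\psi'\in\Xi_p^{s;O_{C,d,D,\e}(1),O_{C,d,D,\e}(1)}$ correlating with $f$, and only then squares and differentiates. For a nilcharacter the diagonal central subgroup $\{(g_s,g_s):g_s\in G_s\}$ genuinely annihilates $F\otimes\overline F$ (note: you wrote the anti-diagonal $\{(g_s,g_s^{-1})\}$, but that scales by $\exp(2\eta(g_s))$; it must be the diagonal), so $\Delta_h\psi'$ is honestly step $s-1$ by Lemma~\ref{3:LE88}. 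A further small inconsistency: your dual estimate is stated with a constant $K(C,d,D,s)$ independent of $\e$, but the reduction to nilcharacters via Lemma~\ref{3:LE.5}/Corollary~\ref{3:LE.6} necessarily introduces $\e$-dependence in the complexity, so the constants should be allowed to depend on $\e$ as well, as in the proposition's statement.
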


\begin{proof}%[Proof of Proposition \ref{3:cinv}] 
	The scheme of the proof is similar to Proposition 1.4 of \cite{GTZ11}. 
	There is nothing to prove when $s=0$ since degree 0 nilsequences are constants. Suppose now that the conclusion holds for some $s-1\in\N$ and we prove it for $s$. 
	By Corollary \ref{3:LE.6}, 	the induction hypothesis, (\ref{3:csg0}) and the Pigeonhole Principle, there exist $\psi'\in\Xi^{s;O_{C,d,D,\e}(1),O_{C,d,D,\e}(1)}_{p}(V(M)\cap (V+c))$ 
	such that 
	\begin{equation}\label{3:csgie1}
	\begin{split}
	\Bigl\vert\E_{n\in V(M)\cap (V+c)}f(n)\otimes \psi'(n) \Bigr\vert\gg_{C,d,D,\e} 1.
	\end{split}
	\end{equation}
	Let $((G/\Gamma)_{\N},g,F,\eta)$ be a $\Xi^{s}_{p}(V(M)\cap (V+c))$-representation of $\psi'$ of complexity and dimension at most $O_{C,d,D,\e}(1)$.	Write $\tilde{g}=g\circ \tau$.
	Let $\phi\colon\F_{p}^{d-r}\to V$ be any bijective linear transformation and set $M'(m):=M(\phi(m)+c)$. Then $(n,h)\in \Gow_{1}(V(M)\cap(V+c))$ if and only if $n=\phi(m)+c, h=\phi(h')$ for some $(m,h')\in V(M')$.
	By taking the square of (\ref{3:csgie1}), we have that 
	\begin{equation}\label{3:csgie2}
	\begin{split}
	&\quad 1\ll_{C,d,D,\e}
	\vert\E_{(n,h)\in \Gow_{1}(V(M)\cap(V+c))}\Delta_{h}f(n)\otimes F(\tilde{g}(n+h)\Gamma)\otimes\overline{F}(\tilde{g}(n)\Gamma)\vert
	\\&=\vert\E_{(m,h')\in \Gow_{1}(V(M'))}\Delta_{\phi(h')}f(\phi(m)+c)\otimes F(\tilde{g}(\phi(m+h')+c)\Gamma)\otimes\overline{F}(\tilde{g}(\phi(m)+c)\Gamma)\vert
	\\&=\E_{h'\in\F_{p}^{d-r}}\vert\E_{m\in V(M')^{h'}}\Delta_{\phi(h')}f(\phi(m)+c)\otimes F(\tilde{g}(\phi(m+h')+c)\Gamma)\otimes\overline{F}(\tilde{g}(\phi(m)+c)\Gamma)\vert+O(p^{-1/2}),
	\end{split}
	\end{equation}
	where 
	we used Theorem \ref{3:ct} in the last equality since $\rank(M')=\rank(M\vert_{V+c})\geq 5$ and $\Gow_{1}(V(M'))$ is an $M'$-set of total co-dimension 2.	
	So if $p\gg_{C,d,D,\e} 1$, then
	there exists $H'\subseteq \F_{p}^{d-r}$ of cardinality $\gg_{C,d,D,\e}p^{d-r}$ such that for all $h'\in H'$,
	$$\vert\E_{m\in V(M')^{h'}}\Delta_{\phi(h')}f(\phi(m)+c)\otimes F(\tilde{g}(\phi(m+h')+c)\Gamma)\otimes\overline{F}(\tilde{g}(\phi(m)+c)\Gamma)\vert\gg_{C,d,D,\e} 1.$$
	
	Let $A'$ be the $(d-r)\times (d-r)$ matrix associated with $M'$ and $H''$ be the set of $h'\in H'$ such that $h'A'\neq \bold{0}$. Then $\vert H'\backslash H''\vert\leq p^{d-r-\rank(A')}$.
	Since 
	$\rank(A')=\rank(M\vert_{V+c})\geq 1$, we have that $\vert H''\vert\gg_{C,d,D,\e}p^{d-r}$.		
	For $h'\in H''$, note that $V(M')^{h'}$ is the intersection of $V(M)$ with $U_{h'}:=\{m\in\F_{p}^{d-r}\colon M'(m+h')=M'(m)\}$. 
	Since $h'A'\neq \bold{0}$,  $U_{h'}$ is an affine subspace of $\F_{p}^{d-r}$ of co-dimension 1.
	By Proposition \ref{3:iissoo}, 
	$$\rank(M'\vert_{U_{h'}})\geq \rank(M')-2=\rank(M\vert_{V+c})-2\geq s^{2}+3s+3\geq (s-1)^{2}+3(s-1)+5.$$

	On the other hand, for all $h'\in H''$, by Proposition \ref{3:BB}, it is not hard to see that the map $$m\mapsto F(\tilde{g}(\phi(m)+c)\Gamma)$$ belongs to $\Xi^{s;O_{C,d,D,\e}(1),O_{C,d,D,\e}(1)}_{p}(\F_{p}^{d-r})$.
	By Lemma \ref{3:LE8} (viii), we have that the map $$m\mapsto F(\tilde{g}(\phi(m+h')+c)\Gamma)\otimes\overline{F}(\tilde{g}(\phi(m)+c)\Gamma)$$
	belongs to $\Nil^{s-1;O_{C,d,D,\e}(1),O_{C,d,D,\e}(1)}_{p}(\F_{p}^{d-r})$. By induction hypothesis,
	$$\Vert \Delta_{h'}f(\phi(\cdot)+c)\Vert_{U^{s}(V(M')^{h'})}\gg_{C,d,D,\e} 1$$
	for all $h'\in H''$. 
	Note that $\Gow_{s+1}(V(M'))$ is an $M'$-set of total co-dimension $\frac{(s+1)(s+2)}{2}+1$.
	If $p\gg_{C,d,D,\e} 1$, then
	\begin{equation}\nonumber
	\begin{split}
	&\quad\Vert f\Vert^{2^{s+1}}_{U^{s+1}(V(M)\cap(V+c))}
	\\&=\E_{(n,h_{1},\dots,h_{s+1})\in \Gow_{s+1}(V(M)\cap(V+c))}\prod_{\e=(\e_{1},\dots,\e_{s+1})\in\{0,1\}^{s+1}}\mathcal{C}^{\vert\e\vert}f\Bigl(n+\sum_{i=1}^{s+1}\e_{i}h_{i}\Bigr)
	\\&=\E_{(m,h'_{1},\dots,h'_{s+1})\in \Gow_{s+1}(V(M'))}\prod_{\e=(\e_{1},\dots,\e_{s+1})\in\{0,1\}^{s+1}}\mathcal{C}^{\vert\e\vert}f\Bigl(\phi\Bigl(m+\sum_{i=1}^{s+1}\e_{i}h'_{i}\Bigr)+c\Bigr)
	\\&=\E_{h'_{s+1}\in \F_{p}^{d-r}}\E_{(m,h'_{1},\dots,h'_{s})\in \Gow_{s}(V(M')^{h'_{s+1}})}\prod_{\e=(\e_{1},\dots,\e_{s})\in\{0,1\}^{s}}\mathcal{C}^{\vert\e\vert}\overline{\Delta_{h_{s+1}'}f}\Bigl(\phi\Bigl(m+\sum_{i=1}^{s+1}\e_{i}h'_{i}\Bigr)+c\Bigr)+O_{s}(p^{-\frac{1}{2}})
	\\&=\E_{h'_{s+1}\in \F_{p}^{d-r}}\Vert \overline{\Delta_{h'_{s+1}}f}(\phi(\cdot)+c)\Vert^{2^{s}}_{U^{s}(V(M')^{h'_{s+1}})}+O_{s}(p^{-\frac{1}{2}})\gg_{C,d,D,\e} 1,
	\end{split}
	\end{equation}
	where we used Lemma \ref{3:changeh} in the second equality, and Theorem \ref{3:ct} in the third equality (since $\rank(M')=\rank(M\vert_{V+c})\geq s^{2}+3s+5$).
	This finishes the proof.	
\end{proof}

\bibliographystyle{plain}
\bibliography{swb}
\end{document}